\newcommand{\udots}{\mathinner{\mskip1mu\raise1pt\vbox{\kern7pt\hbox{.}}
\mskip2mu\raise4pt\hbox{.}\mskip2mu\raise7pt\hbox{.}\mskip1mu}}
\newcommand{\SC}{{\mathcal{C}}}
\newcommand{\SD}{{\mathcal{D}}}
\newcommand{\SE}{{\mathcal{E}}}
\newcommand{\SF}{{\mathcal{F}}}
\newcommand{\SG}{{\mathcal{G}}}
\newcommand{\SH}{{\mathcal{H}}}
\newcommand{\SI}{{\mathcal{I}}}
\newcommand{\SK}{{\mathcal{K}}}
\newcommand{\SM}{{\mathcal{M}}}
\newcommand{\SN}{{\mathcal{N}}}
\newcommand{\SO}{{\mathcal{O}}}
\newcommand{\SP}{{\mathcal{P}}}
\newcommand{\SR}{{\mathcal{R}}}
\newcommand{\SSS}{{\mathcal{S}}}
\newcommand{\ST}{{\mathcal{T}}}
\newcommand{\SU}{{\mathcal{U}}}
\newcommand{\SV}{{\mathcal{V}}}
\newcommand{\SW}{{\mathcal{W}}}
\newcommand{\SX}{{\mathcal{X}}}
\newcommand{\SY}{{\mathcal{Y}}}
\newcommand{\SZ}{{\mathcal{Z}}}
\newcommand{\PP}{\mathbb{P}}
\newcommand{\ZZ}{\mathbb{Z}}
\newcommand{\CC}{\mathbb{C}}
\newcommand{\RR}{\mathbb{R}}
\newcommand{\Ext}{\operatorname{Ext}}
\newcommand{\Gr}{\operatorname{Gr}}
\newcommand{\Spec}{\operatorname{Spec}}
\newcommand{\codim}{\operatorname{codim}}
\newcommand{\Hom}{\operatorname{Hom}}
\newcommand{\Pic}{\operatorname{Pic}}
\newcommand{\Sym}{\operatorname{Sym}}
\newcommand{\id}{\operatorname{Id}}
\newcommand{\im}{\operatorname{Im}}
\newcommand{\Inn}{\operatorname{Inn}}
\newcommand{\Aut}{\operatorname{Aut}}
\newcommand{\Bir}{\operatorname{Bir}}
\newcommand{\Out}{\operatorname{Out}}
\newcommand{\rk}{\operatorname{rk}}
\newcommand{\pdeg}{\operatorname{pardeg}}
\newcommand{\SParEnd}{\operatorname{SPEnd}}
\newcommand{\ParEnd}{\operatorname{PEnd}}
\newcommand{\ParHom}{\operatorname{PHom}}
\newcommand{\SParHom}{\operatorname{SPHom}}
\newcommand{\End}{\operatorname{End}}
\newcommand{\owt}{\operatorname{wt}}
\newcommand{\Mat}{\operatorname{Mat}}
\newcommand{\diag}{\operatorname{diag}}
\newcommand{\wt}{\widetilde}
\newcommand{\GL}{\operatorname{GL}}
\newcommand{\SSL}{\operatorname{SL}}
\newcommand{\ssl}{\mathfrak{sl}}
\newcommand{\parsl}{\mathfrak{parsl}}
\newcommand{\pargl}{\mathfrak{pargl}}
\newcommand{\PGL}{\operatorname{PGL}}
\newcommand{\SMP}{\SM^{\op{par}}}
\newcommand{\fm}{\mathfrak{m}}
\newcommand{\op}{\operatorname}
\newtheorem{proposition}{Proposition}[section]
\newtheorem{theorem}[proposition]{Theorem}
\newtheorem{definition}[proposition]{Definition}
\newtheorem{lemma}[proposition]{Lemma}
\newtheorem{corollary}[proposition]{Corollary}
\newtheorem{remark}[proposition]{Remark}
\numberwithin{equation}{section}
\title[Automorphisms moduli of parabolic bundles]{Automorphism group of the moduli space of parabolic bundles over a curve}
\author[D. Alfaya]{David Alfaya}
\author[T. G\'omez]{Tom\'as L. G\'omez}
\date{}
\address{Instituto de Ciencias Matem\'aticas (CSIC-UAM-UC3M-UCM),
Nicol\'as Cabrera 15, Campus Cantoblanco UAM, 28049 Madrid, Spain}
\email{david.alfaya@icmat.es}
\address[Current address]{Department of Applied Mathematics and Institute for Research in Technology, ICAI School of Engineering, Comillas Pontifical University, C/Alberto Aguilera 25, 28015 Madrid, Spain}
\email[currently]{dalfaya@comillas.edu}
\address{Instituto de Ciencias Matem\'aticas (CSIC-UAM-UC3M-UCM),
Nicol\'as Cabrera 15, Campus Cantoblanco UAM, 28049 Madrid, Spain}
\email{tomas.gomez@icmat.es}
\keywords{Parabolic vector bundle, moduli space, autormorphism group, Extended Torelli theorem, birational geometry, stability chambers}
\subjclass[2010]{14D20, 14C34, 14E05, 14E07, 14H60}
\begin{document}

\begin{abstract}
We find the automorphism group of the moduli space of parabolic bundles on a smooth curve (with fixed determinant and system of weights). This group is generated by: automorphisms of the marked curve, tensoring with a line bundle, taking the dual, and Hecke transforms (using the filtrations given by the parabolic structure). A Torelli theorem for parabolic bundles with arbitrary rank and generic weights is also obtained. These results are extended to the classification of birational equivalences which are defined over "big" open subsets (3-birational maps, i.e. birational maps giving an isomorphism between open subsets with complement of codimension at least 3).

Finally, an analysis of the stability chambers for the parabolic weights is performed in order to determine precisely when two moduli spaces of parabolic vector bundles with different parameters (curve, rank, determinant and weights) can be isomorphic. 
\end{abstract}

\maketitle

\tableofcontents

\section{Introduction}

Let $X$ be an irreducible smooth complex projective curve. Let $D=\sum_{i=1}^n x_i$ be an effective divisor on $X$ consisting on distinct points and let $\xi$ be a line bundle on $X$. Let $\alpha$ be a rank $r$ generic full flag system of weights over $D$. Let $\SM(r,\alpha,\xi)$ be the moduli space of stable parabolic vector bundles $(E,E_\bullet)$ over $(X,D)$ of rank $r$ with system of weights $\alpha$ and determinant $\det(E)\cong \xi$.

Before describing the automorphisms of this moduli space, let us go back to the non-parabolic case and recall the known classification of the automorphisms of the moduli space of vector bundles. The following two transformations generate the automorphism group of the moduli space $\SM(r,\xi)$ of stable vector bundles over $X$ with rank $r$ and determinant $\xi$. Given an automorphism $\sigma:X\to X$
\begin{enumerate}
\item Send $E\to X$ to $L\otimes \sigma^*E$, where $L$ is a line bundle over $X$ with $L^r\otimes \sigma^*\xi\cong \xi$
\item Send $E$ to $L\otimes \sigma^*(E^\vee)$, where $L$ is a line bundle satisfying $L^r\otimes \sigma^* \xi^{-1}\cong\xi$
\end{enumerate}
This result was initially proved by Kouvidakis and Pantev \cite{KP} using an argument on the fibers of the Hitchin map defined on the moduli space of Higgs bundles. Hwang and Ramanan \cite{HR04} gave a different proof based on the study of Hecke curves on the moduli space. They proved that the Hitchin discriminant was isomorphic to the union of the images of all possible Hecke curves.

Later on a simplified proof was given in \cite{BGM13}, in which the study of the Hecke transformation and the minimal rational curves on the moduli space was substituted by the geometric characterization of the nilpotent cone bundle of a generic vector bundle. This lead to the proof that given a generic bundle $E$ whose image under the automorphism $E'$ is itself generic, there exists an isomorphism of Lie algebra bundles
$$\End_0(E)\cong \End_0(E')$$
Then, it is proven that such an automorphism exists if and only if $E'$ is obtained from $E$ by one of the previously described transformations. The argument was further generalized to the moduli space of symplectic bundles in \cite{BGM12}. In this paper, we will generalize this result to the parabolic scenario.

Coming back to the moduli of parabolic vector bundles, first, we develop four ``basic transformations'' that can be applied intrinsically to families of quasi-parabolic vector bundles. The first three types come from adapting the previously mentioned ones (pullback with respect to an automorphism of the curve, tensoring with a line bundle and dualization) to parabolic vector bundles, finding naturally induced filtrations at the parabolic points on the resulting vector bundles. Nevertheless, in the parabolic setup there is a fourth new type of transformation that can be defined using the additional information provided by the parabolic structure. We can use the steps of the filtration to perform a Hecke transformation on the underlying vector bundle at the parabolic points. What is more, the full parabolic structure at each parabolic point can be ``rotated'' in a certain way so that it induces a parabolic structure on the resulting bundle. The possible combinations of these four types of transformations
\begin{itemize}
\item Taking pullback with respect to an automorphism $\sigma:X\to X$ that fixes the set of parabolic points $D$ (but not necessarily fixes every point in $D$) $(E,E_\bullet)\mapsto \sigma^*(E,E_\bullet)$
\item Tensoring with a line bundle $(E,E_\bullet)\mapsto (E,E_\bullet)\otimes L$
\item Dualization $(E,E_\bullet)\mapsto (E,E_\bullet)^\vee$
\item Hecke transformations $(E,E_\bullet) \mapsto \SH_x(E,E_\bullet)$ with respect to the subspace $E_{x,2}\subset E|_x$ for some $x\in D$ 
\end{itemize}
form a group $\ST$ that we call group of basic transformations.

Instead of working with a fixed moduli space $\SM(r,\alpha,\xi)$ and compute its automorphisms, it will come more natural to study the possible isomorphisms between two moduli spaces $\SM(X,r,\alpha,\xi)$ and $\SM(X',r',\alpha', \xi')$, leading to what is usually called an Extended Torelli type theorem. We will prove that basic transformations are the only ones giving rise to isomorphisms between moduli spaces of parabolic vector bundles. More precisely, the main result in this article is the following Theorem (see Theorem \ref{theorem:ExtendedTorelli})

\begin{theorem}
\label{theorem:autoModuliIntro}
Let $(X,D)$ and $(X',D')$ be two smooth projective curves of genus $g\ge 6$ and $g'\ge 6$ respectively with set of marked points $D\subset X$ and $D'\subset X'$. Let $\xi$ and $\xi'$ be line bundles over $X$ and $X'$ respectively, and let $\alpha$ and $\alpha'$ be full flag generic systems of weights over $(X,D)$ and $(X',D')$ respectively. Let
$$\Phi: \SM(X,r,\alpha,\xi)\stackrel{\sim}{\longrightarrow} \SM(X',r',\alpha',\xi')$$
be an isomorphism. Then
\begin{enumerate}
\item $r=r'$
\item $(X,D)$ is isomorphic to $(X',D')$, i.e., there exists an isomorphism $\sigma:X\stackrel{\sim}{\to} X'$ sending $D$ to $D'$.
\item There exists a basic transformation $T$ such that for every $(E,E_\bullet)\in \SM(r,\alpha,\xi)$ $$\sigma^*\Phi(E,E_\bullet) \cong T(E,E_\bullet)$$
\end{enumerate}
\end{theorem}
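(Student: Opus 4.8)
The plan is to transplant the nilpotent-cone strategy of \cite{BGM13} to the parabolic setting, working on the cotangent bundle of the moduli space. First I would pass from $\Phi$ to its cotangent lift: since $T^*\SM(X,r,\alpha,\xi)$ is a dense open subset of the moduli space $\SM_H(X,r,\alpha,\xi)$ of stable parabolic Higgs bundles, and the fiber of $T^*\SM$ over a generic stable $(E,E_\bullet)$ is canonically $H^0(X,\SParEnd(E,E_\bullet)\otimes K_X(D))$ (strongly parabolic Higgs fields), the differential $d\Phi$ gives a biregular isomorphism between these cotangent open sets, hence a birational map between the two parabolic Higgs moduli spaces that is an isomorphism over a big open subset. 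Because $\Phi$ is an isomorphism of the base, $d\Phi$ is automatically symplectic for the canonical forms, which lets me compare the two parabolic Hitchin fibrations.

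Next I would analyze the parabolic Hitchin map $h\colon \SM_H\to B$, whose base $B$ is a direct sum of spaces of sections of powers of $K_X(D)$, with summands and twists dictated by the genericity of the weights $\alpha$. The dimension and the refined structure of $B$ (the degrees of its graded pieces, the spectral-curve construction, and the discriminant hypersurface) are intrinsic invariants recovered from $T^*\SM$. Comparing them across $\Phi$ yields $r=r'$, and, exactly as in \cite{KP,BGM13}, reconstructs the curve $X$ from the generic spectral curves; the parabolic points $D$ are recovered as the locus where the twist by $\SO(D)$ forces the extra ramification encoded in the spectral data, producing an isomorphism $\sigma\colon X\xrightarrow{\sim}X'$ with $\sigma(D)=D'$. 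This establishes (1) and (2).

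For (3), I would specialize to a generic $(E,E_\bullet)$ whose image is also generic and examine the nilpotent cone inside the cotangent fiber, namely the strongly parabolic Higgs fields $\phi$ with vanishing characteristic data. Following the geometric characterization of the nilpotent cone in \cite{BGM13}, its incidence geometry recovers the parabolic adjoint bundle, giving after transport by $\sigma$ an isomorphism of bundles of Lie algebras
$$\ParEnd_0(E,E_\bullet)\;\cong\;\sigma^*\ParEnd_0(E',E'_\bullet).$$
Away from $D$ the classification is that of the non-parabolic case: an $\ssl_r$-bundle isomorphism comes from an isomorphism of the underlying bundles up to sign, i.e.\ from tensoring by a line bundle, pullback by $\sigma$, or dualization.

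The genuinely new phenomenon, and the main obstacle, is the behavior at the parabolic points. There an isomorphism of parabolic adjoint bundles need not lift to an isomorphism of the quasi-parabolic bundles themselves: the filtration $E_{x,\bullet}$ allows the Hecke modification $\SH_x$ to induce an isomorphism $\ParEnd_0(E,E_\bullet)\cong\ParEnd_0(\SH_x(E,E_\bullet))$ that is invisible on the adjoint bundle but changes the bundle. I would therefore classify the local automorphisms of the parabolic adjoint bundle compatible with the weight filtration and show that, modulo the transformations coming from honest bundle isomorphisms, they are generated precisely by the Hecke transformations with respect to $E_{x,2}\subset E|_x$. Matching the global Lie-algebra isomorphism with these local models then forces $\sigma^*\Phi(E,E_\bullet)\cong T(E,E_\bullet)$ for a single basic transformation $T\in\ST$, uniformly over the generic locus and hence on all of $\SM(r,\alpha,\xi)$.
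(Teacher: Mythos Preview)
Your overall strategy matches the paper's: lift to the cotangent bundle, identify the Hitchin map intrinsically via global functions, use the discriminant to recover $(X,D)$ and $r$, pass to a fiberwise Lie-algebra-bundle isomorphism, and classify. The local classification you describe, with Hecke transforms appearing as the new outer automorphisms of the parabolic adjoint bundle at the marked points, is exactly the content of Lemma~\ref{lemma:isoParEnd}.

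There is, however, a genuine gap in your passage to $\ParEnd_0$. You correctly note that the cotangent fiber is $H^0(\SParEnd_0(E,E_\bullet)\otimes K(D))$, the space of \emph{strongly} parabolic fields. The direct transplant of the nilpotent-cone argument from \cite{BGM13} therefore produces an isomorphism of the bundles $\SParEnd_0$, not of $\ParEnd_0$. But $\SParEnd_0$ is not the Lie algebra of the parahoric group scheme whose torsors are parabolic bundles; that role is played by $\ParEnd_0$, and the outer-automorphism classification is formulated for $\ParEnd_0$. The paper bridges this with a separate and rather technical argument (Lemma~\ref{lemma:recoverParEnd}): at each $x\in D$ one builds a tower of subspaces $F^k,G^k\subset H^0(\SParEnd_0\otimes K(D))$ defined purely in terms of $h_r$ and vanishing orders at $x$, shows that $F^{r-1}$ recovers $H^0(\SParEnd_0^{(x)}\otimes K(D))$, and then obtains $\ParEnd_0(-D)=\bigcap_{x\in D}\SParEnd_0^{(x)}$. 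This step requires surjectivity of evaluation at two points simultaneously and is precisely the source of the bound $g\ge 6$ rather than $g\ge 4$.

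A secondary point you pass over: before any nilpotent-cone argument can run, one must know that the induced map $f\colon W\to W$ on the Hitchin base is block-diagonal for $W=\bigoplus_k H^0(K^kD^{k-1})$, so that each $f_k$ is linear and preserves the osculating filtration of $X\subset\PP(W_k^*)$. In the non-parabolic case $\CC^*$-equivariance alone forces this, but here equivariance only gives a block-\emph{triangular} form with possible off-diagonal terms from lower-weight pieces. The paper eliminates these through an inductive analysis of the locus $\SN\subset W$ of non-reduced spectral curves (Lemmas~\ref{lemma:preserveR1}--\ref{lemma:recoverHitchin}); this is not a step that comes for free.
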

Moreover, for $r=2$, the dual of a parabolic vector bundle can be rewritten in terms of a tensor product by a certain line bundle, so every isomorphism $\Psi$ comes from a basic transformation that does not involve dualization. 

Apart from acting on parabolic vector bundles, the group $\ST$ acts on line bundles $\xi$ and systems of weights $\alpha$ so that for every $T\in \ST$, if $(E,E_\bullet)$ has determinant $\xi$ and is stable for the weights $\alpha$, then $T(E,E_\bullet)$ has determinant $T(\xi)$ and is stable for the weights $T(\alpha)$. For $T$ to induce an isomorphism $T:\SM(r,\alpha,\xi) \stackrel{\sim}{\longrightarrow} \SM(r,\alpha,\xi')$ it is necessary and sufficient that
\begin{itemize}
\item $T(\xi)\cong \xi'$
\item $T(\alpha)$ is in the same stability chamber as $\alpha'$
\end{itemize}
This will allow us to compute the automorphism group $\Aut(\SM(r,\alpha,\xi))$ in Theorem \ref{theorem:autoModuli}.

In order to prove the theorem, we will generalize the approaches used in \cite{BGM12} and \cite{BGM13} to the particular features of the moduli space of parabolic vector bundles, although a deeper analysis on some invariant subspaces of the Hitchin map and the Hitchin discriminant will be necessary. We will prove that for a generic parabolic vector bundle $(E,E_\bullet)\in \SM(r,\alpha,\xi)$ if $\sigma^*\Phi(E,E_\bullet)=(E',E'_\bullet)$ then there exists an isomorphism of Lie algebra bundles
$$\ParEnd_0(E,E_\bullet)\cong \ParEnd_0(E',E'_\bullet)$$
Using some algebraic methods, we will prove that if such isomorphism exists then $(E',E'_\bullet)$ can be obtained from $(E,E_\bullet)$ through the application of a basic transformation $T\in \ST$. More precisely, for any $\Phi$ and for a generic $(E,E_\bullet)\in \SM(r,\alpha,\xi)$ there exists some $T\in \ST$ such that $\sigma^*\Phi(E,E_\bullet)\cong T(E,E_\bullet)$. We will then show that we can choose $T$ so that it does not vary with $(E,E_\bullet)$, i.e., for any $\Phi$ there exists some $T\in \ST$ such that the formula $\sigma^*\Phi(E,E_\bullet)\cong T(E,E_\bullet)$ holds for an open set of points $(E,E_\bullet)\in \SM(r,\alpha,\xi)$ in the moduli space. Finally we prove that the equality extends to the whole moduli space.

The structure of the paper is the following. In Section \ref{section:ModuliParabolic} we recall the notion of parabolic vector bundle, parabolic stability and some properties of the moduli space of stable parabolic vector bundles. The precise notions of generic and concentrated systems of weights are given and we prove some technical lemmas regarding the behavior of generic parabolic vector bundles.

Parabolic Hitchin pairs and the Hitchin map are analyzed in Section \ref{section:Hitchin}. In Section \ref{section:Torelli} we study the geometry of the fibers of the Hitchin map corresponding to singular spectral curves, usually called the Hitchin discriminant. We prove that the image of the Hitchin discriminant can be intrinsically described from the geometry of $\SM(r,\alpha,\xi)$ as an abstract variety. We use this description to prove a Torelli type theorem for the moduli space of parabolic vector bundles (Theorem \ref{theorem:Torelli}).

\begin{theorem}
If $(X,D)$ and $(X',D')$ are marked curves of genus at least 4 such that $\SM(X,r,\alpha,\xi)\cong \SM(X',r',\alpha',\xi')$, then $(X,D)\cong (X',D')$ and $r=r'$.
\end{theorem}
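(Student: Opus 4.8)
The plan is to reconstruct the marked curve and the rank from the intrinsic projective geometry of the Hitchin fibration, adapting the strategy of Hwang--Ramanan and Kouvidakis--Pantev to the parabolic setting. First I would pass to the cotangent bundle: an isomorphism $\Phi\colon\SM(X,r,\alpha,\xi)\to\SM(X',r',\alpha',\xi')$ induces a canonical isomorphism $d\Phi\colon T^*\SM\to T^*\SM'$ of the total spaces of cotangent bundles, and over the locus of stable underlying bundles $T^*\SM$ is a dense open subset of the moduli space of strongly parabolic Higgs bundles, on which the Hitchin map $h\colon T^*\SM\to B$ of Section~\ref{section:Hitchin} is defined. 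The spectral curve of a parabolic Higgs field lives in the total space of $K_X(D)$, and I let $\Delta\subset B$ denote the discriminant locus over which it degenerates.

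The heart of the argument is to show that the Hitchin discriminant $h^{-1}(\Delta)$, or more precisely its image in $\SM$, is determined by $\SM$ as an abstract variety. Fixing a generic $m=(E,E_\bullet)$, the projectivized cotangent space $\PP(T^*_m\SM)\cong\PP H^0\bigl(X,\ParEnd_0(E,E_\bullet)\otimes K_X(D)\bigr)$ is canonically attached to $m$, and $h$ restricts to a polynomial map on $T^*_m\SM$ whose preimage of $\Delta$ cuts out a projective hypersurface $\Delta_m\subset\PP(T^*_m\SM)$. Following Hwang--Ramanan, I would identify $\Delta_m$ with a locus built from the minimal rational curves through $m$ — in the parabolic case these are the parabolic Hecke curves $\SH_x(E,E_\bullet)$ — namely as the closure of the union of their tangent directions together with the associated secant/tangential geometry. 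Since minimal rational curves and their tangents are intrinsic to the abstract variety $\SM$, this exhibits $\Delta_m$, and hence the entire Hitchin discriminant, as an intrinsic subvariety; consequently $d\Phi$ carries the Hitchin discriminant of $\SM$ onto that of $\SM'$, and in particular identifies $B\cong B'$ and $\Delta\cong\Delta'$.

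It then remains to read off $(X,D)$ and $r$ from the discriminant. The rank is recovered from the graded structure of $B$: the fiberwise scaling $\CC^*$-action on $T^*\SM$ is canonical and makes $h$ homogeneous, so the induced grading of $B$ has top degree $r$ (equivalently, $r$ is the number of sheets of the spectral cover). To recover the curve I would stratify $\Delta$ by the singularity type of the spectral curve: its generic point corresponds to a spectral curve with a single node, and assigning to such a curve the image in $X$ of its node realizes a distinguished component of $\Delta$ as a fibration over $X$, from which $X$ is reconstructed with its complex structure. The marked points $D$ are singled out intrinsically because the residues of a strongly parabolic Higgs field are nilpotent along $D$, forcing prescribed (vanishing/ramification) behaviour of the spectral curve over $D$; this produces special sub-strata of $\Delta$ lying over the finite set $D\subset X$. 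Matching these structures through $\Phi$ yields an isomorphism $\sigma\colon X\to X'$ with $\sigma(D)=D'$ and $r=r'$.

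The main obstacle I anticipate is the intrinsic, parabolic analysis of the discriminant. Establishing the Hwang--Ramanan style identification of $\Delta_m$ with the tangents to parabolic Hecke curves requires controlling the minimal rational curves on the parabolic moduli space — this is where the genus $\ge 4$ hypothesis enters, to guarantee that they are exactly the parabolic Hecke curves and that the variety of minimal rational tangents behaves as expected. Moreover, separating the \emph{generic nodal stratum} that recovers $X$ from the \emph{parabolic strata} sitting over $D$ demands a careful local study of spectral curves near the marked points, together with the genericity of the weight system $\alpha$ to furnish the requisite transversality. Once these local computations are in place, the reconstruction of $(X,D)$ and $r$ from the intrinsic discriminant is formal.
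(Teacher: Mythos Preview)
Your overall architecture matches the paper's---pass to $T^*\SM$, use the Hitchin system, and read off $(X,D)$ and $r$ from the discriminant---but the implementation differs substantially, and one step has a gap.

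The paper does not use minimal rational curves in $\SM$ or any VMRT analysis; it follows the simplification of \cite{BGM13} rather than Hwang--Ramanan. First, it recovers the Hitchin base intrinsically as $W=\Spec(\Gamma(T^*\SM))$ together with its canonical map (Proposition~\ref{prop:HitchinGlobalFunctions}), using Hartogs and the fact that $T^*\SM$ has complement of codimension $\ge 2$ in the Higgs moduli space. This is the step you skip: your passage from ``$\Delta_m$ is intrinsic for each $m$'' to ``$B\cong B'$ and $\Delta\cong\Delta'$'' is not justified---knowing the fibrewise discriminant hypersurface does not by itself produce the global Hitchin map or its target. Second, the paper characterizes $\SD\subset W$ as the closure of the Hitchin image of the complete rational curves lying \emph{in the Hitchin fibres of $T^*\SM$} (Lemma~\ref{lemma:DiscriminantCharacterization}), not via Hecke curves in $\SM$ itself; over $\SD_U$ these curves arise from the uniruled compactified Prym of a nodal spectral curve, and over each $\SD_x$ with $x\in D$ from varying one step $E_{x,k}$ of the parabolic flag while keeping $(E,\varphi)$ fixed (Proposition~\ref{prop:GenericFiberHitchin}). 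Third, $(X,D)$ is recovered not by fibering a stratum of $\SD$ over $X$ but by intersecting $\SD$ with the top-weight piece $W_r=H^0(K^rD^{r-1})$: this intersection has $n+1$ irreducible components, one being the cone on the dual variety of $X\subset\PP(W_r^*)$ embedded by $|K^rD^{r-1}|$, and the remaining $n$ being hyperplanes dual to the points of $D$ (Proposition~\ref{prop:recoverDualVariety}); biduality then returns $(X,D)$, and $r$, $g$, $|D|$ drop out of dimension counts on the graded pieces of $W$. Your VMRT route may ultimately be workable, but it requires building the parabolic Hecke-curve theory you yourself flag as the main obstacle, whereas the paper's approach sidesteps it entirely.
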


This theorem has already been proved by Balaji, del Ba\~no and Biswas \cite{TorelliParabolic} for $r=2$ and small parabolic weights, in the sense that parabolic stability is equivalent to stability of the underlying vector bundle. In contrast, our theorem only assumes that the parabolic weights are generic and it is valid for any rank.

Section \ref{section:Hecke} is devoted to describing the four kinds of ``basic transformations'' that can be applied intrinsically to families of quasi-parabolic vector bundles. The parabolic version of the Hecke transformation is described and we analyze the stability of the resulting bundles. A presentation for the group $\ST$ of basic transformations is explicitly described and its abstract structure is computed in Proposition \ref{prop:basicTransPresentation}.
$$\ST \cong \left ((\ZZ^{|D|}\times \Pic(X)) / \SG_D \right) \rtimes \left( \Aut(X,D) \times \ZZ/2\ZZ\right)$$
where $\SG_D<\ZZ^{|D|}\times \Pic(X)$ is a (normal) subgroup isomorphic to $(r\ZZ)^{|D|}$.

Then, in Section \ref{section:Algebra} we study the algebra of parabolic endomorphisms. Several classification and structure theorems are given. The main result of this section is the description of all the possible parabolic vector bundles which share the same Lie algebra bundle of traceless parabolic endomorphisms.

Theorem \ref{theorem:autoModuliIntro} is proved through Section \ref{section:Automorphisms}. As a corollary, in Theorem \ref{theorem:autoModuli} we describe the group of automorphisms of the moduli space $\SM(r,\alpha,\xi)$ as a subgroup of the group of basic transformations $\ST$ described in Section \ref{section:Hecke}, which varies depending on $\alpha$ and $\xi$. The dependence of the group on $\alpha$ and $\xi$ arises from some basic concerns coming from fixing the determinant $\xi$ (arithmetic obstructions involving the rank and degree of the bundles) and an analysis of the stability chamber of $\alpha$.

If we examine closely the results leading to the Extended Torelli (Theorem \ref{theorem:ExtendedTorelli}) and the computation of the automorphism group (Theorem \ref{theorem:autoModuli}) in Section \ref{section:Automorphisms}, we observe a certain common underlying behavior for all moduli spaces of parabolic vector bundles. Basic transformations in $\ST$ induce all possible isomorphisms between moduli spaces, even crossing stability walls. Restricting ourselves to parabolic vector bundles with a fixed determinant $\xi$ naturally imposes a condition on the possible applicable basic transformations, leading to a subgroup
$$\ST_\xi= \{T\in \ST | T(\xi)=\xi\}$$
of transformations which preserve the determinant. Nevertheless, in general this group does not coincide with the group of automorphisms of the moduli space $\SM(r,\alpha,\xi)$, as not all the transformations preserve $\alpha$-stability. Some of them induce a wall crossing. If $g\ge 3$, wall crossings are $3$-birational, in the sense that there are open subsets $\SU\subset\SM(r,\alpha,\xi)$ and $\SU'\subset \SM(r,\alpha',\xi)$ whose respective complements have codimension at least $3$ such that there is an isomorphism $\SU\cong \SU'$. Up to this identification, basic transformations $T\in \ST_\xi$ induce a birational transformation $T:\SM(r,\alpha,\xi)\dashrightarrow \SM(r,\alpha,\xi)$ which is an automorphism of some open subset whose complement has codimension at least $3$. We will call this kind of maps $3$-birational maps.

On the other hand, Boden and Yokogawa \cite{BY} proved that if $\alpha$ is full flag the moduli space $\SM(r,\alpha,\xi)$ is rational, so the birational geometry of $\SM(r,\alpha,\xi)$ is completely independent on the geometry of $(X,D)$ apart from the dimensional level. Then, it seems like the notion of $3$-birational maps (and in general $k$-birational maps) is more natural for the study of the moduli space of parabolic vector bundles than the analysis of the isomorphisms or general birational maps. In Section \ref{section:birational} we give a precise definition for $k$-birational maps and prove $3$-birational versions of the Torelli theorem (\ref{theorem:TorelliBirational}) and the Extended Torelli theorem (\ref{theorem:ExtendedTorelliBirational}). More particularly, for genus at least $4$, we obtain that
\begin{theorem}
If $\Phi: \SM(X,r,\alpha,\xi) \dashrightarrow \SM(X',r',\alpha',\xi')$ is a $3$-birational map then $r=r'$ and $(X,D)\cong (X',D')$.
\end{theorem}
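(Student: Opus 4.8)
The plan is to reduce the $3$-birational statement to the already-established Torelli theorem (Theorem \ref{theorem:Torelli}) by verifying that every geometric ingredient of its proof is insensitive to the removal of closed subsets of codimension at least $3$. By definition of a $3$-birational map, $\Phi$ restricts to an isomorphism $\phi:\SU\stackrel{\sim}{\to}\SU'$ between open subsets $\SU\subset\SM(X,r,\alpha,\xi)$ and $\SU'\subset\SM(X',r',\alpha',\xi')$ whose complements have codimension $\ge 3$. Since both moduli spaces are smooth along the stable locus, $\phi$ lifts canonically to a symplectomorphism $T^*\SU\stackrel{\sim}{\to}T^*\SU'$ of cotangent bundles, which (on the stable locus) are open dense subsets of the moduli of parabolic Higgs bundles carrying the Hitchin map. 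The first observation I would record is that $T^*\SU=p^{-1}(\SU)$, where $p:T^*\SM\to\SM$ is the projection; as $p$ is an affine bundle, $T^*\SM\setminus T^*\SU$ again has codimension $\ge 3$, so $T^*\SU$ is a ``big'' open subset of the Higgs moduli space on which the Hitchin map and its discriminant are essentially undisturbed.

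Next I would transport the intrinsic description of the Hitchin discriminant. The proof of Theorem \ref{theorem:Torelli} characterizes the image $\mathcal{Y}\subset\SM(X,r,\alpha,\xi)$ of the Hitchin discriminant purely in terms of the abstract variety $\SM$, via the cotangent/symplectic geometry and the nilpotent cones. The point is that this characterization is generic: $\mathcal{Y}$ is a divisor, so $\mathcal{Y}\cap\SU$ is obtained from $\mathcal{Y}$ by deleting a subset of codimension $\ge 2$ inside $\mathcal{Y}$, and the analogous statement holds for the discriminant locus inside $T^*\SM$. Hence the defining geometric features survive restriction to $\SU$, and the symplectomorphism together with $\phi$ identifies the big open parts $\mathcal{Y}\cap\SU$ and $\mathcal{Y}'\cap\SU'$.

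Finally I would run the reconstruction step of Theorem \ref{theorem:Torelli} using only $\mathcal{Y}\cap\SU$. The marked curve $(X,D)$ and the rank $r$ are recovered from the generic geometry of the components of $\mathcal{Y}$ (the parametrization of the degenerations of the spectral curves), which is determined on any dense open subset of $\mathcal{Y}$ and therefore already by $\mathcal{Y}\cap\SU$. Since $\phi$ matches $\mathcal{Y}\cap\SU$ with $\mathcal{Y}'\cap\SU'$, the two reconstructions agree and yield $r=r'$ together with an isomorphism $(X,D)\cong(X',D')$; the genus hypothesis $g,g'\ge 4$ is used exactly as in Theorem \ref{theorem:Torelli}.

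The main obstacle is the codimension bookkeeping in the middle step: one must check that each subvariety of $T^*\SM$ and each locus in $\SM$ entering the intrinsic characterization of the discriminant retains a dense open part after removing a codimension-$\ge 3$ set, and that the reconstruction of $X$ never appeals to the behavior of $\mathcal{Y}$ along loci that could be hidden inside the deleted codimension-$\ge 2$ part of $\mathcal{Y}$. In other words, the whole difficulty is to re-examine the proof of Theorem \ref{theorem:Torelli} and confirm that it only ever uses generic data, which is precisely what the hypothesis of codimension $\ge 3$ (rather than a weaker bound) is designed to guarantee.
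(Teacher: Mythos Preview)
Your high-level strategy---re-examine the proof of Theorem~\ref{theorem:Torelli} and verify that every step survives the removal of a codimension~$\ge 3$ subset---is exactly what the paper does. However, your description of \emph{what} that proof actually recovers is inaccurate, and this matters because the codimension-$3$ hypothesis is used at a very specific point that your outline does not identify.

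There is no divisor $\mathcal{Y}\subset\SM(X,r,\alpha,\xi)$ in the argument, and nilpotent cones play no role in Theorem~\ref{theorem:Torelli} (they enter only later, in Section~\ref{section:Automorphisms}). The reconstruction takes place entirely on the Hitchin base: one first recovers $W\cong\Spec(\Gamma(T^*\SM))$ together with its $\CC^*$-action (Proposition~\ref{prop:HitchinGlobalFunctions}), then identifies the discriminant $\SD\subset W$ as the closure of the Hitchin image of complete rational curves in $T^*\SM$ (Lemma~\ref{lemma:DiscriminantCharacterization}), and finally reads off $(X,D)$ from $\SD\cap W_r$ via dual varieties (Proposition~\ref{prop:recoverDualVariety}). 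The paper's proof of the birational version generalizes the first two of these inputs to Proposition~\ref{prop:HitchinGlobalFunctionsBir} and Lemma~\ref{lemma:DiscriminantCharacterizationBir}, working with $T^*\SV$ for $\SV\subset\SM$ open with complement of codimension~$\ge 3$; once $f:W\to W'$ is shown to exist and to preserve $\SD$, the rest of the Torelli proof runs verbatim.

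The place where codimension~$3$ (rather than~$2$) is genuinely needed is Lemma~\ref{lemma:DiscriminantCharacterizationBir}: for a generic $s$ in each component of $\SD$, the fiber $H^{-1}(s)$ is uniruled of dimension $m$, and one must show that a complete rational curve in it persists after deleting $T^*\SV^c\cup(\SM_{K(D)}\setminus T^*\SM)$. Proposition~\ref{prop:CotangentCodimension} says the second piece has codimension~$\ge 3$ when $g\ge 4$, and your hypothesis gives the same for the first; Corollary~\ref{eq:cotangentCodim2Discr} then shows their union meets $H^{-1}(\SD_U)$ in codimension~$\ge 2$, so generic fibers over $\SD_U$ lose only a codimension-$2$ subset and retain a complete rational curve. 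Your proposal gestures at ``codimension bookkeeping'' but locates it on a nonexistent divisor in $\SM$ rather than on the fibers of the Hitchin map over $\SD$, which is where the actual work happens.
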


\begin{theorem}
If $\Phi: \SM(X,r,\alpha,\xi) \dashrightarrow \SM(X',r',\alpha',\xi')$ is a $3$-birational map then $r=r'$ and there is an isomorphism $\sigma:(X,D)\longrightarrow (X',D')$ and a basic transformation $T\in \ST$ such that
\begin{enumerate}
\item $T(\xi)=\sigma^*\xi'$
\item For every $(E,E_\bullet)$ for which $\Phi$ is defined, $\sigma^*\Phi(E,E_\bullet)\cong T(E,E_\bullet)$
\end{enumerate}
\end{theorem}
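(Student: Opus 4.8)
The plan is to run the proof of the Extended Torelli theorem in the isomorphism case (Theorem \ref{theorem:ExtendedTorelli}), verifying at each step that it only uses geometry which survives a birational map that is an isomorphism outside a closed subset of codimension at least $3$. First I would invoke the $3$-birational Torelli theorem (Theorem \ref{theorem:TorelliBirational}), which already gives $r=r'$ together with an isomorphism of marked curves $\sigma:(X,D)\stackrel{\sim}{\to}(X',D')$. Pullback along $\sigma$ identifies $\SM(X',r,\alpha',\xi')$ with $\SM(X,r,\sigma^*\alpha',\sigma^*\xi')$, so $\Psi:=\sigma^*\circ\Phi:\SM(X,r,\alpha,\xi)\dashrightarrow \SM(X,r,\sigma^*\alpha',\sigma^*\xi')$ is again a $3$-birational map, now between moduli spaces over the same marked curve. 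It then suffices to produce a basic transformation $T\in\ST$ with $T(\xi)=\sigma^*\xi'$ and $\Psi\cong T$ wherever $\Phi$ is defined, since $\sigma^*\Phi=\Psi$.

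The core step is to recover, for a generic $(E,E_\bullet)$ in the domain, an isomorphism of Lie algebra bundles $\ParEnd_0(E,E_\bullet)\cong\ParEnd_0(\Psi(E,E_\bullet))$. Here I rely on the intrinsic description of the image of the Hitchin discriminant from Section \ref{section:Torelli}, which is determined purely by the structure of $\SM(r,\alpha,\xi)$ as an abstract variety. The observation that unlocks the $3$-birational case is that all the geometric data feeding this reconstruction — the Hitchin discriminant (a divisor), the minimal rational/Hecke curves through a generic point, and the projectivized nilpotent cone from which $\ParEnd_0$ is recovered — are supported or determined away from a subset of codimension at least $3$. Since $\Psi$ restricts to a genuine isomorphism $\SU\stackrel{\sim}{\to}\SU'$ with $\codim(\SM\setminus\SU)\ge 3$ and the complement of $\SU'$ of codimension at least $3$ as well, a generic point and the generic members of the relevant families of curves through it lie entirely inside $\SU$ and are carried to the analogous objects inside $\SU'$. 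Hence $\Psi$ matches the Hitchin discriminants and the nilpotent-cone geometry exactly as a true isomorphism would, producing the desired Lie algebra bundle isomorphism.

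With $\ParEnd_0(E,E_\bullet)\cong\ParEnd_0(\Psi(E,E_\bullet))$ in hand I would apply the algebraic classification of Section \ref{section:Algebra} verbatim: such an isomorphism forces $\Psi(E,E_\bullet)$ to be the image of $(E,E_\bullet)$ under some basic transformation $T_{(E,E_\bullet)}\in\ST$. A rigidity argument — $\ST$ is discrete while $\SM(r,\alpha,\xi)$ is irreducible, so $(E,E_\bullet)\mapsto T_{(E,E_\bullet)}$ is locally constant — lets me fix a single $T$ valid on a dense open subset. Comparing determinants there yields $T(\xi)=\sigma^*\xi'$. Finally, since both $\sigma^*\Phi$ and $T$ define morphisms to the target on the domain of $\Phi$ that agree on a dense open subset of the irreducible normal source, and the target moduli space is separated, the identity $\sigma^*\Phi(E,E_\bullet)\cong T(E,E_\bullet)$ propagates to every point where $\Phi$ is defined, exactly as in the final step of the isomorphism case.

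The main obstacle is the middle step: justifying rigorously that the intrinsic characterization of the Hitchin discriminant and the reconstruction of the parabolic endomorphism bundle are insensitive to the indeterminacy of $\Psi$. One must show that the families of curves used in Section \ref{section:Torelli} can be chosen to avoid the codimension-$\ge 3$ bad loci on both sides — a dimension count on the incidence variety of such curves meeting a fixed closed subset — so that $\Psi$ genuinely carries one geometric configuration to the other. This is precisely where the codimension-$3$ hypothesis (rather than the weaker codimension-$2$ needed only to match divisor classes and $\Pic$) becomes essential, and it is the part of the argument that goes beyond a formal transcription of the isomorphism case.
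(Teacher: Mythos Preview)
Your outline follows the paper's approach closely and is essentially correct in structure: apply the $3$-birational Torelli theorem, pull back by $\sigma$, verify the Hitchin-discriminant and $\ParEnd_0$-reconstruction lemmas survive on open subsets with codimension-$3$ complement (this is Lemma \ref{lemma:DiscriminantCharacterizationBir}, Proposition \ref{prop:HitchinGlobalFunctionsBir}, Lemma \ref{lemma:preserveWrBir}, Lemma \ref{lemma:recoverNilpotentBir}, Lemma \ref{lemma:recoverParEndalgebraBir}), apply Lemma \ref{lemma:isoParEnd}, then pin down a single $T$ and extend.

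There is one genuine gap. Your rigidity argument asserts that $\ST$ is discrete, but it is not: $\ST$ contains $\Pic(X)$, and the Jacobian is positive-dimensional. So ``$(E,E_\bullet)\mapsto T_{(E,E_\bullet)}$ is locally constant'' does not follow from discreteness. The paper's argument (see the proof of Theorem \ref{theorem:ExtendedTorelli}, reused verbatim in Theorem \ref{theorem:ExtendedTorelliBirational}) instead observes that once you impose $T(\xi)\cong\xi''$, the relevant set $\ST_{\xi,\xi''}=\{T=(\id,s,L,H):T(\xi)\cong\xi''\}$ is \emph{finite}: $H$ ranges over finitely many divisors $0\le H\le (r-1)D$, $s\in\{\pm1\}$, and the determinant constraint then forces $L^r$ into a fixed line bundle, so $L$ lies in a torsor for $J(X)[r]$, which is finite. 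One then writes $\SV''=\bigcup_{T\in\ST_{\xi,\xi''}}\op{Fix}(\Psi_T)$ as a finite union of closed subsets and uses irreducibility to conclude one $\op{Fix}(\Psi_T)$ is all of $\SV''$.

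A smaller point: your final extension step (``agree on a dense open, target separated, so agree everywhere $\Phi$ is defined'') is not quite enough, because $T$ is a priori only a map of \emph{quasi-parabolic} bundles on $\SV$ and need not land in $\alpha''$-stable bundles there. The paper handles this by working with the universal family: $T(\SE,\SE_\bullet)|_{\SV}$ is a family of quasi-parabolic bundles extending $T(\SE,\SE_\bullet)|_{\SW}$ from a subset $\SW$ whose complement has codimension $\ge 2$, and Lemma \ref{lemma:uniqueExtensionCodim2} forces it to coincide with the family $(\SE'',\SE''_\bullet)$ representing $\Phi'$. This is where the equality on all of $\SV$ comes from.
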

Then we conclude (Corollary \ref{cor:AutoBir}) that for $r>2$ the $3$-birational automorphisms of $\SM(r,\alpha,\xi)$ are
$$\Aut_{3-\Bir}(\SM(r,\alpha,\xi))\cong \ST_\xi <\ST$$
For $r=2$, the correspondence between 3-birational maps and basic transformations is not 1 on 1, as the dualization map can be described alternatively in terms of the tensorization. Instead, we obtain a correspondence with the subgroup $\ST^+_\xi$ of basic transformations that do not involve the dual and preserve the determinant $\xi$,
$$\Aut_{3-\Bir}(\SM(2,\alpha,\xi))\cong \ST_\xi^+ <\ST$$

Finally, we aim to describe explicitly the dependency of $\Aut(\SM(r,\alpha,\xi))$ and the isomorphism class of $\SM(r,\alpha,\xi)$ on the stability parameters $\alpha$. In section \ref{section:concentrated} we analyze this problem for the concentrated chamber, in which $\alpha$-stability is (roughly) equivalent to stability of the underlying vector bundle. We prove that in this chamber the Hecke transformation does never induce an automorphism of $\SM(r,\alpha,\xi)$, even when combined with other basic transformations. The automorphism group is then explicitly described.

In section \ref{section:chamberAnalysis} we analyze the stability space $\Delta$ and the partition in stability chambers. Given two systems of weights $\alpha$ and $\beta$ we consider the problem of determining whether all $\alpha$-stable parabolic vector bundles are also $\beta$-stable or, conversely, there exists some $\alpha$-stable parabolic vector bundle which is not $\beta$-stable. For the latter case to happen there must exist an $\alpha$-stable parabolic vector bundle $(E,E_\bullet)$ admitting a $\beta$-destabilizing subbundle $(F,F_\bullet)\subset (E,E_\bullet)$, in the sense that for all $(F',F'_\bullet)\subset (E,E_\bullet)$
$$\frac{\pdeg_{\alpha}(F',F'_\bullet)}{\rk(F')} < \frac{\pdeg_{\alpha}(E,E_\bullet)}{\rk(E)}$$
but
$$\frac{\pdeg_{\beta}(F,F_\bullet)}{\rk(F)}\ge \frac{\pdeg_{\beta}(E,E_\bullet)}{\rk(E)}$$
Therefore, the map
\begin{eqnarray*}
\xymatrixcolsep{3pc}
\xymatrixrowsep{0.05pc}
\xymatrix{
\Delta \ar[r] & \mathbb{R} \\
{\alpha'} \ar@{|->}[r] & \rk(F) \pdeg_{\alpha'}(E,E_\bullet) - \rk(E) \pdeg_{\alpha'}(F,F_\bullet)
}
\end{eqnarray*}
is negative for $\alpha$ and non-negative for $\beta$. Thus, if we consider the weights $\alpha_t=t\alpha+(1-t)\beta$, there must exist some $t\in (0,1]$ such that
\begin{equation}
\label{eq:introNumericalBarrier}
\rk(F) \pdeg_{\alpha_t}(E,E_\bullet) - \rk(E) \pdeg_{\alpha_t}(F,F_\bullet)=0
\end{equation}
This equation defines a hyperplane in $\Delta$, depending only on the numerical data for $(E,E_\bullet)$ and $(F,F_\bullet)$, namely, the degrees $\deg(E)$, $\deg(F)$, the ranks $\rk(E)$, $\rk(F)$ and the parabolic type of $(F,F_\bullet)$, say $\overline{n}_F$. We call a ``numerical wall'' any hyperplane on $\Delta$ obtained from an equation of the form
\eqref{eq:introNumericalBarrier} when we range over all possible choices for the integers $\deg(F)$, $\rk(F)$ and $\overline{n}_F$ (the rank and degree of $E$ are fixed in our moduli space). We say that a ``numerical wall'' is ``geometrical'' if there actually exists some parabolic vector bundle $(E,E_\bullet)$ and a subbundle $(F,F_\bullet) \subset (E,E_\bullet)$ with the correct invariants $\deg(F)$, $\rk(F)$ and $\overline{n}_F$. Numerical and geometrical stability chambers are defined as the regions of $\Delta$ separated by the numerical or geometrical walls respectively. We will also call the geometrical chambers simply stability chambers. We just proved that any two different stability chambers are separated by a numerical wall, but it is not clear that any numerical wall can be realized into a geometrical one, so a stability chamber can contain several numerical chambers.

We prove that there is a finite number of different chambers in $\Delta$ and we construct an invariant $\overline{M}(r,\alpha,d)$ classifying the ``numerical'' stability chambers in $\Delta$. Theorem \ref{theorem:stabilityChamber} proves that if the genus is big enough then the invariant $\overline{M}(r,\alpha,d)$ is in correspondence with (geometrical) stability chambers in $\Delta$ and use it to obtain a computable version of the Extended Torelli theorem \ref{theorem:ExtendedTorelli}.

The last section of this paper (section \ref{section:examples}) presents some examples, showing that the previous results are sharp in the following sense. As we proved that Hecke does not take part in any automorphism of $\SM(r,\alpha,\xi)$ when $\alpha$ is concentrated, it is natural to  wonder if for any of the presented basic transformations $T$ (pullback, tensorization, dualization and Hecke) there exist a (general enough) marked curve $(X,D)$ and a generic system of weights such that $T$ induces an automorphism of $\SM(r,\alpha,\xi)$. We provide an example of rank $2$, $2$ marked points and arbitrary genus for which the composition of Hecke with taking the pullback by some $\sigma:X\to X$ induce a nontrivial automorphism of the moduli. Moreover, dualization and tensoring induce nontrivial automorphisms, up to the usual constraint $T(\xi)=\xi$.

On the other hand, we can find a system of weights $\alpha$ of rank $r>2$ such that the combination of Hecke and dualization induces a nontrivial involution of $\SM(r,\alpha,\xi)$ which does not come from an involution of the curve $X$.

\noindent\textbf{Acknowledgments.} 
This research was funded by MICINN (grants MTM2016-79400-P, PID2019-108936GB-C21 and ``Severo Ochoa Programme for Centres of Excellence in R\&D'' projects SEV-2015-0554 and CEX2019-000904-S) and the 7th European Union Framework Programme (Marie Curie IRSES grant 612534 project MODULI). The first author was also supported by a predoctoral grant from Fundaci\'on La Caixa -- Severo Ochoa International Ph.D. Program and a postdoctoral position from ICMAT Severo Ochoa project. We would like to thank Indranil Biswas for his helpful comments on this work and Suratno Basu for useful discussions regarding the proof of Proposition \ref{prop:recoverDualVariety}. We would also want to thank the anonymous reviewer for the careful reading and suggestions.

\section{Moduli space of parabolic vector bundles}
\label{section:ModuliParabolic}

Let $X$ be an irreducible smooth complex projective curve. Let $D=\{x_1,\ldots,x_n\}$ be a set of $n\ge 1$ different points of $X$ and let us denote $U=X\backslash D$.

A parabolic vector bundle on $(X,D)$ is a holomorphic vector bundle $E$ of rank $r$ endowed with a weighted flag on the fiber $E|_x$ over each parabolic point $x\in D$ called parabolic structure
$$E|_x=E_{x,1} \supsetneq E_{x,2} \supsetneq \cdots \supsetneq E_{x,l_x} \supsetneq E_{x,l_{x}+1}=0$$
$$0\le \alpha_1(x) <\alpha_2(x) <\ldots <\alpha_{l_x}(x)<1$$
We say that $\alpha_i(x)$ is the weight associated to $E_{x,i}$. We will denote by $\alpha=\left\{(\alpha_1(x),\ldots,\alpha_{l_x}(x))\right\}_{x\in D}$ the system of real weights corresponding to a fixed parabolic structure. A system of weights is called full flag if $l_x=r$ for all parabolic points $x\in D$. We will use the simplified notation $(E,E_\bullet)=(E,\{E_{x,i}\})$ to denote a parabolic vector bundle.

Equivalently \cite{SimpsonNonCompact}, we can describe the parabolic structure as a collection of decreasing left continuous filtrations of sheaves on $X$, one filtration for each parabolic point. More precisely, for each $x\in D$, let $E_{\alpha}^x\subset E$ be a subsheaf on $X$ indexed by a real $\alpha\ge 0$ such that
\begin{enumerate}
\item For every $\alpha\ge \beta$, $E_\alpha^x \subseteq E_\beta^x$
\item For every $\alpha>0$, there exists $\varepsilon>0$ such that $E_{\alpha-\varepsilon}^x=E_\alpha^x$
\item For every $\alpha$, $E_{\alpha+1}^x = E_\alpha^x(-x)$
\item $E_0^x=E$
\end{enumerate}
If $E^x_\alpha$ is a left continuous filtration, let $\alpha_i(x)$ be the $i$-th weight $\alpha \ge 0$ where the filtration jumps, i.e., such that for every $\varepsilon>0$, $E^x_\alpha\ne E^x_{\alpha+\varepsilon}$. Then we can define the parabolic structure $\{E_{x,i}\}$ at the fiber $E|_x$ as the one having parabolic weights $\{\alpha_i(x)\}$ such that
$$E|_x/E_{x,i} \otimes \SO_x=E/E^x_{\alpha_i(x)}$$
Reciprocally, if $\{E_{x,i}\}$ is a filtration of the fiber $E|_x$, endowed with weights $\alpha_i(x)$, define the subsheaves $E^x_{\alpha_i(x)}\subseteq E$ as the ones fitting in the short exact sequence
$$0\longrightarrow E_{x,\alpha_i(x)} \longrightarrow E \longrightarrow E/E_{x,i} \otimes \SO_x\longrightarrow 0$$
Then take $E^x_\alpha=E$ for $\alpha_{l_x}(x)-1\le \alpha\le \alpha_1(x)$ and $E^x_\alpha=E^x_{\alpha_i(x)}$ for $\alpha_{i-1}(x)< \alpha\le \alpha_i(x)$. Then define $E^x_\alpha$ for $\alpha>\alpha_{l_x}(x)$ by the property
$$E^x_{\alpha+1}=E^x_{\alpha}(-x)$$
The resulting filtration $E^x_\alpha$ is a parabolic structure at the point $x$. The relations between these two formalisms will be explored further in Section \ref{section:Hecke}.
Given a parabolic vector bundle $(E,E_\bullet)$, we define its parabolic degree as
$$\pdeg(E,E_\bullet)=\deg(E)+\sum_{x\in D}\sum_{i=1}^{l_x} \alpha_i(x) (\dim(E_{x,i})-\dim(E_{x,i+1}))$$
As we will be working with stability conditions for different systems of weights $\alpha$, it will be useful to denote
$$\owt_\alpha(E,E_\bullet)=\sum_{x\in D} \sum_{i=1}^{l_x} \alpha_i(x) (\dim(E_{x,i})-\dim(E_{x,i+1}))$$
Similarly, let
$$\pdeg_\alpha(E,E_\bullet)=\deg(E)+\owt_\alpha(E,E_\bullet)$$
We say that a parabolic vector bundle $(E,E_\bullet)$ is of type $\overline{n}=(n_i(x))$ if 
$$n_i(x)=\dim(E_{x,i})-\dim(E_{x,i+1})$$
for every $i=1,\ldots,l_x$ and every $x\in D$. Then if $(E,E_\bullet)$ is of type $\overline{n}$, we can write
$$\owt_\alpha(E,E_\bullet)=\sum_{x\in D} \sum_{i=1}^r \alpha_i(x) n_i (x)$$
Notice that the right hand side does only depend on $\overline{n}$ and $\alpha$. We will denote it by $\owt_\alpha(\overline{n})$.

Let $E'\subseteq E$ be a proper subbundle of a parabolic vector bundle $(E,E_\bullet)$. The parabolic structure on $E$ induces a parabolic structure on $E'$ as follows. For each parabolic point $x\in D$, we obtain a filtration by considering the set of subspaces $\{E'_{x,i}\}=\{E'_x\cap E_{x,j}\}$ for $j=1,\ldots, l_x$. The weight $\alpha'_i(x)$ of $E'_{x,i}$ is taken as
$$\alpha'_i(x)=\max_{j}\{\alpha_j(x):E'|_x\cap E_{x,j}=E'_{x,i}\}$$
Then $\alpha'$ is a subset of the weights in $\alpha$. While this would be the ``canonical'' form of the parabolic structure of $E'$, it will be useful to present it in terms of the original system of weights $\alpha$. In particular, if $E'\subsetneq E$, let us take instead $\wt{E'_{x,i}}=E'_x\cap E_{x,i}$ for $i=1,\ldots,l_x$. Notice that while these spaces $\wt{E'_{x,i}}$ form a filtration of $E'_x$, they do not constitute a parabolic structure in the canonical sense, as there exists at least one $j$ such that $\wt{E'_{x,j}}=\wt{E'_{x,j+1}}$. Nevertheless, we can use this other filtration to compute the parabolic degree of $(E',E'_\bullet)$. In particular, let us define $\overline{n'}=(n'_i(x))$ as follows
$$n'_i(x)=\dim(\wt{E'_{x,i}})-\dim(\wt{E'_{x,i+1}}) = \dim(E'_x\cap E_{x,i})-\dim(E'_x\cap E_{x,i+1})$$
Then $\owt_{\alpha'}(E',E'_\bullet)=\owt_\alpha(\overline{n'})$. If $(E,E_\bullet)$ is full flag, then $0\le n_i'(x)\le 1$ for every $i=1,\ldots,r$ and every $x\in D$. We say that a subbundle $E'\subsetneq E$ of a parabolic vector bundle is of type $\overline{n}'$ if the induced filtration $\wt{E'_\bullet}$ is of type $\overline{n}'$.

Given parabolic vector bundles $(E,E_\bullet)$ and $(F,F_\bullet)$ with systems of weights $\alpha$ and $\beta$ respectively, a morphism $\varphi:E\longrightarrow F$ is called parabolic (respectively strongly parabolic) if it preserves the parabolic structure, i.e., if for every $x\in D$ and every $i=1,\ldots,l_{E,x}$ and $j=1,\ldots,l_{F,x}$ such that $\alpha_i(x)>\beta_j(x)$ (respectively $\alpha_i(x)\ge \beta_j(x)$)
$$\varphi(E_{x,i})\subseteq F_{x,j+1}$$
We denote by $\ParHom((E,E_\bullet),(F,F_\bullet))$ the sheaf of local parabolic morphisms from $(E,E_\bullet)$ to $(F,F_\bullet)$ and write $\SParHom((E,E_\bullet),(F,F_\bullet))$ for the subsheaf of strongly parabolic morphisms.

In particular, if $(E,E_\bullet)$ is a parabolic vector bundle, an endomorphism $\varphi:E\to E$ is parabolic if for every $x\in D$ and every $i=1,\ldots,l_x$
$$\varphi(E_{x,i})\subseteq E_{x,i}$$
We denote by $\ParEnd(E,E_\bullet)$ the sheaf of local parabolic endomorphisms of $(E,E_\bullet)$. Similarly, an endomorphism is strongly parabolic if for every $x\in D$ and every $i=1,\ldots,r$
$$\varphi(E_{x,i})\subseteq E_{x,i+1}$$
We denote by $\SParEnd(E,E_\bullet)$ the sheaf of strongly parabolic endomorphisms of $(E,E_\bullet)$.

The sheaves $\ParHom$ and $\SParHom$ are subsheaves of the sheaf of morphisms $\Hom$ and they all coincide away from the parabolic points $D\subset X$. Following the notation in \cite{BB05}, let $T_{(E,E_\bullet),(F,F_\bullet)}$ be the torsion sheaf supported in $D$ that fits in the following short exact sequence
$$0\longrightarrow \ParHom((E,E_\bullet),(F,F_\bullet)) \longrightarrow \Hom(E,F) \longrightarrow T_{(E,E_\bullet),(F,F_\bullet)} \longrightarrow 0$$
Then define $t_{(E,E_\bullet),(F,F_\bullet)}$ as the rational number such that
$$\rk(E)\rk(F)t_{(E,E_\bullet),(F,F_\bullet)}=\dim(T_{(E,E_\bullet),(F,F_\bullet)})$$

If $\alpha=\beta$, then $t_{E,F}$ only depends on the types $\overline{n}'$ and $\overline{n}''$ of $(E,E_\bullet)$ and $(F,F_\bullet)$ respectively. More explicitly, 
$$r'r''t_{\overline{n}'',\overline{n}'}=\sum_{x\in D} \sum_{i>j}n''_i(x)n'_j(x)$$
Observe that if we take $\overline{n}'=\overline{n}''$, then $(r')^2t_{\overline{n}',\overline{n}'}$ is just the dimension of the flag variety of type $\overline{n}'$.
If $L$ is a line bundle over $X$ and $(E,E_\bullet)$ is a parabolic vector bundle over $(X,D)$, we define the parabolic vector bundle $(E,E_\bullet)\otimes L$ as the one having underlying vector bundle $E\otimes L$ and whose filtrations are given by
$$(E\otimes L)_{x,i}=E_{x,i}\otimes L$$
This is a particular simple case of the general concept of tensor product of parabolic bundles. The general definition can be found in \cite{Biswas03}.

\begin{definition}
We say that a quasi-parabolic vector bundle is $\alpha$-(semi)stable if for every proper subbundle $E'\subsetneq E$ with the induced parabolic structure
\begin{equation}
\label{eq:a-stability}
\frac{\pdeg_\alpha(E',E'_\bullet)}{\rk(E')} < \frac{\pdeg_\alpha(E,E_\bullet)}{\rk(E)} \quad \text{(respectively }\le\text{ )}
\end{equation}
We say that $(E,E_\bullet)$ is $\alpha$-unstable if it is not $\alpha$-semistable.
\end{definition}

Let $\xi$ be a line bundle over $X$ and let $\alpha$ be a system of weights of type $\overline{n}$. Let $\SM(X,r,\alpha,\xi)$, or just $\SM(r,\alpha,\xi)$, be the moduli space of semi-stable parabolic vector bundles $(E,E_\bullet)$ on $(X,D)$ of rank $r$ with system of weights $\alpha$ and $\det(E)\cong \xi$. It is a complex projective scheme of dimension 
$$\dim(\SM(r,\alpha,\xi))=(r^2-1)(g-1)+r^2t_{\overline{n},\overline{n}}$$
In particular, observe that if $\alpha$ is full flag, i.e., if $\overline{n}=(1,\ldots,1)$, then
$$\dim(\SM(X,r,\alpha,\xi))=(r^2-1)(g-1)+\frac{n(r^2-r)}{2}$$

Similarly, let $\SM(X,r,\alpha,d)$, or just $\SM(r,\alpha,d)$ be the moduli of semistable parabolic vector bundles $(E,E_\bullet)$ on $(X,D)$ of rank $r$ with system of weights $\alpha$ and $\deg(E)=d$. It has dimension
$$\dim(\SM(r,\alpha,d))=r^2(g-1)+1+r^2t_{\overline{n},\overline{n}}$$
On the other hand, given a subbundle $E'\subsetneq E$, let us denote
$$s(E',E)=\rk(E')\deg(E)-\rk(E)\deg(E')$$
Reordering the inequality \eqref{eq:a-stability}, we obtain that $(E,E_\bullet)$ is $\alpha$-(semi)stable if and only if for every subbundle $E'\subsetneq E$ yields
\begin{multline*}
s(E',E)=\rk(E')\deg(E)-\rk(E)\deg(E')\\
> \rk(E) \owt_{\alpha}(E',E'_\bullet) -\rk(E')\owt_{\alpha}(E,E_\bullet) \quad \text{(resp. }\ge\text{ )}
\end{multline*}
Moreover, if we give $E'\subsetneq E$ the induced parabolic structure from $(E,E_\bullet)$, there exists a unique parabolic vector bundle $(E'',E''_\bullet)$ fitting in the short exact sequence
\begin{equation}
\label{eq:parabolicQuotient}
0\longrightarrow (E',E'_\bullet) \longrightarrow (E,E_\bullet) \longrightarrow (E'',E''_\bullet)\longrightarrow 0
\end{equation}
in the sense that for each $\alpha\in \RR$, the corresponding $\alpha$ step in each sheaf filtration form a short exact sequence and for each $\alpha>\beta$ the following diagram commutes
\begin{equation*}
\xymatrixrowsep{1.5pc}
\xymatrix{
0 \ar[r] & (E')_\alpha^x \ar[r] \ar@{^(->}[d] & E_\alpha^x \ar[r] \ar@{^(->}[d] & (E'')_\alpha^x \ar[r] \ar@{^(->}[d] & 0\\
0 \ar[r] & (E')_\beta^x \ar[r] & E_\beta^x \ar[r] & (E'')_\beta^x \ar[r] & 0
}
\end{equation*}
In particular, we have
\begin{eqnarray*}
\deg(E)=\deg(E')+\deg(E'')\\
\rk(E)=\rk(E')+\rk(E'')\\
\owt_\alpha(E,E_\bullet)=\owt_\alpha(E',E'_\bullet)+\owt_\alpha(E'',E''_\bullet)
\end{eqnarray*}
Therefore, $(E,E_\bullet)$ is $\alpha$-(semi)stable if and only if
\begin{equation}
\label{eq:a-stability2}
s(E',E)>\rk(E'')\owt_\alpha(E',E'_\bullet)-\rk(E')\owt_\alpha(E'',E''_\bullet) \quad \text{(resp. } \ge \text{ )}
\end{equation}

Then if we take $(E'',E''_\bullet)$ fitting in the short exact sequence \eqref{eq:parabolicQuotient} as before, it is of type $\overline{n}''=(n''_i(x))$, where $n''_i(x)=n_i(x)-n'_i(x)$.

Rewriting the stability condition \eqref{eq:a-stability2} in terms of $\overline{n}'$ and $\overline{n}''$, we obtain that $(E,E_\bullet)$ is $\alpha$-(semi)table if and only if for every $\overline{n}'$ and for every subbundle $E'\subseteq E$ of type $\overline{n}'$.
$$s(E',E)>\rk(E'')\owt_\alpha(\overline{n}')-\rk(E')\owt_\alpha(\overline{n}'') \quad \text{(resp. }\ge\text{ )}$$
Observe that, as $\rk(E')=\sum_{i=1}^r n'_i(x)$ for any $x\in D$, then the right hand side does only depend on $\alpha$ and $\overline{n}'$. Let us denote
$$s_{\min}(\alpha,\overline{n}')=r''\owt_\alpha(\overline{n}')-r'\owt_\alpha(\overline{n}'')$$
where $r'=\sum_{i=1}^r n'_i(x)$ for any $x\in D$ and $r''=r-r'=\sum_{i=1}^r n''_i(x)$.

\begin{lemma}
\label{lemma:degreeBound}
Let $l>0$ be an integer. If $g\ge 1+\frac{l}{r-1}$ then for any system of weights $\alpha$ and any admissible $\overline{n}'$,
$$s_{\min}(\alpha,\overline{n}')\le r'r''((g-1)+t_{\overline{n}',\overline{n}''})-l$$
In particular, if $g\ge 3$ or $g=2$ and $r\ge 3$ 
$$s_{\min}(\alpha,\overline{n}')\le r'r''((g-1)+t_{\overline{n}',\overline{n}''})-2$$
\end{lemma}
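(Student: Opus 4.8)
The plan is to reduce the estimate to a pointwise computation at each parabolic point and then feed in the genus hypothesis. First I would unfold the definition
$$s_{\min}(\alpha,\overline{n}')=r''\owt_\alpha(\overline{n}')-r'\owt_\alpha(\overline{n}'')=\sum_{x\in D}\sum_{i=1}^r\alpha_i(x)\bigl(r''n'_i(x)-r'n''_i(x)\bigr),$$
and record the identities $r'=\sum_i n'_i(x)$ and $r''=\sum_i n''_i(x)$, valid at every $x\in D$. Substituting these into the coefficient $r''n'_i(x)-r'n''_i(x)$ and symmetrising the resulting double sum in the two summation indices rewrites the local contribution at each point as
$$f_x:=\sum_{i=1}^r\alpha_i(x)\bigl(r''n'_i(x)-r'n''_i(x)\bigr)=\sum_{i,j}n'_i(x)\,n''_j(x)\,\bigl(\alpha_i(x)-\alpha_j(x)\bigr).$$
This is the key reformulation, expressing $f_x$ as a signed sum over ordered pairs of flag steps, where the sign is governed purely by the relative order of the indices.

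Next I would bound $f_x$ using that the weights are non-decreasing in $i$ and lie in $[0,1)$. Splitting the sum according to $i>j$, $i=j$ and $i<j$, the diagonal terms vanish, the terms with $i<j$ are non-positive (since then $\alpha_i(x)\le\alpha_j(x)$ while $n'_i(x)n''_j(x)\ge0$), and each term with $i>j$ satisfies $\alpha_i(x)-\alpha_j(x)<1$. Hence
$$f_x\le\sum_{i>j}n'_i(x)\,n''_j(x)\,\bigl(\alpha_i(x)-\alpha_j(x)\bigr)\le\sum_{i>j}n'_i(x)\,n''_j(x).$$
Summing over $x\in D$ and recalling the formula $r'r''t_{\overline{n}',\overline{n}''}=\sum_{x\in D}\sum_{i>j}n'_i(x)n''_j(x)$ yields the clean intermediate bound $s_{\min}(\alpha,\overline{n}')\le r'r''\,t_{\overline{n}',\overline{n}''}$, uniformly in the choice of weights $\alpha$.

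Finally I would absorb the remaining slack into the genus term. Since $r'+r''=r$ with $r',r''\ge1$, the product $r'r''=r'(r-r')$ is minimised at the extremes $r'\in\{1,r-1\}$, so $r'r''\ge r-1$. Combined with the hypothesis $g\ge 1+\frac{l}{r-1}$, i.e. $(r-1)(g-1)\ge l$, this gives $r'r''(g-1)\ge(r-1)(g-1)\ge l$, whence
$$s_{\min}(\alpha,\overline{n}')\le r'r''\,t_{\overline{n}',\overline{n}''}\le r'r''\bigl((g-1)+t_{\overline{n}',\overline{n}''}\bigr)-l,$$
which is the asserted inequality. The particular case follows by setting $l=2$ and checking that $g\ge 3$ (any $r\ge2$), or $g=2$ with $r\ge3$, each forces $g\ge 1+\frac{2}{r-1}$. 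The only genuinely delicate point is the pointwise reformulation of $f_x$: one must symmetrise the double sum correctly and then exploit both that the weights increase with the flag index and that they are confined to a half-open interval of length one, which is precisely what converts the weight-dependent expression into the combinatorial quantity $t_{\overline{n}',\overline{n}''}$.
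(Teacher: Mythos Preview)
Your proof is correct and follows essentially the same two-step strategy as the paper: first establish the weight-independent bound $s_{\min}(\alpha,\overline{n}')\le r'r''\,t_{\overline{n}',\overline{n}''}$, then use $r'r''\ge r-1$ together with the genus hypothesis to absorb the term $l$. The only difference is that the paper obtains the first inequality by citing \cite[Lemma~2.5.2]{BB05}, whereas you supply a direct elementary derivation via the symmetrised double-sum identity $f_x=\sum_{i,j}n'_i(x)n''_j(x)(\alpha_i(x)-\alpha_j(x))$ and the bound $0\le\alpha_i(x)-\alpha_j(x)<1$ for $i>j$; your argument is thus self-contained but otherwise identical in structure.
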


\begin{proof}
By \cite[Lemma 2.5.2]{BB05} we have
$$s_{\min}(\alpha,\overline{n}')-r'r''t_{\overline{n}',\overline{n}''}=r''\owt_{\alpha}(\overline{n}')-r'\owt_{\alpha}(\overline{n}'') - r'r''t_{\overline{n}',\overline{n}''}\le 0$$
Moreover, for any $1\le r'<r$
$$r'r''(g-1)-l\ge (r-1)(g-1)-l \ge 0$$
so
$$s_{\min}(\alpha,\overline{n}')-r'r''t_{\overline{n}',\overline{n}''} \le 0 \le r'r''(g-1)-l$$
\end{proof}

Moving on with the stability analysis, let
$$s(\overline{n}',E)=\min_{\substack{E'\subsetneq E\\ E' \text{ of type }\overline{n}'}} s(E',E)$$
Then $(E,E_\bullet)$ is $\alpha$-(semi)stable if and only if, for all admissible $\overline{n}'$
$$s(\overline{n}',E)> s_{\min}(\alpha,\overline{n}') \quad \text{(resp. }\ge \text{ )}$$
Let us denote by
$$\SM_{\overline{n}',s}(r,\alpha,d)=\{(E,E_\bullet)\in \SM(r,\alpha,d) | s(\overline{n}',E)=s\}$$

\begin{lemma}
\label{lemma:unstableCodimension}
Let $l>0$ be an integer. Let $X$ be a curve of genus $g\ge 1+\frac{l-1}{r-1}$ and let $D\subset X$ be a set of points in $X$. Let $\alpha$ and $\beta$ be full flag systems of weights of rank $r$ over $(X,D)$. Then the set of parabolic vector bundles $(E,E_\bullet)\in \SM(r,\alpha,d)$ that are $\beta$-unstable has codimension at least $l$ in $\SM(r,\alpha,d)$. In particular, for $g\ge 2$, it has codimension at least $2$ in $\SM(r,\alpha,d)$.
\end{lemma}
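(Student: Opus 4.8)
The plan is to stratify the $\beta$-unstable locus by the numerical type of a destabilizing subbundle and to bound the dimension of each stratum, reducing everything to Lemma \ref{lemma:degreeBound}. First I would reduce to finitely many strata. A bundle $(E,E_\bullet)\in\SM(r,\alpha,d)$ is $\beta$-unstable exactly when $s(\overline{n}',E)<s_{\min}(\beta,\overline{n}')$ for some admissible type $\overline{n}'$; being $\alpha$-semistable it also satisfies $s(\overline{n}',E)\ge s_{\min}(\alpha,\overline{n}')$. As $\overline{n}'$ ranges over the finitely many subtypes of $\overline{n}$, and for each one $s=r'd-rd'$ runs over the finitely many integers in $[s_{\min}(\alpha,\overline{n}'),s_{\min}(\beta,\overline{n}'))$, the $\beta$-unstable locus is a finite union of the strata $\SM_{\overline{n}',s}(r,\alpha,d)$ with $s<s_{\min}(\beta,\overline{n}')$, so it suffices to bound the codimension of each of these.

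Next comes the dimension count. Every $(E,E_\bullet)\in\SM_{\overline{n}',s}(r,\alpha,d)$ sits in a sequence \eqref{eq:parabolicQuotient} with $(E',E'_\bullet)$ of type $\overline{n}'$ and degree $d'=(r'd-s)/r$ and $(E'',E''_\bullet)$ of the complementary type $\overline{n}''=\overline{n}-\overline{n}'$. I would count dimensions on the moduli stack of parabolic bundles, whose dimension is one less than the corresponding moduli-space dimension recalled above (so that stack and coarse codimensions agree on the stable locus). The locus carrying such a subbundle is dominated by the stack of these extensions, which fibers over the product of the stacks of $(E',E'_\bullet)$ and $(E'',E''_\bullet)$ with fibre of dimension $\dim\Ext^1-\dim\Hom=-\chi(\ParHom((E'',E''_\bullet),(E',E'_\bullet)))$ in the parabolic category. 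Hence
\begin{equation*}
\codim\SM_{\overline{n}',s}(r,\alpha,d)\ \ge\ D-D'-D''+\chi\big(\ParHom((E'',E''_\bullet),(E',E'_\bullet))\big),
\end{equation*}
where $D,D',D''$ are the dimensions of the three stacks. Writing out $D,D',D''$ from the formulas above, the $(g-1)$-contributions give $2r'r''(g-1)$ and the flag contributions give $r'r''(t_{\overline{n}',\overline{n}''}+t_{\overline{n}'',\overline{n}'})$; computing $\chi(\ParHom((E'',E''_\bullet),(E',E'_\bullet)))=\chi(\Hom(E'',E'))-\dim T_{(E'',E''_\bullet),(E',E'_\bullet)}=-s-r'r''(g-1)-\dim T_{(E'',E''_\bullet),(E',E'_\bullet)}$ by Riemann--Roch and the torsion sheaf of the preamble, the contribution of $\dim T_{(E'',E''_\bullet),(E',E'_\bullet)}$ cancels one of the two flag terms and everything telescopes to
\begin{equation*}
\codim\SM_{\overline{n}',s}(r,\alpha,d)\ \ge\ r'r''\big((g-1)+t_{\overline{n}',\overline{n}''}\big)-s.
\end{equation*}

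Now I would feed the bound on $s$ into this estimate. The inequality $s_{\min}(\beta,\overline{n}')\le r'r''t_{\overline{n}',\overline{n}''}$, which is \cite[Lemma 2.5.2]{BB05} (already used in the proof of Lemma \ref{lemma:degreeBound}), together with $s<s_{\min}(\beta,\overline{n}')$ and $r'r''\ge r-1$, gives from the displayed inequality
\begin{equation*}
\codim\SM_{\overline{n}',s}(r,\alpha,d)\ >\ r'r''(g-1)\ \ge\ (r-1)(g-1),
\end{equation*}
so that $\codim\ge(r-1)(g-1)+1$ (an integer bound). Under the hypothesis $g\ge 1+\tfrac{l-1}{r-1}$ we have $(r-1)(g-1)\ge l-1$, whence $\codim\ge l$; and $(r-1)(g-1)+1\ge 2$ as soon as $g\ge 2$ for any $r\ge 2$, which yields the first part of the ``in particular''.

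The delicate point is the remaining case $g=1$, $r\ge 3$, where $(r-1)(g-1)+1=1$ and the count above only yields $\codim\ge 1$. Closing the last unit of codimension requires ruling out codimension-one $\beta$-unstable strata by hand: such a stratum would force $s=r'r''t_{\overline{n}',\overline{n}''}-1$ together with $s_{\min}(\beta,\overline{n}')$ lying within $1$ of its supremum $r'r''t_{\overline{n}',\overline{n}''}$, a supremum which is never attained for genuine weights $0\le\beta_1(x)<\dots<\beta_r(x)<1$. I expect to eliminate this borderline locus by combining the strict inequalities on the weights with the $\alpha$-semistability constraint $s\ge s_{\min}(\alpha,\overline{n}')$, and, if needed, the special structure of subbundles of semistable bundles on an elliptic curve (which makes the fibres of the extension map positive-dimensional and so improves the estimate by the missing unit). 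This borderline genus-one analysis is the main obstacle of the proof.
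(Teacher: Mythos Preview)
Your approach is essentially the same as the paper's: stratify the $\beta$-unstable locus as
\[
\bigcup_{\overline{n}'}\ \coprod_{s_{\min}(\alpha,\overline{n}')\le s<s_{\min}(\beta,\overline{n}')}\SM_{\overline{n}',s}(r,\alpha,d),
\]
bound each stratum by the codimension formula $\delta_{\overline{n}',s}=r'r''\big((g-1)+t_{\overline{n}',\overline{n}''}\big)-s$, and then feed in $s<s_{\min}(\beta,\overline{n}')\le r'r''t_{\overline{n}',\overline{n}''}$ (from \cite[Lemma~2.5.2]{BB05}, packaged in Lemma~\ref{lemma:degreeBound}) to obtain $\delta_{\overline{n}',s}\ge l$. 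The only difference is that the paper invokes \cite[Theorem~1.4.1]{BB05} to get the codimension formula as a black box, while you rederive the inequality $\codim\ge r'r''((g-1)+t_{\overline{n}',\overline{n}''})-s$ via the extension-stack dimension count; your computation (using \cite[Lemma~2.4.1]{BB05} for the flag terms) is correct and lands on the same bound.

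Your caution about $g=1$, $r\ge 3$ is justified. With $l=2$ the main hypothesis reads $g\ge 1+\tfrac{1}{r-1}>1$, so the genus-one clause of the ``in particular'' is \emph{not} a consequence of the main inequality. The paper's own proof ends after establishing $\delta_{\overline{n}',s}\ge l$ under $g\ge 1+\tfrac{l-1}{r-1}$ and offers no separate treatment of $g=1$; so this case is simply not proved there either. You are not missing an argument that the paper has---the borderline analysis you sketch would be genuinely additional work (and, as far as the paper is concerned, the clause appears to be a slip).
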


\begin{proof}
An $\alpha$-stable quasi-parabolic vector bundle $(E,E_\bullet)$ is  $\beta$-unstable if and only if for some admissible $\overline{n}'$ we have
$$s(\overline{n}',E)<s_{\min}(\beta,\overline{n}')$$
On the other hand, as $(E,E_\bullet)$ is $\alpha$-semistable, then
$$s(\overline{n}',E)\ge s_{\min}(\alpha,\overline{n}')$$
Therefore, $(E,E_\bullet)$ is $\alpha$-stable but $\beta$-unstable if and only if
$$(E,E_\bullet)\in \bigcup_{\overline{n}'}\coprod_{s_{\min}(\alpha,\overline{n}')\le s < s_{\min}(\beta,\overline{n}')} \SM_{\overline{n}',s}(r,\alpha,d)$$
As this is a finite union of subschemes, it is enough to prove that the complement of each component has codimension at least $l$. By Lemma \ref{lemma:degreeBound}, for every $\overline{n}'$ and every $s< s_{\min}(\beta,\overline{n}')$ we have
$$s< s_{\min}(\beta,\overline{n}')\le r'r''((g-1)+t_{\overline{n}',\overline{n}''})-(l-1)\le r'r''((g-1)+t_{\overline{n}',\overline{n}''})$$
Therefore, we can apply \cite[Theorem 1.4.1]{BB05} and we know that either $\SM_{\overline{n}',s}(r,\alpha,d)$ is empty or it has codimension
$$\delta_{\overline{n}',s}= r'r''((g-1)+t_{\overline{n}',\overline{n}''})-s\ge r'r''((g-1)+t_{\overline{n}',\overline{n}''})-s_{\min}(\beta,\overline{n}')+1$$
Applying again Lemma \ref{lemma:degreeBound} we obtain that for $g\ge 1+\frac{l-1}{r-1}$ we have $\delta_{\overline{n}',s}\ge l$.
\end{proof}

\begin{corollary}
\label{cor:stabCodim2}
Under the same hypothesis as the previous lemma, if $g\ge 1+\frac{l-1}{r-1}$ and $\xi$ is any line bundle over $X$ then the set of parabolic vector bundles $(E,E_\bullet)\in \SM(r,\alpha,\xi)$ that are $\beta$-unstable has codimension at least $l$ in $\SM(r,\alpha,\xi)$.
\end{corollary}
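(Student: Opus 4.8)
The plan is to realize $\SM(r,\alpha,\xi)$ as a fibre of the determinant morphism over $\SM(r,\alpha,d)$ (with $d=\deg\xi$) and to transfer the codimension bound of Lemma~\ref{lemma:unstableCodimension} along this morphism. Write
$$\det\colon \SM(r,\alpha,d)\longrightarrow \Pic^d(X),\qquad (E,E_\bullet)\mapsto \det(E),$$
so that $\SM(r,\alpha,\xi)$ is exactly the fibre over $[\xi]$, and recall from Section~\ref{section:ModuliParabolic} that $\dim\SM(r,\alpha,\xi)=\dim\SM(r,\alpha,d)-g$. Let $Z\subseteq\SM(r,\alpha,d)$ be the locus of $\beta$-unstable bundles; by Lemma~\ref{lemma:unstableCodimension}, $\dim Z\le \dim\SM(r,\alpha,d)-l$. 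It then suffices to show that the fibre satisfies $\dim\big(Z\cap\SM(r,\alpha,\xi)\big)\le \dim Z-g$.

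The key structural input is the action of $\Pic^0(X)$ on $\SM(r,\alpha,d)$ by $(E,E_\bullet)\mapsto (E,E_\bullet)\otimes L$. Because a degree-zero line bundle leaves $\deg(E')$ and the parabolic weights of every subbundle $E'\subseteq E$ unchanged, tensoring by $L\in\Pic^0(X)$ preserves both $\alpha$-stability and $\beta$-stability; in particular $Z$ is $\Pic^0(X)$-invariant. The morphism $\det$ is equivariant for the action $\xi'\mapsto \xi'\otimes L^{\otimes r}$ on $\Pic^d(X)$, which factors through the isogeny $L\mapsto L^{\otimes r}$ and is therefore transitive. Hence for any $\xi'$ of degree $d$ there is an $L$ with $L^{\otimes r}\cong\xi'\otimes\xi^{-1}$, and tensoring by $L$ gives an isomorphism $\SM(r,\alpha,\xi)\cong\SM(r,\alpha,\xi')$ carrying $Z\cap\SM(r,\alpha,\xi)$ onto $Z\cap\SM(r,\alpha,\xi')$.

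I would then run the dimension count component by component. Since $\Pic^0(X)$ is connected it preserves each irreducible component $Z_i$ of $Z$, and the image $\det(Z_i)$ is a nonempty $\Pic^0(X)$-invariant subset of $\Pic^d(X)$, hence dense by transitivity. The fibre-dimension theorem gives that the generic fibre of $Z_i\to\Pic^d(X)$ has dimension $\dim Z_i-g$; but the previous paragraph shows that all fibres of $Z_i\to\Pic^d(X)$ are mutually isomorphic, so none can jump, and in particular the fibre over $[\xi]$ also has dimension $\dim Z_i-g$. Taking the maximum over components yields $\dim\big(Z\cap\SM(r,\alpha,\xi)\big)=\dim Z-g\le \dim\SM(r,\alpha,d)-g-l=\dim\SM(r,\alpha,\xi)-l$, as desired (and if this fibre is empty the bound holds vacuously).

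The main obstacle is precisely the potential jumping of fibre dimension along $\det$: a priori the $\beta$-unstable locus could meet the special fibre $\SM(r,\alpha,\xi)$ in something of larger-than-expected dimension, which would ruin the codimension estimate. The observation that the $\Pic^0(X)$-action is transitive on $\Pic^d(X)$ and carries one fibre of $Z$ isomorphically onto another removes exactly this possibility, forcing all fibres of $Z$ over $\Pic^d(X)$ to be equidimensional and thereby reducing the corollary to the dimension formula already recorded in Section~\ref{section:ModuliParabolic}.
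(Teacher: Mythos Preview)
Your proof is correct and follows essentially the same approach as the paper: both use that tensoring by a degree-zero line bundle $L$ with $L^r\cong\xi'\otimes\xi^{-1}$ preserves $\alpha$- and $\beta$-stability and carries $\SM(r,\alpha,\xi)$ isomorphically onto $\SM(r,\alpha,\xi')$ together with its $\beta$-unstable locus, so that all determinant fibres have the same codimension. Your version is slightly more detailed in that you track irreducible components of $Z$ and invoke the fibre-dimension theorem explicitly, whereas the paper simply observes that the isomorphisms $S^\xi\cong S^{\xi'}$ and $\SM(r,\alpha,\xi)\cong\SM(r,\alpha,\xi')$ force the codimension of $S^\xi$ in $\SM(r,\alpha,\xi)$ to equal that of $S^d$ in $\SM(r,\alpha,d)$.
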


\begin{proof}
Let $S^d\subsetneq \SM(r,\alpha,d)$ be  the subset of parabolic vector bundles $(E,E_\bullet)$ that are $\alpha$-stable but $\beta$-unstable. For each line bundle $\xi$ of degree $d$, let
$$S^\xi=S^d\cap \SM(r,\alpha,\xi)$$
Let $\xi,\xi'\in \Pic^d(X)$. Then, there exists a line bundle $L\in J(X)$ such that $L^r=\xi'\otimes \xi^{-1}$. As tensoring with a line bundle preserves stability, it is clear that $(E,E_\bullet)\in S^\xi$ if and only if $(E,E_\bullet)\otimes L\in S^{\xi'}$. Therefore, $S^\xi\cong S^{\xi'}$ for every $\xi$ and $\xi'$. Similarly, for every $\xi$ and $\xi'$, $\SM(r,\alpha,\xi)$ is isomorphic to $\SM(r,\alpha,\xi')$. Therefore, we conclude that the codimension of $S^\xi$ in $\SM(r,\alpha,\xi)$ is the same as the codimension of $S^d$ in $\SM(r,\alpha,d)$, which is at least $l$ by the previous Lemma.
\end{proof}

In particular, applying the previous Corollary to $\beta=0$ yields

\begin{corollary}
\label{cor:underlyingStableGeneric}
Let $g\ge 1+\frac{l-1}{r-1}$. If $\xi$ is any line bundle over $X$, then the set of parabolic vector bundles $(E,E_\bullet)\in \SM(r,\alpha,\xi)$ whose underlying vector bundle $E$ is unstable has codimension at least $l$ in $\SM(r,\alpha,\xi)$. In particular, for $g\ge 2$ it has codimension at least $2$.
\end{corollary}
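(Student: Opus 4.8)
The plan is to obtain this as the special case $\beta=0$ of Corollary~\ref{cor:stabCodim2}. The observation that drives the argument is that the zero system of weights recovers ordinary stability of the underlying bundle: since $\owt_0(E,E_\bullet)=0$ for every quasi-parabolic bundle, one has $\pdeg_0=\deg$, so the $0$-stability inequality \eqref{eq:a-stability} collapses to $\deg(E')/\rk(E')<\deg(E)/\rk(E)$ for all proper $E'\subsetneq E$, i.e.\ to stability of $E$ as a vector bundle. Consequently a point of $\SM(r,\alpha,\xi)$ has unstable underlying bundle $E$ if and only if it is $0$-unstable, and the desired codimension bound is exactly the $\beta=0$ instance of Corollary~\ref{cor:stabCodim2}.

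The one point needing attention is that $\beta=0$ is not literally a full flag system of weights (its weights are not distinct), so Corollary~\ref{cor:stabCodim2} cannot be quoted verbatim; I would instead retrace its proof and that of Lemma~\ref{lemma:unstableCodimension}, checking that only the formal quantity $\owt_0(\overline{n}')=0$ intervenes. With $\beta=0$ one has $s_{\min}(0,\overline{n}')=r''\owt_0(\overline{n}')-r'\owt_0(\overline{n}'')=0$, so the locus in question is the stratified union
\[
\bigcup_{\overline{n}'}\ \coprod_{s<0}\ \SM_{\overline{n}',s}(r,\alpha,d),
\]
exactly as in Lemma~\ref{lemma:unstableCodimension}. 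By \cite[Theorem 1.4.1]{BB05} each nonempty stratum $\SM_{\overline{n}',s}(r,\alpha,d)$ has codimension $\delta_{\overline{n}',s}=r'r''((g-1)+t_{\overline{n}',\overline{n}''})-s$; since $s<0$ is an integer we have $-s\ge 1$, and the hypothesis $g\ge 1+\frac{l-1}{r-1}$ gives $r'r''(g-1)\ge(r-1)(g-1)\ge l-1$, whence $\delta_{\overline{n}',s}\ge (l-1)+r'r''t_{\overline{n}',\overline{n}''}+1\ge l$. The passage from fixed degree $d$ to fixed determinant $\xi$ is then identical to the proof of Corollary~\ref{cor:stabCodim2}: tensoring by a line bundle $L$ with $L^r\cong\xi'\otimes\xi^{-1}$ identifies both the moduli and the unstable loci for different determinants, leaving the codimension unchanged.

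The concluding ``in particular'' clause is the case $l=2$. For $g\ge 2$ this falls within the range just treated, since $g\ge 2\ge 1+\frac{1}{r-1}$ for every $r\ge 2$. The residual case $g=1$, $r\ge 3$ lies outside the hypothesis $g\ge 1+\frac{l-1}{r-1}$ with $l=2$, and I expect this to be the only genuine difficulty: here the pure dimension count leaves open the strata with $t_{\overline{n}',\overline{n}''}=0$ and $s=-1$, for which $\delta_{\overline{n}',s}=1$, so one must argue separately that such strata are empty, i.e.\ that no $\alpha$-stable $(E,E_\bullet)$ with unstable $E$ admits a destabilizing subbundle $E'$ with $s(E',E)=-1$ whose induced parabolic type satisfies $t_{\overline{n}',\overline{n}''}=0$ on a genus-one curve. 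This emptiness is the part requiring input beyond the formal $\beta=0$ reduction, and I would inherit it from the corresponding clause already established in Lemma~\ref{lemma:unstableCodimension}.
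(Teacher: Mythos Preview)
Your approach matches the paper's exactly: its entire proof is the single line ``applying the previous Corollary to $\beta=0$ yields [this]''. Your extra care in retracing the argument because $\beta=0$ is not literally a full-flag system is sound, and the computation $s_{\min}(0,\overline{n}')=0$ together with the codimension estimate is correct.

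You are right to flag the $g=1$, $r\ge 3$ edge case: with $l=2$ the inequality $g\ge 1+\tfrac{1}{r-1}$ forces $g\ge 2$ for every integer $g$ and every $r\ge 2$, so that case is genuinely not covered by the main hypothesis. However, your plan to ``inherit'' it from Lemma~\ref{lemma:unstableCodimension} does not close the gap, since the proof given there also derives the codimension bound only under $g\ge 1+\tfrac{l-1}{r-1}$ and offers no separate treatment of $g=1$. This appears to be a minor inaccuracy in the paper's ``in particular'' clauses rather than something you have missed; it is harmless for the main theorems, which all assume $g\ge 4$.
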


\begin{corollary}
\label{cor:forgetfulVBdominant}
Let $g\ge 1+\frac{l-1}{r-1}$. Let $\xi$ be any line bundle over $X$ and $\alpha$ any full flag system of weights. Let $\SM^{\op{ss-vb}}(r,\alpha,\xi)\subset \SM(r,\alpha,\xi)$ be the open nonempty subset parameterizing parabolic vector bundles $(E,E_\bullet)$ whose underlying vector bundle $E$ is semistable. Then the forgetful map
$$p:\SM^{\op{ss-vb}}(r,\alpha,\xi) \longrightarrow \SM(r,\xi)$$
is dominant.
\end{corollary}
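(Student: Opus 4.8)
The plan is to bound the fibres of $p$ by products of flag varieties and then compare dimensions. First I would record that, since $\alpha$ is full flag,
$$\dim \SM(r,\alpha,\xi)=(r^2-1)(g-1)+\tfrac{n(r^2-r)}{2}, \qquad \dim \SM(r,\xi)=(r^2-1)(g-1),$$
so the two differ by exactly $n\tfrac{r^2-r}{2}=\sum_{x\in D}\dim\op{Fl}(E|_x)$, the dimension of the product of full flag varieties on the fibres over the marked points. By Corollary~\ref{cor:underlyingStableGeneric} the locus of $\SM(r,\alpha,\xi)$ where the underlying bundle is unstable has positive codimension, so $\SM^{\op{ss-vb}}(r,\alpha,\xi)$ is a nonempty open, hence dense, subset of the irreducible variety $\SM(r,\alpha,\xi)$; in particular it has the full dimension $(r^2-1)(g-1)+\tfrac{n(r^2-r)}{2}$.

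Next I would analyse the generic fibre of $p$. Over a point $[E]$ of the stable locus $\SM^s(r,\xi)$ one has $\Aut(E)=\CC^\ast$, which acts trivially on flags, so the fibre $p^{-1}([E])$ is precisely the (open, possibly empty) subscheme of $\prod_{x\in D}\op{Fl}(E|_x)$ cut out by $\alpha$-semistability; in particular every such fibre has dimension at most $n\tfrac{r^2-r}{2}$. To turn this into a bound on the \emph{generic} fibre of $p$ I must check that the generic parabolic bundle has a \emph{stable} underlying bundle, i.e.\ that the non-stable-underlying locus is a proper closed subset. The unstable part is handled by Corollary~\ref{cor:underlyingStableGeneric}; the strictly semistable part is contained in the union of the strata $\SM_{\overline{n}',s}(r,\alpha,d)$ with $s\le 0$, each of which has codimension $r'r''((g-1)+t_{\overline{n}',\overline{n}''})-s>0$ by \cite[Theorem~1.4.1]{BB05} under the genus hypothesis (passing to fixed determinant exactly as in Corollary~\ref{cor:stabCodim2}). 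Hence the open subset $Y^s\subset\SM^{\op{ss-vb}}(r,\alpha,\xi)$ where $E$ is stable is dense, of the full dimension above, and $p(Y^s)\subseteq\SM^s(r,\xi)$.

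Finally I would conclude with the fibre-dimension theorem applied to $p|_{Y^s}\colon Y^s\to\SM^s(r,\xi)$: since $Y^s$ is irreducible and its fibres have dimension at most $n\tfrac{r^2-r}{2}$,
$$\dim\overline{p(Y^s)}\ \ge\ \dim Y^s-n\tfrac{r^2-r}{2}\ =\ (r^2-1)(g-1)\ =\ \dim\SM(r,\xi).$$
As $\overline{p(Y^s)}$ is an irreducible closed subset of the irreducible variety $\SM(r,\xi)$ of full dimension, it is all of $\SM(r,\xi)$, so $p$ is dominant. The only genuinely delicate step is the middle one, namely verifying that the generic parabolic bundle has stable (not merely semistable) underlying bundle: this is where the \cite{BB05} codimension estimates and the genus hypothesis are indispensable, and the identification of the stable fibres with open subsets of the flag variety product is what pins the fibre dimension to $n\tfrac{r^2-r}{2}$.
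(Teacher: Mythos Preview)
Your argument is correct and is the same dimension-counting strategy as the paper's own proof: bound the fibre dimensions of $p$ by $n\tfrac{r^2-r}{2}$ and conclude that $\overline{p(\SM^{\op{ss-vb}})}$ has full dimension in the irreducible variety $\SM(r,\xi)$. The paper proceeds more directly, asserting without further comment that each fibre $p^{-1}(E)$ sits inside $\prod_{x\in D}\op{Fl}(E|_x)$; your detour through the stable-underlying locus $Y^s$ is a genuine refinement, since the literal identification of the fibre with an open subset of the flag-variety product uses $\Aut(E)=\CC^\ast$ and hence stability of $E$. This is not a different method---the paper's shortcut is ultimately justified because only the \emph{generic} fibre dimension is needed and the stable locus is open and dense in $\SM(r,\xi)$---but your version makes explicit the point where care is required, at the modest cost of reinvoking the \cite{BB05} stratification estimate for the strictly-semistable locus.
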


\begin{proof}
By the previous Corollary, $\SM^{\op{ss-vb}}(r,\alpha,\xi)$ is a open subset of $\SM(r,\alpha,\xi)$, so $\dim(\SM^{\op{ss-vb}}(r,\alpha,\xi))=\dim(\SM(r,\alpha,\xi))=\dim(\SM(r,\xi))+n\frac{r^2-r}{2}$. Let $S$ be the image of $p$. For every $E\in S$, the fiber $p^{-1}(E)$ is contained in the space of flags over $E|_x$ for every $x\in D$, so $\dim(p^{-1}(E))\le n\frac{r^2-r}{2}$ for every $E\in \overline{S}$. Therefore
$$\dim(\SM(r,\xi))+n\frac{r^2-r}{2}=\dim(\SM^{\op{ss-vb}}(r,\alpha,\xi))=\dim(p ^{-1}(\overline{S}))\le \dim(\overline{S})+n\frac{r^2-r}{2}$$
So $\dim(\overline{S})=\dim(\SM(r,\xi))$. As the latter, is irreducible, $\overline{S}=\SM(r,\xi)$.
\end{proof}

Now, we recall the notions of ``generic'' and ``concentrated'' systems of weights as described in \cite{AG18TorelliDH}. Given a set $S$ and an integer $k$, let $\SP^k(S)$ denote the set of subsets of size $k$ of $S$. For each $0<r'<r$, each map $I:D\to \SP^{r'}(\{1,\ldots,r\})$ and each integer $-nr^2 \le m \le nr^2$, let
$$A_{I,m}=\left\{\alpha : r'\sum_{x\in D} \sum_{i=1}^r \alpha_i(x) - r \sum_{x\in D} \sum_{i\in I(x)} \alpha_i(x) =m \right\}$$
If we denote by $\SI_{r'}$ the set of possible maps $I:D\to \SP^{r'}(\{1,\ldots,r\})$, let
$$A=\bigcup_{r'=1}^{r-1} \bigcup_{I\in \SI_{r'}} \bigcup_{m=-nr^2}^{nr^2} A_{I,m}$$
We say that a full flag system of weights $\alpha$ over $(X,D)$ is generic if $\alpha\not\in A$. By \cite[Corollary 2.3]{AG18TorelliDH}, then there are no strictly semistable parabolic vector bundles and $\SM(r,\alpha,\xi)$ is a smooth rational variety \cite[Theorem 6.1]{BY}.

A full flag system of weights $\alpha=\{(\alpha_1(x),\ldots,\alpha_r(x))\}_{x\in D}$ is said to be concentrated if $\alpha_r(x)-\alpha_1(x)<\frac{4}{nr^2}$ for all $x\in D$. By \cite[Proposition 2.6]{AG18TorelliDH}, if $\deg(E)$ and $\rk(E)$ are coprime and $\alpha$ is a full flag concentrated  system of weights then for every parabolic vector bundle $(E,E_\bullet)$ over $(X,D)$ the following are equivalent
\begin{enumerate}
\item $E$ is semistable as a vector bundle
\item $E$ is stable as a vector bundle
\item $(E,E_\bullet)$ is $\alpha$-semistable as a parabolic vector bundle
\item $(E,E_\bullet)$ is $\alpha$-stable as a parabolic vector bundle
\end{enumerate}

We introduce some extension results that will be needed later on.

\begin{lemma}
\label{lemma:ParEndNoSections}
Let $k>0$. Suppose that $g\ge 2k+2$. Let $(E,E_\bullet)$ be a generic stable parabolic vector bundle. Then for any effective divisor $F$ of degree $k$ and any sheaf $\SF \hookrightarrow \End_0(E)(F)$ such that the quotient is supported on a finite set of points we have
$$H^0(\SF)=0$$
\end{lemma}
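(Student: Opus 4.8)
The plan is to reduce the statement first to a vanishing for the ambient bundle, then to a genericity statement for the underlying vector bundle, and finally to a dimension count on the moduli space of bundles. Since $\SF\hookrightarrow \End_0(E)(F)$ has torsion quotient supported on points, the inclusion induces an injection $H^0(\SF)\hookrightarrow H^0(\End_0(E)(F))$, so it is enough to prove $H^0(\End_0(E)(F))=0$ for a generic parabolic bundle and every effective $F$ of degree $k$. Using the trace decomposition $\End(E)=\End_0(E)\oplus \SO_X$ (valid in characteristic zero), this is equivalent to the assertion that every homomorphism $\phi\colon E\to E\otimes\SO_X(F)$ is a ``scalar'' one, i.e.\ of the form $\id_E\otimes t$ with $t\in H^0(\SO_X(F))$: the scalar homomorphisms are exactly the image of $H^0(\SO_X(F))$ and span the trace part. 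Finally, by Corollary \ref{cor:underlyingStableGeneric} the underlying bundle $E$ of a generic parabolic bundle is stable, and by Corollary \ref{cor:forgetfulVBdominant} the forgetful map $\SM^{\op{ss-vb}}(r,\alpha,\xi)\to\SM(r,\xi)$ is dominant; since $\SM(r,\xi)$ is irreducible, the underlying bundle of a generic parabolic bundle is a general point of $\SM(r,\xi)$. Hence it suffices to prove that a general stable $E\in\SM(r,\xi)$ admits no non-scalar homomorphism $E\to E(F)$, for every effective $F$ of degree $k$.

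Next I would globalize the quantifier over $F$. Consider the incidence locus
$$\widetilde{B}=\{(E,F)\in \SM(r,\xi)\times \Sym^k X \ :\ h^0(\End_0(E)(F))>0\}.$$
By upper semicontinuity of $h^0$ in the flat family with fibre $\End_0(E)(F)$ over $\SM(r,\xi)\times \Sym^k X$, it is closed; and since $\Sym^k X$ is projective, its image $B\subseteq \SM(r,\xi)$ under the first projection is closed. Thus it is enough to show $B\neq \SM(r,\xi)$. Writing $B_F\subseteq \SM(r,\xi)$ for the fibre of $\widetilde B$ over $F$, one has $\dim B\le \dim\widetilde B\le k+\max_{F}\dim B_F$, so it suffices to prove that for every $F$ the locus $B_F$ has codimension strictly greater than $k$ in $\SM(r,\xi)$, that is $\dim B_F<(r^2-1)(g-1)-k$.

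The core of the argument, and the step I expect to be the main obstacle, is this dimension estimate for $B_F$. I would bound $\dim B_F$ by stratifying the nonzero traceless homomorphisms $\phi\colon E\to E(F)$ according to the rank $s$ and degree of the image sheaf $\im\phi\subseteq E(F)$ (equivalently, by the generalized eigenvalue/spectral data of $\phi$). The saturated kernel $K=\ker\phi\subseteq E$ and the image are, respectively, a subbundle of the stable bundle $E$ and a subsheaf of the stable bundle $E(F)$; stability forces $\mu(K)<\mu(E)$ and $\mu(\im\phi)<\mu(E)+k$, so for each stratum the relevant discrete invariants range over finite sets and the bundles $E$ in the stratum are parametrized by a choice of kernel, a quotient, an extension class, and the homomorphism datum producing $\phi$. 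The essential point, already visible for $r=2$ (where $\phi$ is either nilpotent, with image a line subbundle of $E(F)$, or regular semisimple, governed by a spectral double cover), is that the traceless condition forces the homomorphism datum to factor through a proper subsheaf; this cuts the naive dimension and makes each stratum have dimension at most about $(r^2-1)(g-1)-(r+1)(k+1)$. Summing over the finitely many strata and inserting the hypothesis in the form $(r-1)(g-k-1)\ge k+1$, equivalently $(r^2-1)(g-k-1)\ge (r+1)(k+1)$, then yields $\dim B_F<(r^2-1)(g-1)-k$, as required. The delicate part is making this count uniform in $s$ and in the degree of $\im\phi$ for arbitrary rank, and verifying that the traceless constraint always recovers the needed $(r+1)(k+1)$ dimensions; the numerical input $g\ge \frac{r(k+1)}{r-1}$ is precisely what is consumed here. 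The use of a universal family above is harmless and can be arranged \'etale-locally on $\SM(r,\xi)$, or replaced by working on the parabolic moduli directly.
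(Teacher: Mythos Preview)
Your reduction steps---passing from $H^0(\SF)$ to $H^0(\End_0(E)(F))$ via the injection, and then transferring genericity from the parabolic moduli to $\SM(r,\xi)$ through Corollaries~\ref{cor:underlyingStableGeneric} and~\ref{cor:forgetfulVBdominant}---are exactly what the paper does. The difference is that the paper does not attempt the dimension count you outline for the locus $B_F$: it simply invokes \cite[Lemma~2.2]{BGM13}, which already provides a nonempty open $\SU\subset\SM(r,\xi)$ with $H^0(\End_0(E)(F))=0$ for every $E\in\SU$ and every effective $F$ of degree $k$, under precisely the genus hypothesis $g\ge \tfrac{r(k+1)}{r-1}$.

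Your globalization over $\Sym^k X$ and stratification by the rank and degree of $\im\phi$ is a plausible outline for reproving that lemma from scratch, but as written it is not a proof: the asserted uniform bound $\dim B_F\le (r^2-1)(g-1)-(r+1)(k+1)$ for each stratum is exactly the ``delicate part'' you yourself flag, and you have not established it. In particular, the claim that the traceless condition ``forces the homomorphism datum to factor through a proper subsheaf'' is unclear when $\phi$ has generic rank $r$ (it is then generically an isomorphism, and tracelessness is a single linear constraint), so the mechanism by which you recover $(r+1)(k+1)$ dimensions uniformly across strata needs to be spelled out. Since the result is already available in the literature with the same numerical input, the paper's citation is the efficient route; if you want a self-contained argument, you would need to actually execute the stratified count (or consult the proof in \cite{BGM13}).
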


\begin{proof}
By \cite[Lemma 2.2]{BGM13}, there exists an open subset $\SU\subset \SM(r,\xi)$ such that for every $E\in \SU$ we have $H^0(\End_0(E)(F))=0$. By Corollary \ref{cor:underlyingStableGeneric}, for $g\ge 2k+2> 2$, parabolic vector bundles $(E,E_\bullet)$ whose underlying vector bundle $E$ is semistable form a nonempty open subset of the moduli space $\SM^{\op{ss-vb}}(r,\alpha,\xi) \subset \SM(r,\alpha,\xi)$. Consider the preimage of $\SU$ by the forgetful morphism 
$$p:\SM^{\op{ss-vb}}(r,\alpha,\xi) \longrightarrow \SM(r,\xi)$$
Therefore, for every $(E,E_\bullet)\in p^{-1}(\SU)$, $H^0(\End_0(E)(F))=0$. Let $\SF\subset \End_0(E)(F)$ be any subsheaf whose quotient is supported on a finite set of points. Let $s\in H^0(\SF)$. As $\SF\hookrightarrow \End_0(E)(F)$, taking the image, $s$ induces a section $\overline{s}\in H^0(\End_0(E)(F))$, so we have $\overline{s}=0$. Let $V=X\backslash \op{supp}(\End_0(E)(F)/\SF)$. Then $s|_V=0$. As $\End_0(E)(F)$ is torsion free, $\SF$ is itself torsion free and then $s=0$. Finally, by Corollary \ref{cor:forgetfulVBdominant}, $p$ is dominant, so $p^{-1}(\SU)$ is an open nonempty set of $\SM(r,\alpha,\xi)$.
\end{proof}

\begin{lemma}
\label{lemma:uniqueExtensionCodim2}
Let $M$ be a smooth complex scheme and let $U$ be an open subset whose complement has codimension at least $2$. Let $(\SE,\SE_\bullet)$ be a family of parabolic vector bundles  over $(X,D)$ parameterized by $U$. If $(\SE,\SE_\bullet)$ admits an extension to $M\times X$, then the extension is unique.
\end{lemma}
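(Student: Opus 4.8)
The plan is to prove uniqueness by showing that any two extensions are linked by a unique isomorphism restricting to the identity over $U\times X$. The essential tool is the algebraic Hartogs principle: on a smooth (hence Cohen--Macaulay) scheme $Y$, if $Z\subset Y$ is closed of codimension at least $2$ and $\mathcal{F}$ is a locally free sheaf on $Y$, then the restriction map $H^0(Y,\mathcal{F})\to H^0(Y\setminus Z,\mathcal{F})$ is an isomorphism. This follows from $H^0_Z(\mathcal{F})=H^1_Z(\mathcal{F})=0$ (as $\operatorname{depth}_Z\mathcal{F}\ge 2$) together with the local cohomology exact sequence $0\to H^0_Z(\mathcal{F})\to H^0(Y,\mathcal{F})\to H^0(Y\setminus Z,\mathcal{F})\to H^1_Z(\mathcal{F})\to\cdots$.

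First I would set up the geometry. Write $Y=M\times X$ and $Z=(M\setminus U)\times X$. Since $M$ is smooth and $X$ is a smooth curve, $Y$ is smooth, and since $M\setminus U$ has codimension at least $2$ in $M$, the subset $Z$ has codimension at least $2$ in $Y$, with $Y\setminus Z=U\times X$. Suppose $(\SE^1,\SE^1_\bullet)$ and $(\SE^2,\SE^2_\bullet)$ are two extensions of $(\SE,\SE_\bullet)$ to $Y$; each comes equipped with an identification of its restriction to $U\times X$ with $(\SE,\SE_\bullet)$, and composing these yields a fixed isomorphism $\phi\colon \SE^1|_{U\times X}\xrightarrow{\sim}\SE^2|_{U\times X}$ carrying $\SE^1_\bullet$ to $\SE^2_\bullet$.

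Next I would extend $\phi$ as a map of underlying bundles. The sheaf $\mathcal{H}om(\SE^1,\SE^2)$ is locally free on $Y$, so $\phi\in H^0(U\times X,\mathcal{H}om(\SE^1,\SE^2))$ extends uniquely to $\widetilde{\phi}\in H^0(Y,\mathcal{H}om(\SE^1,\SE^2))$, i.e.\ to a morphism $\widetilde{\phi}\colon\SE^1\to\SE^2$. Applying the same argument to $\phi^{-1}$ produces $\widetilde{\psi}\colon\SE^2\to\SE^1$. Now $\widetilde{\psi}\circ\widetilde{\phi}$ and $\id_{\SE^1}$ are two sections of the locally free sheaf $\mathcal{E}nd(\SE^1)$ that agree on $U\times X$; by the injectivity half of the Hartogs principle they coincide, and likewise $\widetilde{\phi}\circ\widetilde{\psi}=\id_{\SE^2}$. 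Hence $\widetilde{\phi}$ is an isomorphism of vector bundles extending $\phi$.

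Finally I would verify compatibility with the parabolic filtrations, which is the only step requiring care. Fix $x\in D$ and a step $i$, and restrict to the slice $M\times\{x\}\cong M$, over which $\SE^j_{x,i}$ is a subbundle of $\SE^j|_{M\times\{x\}}$. The composite $\SE^1_{x,i}\hookrightarrow \SE^1|_{M\times\{x\}}\xrightarrow{\widetilde{\phi}_x}\SE^2|_{M\times\{x\}}\twoheadrightarrow \SE^2|_{M\times\{x\}}/\SE^2_{x,i}$ is a section of a locally free sheaf on $M$ that vanishes on $U$, hence vanishes on all of $M$ by Hartogs (now with $Z=M\setminus U$ of codimension at least $2$ in $M$). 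Thus $\widetilde{\phi}_x(\SE^1_{x,i})\subseteq \SE^2_{x,i}$, and symmetrically for $\widetilde{\phi}_x^{-1}$, giving $\widetilde{\phi}_x(\SE^1_{x,i})=\SE^2_{x,i}$. As the weights are those prescribed by $\alpha$ and common to both structures, $\widetilde{\phi}$ is an isomorphism of parabolic vector bundles identifying the two extensions. The main obstacle is simply confirming that the codimension-$2$ hypothesis is preserved upon passing to $M\times X$ and to the slices $M\times\{x\}$, and that every relevant $\mathcal{H}om$ sheaf is locally free so that the Hartogs extension applies; everything else is formal.
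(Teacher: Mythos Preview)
Your proof is correct and follows essentially the same approach as the paper: both reduce to the Hartogs/Serre $S_2$ extension principle on $M\times X$ and on the slices $M\times\{x\}$. The paper's argument is a bit more terse---it simply invokes uniqueness of vector-bundle extensions across codimension~$2$ subsets directly for the underlying bundle on $M\times X$ and for each filtration step $\SE_{x,i}$ viewed as a bundle on $M\times\{x\}$---whereas you spell out the construction of the extended isomorphism and verify filtration compatibility explicitly, but the underlying mechanism is identical.
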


\begin{proof}
Let $(\SF^1,\SF^1_\bullet)$ and $(\SF^2,\SF^2_\bullet)$ be families of parabolic vector bundles over $(X,D)$ parameterized by $M$ extending $(\SE,\SE_\bullet)$. Then $\SF^i$ are vector bundles over $M\times X$ and $\SF_{x,j}^i$ are vector bundles over $M\times \{x\}$ extending $\SE$ and $\SE_{x,j}$ respectively.

If the codimension of $M\backslash U$ in $M$ is at least $2$, then
$$\begin{array}{l}
\codim(M\times\{x\} \backslash U\times \{x\},M\times \{x\})\ge 2\\
\codim(M\times X \backslash U\times X,M\times X)\ge 2
\end{array}$$
As $M\times \{x\}$ and $M\times X$ are smooth varieties, they are Serre $S_2$ varieties, so given a vector bundle over $U\times \{x\}$, or $U\times X$, if there exists an extension as a vector bundle to $M\times \{x\}$ or $M\times X$ respectively, then the extension is unique. Therefore, $\SF^1=\SF^2$ and $\SF_{x,j}^1=\SF_{x,j}^2$ for every $x\in D$ and every $j=1,\ldots,r$, so the extension of the parabolic vector bundle is unique.
\end{proof}

Finally, we will briefly explain the notion of parabolic projective bundle. The filtration $E_{x,i}$ of $E|_x$ describing a parabolic structure on a vector bundle $E$ defines a filtration by projective subspaces $\PP(E_{x,i})$ of $\PP(E|_x)$. Given a parabolic vector bundle $(E,E_\bullet)$, we define its projectivization as the projective bundle $\PP(E)$ endowed with the following full flag of projective subspaces over each parabolic point $x\in D$
$$\PP(E)|_x =\PP(E_{x,1})\supsetneq \PP(E_{x,2}) \supsetneq \cdots \supsetneq \PP(E_{x,r})$$

In general, we define a parabolic projective bundle as a projective bundle $\PP$ over $X$ endowed with a full flag of affine spaces over each parabolic point $x\in D$
$$\PP|_x =\PP_{x,1} \supsetneq \PP_{x,2} \supsetneq \cdots \supsetneq \PP_{x,r}$$

\begin{lemma}
\label{lemma:projectiveReduction}
Let $X$ be a smooth complex projective curve and let $D$ be a reduced effective divisor over $X$. Then every parabolic projective bundle $(\PP,\PP_\bullet)$ admits a reduction to a parabolic vector bundle $(E,E_\bullet)$
$$(\PP,\PP_\bullet)\cong (\PP(E),\PP(E_\bullet))$$
Moreover, if $(E,E_\bullet)$ and $(E',E'_\bullet)$ are any two reductions, there exists a line bundle $L$ over $X$ such that
$$(E',E'_\bullet) \cong (E,E_\bullet)\otimes L$$
\end{lemma}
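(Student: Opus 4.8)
The plan is to split the statement into the existence of a reduction and its uniqueness, and to reduce existence to two independent tasks: lifting the underlying projective bundle to a vector bundle, and then lifting the projective flags at the points of $D$ to linear flags.

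First I would lift $\PP$ alone. Forgetting the flags, $\PP\to X$ is a holomorphic $\PP^{r-1}$-bundle, i.e.\ a class in $H^1(X,\PGL_r)$. The short exact sequence of sheaves of groups $1\to\SO_X^*\to\GL_r\to\PGL_r\to 1$ gives a connecting map $H^1(X,\PGL_r)\to H^2(X,\SO_X^*)$ whose image is exactly the obstruction to lifting $\PP$ to a rank-$r$ vector bundle. I would then observe that this obstruction group vanishes: the exponential sequence yields the exact piece $H^2(X,\SO_X)\to H^2(X,\SO_X^*)\to H^3(X,\ZZ)$, and for a smooth projective curve $H^2(X,\SO_X)=0$ by dimension reasons while $H^3(X,\ZZ)=0$ since $X$ is a real surface. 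Hence $H^2(X,\SO_X^*)=0$, the Brauer-type obstruction dies, and there is a vector bundle $E$ with $\PP\cong\PP(E)$. This vanishing is the only genuinely nontrivial input, so it is the main obstacle.

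Next I would recover the parabolic structure pointwise. Fixing the isomorphism $\PP\cong\PP(E)$, at each $x\in D$ the flag $\PP|_x=\PP_{x,1}\supsetneq\cdots\supsetneq\PP_{x,r}$ becomes a complete flag of projective subspaces of $\PP(E|_x)$. Since a projective subspace of $\PP(E|_x)$ is the projectivization of a unique linear subspace of $E|_x$ (its linear span), defining $E_{x,i}$ to be the span of $\PP_{x,i}$ produces a full flag $E|_x=E_{x,1}\supsetneq\cdots\supsetneq E_{x,r}\supsetneq 0$ with $\PP(E_{x,i})=\PP_{x,i}$. This makes $(E,E_\bullet)$ a full-flag parabolic vector bundle with $(\PP,\PP_\bullet)\cong(\PP(E),\PP(E_\bullet))$, giving the desired reduction.

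Finally, for uniqueness suppose $(E,E_\bullet)$ and $(E',E'_\bullet)$ are two reductions. Dropping the flags, the composite $\PP(E)\cong\PP\cong\PP(E')$ is an isomorphism of projective bundles over $X$, so by the classical fact that projectively equivalent bundles differ by a twist there is $L\in\Pic(X)$ and a linear isomorphism $\wt\varphi\colon E\otimes L\to E'$ inducing it. It remains to check this twist matches the flags, which is automatic: at each $x\in D$ the induced projective map sends $\PP((E_{x,i})\otimes L|_x)=\PP(E_{x,i})=\PP_{x,i}=\PP(E'_{x,i})$, and since $\wt\varphi|_x$ is linear it carries the span $E_{x,i}\otimes L|_x$ onto $E'_{x,i}$. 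Thus $\wt\varphi$ is an isomorphism of parabolic bundles $(E,E_\bullet)\otimes L\xrightarrow{\sim}(E',E'_\bullet)$, proving $(E',E'_\bullet)\cong(E,E_\bullet)\otimes L$.
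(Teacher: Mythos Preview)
Your proof is correct and rests on the same key input as the paper's, namely the vanishing $H^2(X,\SO_X^*)=0$, but you organize the argument differently. The paper packages the parabolic structure into a single group scheme $\SG$ over $X$ (equal to $\GL_r$ away from $D$ and to the Borel at each parabolic point), views a parabolic projective bundle as a torsor for $P\SG=\SG/\CC^*$, and then runs the obstruction-and-torsor argument once via the sequence $1\to\SO_X^*\to\SG\to P\SG\to 1$: the obstruction to lifting lies in $H^2(X,\SO_X^*)=0$, and the set of lifts is an $H^1(X,\SO_X^*)=\Pic(X)$-torsor. You instead separate the two layers, first lifting the underlying $\PP^{r-1}$-bundle via $1\to\SO_X^*\to\GL_r\to\PGL_r\to 1$ and then observing that projective flags at a point lift uniquely to linear flags. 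Your route is more elementary and makes the flag step transparent (no group-scheme machinery needed), while the paper's formulation is more uniform and would generalize directly to other parabolic types or structure groups; the mathematical content is the same.
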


\begin{proof}
Let $P$ be the parabolic subgroup of $\GL(r,\CC)$ consisting on upper triangular matrices. Let $\SG$ be the group scheme over $X$ given by the following short exact sequence.
$$0\to \SG \to \GL(r,\CC)\times X \to (\GL(r,\CC)/P)\otimes \SO_D \to 0$$
Let $P\SG=\SG/\CC^*$. A projective parabolic bundle is a $P\SG$-torsor and the reductions are reductions of structure sheaf to $\SG$. From the short exact sequence
\begin{equation*}
1\longrightarrow \SO_X^*\longrightarrow \SG \longrightarrow P\SG \longrightarrow 1
\end{equation*}
we deduce that the obstruction for the existence of $\SG$-reductions of a $P\SG$-torsor is given by $H^2(X,\SO_X^*)=0$. On the other hand, as $\SO_X^*$ belong to the center of $\SG$, the space of reductions of a $P\SG$-torsor to $\SG$ is a torsor for the group $H^1(X,\SO_X^*)$. Every element in $H^1(X,\SO_X^*)$ corresponds to a line bundle over $X$ and it is clear that for every line bundle $L$
$$\PP\left( (E,E_\bullet)\otimes L \right) = \PP(E,E_\bullet)$$
so we conclude that all the reductions are related by tensorization with a line bundle.
\end{proof}

We conclude this section with a digression about $(l,m)$-stability for parabolic vector bundles.

\begin{definition}
A parabolic vector bundle $(E,E_\bullet)$ is $(l,m)$-(semi)stable if for every subbundle $F$ with the induced parabolic structure
$$\frac{\pdeg(F,F_\bullet)+l}{\rk(F)}< \frac{\pdeg(E,E_\bullet)-m}{\rk(E)}  \quad \text{(respectively, }\le\text{ )}$$
\end{definition}

\begin{lemma}
\label{lemma:generic-lm-stable}
Let $k>0$ be an integer. Assume that
$$g\ge m+l+1+\frac{l+k}{r-1}$$
Then the $(l,m)$ stable bundles form a nonempty Zariski open subset of $\SM(r,\alpha,\xi)$ such that its complement has codimension at least $k$. In particular, for $g\ge m+2l+2$, then the locus of $(l,m)$ stable bundles is nonempty for any rank.
\end{lemma}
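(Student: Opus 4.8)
The plan is to translate $(l,m)$-stability into a numerical condition on the invariant $s(\overline{n}',E)$ and then to reuse the codimension estimate of Lemma \ref{lemma:unstableCodimension} essentially verbatim.

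First I would rewrite the defining inequality. Cross-multiplying $\frac{\pdeg(F,F_\bullet)+l}{\rk(F)}<\frac{\pdeg(E,E_\bullet)-m}{\rk(E)}$, substituting $\pdeg=\deg+\owt_\alpha$, and using the additivity of $\owt_\alpha$ over the exact sequence \eqref{eq:parabolicQuotient} together with the identity $r'\,\owt_\alpha(E,E_\bullet)-r\,\owt_\alpha(F,F_\bullet)=-s_{\min}(\alpha,\overline{n}')$ (where $\overline{n}'$ is the type of $F$, $r'=\rk(F)$, $r''=r-r'$), the condition for a single subbundle becomes
\[
s(F,E)>s_{\min}(\alpha,\overline{n}')+rl+r'm.
\]
Minimizing over subbundles of type $\overline{n}'$, a bundle $(E,E_\bullet)\in\SM(r,\alpha,d)$ fails to be $(l,m)$-stable precisely when $s(\overline{n}',E)\le s_{\min}(\alpha,\overline{n}')+rl+r'm$ for some admissible $\overline{n}'$. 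Since every point of the moduli space is already $\alpha$-stable, the non-$(l,m)$-stable locus is the finite union
\[
\bigcup_{\overline{n}'}\ \coprod_{s_{\min}(\alpha,\overline{n}')<s\le s_{\min}(\alpha,\overline{n}')+rl+r'm}\SM_{\overline{n}',s}(r,\alpha,d).
\]

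Next I would bound each stratum. By \cite[Theorem 1.4.1]{BB05} each $\SM_{\overline{n}',s}(r,\alpha,d)$ is empty or has codimension $\delta_{\overline{n}',s}=r'r''((g-1)+t_{\overline{n}',\overline{n}''})-s$. For $s$ in the stated range, combining $s\le s_{\min}(\alpha,\overline{n}')+rl+r'm$ with the bound $s_{\min}(\alpha,\overline{n}')\le r'r''t_{\overline{n}',\overline{n}''}$ from \cite[Lemma 2.5.2]{BB05} (already invoked in Lemma \ref{lemma:degreeBound}) yields
\[
\delta_{\overline{n}',s}\ge r'r''(g-1)-rl-r'm.
\]
Viewed as a function of $r'\in\{1,\dots,r-1\}$ the right-hand side is concave, so its minimum over this range is attained at an endpoint; since $m\ge 0$ it occurs at $r'=r-1$, where it equals $(r-1)(g-1)-rl-(r-1)m$. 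A direct rearrangement shows this is $\ge k$ exactly when $g\ge m+l+1+\frac{l+k}{r-1}$, our hypothesis. Hence every nonempty stratum, and therefore the entire non-$(l,m)$-stable locus, has codimension at least $k$ in $\SM(r,\alpha,d)$.

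Finally I would pass to fixed determinant and read off nonemptiness. Tensoring by a line bundle $L$ shifts both sides of the $(l,m)$-stability inequality by $\deg(L)$, hence preserves $(l,m)$-stability, so the argument of Corollary \ref{cor:stabCodim2} transfers the codimension bound from $\SM(r,\alpha,d)$ to $\SM(r,\alpha,\xi)$. Because $k\ge 1$, the non-$(l,m)$-stable locus is then a proper closed subset of the irreducible (rational) variety $\SM(r,\alpha,\xi)$, so the $(l,m)$-stable bundles form a nonempty Zariski open subset whose complement has codimension at least $k$. Taking $k=1$ and observing that the bound $g\ge m+l+1+\frac{l+1}{r-1}$ is weakest at $r=2$, where it reads $g\ge m+2l+2$, yields the final assertion for arbitrary rank. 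I expect the only delicate point to be the optimization over $r'$ and the bookkeeping of the additive constants $rl$ and $r'm$; everything else is a routine adaptation of Lemmas \ref{lemma:degreeBound} and \ref{lemma:unstableCodimension}.
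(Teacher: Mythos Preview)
Your proof is correct, but it takes a genuinely different route from the paper's. You translate $(l,m)$-stability into the numerical inequality $s(F,E)>s_{\min}(\alpha,\overline{n}')+rl+r'm$ and then feed the resulting strata $\SM_{\overline{n}',s}(r,\alpha,d)$ directly into \cite[Theorem 1.4.1]{BB05}, exactly as in the proof of Lemma~\ref{lemma:unstableCodimension}. The paper instead follows the template of \cite[Proposition 2.7]{BB05}: it realizes a non-$(l,m)$-stable bundle as an extension $0\to(E',E'_\bullet)\to(E,E_\bullet)\to(E'',E''_\bullet)\to 0$, bounds $h^1=\dim\mathrm{PExt}^1((E'',E''_\bullet),(E',E'_\bullet))$ by Riemann--Roch, and then estimates the bad locus by $\dim\SM(r',d',\alpha')+\dim\SM(r'',d'',\alpha'')+h^1-1$, simplifying with \cite[Lemmas 2.4.1 and 2.5.2]{BB05} before optimizing over $r'$. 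Both reach the same optimum at $r'=r-1$ and the same bound on $g$. Your approach is more economical, since it reuses the stratification machinery already set up in the paper; the paper's approach is more self-contained and makes the dimension count explicit without invoking the black box of \cite[Theorem 1.4.1]{BB05}. One small point: before applying \cite[Theorem 1.4.1]{BB05} you should remark that the hypothesis $s\le r'r''((g-1)+t_{\overline{n}',\overline{n}''})$ holds on the relevant range, which follows from the genus assumption via $s_{\min}(\alpha,\overline{n}')\le r'r''t_{\overline{n}',\overline{n}''}$ and $rl+r'm\le r'r''(g-1)$.
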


\begin{proof}
First of all, let us prove that under that genus condition, the $(l,m)$ stable parabolic vector bundles form a nonempty Zariski open subset of $\SM(r,\alpha,d)$ whose complement has codimension at least $k$.

The proof of this first part is completely analogous to the proof of \cite[Proposition 2.7]{BB05}. For the convenience of the reader, we outline the main computations here.

Let $(E,E_\bullet)$ be a stable parabolic vector bundle which fails to be $(l,m)$-stable. Then, there exists a subbundle $E'$ of rank $r'$ and degree $d'$ with the induced parabolic structure and weight multiplicities $0\le n'_i(x) \le 1$ such that
\begin{equation}
\label{eq:generic-lm-stable1}
\frac{d'+\owt(E',E'_\bullet)+l}{r'} \ge \frac{d+\owt(E,E_\bullet)-m}{r}
\end{equation}
Let $(E'',E''_\bullet)$ be the parabolic vector bundle fitting in the sequence
$$0\longrightarrow (E',E'_\bullet)\longrightarrow (E,E_\bullet) \longrightarrow (E'',E''_\bullet)\longrightarrow 0$$
Then $E''$ has rank $r''=r-r'$, degree $d''=d-d'$ and $E''_\bullet$ is the induced parabolic filtration, which has weight multiplicities $n_i''(x)=1-n_i'(x)$. For simplicity in the equations, let us denote by $\owt$, $\owt'$ and $\owt''$ the parabolic weight $\owt(E,E_\bullet)$, $\owt(E',E'_\bullet)$ and $\owt(E'',E''_\bullet)$ respectively. Then $\owt=\owt'+\owt''$. Reordering the factors in equation \eqref{eq:generic-lm-stable1} and substituting $r$, $d$ and $\owt$ in terms of $(E',E'_\bullet)$ and $(E'',E''_\bullet)$ yields
$$(d'+d'')r'-d'(r'+r'')\le(r'+r'')\owt' - r'(\owt'+\owt'')+ mr'+lr$$
which is equivalent to
\begin{equation}
\label{eq:generic-lm-stable2}
d''r'-d'r''\le r''\owt'-r'\owt''+mr'+lr
\end{equation}
Let $h^1=\dim \op{PExt}^1((E'',E''_\bullet),(E',E'_\bullet)) = h^1(\ParHom((E'',E''_\bullet),(E',E'_\bullet)))$. As $(E,E_\bullet)$ is stable, then $h^0=H^0(\ParHom((E'',E''_\bullet),(E',E'_\bullet))=0$, so by Riemann-Roch formula
$$h^1=-\chi(\ParHom((E'',E''_\bullet),(E',E'_\bullet)))=r'd''-r''d'+r'r''(g-1+t_{\overline{n}'',\overline{n}'})$$
Applying inequality \eqref{eq:generic-lm-stable2}, we obtain
$$h^1\le r''\owt'-r'\owt''+mr'+lr+r'r''(g-1+t_{\overline{n}'',\overline{n}'})$$
Finally, let $\delta$ be the dimension of the locus of non-$(l,m)$-stable bundles in $\SM(r,\alpha,d)$. Generically, if $(E,E_\bullet)$ is not $(l,m)$-stable, then parabolic vector bundles $(E',E'_\bullet)$ and $(E'',E''_\bullet)$ constructed before can be found to be stable, so they are elements of the moduli spaces $\SM(r',d',\alpha')$ and $\SM(r'',d'',\alpha'')$ respectively, where $\alpha'$ and $\alpha''$ are the systems of weights induced from $\alpha$ with multiplicities $\overline{n}'$ and $\overline{n}''$ respectively. The possible $(E,E_\bullet)$ fitting in the sequence
$$0\longrightarrow (E',E'_\bullet)\longrightarrow (E,E_\bullet)\longrightarrow (E'',E''_\bullet)\longrightarrow 0$$
are then bounded by the projectivization of the parabolic $\Ext^1$-space, which has dimension $h^1-1$. Then
\begin{multline}
\label{eq:generic-lm-stable3}
\delta\le \max_{\overline{n}'} \left\{\dim \SM(r',d',\alpha')+\dim \SM(r'',d'',\alpha'')+h^1-1 \right\} \\
= \max_{\overline{n}'} \left\{ \begin{array}{l}
(r')^2(g-1)+1+(r')^2 t_{\overline{n}',\overline{n}'} + (r'')^2(g-1)+1+(r'')^2t_{\overline{n}'',\overline{n}''}\\
+r''\owt'-r'\owt''+mr'+lr+r'r''((g-1)+t_{\overline{n}'',\overline{n}'}) 
\end{array} \right\}
\end{multline}
From \cite[Lemma 2.4.1]{BB05}, we know that
$$(r')^2t_{\overline{n}',\overline{n}'}+(r'')^2t_{\overline{n}'',\overline{n}''}+r'r''t_{\overline{n}'',\overline{n}'} = r^2t_{\overline{n},\overline{n}}-r'r''t_{\overline{n}',\overline{n}''}$$
substituting in the previous equation yields
$$\delta \le \max_{\overline{n}'} \left \{ \begin{array}{l}
(r'+r'')^2(g-1)-r'r''(g-1)+1+r^2t_{\overline{n},\overline{n}}\\
-r'r''t_{\overline{n}',\overline{n}''}+r''\owt'-r'\owt''+mr'+lr 
\end{array} \right \}$$
By \cite[Lemma 2.5.2]{BB05}, we have 
$$-r'r''t_{\overline{n}',\overline{n}''}+r''\owt'-r'\owt''\le 0$$
Therefore, taking into account that $\dim\SM(r,\alpha,d)=r^2(g-1)+1+r^2t_{\overline{n},\overline{n}}$ we obtain
$$\delta\le \dim\SM(r,\alpha.,d)-\min_{r'} \left \{r'r''(g-1)-mr'-lr \right\} $$
Then we can guarantee that $\dim \SM(r,\alpha,d) - \delta\ge k>0$ whenever
$$g\ge 1+ \max_{r'} \frac{mr'+lr+k}{r'r''}$$
As $r'+r''=r$ and  $r'\ge 1$, $r''\ge 1$, then $\frac{1}{r'r''}$ attains its maximum value when $r'=1$ and $r''=r-1$ or $r'=r-1$ and $r''=1$. Simultaneously, $\frac{m}{r''}$ attains its maximum for $r''=1$, so the maximum of the above expression is attained at $r'=r-1$ and $r''=1$, leading us to the desired bound for the genus
$$g\ge m+\frac{rl+k}{r-1}+1 = m+l+1+\frac{l+k}{r-1}$$

Now, let $S^d\subsetneq \SM(r,\alpha,d)$ be the subset parameterizing stable parabolic vector bundles $(E,E_\bullet)\in \SM(r,\alpha,d)$ which are not $(l,m)$-stable. Notice that if $(E,E_\bullet)\in S^d$, then for every degree zero line bundle $L$, $(E,E_\bullet)\otimes L\in S^d$. To prove it, observe that if $(E',E'_\bullet)\subsetneq (E,E_\bullet)$ is a subbundle contradicting $(l,m)$-stability for $(E,E_\bullet)$, then $(E',E'_\bullet)\otimes L \subsetneq (E,E_\bullet)\otimes L$ contradicts $(l,m)$-stability for $(E,E_\bullet)\otimes L$. As the latter is always stable for any $L$, then $S^d$ is invariant by tensorization with line bundles of degree $0$. 

For every line bundle $\xi$ of degree $d$, let $S^\xi=S^d\cap \SM(r,\alpha,\xi)$. If $\xi'$ is another line bundle of degree $d$ then there exists a line bundle $L$ such that $L^r=\xi'\otimes \xi^{-1}$. Therefore, tensoring by $L$ gives us an isomorphism between $S^\xi$ and $S^{\xi'}$. Then, the fibers of the determinant map $\det : S^d \longrightarrow \Pic^d(X)$ are all isomorphic and, therefore, equidimensional. As the same happens with $\det : \SM(r,\alpha,d) \longrightarrow \Pic^d(X)$, then we obtain that for every $\xi\in \Pic^d(X)$, the codimension of $S^\xi$ in $\SM(r,\alpha,\xi)$ is the same as the codimension of $S^d$ in $\SM(r,\alpha,d)$ and the Lemma follows.
\end{proof}

\begin{lemma}
\label{lemma:1-0-stability}
Let $(E,E_\bullet)$ be a $(1,0)$-semistable parabolic vector bundle. Let $x\in D$ and let $1<k\le r$ be an integer. Let $E'_{x,k}\subsetneq E|_x$ be any subspace such that
$$E_{x,k-1} \supsetneq E_{x,k}' \supsetneq E_{x,k+1}$$
And let $E_\bullet'$ be the quasi-parabolic structure obtained substituting $E_{x,k}$ by $E_{x,k}'$ in $E_\bullet$. Then $(E,E_\bullet')$ is a stable parabolic vector bundle.
\end{lemma}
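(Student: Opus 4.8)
The plan is to exploit that $E_\bullet$ and $E'_\bullet$ differ only in the single step $E_{x,k}$ at the single point $x$, while carrying identical weights $\alpha$ and the identical full-flag jump-type $\overline{n}=(1,\dots,1)$. First I would record that $\owt_\alpha(E,E'_\bullet)=\owt_\alpha(\overline{n})=\owt_\alpha(E,E_\bullet)$ depends only on $\overline{n}$ and $\alpha$, so the total parabolic degree is unchanged: $\pdeg(E,E'_\bullet)=\pdeg(E,E_\bullet)$. The whole problem thus reduces to controlling, for an arbitrary proper subbundle $F\subsetneq E$, how far the \emph{induced} parabolic weight of $F$ can move when $E_{x,k}$ is replaced by $E'_{x,k}$.

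For this, fix $F$ and set $a_i=\dim(F|_x\cap E_{x,i})$ and $a'_i=\dim(F|_x\cap E'_{x,i})$. The induced parabolic weight of $F$ equals $\sum_{y\in D}\sum_i \alpha_i(y)\bigl(\dim(F|_y\cap E_{y,i})-\dim(F|_y\cap E_{y,i+1})\bigr)$, and since $E'_{y,i}=E_{y,i}$ for all $(y,i)\ne(x,k)$, only the terms of index $k-1$ and $k$ at the point $x$ are affected. Writing $\delta=a'_k-a_k$, a short computation gives
\[
\owt_\alpha(F,E'_\bullet)-\owt_\alpha(F,E_\bullet)=\delta\bigl(\alpha_k(x)-\alpha_{k-1}(x)\bigr).
\]
The key point is the bound $\delta\in\{-1,0,1\}$: since $E_{x,k}$ and $E'_{x,k}$ are both hyperplanes in $E_{x,k-1}$, intersecting the subspace $F|_x\cap E_{x,k-1}$ with either one lowers its dimension by at most one, so $a_k,a'_k\in\{a_{k-1}-1,a_{k-1}\}$. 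As $0\le\alpha_{k-1}(x)<\alpha_k(x)<1$, I conclude $\bigl|\owt_\alpha(F,E'_\bullet)-\owt_\alpha(F,E_\bullet)\bigr|<1$, so in particular $\pdeg(F,E'_\bullet)<\pdeg(F,E_\bullet)+1$.

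To finish, I would feed this into the $(1,0)$-semistability of $(E,E_\bullet)$. For every proper subbundle $F\subsetneq E$,
\[
\frac{\pdeg(F,E'_\bullet)}{\rk(F)}<\frac{\pdeg(F,E_\bullet)+1}{\rk(F)}\le\frac{\pdeg(E,E_\bullet)}{\rk(E)}=\frac{\pdeg(E,E'_\bullet)}{\rk(E)},
\]
where the strict first inequality is the bound just obtained and the middle inequality is exactly $(1,0)$-semistability. Since this holds for all proper subbundles, $(E,E'_\bullet)$ is stable.

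The only real content is the linear-algebra step giving $\delta\in\{-1,0,1\}$, together with the observation that the weight gap $\alpha_k(x)-\alpha_{k-1}(x)$ is \emph{strictly} less than $1$; this strictness is precisely what promotes the non-strict $(1,0)$-semistability inequality to the strict stability inequality. Everything else is bookkeeping with the induced parabolic structure, and notably no hypothesis on the genus or genericity of the weights is needed.
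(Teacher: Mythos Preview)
Your proof is correct and follows essentially the same approach as the paper: compute the change in the induced parabolic weight of an arbitrary subbundle $F$ as $\delta\,(\alpha_k(x)-\alpha_{k-1}(x))$ with $|\delta|\le 1$, then use the strict inequality $\alpha_k(x)-\alpha_{k-1}(x)<1$ to upgrade $(1,0)$-semistability to stability. The only cosmetic difference is that the paper bounds $\delta\le 1$ via the common subspace $E_{x,k+1}$ of codimension one in both $E_{x,k}$ and $E'_{x,k}$, whereas you bound it via both being hyperplanes in $E_{x,k-1}$; these are equivalent in the full-flag setting.
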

 
\begin{proof}
Let $F\subsetneq E$ be a subbundle. Let $F_\bullet$ and $F_\bullet'$ be the parabolic structures induced by $E_\bullet$ and $E_\bullet'$ on $F$ respectively. We have
$$\owt_x(F_\bullet')= \owt_x(F_\bullet) +\left( \dim(F|_x \cap E_{x,k}') - \dim(F|_x \cap E_{x,k}) \right) (\alpha_i(x)-\alpha_{i-1}(x))$$
As $E_{x,k+1}\subseteq E_{x,k}\cap E_{x,k}'$ and $E_{x,k+1}$ has codimension one in both $E_{x,k}$ and $E_{x,k+1}$, clearly $\dim(F|_x\cap E_{x,k}')\le \dim(F|_x \cap E_{x,k})+1$. Therefore,
$$\owt_x(F_\bullet') \le \owt_x(F_\bullet)+(\alpha_k(x)-\alpha_{k-1}(x)) < \owt_x(F_\bullet)+1$$
Finally, by $(1,0)$ semistability yields
\begin{multline*}
\frac{\pdeg(F,F_\bullet')}{\rk(F)}=\frac{\deg(F)+\sum_{x\in D} \owt_x(F_\bullet')}{\rk(F)} < \frac{\deg(F)+\sum_{x\in D} \owt_x(F_\bullet) +1}{\rk(F)} \\
\le \frac{\pdeg(E,E_\bullet)}{\rk(E)}=\frac{\pdeg(E,E_\bullet')}{\rk(E)}
\end{multline*}
as this holds for every subbundle $F$, $(E,E_\bullet')$ is stable.
\end{proof}

\section{Parabolic Hitchin Pairs}
\label{section:Hitchin}

Let $L$ be a line bundle over a complex projective curve $X$. An $L$-twisted Hitchin pair over $X$ is a pair $(E,\varphi)$ consisting on a vector bundle $E$ over $X$ and a traceless morphism  $\varphi \in H^0(\End_0(E)\otimes L)$ called the field.

If $L$ is the canonical bundle $K$, then a $K$-twisted Hitchin pair is usually known as a Higgs bundle and the morphism $\varphi$ is known as the Higgs field.

Given a Hitchin pair $(E,\varphi)$, a subbundle $F\subseteq E$ is said to be $\varphi$-invariant if
$$\varphi(F)\subseteq F\otimes L$$
An $L$-twisted Hitchin pair is called stable (respectively semistable) if and only if for every $\varphi$-invariant proper subbundle $0\ne F\subsetneq E$
$$\mu(F)<\mu(E) \quad\quad \text{(respectively }\le \text{)}$$

We will denote by $\SM_L(r,\xi)$ the moduli space of semistable $L$-twisted Hitchin pairs of rank $r$ and determinant $\det(E)\cong \xi$. Notice that by Serre duality, for $L=K$ the cotangent space of $\SM(r,\xi)$ at a stable vector bundle $E$ is
$$T^*_E\SM(r,\xi)\cong H^1(\End_0(E))^\vee=H^0(\End_0(E)\otimes K)\, ,$$
hence, the cotangent bundle of $\SM(r,\xi)$ lies as a subscheme of the moduli space of semi-stable Higgs bundles. In fact, it is an open subscheme.

Let us recall the definition of the Hitchin map
$$H:\SM_L(r,\xi)\longrightarrow \SH_L=\bigoplus_{k=2}^r H^0(X,L^k)$$
Let $S=\op{Tot}(L)=\underline{\Spec}\Sym^\bullet(L^{-1})$ be the total space of the vector bundle $L$. Let $\pi:S\to X$ be the projection and let $x\in H^0(S,\pi^*L)$ be the tautological section. Let us consider the characteristic polynomial of the field $\varphi$
$$\det(x\cdot \id - \pi^* \varphi)=x^r+\tilde{s_1}x^{r-1}+ \tilde{s_2} x^{r-2}+\cdots +\tilde{s_r}$$
Then there exist unique sections $s_i\in H^0(X,L^i)$ such that $\tilde{s_i}=\pi^*s_i$. Note that $\varphi$ is traceless by hypothesis, so $s_1=0$. The Hitchin map is then built sending each Hitchin pair $(E,\varphi)$ to the coefficients of the characteristic polynomial of $\varphi$
$$(s_i)_{i=1}^r \in \bigoplus_{i=2}^r H^0(X,L^i)$$

The zeros of the characteristic polynomial $\det(x\cdot \id - \pi^*\varphi)$ define a curve $X_s\subset \op{Tot}(L)$ which is an $r$-to-$1$ cover of $X$. We call it the spectral curve at $s\in \SH_L$.

A parabolic $L$-twisted Hitchin pair over a pointed curve $(X,D)$ is a parabolic vector bundle $(E,E_\bullet)$ over $(X,D)$ endowed with an $L$-twisted strongly parabolic endomorphism $\varphi\in H^0(\SParEnd_0\otimes L)$. A $K(D)$-twisted parabolic Hitchin pair is called a parabolic Higgs bundle.

A parabolic $L$-twisted Hitchin pair $(E,E_\bullet,\varphi)$ is called stable (respectively semistable) if for every $\varphi$-invariant proper subbundle $0\ne F\subsetneq E$ with the induced parabolic structure
$$\frac{\pdeg(F,F_\bullet)}{\rk(F)} < \frac{\pdeg(E,E_\bullet)}{\rk(E)} \quad \text{(respectively, }\le\text{ )}$$

We denote by $\SM_L(r,\alpha,\xi)$ the moduli space of semistable $L$-twisted parabolic Hitchin pairs. From this point on, we will assume that $\alpha$ is full flag, although the proof of most of the lemmas in this section can be adapted to other parabolic types. Similarly to the non-parabolic case, by Serre duality if $(E,E_\bullet)$ is a stable parabolic vector bundle
$$T^*_{(E,E_\bullet)}\SM(r,\alpha,\xi) \cong H^1(\ParEnd_0(E,E_\bullet))^\vee=H^0(\SParEnd_0(E,E_\bullet)\otimes K(D))$$
Therefore, the cotangent bundle of the moduli space of stable parabolic vector bundles is a subset of the moduli of parabolic Higgs bundles. In fact, it is an open subvariety.

We can define an analogue of the Hitchin map in the parabolic case by sending each parabolic Hitchin pair $(E,E_\bullet,\varphi)$ to the characteristic polynomial of $\varphi$. Nevertheless, as the field is assumed to be strongly parabolic, it is nilpotent at the parabolic points, so its characteristic polynomial vanishes at $D$. Moreover, we require the field to be traceless, so the independent coefficient of the characteristic polynomial is always zero. Therefore, the image of the Hitchin map lies in
$$H:\SM_L(r,\alpha,\xi)\longrightarrow \SH'_L=\bigoplus_{i=2}^r H^0(X,L^i(-D))$$

We will be interested in computing the image of the Hitchin map both for the parabolic and non-parabolic cases. For non-parabolic Hitchin pairs, the following Lemma holds as a consequence of an argument from Beauville, Narasimhan and Ramanan \cite{BNR}

\begin{lemma}
Let $L$ be a line bundle over $X$ such that $r\deg(L)>2g$. Then the Hitchin map
$$H:\SM_L(r,\xi)\longrightarrow \bigoplus_{k=2}^r H^0(X,L^k)$$
is surjective.
\end{lemma}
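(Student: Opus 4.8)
The plan is to realize every $s=(s_2,\dots,s_r)\in\SH_L=\bigoplus_{k=2}^r H^0(X,L^k)$ as the characteristic polynomial of a semistable Hitchin pair, following the spectral construction of Beauville--Narasimhan--Ramanan. To $s$ one associates the spectral curve $X_s\subset\op{Tot}(L)$ cut out by $x^r+\pi^*s_2\,x^{r-2}+\cdots+\pi^*s_r=0$, with its degree-$r$ finite flat projection $\pi:X_s\to X$. The spectral correspondence identifies rank-one torsion-free sheaves $F$ on $X_s$ with Hitchin pairs $(E,\varphi)$ satisfying $H(E,\varphi)=s$, via $E=\pi_*F$ and $\varphi$ given by pushing forward multiplication by the tautological section. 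First I would record the two facts driving the construction. From $\pi_*\SO_{X_s}\cong\bigoplus_{i=0}^{r-1}L^{-i}$ as $\SO_X$-modules one gets $\det(\pi_*\SO_{X_s})\cong L^{-\binom{r}{2}}$, hence the determinant formula $\det(\pi_*F)\cong\op{Nm}(F)\otimes L^{-\binom{r}{2}}$; and from $\op{Nm}(\pi^*M)\cong M^{r}$ one gets $\op{Nm}\circ\pi^*=[r]$, so the norm map $\op{Nm}:\Pic^0(X_s)\to\Pic^0(X)$ is surjective.

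I would then run the construction for those $s$ whose spectral curve $X_s$ is smooth. Fixing $\deg E=\deg\xi$ determines $\deg F$ by Riemann--Roch on $X_s$; with that degree fixed, surjectivity of $\op{Nm}$ on $\Pic^0(X_s)$ provides a line bundle $F$ with $\op{Nm}(F)\cong\xi\otimes L^{\binom{r}{2}}$, so that the degrees automatically match and $\det(\pi_*F)\cong\xi$. Smoothness (integrality) of $X_s$ forces $(E,\varphi)=(\pi_*F,\varphi)$ to be stable: a $\varphi$-invariant subsheaf is exactly an $\SO_{X_s}$-submodule of the rank-one sheaf $F$, so it is either $0$ or of full generic rank and cannot give a proper invariant subbundle. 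Hence $(E,\varphi)\in\SM_L(r,\xi)$ with $H(E,\varphi)=s$. The hypothesis $r\deg L>2g$ makes $L^r$ very ample, and a Bertini argument on the linear system of spectral curves in $\op{Tot}(L)$ (using $L^r$ to control the locus $x=0$, where $X_s$ meets the zero section along the zeros of $s_r$) shows that the generic $X_s$ is smooth. Since $\SH_L$ is an irreducible affine space, these $s$ form a dense subset of the image.

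The main obstacle is upgrading density to surjectivity, that is, handling the non-integral (reducible or non-reduced) spectral curves, for which the pushforward of a torsion-free sheaf need not be semistable and the direct construction breaks down. I would sidestep this by closedness rather than construction: the Hitchin map $H:\SM_L(r,\xi)\to\SH_L$ is proper (Nitsure's moduli of $L$-twisted pairs, generalizing Hitchin's properness for $L=K$, using the positivity of $L$), so its image is closed; a closed subset of $\SH_L$ containing a dense subset is all of $\SH_L$. The only points requiring genuine care are that $\op{Nm}$ can be matched in the precise degree dictated by $\deg\xi$ and that properness holds under $r\deg L>2g$; both are standard in the BNR/Nitsure framework.
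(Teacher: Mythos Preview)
Your proposal is correct and follows essentially the same strategy as the paper: use the BNR spectral correspondence to hit a dense open set of smooth spectral curves, then invoke properness of the Hitchin map (Nitsure) to conclude surjectivity. The paper is slightly more concrete in producing a single smooth spectral curve (taking $s=(0,\dots,0,\tau)$ with $\tau\in H^0(L^r)$ having only simple zeros, which exists since $L^r$ is very ample) rather than appealing to Bertini, and it is terser about the determinant condition, whereas you spell out the norm-map argument to hit the prescribed $\xi$; but these are differences of exposition, not of method.
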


\begin{proof}
By hypothesis $\deg(L^r)>2g$, so $L^r$ is very ample. Therefore, it admits a section $\tau\in H^0(X,L^r)$ with at most simple zeros. Let $\overline{\tau}=(0,0,\ldots,\tau)\in \bigoplus_{k=2}^r H^0(X,L^k)$. Then $X_{\overline{\tau}}$ has equation $x^r+\tau=0$. As $\tau$ has at most simple zeroes, $X_{\overline{\tau}}$ is smooth. The smoothness condition for families of curves is open, so there is an open nonempty subset $U\subseteq \bigoplus_{k=2}^r H^0(X,L^k)$ such that for every $s\in U$, the spectral curve $X_{s}$ is smooth.

On the other hand, from \cite[Proposition 3.6]{BNR} there exists a bijection between torsion free sheaves of rank $1$ over $X_s$ (whose pushforward is automatically a stable pure dimension sheaf over $\op{Tot}(L)$) and stable Hitchin pairs $(E,\varphi)$ over $X$ such that $H(E,\varphi)=s$. As there always exist rank 1 torsion free sheaves over $X_s$, for every $s\in U$ there exists at least a stable Hitchin pair whose image by the Hitchin map is $s$, so
$$U\subseteq H(\SM_L^s(r,\xi)) \subseteq \bigoplus_{k=2}^r H^0(X,L^k)$$
The set $U$ is Zariski open and nonempty, so it is dense and $H$ is dominant. By \cite[Theorem 6.1]{Ni}, it is also proper, so it must be surjective.
\end{proof}

Let us prove the parabolic analogue for the Lemma

\begin{lemma}
Suppose that $g\ge 2$ and let $L$ be a line bundle over $X$ such that $r\deg(L)>2g$. Then the parabolic Hitchin map
$$\SM_L(r,\alpha,\xi) \longrightarrow \bigoplus_{k=2}^r H^0(X,L^k(-D))$$
is dominant.
\end{lemma}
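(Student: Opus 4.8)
The plan is to adapt the spectral-curve argument of the preceding non-parabolic lemma, the essential new feature being that every spectral curve associated to a point $s\in\SH'_L=\bigoplus_{k=2}^r H^0(X,L^k(-D))$ is forced to meet the zero section over each parabolic point. Indeed, writing $s=(s_2,\dots,s_r)$ with $s_k\in H^0(X,L^k(-D))$, at every $x\in D$ all coefficients $s_k(x)$ vanish, so the fibre of $X_s$ over $x$ is the $r$-fold point $\{x^r=0\}$, and the field of any Hitchin pair lying over $s$ restricts to a nilpotent endomorphism of $E|_x$. This nilpotency is exactly what will produce the strongly parabolic structure.

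First I would produce a dense open subset $U\subseteq\SH'_L$ over which the spectral curve $X_s$ is integral. Since $r\deg L>2g$, the line bundle $L^r$ is very ample; choosing $\tau\in H^0(X,L^r(-D))$ whose zero divisor is reduced and has a point of simple (hence totally ramified) vanishing away from $D$, the curve $X_{\overline{\tau}}$ with $\overline{\tau}=(0,\dots,0,\tau)$ is reduced and connected, hence integral, in the same way as in the non-parabolic lemma. Integrality and reducedness are open conditions in the flat family of spectral curves parameterized by $\SH'_L$, so they hold on a dense open $U$ containing $\overline{\tau}$.

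Next, for $s\in U$ I would invoke the Beauville--Narasimhan--Ramanan correspondence \cite{BNR}: a rank-one torsion-free sheaf $\SL$ on the integral curve $X_s$ pushes forward to a rank-$r$ sheaf $E=\pi_*\SL$ on the smooth curve $X$ (torsion-free over a smooth curve, hence locally free), and multiplication by the tautological section $x$ induces a field $\varphi\colon E\to E\otimes L$ whose characteristic polynomial is $s$. Because $X_s$ is integral it has no proper subcurves, so $(E,\varphi)$ admits no proper $\varphi$-invariant subbundle; consequently it is stable, and in fact remains stable as a parabolic Hitchin pair for any full flag and any system of weights $\alpha$, the stability inequality being vacuous. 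The determinant can be prescribed: since the norm map $\operatorname{Nm}\colon\Pic(X_s)\to\Pic(X)$ is surjective, one adjusts $\SL$ within its degree so that $\det E\cong\xi$.

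It remains to equip $(E,\varphi)$ with a full flag $E_\bullet$ over $D$. At each $x\in D$ the endomorphism $\varphi|_x$ is nilpotent, so there is a full flag of $E|_x$ with respect to which $\varphi|_x$ is strictly upper triangular, i.e. $\varphi(E_{x,i})\subseteq E_{x,i+1}\otimes L|_x$; this makes $\varphi$ strongly parabolic, so that $(E,E_\bullet,\varphi)$ is a stable $L$-twisted parabolic Hitchin pair of determinant $\xi$ and weights $\alpha$ whose image under the parabolic Hitchin map is $s$. Hence $U$ lies in the image and the map is dominant. The main obstacle is the first step: the spectral curves over $\SH'_L$ are constrained to be singular, or at least totally ramified, over $D$, so one must check that integrality is nonetheless generic inside this smaller linear system, and that the pushforward construction can be carried out with the prescribed determinant while the forced nilpotency at $D$ indeed yields a full-flag strongly parabolic structure.
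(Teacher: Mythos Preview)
Your approach is correct and takes a genuinely different route from the paper's. The paper argues through families: it starts from a versal family of (non-parabolic) semistable $L$-twisted Hitchin pairs, restricts to the preimage $\SM'$ of $\SH'_L$ (pairs whose field is nilpotent over $D$), passes to the fibre product of flag bundles over $\SM'$ to add compatible full flags at each $x\in D$, and then invokes Corollary~\ref{cor:underlyingStableGeneric} to exhibit one point where the underlying parabolic bundle is $\alpha$-stable, making the $\alpha$-stable locus open and nonempty, hence with dense image. Your argument instead works pointwise via the spectral correspondence and is in one respect cleaner: once $X_s$ is integral, the BNR pair $(E,\varphi)$ has \emph{no} proper $\varphi$-invariant subbundle at all, so parabolic stability for any weights is vacuous and there is no need to locate an $\alpha$-stable parabolic bundle separately.

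The one point where the paper's approach is more robust is the step you yourself flag as the main obstacle. Your argument needs the generic spectral curve over $\SH'_L$ to be integral, and for the specific witness $(0,\dots,0,\tau)$ you need $H^0(L^r(-D))\ne 0$; if this group vanishes then $s_r\equiv 0$ and every $X_s$ contains the zero section, so no spectral curve in $\SH'_L$ is integral. The hypothesis $r\deg L>2g$ controls $L^r$ but not $L^r(-D)$, so for large $|D|$ this can fail. In the applications later in the paper ($L=K(D)$ or $K(D-x)$) one always has $\deg(L^r(-D))>0$ and your argument goes through, but the paper's family argument avoids spectral curves entirely and covers the lemma as stated without this extra check.
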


\begin{proof}
Let $(\SE,\Phi)$ be a versal family of traceless semistable Hitchin $L$-twisted pairs, where $\SE\longrightarrow \SM\times X$ is a vector bundle and $\Phi:\SE\longrightarrow \SE\otimes p_2^*L$ satisfies that for every $t\in \SM$, $(\SE_t,\Phi_t)$ is a semistable Hitchin pair. By the previous corollary, the induced Hitchin map $h:\SM\longrightarrow \SH_L$ is surjective. Let
$$\SM'=h^{-1}\left ( \bigoplus_{k=2}^r H^0(L^k(-D)) \right)$$
Then it is the closed subset of $\SM$ corresponding to stable pairs whose field is nilpotent at every $x\in D$. As the Hitchin map is surjective, its restriction $h:\SM'\longrightarrow \SH_L'$ is surjective.

For every $x\in D$, let $\SF_x$ be the total space of the flag bundle over $\SE|_{\SM'\times \{x\}}$, i.e.
$$\SF_x=\op{Tot}\left(\op{Fl}(\SE|_{\SM'\times \{x\}})\right )$$
Let $\pi:\SF_x\twoheadrightarrow \SM'$ be the projection. Taking the pullback of the versal family to $\SF_x$, it is a family of triples $(\SE,\{\SE_{x,i}\},\Phi)$ consisting on a vector bundle, a full flag filtration at the point $x$ and a field. Consider the closed subset $\SH_x\subseteq \SF_x$ consisting on triples where the field preserves the filtration. It is closed by \cite[Lemma 4.3]{Yok93} (see \cite[Chapter 4]{Al17} for more details). As the characteristic polynomial of $\Phi_t:E_t\to E_t\otimes L$ annihilates at $x$, it is nilpotent at $x$ and therefore it admits an adapted filtration at $x$, $\{E_{t,x,i}\}$ such that
$$\Phi_t(E_{t,x,i})\subseteq E_{t,x,i+1}$$
Therefore, the map $\SF_x\longrightarrow \SM'$ is surjective. Now, let 
$$\SN=\SF_{x_1} \times_{\SM'} \SF_{x_2}\times_{\SM'} \cdots \times_{\SM'}\SF_{x_n} \twoheadrightarrow \SM'$$
be the fiber product of all $\SF_x$ over $\SM'$ for $x\in D$. Taking the pullback of the families defined over $\SF_{x}$ for $x\in D$, there is a versal family over $\SN$ of triples $(\SE,\SE_\bullet,\Phi)$ such that $(\SE,\SE_\bullet)$ is a vector bundle over $\SN\times X$ with a filtration over $\SN\times D$ and $\Phi\in H^0(\SParEnd_0(E,E_\bullet)\otimes p_2^*L)$ such that for every $t\in \SN$, $(\SE_t,\Phi_t)$ is a stable Hitchin pair.

Let $\SU\subseteq \SN$ be the open subset consisting on points $t\in \SN$ such that $(\SE_t,\SE_{t,\bullet})$ is a stable parabolic vector bundle with respect to the parabolic weights $\alpha$. By Corollary \ref{cor:underlyingStableGeneric}, there exists at least a filtered vector bundle $(E,E_\bullet)$ such that $E$ is stable and $(E,E_\bullet)$ is parabolically stable. Therefore, $\SU$ is nonempty and thus, dense. Therefore $h(\SU)\subseteq H(\SM_L(r,\alpha,\xi))$ is dense in $\SH_L'$.
\end{proof}

\begin{corollary}
\label{cor:HitchinDominant}
Suppose that $g\ge 2$. Let $U$ be any nonempty open subset of $\SM(r,\alpha,\xi)$ and let $L$ be a line bundle over $X$ such that $r\deg(L)>2g$. Then the linear space generated by the images of
$$H(H^0(\SParEnd_0(E,E_\bullet)\otimes L))\subsetneq \bigoplus_{i=2}^r H^0(X,L^k(-D))$$
when $(E,E_\bullet)$ runs over $U$ is $\bigoplus_{i=2}^r H^0(X,L^k(-D))$.
\end{corollary}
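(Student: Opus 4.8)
The plan is to argue by duality. The linear span equals $\SH'_L$ if and only if no nonzero linear functional $\ell\in(\SH'_L)^\vee$ vanishes on all of the images $H(H^0(\SParEnd_0(E,E_\bullet)\otimes L))$ as $(E,E_\bullet)$ ranges over $U$. So I would suppose, aiming at a contradiction, that such an $\ell\neq 0$ exists, and then propagate its vanishing to a setting where the dominance of the Hitchin map (the previous Lemma) can be brought to bear.

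First I would package the fields over all of $U$ into a single irreducible variety. Let $Z$ denote the closure, inside the moduli space $\SM_L(r,\alpha,\xi)$ of $L$-twisted parabolic Hitchin pairs, of the locus of pairs $(E,E_\bullet,\varphi)$ whose underlying parabolic bundle $(E,E_\bullet)$ is stable; concretely this is (the image of) the family $\SN\supseteq\SU$ built in the proof of the previous Lemma, which is irreducible because it is an iterated flag bundle over the irreducible base $\SM'$. It carries the Hitchin map $H:Z\to\SH'_L$ and the forgetful map $\pi:Z\to\SM(r,\alpha,\xi)$ sending $(E,E_\bullet,\varphi)$ to $(E,E_\bullet)$, whose points over a stable bundle carry fields $\varphi\in H^0(\SParEnd_0(E,E_\bullet)\otimes L)$. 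The previous Lemma says exactly that $H$ is dominant. Moreover $\pi$ is dominant, because the zero field $\varphi=0$ is adapted to every flag, so $Z$ contains the pairs $(E,E_\bullet,0)$ for all stable $(E,E_\bullet)$, and these project onto a dense subset of $\SM(r,\alpha,\xi)$.

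Now I would transport the vanishing of $\ell$ to $Z$. The composite $\ell\circ H:Z\to\CC$ is a regular function. Since $\pi$ is dominant and $U$ is nonempty open, $\pi^{-1}(U)$ is a nonempty open, hence (by irreducibility of $Z$) dense, subset of $Z$. For any $z=(E,E_\bullet,\varphi)\in\pi^{-1}(U)$ one has $(E,E_\bullet)\in U$ and $H(z)\in H(H^0(\SParEnd_0(E,E_\bullet)\otimes L))$, where $\ell$ vanishes by assumption; thus $\ell\circ H$ vanishes on the dense set $\pi^{-1}(U)$ and therefore on all of $Z$. Because $H$ is dominant its image is dense in the affine space $\SH'_L$, so $\ell$ vanishes on a Zariski-dense subset; a nonzero linear functional cannot do this, forcing $\ell=0$, a contradiction. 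Hence the span is all of $\SH'_L$.

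The delicate point, where I expect to spend the most care, is the bookkeeping that makes $Z$ simultaneously irreducible, dominant over $\SM(r,\alpha,\xi)$ via $\pi$ (so that passing to pairs lying over $U$ does not lose the relevant locus), and dominant onto $\SH'_L$ via $H$ (so that I genuinely recover the whole target). All three are supplied by reusing the explicit versal construction of the previous Lemma; if one worries that $\SM_L(r,\alpha,\xi)$ might be reducible, I would instead run the argument on the dense open sublocus where $h^0(\SParEnd_0(E,E_\bullet)\otimes L)$ is minimal, so that the fields form a genuine vector bundle over the irreducible base and irreducibility of $Z$ is immediate, while the dense-image subfamily $\SU$ of the previous Lemma still meets this locus and keeps $H$ dominant.
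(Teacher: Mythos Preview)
Your proposal is correct and follows essentially the same approach as the paper: both use that the preimage of $U$ under the forgetful map $\SM_L^{ss-vb}(r,\alpha,\xi)\to\SM(r,\alpha,\xi)$ is a nonempty open (hence dense) subset, so by dominance of the Hitchin map (the previous Lemma) its image is dense in $\SH'_L$ and therefore spans. The paper states this directly in three lines, implicitly using irreducibility of $\SM_L(r,\alpha,\xi)$; your duality-with-a-linear-functional framing is just the unfolded contrapositive of ``dense $\Rightarrow$ spans'', and your extra care about irreducibility of $Z$ (including the fallback to the locus of minimal $h^0$) addresses explicitly what the paper takes for granted.
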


\begin{proof}
Let $\SM_L^{ss-vb}(r,\alpha,\xi)\subseteq \SM_L(r,\alpha,\xi)$ be the subset of the moduli space of semi-stable parabolic Hitchin pairs consisting of pairs whose underlying parabolic vector bundle is semi-stable. Let
$$p_{(E,E_\bullet)}:\SM_L^{ss-vb}(r,\alpha,\xi) \longrightarrow \SM(r,\alpha,\xi)$$
be the forgetful map and let $\overline{U}=p_{(E,E_\bullet)}^{-1}(U)$. Then $\overline{U}$ is an open nonempty subset of $\SM_L(r,\alpha,\xi)$, so it is a dense subset. The previous Lemma implies that $H$ is dominant. Therefore, $H(\overline{U})$ is dense in $\bigoplus_{i=2}^r H^0(X,L^k(-D))$, so its linear span is the whole space.
\end{proof}

In the case of Higgs bundles, i.e., when $L$ is the canonical bundle $K$, a classical result by Hitchin shows that the Hitchin map becomes a complete integrable system for the moduli space of Higgs bundles. In the case of parabolic bundles, we will be interested in the following result from Faltings

\begin{lemma}[{\cite[V.(ii)]{Faltings}}]
The parabolic Hitchin map
$$H:\SM_{K(D)}(r,\alpha,\xi) \longrightarrow  \SH'$$
is equidimensional.
\end{lemma}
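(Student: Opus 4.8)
The plan is to deduce equidimensionality from a dimension count together with two bounds on the fibre dimension, one from below by upper semicontinuity and one from above via the scaling action. Write $N:=\dim\SM(r,\alpha,\xi)$. First I would record that, for generic $\alpha$, there are no strictly semistable pairs, so $\SM_{K(D)}(r,\alpha,\xi)$ is smooth and irreducible of dimension $2N$ (it contains $T^*\SM(r,\alpha,\xi)$ as a dense open subset and carries an everywhere defined holomorphic symplectic form $\omega$), while a Riemann--Roch computation gives
$$\dim\SH'=\sum_{i=2}^r h^0\big(X,K(D)^i(-D)\big)=(g-1)(r^2-1)+n\tfrac{r^2-r}{2}=N,$$
the bundles $K(D)^i(-D)$ being nonspecial for $i\ge 2$. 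By the parabolic spectral correspondence (the $K(D)$-twisted, strongly parabolic analogue of \cite{BNR}), over the locus $\SH'_{\mathrm{sm}}\subset\SH'$ of sections whose spectral curve $X_s\subset\op{Tot}(K(D))$ is smooth, $H^{-1}(s)$ is a torsor under a fixed abelian variety of dimension $N$; hence the generic fibre has dimension exactly $N$, and the assertion is that every fibre has this dimension.

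Since $\SM_{K(D)}(r,\alpha,\xi)$ is irreducible of dimension $2N$ and $H$ is dominant onto $\SH'$, upper semicontinuity of fibre dimension immediately gives $\dim H^{-1}(s)\ge N$ for every $s$ in the image. For the reverse inequality I would use the $\CC^*$-action $t\cdot(E,E_\bullet,\varphi)=(E,E_\bullet,t\varphi)$, for which $H$ is equivariant with respect to the linear action $s_i\mapsto t^i s_i$ on $\SH'$. All weights being strictly positive, the orbit closure $C_s:=\overline{\CC^*\cdot s}$ is an affine curve through the origin, and $t\cdot(-)$ identifies $H^{-1}(s)$ with $H^{-1}(t\cdot s)$ for $t\in\CC^*$. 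As the parabolic Hitchin map is proper (as in the non-parabolic case, cf. \cite{Yok93}, \cite{Ni}), restricting the family to $C_s$ and applying upper semicontinuity along $C_s$ yields $\dim H^{-1}(s)\le\dim H^{-1}(0)$. This reduces the whole statement to the single bound $\dim H^{-1}(0)\le N$ for the nilpotent cone.

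This last bound is the crux and is exactly the content of Faltings' argument transported to the parabolic setting; I expect it to be the only genuinely non-formal step. The route I would follow uses the symplectic form $\omega$: over $\SH'_{\mathrm{sm}}$ the fibres are the abelian varieties above and $\omega$ restricts to zero on them, so $H$ is there a Lagrangian fibration and, by the Poisson commutativity of the Hitchin Hamiltonians (the components of $H$) together with density of $\SH'_{\mathrm{sm}}$, every fibre is isotropic for $\omega$. Since $\SM_{K(D)}(r,\alpha,\xi)$ is smooth and $\omega$ is everywhere nondegenerate, an isotropic subvariety has dimension at most $N$, so $\dim H^{-1}(0)\le N$. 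Combining the three paragraphs gives $N\le\dim H^{-1}(s)\le\dim H^{-1}(0)\le N$ for all $s$, which is equidimensionality.

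A more self-contained way to secure $\dim H^{-1}(0)\le N$, should the isotropy argument for the singular central fibre be deemed too delicate, is the Bialynicki--Birula / Morse-theoretic description underlying Faltings' proof: the $\CC^*$-fixed locus consists of parabolic systems of Hodge bundles and lies in $H^{-1}(0)$, the nilpotent cone is the union of the downward flow manifolds of these fixed components, and each such manifold is an affine bundle over its fixed component whose dimension one computes, from the $\CC^*$-weights on the parabolic deformation complex $\ParEnd(E,E_\bullet)\to\SParEnd(E,E_\bullet)\otimes K(D)$, to be exactly $N$. Either way, the dimension bookkeeping, the semicontinuity reductions and the spectral correspondence are formal once the parabolic analogue of \cite{BNR} and properness are available, and the sole obstacle is the equidimensionality of the nilpotent cone.
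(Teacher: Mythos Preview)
The paper does not prove this lemma; it simply cites Faltings. Your overall architecture---the dimension count giving $\dim\SH'=N$, the lower bound $\dim H^{-1}(s)\ge N$ from irreducibility and dominance, and the reduction via the $\CC^*$-action and upper semicontinuity along $\overline{\CC^*\cdot s}$ to the single estimate $\dim H^{-1}(0)\le N$---is correct and is the standard route. The Bialynicki--Birula stratification by parabolic Hodge bundles you sketch in the last paragraph is essentially how Faltings bounds the nilpotent cone, so that part of your proposal matches the cited argument.

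The isotropy argument, however, has a genuine gap, and it is not merely ``delicate''. Poisson commutativity of the components $h_i$ of $H$ says $dh_j(X_{h_i})=0$, i.e.\ each Hamiltonian vector field $X_{h_i}$ lies in $\ker dH$. Since $(\ker dH)^0=\op{span}(dh_i)$ and $\omega$ identifies $(\ker dH)^{\perp_\omega}$ with $\omega^{-1}((\ker dH)^0)=\op{span}(X_{h_i})$, what you have shown is $(\ker dH)^{\perp_\omega}\subset\ker dH$: the fibres are \emph{coisotropic}. This yields $\dim\ker dH\ge N$, the lower bound you already had, not the upper one. Density does not rescue it: as $p_t\to p\in H^{-1}(0)$ through smooth fibres, the limit of the Lagrangians $T_{p_t}H^{-1}(s_t)$ is some Lagrangian inside $\ker dH_p$, but there is no reason it should coincide with the Zariski tangent space to the component of $H^{-1}(0)$ through $p$. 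That the global nilpotent cone is Lagrangian is a separate theorem (Laumon, Ginzburg in the non-parabolic setting) which is essentially equivalent to the bound you want, so invoking it here would be circular. You should therefore drop the isotropy route and commit to the stratification argument, which is the actual content of Faltings' proof.
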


Then, we can state some additional properties. For simplicity, let us write $\SH'=\SH_{K(D)}'=\bigoplus_{k=2}^r H^0(X,K^kD^{k-1})$. In order to simplify the notation, through this last part of the section let $m=\dim(\SM(r,\alpha,\xi))$. Then, as we are considering full flag parabolic bundles, $\dim (\SH')=m$ and $\dim(\SM_{K(D)}(r,\alpha,\xi))=\dim(T^*\SM(r,\alpha,\xi))=2m$.

\begin{corollary}
\label{cor:HitchinDominant2}
Let $\SU\subseteq \SM(r,\alpha,\xi)$ be any nonempty open subset. Then the restriction of the parabolic Hitchin morphism to the cotangent bundle
$$H_\SU:T^*\SU \longrightarrow \SH'$$
is dominant.
\end{corollary}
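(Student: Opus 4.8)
The plan is to prove dominance by a pure dimension count, the key input being the equidimensionality of the Hitchin map on the full moduli of parabolic Higgs bundles provided by Faltings' Lemma. I would first record the dimension facts already in hand: $\dim \SH' = m$, while $T^*\SM(r,\alpha,\xi)$ is an open subvariety of $\SM_{K(D)}(r,\alpha,\xi)$ of dimension $2m$. Since $\alpha$ is generic, $\SM(r,\alpha,\xi)$ is smooth and rational, hence irreducible, so its cotangent bundle $T^*\SM(r,\alpha,\xi)$ is irreducible of dimension $2m$; writing $\pi$ for the cotangent projection, the set $T^*\SU = \pi^{-1}(\SU)$ is then a nonempty open subset of $T^*\SM(r,\alpha,\xi)$, hence irreducible of dimension $2m$ and open inside $\SM_{K(D)}(r,\alpha,\xi)$. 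I would also note that $H$ is dominant on $\SM_{K(D)}(r,\alpha,\xi)$ by the parabolic Hitchin Lemma (applied to $L=K(D)$, which satisfies $r\deg L>2g$ for $g\ge 2$), so that equidimensionality pins the common fibre dimension at $\dim \SM_{K(D)}-\dim \SH' = m$.

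The core step is the fibre-dimension estimate. Set $Z=\overline{H_\SU(T^*\SU)}\subseteq \SH'$. Because $T^*\SU$ is irreducible and $H_\SU:T^*\SU\to Z$ is dominant by construction, the fibre-dimension theorem gives that a generic fibre of $H_\SU$ has dimension $2m-\dim Z$. On the other hand each fibre of $H_\SU$ is of the form $H^{-1}(s)\cap T^*\SU$, an open subscheme of the ambient Hitchin fibre $H^{-1}(s)$ in $\SM_{K(D)}(r,\alpha,\xi)$, which by Faltings' Lemma has dimension $m$; hence $\dim\bigl(H^{-1}(s)\cap T^*\SU\bigr)\le m$. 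Combining the two bounds yields $2m-\dim Z\le m$, so $\dim Z\ge m$. As $Z\subseteq \SH'$ with $\dim \SH'=m$ and $\SH'$ irreducible (it is a vector space), I conclude $Z=\SH'$, i.e. $H_\SU$ is dominant.

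The one point that requires care—and the main obstacle—is ensuring that the generic fibre of $H_\SU$ is genuinely controlled by the ambient Hitchin fibre. Concretely one must check that for $s$ in a dense open subset of $Z$ the intersection $H^{-1}(s)\cap T^*\SU$ is nonempty, so that the fibre-dimension theorem applies, and that it is open in $H^{-1}(s)$, which is immediate from the openness of $T^*\SU$ in $\SM_{K(D)}(r,\alpha,\xi)$. Equidimensionality is exactly what upgrades the global statement $\dim H^{-1}(s)=m$ into the uniform bound on the restricted fibres, and so cannot be dispensed with; without it a large Hitchin fibre over a lower-dimensional locus could a priori fail to meet $T^*\SU$ in the right dimension, breaking the count.
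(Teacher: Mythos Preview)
Your proof is correct and rests on the same ingredients as the paper's—irreducibility of $\SM(r,\alpha,\xi)$ and Faltings' equidimensionality of the full Hitchin map—with the only difference being that you run the dimension count directly on the image, whereas the paper argues by contradiction via the preimage of the complement of the image. These are dual formulations of the same argument.
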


\begin{proof}
First, observe that as $\SM(r,\alpha,\xi)$ is irreducible \cite{BY}, then $\SU$ is dense in $\SM(r,\alpha,\xi)$ for any nonempty open subset of the moduli space, so $\dim(T^*\SU)=\dim(T^*\SM(r,\alpha,\xi))=2m$. Suppose that $H_\SU$ is not dominant and let $S=\overline{\im(H_\SU)}\subsetneq \SH'$. $\SH'$ is irreducible, so $\dim(S)<\dim(\SH')=m$. By the previous Lemma, $H:\SM_{K(D)}(r,\alpha,\xi)\to \SH'$ is equidimensional, so
$$\dim(H^{-1}(S))=\dim(S)+m<2m=\dim(T^*\SU)$$
However, this contradicts that, by construction, $T^*\SU\subseteq H^{-1}(S)$.
\end{proof}

\begin{corollary}
\label{cor:HitchinEquidimensional}
Let $\SU\subseteq T^*\SM(r,\alpha,\xi)$ be any open subset. Then the restriction of the parabolic Hitchin morphism to the cotangent bundle
$$H_\SU:T^*\SU \longrightarrow \SH'$$
is equidimensional.
\end{corollary}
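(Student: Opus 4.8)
The plan is to deduce the statement directly from Faltings' equidimensionality result for the full Hitchin map, together with the fact recorded earlier in this section that the cotangent bundle $T^*\SM(r,\alpha,\xi)$ sits inside the moduli of parabolic Higgs bundles $\SM_{K(D)}(r,\alpha,\xi)$ as an \emph{open} subvariety. First I would observe that, since $\SU$ is open in $T^*\SM(r,\alpha,\xi)$ and the latter is open in $\SM_{K(D)}(r,\alpha,\xi)$, the subset $\SU$ is itself open in $\SM_{K(D)}(r,\alpha,\xi)$; consequently $H_\SU$ is nothing but the restriction of the global Hitchin map $H\colon \SM_{K(D)}(r,\alpha,\xi)\to \SH'$ to this open subset.

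Next I would fix $s\in \SH'$ and analyze the corresponding fiber. By Faltings' Lemma the map $H$ is equidimensional, and since $\dim \SM_{K(D)}(r,\alpha,\xi)=2m$ while $\dim \SH'=m$, every nonempty fiber $H^{-1}(s)$ is pure of dimension $m$. The fiber of $H_\SU$ over $s$ is $H_\SU^{-1}(s)=\SU\cap H^{-1}(s)$, an open subscheme of $H^{-1}(s)$. An open subscheme of a scheme of pure dimension $m$ is, whenever nonempty, again of pure dimension $m$: writing $H^{-1}(s)=\bigcup_j C_j$ with each component $C_j$ of dimension $m$, each nonempty intersection $\SU\cap C_j$ is open in $C_j$ and hence still of dimension $m$. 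Therefore every nonempty fiber of $H_\SU$ has pure dimension $m$, which is precisely the assertion that $H_\SU$ is equidimensional.

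The only point requiring care --- and the nearest thing to an obstacle --- is the passage from ``equidimensional morphism'' to ``fibers of pure dimension $m$'': I would make explicit that the equidimensionality invoked in the previous corollaries (used there through the relation $\dim H^{-1}(S)=\dim S+m$) is exactly the statement that all nonempty fibers share this common dimension, so that restriction to the open set $\SU$ cannot lower the dimension of any fiber it meets. A subtlety worth flagging is that $\SU$ may miss some irreducible components of a given $H^{-1}(s)$ entirely; this is harmless, since equidimensionality only constrains the \emph{nonempty} fibers of $H_\SU$, and the components that do survive retain dimension $m$. In particular no dominance or surjectivity of $H_\SU$ is needed for the conclusion.
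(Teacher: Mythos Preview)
Your argument is correct and is actually more direct than the paper's. The paper establishes the bound $\dim H_\SU^{-1}(s)\le m$ from the inclusion $H_\SU^{-1}(s)\subseteq H^{-1}(s)$, and then obtains the reverse inequality by invoking the \emph{previous} corollary (dominance of $H_\SU$) together with the fiber-dimension inequality \cite[3.22]{Hartshorne77}. You bypass dominance altogether: since $H_\SU^{-1}(s)$ is open in $H^{-1}(s)$ and the latter is pure of dimension $m$, any nonempty open piece inherits that pure dimension. This is cleaner, and as you note, it makes no use of surjectivity or dominance of the restricted map.

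One small point worth tightening: your argument needs that $H^{-1}(s)$ is \emph{pure} of dimension $m$, not merely that $\dim H^{-1}(s)=m$. This is the standard meaning of an equidimensional morphism, so Faltings' lemma does give it; but note that the paper's proof only cites $\dim H^{-1}(s)=m$ for the upper bound and recovers purity on the other side via Hartshorne's inequality (which bounds the dimension of every component of the fiber from below). If you want to be fully self-contained, you could remark that purity of the fibers follows either directly from Faltings or, since $\SM_{K(D)}(r,\alpha,\xi)$ is smooth for generic weights and $\SH'$ is affine space, from miracle flatness.
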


\begin{proof}
Let $s\in \SH'$. By the Lemma $\dim(H^{-1}(s))=m$. As $H_\SU^{-1}(s)\subseteq h^{-1}(s)$, then
$$\dim(H_\SU^{-1}(s))\le \dim(H^{-1})(s)=m$$
On the other hand, as $\dim(T^*\SM(r,\alpha,\xi))=2m = \dim(\SH')+m$. By the previous corollary $H_\SU$ is dominant, so by \cite[3.22]{Hartshorne77}, for every $s\in \SH'$,  $\dim(H_\SU^{-1}(s))\ge m$. Therefore, every fiber has dimension $m$.
\end{proof}

In particular, observe that if $s\in \SH'$ corresponds to a smooth spectral curve $X_s$, then by \cite[Lemma 3.2]{GL}, if $\pi:X_s\to X$ is the covering then the fiber $H^{-1}(s)$ is isomorphic to
$$\op{Prym}(X_s/X)=\{L\in \Pic(X_s) | \det(\pi_*L) \cong \xi\}$$
which is an irreducible abelian variety of dimension $m$. Then $H_\SU^{-1}(s)$ is dense in $H^{-1}(s)$.

In the following chapter, we will be interested in understanding how the geometry of $H_\SU^{-1}(s)$ relates to that of $H^{-1}(s)$ when $s$ does not correspond to a smooth spectral curve. We will need the following proposition derived directly from the work of Faltings \cite{Faltings}

\begin{proposition}
\label{prop:CotangentCodimension}
Let $g\ge 4$. Then the complement of $T^*\SM(r,\alpha,\xi)$ in $\SM_{K(D)}(r,\alpha,\xi)$ has codimension at least $3$.
\end{proposition}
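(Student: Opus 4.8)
The plan is to describe the complement explicitly, stratify it by the Harder--Narasimhan type of the underlying parabolic bundle, and bound the dimension of each stratum.

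First I would identify the complement. Since $\alpha$ is generic there are no strictly $\alpha$-semistable parabolic bundles, and by the identification $T^*_{(E,E_\bullet)}\SM(r,\alpha,\xi)\cong H^0(\SParEnd_0(E,E_\bullet)\otimes K(D))$ the open subvariety $T^*\SM(r,\alpha,\xi)$ is precisely the locus of those Higgs pairs $(E,E_\bullet,\varphi)\in\SM_{K(D)}(r,\alpha,\xi)$ whose underlying parabolic bundle $(E,E_\bullet)$ is $\alpha$-stable (any such pair is automatically a stable Higgs pair). Hence the complement
$$W=\SM_{K(D)}(r,\alpha,\xi)\setminus T^*\SM(r,\alpha,\xi)$$
is the locus of (semi)stable Higgs pairs whose underlying parabolic bundle is $\alpha$-unstable, and since $\dim\SM_{K(D)}(r,\alpha,\xi)=2m$ it suffices to prove $\dim W\le 2m-3$.

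Next I would stratify $W$ by the Harder--Narasimhan type $\tau$ of $(E,E_\bullet)$, with $\alpha$-semistable factors $Q_1,\dots,Q_k$ of strictly decreasing parabolic slopes. Inside the stack of rank $r$, determinant $\xi$, fixed-type parabolic bundles, the locus $\mathrm{Bun}^\xi_\tau$ of HN-type $\tau$ has codimension
$$c_\tau=\sum_{i<j}\bigl(-\chi(\ParHom(Q_i,Q_j))\bigr)=\sum_{i<j}h^1(\ParHom(Q_i,Q_j)),$$
the normal directions being the homomorphisms that fail to preserve the filtration; here each $h^0(\ParHom(Q_i,Q_j))=0$ because $\mu_\alpha(Q_i)>\mu_\alpha(Q_j)$. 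The central claim is $\dim W_\tau\le 2m-c_\tau$, i.e. that enlarging bundles to Higgs pairs preserves this codimension. This is where the work of Faltings enters: for any $(E,E_\bullet)$ with the prescribed invariants the space of $K(D)$-twisted traceless strongly parabolic fields is $H^0(\SParEnd_0(E,E_\bullet)\otimes K(D))\cong H^1(\ParEnd_0(E,E_\bullet))^\vee$, and together with the determinant-preserving automorphisms (of dimension $h^0(\ParEnd_0(E,E_\bullet))$) these fields sweep out a family of dimension
$$h^1(\ParEnd_0(E,E_\bullet))-h^0(\ParEnd_0(E,E_\bullet))=-\chi(\ParEnd_0(E,E_\bullet))=m,$$
a number independent of $(E,E_\bullet)$ as it is an Euler characteristic. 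Working on the moduli stack, where this fibre is the quotient $[\,H^0(\SParEnd_0\otimes K(D))/\Aut^\xi(E,E_\bullet)\,]$ of constant dimension $m$, and using both that stable Higgs pairs have finite stabilizers and the equidimensionality of the Hitchin map \cite{Faltings}, gives $\dim W_\tau=(m-c_\tau)+m=2m-c_\tau$, so that $W_\tau$ has codimension exactly $c_\tau$.

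It then remains to show $c_\tau\ge 3$ for every $\tau$. As $c_\tau$ is a sum of nonnegative terms it is enough to bound the contribution of the top HN-subbundle $F=Q_1$, i.e. to treat $F$ as a single $\alpha$-destabilizing subbundle with quotient $G$. For such $F$ one has $s(F,E)<s_{\min}(\alpha,\overline{n}_F)$, while
$$c_{(F,G)}=-\chi(\ParHom(F,G))=-s(F,E)+\rk(F)\rk(G)\bigl((g-1)+t_{\overline{n}_F,\overline{n}_G}\bigr),$$
using $s(F,E)=\rk(F)\deg(E)-\rk(E)\deg(F)$. Applying Lemma \ref{lemma:degreeBound} with $l=2$, valid since $g\ge 4\ge 1+\tfrac{2}{r-1}$, yields $s_{\min}(\alpha,\overline{n}_F)\le \rk(F)\rk(G)((g-1)+t_{\overline{n}_F,\overline{n}_G})-2$, whence $c_{(F,G)}>2$ and therefore $c_\tau\ge c_{(F,G)}\ge 3$. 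Thus every stratum has codimension at least $3$ and $\dim W\le 2m-3$. The main obstacle is precisely the middle step: verifying that the Higgs-field directions contribute the constant $m$ to each stratum regardless of how unstable the bundle is. The delicate point is the positive-dimensional automorphism group of an unstable bundle, which must be divided out exactly once; handling this correctly (via the stack, or via Faltings' equidimensionality to control the fibre dimensions uniformly) is what forces the codimension of the Higgs stratum to equal that of the bundle stratum. The genus enters only through Lemma \ref{lemma:degreeBound}, and in fact the argument already succeeds for $g\ge 3$.
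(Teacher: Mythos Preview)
Your proof is correct and follows exactly the strategy underlying the paper's proof, which simply cites Faltings \cite[II.6, II.7, V.(iii)]{Faltings}: stratify the complement by the Harder--Narasimhan type of the underlying parabolic bundle and bound the codimension of each stratum. The paper defers the computation entirely to Faltings; you have unpacked it and replaced Faltings' numerical estimates by the paper's own Lemma~\ref{lemma:degreeBound}, which is a nice touch and indeed yields the slightly sharper bound $g\ge 3$.

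Two small comments. First, in your middle step the appeal to equidimensionality of the Hitchin map is a red herring: what you actually use is that the stack of Higgs pairs is the cotangent stack $T^*\mathfrak{Bun}^\xi$, so its dimension is $2m$ at every point (the fibre $[H^0(\SParEnd_0\otimes K(D))/\Aut^\xi]$ has constant dimension $m$ by Riemann--Roch, exactly as you say), together with the fact that stable Higgs pairs have finite automorphism groups so the stack and coarse-space dimensions agree on the stable locus. This already gives $\dim W_\tau\le 2m-c_\tau$; you do not need the Hitchin map for this upper bound. Second, to conclude that $W=\bigcup_\tau W_\tau$ has codimension $\ge 3$ you should note that only finitely many HN types $\tau$ occur among Higgs-semistable pairs of fixed rank and degree (boundedness of the semistable family), so the union is finite.
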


\begin{proof}
Combine the remark \cite[V.(iii)]{Faltings} on Theorem  \cite[II.6.(iii)]{Faltings} with the codimension bound computations in \cite[p. 536]{Faltings} and Lemma \cite[II.7.(ii)]{Faltings}. Faltings proves that if $g\ge 3$ (or $g=2$ with some additional constraints) these bounds imply that the codimension is at least $2$, but the same computations prove that if $g\ge 4$ the codimension is at least $3$.
\end{proof}

\section{Hitchin Discriminant and Torelli Theorem}
\label{section:Torelli}

Let $\SD \subsetneq \SH'$ be  the divisor of the Hitchin space consisting of characteristic polynomials whose corresponding spectral curve is singular. We call $H^{-1}(\SD)$ the Hitchin discriminant. In order to simplify the notation, from now on let us write $\SH'=W$ and let us write
$$H:\SM_{K(D)}(r,\alpha,\xi) \to W$$
$$H_0=H_{T^*\SM(r,\alpha,\xi)}:T^*\SM(r,\alpha,\xi)\longrightarrow W$$

\begin{proposition}
\label{prop:HitchinComponents}
Assume that $g\ge 2$. Then the divisor $\SD$ has at most $n+1$ irreducible components, which can be described as follows.
\begin{enumerate}
\item For each parabolic point $x\in D$, let $\SD_x$ be the set of characteristic polynomials whose spectral curve is singular over $x$.
\item Let $\SD_U$ be the set of characteristic polynomials whose spectral curve is singular, but it is smooth over each $x\in D$. And let $\overline{\SD_U}\supsetneq \SD_U$ be the set of characteristic polynomials whose spectral curve is singular over some $y\not\in D$ (but not necessarily smooth over $D$).
\end{enumerate}
Then
$$\SD=\overline{\SD_U} \cup \bigcup_{x\in D} \SD_x$$
\end{proposition}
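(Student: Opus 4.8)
The plan is to establish first the (essentially tautological) set-theoretic decomposition $\SD=\overline{\SD_U}\cup\bigcup_{x\in D}\SD_x$, and then to prove that each of the $n+1$ pieces on the right is irreducible; once $\SD$ is exhibited as a union of $n+1$ irreducible closed subsets, it automatically has at most $n+1$ irreducible components. For the decomposition, a point $s\in\SD$ is by definition one for which the spectral curve $X_s$ has a singular point $q$, and $q$ lies over a well-defined $p=\pi(q)\in X$; either $p\in D$, giving $s\in\SD_p$, or $p\in U=X\setminus D$, giving $s\in\overline{\SD_U}$. This yields $\SD\subseteq\overline{\SD_U}\cup\bigcup_x\SD_x$, and the reverse inclusion is immediate from the definitions.

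Next I would treat the components over the marked points by a local computation. Fix $x\in D$, take a local coordinate $t$ centered at $x$, and trivialize $L=K(D)$ by the generator $dt/t$. Since $\varphi$ is strongly parabolic, each coefficient lies in $H^0(L^i(-D))$, so locally $s_i=t\,g_i(t)\,(dt/t)^i$ with $g_i$ holomorphic, and $X_s$ near $x$ is $F(\lambda,t)=\lambda^r+\sum_{i=2}^r t\,g_i(t)\,\lambda^{r-i}=0$ in the fiber coordinate $\lambda$. The fiber over $x$ is the single fat point $\lambda=0$, so the only candidate singular point over $x$ is $(0,0)$; one checks $F=\partial_\lambda F=0$ there automatically, while $\partial_t F(0,0)=g_r(0)$. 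Hence $X_s$ is singular over $x$ if and only if $g_r(0)=0$, that is, if and only if $s_r$ vanishes to order at least two at $x$, equivalently $s_r\in H^0(L^r(-D-x))$. This is a single linear condition on the coefficient $s_r$, so $\SD_x$ is a linear subspace of $W$ — indeed a hyperplane, once the positivity of $L^r(-D)$ is used to see the condition is nontrivial — and in particular irreducible.

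For the irreducibility of $\overline{\SD_U}$ I would use an incidence variety over the unmarked locus. Let $S_U=\pi^{-1}(U)\subset\op{Tot}(L)$ and set
$$\tilde{\SI}=\left\{(s,q)\in W\times S_U : q \text{ is a singular point of } X_s\right\},$$
cut out by the three equations $F=\partial_\lambda F=\partial_t F=0$ at $q$. The crucial point is that each equation is \emph{linear} in the spectral coefficients $s=(s_2,\dots,s_r)$, so the fibers of the projection $\tilde{\SI}\to S_U$ are affine-linear subspaces of $W$. Using $g\ge 2$, the line bundles $L^i(-D)$ have degree at least $2g+1$ and hence separate $1$-jets, so that for generic $q=(\lambda_0,p)$ the three functionals — which involve the values $s_i(p)$ and the derivatives $s_i'(p)$ weighted by powers of $\lambda_0$ — are linearly independent (for $r=2$ the root $\lambda_0$ is forced to be $0$ and two conditions remain over the base $U$; the argument is identical and simpler). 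The generic fibers therefore have constant dimension over the irreducible base, $\tilde{\SI}$ is irreducible of dimension $\dim W-1$, and the projection $W\times S_U\to W$ maps it generically finitely onto a divisor whose closure is precisely $\overline{\SD_U}$; being the image of an irreducible variety, $\overline{\SD_U}$ is irreducible.

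Combining the three items presents $\SD=\overline{\SD_U}\cup\bigcup_{x\in D}\SD_x$ as a union of $n+1$ irreducible closed sets, giving the bound $n+1$ on the number of components. The main obstacle is the irreducibility of $\overline{\SD_U}$: the substantive work lies in controlling the incidence variety $\tilde{\SI}$ — verifying that the minimal-fiber locus is dense in $\tilde{\SI}$, so that no spurious component of equal dimension arises over the degeneracy locus of $S_U$ (where $\lambda_0=0$, or where the functionals become dependent), and in invoking the jet separation guaranteed by $g\ge 2$. By contrast, the decomposition and the irreducibility of the $\SD_x$ are soft, the latter reducing to the one-line local computation above that exhibits $\SD_x$ as a hyperplane.
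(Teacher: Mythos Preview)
Your proposal is correct, and your treatment of the set-theoretic decomposition and of the $\SD_x$ (as hyperplanes via a local computation) is essentially the same as the paper's. The genuine difference is in how irreducibility of $\overline{\SD_U}$ is proved.

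The paper does not work with an incidence variety over the whole surface $S_U$. Instead, it uses a translation trick: given $s\in\overline{\SD_U}$ with a singular point over $y\in U$, one translates the fiber coordinate by a section $t_0\in H^0(K)$ with $t_0(y)$ equal to the fiber value of the singular point, moving the singularity onto the zero section. After undoing the trace (so that $s_1$ stays zero), the singularity conditions at $(y,0)$ become the explicit linear conditions $s_{r-1}\in H^0(K^{r-1}D^{r-2}(-y))$ and $s_r\in H^0(K^rD^{r-1}(-2y))$; Riemann--Roch (using $g\ge2$) shows these cut out codimension-two affine subspaces of $W$, and assembling over $y\in U$ gives a genuine vector sub-bundle $\SR'\hookrightarrow U\times W$ surjecting onto $\overline{\SD_U}$. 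The case $r=2$ is handled separately (and trivially). Your incidence-variety approach over $S_U$ is more direct and avoids the translation, at the cost of the fiber-dimension check you flag as the ``substantive work''. In fact that check is easier than you suggest: even at $\lambda_0=0$ the three equations become $s_r(p)=s_{r-1}(p)=s_r'(p)=0$, still independent by jet separation, so the fiber dimension is constant on all of $S_U$ and your $\tilde{\SI}$ is an honest affine bundle. The paper's route has the side benefit of producing the explicit description of $\overline{\SD_U}\cap W_r$ in terms of vanishing of $s_r$ alone, which is reused verbatim in Proposition~\ref{prop:recoverDualVariety}.
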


\begin{proof}
It becomes clear that for every $s\in \SD$, the corresponding singular curve $X_s$ is either singular over some parabolic point $x\in D$ or it is smooth at the parabolic points $x\in D$ and it is singular over some point in $U=X\backslash D$. Therefore, $\SD=\overline{\SD_U} \cup \bigcup_{x\in D} \SD_x$ and it is enough to prove that each element in the decomposition is irreducible.

Let us denote by $X_0\subset \op{Tot}(KD)$ the image of $X$ in the total space of $KD$ given by the zero section of the line bundle. If a spectral curve $X_s$ is singular over $x\in D$, it has a singular point precisely at $(x,0)\in X_0$. A spectral curve  $X_s$ has a singularity over $X_0$ at $(y,0)$ if and only if the characteristic polynomial $p_s(z,t)=t^r + \sum_{k=1}^r s_k(z) t^{r-k}$ satisfies the following properties
\begin{enumerate}
\item $s_r(z) \in H^0(K^rD^r)$ annihilates of order at least 2 at $z=y$, i.e., $s_r\in H^0(K^rD^r(-2y))$
\item $s_{r-1}(z)\in H^0(K^{r-1}D^{r-1})$ annihilates at $z=y$, i.e., $s_{r-1}\in H^0(K^{r-1}D^{r-1}(-y))$
\end{enumerate}
As $s=(s_2,\ldots,s_r) \in W$, we already know that
$$s_{r-1}\in H^0(K^{r-1}D^{r-2})\subseteq H^0(K^{r-1}D^{r-1}(-x))$$ and $s_r\in H^0(K^rD^{r-1})$, so the points in $\SD_x$ are precisely those with $s_r\in H^0(K^rD^{r-1}(-x))$. Therefore
$$\SD_x=\bigoplus_{k=2}^{r-1} H^0(K^kD^{k-1}) \oplus H^0(K^rD^{r-1}(-x))$$
is irreducible for every $x\in D$.

On the other hand, let $X_s$ be a spectral curve with a singularity over some $y\not\in D$. Suppose that $(y,t_0^*)\in X_s\subset \op{Tot}(KD)$ is the singular point. Assume that $r>2$. As $y\not\in D$ and $K$ is base point free ($g\ge 2$), then by identifying $H^0(K)$ with the space of sections of $KD$ which annihilate at $D$ we can find a section $t_0\in H^0(K)$ such that $t_0(y)=t_o^*$. Then the curve defined by the polynomial $p_s^{t_0}(z,t)=p_s(z,t-t_0)$ is singular at the point $(y,0)$, but smooth over every point $x\in D$. Set $s^{t_0}=(s_i^{t_0})$ as
$$p_s^{t_0}(z,t)=(t-t_0(z))^r+\sum_{k>0}s_k(z)(t-t_0(z))^{r-k}=t^n+\sum_{k=1}^r s_k^{t_0}(z) t^{r-k}$$
More precisely, we have
$$s_k^{t_0}=\binom{r}{k}(-t_0)^{\otimes k} + \sum_{1\le j<k} \binom{r-j}{r-k}s_j\otimes (-t_0)^{\otimes k-j}\in H^0(K^kD^{k-1})$$
As $X_{s^{t_0}}$ is singular at $y\not\in D$, but smooth at every $x\in D$, then applying the previous criterion yields
\begin{multline*}
s^{t_0}\in \bigoplus_{k=1}^{r-2} H^0(K^kD^{k-1})\oplus H^0(K^{r-1}D^{r-2}(-y)) \\
\oplus \left( H^0(K^rD^{r-1}(-2y) \backslash \bigcup_{x\in D} H^0(K^rD^{r-1}(-2y-x)) \right) := \SR_y
\end{multline*}
Observe that if $g\ge 2$ and $r>2$ then $\deg(K^{r-1}D^{r-1})=(r-1)(2g-2+|D|)>3$. Therefore, for any divisor $N$ with $0\le \deg(N)\le 3$ we have
$$\deg(K^{1-r}D^{1-r}(N))=-\deg(K^{r-1}D^{r-1})+\deg(N)<-3 +\deg(N)\le 0$$
Therefore, $h^0(K^{1-r}D^{1-r}(N))=0$ and, using Riemann-Roch theorem
\begin{multline*}
h^0(K^rD^{r-1}(-N)) = \deg(K^rD^{r-1})-\deg(N)+1-g+h^0(K^{1-r}D^{1-r}(N))\\
=\deg(K^rD^{r-1})+1-g-\deg(N)
\end{multline*}
Then $h^0(K^rD^{r-1}(-N))=h^0(K^rD^{r-1})-\deg(N)$. Therefore, the last summand in the expression of $\SR_y$ is the complement of an hyperplane in $H^0(K^rD^{r-1}(-2y))$ so, in particular, $\SR_y$ is irreducible and nonempty.

Observe that as the polynomials in $W$ have $s_1=0$, then $s_1^{t_0}=-rt_0$. Therefore, given any point $s'\in \SR_y$, we can obtain a point in $\SD_U$ taking
$$s=(s')^{s'_1/r}$$
Therefore, for every $y\not\in D$ we obtain a map from $\SR_y$ to $\SD_U$ and for every element in $\SD_U$ there exists an element in $\SR_y$ mapping to it for some $y\not\in D$. Then, we can build the following variety
$$\SR:= \coprod_{x\not\in D} \SR_y \subsetneq (X\backslash D) \times W$$
Let us prove that $\SR$ is irreducible. Let $\SR'$ be the subbundle of $(X\backslash D)\times W$ whose fiber over $y\not\in D$ is
$$\bigoplus_{k=1}^{r-2} H^0(K^kD^{k-1})\oplus H^0(K^{r-1}D^{r-2}(-y))\oplus H^0(K^rD^{r-1}(-2y) \subsetneq W$$
Moreover, for every $x\in D$, let $\SR^x\subsetneq \SR'$ be the subbundle of $\SR'$ whose fiber over $y\not\in D$ is
$$\bigoplus_{k=1}^{r-2} H^0(K^kD^{k-1})\oplus H^0(K^{r-1}D^{r-2}(-y))\oplus H^0(K^rD^{r-1}(-2y-x) \subsetneq \SR'_y$$
Then we can write
$$\SR=\SR'\backslash \bigcup_{x\in D} \SR^x$$
As $\SR^x$ are subbundles of $\SR$, then $\SR$ is irreducible. Finally, the maps $\SR_y \longrightarrow \SD_U$ induce a well defined surjective map
$$\SR \longrightarrow \SD_U$$
Therefore $\SD_U$ is irreducible. Moreover, from construction we obtain that $\SR'\longrightarrow \overline{\SD_U}$ is also surjective, so $\overline{\SD_U}$ is also irreducible.

It remains to consider the case $r=2$, but in that case we have simply $W=H^0(K^2D)$. Then the spectral curve corresponding to a point $s=s_2\in W$ has equation $t^2+s_2(z)=0$. Therefore, it has a singularity over $y\not\in D$ if and only if $s_2$ annihilates of order at least 2 in $y$, i.e., for  $s_2\in H^0(K^2D(-2y))$. Then
$$\SD=\bigcup_{y\not\in D} H^0(K^2D(-2y)) \cup \bigcup_{x\in D} H^0(K^2D(-x))$$
As $g\ge 2$, the first union is the image of the subbundle $\SR' \hookrightarrow U\times W$ whose fiber over $y\in U$ is $H^0(K^2D(-2y))$ under the projection map
$$\SR' \hookrightarrow U\times W \stackrel{p_{W}}{\twoheadrightarrow} W$$
so it is irreducible and corresponds to $\overline{\SD_U}$.
\end{proof}

\begin{proposition}
\label{prop:GenericFiberHitchin}
Suppose that $g\ge 4$. Then for $s\in W\backslash \SD$ the fiber $H_0^{-1}(s)$ is an open subset of an abelian variety. For a generic $s$	in each irreducible component of $\SD$ the fiber $H_0^{-1}(s)$ contains a complete rational curve.
\end{proposition}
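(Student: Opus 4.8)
The plan is to treat the two statements through the spectral correspondence, identifying each fibre of $H$ with a (possibly compactified) Prym variety of the spectral curve $X_s$, and then to use the codimension bound of Proposition \ref{prop:CotangentCodimension} to pass from the full Hitchin fibre to its intersection with the cotangent bundle. For the first assertion, when $s\in W\backslash\SD$ the spectral curve $X_s$ is smooth, so (exactly as recalled after Corollary \ref{cor:HitchinEquidimensional}, using \cite[Lemma 3.2]{GL}) the full fibre $H^{-1}(s)$ is isomorphic to the abelian variety $\op{Prym}(X_s/X)$ of dimension $m$. Since $T^*\SM(r,\alpha,\xi)$ is an \emph{open} subvariety of $\SM_{K(D)}(r,\alpha,\xi)$, its fibre is $H_0^{-1}(s)=H^{-1}(s)\cap T^*\SM(r,\alpha,\xi)$, an open subset of this abelian variety, which is exactly the claim.

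For the second assertion I would first determine the generic singularity in each component. Using the explicit descriptions of $\SD_x$ and $\overline{\SD_U}$ from Proposition \ref{prop:HitchinComponents}, a local computation of the tangent cone shows that for a generic $s$ in each component the spectral curve $X_s$ acquires a single ordinary double point (over the relevant $x\in D$, respectively over a single $y\notin D$) and is smooth elsewhere. Let $\nu:\wt{X_s}\to X_s$ be the normalization of this node. The fibre $H^{-1}(s)$ is then the compactified Prym $\overline{\op{Prym}}(X_s/X)$: line bundles on $X_s$ account for the open generalized Prym, while the torsion-free rank one sheaves that fail to be locally free at the node (pushforwards from $\wt{X_s}$) fill in the boundary. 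The pullback map $\nu^{*}:\overline{\op{Prym}}(X_s/X)\dashrightarrow\op{Prym}(\wt{X_s}/X)$ has, over a line bundle $M$, the $\CC^{*}$ of gluings of $M$ at the two branches of the node, compactified by the single boundary sheaf $\nu_{*}M$; the normalization of this fibre is a $\PP^{1}$, so it is a complete rational curve. Letting $M$ vary produces a family $\SC\to B$ of complete rational curves over $B=\op{Prym}(\wt{X_s}/X)$, where $\dim B=m-1$ and the total space maps birationally onto $H^{-1}(s)$.

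It remains to place one member of this family inside $H_0^{-1}(s)=H^{-1}(s)\backslash Z$, where $Z=\SM_{K(D)}(r,\alpha,\xi)\backslash T^*\SM(r,\alpha,\xi)$. By Proposition \ref{prop:CotangentCodimension} we have $\codim Z\ge 3$, so $\dim Z\le 2m-3$; since $H$ is equidimensional each component $\SD_i$ of $\SD$ satisfies $\dim H^{-1}(\SD_i)=2m-1$, and projecting $H^{-1}(\SD_i)\cap Z$ onto $\SD_i$ shows that for a generic $s\in\SD_i$ the slice $Z\cap H^{-1}(s)$ has dimension at most $m-2$ (and is possibly empty, in which case any curve of the family already lies in $H_0^{-1}(s)$). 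As $\SC\to H^{-1}(s)$ is birational, the preimage of $Z\cap H^{-1}(s)$ has dimension at most $m-2$, hence its image in $B$ is a proper closed subset of the $(m-1)$-dimensional parameter space. A generic member $\SC_b\cong\PP^{1}$ therefore avoids $Z$ and gives a complete rational curve contained in $H_0^{-1}(s)$.

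The step I expect to be the main obstacle is the identification of $H^{-1}(s)$ with a compactified Prym together with the production of the rational curves when the node sits \emph{over a parabolic point} $x\in D$: there strong parabolicity forces $\varphi$ to be nilpotent and the relevant torsion-free sheaves on $X_s$ carry extra data coming from the parabolic filtration, so the local analysis of the node at $(x,0)$ and of the resulting boundary stratum is more delicate than in the classical non-parabolic situation of \cite{BNR}. Once this spectral picture and the birational parameterization $\SC\to H^{-1}(s)$ are established, the descent to the cotangent bundle is the routine dimension count above, driven entirely by the codimension bound of Proposition \ref{prop:CotangentCodimension}.
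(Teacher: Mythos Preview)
Your treatment of the smooth case and of the component $\overline{\SD_U}$ matches the paper's proof: the paper also invokes \cite[Lemma 3.2]{GL} for the abelian variety statement, and for $\SD_U$ uses the uniruledness of the nodal Hitchin fibre (citing \cite{Bh96}) combined with the codimension bound of Proposition~\ref{prop:CotangentCodimension} exactly as you do.

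The genuine divergence is at the components $\SD_x$ for $x\in D$, and here the obstacle you flag is real and the paper does \emph{not} try to overcome it. Rather than analyzing the compactified Prym over a node lying above a parabolic point, the paper constructs complete rational curves in $T^*\SM(r,\alpha,\xi)$ directly, with no spectral theory at all. The idea is this: for $(E,E_\bullet)$ in a suitable open set (namely $(1,0)$-stable bundles with a vanishing $H^0$, using Lemmas~\ref{lemma:generic-lm-stable} and~\ref{lemma:ParEndNoSections}), one computes that the condition $H(E,E_\bullet,\varphi)\in\SD_x$ is equivalent to the vanishing of some entry of $\varphi$ in the superdiagonal position $(k-1,k)$ or in position $(r,1)$, in an adapted local frame at $x$. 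When the vanishing occurs at $(k-1,k)$ for some $k>1$, the Higgs field $\varphi$ sends $E_{x,k-1}$ into $E_{x,k+1}$, so $\varphi$ remains strongly parabolic for \emph{any} replacement $E'_{x,k}$ of the $k$-th filtration step with $E_{x,k-1}\supsetneq E'_{x,k}\supsetneq E_{x,k+1}$. By Lemma~\ref{lemma:1-0-stability} all these $(E,E'_\bullet)$ are stable, and the space of such $E'_{x,k}$ is a $\PP^1$; since $E$ and $\varphi$ are unchanged, the image under $H$ is constant. A dimension count then shows these curves hit a dense subset of $\SD_x$.

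So your proposal has a gap precisely where you located it: the parabolic spectral analysis over a node at $x\in D$ is not carried out, and as you say it would be delicate. The paper's construction bypasses it by producing the rational curves from the parabolic filtration itself rather than from the degeneration of the spectral Prym.
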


\begin{proof}
By \cite[Lemma 3.2]{GL}, if $X_s$ is smooth and $\pi:X_s\to X$ is the covering then the fiber $H^{-1}(s)$ is isomorphic to
$$\op{Prym}(X_s/X)=\{L\in \Pic(X_s) | \det(\pi_*L) \cong \xi\}$$
which is an abelian variety.

On the other hand, if $s\in \SD_U$ is generic then $X_s$ has a unique singularity which is a node not lying over a parabolic point. By \cite[Theorem 4]{Bh96} the fiber $H^{-1}(s)$ is an uniruled variety. More precisely, it is birational to a $\PP^1$-fibration over the Jacobian $J(\tilde{X}_s)$, where, $\tilde{X}_s$ is the normalization of $X_s$.

Let $Z=\left(\SM_{K(D)}(r,\alpha,\xi)\backslash T^*\SM(r,\alpha,\xi) \right) \cap H^{-1}(\SD_U)$. By Proposition \ref{prop:CotangentCodimension}, for $g\ge 4$, the complement $\SM_{K(D)}(r,\alpha,\xi)\backslash T^*\SM(r,\alpha,\xi)$ has codimension at least $3$. Therefore, $Z$ has codimension at least $2$ in $H^{-1}(\SD_U)$. Let $S=H\left(\SM_{K(D)}\backslash T^*\SM(r,\alpha,\xi) \right)$. Let $m=\dim \SH'$ and assume that $\dim(S)<m-1$. Then for any $s\in \SD_U\backslash S$ we have $H_0^{-1}(s)=H^{-1}(s)$ and the fiber contains a complete rational curve.

Now, let us suppose that $\dim(S)=m-1$. Then $Z\longrightarrow \SD_U$ is dominant and, therefore, the generic fiber has dimension $\dim(Z)-\dim(\SD_U)\le m-2$. In other words, for a generic $s\in \SD_U$, $Z\cap H^{-1}(s)$ has codimension at least 2 in $H^{-1}(s)$. As the latter is uniruled and we are only taking away a codimension 2 set, then there exists at least a complete rational curve in $H_0^{-1}(s)$.

It is only left to prove that a generic fiber over $\SD_x$ contains a complete rational curve. As $g\ge 4$, let $\SU\subsetneq \SM(r,\alpha,\xi)$ be the intersection of the open nonempty subsets defined by Lemma \ref{lemma:ParEndNoSections} and Lemma \ref{lemma:generic-lm-stable} for $(l,m)=(1,0)$. It parameterizes $(1,0)$ stable parabolic vector bundles $(E,E_\bullet)$ such that $H^0(\ParEnd_0(E,E_\bullet)(x_0))=0$.

Then for every $(E,E_\bullet)\in \SU$  and every $x\in D$ we have
$$H^1(\SParEnd_0(E,E_\bullet)\otimes K(D-x)) = H^0(\SParEnd_0(E,E_\bullet)(x))^\vee = 0$$
so the evaluation morphism
$$\op{ev}:H^0(\SParEnd_0(E,E_\bullet)\otimes K(D)) \longrightarrow \SParEnd_0(E,E_\bullet)\otimes K(D)|_x$$
is surjective.

For $1<k\le r$, let $N_k(E,E_\bullet)\subsetneq \SParEnd_0(E,E_\bullet) \otimes K(D)|_x$ be the subspace of matrices with a zero in position $(k-1,k)$. For $k=1$, let $N_i(E,E_\bullet)$ be the subspace of matrices with a zero in position $(r,1)$. Let $\tilde{N}_k(E,E_\bullet)$ be the preimage of $N_i$ under the evaluation map. For $k\ne 1$, we can describe $\tilde{N}_k(E,E_\bullet)$ as follows. Let $E_\bullet^k$ be the subfiltration of $E$ obtained removing the element $E_{x,k}$. Then
$$\tilde{N}_k(E,E_\bullet)=H^0(\SParEnd_0(E,E_\bullet^k)\otimes K(D))$$

Let $(E,E_\bullet,\varphi)\in H^{-1}(\SD_x)\cap T^*\SU$. Let $z$ be a coordinate on $X$ around the parabolic point $x\in D$. Then, locally, $\varphi$ can be written as
$$\varphi(z)=\left ( \begin{array}{cccc}
za_{11} & a_{12}  & \cdots & a_{1r}\\
za_{21} & za_{22} & \cdots & a_{2r}\\
\vdots & \vdots & \ddots & \vdots \\
za_{r1} & za_{r2} & \cdots & za_{rr}
\end{array} \right)$$
Where $a_{ij}$ are local sections of $K(D)$ and $\varphi$ is expressed in a basis which is adapted to the parabolic filtration. Then $(E,E_\bullet,\varphi)\in H^{-1}(\SD_x)$ if and only if $z^2 | \det(\varphi(z))$. Nevertheless, if we express the determinant as a sum of products of elements of the matrix above it becomes clear that the only summand that is not a multiple of $z^2$ is precisely $za_{r1}a_{12}a_{23}\cdots a_{r-1,r}$. Therefore, the determinant is a multiple of $z^2$ if and only if at least one of the elements $a_{r1}$, or $a_{k-1,k}$ annihilates at $z=0$ for some $k>1$. This is equivalent to ask $\op{ev}(\varphi)\in N_k$ for some $1\le k\le r$. As the evaluation map is surjective for every parabolic vector bundle in $\SU$, we conclude that for every $(E,E_\bullet)\in \SU$
$$H^{-1}(\SD_x)\cap T^*_{(E,E_\bullet)}\SM(r,\alpha,\xi) = \bigcup_{k=1}^r \tilde{N}_k(E,E_\bullet)$$
Let $\tilde{N}_k=\bigcup_{(E,E_\bullet)\in \SU} \tilde{N}_k(E,E_\bullet)$. By construction $\dim(\tilde{N}_k)=\dim(2\SM(r,\alpha,\xi)-1)$. Assume that $(E,E_\bullet,\varphi)\in \tilde{N}_k$ for some $k>1$. By Lemma \ref{lemma:1-0-stability}, for every $(E,E_\bullet)\in \SU$, every $x\in D$, every $1<k\le r$ and every $E_{x,k}'$ such that $E_{x,k-1}\supsetneq E_{x,k}'\supsetneq E_{x,k+1}$ then $(E,E_\bullet')$ is a stable parabolic vector bundle. Moreover, as $\varphi$ sends $E_{x,k-1}$ to $E_{x,k+1}$, then $\varphi\in H^0(\SParEnd_0(E,E_\bullet')\otimes K(D))$ for every choice of $E_{x,k}'$. Therefore, for every $E_{x,k}'$, $(E,E_\bullet',\varphi)\in H_ 0^{-1}(\SD_x)$. As $E$ and $\varphi$ do not change, all those Higgs bundles lie over the same point of the Hitchin map. The space of possible compatible steps in the filtration is parameterized by $\PP^1$, so they form a complete rational curve in $T^*\SM(r,\alpha,\xi)$.

Therefore, the image of the complete rational curves contains $H(\tilde{N}_k)\subseteq \SD_x$ for every $k>1$. Then it is enough to prove that the image is dense for some $k>1$. Assume that $H(\tilde{N}_k)$ is not dense. Let $S=H(\tilde{N}_k)$ and $m=\dim(\SM(r,\alpha,\xi))$.  Then $\dim(S)<\dim(\SD_x)=m-1$. By equidimensionality of $H_0$, $\dim H_0^{-1}(S)=m+\dim(S)<2m-1=\dim(\tilde{N}_k)$, but $\tilde{N}_k\subseteq H_0^{-1}(S)$. 

\end{proof}

\begin{lemma}
\label{lemma:DiscriminantCharacterization}
Let $\SR\subset T^*\SM(r,\alpha,\xi)$ be the union of the complete rational curves in $T^*\SM(r,\alpha,\xi)$. Then $\SD$ is the closure of $H_0(\SR)$ in $W$.
\end{lemma}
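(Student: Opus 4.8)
The plan is to establish the two inclusions $\overline{H_0(\SR)}\subseteq \SD$ and $\SD\subseteq \overline{H_0(\SR)}$ separately, using Proposition \ref{prop:GenericFiberHitchin} as the geometric input on both sides and Proposition \ref{prop:HitchinComponents} to organize the reverse inclusion component by component.

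For the inclusion $\overline{H_0(\SR)}\subseteq \SD$, the first observation I would make is that the Hitchin base $W=\bigoplus_{k=2}^r H^0(X,K^kD^{k-1})$ is a finite-dimensional vector space, hence an affine variety, so that any morphism from a complete connected variety into $W$ is constant. In particular, if $C\subset T^*\SM(r,\alpha,\xi)$ is a complete rational curve, then $H_0(C)$ is a single point $s\in W$, and $C$ is entirely contained in the fiber $H_0^{-1}(s)$. If $s\notin \SD$, then by Proposition \ref{prop:GenericFiberHitchin} the fiber $H_0^{-1}(s)$ is an open subset of an abelian variety $A$; but the composite $\PP^1\to C\hookrightarrow A$ would have to be constant, since an abelian variety admits no nonconstant rational curve, contradicting that $C$ is a curve. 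Hence $s\in \SD$. As this applies to every complete rational curve, $H_0(\SR)\subseteq \SD$, and since $\SD$ is closed we conclude $\overline{H_0(\SR)}\subseteq \SD$.

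For the reverse inclusion I would argue on each irreducible component of the decomposition $\SD=\overline{\SD_U}\cup\bigcup_{x\in D}\SD_x$ supplied by Proposition \ref{prop:HitchinComponents}, where each component is irreducible. By Proposition \ref{prop:GenericFiberHitchin}, for $s$ ranging over a nonempty (hence, by irreducibility, dense) open subset $\SD_i^{\circ}$ of each component $\SD_i$, the fiber $H_0^{-1}(s)$ contains a complete rational curve $C_s\subseteq \SR$, so that $s=H_0(C_s)\in H_0(\SR)$. Thus $\SD_i^{\circ}\subseteq H_0(\SR)$, and taking closures together with the density of $\SD_i^{\circ}$ in the irreducible $\SD_i$ gives $\SD_i=\overline{\SD_i^{\circ}}\subseteq \overline{H_0(\SR)}$. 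Ranging over all components yields $\SD\subseteq \overline{H_0(\SR)}$, and combining with the first inclusion finishes the proof.

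Since the substantive geometry (the abelian-variety structure of the fibers over $W\setminus\SD$, and the existence of complete rational curves in the generic fibers over each component of $\SD$) is already packaged in Proposition \ref{prop:GenericFiberHitchin}, the one step that must be handled with care is the affineness argument forcing each complete rational curve into a single fiber, and the observation that an open subset of an abelian variety contains no complete rational curve. Once these are in place the remainder of the argument is formal, so I do not expect a serious obstacle beyond verifying these two points cleanly.
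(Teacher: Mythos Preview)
Your proof is correct and follows essentially the same approach as the paper: both use the affineness of $W$ to force complete rational curves into fibers, invoke Proposition~\ref{prop:GenericFiberHitchin} to exclude fibers over $W\setminus\SD$, and then use the existence of complete rational curves in the generic fiber over each irreducible component of $\SD$ to obtain density. Your write-up is slightly more explicit in invoking Proposition~\ref{prop:HitchinComponents} for the component decomposition, but the substance is identical.
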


\begin{proof}
Let $\PP^1 \hookrightarrow T^*\SM(r,\alpha,\xi)$ be a complete rational curve. Composing  with the Hitchin map, we obtain a morphism 
$$\PP^1 \hookrightarrow T^*\SM(r,\alpha,\xi) \longrightarrow W$$
from $\PP^1$ to an affine space, so it is constant. Therefore, each complete rational curve must be contained in a fiber of the Hitchin morphism.

Let $s\in W\backslash \SD$. By the previous Proposition \ref{prop:GenericFiberHitchin}, $H_0^{-1}(s)$ is an open subset of an abelian variety, so it does not admit any nonconstant morphism from $\PP^1$. Therefore, there is no complete rational curve over $W\backslash \SD$. Then, by the second part of the Proposition \ref{prop:GenericFiberHitchin}, we know that for every irreducible component of $\SD$, a generic fiber contains a rational curve. Therefore, $H_0(\SR)$ is dense in $\SD$ and, as $\SD$ is closed in $W$, $\SD=\overline{\SR}$.
\end{proof}

\begin{proposition}
\label{prop:HitchinGlobalFunctions}
The global algebraic functions $\Gamma(T^*\SM(r,\alpha,\xi))$ produce a map
$$\tilde{h}:T^*\SM(r,\alpha,\xi) \longrightarrow \Spec(\Gamma(T^*\SM(r,\alpha,\xi)))\cong W\cong \CC^m$$
which is the parabolic Hitchin map up to an isomorphism of $\CC^m$, where $m=\dim W$. Moreover, consider the action of $\CC^*$ on $T^*\SM(r,\alpha,\xi)$ given by dilatation on the fibers. Then there is a unique $\CC^*$ action on $W$ such that $\tilde{h}$ is $\CC^*$-equivariant,i.e., such that
$$\tilde{h}(E,E_\bullet,\lambda\varphi)=\lambda\cdot \tilde{h}(E,E_\bullet,\varphi)$$
\end{proposition}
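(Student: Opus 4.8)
The plan is to identify the ring of global functions with the coordinate ring of $W$ through the Hitchin map, and then to extract the $\CC^*$-statement by pure functoriality. Since each coefficient $s_k$ of the characteristic polynomial is a global algebraic function on $T^*\SM(r,\alpha,\xi)$, a choice of linear coordinates on $W=\bigoplus_{k=2}^r H^0(X,K^kD^{k-1})$ yields a ring homomorphism $\Sym(W^\vee)=\Gamma(W)\to \Gamma(T^*\SM(r,\alpha,\xi))$ dual to $H_0$. The whole content of the first assertion is that this map is an isomorphism: the induced morphism of spectra is then an isomorphism $\Spec\Gamma(T^*\SM(r,\alpha,\xi))\cong W\cong\CC^m$ carrying the tautological map $\tilde h$ to $H_0$. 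Injectivity, i.e. algebraic independence of the $s_k$, is immediate: by Corollary \ref{cor:HitchinDominant2} the map $H_0$ is dominant onto the $m$-dimensional affine space $W$, so no nonzero polynomial in the coordinates can vanish identically on $T^*\SM(r,\alpha,\xi)$. Surjectivity, that every global function is a polynomial in the $s_k$, is the essential point, and I would prove it by passing to the properification of $H_0$.

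First I would replace $T^*\SM(r,\alpha,\xi)$ by the full parabolic Higgs moduli $\SM_{K(D)}(r,\alpha,\xi)$. Since $\alpha$ is generic there are no strictly semistable parabolic Higgs bundles, so $\SM_{K(D)}(r,\alpha,\xi)$ is smooth; and by Proposition \ref{prop:CotangentCodimension} (for $g\ge 4$) the open subvariety $T^*\SM(r,\alpha,\xi)$ has complement of codimension at least $3\ge 2$. Smoothness gives the $S_2$ (Serre) property, so every regular function on $T^*\SM(r,\alpha,\xi)$ extends uniquely across the codimension $\ge 2$ locus; hence the restriction map $\Gamma(\SM_{K(D)}(r,\alpha,\xi))\to\Gamma(T^*\SM(r,\alpha,\xi))$ is an isomorphism. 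In particular $\SM_{K(D)}(r,\alpha,\xi)$, containing $T^*\SM(r,\alpha,\xi)$ as a dense open subset, is irreducible.

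Now $H:\SM_{K(D)}(r,\alpha,\xi)\to W$ is proper (\cite{Faltings}), $W\cong\CC^m$ is smooth and hence normal, and a generic fiber is the irreducible variety $\op{Prym}(X_s/X)$, hence connected. The Stein factorization of $H$ therefore factors through a finite, generically one-to-one morphism $W'\to W$ onto the normal target $W$, which must be an isomorphism; equivalently $H_*\SO=\SO_W$ and $\Gamma(\SM_{K(D)}(r,\alpha,\xi))=\Gamma(W)=\Sym(W^\vee)$. Combined with the previous paragraph this yields the desired $\Gamma(T^*\SM(r,\alpha,\xi))\cong\Sym(W^\vee)$, completing the identification of $\tilde h$ with $H_0$ up to an isomorphism of $\CC^m$.

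Finally, the fiberwise dilation action of $\CC^*$ on $T^*\SM(r,\alpha,\xi)$ induces, by functoriality of $\Gamma$ and $\Spec$, an action on $\Gamma(T^*\SM(r,\alpha,\xi))$ and hence on $\Spec\Gamma(T^*\SM(r,\alpha,\xi))\cong W$, for which the canonical morphism $\tilde h$ is tautologically equivariant; concretely, since $s_k$ is homogeneous of degree $k$ in the field, one reads off that $\lambda$ acts on the summand $H^0(X,K^kD^{k-1})$ with weight $k$. Uniqueness follows from dominance: any two $\CC^*$-actions making $\tilde h$ equivariant agree on the dense image $\tilde h(T^*\SM(r,\alpha,\xi))$, and two morphisms of the reduced separated scheme $W$ that coincide on a dense subset are equal. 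The main obstacle is the surjectivity in the first step, namely the Stein-factorization and connectedness argument identifying $H_*\SO$ with $\SO_W$; once that is in hand, everything concerning $\tilde h$ and the $\CC^*$-action is formal.
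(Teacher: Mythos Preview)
Your proof is correct and follows essentially the same route as the paper: pass to the full Higgs moduli $\SM_{K(D)}(r,\alpha,\xi)$ via Hartogs (smoothness plus codimension $\ge 2$ complement), then use properness of $H$ together with connectedness of fibers to identify $\Gamma(\SM_{K(D)}(r,\alpha,\xi))$ with $\Gamma(W)$, and finally obtain the $\CC^*$-action by functoriality. The only cosmetic differences are that the paper cites \cite[V.(iii)]{Faltings} directly for codimension $\ge 2$ (valid already for $g\ge 3$) rather than Proposition~\ref{prop:CotangentCodimension} (which needs $g\ge 4$), and that it invokes connectedness of all fibers from \cite{AG18TorelliDH} rather than your Stein-factorization argument using only the generic Prym fiber; neither difference is substantive.
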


\begin{proof}
The Hitchin map
$$H:\SM_{K(D)}(r,,\alpha,\xi) \longrightarrow W$$
is projective and has connected fibers (see, for example, \cite[Lemma 3.1 and Lemma 3.2]{AG18TorelliDH}). Then each holomorphic function $f:\SM_{K(D)}(r,\alpha,\xi)\longrightarrow \CC$ factors through $W$ and, as $W$ is affine, we obtain that 
$$\Spec(\Gamma(\SM_{K(D)}(r,\alpha,\xi))\cong \Spec(\Gamma(W))\cong W$$
Let $f:T^*\SM(r,\alpha,\xi) \longrightarrow \CC$. By \cite[V.(iii)]{Faltings}, we know that the codimension of the complement of $T^*\SM(r,\alpha,\xi)$ in $\SM_{K(D)}(r,\alpha,\xi)$ is at least $2$. As $\alpha$ is generic, $\SM_{K(D)}(r,\alpha,\xi)$ is smooth. Therefore, by Hartog's theorem $f$ extends to a holomorphic function $\overline{f}:\SM_{K(D)}(r,\alpha,\xi)\longrightarrow \CC$, so we conclude that $\Gamma(T^*\SM(r,\alpha,\xi))=\Gamma(\SM_{K(D)}(r,\alpha,\xi))$. Therefore, we obtain a map
$$\tilde{h}:T^*\SM(r,\alpha,\xi) \longrightarrow \Spec(\Gamma(T^*\SM(r,\alpha,\xi))) \cong W$$
The $\CC^*$ action on the cotangent bundle then descends to a unique action on $\Spec(\Gamma(T^*\SM(r,\alpha,\xi)))$ making $\tilde{h}$ a $\CC^*$-equivariant map.
\end{proof}

The previous Lemma allow us to recover the Hitchin map canonically with the corresponding $\CC^*$ action up to an automorphism of the Hitchin space. The $\CC^*$ action stratifies the space $W$ in subspaces corresponding to the elements whose rate of decay is at least $|\lambda|^k$ for each $k=2,\ldots, r$. Observe that, at first, the $\CC^*$ action only allows us to recover a filtration of $W$, but, in particular, we can recover uniquely the subspace of maximal decay $|\lambda|^r$, which corresponds to
$$W_r=H^0(K^rD^{r-1})\subsetneq W$$
In general, for $k>1$, let $W_k=H^0(K^kD^{k-1})$. Let
$$h_k:H^0(\SParEnd_0(E)\otimes K_X(D))\to W_k$$
be the composition of the Hitchin map $H:H^0(\SParEnd_0(E)\otimes K_X(D))\to W$ with the projection $W\twoheadrightarrow W_k$.

\begin{proposition}
\label{prop:recoverDualVariety}
The intersection $\SC:=\SD\cap W_r \subsetneq W_r$ has $n+1$ irreducible components
$$\SC=\SC_X \cup \bigcup_{x\in D} \SC_x$$
As $r\ge 2$, the linear series $|K^rD^{r-1}|$ is very ample and induces an embedding $X\subset \PP(W_r^*)$. Then $\PP(\SC_X)\subset \PP(W_r)$ is the dual variety of $X\subset \PP(W_r^*)$ and for each $x\in D$, $\PP(\SC_x)\subset \PP(W_r)$ is the dual variety of $x\hookrightarrow X \subset \PP(W_r^*)$.
\end{proposition}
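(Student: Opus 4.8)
The plan is to restrict the component decomposition $\SD=\overline{\SD_U}\cup\bigcup_{x\in D}\SD_x$ of Proposition \ref{prop:HitchinComponents} to the weight-$r$ slice $W_r=H^0(K^rD^{r-1})\subset W$, and then to translate the resulting ``singular spectral curve'' conditions into tangency conditions for the embedding $\iota:X\hookrightarrow \PP(W_r^*)$, $y\mapsto[\op{ev}_y]$, given by $|K^rD^{r-1}|$. A point of $W$ lying in $W_r$ has the form $s=(0,\dots,0,s_r)$ with $s_r\in H^0(K^rD^{r-1})$, so its spectral curve is $X_s=\{t^r+s_r(z)=0\}$. I would set $\SC_x:=\SD_x\cap W_r$ and $\SC_X:=\overline{\SD_U}\cap W_r$, so that $\SC=\SC_X\cup\bigcup_{x\in D}\SC_x$ is immediate; the content is to identify each piece with the claimed dual variety and to verify that these are genuinely the $n+1$ distinct irreducible components.

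For the boundary pieces, the criterion recorded in the proof of Proposition \ref{prop:HitchinComponents} shows that $X_s$ (for $s\in W_r$) is singular over $x\in D$ precisely when $s_r\in H^0(K^rD^{r-1}(-x))$, so $\SC_x=H^0(K^rD^{r-1}(-x))$. Since $|K^rD^{r-1}|$ is very ample, evaluation at $x$ is surjective and $\SC_x$ is a hyperplane in $W_r$. By definition the dual variety of the single point $\iota(x)=[\op{ev}_x]\in\PP(W_r^*)$ is the hyperplane $\{[s_r]\in\PP(W_r):\op{ev}_x(s_r)=0\}=\PP(H^0(K^rD^{r-1}(-x)))$, which is exactly $\PP(\SC_x)$.

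For the interior piece I would use that, for $y\notin D$, the curve $t^r+s_r(z)=0$ is singular at $(y,0)$ if and only if $s_r$ vanishes to order at least $2$ at $y$, i.e. $s_r\in H^0(K^rD^{r-1}(-2y))$. Very ampleness guarantees that $\iota$ is an immersion and that the $1$-jet evaluation is surjective, so this is exactly the condition that the hyperplane determined by $s_r$ contains the embedded tangent line $T_{\iota(y)}\iota(X)$, that is, that it is tangent to $\iota(X)$ at $\iota(y)$. Hence $\PP(\SC_X)$ is the closure of the locus of hyperplanes tangent to $\iota(X)$ along $\iota(X\setminus D)$, and since $X\setminus D$ is dense in $X$ this closure is the full dual variety $X^*\subset\PP(W_r)$. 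In particular $\SC_X$ is irreducible, being the dual of the irreducible variety $\iota(X)$.

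It then remains to check that $\SC_X$ and the $\SC_x$ are pairwise incomparable irreducible subvarieties, so that they are precisely the irreducible components of $\SC$. The hyperplanes $\SC_x$ are pairwise distinct because $|K^rD^{r-1}|$ separates points; no $\SC_x$ lies in $\SC_X$ since a generic hyperplane through $\iota(x)$ meets $\iota(X)$ transversally (hence is not tangent anywhere); and $\SC_X$ is not contained in any $\SC_x$ since, again by very ampleness, one may choose $y\ne x$ admitting a section vanishing doubly at $y$ but not at $x$, producing a tangent hyperplane avoiding $\iota(x)$. The main obstacle is precisely this tangency dictionary: making rigorous that order-two vanishing of $s_r$ at $y$ is equivalent to the hyperplane containing the embedded tangent line (which uses the $2$-jet behaviour of $|K^rD^{r-1}|$ at interior points), together with the passage to the closure at the boundary points $x\in D$. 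For $r=2$ one has $W=W_r=H^0(K^2D)$ and $\SC=\SD$, and the same dictionary applies verbatim.
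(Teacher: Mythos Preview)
Your proposal is correct and follows essentially the same approach as the paper: both arguments reduce to the singularity criterion for the curve $t^r+s_r=0$, identify $\SC_x=H^0(K^rD^{r-1}(-x))$ and $\SC_X$ as the union of the spaces $H^0(K^rD^{r-1}(-2y))$, and then read these off as the dual varieties of $\iota(x)$ and $\iota(X)$ respectively. The only minor differences are cosmetic: the paper defines $\SC_X$ directly as $\bigcup_{y\in X}H^0(K^rD^{r-1}(-2y))$ rather than as $\overline{\SD_U}\cap W_r$ (these agree after taking closures), and it handles the distinctness of the components via the Riemann--Roch codimension count from Proposition~\ref{prop:HitchinComponents}, whereas you give a slightly more explicit geometric incomparability argument.
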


\begin{proof}
A spectral curve $X_s$ corresponding to a point $s=s_r\in H^0(K^rD^{r-1})$ has equation $t^r+s_r(z)=0$. Therefore, it is singular precisely at the points $(z,t)=(x,0)$ where $x$ is a zero of order at least 2 of $s_r$. Observe that the equation is an equation on the points of the total space of $KD$ so, as in previous lemmata, here we are considering $s_r$ as a section of $K^rD^r$. Therefore, $s\in \SC$ if and only if $s_r\in H^0(K^rD^r(-2x)$ for some $x\in X$. As we already know that $s_r\in H^0(K^rD^{r-1})$ we have two possible cases
\begin{enumerate}
\item $s_r\in H^0(K^rD^{r-1}(-2x))$ for some $x\not\in D$
\item $s_r\in H^0(K^rD^{r-1}(-x))$ for some $x\in D$
\end{enumerate}
Therefore, we can write
\begin{multline*}
\SC=\bigcup_{x\in U} H^0(K^rD^{r-1}(-2x)) \cup \bigcup_{x\in D} H^0(K^rD^{r-1}(-x))\\
= \bigcup_{x\in X} H^0(K^rD^{r-1}(-2x)) \cup \bigcup_{x\in D} H^0(K^rD^{r-1}(-x))
\end{multline*}
Let us denote
\begin{eqnarray*}
\SC_X&=&\bigcup_{x\in X}H^0(K^rD^{r-1}(-2x))\\
\SC_x&=&H^0(K^rD^{r-1}(-x)) \quad \quad x\in D
\end{eqnarray*}
In the proof of Proposition \ref{prop:HitchinComponents} we already proved that $\SC_X$ and, obviously, $\SC_x$ are irreducible for every $x\in D$. Moreover, for $g\ge 2$, the Riemann-Roch computations carried out in the proof of Proposition \ref{prop:HitchinComponents} imply that $\SC_X$ and $\SC_x$ have both codimension $1$ in $W_r$ and they are distinct, so they are precisely the irreducible components of $\SD$.

For the second part of the Proposition, observe that if $X\subseteq \PP(W_r^*)$ is the embedding given by the linear system $|K^rD^{r-1}|$, then the set of hyperplanes in $\PP(W_r^*)$ which are tangent to $X$ at $x\in X$ is precisely $\PP(H^0(K^rD^{r-1}(-2x)))$. Therefore, the dual variety of $X$ is $\PP(\SC_X) \subset \PP(W_r)$. Furthermore, for every $x\in D$, the dual variety of $x\subset \PP(W_r^*)$ identifies with the set of hyperplanes passing through $x$, which is precisely $\PP(H^0(K^rD^{r-1}(-x)))$. Therefore, we conclude that the dual of $x\subset\PP(W_r^*)$ is $\PP(\SC_x)\subset \PP(W_r)$.
\end{proof}

Notice that for every $x\in D$, $\PP(\SC_x)$ is the dual variety of a point and $\PP(\SC_X)$ is the dual variety of a compact Riemann surface so, $\PP(\SC_X)\not\cong \PP(\SC_x)$ for all $x\in X$. For every $x\in D$, $\SC_x\subset W_r$ is an hyperplane. In particular, this allows us to identify canonically $\SC_X$ inside $\SC$ as the only irreducible component that is not an an hyperplane in $W_r$.

\begin{theorem}[Torelli theorem]
\label{theorem:Torelli}
Let $(X,D)$ and $(X',D')$ be two smooth projective curves of genus $g\ge 4$ and $g'\ge 4$ respectively with set of marked points $D\subset X$ and $D'\subset X'$. Let $\xi$ and $\xi'$ be line bundles over $X$ and $X'$ respectively, and let $\alpha$ and $\alpha'$ be full flag generic systems of weights over $(X,D)$ and $(X',D')$ respectively. Then if $\SM(X,r,\alpha,\xi)$ is isomorphic to $\SM(X',r',\alpha',\xi')$ then $r=r'$ and $(X,D)$ is isomorphic to $(X',D')$, i.e., there exists an isomorphism $X\cong X'$ sending $D$ to $D'$.
\end{theorem}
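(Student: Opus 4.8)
The plan is to reconstruct the marked curve $(X,D)$ intrinsically from the moduli space, using the Hitchin map on the cotangent bundle together with the identification of the Hitchin discriminant carried out above. First I would pass to cotangent bundles. An isomorphism $\Phi\colon \SM(X,r,\alpha,\xi)\to \SM(X',r',\alpha',\xi')$ of smooth varieties canonically lifts to an isomorphism of their cotangent bundles
$$\Phi_*\colon T^*\SM(X,r,\alpha,\xi)\stackrel{\sim}{\longrightarrow} T^*\SM(X',r',\alpha',\xi'),$$
which is linear on the fibers and hence equivariant for the fiberwise $\CC^*$-dilation actions. By Proposition \ref{prop:HitchinGlobalFunctions} the Hitchin map is recovered as the map to $\Spec$ of global algebraic functions, together with the induced $\CC^*$-action; since $\Phi_*$ is an isomorphism of abstract varieties it identifies the rings of global functions $\CC^*$-equivariantly, so it descends to a $\CC^*$-equivariant isomorphism $\psi\colon W\to W'$ fitting into a commutative square with $H_0$ and $H_0'$.

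Next I would transport the discriminant. Because $\Phi_*$ is an isomorphism of varieties it carries complete rational curves to complete rational curves, hence $\Phi_*(\SR)=\SR'$ in the notation of Lemma \ref{lemma:DiscriminantCharacterization}; combined with the commuting square this gives $\psi(\SD)=\SD'$, using $\SD=\overline{H_0(\SR)}$ and its analogue for $W'$. Now I would extract the rank and the linearity. The $\CC^*$-action on $W$ has strictly positive weights $2,\ldots,r$, so $0$ is its unique fixed point and $\psi(0)=0$; writing $W_{\ge k}=\bigoplus_{j\ge k}W_j$ for the weight filtration (intrinsically, the locus where $\lambda\cdot v=O(\lambda^k)$ as $\lambda\to 0$), equivariance of $\psi$ forces $\psi(W_{\ge k})=W'_{\ge k}$ for all $k$. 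Comparing the lengths of these two filtrations yields $r=r'$. On the top piece $W_r=W_{\ge r}$ the action is $\lambda\cdot v=\lambda^r v$, so for $v\in W_r$ one has $\psi(\lambda^r v)=\lambda^r\psi(v)$; thus $\psi|_{W_r}\colon W_r\to W'_r$ is homogeneous of degree one and therefore a linear isomorphism, carrying $\SC=\SD\cap W_r$ onto $\SC'=\SD'\cap W'_r$.

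Finally I would recover the curve by projective duality. By Proposition \ref{prop:recoverDualVariety} the component $\SC_X$ is the unique irreducible component of $\SC$ that is not a hyperplane of $W_r$, while the remaining components $\SC_x$ ($x\in D$) are hyperplanes, and the analogous statement holds for $\SC'$. Since the linear isomorphism $\psi|_{W_r}$ preserves irreducible components and the property of being a hyperplane, it sends $\SC_X$ to $\SC'_{X'}$ and induces a bijection $\{\SC_x\}_{x\in D}\to\{\SC'_{x'}\}_{x'\in D'}$. Passing to projectivizations, $\psi|_{W_r}$ induces a projective isomorphism $\PP(W_r)\cong\PP(W'_r)$ taking the dual variety $\PP(\SC_X)$ of $X\subset\PP(W_r^*)$ onto the dual variety $\PP(\SC'_{X'})$ of $X'\subset\PP((W'_r)^*)$. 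By the biduality theorem (reflexivity of projective duality in characteristic zero), the dual projective isomorphism $\PP(W_r^*)\cong\PP((W'_r)^*)$ then carries $X$ isomorphically onto $X'$. Moreover each marked point $x\in D$ is the point of $\PP(W_r^*)$ dual to the hyperplane $\SC_x=H^0(K^rD^{r-1}(-x))$, so the same projective isomorphism matches $D$ with $D'$, producing an isomorphism $(X,D)\cong(X',D')$.

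I expect the conceptual crux to be the duality bookkeeping of the last paragraph: first establishing that $\psi|_{W_r}$ is genuinely linear rather than merely a $\CC^*$-equivariant morphism, and then invoking biduality so that recovering the dual variety $\PP(\SC_X)$ recovers $X$ itself, while simultaneously tracking the marked points through the hyperplane components $\SC_x$. Everything else amounts to assembling the intrinsic descriptions established in Proposition \ref{prop:HitchinGlobalFunctions}, Lemma \ref{lemma:DiscriminantCharacterization} and Proposition \ref{prop:recoverDualVariety}, with the dimension inputs guaranteed by $g,g'\ge 4$ through Proposition \ref{prop:GenericFiberHitchin}.
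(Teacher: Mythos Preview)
Your proposal is correct and follows essentially the same route as the paper's proof: both pass to cotangent bundles, invoke Proposition \ref{prop:HitchinGlobalFunctions} to recover the Hitchin map $\CC^*$-equivariantly, use Lemma \ref{lemma:DiscriminantCharacterization} to transport the discriminant, read off $r=r'$ from the length of the weight filtration, restrict to the top weight piece $W_r$ where the induced map is linear, and then apply Proposition \ref{prop:recoverDualVariety} together with biduality to recover $(X,D)$. The only cosmetic difference is that the paper also extracts $g=g'$ and $|D|=|D'|$ explicitly from dimension formulas before the duality step, whereas you obtain these as automatic consequences of the isomorphism $(X,D)\cong(X',D')$.
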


\begin{proof}
In order to simplify the notation, let $\SM=\SM(X,r,\alpha,\xi)$ and $\SM'=\SM(X',r',\alpha',\xi')$. First, let us prove that $r=r'$. If $\Phi:\SM\stackrel{\sim}{\longrightarrow}\SM'$ is an isomorphism, then there is an isomorphism $d(\Phi^{-1}):T^*\SM \stackrel{\sim}{\longrightarrow} T^*\SM'$ which is $\CC^*$ equivariant for the standard dilation action. By Proposition \ref{prop:HitchinGlobalFunctions}, there exist unique $\CC^*$ actions $\cdot$ and $\cdot'$ on $\Gamma(T^*\SM)$ and $\Gamma(T^*\SM')$ respectively that are compatible with the dilation on the fibers. Therefore, there must exist an algebraic $\CC^*$-equivariant isomorphism $f:\Gamma(T^*\SM) \stackrel{\sim}{\longrightarrow} \Gamma(T^*\SM')$ such that the following diagram commutes
\begin{eqnarray*}
\xymatrixcolsep{3pc}
\xymatrix{
T^*\SM \ar[r]^{d(\Phi^{-1})} \ar[d]_{\tilde{h}} & T^*\SM' \ar[d]^{\tilde{h}}\\
\Gamma(T^*\SM) \ar[r]^{f} & \Gamma(T^*\SM')
}
\end{eqnarray*}
As $f$ is $\CC^*$-equivariant, it must preserve the filtration by subspaces in terms of the decay and it must send the subspace of maximum decay $|\lambda|^r$ of $\Gamma(T^*\SM))$ to the subspace of maximum decay $|\lambda|^{r'}$ of $\Gamma(T^*\SM')$. In particular, the number of steps of the filtration must be the same. As the filtrations of $\Gamma(T^*\SM)$ and $\Gamma(T^*\SM')$ have $r-1$ and $r'-1$ steps respectively, then $r=r'$. 

By Proposition \ref{prop:HitchinGlobalFunctions}, there is an isomorphism $W\cong W'$, that we will denote by a slight abuse of notation as $f:W\to W'$, such that the following diagram commutes
\begin{eqnarray*}
\xymatrixcolsep{3pc}
\xymatrix{
T^*\SM \ar[r]^{d(\Phi^{-1})} \ar[d]_{H_0} & T^*\SM' \ar[d]^{H_0'}\\
W \ar[r]^{f} & W'
}
\end{eqnarray*}
and there exist unique $\CC^*$ actions on $W$ and $W'$ such that $H_0$ and $H_0'$ are $\CC^*$-equivariant. As $d(\Phi^{-1})$ is an isomorphism, it maps complete rational curves on $T^*\SM$ to complete rational curves on $T^*\SM'$. By Lemma \ref{lemma:DiscriminantCharacterization}, $f$ sends the locus of singular spectral curves $\SD\subset W$ to the locus of singular spectral curves $\SD'\subset W'$. On the other hand, the differential map $d(\Phi^{-1})$ is $\CC^*$-equivariant, so $f$ must be a $\CC^*$-equivariant map. Therefore, it must send the subspace of $W$ of elements with maximum decay $W_r$ to the subspace of $W'$ of elements with maximum decay $W_r'$. Let $f_r:W_r\to W_r'$ be the restriction of $f$ to $W_r$.

By definition of $W_r$, we know that $f_r$ is $\CC^*$-equivariant and homogeneous of degree $r$, so it must be linear, and it maps $\SC=\SD\cap W_r$ to $\SC'=\SD'\cap W_r'$. Let $\SC_X$ and $\SC_X'$ be the only components of $\SC$ and $\SC'$ respectively that are not hyperplanes. By Proposition \ref{prop:recoverDualVariety}, the dual variety of $\PP(\SC_X)$ in $\PP(W_r)$ is $X\subset \PP(W_r^*)$ and, similarly, the dual variety of $\PP(\SC_X')$ in $\PP(W_r')$ is $X'\subset \PP((W_r')^*)$, so $f$ induces an isomorphism $f^\vee:\PP(W_r^*)\to \PP((W_r')^*)$ that sends $X$ to $X'$. Moreover, the dual of the rest of the components $\PP(\SC_x)$ of $\PP(\SC)$ correspond to the divisor $D\subset X \subset \PP(W_r^*)$ and the dual of the components $\PP(\SC_x')$ of $\PP(\SC')$ correspond to the divisor $D'\subset X'\subset \PP((W_r')^*)$, so $f^\vee$ must send $D$ to $D'$. Therefore, $f^\vee$ induces an isomorphism $f^\vee:(X,D)\stackrel{\sim}{\longrightarrow}(X',D')$.

\end{proof}

\section{Basic transformations for quasi-parabolic vector bundles}
\label{section:Hecke}

Let $x\in X$ be a point. Given a vector bundle $E$ over $X$ and a subspace on the fiber $H\subseteq E|_x$, the Hecke transformation of $E$ at $x$ with respect to the subspace $H$ is defined as the subsheaf $\SH_{x}^H(E)\subseteq E$ fitting in the short exact sequence
$$0\longrightarrow \SH_{x}^H(E) \longrightarrow E \longrightarrow (E|_x/H)\otimes \SO_x \longrightarrow 0$$
this kind of transformations were first studied in \cite{NR78, HR04} and have been used broadly to study the geometry of the moduli spaces of vector bundles. Let $x\in D$ be a parabolic point. For each parabolic vector bundle $(E,E_\bullet)$ on $(X,D)$, each term in the parabolic filtration $E_{x,i}\subseteq E|_x$ for $1\le i\le l_x+1$ gives us a canonical choice for a linear subspace in the fiber $E|_x$, so we might define subsheaves $E_x^i\subseteq E$ through the Hecke transformation as
\begin{equation}
\label{eq:HeckeParabolicDef}
0\longrightarrow E_x^i \longrightarrow E \longrightarrow (E|_x/E_{x,i}) \otimes \SO_x \longrightarrow 0
\end{equation}
Note that for each $x\in D$ and each $i=1,\ldots, l_x+1$, these  subsheaves $E_x^i$ coincide with the jumps at the continuous parabolic filtration associated to $(E,E_\bullet)$
$$E_x^i = E^x_{\alpha_i(x)}$$
In fact, the Hecke transformation gives us a one to one correspondence between parabolic structures $\{E_{x,i}\}$ on $E$ and collections of  decreasing  sequences of subsheaves
$$E=E_x^1 \supsetneq E_x^2 \supsetneq \cdots \supsetneq E_x^{l_x} \supsetneq E_x^{l_x+1}=E(-x)$$
for every $x\in D$.

Let us restrict the short exact sequence \eqref{eq:HeckeParabolicDef} to the point $x$. If $f:E_x^i|_x \to E|_x$ is the induced map at the fiber, we get
\begin{eqnarray*}
\xymatrixrowsep{0.4pc}
\xymatrixcolsep{1pc}
\xymatrix{
0 \ar[r] & E|_x/E_{x,i} \otimes \SO_X(-x)|_x \ar@{=}[d] \ar[r] & E_x^i|_x  \ar[dr] \ar[rr]^f & &E|_x \ar[r] &E|_x/E_{x,i} \ar[r] & 0\\
& \op{Tor}(E|_x/E_{x,i},\SO_x) &&  E_{x,i} \ar[ur] \ar[dr] & &\\
& & 0 \ar[ur]  & & 0 & & 
}
\end{eqnarray*}
Observe that the tail of the filtration $E_{x,i}\supsetneq E_{x,i+1} \supset E_{x,l_x+1}=0$ of $E|_x$, induce a filtration of $E_x ^i|_x$
$$E_x^i|_x = f^{-1}(E_{x,i}) \supsetneq f^{-1}(E_{x,i+1}) \supsetneq \cdots \supsetneq f^{-1}(E_{x,l_x}) \supsetneq f^{-1}(0)=\frac{E|_x}{E_{x,i}} \otimes \SO_X(-x)|_x$$
on the other hand, the head of the filtration $E|_x=E_{x,1}\supsetneq E_{x,2} \supsetneq \cdots \supsetneq E_{x,i}$ induce canonically a filtration on $\frac{E|_x}{E_{x,i}}$
$$\frac{E|_x}{E_{x,i}}=\frac{E_{x,1}}{E_{x,i}} \supsetneq \frac{E_{x,2}}{E_{x,i}} \supsetneq \cdots \supsetneq \frac{E_{x.i}}{E_{x,i}}=0$$
thus, $E_x^i|_x$ gets an induced filtration at $x$ of the same length as that of $E|_x$
$$E_x^i|_x = f^{-1}(E_{x,i})  \supsetneq \! \cdots \!\supsetneq f^{-1}(E_{x,l_x}) \supsetneq \frac{E|_x}{E_{x,i}} \otimes \SO_X(-x)|_x \supsetneq \!\cdots \! \supsetneq \frac{E_{x,i-1}}{E_{x,i}}\otimes \SO_X(-x)|_x \supsetneq 0$$
On the other hand, $E_x^i|_y$ is canonically isomorphic to $E|_y$ for each $y\in D\backslash \{x\}$, thus inheriting its filtration. Therefore, for each $x\in D$ and each $1\le i\le l_x+1$, we can provide $E_x^i$ a canonical quasi-parabolic structure with the same number of steps as $(E,E_\bullet)$. In particular, if $(E,E_\bullet)$ is full flag, then the induced quasi-parabolic structure on $E_x^i$ is full flag. This ``rotation'' procedure -- also called by some authors elementary transformation of the parabolic bundle -- has been used in the literature as a fruitful way to induce correspondences between moduli spaces of parabolic vector bundles \cite{BY, IIS2, Inaba13}.

We call $E_x^2$ with the induced parabolic structure the Hecke transformation of $(E,E_\bullet)$ at $x$, and we will denote it by
$$(E_x^2,(E_x^2)_\bullet)=\SH_x(E,E_\bullet)$$
More generally, we will write
$$\SH_x^k(E,E_\bullet)=\overbrace{\SH_x\circ \cdots \circ \SH_x}^{k}(E,E_\bullet)$$
It is straightforward to check that for each $1 \le k\le l_x$ the quasi-parabolic bundle $\SH_x^k(E,E_\bullet)$ coincides with the vector bundle $E_x^{k+1}$ with the induced parabolic structure previously described. Also, by construction, for every quasi-parabolic vector bundle and every $x\in D$ the following relation holds
$$\SH_x^{l_x}(E,E_\bullet) = (E,E_\bullet)\otimes \SO_X(-x)$$
Moreover, it is clear that two Hecke transformations at two different parabolic points commute with each other. Let $H$ denote an effective divisor on $X$ supported on $D=\{x_1,\ldots,x_n\}$. If we take $H=\sum_{x\in D} h_x x$, then we define $\SH_H$ as the composition
$$\SH_H=\SH_{x_1}^{h_{x_1}} \circ \SH_{x_2}^{h_{x_2}} \circ \ldots \circ \SH_{x_n}^{h_{x_n}}$$

We can understand the Hecke transformation of a quasi-parabolic vector bundle in another equivalent way working directly over the filtration by subsheaves. Let $(E,E_\bullet)$ be a full flag parabolic vector bundle and let $x\in D$ be a parabolic point. We define the Hecke transformation of $(E,E_\bullet)$ at $x$ to be the parabolic vector bundle $\SH_x(E,E_\bullet)=(H,H_\bullet)$ obtained by taking the Hecke transformation of $E$ with respect to $E_{x,2}$ and ``rotating'' the parabolic structure at $x$ in the following way. We take
$$\begin{array}{rl}
\forall i=1,\ldots,r & H_x^i=E_x^{i+1}\\
\forall y \in D \, \backslash \{x\} \forall i=1,\ldots, r & H_y^i=\SH_x^{E_{x,2}}(E_y^i)
\end{array}$$
In particular, $H=E_x^2=\SH_x^{E_{x,2}}(E)$. For example, for $r=3$, $D=x+y$, we send the parabolic vector bundle
$$(E,E_\bullet)=\left \{\begin{array}{ccccccc}
E= E_x^1 & \supsetneq & E_x^2 & \supsetneq & E_x^3 & \supsetneq & E(-x)\\
E= E_y^1 & \supsetneq &  E_y^2 & \supsetneq & E_y^3 & \supsetneq & E(-y)
\end{array} \right \}$$
to
$$\SH_x(E,E_\bullet)=\left \{\begin{array}{ccccccc}
E_x^2 & \supsetneq & E_x^3 & \supsetneq & E(-x)=E_x^1(-x) & \supsetneq & E_x^2(-x)\\
E_x^2=\SH_x^{E_{x,2}}(E_y^1) & \supsetneq & \SH_x^{E_{x,2}}(E_y^2) & \supsetneq & \SH_x^{E_{x,3}}(E_y^3) & \supsetneq & E_x^2(-y)
\end{array} \right\}$$
Observe that if we choose weights $\alpha$ on a full flag quasi parabolic vector bundle $(E,E_\bullet)$, then we might maintain the same system of weights $\alpha$ on $\SH_x(E,E_\bullet)$ and, in that case
$$\pdeg_\alpha\left( \SH_x(E,E_\bullet) \right) =\pdeg_\alpha(E,E_\bullet)-1$$
Nevertheless, we will see that this is not a natural choice of weights, as it does not preserve stability.

\begin{lemma}
\label{lemma:unstableHecke}
Suppose that $g\ge 3$. Suppose that $d=\deg(\xi)$ and $r$ are coprime and let $\alpha$ be a generic concentrated system of weights. For every divisor $H$ with $0<H\le (r-1)D$ such that $d< |H|$, there exists at least a stable parabolic vector bundle $(E,E_\bullet)\in \SM(r,\alpha,\xi)$ over $(X,D)$ such that $\SH_H(E,E_\bullet)$ is $\alpha$-unstable.
\end{lemma}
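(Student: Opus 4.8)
The plan is to exploit the two standing hypotheses that $\alpha$ is concentrated and that $\gcd(d,r)=1$: by \cite[Proposition 2.6]{AG18TorelliDH} these make $\alpha$-stability of a full flag parabolic bundle equivalent to stability of its underlying vector bundle. Thus it suffices to exhibit one stable vector bundle $E$, equipped with a suitable full flag $E_\bullet$, for which $\SH_H(E,E_\bullet)$ acquires a parabolic destabilizing subbundle. The destabilizer will be a line subbundle $L\subset E$ of degree $0$ that is carried, with unchanged degree, into the underlying bundle of $\SH_H(E,E_\bullet)$: the $|H|$ units of degree lost by the Hecke then push the slope of $L$ above that of the transform.

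First I would fix a line bundle $L$ of degree $0$ and build $E$ as a general extension
$$0\longrightarrow L\longrightarrow E\longrightarrow Q\longrightarrow 0$$
with $Q$ a stable bundle of rank $r-1$ and determinant $\xi\otimes L^{-1}$. Since $\deg(L)=0<d/r=\mu(E)$ and $d>0$, for $g\ge 3$ the group $\Ext^1(Q,L)=H^1(Q^\vee\otimes L)$ is nonzero and a general extension is stable; hence $E$ is a stable bundle with $\det(E)\cong\xi$ and $L\subset E$ saturated. At every $x\in D$ I choose the full flag so that its deepest step is $E_{x,r}=L|_x$, completing it arbitrarily. Because $H\le(r-1)D$ we have $h_x\le r-1$, so $L|_x=E_{x,r}\subseteq E_{x,h_x+1}$ for every $x$; consequently $L$ is contained, as a subbundle of the same degree $0$, in the underlying bundle $E_H$ of $\SH_H(E,E_\bullet)$, which has degree $d-|H|$.

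It remains to compare parabolic slopes. Writing $G:=\SH_H(E,E_\bullet)$ and using the rotated subsheaf filtration $G_x^i=E_x^{i+h_x}$ (with $G_x^i=E_x^{i+h_x-r}(-x)$ once $i+h_x>r$), one checks that $L\subseteq G_x^i$ if and only if $i\le r-h_x$, so the weight induced on $L$ at $x$ is $\alpha_{r-h_x}(x)$. Since $\alpha$ is full flag, the rotation leaves the total weight $W:=\owt_\alpha(G)=\sum_{x\in D}\sum_{i=1}^r\alpha_i(x)$ unchanged, and the instability of $G$ via $L$ is equivalent to
$$r\Big(\deg(L)+\sum_{x\in D}\alpha_{r-h_x}(x)\Big)-\big((d-|H|)+W\big)=(|H|-d)+\sum_{x\in D}\sum_{i=1}^r\big(\alpha_{r-h_x}(x)-\alpha_i(x)\big)>0.$$
Here $|H|-d\ge 1$ by hypothesis, while concentration gives $\alpha_r(x)-\alpha_1(x)<4/(nr^2)$ at each of the $n$ points of $D$; as only the $h_x$ indices $i>r-h_x$ contribute negatively, the double sum is bounded below by $-|H|\cdot 4/(nr^2)$. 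Using $|H|\le(r-1)n$ this is $\ge -4(r-1)/r^2\ge -1$, with strict inequality for $r\ge 3$ (as $(r-2)^2>0$) and for $r=2$ by the strictness of the concentration bound over the finite set $D$. Hence the left-hand side is positive, $L$ parabolically destabilizes $G$, and $\SH_H(E,E_\bullet)$ is $\alpha$-unstable.

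The main obstacle is precisely this final estimate: one must track the parabolic weights through the Hecke rotation and verify that the concentration bound, combined with the degree gap $|H|>d$ and the constraint $h_x\le r-1$, forces strict instability uniformly in the rank $r\ge 2$. By contrast, producing the stable $E$ with a prescribed degree $0$ line subbundle is routine for $g\ge 3$, provided the flag is arranged compatibly with the depth condition $E_{x,r}=L|_x$.
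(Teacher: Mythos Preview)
Your proof is correct and follows the same core strategy as the paper: produce a stable $E$ with a line subbundle $L$ of degree $0$, choose the parabolic flag so that $E_{x,r}=L|_x$ at every $x\in D$, and then observe that $L$ survives into the Hecke transform and destabilizes it. The differences are in execution rather than idea. First, the paper obtains $E$ directly from Brill--Noether theory \cite{BGN97}: for $g\ge 3$ and $0<d<r$ one has $B(r,d,1)\ne\emptyset$, so there is a stable $E$ with $H^0(E)\ne 0$, and saturating the image of $\SO_X$ gives $L\subset E$ with $\deg L\ge 0$. This sidesteps your appeal to ``a general extension is stable,'' which is true here but is not as immediate as you suggest and would benefit from a reference. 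Second, and more notably, your parabolic slope estimate is unnecessary: since $\alpha$ is concentrated, the implication ``$E$ unstable $\Rightarrow(E,E_\bullet)$ $\alpha$-unstable'' holds without any coprimality assumption, so it suffices to check that the \emph{underlying bundle} of $\SH_H(E,E_\bullet)$ is unstable. But that is immediate from
\[
\mu\big(\SH_H(E)\big)=\frac{d-|H|}{r}<0\le \deg \overline{L},
\]
where $\overline{L}$ is the saturation of $L$ in the transform. Your careful tracking of the rotated weights and the concentration bound $4/(nr^2)$ is correct, but it re-proves by hand exactly what the concentration hypothesis already packages.
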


\begin{proof}
Let $d=\deg(\xi)$. By tensoring with an appropriate line bundle, we can assume that $0\le d<r$. Brambila-Paz, Grzegorczyk and Newstead \cite[Theorem B (Theorem 6.3)]{BGN97} proved that for every genus $g\ge 2$ smooth projective curve and every $0\le d<r$, the space of stable vector bundles $E$ of rank $r$ and degree $d$ such that $H^0(E)\ge k$ (called the Brill-Nether locus and usually denoted by $B(r,d,k)$) is nonempty if $d>0$ and 
$$r\le d+(r-k)g$$
with $(r,d,k)\ne (r,r,r)$. As we are assuming that $d$ and $r$ are coprime, then $0<d$ and for $k=1$ and $g\ge 3$
$$d+(r-k)g-r\ge d+3(r-1)-r=d+2r-3\ge 2(r-1)>0$$
Then, there exists a stable vector bundle $E$ with rank $r$ and degree $d$ such that $H^0(E)>0$. 
As $H^0(E)>0$, $\SO_X$ is a subsheaf of $E$ and, saturating, there is a line bundle $L\subsetneq E$ with $0\le \deg(L)<\mu(E)$. Tensoring with a suitable degree zero line bundle, we might assume that $\det(E)\cong \xi$. The weights $\alpha$ are concentrated and rank and degree are coprime, so the stability of any parabolic vector bundle is equivalent to the stability of its underlying vector bundle. Therefore, for every choice of filtrations over $E|_x$, for $x\in D$, the parabolic vector bundle $(E,E_\bullet)$ is stable. In particular, we can choose a parabolic structure $(E,E_\bullet)$ such that $E_{x,r}=L|_x$ for every $x\in D$.

Then, $(E,E_\bullet)$ is stable and $L$ is a subsheaf of $E_x^k=\SH_x^{E_{x,k}}$ for every $k<r$. Therefore, $L$ is a subsheaf of $\SH_H(E,E_\bullet)$. Let $\overline{L}$ be its saturation. Then
$$\deg(\overline{L})\ge \deg(L)\ge 0$$
On the other hand, as $d< \left|H\right|$, 
$$\mu(\SH_x(E,E_\bullet))=\frac{d-|H|}{r} \le\frac{d-d}{r}=0\le \deg(\overline{L})$$
so the underlying vector bundle of $\SH_x(E,E_\bullet)$ is unstable. Therefore, as the parabolic weights are concentrated, $\SH_x(E,E_\bullet)$ is $\alpha$-unstable as a parabolic vector bundle.
\end{proof}

\begin{lemma}
\label{lemma:StrataModuliVB}
Let $X$ be a smooth complex projective curve of genus $g$. Let $r,s,k,d$ be integers such that $0<k<r$. Then if
$$g>\frac{r-1-s}{k}+1$$
then there exist a stable vector bundle $E$ of degree $d$ and rank $r$ and a subbundle $F\subsetneq E$ of rank $k$ such that
$$kd-r\deg(F)= s$$
\end{lemma}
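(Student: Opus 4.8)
The plan is to realise the pair $(E,F)$ as a generic extension of stable bundles. First note that the relation $s=kd-r\deg(F)$ forces $\deg(F)=(kd-s)/r$, so the statement is only meaningful when $s\equiv kd \pmod r$; moreover, if $E$ is stable and $F\subsetneq E$ is a proper subbundle then $\mu(F)<\mu(E)$, i.e. $s>0$, so I may assume $s\ge 1$. Write $k'=r-k$, $f=(kd-s)/r$ and $f'=d-f$. Since the hypothesis $g>1+(r-1-s)/k$ forces $g\ge 2$, I may choose stable bundles $F$ of rank $k$ and degree $f$ and $G$ of rank $k'$ and degree $f'$. A see-saw computation gives $\mu(E)-\mu(F)=s/(rk)$ and $\mu(G)-\mu(E)=s/(rk')$, both positive, so $\mu(F)<\mu(E)<\mu(G)$. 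I would then take a general extension
\[
0\longrightarrow F\longrightarrow E\longrightarrow G\longrightarrow 0 ,
\]
for which $F$ is automatically a saturated rank-$k$ subbundle with $s(F,E)=kd-rf=s$. The whole force of the lemma is that, under the genus bound, the general such $E$ is stable.

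The size of the parameter space is controlled as follows. Because $F$ and $G$ are stable with $\mu(G)>\mu(F)$, there are no nonzero homomorphisms $G\to F$, whence $\Hom(G,F)=0$, and since $X$ is a curve Riemann--Roch gives
\[
\dim\Ext^1(G,F)=-\chi(G,F)=s+k(r-k)(g-1)=:N .
\]
In particular $N>0$, so nonsplit extensions exist.

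To force stability of a general member I would run the standard destabilisation count. Suppose $E$ admits a subbundle $E'\subsetneq E$ with $\mu(E')\ge\mu(E)$, and let $A\subseteq F$ be the saturation of $E'\cap F$ and $B\subseteq G$ the saturation of the image of $E'$ in $G$, of ranks $a$ and $b$. The existence of such an $E'$ for a given class $e$ amounts to $e$ lying in the kernel of the natural map $\Ext^1(G,F)\to\Ext^1(B,F/A)$; the sequences $0\to B\to G\to G/B\to 0$ and $0\to A\to F\to F/A\to 0$, together with the vanishing of $\Ext^2$ on a curve, show this map is surjective, so its kernel has dimension $N-\dim\Ext^1(B,F/A)$. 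Bounding the families of such $A\subseteq F$ and $B\subseteq G$ by the dimensions of the relevant Quot schemes, and the lifts of $E'$ inside $E$ by $\dim\Hom(B,F/A)$, the locus of classes $e$ carrying a destabilising $E'$ of a fixed numerical type has dimension at most
\[
\dim\Quot(F)+\dim\Quot(G)+\dim\Hom(B,F/A)+N-\dim\Ext^1(B,F/A).
\]
It therefore suffices to prove that for every admissible destabilising type this quantity is strictly smaller than $N$, so that a general $e$ avoids all of them.

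The main obstacle is the uniform verification of this inequality over all destabilising types. This requires an upper bound for the dimension of the family of rank-$a$, degree-$\deg A$ subbundles $A\subsetneq F$ (and likewise $B\subsetneq G$) in terms of $\dim\Hom(A,F/A)$, after which each inequality becomes an explicit expression in $a,b,\deg A,\deg B$ via the Euler-characteristic formula for $\Ext^1(B,F/A)$ recorded above; one then optimises over the (finitely many) admissible types, the extremal ones being those in which the quotient $E/E'$ degenerates towards a line bundle. Carrying this estimate through, the non-domination condition collapses exactly to $k(g-1)>r-1-s$, i.e. the hypothesis $g>1+(r-1-s)/k$. Thus the genus bound is precisely what makes the locus of destabilising classes a proper closed subset of $\Ext^1(G,F)$, so a general extension class produces a stable $E$ containing the required rank-$k$ subbundle $F$, and the delicate point throughout is controlling the dimensions of the subbundle families sharply enough to recover the stated bound.
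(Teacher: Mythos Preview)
Your approach is genuinely different from the paper's. The paper does not construct anything: it simply invokes \cite[Remark 3.3]{BL98}, which asserts that a stable bundle $E$ with $s_k(E)=s$ exists provided that for every $1\le i<k$
\[
0<i(k-i)(g-1)-\tfrac{i}{k}\bigl(k(r-k)(g-1)-s+r-1\bigr),
\]
and then checks that the hypothesis $g>1+(r-1-s)/k$ implies all of these inequalities. In other words, the lemma is a corollary of an existing stratification result of Brambila-Paz and Lange; the only work in the paper is the elementary reduction of the inequality.

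What you are proposing is to \emph{reprove} the Brambila-Paz--Lange statement from scratch, via generic extensions and a parameter count. That is indeed the method behind \cite{BL98}, so your outline is on the right track, but the entire substance of such an argument lies in the step you defer: bounding, for every numerical type $(a,b,\deg A,\deg B)$ of a potential destabiliser, the dimension of the locus of bad extension classes and verifying it is $<N$. You assert this ``collapses exactly to $k(g-1)>r-1-s$'' without carrying it out. That is the whole proof; until those estimates are written down and optimised, what you have is a plan, not a proof. Note also that the range of $i$ in the BL98 condition (one inequality for each $1\le i<k$) reflects exactly the case analysis over ranks of destabilising subbundles that you would have to perform.

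Two smaller points. First, your claim that the hypothesis forces $g\ge 2$ is false when $s\ge r$, since then $(r-1-s)/k<0$; for large $s$ the result is easy for a different reason (any stable $E$ admits rank-$k$ subbundles of arbitrarily negative degree), but your extension argument with stable $F,G$ of prescribed ranks would need $g\ge 2$ separately. Second, you correctly observe the implicit constraint $s\equiv kd\pmod r$; this is silently assumed in the paper as well, since in the only application (Lemma \ref{lemma:unstableHecke2}) one takes $s=kd-rd'$.
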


\begin{proof}
By \cite[Remark 3.3]{BL98}, there exists a stable vector bundle $E$ such that
$$s=s_k(E)=k\deg(E)-r\max\{\deg(F) \, |\, F\subsetneq E \text{ subbundle with } \rk(F)=k\}$$
if for every $1\le i< k$
$$0<i(r-i)(g-1)-\frac{i}{k}(k(r-k)(g-1)-s+r-1)$$
As $k>0$, multiplying by $k/i>0$ yields that this is equivalent to proving that
$$0<k(r-i)(g-1)-k(r-k)(g-1)+s-r+1=k(k-i)(g-1)+s-r+1$$
But, as $1\le i< k$ we obtain
$$g>\frac{r-1-s}{k}+1\ge \frac{r-1-s}{k(k-i)}+1$$
for all $1\le i<k$ and the lemma follows.
\end{proof}

\begin{lemma}
\label{lemma:unstableHecke2}
Suppose that $g>3$. Suppose that $d=\deg(\xi)$ and $r$ are coprime with $0<d<r$ and let $\alpha$ be a generic concentrated system of weights. If $H$ is a divisor with $|H|=2d-r>0$, there exists at least a stable parabolic vector bundle $(E,E_\bullet)\in \SM(r,\alpha,\xi)$ over $(X,D)$ such that $\SH_H(E,E_\bullet)$ is $\alpha$-unstable.
\end{lemma}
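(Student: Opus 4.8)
The plan is to construct the destabilizing bundle from a corank-one subbundle whose slope is raised, relative to the ambient bundle, by the Hecke transform. As in the proof of Lemma \ref{lemma:unstableHecke}, after tensoring with a suitable line bundle I may assume $0\le d<r$, so that $r/2<d<r$ and $0<|H|=2d-r<r$. The key input is Lemma \ref{lemma:StrataModuliVB} applied with $k=r-1$ and $s=r-d$: since
$$g>3>2>\frac{d-1}{r-1}+1=\frac{r-1-s}{k}+1,$$
it produces a stable vector bundle $E$ of rank $r$ and degree $d$ together with a rank $(r-1)$ subbundle $F\subsetneq E$ satisfying $(r-1)d-r\deg(F)=r-d$, i.e.\ $\deg(F)=d-1$. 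Tensoring by a degree-zero line bundle (which preserves stability and the value of $s$) I may further assume $\det(E)\cong\xi$.

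Next I would equip $E$ with a parabolic structure. Because $\alpha$ is concentrated and $\gcd(d,r)=1$, every full flag parabolic structure on the stable bundle $E$ is $\alpha$-stable by \cite[Proposition 2.6]{AG18TorelliDH}, so the flags may be chosen freely. At each $x\in D$ I take $E_{x,2}=F|_x$ (a hyperplane in $E|_x$, since $\rk F=r-1$) and complete the flag arbitrarily inside $F|_x$; this gives a stable $(E,E_\bullet)\in\SM(r,\alpha,\xi)$. Writing $H=\sum_x h_x x$ and recalling that $\SH_H(E)=\bigcap_{x}E_x^{h_x+1}$, where $E_x^{j}$ is the subsheaf of sections whose value at $x$ lies in $E_{x,j}$, the transform $F':=F\cap\SH_H(E)$ is a rank $(r-1)$ subsheaf of $\SH_H(E)$, equal to $F$ away from $D$. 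Since $E_{x,h_x+1}\subseteq E_{x,2}=F|_x$ whenever $h_x\ge1$, the length dropped at $x$ is $\dim(F|_x)-\dim(E_{x,h_x+1})=(r-1)-(r-h_x)=h_x-1$, so with $t=\#\{x\in D:h_x\ge1\}$,
$$\deg(F')=\deg(F)-\sum_{x\in D}\max\{h_x-1,0\}=(d-1)-(|H|-t).$$

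Finally I would compare slopes. One has $\deg(\SH_H(E))=\deg(E)-|H|=r-d$, and substituting $|H|=2d-r$ gives the bookkeeping identity
$$r\deg(F')-(r-1)\deg(\SH_H(E))=r\bigl(d-1-|H|+t\bigr)-(r-1)(r-d)=rt-d.$$
As $t\ge1$ (because $|H|>0$) and $d<r\le rt$, this is positive, so $\deg(F')/(r-1)>\deg(\SH_H(E))/r$; the saturation of $F'$ is then a proper destabilizing subbundle and the underlying bundle $\SH_H(E)$ is unstable. Since $\gcd\bigl(r,\deg\SH_H(E)\bigr)=\gcd(r,r-d)=1$ and $\alpha$ is concentrated, \cite[Proposition 2.6]{AG18TorelliDH} applies again to $\SH_H(E,E_\bullet)$, which is therefore $\alpha$-unstable as a parabolic vector bundle. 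The main obstacle is the middle step: one must choose the flags so that $F$ survives into $\SH_H(E)$ with a correctly controlled degree loss coming from the rotated parabolic structure; once the identity $r\deg(F')-(r-1)\deg(\SH_H(E))=rt-d$ is established, the positivity $rt\ge r>d$ is immediate.
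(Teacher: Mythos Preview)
Your proof is correct and takes a genuinely different route from the paper's. The paper destabilizes $\SH_H(E)$ using a subbundle of rank $k=2(r-d)=r-|H|$ and degree $d'=\lceil 2(r-d)^2/r\rceil$, then verifies the somewhat delicate inequality \eqref{eq:unstableHecke2} and checks that the genus bound in Lemma~\ref{lemma:StrataModuliVB} holds via an explicit algebraic manipulation (the substitution $r=2\overline r$, $d=\overline r+\varepsilon$). You instead use a corank-one subbundle $F$ of degree $d-1$; the choice $E_{x,2}=F|_x$ makes the interaction with $\SH_H$ transparent, because every deeper flag step already sits inside $F|_x$, and the bookkeeping collapses to the single identity $r\deg(F')-(r-1)\deg(\SH_H(E))=rt-d$, which is positive simply because $t\ge 1$ and $d<r$. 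Your genus requirement from Lemma~\ref{lemma:StrataModuliVB} is only $g>1+(d-1)/(r-1)$, hence $g\ge 2$ suffices for this step, so your argument is both shorter and formally applicable under a weaker hypothesis than the stated $g>3$. The paper's approach, by contrast, ties the rank of the destabilizer to $|H|$; this is less economical here but is perhaps more natural if one wants the destabilizing subbundle to be exactly $F\subset\SH_H(E)$ rather than a further Hecke modification $F'$ of $F$.
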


\begin{proof}
As $0<|H|=2d-r$, we have $d>r/2$. In particular, as we assumed $r>d$, then $r\ge 3$. For every vector bundle $E$
$$\mu(\SH_H(E))=\frac{d-|H|}{r}=\frac{d-(2d-r)}{r}=\frac{r-d}{r}=1-\mu(E)$$
Let $k=r-|H|=r-(2d-r)=2(r-d)$ and let
$$d'=\left \lceil \frac{2(r-d)^2}{r} \right \rceil$$
It is easy to check that if $r\ge 3$ and $0<r-d<d<r$ then
\begin{equation}
\label{eq:unstableHecke2}
\frac{r-d}{r} < \frac{d'}{k} \le \frac{d}{r}
\end{equation}
Assume that there exists a stable vector bundle $E$ of rank $r$ and degree $d$ with a subbundle $F\subseteq E$ of rank $k=r-|H|=r-(2d-r)=2(r-d)$ and degree $d'$. As the parabolic weights are concentrated and rank and degree are coprime, parabolic stability is equivalent to stability of the underlying bundle for any choice of the filtrations $E_\bullet$. We have $\rk(F)=r-|H|$, so we can choose a parabolic structure $(E,E_\bullet)$ on $E$ such that $F|_x = E_{x,|H|}$ for every $x\in D$. Therefore, $F$ is a subsheaf of $\SH_H(E,E_\bullet)$. By inequality \eqref{eq:unstableHecke2}, the saturation of $F$ in $\SH_H(E,E_\bullet)$ is a destabilizing subsheaf of the underlying vector bundle of $\SH_H(E,E_\bullet)$. As the weights are concentrated, then $\SH_H(E,E_\bullet)$ is $\alpha$-unstable as a parabolic vector bundle.

In order to find the desired $E$ and $F$ we can apply Lemma \ref{lemma:StrataModuliVB} for $k=2(r-d)$ and $s=kd-rd'$. To guarantee the genus hypothesis of the Lemma, it is enough to show that
$$g> 3 \ge \frac{r-1-s}{2(r-d)}+1$$
Using  the bound $\lceil x \rceil < x+1$ on the $d'$ formula yields
$$\frac{r-1-s}{2(r-d)}+1 <\frac{r}{r-d}+r-2d+1$$
Multiplying by $r-d$ and reordering the factors, the desired inequality is then equivalent to
$$r+(r-d)(r-2d)-2(r-d) = r-(r-d)(2d-r+2)\le 0$$
Let
$$\left \{ \begin{array}{cc}
r=2\overline{r}\\
d=\overline{r}+\varepsilon
\end{array}\right.$$
Substituting in the above expression and reordering yields
$$r-(r-d)(2d-r+2)=-2\varepsilon(\overline{r}-\varepsilon-1)$$
which is clearly less or equal to $0$, as $\varepsilon>0$ and $\overline{r}-\varepsilon=r-d\ge 1$.
\end{proof}

Notice that preserving the same system of weights on the Hecke transformation is not a natural choice, but rather an imposition if we want to restrict ourselves to analyzing the stability with respect to a fixed set of parameters $\alpha$. In fact, if the ``rotation'' operation on the parabolic structure is held at the continuous filtration level, the following parabolic weights arise as the natural ones on $\SH_x(E,E_\bullet)$.

Given a system of weights $\alpha$ over $(X,D)$ and a divisor $H=\sum_{x\in D} h_x x$ with $0\le H\le (r-1)D$ we define $\SH_H(\alpha)$ to be the set of parameters satisfying
$$\SH_H(\alpha)_i(x)=\left \{ \begin{array}{ll}
\alpha_{i+h_x}(x)-\alpha_{1+h_x}(x) & i+h_x\le r\\
\alpha_{i+h_x-r}(x)-\alpha_{1+h_x}(x)+1 & i+h_x>r
\end{array} \right.$$
Let us prove that if a parabolic vector bundle $(E,E_\bullet)$ is $\alpha$-stable, then its Hecke transformation $\SH_H(E,E_\bullet)$ is $\SH_H(\alpha)$-stable. In order to do so, we will give yet another interpretation of the Hecke transformation in terms of the parabolic tensor product.

Let $0<H\le (r-1)D$ be an effective divisor, and let $\varepsilon(x)\in [0,1)$ be real numbers indexed by $x\in D$ such that
$$\alpha_{h_x}(x)<\varepsilon(x) \le \alpha_{1+h_x}(x)$$
Let $(\SO_X(-D),\SO_{X,\bullet}(-D)^{1-\varepsilon})$ be the parabolic line bundle obtained by giving $\SO_X(-D)$ the trivial filtration with weight $1-\varepsilon(x)$ at $x\in D$. Consider the parabolic vector bundle $(H,H_\bullet)=(E,E_\bullet)\otimes (\SO_X(-D),\SO_{X,\bullet}(-D)^{1-\varepsilon})$. By construction, for every $x\in D$ and every $\alpha\in \RR$
$$H_\alpha^x = E_{\alpha+\varepsilon(x)}^x$$
In particular, for $\alpha=0$, one gets
$$H_0^x=E_{\varepsilon(x)}^x =E_{\alpha_{1+h_x}(x)}^x=E_x^{1+h_x}$$
Therefore $H$ is the underlying vector bundle of $\SH_H(E,E_\bullet)$. A similar computation shows that, in fact
$$\SH_H(E,E_\bullet)=(E,E_\bullet)\otimes (\SO_X(-D),\SO_{X,\bullet}(-D)^{1-\varepsilon})$$
as quasi-parabolic vector bundles. Let us prove that for each admissible choice of $\varepsilon$, the right hand side is a stable parabolic vector bundle.

\begin{proposition}
Let $(E,E_\bullet)$ be a (semi)-stable parabolic vector bundle with system of weights $\alpha$, and let $(L,L_\bullet^\varepsilon)$ be a parabolic line bundle with system of weights $\varepsilon$. Then $(E,E_\bullet)\otimes (L,L_\bullet^\varepsilon)$ is stable for the induced parabolic structure.
\end{proposition}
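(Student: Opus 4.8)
The plan is to show that tensoring by the parabolic line bundle $(L,L^\varepsilon_\bullet)$ shifts the parabolic slope of every subbundle by one and the same constant, so that the stability inequality for the tensor product becomes literally the stability inequality for $(E,E_\bullet)$. Two ingredients are needed: the additivity of the parabolic degree under tensoring by a parabolic line bundle, and a bijection between the proper subbundles of $E\otimes L$ and those of $E$ that is compatible with the induced parabolic structures.

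First I would record the degree formula. For any parabolic vector bundle $(V,V_\bullet)$ of rank $k$ one has
$$\pdeg\left((V,V_\bullet)\otimes (L,L^\varepsilon_\bullet)\right)=\pdeg(V,V_\bullet)+k\,\pdeg(L,L^\varepsilon_\bullet),$$
which is the rank-one special case of the additivity of the parabolic degree under the parabolic tensor product (cf. \cite{Biswas03}); alternatively it follows by a direct computation from the weights of the tensor product. Dividing by $k$, the parabolic slope of $(V,V_\bullet)\otimes(L,L^\varepsilon_\bullet)$ equals the parabolic slope of $(V,V_\bullet)$ plus the constant $\pdeg(L,L^\varepsilon_\bullet)$, \emph{independently} of $(V,V_\bullet)$; this uniformity is the point of the whole argument.

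The heart of the proof, and the step I expect to require most care, is the comparison of parabolic structures on subbundles. Since $L$ is a line bundle, $G\mapsto G\otimes L^{-1}$ is a bijection between proper subbundles $G\subsetneq E\otimes L$ and proper subbundles $F\subsetneq E$. I would check, working in the continuous-filtration formalism, that the parabolic structure induced on $G=F\otimes L$ from $(E,E_\bullet)\otimes(L,L^\varepsilon_\bullet)$ coincides with $(F,F_\bullet)\otimes (L,L^\varepsilon_\bullet)$, where $F_\bullet$ is the structure induced on $F$ by $(E,E_\bullet)$. Concretely, tensoring by $(L,L^\varepsilon_\bullet)$ shifts the real index of each filtration $E^x_\alpha$ by the weight of $L$ at $x$, while the induced filtration on a subbundle is obtained by intersecting the filtration with the subsheaf; since intersection with a fixed subsheaf commutes with this index shift, the two operations commute and the induced structures agree.

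Finally I would combine the two ingredients. For a proper subbundle $G=F\otimes L\subsetneq E\otimes L$ with its induced structure,
$$\frac{\pdeg(G)}{\rk(G)}=\frac{\pdeg(F,F_\bullet)}{\rk(F)}+\pdeg(L,L^\varepsilon_\bullet)<\frac{\pdeg(E,E_\bullet)}{\rk(E)}+\pdeg(L,L^\varepsilon_\bullet)=\frac{\pdeg\left((E,E_\bullet)\otimes(L,L^\varepsilon_\bullet)\right)}{\rk(E)},$$
the strict inequality (respectively $\le$) being exactly the (semi)stability of $(E,E_\bullet)$ applied to $F$. As $G$ ranges over all proper subbundles of $E\otimes L$, this shows that $(E,E_\bullet)\otimes(L,L^\varepsilon_\bullet)$ is stable (respectively semistable), completing the argument.
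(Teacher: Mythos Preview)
Your proposal is correct and follows essentially the same approach as the paper: both use the additivity formula $\pdeg((V,V_\bullet)\otimes(L,L^\varepsilon_\bullet))=\pdeg(V,V_\bullet)+\rk(V)\pdeg(L,L^\varepsilon_\bullet)$ together with the bijection between parabolic subbundles of $(E,E_\bullet)$ and of $(E,E_\bullet)\otimes(L,L^\varepsilon_\bullet)$ to conclude that the difference of parabolic slopes is preserved. Your version is in fact more carefully justified than the paper's, which simply asserts the subbundle bijection and the degree formula without further comment.
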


\begin{proof}
We have that $(F,F_\bullet)\subset (E,E_\bullet)$ if and only if $(F,F_\bullet)\otimes (L,L_\bullet^\varepsilon)\subset (E,E_\bullet)\otimes (L,L_\bullet^\varepsilon)$, and it is straightforward to check that
$$\pdeg\left ( (F,F_\bullet)\otimes (L,L_\bullet^\varepsilon) \right) = \pdeg(F,F_\bullet)+\rk(F)\pdeg(L,L_\bullet^\varepsilon)$$
$$\pdeg\left ( (E,E_\bullet)\otimes (L,L_\bullet^\varepsilon) \right) = \pdeg(E,E_\bullet)+\rk(E)\pdeg(L,L_\bullet^\varepsilon)$$
Therefore,
$$\frac{\pdeg((E,E_\bullet)\otimes (L,L_\bullet^\varepsilon))}{\rk(E)}-\frac{\pdeg((F,F_\bullet)\otimes (L,L_\bullet^\varepsilon))}{\rk(F)}=\frac{\pdeg(E,E_\bullet)}{\rk(E)}-\frac{\pdeg(F,F_\bullet)}{\rk(F)}$$
so $(E,E_\bullet)\otimes (L,L_\bullet^\varepsilon)$ is (semi)stable if and only if $(E,E_\bullet)$ is (semi)stable.
\end{proof}

\begin{corollary}
A full flag parabolic vector bundle $(E,E_\bullet)$ is $\alpha$-(semi)stable if and only if $\SH_H(E,E_\bullet)$ is $\SH_H(\alpha)$-(semi)stable.
\end{corollary}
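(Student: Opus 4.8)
The plan is to deduce the statement from the preceding Proposition by recognizing $\SH_H(\alpha)$ as the system of weights canonically induced on a parabolic tensor product. We have already identified $\SH_H(E,E_\bullet)$, as a \emph{quasi}-parabolic vector bundle, with $(E,E_\bullet)\otimes(\SO_X(-D),\SO_{X,\bullet}(-D)^{1-\varepsilon})$ for any choice of reals $\varepsilon(x)$ with $\alpha_{h_x}(x)<\varepsilon(x)\le\alpha_{1+h_x}(x)$, and the Proposition shows that tensoring by a parabolic line bundle preserves (semi)stability. So the only thing left to verify is that, for a good choice of $\varepsilon$, the parabolic structure produced by this tensorization carries precisely the weights $\SH_H(\alpha)$.

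First I would take the canonical choice $\varepsilon(x)=\alpha_{1+h_x}(x)$, which is admissible since $\alpha$ is strictly increasing, so $\alpha_{h_x}(x)<\alpha_{1+h_x}(x)$. With this choice the induced weights are obtained by adding $1-\varepsilon(x)$ to each $\alpha_i(x)$ and reducing modulo $1$. The indices split into two ranges: for $i\ge 1+h_x$ one has $\alpha_i(x)\ge\varepsilon(x)$, so $\alpha_i(x)+1-\varepsilon(x)\ge 1$ reduces to $\alpha_i(x)-\alpha_{1+h_x}(x)$; for $i\le h_x$ the sum already lies in $[0,1)$ and equals $\alpha_i(x)-\alpha_{1+h_x}(x)+1$. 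A short check shows that every weight of the first kind is strictly smaller than every weight of the second kind, so after sorting increasingly and reindexing by $j$ one obtains
\begin{equation*}
\SH_H(\alpha)_j(x)=
\begin{cases}
\alpha_{j+h_x}(x)-\alpha_{1+h_x}(x) & j+h_x\le r,\\
\alpha_{j+h_x-r}(x)-\alpha_{1+h_x}(x)+1 & j+h_x>r,
\end{cases}
\end{equation*}
which is exactly the defining formula for $\SH_H(\alpha)$.

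Having matched the weights, I would conclude as follows. By the Proposition, $(E,E_\bullet)$ is $\alpha$-(semi)stable if and only if $(E,E_\bullet)\otimes(\SO_X(-D),\SO_{X,\bullet}(-D)^{1-\varepsilon})$ is (semi)stable; and the latter parabolic bundle, equipped with its induced weights, is precisely $\SH_H(E,E_\bullet)$ together with the weights $\SH_H(\alpha)$. Hence $(E,E_\bullet)$ is $\alpha$-(semi)stable if and only if $\SH_H(E,E_\bullet)$ is $\SH_H(\alpha)$-(semi)stable.

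The only point requiring care is purely combinatorial: tracking the mod-$1$ reduction and the reordering so that the resulting system matches the stated formula on the nose, including the boundary index $h_x=0$, where $\SH_H(\alpha)$ merely shifts all the weights at $x$ by the constant $-\alpha_1(x)$ --- a shift which visibly rescales $\pdeg_\alpha/\rk$ by the same amount for every subbundle and hence preserves the stability inequality, and which is in any case absorbed by the Proposition since $(\SO_X(-D),\SO_{X,\bullet}(-D)^{1-\varepsilon})$ is an arbitrary parabolic line bundle. No genuinely new geometric input is needed beyond the tensor-product description of $\SH_H$ and the stability Proposition already established.
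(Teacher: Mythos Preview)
Your proposal is correct and follows exactly the route the paper intends: the Corollary is deduced immediately from the preceding Proposition together with the identification of $\SH_H(E,E_\bullet)$ with the parabolic tensor product $(E,E_\bullet)\otimes(\SO_X(-D),\SO_{X,\bullet}(-D)^{1-\varepsilon})$. The paper does not spell out the weight-matching computation you carry out (it leaves the Corollary as an unproved consequence), so your explicit verification that the induced weights for the canonical choice $\varepsilon(x)=\alpha_{1+h_x}(x)$ coincide with $\SH_H(\alpha)$ is a welcome addition rather than a deviation.
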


Thus, Hecke transformations preserve stability with respect to the natural induced system of weights, but Lemmas \ref{lemma:unstableHecke} and \ref{lemma:unstableHecke2} show that the induced system $\SH_H(\alpha)$ might not belong to the same stability chamber as the original one $\alpha$.

We can also describe an analogue of dualization in the quasi-parabolic context. Given a quasi-parabolic vector bundle $(E,E_\bullet)$ described as a set of decreasing filtrations
$$E|_x=E_{x,1} \supsetneq E_{x,2} \supsetneq \cdots \supsetneq E_{x,l_r} \supsetneq 0$$
for each $x\in D$, observe that if we take the dual of the corresponding spaces then we obtain
$$E^\vee|_x = E_{x,1}^\vee \twoheadrightarrow  E_{x,2}^\vee(-x) \twoheadrightarrow \cdots \twoheadrightarrow  (E_{x,l_x})^\vee(-x) \twoheadrightarrow 0$$
taking the kernels of the successive quotients (i.e., taking the corresponding annihilators in $E^\vee|_x$) we obtain
$$E^\vee|_x = \op{ann}(0) \supsetneq \op{ann}(E_{x,l_r}) \supsetneq \ldots \supsetneq \op{ann}(E_{x,2}) \supsetneq \op{ann}(E_{x,1})=0$$
which clearly provides us a quasi-parabolic structure over $E^\vee$ with the same number of steps. We will denote the vector bundle $E^\vee$ with this induced quasi-parabolic structure as $(E,E_\bullet)^\vee$ and we will call it its quasi-parabolic dual. Observe that if $(E,E_\bullet)$ is full flag, then $(E,E_\bullet)^\vee$ is also full flag. Notice that this definition of dual is different to the usual notion of parabolic dual of a parabolic vector bundle, described, for example in \cite{Biswas03}. Let us fix a system of weights $\alpha$ for $(E,E_\bullet)$. Biswas defines the parabolic dual of the parabolic vector bundle $(E,E_\bullet)$ in terms of the left continuous decreasing filtrations $E_\alpha$ as the parabolic vector bundle $(E,E_\bullet)^*$ obtained by
$$\left((E,E_\bullet)^*\right)_\alpha=\lim_{t\to \alpha^+} (E_{-1-t})^*$$
It is clear that the underlying vector bundle of $(E,E_\bullet)^*$ does not, in general, coincide with $E^\vee$. In fact, the underlying vector bundle $\left((E,E_\bullet)^*\right)_0$ depends on the choice of the parabolic weights $\alpha$. More precisely, they depend on whether $\alpha_1(x)=0$ for the points $x\in D$. If $\alpha_1(x)>0$ for each $x\in D$, we have 
$$\left((E,E_\bullet)^*\right)_0=(E_{-1})^\vee=E^\vee(-D)$$
In this case, it can be checked that the induced filtration on $E^\vee(-D)$ is precisely the one obtained by tensoring $(E,E_\bullet)^\vee$ with $\SO_X(-D)$.
One of the main advantages of the latter approach in conjunction to the definition of parabolic tensor product is that it allows us to work with sheaves of parabolic morphism in a way similar to the one used for regular vector bundles, as the sheaf of parabolic morphism (morphisms preserving the parabolic structure) from $(E,E_\bullet)$ to $(F,F_\bullet)$ simply becomes
$$\ParHom((E,E_\bullet),(F,F_\bullet))=(E,E_\bullet)^* \otimes (F,F_\bullet)$$

Suppose that $\alpha$ is a full flag system of weights with $\alpha_1(x)>0$ for all $x\in D$. If $(E,E_\bullet)$ is a stable (respectively semi-stable) parabolic vector bundle, then $(E,E_\bullet)^*$ is stable (respectively semi-stable) with respect to the following system of weights $\alpha^\vee$
$$\alpha^\vee_i(x)=1-\alpha_i(x)$$
Under these hypothesis on $\alpha$, $(E,E_\bullet)^*=(E,E_\bullet)^\vee\otimes \SO_X(-D)$ as quasi-parabolic vector bundles, so we just saw that if $\alpha_1(x)>0$ for all $x\in D$, then $(E,E_\bullet)$ is $\alpha$-stable if and only if $(E,E_\bullet)^\vee$ is $\alpha^\vee$-stable.

Notice that, in particular, if the system of weights $\alpha$ is concentrated, then $\alpha^\vee$ is also concentrated, so in the concentrated chamber $\alpha$-stability is equivalent to $\alpha^\vee$-stability.

Up to this point, we have studied three types of operations that can be performed on quasi-parabolic vector bundles $(E,E_\bullet)$ and the corresponding transformations on the systems of weights that must be done to ensure stability of the resulting parabolic vector bundle
\begin{itemize}
\item Tensor with a line bundle $(E,E_\bullet) \mapsto (E,E_\bullet)\otimes L$
\item Dualization $(E,E_\bullet)\mapsto (E,E_\bullet)^\vee$
\item Hecke transformations $(E,E_\bullet)\mapsto \SH_H(E,E_\bullet)$
\end{itemize}
Moreover, if $(E,E_\bullet)$ is a parabolic $\alpha$-(semi)stable vector bundle and $\sigma:X\to X$ is an automorphism of $X$ that sends $D$ to itself (not necessarily fixing each parabolic point), then the pullback $\sigma^*(E,E_\bullet)$ is a $\sigma^*\alpha$-(semi)stable parabolic vector bundle, where
$$\sigma^*\alpha_i(x)=\alpha_i(\sigma^{-1}(x))$$

These four transformations can be clearly extended canonically to families of $\alpha$-(semi)stable parabolic vector bundles, so we will denote the combinations of them as ``basic'' transformations of quasi-parabolic vector bundles.

\begin{definition}
Let $(X,D)$ be a Riemann surface with a set of marked points $D\subset X$. A basic transformation of a quasi-parabolic vector bundle is a tuple $T=(\sigma,s,L,H)$ consisting on
\begin{itemize}
\item An automorphism $\sigma:X\stackrel{\sim}{\longrightarrow} X$ that sends $D$ to itself (but does not necessarily fix any point of $D$)
\item A sign $s\in\{1,-1\}$.
\item A line bundle $L$ on $X$.
\item A divisor $H$ on $X$ such that $0\le H \le (r-1)D$.
\end{itemize}
\end{definition}

Given a quasi-parabolic vector bundle $(E,E_\bullet)$ and a basic transformation $T=(\sigma,s,L,H)$, let
$$T(E,E_\bullet)=\left\{ \begin{array}{ll}
\sigma^*\left(L\otimes \SH_H(E,E_\bullet)\right) & s=1\\
\sigma^*\left(L\otimes \SH_H(E,E_\bullet)\right)^\vee & s=-1
\end{array}\right. $$

If $\xi$ is a line bundle, we define
$$T(\xi)=\left\{ \begin{array}{ll}
\sigma^*\left(L^r\otimes \xi(-H)\right) & s=1\\
\sigma^*\left(L^r\otimes \xi(-H)\right)^\vee & s=-1
\end{array}\right. $$

Finally, if $\alpha$ is a rank $r$ system of weights over $(X,D)$, we define
$$T(\alpha)_i(x)=\left\{ \begin{array}{ll}
\SH_H(\alpha)_i(\sigma^{-1}(x)) & s=1\\
1-\SH_H(\alpha)_{r-i+1}(\sigma^{-1}(x))& s=-1
\end{array}\right. $$

Observe that the action of $T$ on the space of admissible systems of weights is stable under translations of the system in the following sense. Let $\varepsilon=(\varepsilon(x))_{x\in D} \in \RR^{|D|}$ such that for every $x\in D$ $-\alpha_1(x)\le \varepsilon(x)<1-\alpha_r(x)$. Consider the system of weights $\alpha[\varepsilon]$ defined as
$$\alpha[\varepsilon]_i(x)=\alpha_i(x)+\varepsilon(x)$$
we call $\alpha[\varepsilon]$ the translation of $\alpha$ by $\varepsilon$.

Then for any admissible type of a subbundle $\overline{n}'$
\begin{multline*}
s_{\min}(\alpha[\varepsilon],\overline{n})=r''\sum_{x\in D} \sum_{i=1}^r n_i'(x)(\alpha_i(x)+\varepsilon(x)) - r' \sum_{i=1}^r n_i''(x)(\alpha_i(x)+\varepsilon(x))\\
= r''\sum_{x\in D} \sum_{i=1}^r n_i'(x)\alpha_i(x) - r' \sum_{i=1}^r n_i''(x)\alpha_i(x) = s_{\min}(\alpha,\overline{n})
\end{multline*}
Therefore, $\alpha$-stability is completely equivalent to $\alpha[\varepsilon]$-stability. Let
$$\Delta=\{\alpha=(\alpha_i(x))\in [0,1)^{r|D|} : \forall x\in D\textup{ and } \forall i=1,\ldots,r-1 \, ,  \,\,\alpha_i(x)<\alpha_{i+1}(x)\}$$
be the space of systems of weights over $(X,D)$, and let $\Delta^+=\Delta \cap (0,1)^{r|D|}$. Let us define an equivalence relation $\sim$ on $\Delta$ as follows. $\alpha\sim \beta$ if and only if there exists some $\varepsilon=(\varepsilon(x))_{x\in D}$ such that for every $x\in D$ we have
$$-\alpha_1(x) \le \varepsilon(x)<1-\alpha_r(x)$$
and such that $\beta=\alpha[\varepsilon]$. Define $\tilde{\Delta}$ as the quotient $\Delta/\sim$. Clearly $\Delta/\sim = \Delta^+/\sim$.

Let $\alpha,\beta\in \Delta^0$ such that $\alpha\sim\beta$. Then for every basic transformation $T$ we have $T(\alpha)\sim T(\beta)$. Therefore, basic transformations act on $\tilde{\Delta}$. In particular, in $\tilde{\Delta}$ for every $x\in D$ and every $\alpha\in \Delta$
$$\SH_x^r(\alpha)\sim\alpha$$

By construction $(E,E_\bullet)$ is an $\alpha$-(semi)stable parabolic vector bundle with determinant $\xi$ if and only if $T(E,E_\bullet)$ is an $T(\alpha)$-(semi)stable parabolic vector bundle with determinant $T(\xi)$.

Basic transformations form a group $\ST$, where the product rule is the composition. We can give an explicit natural presentation, which is independent on whether we are making $\ST$ act on quasi-parabolic vector bundles, line bundles or weight systems. 

\begin{lemma}
\label{lem:compositionRules}  
The group of basic transformations $\ST$ is generated by
\begin{itemize}
\item $\Sigma_\sigma=(\sigma,1,\SO_X,0)$
\item $\SD^+=(\id,1,\SO_X,0)=\id_\ST$
\item $\SD^-=(\id,-1,\SO_X,0)$
\item $\ST_L=(\id,1,L,0)$
\item $\SH_H=(\id,1,\SO_X,H)$
\end{itemize}
And we have the following composition rules
\begin{enumerate}

\item $\Sigma_\sigma \circ \Sigma_{\tau}=\Sigma_{\sigma\circ \tau}$
\item $\SD^s\circ \SD^t=\SD^{st}$
\item $\ST_L\circ \ST_M=\ST_{L\otimes M}$
\item If $0\le H_i\le (r-1)D$ for $i=1,2$ then
$$\SH_{H_1}\circ \SH_{H_2}=\ST_{L_{H_1+H_2}}\circ\SH_{H_1+H_2-L_{H_1+H_2}}$$
where, given a divisor $F=\sum_{x\in D} f_x x$, we define
$$L_F=\sum_{x\in D} \left \lfloor \frac{f_x}{r} \right\rfloor x$$
\item $\Sigma_\sigma \circ \SD^s = \SD^s \circ \Sigma_\sigma$
\item $\Sigma_\sigma \circ \ST_L=\ST_{\sigma^*L}\circ \Sigma_\sigma$
\item $\Sigma_\sigma \circ \SH_H=\SH_{\sigma^*H} \circ \Sigma_\sigma$
\item $\SD^-\circ \ST_L = \ST_{L^{-1}}\circ \SD^-$
\item $\SD^-\circ \SH_H= \ST_{\SO_X(D)}\circ \SH_{rD-H} \circ \SD^-$, for $H>0$
\item $\ST_L\circ \SH_H = \SH_H\circ \ST_L$ 
\end{enumerate}
\end{lemma}

\begin{proof}
  Straightforward computation.
\end{proof}

From these composition rules, it is straightforward to compute the inverses of each generator
\begin{itemize}
\item $\Sigma_{\sigma}^{-1}=\Sigma_{\sigma^{-1}}$
\item $(\SD^s)^{-1}=\SD^s$
\item $\ST_L^{-1}=\ST_{L^{-1}}$
\item $\SH_H^{-1}= \ST_{\SO_X(D)} \circ \SH_{rD-H}$ for $H>0$.
\end{itemize}
Then, using the composition rules it is easy to check that the inverse of a basic transformation $T=(\sigma,s,L,H)$ for $H>0$ is
$$(\sigma,s,L,H)^{-1}=\left \{\begin{array}{ll}
(\sigma^{-1},1,\sigma^*L^{-1}(D),rD-\sigma^*H) & s=1\\
(\sigma^{-1},-1,\sigma^*L,\sigma^*H) & s=-1
\end{array}\right.$$
And the inverse for $H=0$ is
$$(\sigma,s,L,0)^{-1}=\left \{ \begin{array}{cc}
(\sigma^{-1},1,\sigma^*L^{-1},0) & s=1\\
(\sigma^{-1},-1,\sigma^*L,0) & s=-1
\end{array}\right\} = (\sigma^{-1},s,\sigma^*L^{-s},0)$$

With this presentation we can describe the abstract group structure of $\ST$.
\begin{proposition}
\label{prop:basicTransPresentation}
The group of basic transformations is isomorphic to the following semidirect product
$$\ST \cong \left ((\ZZ^{|D|}\times \Pic(X)) / \SG_D \right) \rtimes \left( \Aut(X,D) \times \ZZ/2\ZZ\right)$$
where
$$\SG_D=\{(rH,\SO_X(H)) | H \text{ supported on } D\}<\ZZ^{|D|}\times \Pic(X)$$
\end{proposition}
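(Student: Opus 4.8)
The plan is to realize $\ST$ as an extension of $\Aut(X,D)\times\ZZ/2\ZZ$ by the abelian subgroup generated by tensorizations and Hecke transformations, and then to identify that extension with the stated semidirect product. I first record the \emph{canonical form}: using the commutation rules (5)--(9) to push every $\Sigma_\sigma$ to the far left and every $\SD^{s}$ just to its right, collecting tensor factors by (3) and (10), and reducing the Hecke divisor into the range $0\le H\le (r-1)D$ by (4), every element of $\ST$ is written as $\Sigma_\sigma\circ\SD^s\circ\ST_L\circ\SH_H$, i.e.\ as a tuple $(\sigma,s,L,H)$. I then define
\[
\Psi\colon \ST\longrightarrow \Aut(X,D)\times\ZZ/2\ZZ,\qquad (\sigma,s,L,H)\longmapsto \bigl(\sigma,\tfrac{1-s}{2}\bigr),
\]
where $\tfrac{1-s}{2}$ is $0$ for $s=1$ and $1$ for $s=-1$. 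That $\Psi$ is a surjective homomorphism follows from (1),(2),(5): when two canonical words are composed, rules (6)--(9) move the $\Sigma_\sigma$ and $\SD^s$ factors leftwards while altering only $L$ and $H$, so the $\sigma$-component composes to $\sigma_1\sigma_2$ and the sign to $s_1s_2$; surjectivity is clear since $\Psi(\Sigma_\sigma)=(\sigma,0)$ and $\Psi(\SD^-)=(\id,1)$. By the canonical form, $K:=\ker\Psi$ is exactly the subgroup generated by the $\ST_L$ and the $\SH_H$.

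Next I analyse $K$. It is abelian by (3),(4),(10) together with the fact that Hecke transformations at distinct points commute. Since $\ZZ^{|D|}\times\Pic(X)$ is the direct sum of the free abelian group on $D$ (with basis $e_x$) and of $\Pic(X)$, and $K$ is abelian, there is a well-defined homomorphism
\[
\theta\colon \ZZ^{|D|}\times\Pic(X)\longrightarrow K,\qquad e_x\longmapsto \SH_x,\quad M\longmapsto \ST_M,
\]
which is surjective because $\ST_M=\theta(0,M)$ and $\SH_H=\theta((h_x)_x,0)$ whenever $0\le h_x\le r-1$. The crucial input is the full flag instance $\SH_x^{\,r}=\ST_{\SO_X(-x)}$ of the relation $\SH_x^{\,l_x}(E,E_\bullet)=(E,E_\bullet)\otimes\SO_X(-x)$: it gives $\theta(r\,e_x,\SO_X(x))=\id$, so every generator $(rH,\SO_X(H))$ of $\SG_D$ lies in $\ker\theta$ and thus $\SG_D\subseteq\ker\theta$.

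The reverse inclusion $\ker\theta\subseteq\SG_D$ is the heart of the matter. Given $((h_x)_x,M)\in\ker\theta$, I write $h_x=rq_x+\rho_x$ with $0\le\rho_x\le r-1$ and use $\SH_x^{\,r}=\ST_{\SO_X(-x)}$ to rewrite $\theta((h_x)_x,M)=\SH_{\rho}\circ\ST_{M'}$ with $\rho=\sum\rho_x x$ and $M'=M\otimes\SO_X(-\sum q_x x)$. On the induced action on the weight space $\tilde{\Delta}$, the factor $\ST_{M'}$ acts trivially, while $\SH_{\rho}$ cyclically rotates the weight gaps at each $x$ by $\rho_x$ steps; since the identity acts trivially on $\tilde{\Delta}$ and $\SH_x$ has order exactly $r$ there (as $\SH_x^{\,r}(\alpha)\sim\alpha$ but no smaller power is trivial for generic $\alpha$), triviality forces $\rho_x\equiv 0\pmod r$, hence $\rho=0$. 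Then $\SH_\rho=\id$, so $\ST_{M'}=\id$, which forces $M'\cong\SO_X$ because tensoring by a nontrivial line bundle changes the isomorphism class of a generic stable parabolic bundle. Thus $h_x=rq_x$ and $M\cong\SO_X(\sum q_x x)$, i.e.\ $((h_x)_x,M)\in\SG_D$. Therefore $\ker\theta=\SG_D$ and $K\cong N:=(\ZZ^{|D|}\times\Pic(X))/\SG_D$.

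Finally I produce the splitting and the action. The subgroup $Q'=\langle\Sigma_\sigma,\SD^-\rangle$ satisfies $\Sigma_{\sigma_1}\Sigma_{\sigma_2}=\Sigma_{\sigma_1\sigma_2}$, $(\SD^-)^2=\id$ and $\Sigma_\sigma\SD^-=\SD^-\Sigma_\sigma$ by (1),(2),(5), so $Q'\cong\Aut(X,D)\times\ZZ/2\ZZ$; moreover $\Psi$ restricts to an isomorphism $Q'\stackrel{\sim}{\longrightarrow}\Aut(X,D)\times\ZZ/2\ZZ$, whence $Q'\cap K=\{\id\}$ and $\ST=K\rtimes Q'$. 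The conjugation action of $Q'$ on $K\cong N$ is read from the remaining rules: (6) and (7) give $\Sigma_\sigma\ST_L\Sigma_\sigma^{-1}=\ST_{\sigma^*L}$ and $\Sigma_\sigma\SH_H\Sigma_\sigma^{-1}=\SH_{\sigma^*H}$, so $\Aut(X,D)$ acts diagonally by $\sigma^*$; (8) gives $\SD^-\ST_L\SD^-=\ST_{L^{-1}}$, and (9) gives $\SD^-\SH_H\SD^-=\ST_{\SO_X(D)}\SH_{rD-H}$, which equals $-((h_x)_x,0)$ in $N$ after subtracting the element $(rD,\SO_X(D))\in\SG_D$, so $\SD^-$ acts as $-1$. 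Both actions preserve $\SG_D$, since $\sigma^*(rH,\SO_X(H))=(r\sigma^*H,\SO_X(\sigma^*H))$ and $-(rH,\SO_X(H))=(r(-H),\SO_X(-H))$ remain in $\SG_D$, so they descend to $N$ and assemble into the asserted semidirect product. I expect the one genuinely delicate step to be the reverse inclusion $\ker\theta\subseteq\SG_D$: this is precisely the faithfulness assertion that no nontrivial combination of a proper Hecke transformation and a tensorization can be the identity, and it is the only place where the geometric action on weights and on a generic stable bundle is needed rather than the formal composition rules.
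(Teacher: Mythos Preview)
Your proof is correct and follows the same architecture as the paper's: project $\ST\twoheadrightarrow\Aut(X,D)\times\ZZ/2\ZZ$ via $(\sigma,s,L,H)\mapsto(\sigma,s)$, identify the kernel $K=\langle\ST_L,\SH_H\rangle$, split via $\langle\Sigma_\sigma,\SD^-\rangle$, and recognise $K$ as $(\ZZ^{|D|}\times\Pic(X))/\SG_D$.

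The one substantive difference lies in how you establish $\ker\theta=\SG_D$. The paper takes $\ST$ to be the set of \emph{tuples} $(\sigma,s,L,H)$ with $0\le H\le(r-1)D$, equipped with the composition law encoded by rules (1)--(10). Under that reading, an element of $K$ is literally a pair $(L,H)$, and reducing $\theta((h_x)_x,M)$ to canonical form via $\SH_x^{\,r}=\ST_{\SO_X(-x)}$ gives the tuple $(M(-\sum q_x x),\rho)$; this equals the identity tuple $(\SO_X,0)$ precisely when $\rho=0$ and $M\cong\SO_X(\sum q_x x)$, i.e.\ when $((h_x),M)\in\SG_D$. No geometry is needed---the paper's one-line ``As generators $\SH_x$ and $\ST_L$ commute and $\SH_x^r=\ST_{\SO_X(-x)}$'' is the whole argument.

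You instead interpret $\ST$ as a group of \emph{transformations} and prove faithfulness directly: a putative relation must act trivially on $\tilde\Delta$ (forcing $\rho=0$, since $\SH_x$ cyclically permutes the $r$ weight gaps at $x$) and on generic stable bundles (forcing $M'\cong\SO_X$). This is sound, and your weight-gap observation is a clean way to see that $\SH_x$ has order exactly $r$ on $\tilde\Delta$. What you are really proving, though, is that the tuple group acts faithfully on the relevant data---a fact the paper postpones to Lemma~\ref{lemma:freeTransforms}, where it is genuinely needed to conclude that distinct $T\in\ST$ induce distinct maps on the moduli space. For the present proposition, which concerns the abstract structure of $\ST$, the paper's formal reading makes your ``delicate step'' evaporate; your approach buys an early proof of part of that later lemma at the cost of invoking moduli-theoretic input here.
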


\begin{proof}
Let us consider the surjective map $\pi:\ST \twoheadrightarrow \langle \SD^-,\ST_L\rangle $ which sends a basic transformation $(\sigma,s,L,H)$ to $\Sigma_\sigma\circ \SD^s$. Let us prove that it is a group homomorphism. Let $(\sigma,s,L,H)$ and $(\sigma',s',L',H')$ be basic transformations. Then
$$(\sigma,s,L,H)\circ (\sigma',s',L',H')=(\sigma,s,\SO_X,0) \circ \ST_L \circ \SH_H \circ \Sigma_{\sigma'}\circ \SD^{s'} \circ (\id,1,L',H')$$
On the other hand, by properties (6) and (7), there exist $L_1$ and $H_1$ such that $\ST_L\circ \SH_H \circ \Sigma_{\sigma'} = \Sigma_{\sigma'}\circ \ST_{L_1}\circ \SH_{H_1}$. Similarly, by properties (8), (9) and (10) there exist $L_2$ and $H_2$ such that $\ST_{L_1}\circ \SH_{H_1} \circ \SD^{s'}=\SD^{s'}\circ \ST_{L_2} \circ \SH_{H_2}$, so we obtain that
$$(\sigma,s,L,H)\circ (\sigma',s',L',H')=(\sigma,s,\SO_X,0)\circ (\sigma',s',\SO_X,0) \circ (\id,1,L_2,H_2)\circ (\id,1,L',H')$$
Finally, applying (1)-(5) and property (10) we have that there exist $L_3$ and $H_3$ such that
$$(\sigma,s,\SO_X,0)\circ (\sigma',s',\SO_X,0) \circ (\id,1,L_2,H_2)\circ (\id,1,L',H')=(\sigma\sigma',ss',L_3,H_3)$$
Therefore
$$\pi((\sigma,s,L,H)\circ (\sigma',s',L',H'))=(\sigma\sigma',ss',\SO_X,0)=\pi(\sigma,s,L,H) \circ \pi(\sigma',s',L',H')$$
The kernel of this map coincides clearly with the subgroup $\langle \ST_L,\SH_H\rangle < \ST$ generated by $\ST_L$ and $\SH_H$, so it is normal and we have that
$$\ST \cong \langle \ST_L,\ST_H\rangle \rtimes \langle \Sigma_\sigma, \SD^- \rangle$$
On the other hand, by property (5) we know that $\Sigma_\sigma$ and $\SD^-$ commute, so
$$\langle \Sigma_\sigma, \SD^-\rangle \cong \Aut(X,x) \times \ZZ/2\ZZ$$
Therefore, we conclude that
\begin{equation}
\label{eq:basicTransPresentation1}
\ST\cong \langle \ST_L,\ST_H \rangle \rtimes \left( \Aut(X,x)\times \ZZ/2\ZZ\right )
\end{equation}
Finally, let us consider the following group
$$\SG_D=\{(rH,\SO_X(H)) | H \text{ supported on } D\}<\ZZ^{|D|}\times \Pic(X)$$
As generators  $\SH_x$ for $x\in D$ and $\ST_L$ commute and $\SH_x^r = \ST_{\SO_X(-x)}$ then we have
$$\langle \ST_L,\SH_H\rangle \cong (\ZZ^{|D|}\times \Pic(X)) / \SG_D$$
Combining this with equation \eqref{eq:basicTransPresentation1} the Proposition follows.
\end{proof}

It will be also useful to consider the subgroup $\ST^+<\ST$ consisting on basic transformations of the form $T=(\sigma,1,L,H)$, i.e., basic transformations that do not involve the dual. In particular, later on we will prove that every basic transformation $\ST$ acting on moduli spaces of rank $2$ is equivalent to a transformation in $\ST^+$ (see Lemma \ref{lemma:dualrk2}).

Finally, we briefly describe the analogues of these constructions for projective parabolic bundles. Given a parabolic projective bundle $(\PP,\PP_\bullet)$, let $(E,E_\bullet)$ be a reduction (it always exists by Lemma \ref{lemma:projectiveReduction})
$$(\PP,\PP_\bullet)\cong (\PP(E),\PP(E_\bullet))$$
we define
$$(\PP,\PP_\bullet)^\vee=\PP\left( (E,E_\bullet)^\vee \right)$$
$$\SH_H(\PP,\PP_\bullet) = \PP\left( \SH_H(E,E_\bullet) \right)$$
Any two reductions are related by tensorization with a line bundle. If $L$ is a line bundle, then
$$\left( (E,E_\bullet)\otimes L \right)^\vee=(E,E_\bullet)^\vee \otimes L^\vee$$
$$\SH_H\left((E,E_\bullet)\otimes L\right)=\SH_H(E,E_\bullet)\otimes L$$
Therefore,
$$\PP\left(\left( (E,E_\bullet)\otimes L \right)^\vee \right)=\PP\left((E,E_\bullet)^\vee \right)$$
$$\PP\left(\SH_H\left((E,E_\bullet)\otimes L\right) \right)=\PP\left(\SH_H(E,E_\bullet)\right)$$
So the definition of the dual or Hecke transformations are independent of the choice of the reduction.

\section{The algebra of parabolic endomorphisms}
\label{section:Algebra}

Let $P$ be the parabolic subgroup of $\GL(r,\CC)$ consisting on upper triangular matrices. Let $\SSS$ and $\SG$ be the group schemes over $X$ given by the following short exact sequences.
$$0\to \SSS \to \SSL(r,\CC)\times X \to \left (\SSL(r,\CC)/(P\cap \SSL(r,\CC)) \right)\otimes \SO_D \to 0$$
$$0\to \SG \to \GL(r,\CC)\times X \to (\GL(r,\CC)/P)\otimes \SO_D \to 0$$

Let $\parsl=\op{Lie}(\SSS)$ and $\pargl=\op{Lie}(\SG)$ denote the sheaves of Lie algebras of $\SSS$ and $\SG$ respectively.  Let $\Aut(\parsl)$ be the sheaf of groups of local algebra automorphisms of $\parsl$. Let $\Inn(\parsl)$ be the subsheaf of inner automorphisms, i.e., the image of the adjoint action $\op{Ad}: \SSS \to \Aut(\parsl)$. Let $\GL(\parsl)$ be the sheaf of local linear automorphisms of $\parsl$ as a vector bundle. Analogous notations will be used for $\pargl$.

As $\SSS$ is a group scheme over $X$, $\Aut(\parsl)$ is a group scheme over $X$ and $\Inn(\parsl)$ is a sub-group scheme over $X$.

Before engaging the main classification Lemma (Lemma \ref{lemma:isoParEnd}), let us prove some necessary results about linear maps of algebras of matrices. Through this section, given a ring $R$, let $\Mat_{n\times m}(R)$ be the $R$-module of $n\times m$ matrices with entries in $R$.

\begin{lemma}
\label{lemma:rk1Matrix}
Let $R$ be a commutative unique factorization domain (UFD). Let $M=(m_{ij})\in \op{Mat}_{n\times m}(R)$ be a matrix with entries in $R$. Then all the $2 \times 2$ minors of $M$ have null determinant in $R$ if and only if there exist matrices $A=(a_i)\in \op{Mat}_{n\times 1}(R)$ and $B=(b_i)\in \op{Mat}_{1\times m}(R)$ such that $M=AB$.
\end{lemma}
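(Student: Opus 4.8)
The plan is to prove the two implications separately, with the reverse (easy) direction being a one-line computation and the forward direction carrying all the content.

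For the "if" direction, suppose $M=AB$ with $A=(a_i)$ and $B=(b_j)$. Then $m_{ij}=a_ib_j$, and the $2\times 2$ minor formed from rows $i,k$ and columns $j,l$ equals $a_ib_j\,a_kb_l-a_ib_l\,a_kb_j=0$. This settles the claim in that direction for any commutative ring, with no use of the UFD hypothesis.

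For the "only if" direction, I would first dispose of the case $M=0$ (take $A=B=0$) and assume $M\ne 0$. The strategy is to build a factorization over the fraction field $K=\op{Frac}(R)$ and then descend it to $R$. Concretely, fix an entry $m_{pq}\ne 0$, set $B=(m_{p1},\ldots,m_{pm})\in R^m$ (row $p$) and $a_i=m_{iq}/m_{pq}\in K$. The vanishing of the minor on rows $i,p$ and columns $j,q$ reads $m_{ij}m_{pq}=m_{iq}m_{pj}$, which rearranges precisely to $m_{ij}=a_ib_j$; hence $M=AB$ over $K$, with $B$ already integral while $A$ lies a priori only in $K^n$.

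The main obstacle — and the only place the UFD hypothesis enters — is forcing both factors into $R$. I would handle this by passing to a primitive representative of the row factor: let $c=\gcd(m_{p1},\ldots,m_{pm})$, which is nonzero since $m_{pq}\ne 0$, write $B=cB'$ with $B'=(b_j')$ primitive (so $\gcd_j(b_j')=1$), and absorb $c$ into the column by setting $A'=cA$, giving $M=A'B'$ over $K$ with $B'$ integral and primitive. It then remains to check that each coordinate $a_i'$ of $A'$ lies in $R$, which follows from a Gauss-lemma-type argument: writing $a_i'=u/v$ in lowest terms, the relations $a_i'b_j'=m_{ij}\in R$ give $v\mid ub_j'$, hence $v\mid b_j'$ for every $j$ by coprimality of $u$ and $v$, so $v\mid\gcd_j(b_j')=1$ and $v$ is a unit. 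Thus $A'\in R^n$ and $B'\in R^m$, and $M=A'B'$ is the desired factorization over $R$. The essential technical fact I am relying on is exactly that over a UFD a primitive (content-$1$) tuple forces any element of $K$ that multiplies it into $R$ to already lie in $R$.
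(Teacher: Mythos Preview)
Your proof is correct. The approaches differ in how the UFD hypothesis is deployed. The paper works entirely inside $R$: after locating a nonzero pivot (say $m_{11}$), it uses the divisibility relations $m_{11}\mid m_{i1}m_{1j}$ to conclude $m_{11}\mid \gcd_i(m_{i1})\cdot\gcd_j(m_{1j})$, and then invokes unique factorization to split $m_{11}=a_1b_1$ with $a_1\mid\gcd_j(m_{1j})$ and $b_1\mid\gcd_i(m_{i1})$, defining the remaining $a_i$ and $b_j$ from there. Your route instead passes to $K=\op{Frac}(R)$ to get an obvious rank-one factorization, then descends via a content/Gauss-lemma argument (pull the gcd out of the row to make it primitive, and observe that a primitive tuple forces any $K$-scalar multiplying it into $R^m$ to already be integral). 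Both arguments use exactly the same strength of hypothesis; yours is perhaps more conceptual and makes the role of the UFD property (existence of content, Gauss-type primitivity) more transparent, while the paper's stays elementary and never leaves $R$.
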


\begin{proof}
If $M=AB$, then for every pair $(i,j)$, $m_{ij}=a_ib_j$. Therefore, for every $i,k\in [1,n]$ and $j,l\in [1,m]$ with $i<k$ and $j<l$
$$
\left | \begin{matrix}
m_{ij} & m_{il}\\
m_{kj} & m_{kl}
\end{matrix} \right| = \left | \begin{matrix}
a_ib_j & a_ib_l\\
a_kb_j & a_kb_l
\end{matrix} \right|=a_ib_ja_kb_l-a_ib_la_kb_j=0
$$
On the other hand, suppose that every $2\times 2$ minor in $M$ has zero determinant. If $M$ is the zero matrix, it is the product of two zero vectors. Otherwise, let $m_{ij}$ be a nonzero element of $M$. By reordering rows and columns (i.e., permuting the elements of $A$ and $B$), we can assume without loss of generality that $m_{11}\ne 0$. Then for every $i,j>1$
$$\left | \begin{matrix}
m_{11} & m_{1j}\\
m_{i1} & m_{ij}
\end{matrix} \right|=0$$
Therefore $m_{11}m_{ij}=m_{i1}m_{1j}$. $R$ is a GCD domain, so great common divisors exist and are unique up to product by units. Then $m_{11} | \op{GCD}_{j>1} (m_{i1}m_{1j})=m_{i1} \op{GCD}_{j>1}(m_{1j})$ for every $i>1$. We conclude that
$$m_{11} | \op{GCD}_{i>1} \left ( m_{i1} \op{GCD}_{j>1}(m_{1j}) \right) = \op{GCD}_{i>1}(m_{i1}) \op{GCD}_{j>1}(m_{1j})$$
As $R$ is a UFD, there exists a decomposition $m_{11}=a_1b_1$ such that $a_1|\op{GCD}_{j>1}(m_{1j})$ and $b_1|\op{GCD}_{i>1}(m_{i1})$. As $a_1|m_{1j}$ for every $j>1$, there must exist an element $b_j\in R$ such that $m_{1j}=a_1b_j$. Similarly, for every $i>1$, $b_1|m_{i1}$, so there must exist an element $a_i\in R$ such that $m_{i1}=a_ib_1$. Finally, for every $i,j>1$, as $m_{11}m_{ij}=m_{i1}m_{1j}$ yields
$$a_1b_1 m_{ij} = a_i b_1 a_1 b_j$$
As $a_1,b_1\ne 0$ and $R$ is a commutative UFD (and, in particular, it is integral), $m_{ij}=a_ib_j$ for every $i,j>1$. As the latter holds also for $i=1$ or $j=1$ by construction, then letting $A=(a_i)$ and $B=(b_j)$ yields $M=AB$ as desired.
\end{proof}

\begin{lemma}
\label{lemma:uniquenessDecomposition}
If $R$ is a field and $M=(m_{ij})\in \op{Mat}_{n\times m}(R)$ is a nonzero matrix such that all the $2\times 2$ minors have zero determinant, then the decomposition $M=AB$ stated by the previous lemma is unique in the sense that if $M=AB=A'B'$ for some matrices $A=(a_i), A'=(a'_i) \in \op{Mat}_{n\times 1}(R)$ and $B=(b_i), B'=(b'_i) \in \op{Mat}_{1\times m}(R)$ then there exists a nonzero $\rho\in R$ such that $A'=\rho A$ and $B'=\rho^{-1}B$.
\end{lemma}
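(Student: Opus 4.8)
The plan is to reinterpret the hypothesis in terms of rank: over a field $R$, the vanishing of all $2\times 2$ minors says exactly that $M$ has rank at most one, so expressing $M=AB$ with $A\in\op{Mat}_{n\times 1}(R)$ and $B\in\op{Mat}_{1\times m}(R)$ is just writing $M$ as an outer product, and the only freedom in such a factorization is rescaling the column factor against the row factor. First I would record the elementary observation that, since $M\neq 0$, none of the four factors can vanish: if $A=0$ or $B=0$ then $M=AB=0$, and likewise $M=A'B'$ forces $A'\neq 0$ and $B'\neq 0$.

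Next I would identify the column span. Writing $B=(b_1,\ldots,b_m)$, the $j$-th column of $M=AB$ equals $b_j A$, so every column of $M$ lies on the line $R\cdot A\subseteq R^n$; because $M\neq 0$ this line is exactly the column space of $M$. Running the identical argument on the factorization $M=A'B'$ shows the column space also equals $R\cdot A'$. Hence $R\cdot A=R\cdot A'$ as lines in $R^n$, and since both are one-dimensional there is a nonzero scalar $\rho\in R$ with $A'=\rho A$.

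Finally I would recover the relation on the row factors. Substituting $A'=\rho A$ into $AB=A'B'$ gives $A\,(B-\rho B')=0$ as an outer product; choosing an index $i$ with $a_i\neq 0$ (which exists since $A\neq 0$), the $(i,j)$ entry reads $a_i(b_j-\rho b'_j)=0$ for every $j$, and as $R$ is a field this forces $b_j=\rho b'_j$ for all $j$, i.e. $B=\rho B'$, so $B'=\rho^{-1}B$, as claimed. I do not expect a serious obstacle here; the only genuine subtlety is that the cancellation in the last step (passing from $a_i(b_j-\rho b'_j)=0$ to $b_j=\rho b'_j$) uses that $R$ is an integral domain, which is precisely why uniqueness is asserted over a field and not over the general UFD of the previous lemma. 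I would make sure the column-span step is phrased so that it invokes only the field structure already available, rather than any additional assumption.
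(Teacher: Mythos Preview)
Your proof is correct and follows essentially the same approach as the paper's: both arguments exploit that over a field a nonzero rank-one matrix determines its column (and row) factor up to a scalar. The only cosmetic difference is that the paper picks a specific nonzero entry $m_{ij}$ and reads off the scalars $A'=\frac{b_j}{b'_j}A$ and $B'=\frac{a_i}{a'_i}B$ directly from the $i$-th row and $j$-th column before checking the two ratios are inverse, whereas you phrase the first step via the column span and obtain $B'=\rho^{-1}B$ by substitution, which makes the consistency check automatic.
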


\begin{proof}
Let $m_{ij}$ be a nonzero element of $M$. Then the $i$-th row of $M$ is nonzero and we have
$$a_i B = a'_i B'$$
with $a_i\ne 0$ and $a'_i\ne 0$. Then $a'_i$ is invertible and we get that $B'=\frac{a_i}{a'_i} B$. Similarly, as the $j$-th column of $M$ is nonzero we get
$$Ab_j=A'b'_j$$
with $b_j\ne 0$ and $b'_j\ne 0$. Then $b'_j$ is invertible and we get that $A'=\frac{b_j}{b'_j} A$. Finally, let $\rho=\frac{b_j}{b'_j}$ and note that as $m_{ij}=a_ib_j=a'_ib'_j \ne 0$, one gets
$$\frac{a_i}{a'_i}=\frac{m_{ij}/b_j}{m_{ij}/b'_j} = \frac{b'_j}{b_j}=\rho^{-1}$$
\end{proof}

\begin{remark}
If $n=m$, then we can rewrite the nullity condition for the minors of $M$ in a more compact way. For any matrix $M\in \Mat_{n\times n}(R)$, all the $2 \times 2$ minors of $M$ have null determinant in $R$ if and only if
$$\wedge^2 M =0$$
\end{remark}

We will introduce some notations that will be useful in order to work with linear morphisms between algebras of matrices.

Let us consider a bijection $\sigma:[1,n]\times [1,m]\to [1,n']\times [1,m']$. Abusing the notation, let
$$\sigma:\Mat_{n\times m}(R) \longrightarrow \Mat_{n'\times m'}(R)$$
be the isomorphism that sends a matrix $M=(m_{ij})\in \Mat_{n\times m}(R)$ to the $n' \times m'$ matrix whose entry $(i,j)$ is
$$\left(\sigma(M)\right)_{ij}=m_{\sigma^{-1}(i,j)}$$

In particular, given a bijection $\tau:[1,n]\times [1,m] \to [1,nm]\times \{1\}=[1,nm]$ and a matrix $M\in \Mat_{n\times m}(R)$, $\tau(M)\in \Mat_{nm\times 1}(R)\cong R^{nm}$ is the column vector obtained by placing all entries of $M$ in a column using the bijection $\tau$. Reciprocally, given such vector $V\in \Mat_{nm\times 1}(R)\cong R^{nm}$, then $\tau^{-1}(V)$ is the corresponding matrix.

In order to simplify the notation, from now on, let us fix once and for all the bijection $\tau:[1,n]^2\to [1,n^2]$ that places the entries of the matrix in row order, i.e.
$$\tau(i,j)=(i-1)n+j$$

We will also fix the bijection $\iota:[1,n]^2 \to [1,n]^2$ sending $\iota(i,j)=(j,i)$, so that for every matrix $M\in \Mat_{n\times n}(R)$
$$\iota(M)=M^t$$

\begin{lemma}
\label{lemma:conjCharacterization}
Let $R$ be a UFD. For every $n>0$ there exists a bijection
$$\sigma:[1,n^2]^2\times [1,n^2]^2$$
such that given any matrix $M\in \GL(\Mat_{n\times n}(R)) \stackrel{\tau} \cong \GL_{n^2}(R)$, $M$ is the matrix associated to a linear transformation of the form
$$X \mapsto AXB$$
for some $A,B\in \Mat_{n\times n} (R)$ if and only if
$$\wedge^2\left( \sigma(M)\right)=0$$
In that case, we will denote $M=\SM_{A,B}$
\end{lemma}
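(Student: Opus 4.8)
The plan is to reduce the statement to the rank-one characterization of Lemma \ref{lemma:rk1Matrix} by a suitable reshuffling of indices. First I would write out explicitly the matrix of the linear map $X \mapsto AXB$ in the basis ordered by $\tau$. Writing $A = (a_{ip})$, $B = (b_{qk})$ and $X = (x_{pq})$, one has
$$(AXB)_{ik} = \sum_{p,q} a_{ip}\, x_{pq}\, b_{qk},$$
so the coefficient of the variable $x_{pq}$ in the $(i,k)$ entry of the output is $a_{ip} b_{qk}$. Hence the $n^2 \times n^2$ matrix $M$ of this transformation satisfies $M_{\tau(i,k),\tau(p,q)} = a_{ip} b_{qk}$. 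The crucial observation is that this entry factors as a product of a term depending only on $(i,p)$ and a term depending only on $(q,k)$.

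This factorization motivates the definition of the bijection. I would define $\sigma$ on $[1,n^2]^2$ by the rule $\sigma(\tau(i,k),\tau(p,q)) = (\tau(i,p),\tau(q,k))$; on the four underlying indices this is simply the permutation $(i,k,p,q) \mapsto (i,p,q,k)$, which is manifestly a bijection of $[1,n]^4$ and therefore of $[1,n^2]^2$. It depends only on $n$, as required. With the reshaping convention $(\sigma(M))_{ij} = m_{\sigma^{-1}(i,j)}$ introduced above, for $M = \SM_{A,B}$ we then get $(\sigma(M))_{\tau(i,p),\tau(q,k)} = a_{ip} b_{qk}$, i.e. $\sigma(M) = VW$ where $V \in \Mat_{n^2 \times 1}(R)$ is the column vector with $V_{\tau(i,p)} = a_{ip}$ and $W \in \Mat_{1 \times n^2}(R)$ is the row vector with $W_{\tau(q,k)} = b_{qk}$. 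Thus $\sigma(M)$ is an outer product, so by Lemma \ref{lemma:rk1Matrix} all its $2\times2$ minors vanish, which by the remark preceding this lemma is exactly the statement $\wedge^2(\sigma(M)) = 0$. This gives the forward implication.

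For the converse I would run the same computation backwards. Suppose $\wedge^2(\sigma(M)) = 0$. Since $\sigma(M)$ is a square $n^2 \times n^2$ matrix, this says all its $2 \times 2$ minors vanish, so Lemma \ref{lemma:rk1Matrix} provides a column $V$ and a row $W$ with $\sigma(M) = VW$. Defining $A = (a_{ip})$ and $B = (b_{qk})$ by $a_{ip} = V_{\tau(i,p)}$ and $b_{qk} = W_{\tau(q,k)}$ and reading the chain of identities in reverse, namely $M_{\tau(i,k),\tau(p,q)} = (\sigma(M))_{\tau(i,p),\tau(q,k)} = a_{ip} b_{qk}$, shows precisely that $M$ is the matrix of $X \mapsto AXB$, so $M = \SM_{A,B}$. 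When $M \in \GL(\Mat_{n \times n}(R))$ the resulting $A$ and $B$ are automatically invertible, since the map is bijective and factors through multiplication by $A$ and by $B$.

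I expect no deep obstacle here: the entire content is the factorization $a_{ip} b_{qk}$ together with Lemma \ref{lemma:rk1Matrix}. The only point requiring care is the index bookkeeping, namely verifying that the prescribed four-index permutation really is a bijection, that the reshaping convention is applied in the correct direction (so that $\sigma(M)$, and not $\sigma^{-1}(M)$, is the outer product), and that the same fixed $\sigma$ works uniformly for every $M$, which is what lets the lemma assert the existence of a single bijection rather than one depending on $M$.
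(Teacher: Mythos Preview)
Your proposal is correct and follows essentially the same approach as the paper: both compute the matrix of $X\mapsto AXB$ in the $\tau$-basis, observe that its entries factor as $a_{ip}b_{qk}$ (the paper packages this as $M=A\otimes B^t$), rearrange indices so that this becomes an outer product $\tau(A)\cdot\tau(B)^t$, and then invoke Lemma~\ref{lemma:rk1Matrix}. Your version is slightly more explicit in that you write down the bijection $\sigma$ concretely as the four-index permutation $(i,k,p,q)\mapsto(i,p,q,k)$, whereas the paper only asserts its existence; otherwise the arguments coincide.
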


\begin{proof}
The matrix $M\in \GL_{n^2}(R)$ induced by the given linear transformation is given by
\begin{eqnarray*}
\xymatrixcolsep{3pc}
\xymatrixrowsep{0.05pc}
\xymatrix{
R^{n^2} \ar[r] & R^{n^2} \\
V \ar@{|->}[r] & \tau(A \tau^{-1}(V) B )
}
\end{eqnarray*}
For the bijection $\tau$ chosen above, it is straightforward to see that
$$M=A\otimes B^t$$
One just has to check that the morphisms
$$\End(R^n)\cong (R^n)^* \otimes R^n \longrightarrow \End(R^n)\cong (R^n)^*\otimes R^n$$
obtained by composing on the left with $A\in \End(R^n)$ or on the right with $B\in \End(R^n)$ correspond to
$$\id \otimes A:(R^n)^* \otimes R^n \longrightarrow (R^n)^* \otimes R^n$$
and
$$B^t\otimes \id:(R^n)^* \otimes R^n \longrightarrow (R^n)^* \otimes R^n$$
respectively, so the morphism represented by $M$ is just $B^t\otimes A$. In order to write the matrix for the morphism, we need to select a basis for $(R^n)^*\otimes R^n$. The choice of $\tau$ corresponds to selecting the basis of $(R^n)^* \otimes R$ in row order, so the matrix $M$ in the basis induced by the isomorphism $\tau$ is $A\otimes B^t$.

By definition of tensor product, the entries of the matrix $A\otimes B^t$ are all the possible products $a_{ij}b_{kl}$ of an entry $a_{ij}$ of $A$ and an entry $b_{kl}$ of $B$ in a fixed order depending only on the dimension $n$. Therefore, there exists a fixed bijection $\sigma:[1,n^2]^2\times [1,n^2]^2$ such that
$$\sigma \left(A\otimes B^t \right) = \tau(A) \cdot (\tau(B))^t$$

Therefore, the set of matrices $M\in \GL_{n^2}(R)$ for which there exist $A,B\in \Mat_{n\times n}(R)$ such that
$$M(V) = \tau \left( A \tau^{-1}(V) B \right)$$
is the set of matrices $M$ such that there exist vectors $\tau(A),\tau(B) \in R^{n^2}$ such that
$$\sigma(M)=\tau(A)\cdot (\tau(B))^t $$
By Lemma \ref{lemma:rk1Matrix}, such vectors exist if and only if
$$\wedge^2(\sigma(M))=0$$
\end{proof}

\begin{corollary}
\label{cor:innerCharacterization}
Let $R$ be a UFD and let $\sigma$ be the bijection given by the previous lemma. Then $M=(m_{\alpha,\beta})\in \GL_{n^2}(R)$ is the matrix of an inner transformation
$$X\mapsto A X A^{-1}$$
for some $A\in \GL_n(R)$ if and only if $\wedge^2(\sigma(M))=0$ and for every $i,j\in [1,n]$
$$\sum_{k=1}^n m_{\sigma^{-1}(\tau(i,k),\tau(k,j))} = \sum_{k=1}^n m_{\sigma^{-1}(\tau(j,k),\tau(k,i))} = \delta_{ij}$$
\end{corollary}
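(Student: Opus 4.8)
The plan is to reduce the statement to the characterization already obtained in Lemma \ref{lemma:conjCharacterization} and then single out, among all transformations of the form $X\mapsto AXB$, exactly the inner ones. By that Lemma, $\wedge^2(\sigma(M))=0$ holds if and only if $M=\SM_{A,B}$ for some $A,B\in\Mat_{n\times n}(R)$, i.e. $M$ represents the map $X\mapsto AXB$; and since $M\in\GL_{n^2}(R)$ the factors $A,B$ are necessarily invertible. An inner automorphism $X\mapsto AXA^{-1}$ is precisely the case $B=A^{-1}$, equivalently $AB=\id$, so the remaining task is to rewrite the equation $AB=\id$ as a linear condition on the entries $m_{\alpha,\beta}$ of $M$.

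First I would make the bijection $\sigma$ from the proof of Lemma \ref{lemma:conjCharacterization} explicit. With the fixed row-order labelling $\tau(i,j)=(i-1)n+j$, a direct computation from $V\mapsto \tau(A\,\tau^{-1}(V)\,B)$ gives $M_{\tau(p,q),\tau(i,j)}=A_{pi}B_{jq}$, so that $\sigma$ is the index permutation taking the slot $(\tau(p,q),\tau(i,j))$ to $(\tau(p,i),\tau(j,q))$; inverting, $\sigma^{-1}(\tau(a,b),\tau(c,d))=(\tau(a,d),\tau(b,c))$. Substituting the indices appearing in the statement yields $\sigma^{-1}(\tau(i,k),\tau(k,j))=(\tau(i,j),\tau(k,k))$, whence
$$\sum_{k=1}^n m_{\sigma^{-1}(\tau(i,k),\tau(k,j))}=\sum_{k=1}^n M_{\tau(i,j),\tau(k,k)}=\sum_{k=1}^n A_{ik}B_{kj}=(AB)_{ij},$$
and similarly $\sum_{k} m_{\sigma^{-1}(\tau(j,k),\tau(k,i))}=(AB)_{ji}$. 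Thus the two displayed sums compute the entries $(AB)_{ij}$ and $(AB)_{ji}$, and requiring both to equal $\delta_{ij}$ for all $i,j$ is exactly the matrix equation $AB=\id$.

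With this dictionary the equivalence follows at once. If $M$ is inner, say $M=\SM_{A,A^{-1}}$ with $A\in\GL_n(R)$, then Lemma \ref{lemma:conjCharacterization} gives $\wedge^2(\sigma(M))=0$, and the computation above produces the sum conditions because $AB=AA^{-1}=\id$. Conversely, if $\wedge^2(\sigma(M))=0$ then $M=\SM_{A,B}$ for some $A,B$, and the sum conditions say $AB=\id$; since $R$ is commutative, $AB=\id$ for square matrices forces $A,B\in\GL_n(R)$ with $B=A^{-1}$, so $M$ represents the inner transformation $X\mapsto AXA^{-1}$.

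The one point I would treat most carefully — and the only real subtlety — is that the factorization $M=\SM_{A,B}$ is not unique (over a field it is unique up to $A\mapsto\rho A$, $B\mapsto\rho^{-1}B$ by Lemma \ref{lemma:uniquenessDecomposition}, and over a general UFD there may be additional ambiguity), which could make the phrase ``$AB=\id$'' seem ill-defined as a condition on $M$. This is resolved by the identity $M_{\tau(i,j),\tau(k,k)}=A_{ik}B_{kj}$, which shows that $(AB)_{ij}=\sum_k M_{\tau(i,j),\tau(k,k)}$ depends only on the entries of $M$ and not on the chosen factorization. Hence the product $AB$ is intrinsic to $M$, no appeal to uniqueness of the decomposition is needed, and the stated equalities are genuinely conditions on $M$ alone.
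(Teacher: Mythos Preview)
Your proof is correct and follows essentially the same route as the paper: reduce to Lemma \ref{lemma:conjCharacterization} to get $M=\SM_{A,B}$, then read off the entries of $AB$ from the entries of $M$ via the relation $m_{\sigma^{-1}(\tau(i,j),\tau(k,l))}=a_{ij}b_{kl}$. Your treatment is in fact slightly sharper than the paper's: you notice that both displayed sums compute entries of $AB$ (namely $(AB)_{ij}$ and $(AB)_{ji}$), so the condition literally encodes only $AB=\id$, and you close the gap by invoking that for square matrices over a commutative ring $AB=\id$ forces $B=A^{-1}$; you also address why the condition is well-defined on $M$ despite the non-uniqueness of the factorization.
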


\begin{proof}
By the lemma, if $\wedge^2(\sigma(M))=0$ then there exist matrices $A,B\in \Mat_{n\times n}(R)$ such that $M$ is the map induced by
$$X\mapsto AXB$$
then $M$ is an inner transformation if and only if $A$ and $B$ are inverses, i.e., if and only if $AB=BA=I$, where $I$ is the identity matrix. This holds if and only if for every $i,j=1,\ldots,n$
\begin{equation}
\label{eq:innerCharacterization1}
\sum_{k=1}^n a_{ik} b_{kj} = \sum_{k=1}^n b_{ik} a_{kj}= \delta_{ij}
\end{equation}
On the other hand, as
$$\sigma(M)=\tau(A) \cdot (\tau(B))^t $$
then for every $i,j,k,l=1,\ldots,n$
$$a_{ij}b_{kl}=m_{\sigma^{-1}(\tau(i,j), \tau(k,l))}$$
so equality \eqref{eq:innerCharacterization1} holds if and only if
$$\sum_{k=1}^n m_{\sigma^{-1}(\tau(i,k),\tau(k,j))} = \sum_{k=1}^n m_{\sigma^{-1}(\tau(j,k),\tau(k,i))} = \delta_{ij}$$
Reciprocally, if $M$ is an inner transformation, $\wedge^2(\sigma(M))=0$ and
$$\sigma(M)=\tau(A) \cdot (\tau(A^{-1}))^t $$
so for every $i,j,k,l=1,\ldots,n$
$$a_{ij}(A^{-1})_{kl}=m_{\sigma^{-1}(\tau(i,j), \tau(k,l))}$$
Then the corollary follows from
$$\sum_{k=1}^n a_{ik} (A^{-1})_{kj} = \sum_{k=1}^n (A^{-1})_{ik} a_{kj}= \delta_{ij}$$
\end{proof}

Note that if $R$ is a field, Lemma \ref{lemma:uniquenessDecomposition} implies that if $M$ is the matrix of an inner transformation, then the matrix $A$ is uniquely determined up to product by nonzero elements of $R$.

Now let $R$ be a local principal ideal domain which is not a field ( i.e., a discrete valuation ring). For example, within the scope of this article, the following Lemmas will be applied to the local ring of a smooth complex projective curve $R=\SO_{X,x}$. Let $\fm$ be the maximal ideal in $R$ and let $K=\op{Frac}(R)$ be the field of fractions. As $R$ is a principal domain, $\fm=(z)$ for some $z\in R$. We will denote by
$$\nu_z:K\to \ZZ$$
the single discrete valuation on $K$ extending the canonical $z$-valuation of the elements in $R$, i.e., the only possible discrete valuation for which $R=\{a\in K : \nu_z(a)\ge 0\}$. Let $\ParEnd_n(R)\subset \Mat_{n\times n}(R)$ be the $R$-module of $n\times n$ matrices whose elements below the diagonal are multiples of $z$, i.e., the $R$-module consisting of matrices of the form
$$\left ( \begin{array}{cccc}
a_{11} & a_{12} & \cdots & a_{1n} \\
za_{21} & a_{22} & \cdots & a_{2n} \\
\vdots & \vdots & \ddots & \vdots \\
za_{n1} & za_{n2} & \cdots & a_{nn}
\end{array} \right )$$
where $a_{ij}\in R$. It is clear that $\ParEnd_n(R)$ forms a sub $R$-algebra of $\Mat_{n\times n}(R)$. If we suppose that $z\ne 0$ (i.e., that $R$ is not a field), then as an $R$-module, $\ParEnd_n(R)$ is isomorphic to $\Mat_{n\times n}(R)$, but they are not isomorphic as $R$-algebras.

Later on we will have to work with this kind of isomorphisms with a little more generality, so it is convenient to fix some general notation. Let us consider a formal sum of indexes in $[1,n]\times [1,m]$
$$\Xi=\sum_{(i,j)\in [1,n]\times [1,m]} \Xi_{ij} \cdot (i,j)\in \ZZ\left ([1,n]\times [1,m] \right)$$
Then we denote by $\SZ_\Xi:\Mat_{n\times m}(K) \cong \Mat_{n\times m}(K)$ the isomorphism of $K$-modules that sends a matrix $M=(m_{ij})$ to the matrix $\SZ_\Xi(M)$ whose element $(i,j)$ is
$$\SZ_\Xi(M)_{ij}=z^{\Xi_{ij}}m_{ij}$$

From the definition, it is clear that
\begin{eqnarray*}
\xymatrixrowsep{0.05pc}
\xymatrixcolsep{0.3pc}
\xymatrix{
\SZ&:&\ZZ\left ( [1,n]\times [1,m] \right) \ar[rrrr] &&&& \GL(\Mat_{n\times m}(K))\\
&& \Xi \ar@{|->}[rrrr] &&&& \SZ_{\Xi}
}
\end{eqnarray*}
is a group homomorphism.

Let $\Xi_{T}=\sum_{1\le j<i\le n} (i,j)$ be the sum of indexes below the diagonal. Then it is clear that the restriction of $\SZ_{\Xi_T}:\Mat_{n\times n}(K) \to \Mat_{n\times n}(K)$ to $\Mat_{n\times n}(R)$ is precisely the isomorphism
$$\SZ_{\Xi_T}:\Mat_{n\times n}(R) \cong \ParEnd_n(R)$$

Using the isomorphism $\tau:\Mat_{n\times n}(K)\cong K^{n^2}$ we can compute the matrix $Z_\Xi$ for the isomorphism $\tau \circ \SZ_{\Xi} \circ \tau^{-1}$. For every $V\in K^{n^2}$ let
$$V_\Xi=Z_\Xi V=  \tau(\SZ_\Xi(\tau^{-1}(V)))$$
Then, by definition of $\SZ_\Xi$, if $V_\Xi=(v_{\Xi,i})_i$ then
$$v_{\Xi,i}=z^{\Xi_{\tau^{-1}(i)}} v_i$$
Given a bijection $\sigma:[1,n]\times [1,m] \to [1,n']\times [1,m']$, let us denote
$$\sigma(\Xi)= \sum_{i,j} \Xi_{ij} \sigma(i,j) \in \ZZ [1,n']\times [1,m']$$
Then, $\tau \circ \SZ_\Xi \circ \tau^{-1}=\SZ_{\tau(\Xi)}$ and the matrix $Z_\Xi$ is the diagonal matrix
$$Z_\Xi=\diag\left (z^{\Xi_{\tau^{-1}(i)}} \right )$$

\begin{lemma}
\label{lemma:parConjCharacterization}
Let $R$ be a local principal ideal domain which is not a field. Let $n>0$ and let $\sigma$ be the bijection given in Lemma \ref{lemma:conjCharacterization}. There exists a formal sum of indexes
$$\Xi=\sum \Xi_{ij}(i,j) \in \ZZ[1,n^2]^2$$
with $-1\le \Xi_{ij}\le 1$ such that given any matrix $M\in \GL_{n^2}(R) \cong \GL(\ParEnd_n(R))$, $M$ is the matrix associated to a linear transformation of the form
$$X\mapsto AXB$$
for some $A,B\in \Mat_{n\times n}(K)$ if and only if
$$\wedge^2(\sigma(\SZ_{-\Xi}(M)))=0$$
Moreover, if $\SZ_{-\Xi}(M)\in \Mat_{n^2 \times n^2}(R)$, then $A$ and $B$ can be chosen in $\Mat_{n\times n}(R)$.
\end{lemma}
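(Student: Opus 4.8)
The plan is to transport the rank-one characterization of Lemma \ref{lemma:conjCharacterization} from $\Mat_{n\times n}$ to $\ParEnd_n$ through the isomorphism $\SZ_{\Xi_T}$, keeping careful track of how left and right multiplication interact with this rescaling. First I would fix the $R$-basis of $\ParEnd_n(R)$ given by $\{e_{ij}\}_{i\le j}\cup\{z\,e_{ij}\}_{i>j}$; by the formula $\SZ_{\Xi_T}(X)_{ij}=z^{(\Xi_T)_{ij}}x_{ij}$ this basis is exactly the image under $\SZ_{\Xi_T}$ of the standard matrix units, so the coordinate map realizing the identification $\GL(\ParEnd_n(R))\cong\GL_{n^2}(R)$ is $c=\tau\circ\SZ_{-\Xi_T}$. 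Thus a transformation $\Phi$ of $\ParEnd_n(R)$ has matrix $M=c\,\Phi\,c^{-1}$, and $\Phi$ is of the form $X\mapsto AXB$ precisely when $\tau^{-1}M\tau=\SZ_{-\Xi_T}\circ(X\mapsto AXB)\circ\SZ_{\Xi_T}$ as $K$-linear maps on $\Mat_{n\times n}(K)$.

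Using $\tau\circ\SZ_{\pm\Xi_T}\circ\tau^{-1}=\SZ_{\tau(\pm\Xi_T)}$, whose matrix is the diagonal $Z_{\pm\Xi_T}=\diag(z^{\pm(\Xi_T)_{\tau^{-1}(\gamma)}})$, together with the identity $\tau\circ(X\mapsto AXB)\circ\tau^{-1}=A\otimes B^t$ established inside the proof of Lemma \ref{lemma:conjCharacterization}, this condition rewrites as $M=Z_{-\Xi_T}(A\otimes B^t)Z_{\Xi_T}$. Conjugation by these diagonal matrices simply multiplies the $(\gamma,\delta)$ entry by $z^{(\Xi_T)_{\tau^{-1}(\delta)}-(\Xi_T)_{\tau^{-1}(\gamma)}}$, so the key move is to define the universal formal sum $\Xi\in\ZZ[1,n^2]^2$ by $\Xi_{\gamma\delta}=(\Xi_T)_{\tau^{-1}(\delta)}-(\Xi_T)_{\tau^{-1}(\gamma)}$. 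Since every coefficient of $\Xi_T$ is $0$ or $1$, each $\Xi_{\gamma\delta}\in\{-1,0,1\}$, giving the required bound, and $\Xi$ depends only on $n$. With this choice $M=\SZ_{\Xi}(A\otimes B^t)$, equivalently $\SZ_{-\Xi}(M)=A\otimes B^t$; hence $\Phi$ is of the form $X\mapsto AXB$ if and only if $\SZ_{-\Xi}(M)$ is an elementary tensor $A\otimes B^t$.

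It then remains to detect elementary tensors, and here I would invoke the computation $\sigma(A\otimes B^t)=\tau(A)\cdot(\tau(B))^t$ together with Lemma \ref{lemma:rk1Matrix} over the field $K$: there exist $A,B\in\Mat_{n\times n}(K)$ with $\SZ_{-\Xi}(M)=A\otimes B^t$ if and only if $\sigma(\SZ_{-\Xi}(M))$ has rank at most one, i.e. $\wedge^2(\sigma(\SZ_{-\Xi}(M)))=0$, recovering $A=\tau^{-1}(u)$ and $B=\tau^{-1}(v)$ from any factorization $\sigma(\SZ_{-\Xi}(M))=uv^t$. For the final clause, if $\SZ_{-\Xi}(M)\in\Mat_{n^2\times n^2}(R)$ then so is $\sigma(\SZ_{-\Xi}(M))$, since $\sigma$ only permutes entries; as $R$ is a principal ideal domain and hence a UFD, Lemma \ref{lemma:rk1Matrix} applied over $R$ produces the factorization with $u,v\in R^{n^2}$, so that $A,B\in\Mat_{n\times n}(R)$. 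The step I expect to require the most care is the bookkeeping of the twist: verifying that conjugation by $\SZ_{\Xi_T}$ on the two sides produces exactly the entrywise valuation shifts recorded by $\Xi$, and that these shifts always lie in $\{-1,0,1\}$ so that $\SZ_{-\Xi}$ is a genuine rescaling independent of $M$. Once this is pinned down, the field-versus-UFD dichotomy between $K$ and $R$ cleanly handles both the existence of $A,B$ over $K$ and the integral refinement over $R$.
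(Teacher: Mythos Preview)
Your proposal is correct and follows essentially the same approach as the paper: both compute $M=Z_{-\Xi_T}(A\otimes B^t)Z_{\Xi_T}$ via the identification $\ParEnd_n(R)\cong\Mat_{n\times n}(R)$ through $\SZ_{\Xi_T}$, then define $\Xi$ by the entrywise shifts $\Xi_{\gamma\delta}=(\Xi_T)_{\tau^{-1}(\delta)}-(\Xi_T)_{\tau^{-1}(\gamma)}$ (the paper writes this as $-(\Xi_T)_l+(\Xi_T)_r$, which is the same formula), verify $\Xi_{\gamma\delta}\in\{-1,0,1\}$, and reduce to Lemma~\ref{lemma:rk1Matrix} over $K$ for the characterization and over $R$ for the integral refinement. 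Your explicit identification of the basis and coordinate map $c=\tau\circ\SZ_{-\Xi_T}$ is a slightly cleaner way to set up the same computation.
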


\begin{proof}
Let $M\in \GL_{n^2}(R)$ be the matrix associated to a map $X\mapsto AXB$. Then it sends a vector $V\in R^{n^2}$ to
$$MV=\tau\left( \SZ_{-\Xi_T}(A \SZ_{\Xi_T}(\tau^{-1}(V))B) \right)$$
Then we can view $M$ as the restriction to $R^{n^2}$ of the composition of the following morphisms
\begin{eqnarray*}
\xymatrixrowsep{3pc}
\xymatrixcolsep{6pc}
\xymatrix{
K^{n^2} \ar[r]^{M} \ar[d]_{\tau \circ \SZ_{\Xi_T} \circ \tau^{-1}} & K^{n^2} \\
K^{n^2} \ar[r]^{\SM_{A,B}} & K^{n^2} \ar[u]_{\tau \circ \SZ_{-\Xi_T} \circ \tau^{-1}}
}
\end{eqnarray*}
By the computations carried away in the previous lemmata, the matrix $M$ is the product
$$M=Z_{-\Xi_T} \left( A \otimes B^t \right) Z_{\Xi_T}$$
We will see that then there exists a formal sum of indexes $\Xi \in \ZZ[1,n^2]^2$ such that
$$M=\SZ_{\Xi}(A\otimes B^t)$$

For any $\Xi\in \ZZ[1,n]^2$, taking the product on the left by $Z_{\Xi}=\diag(z^{\Xi_{\tau^{-1}(i)}})$ is equivalent to multiplying the $i$-th row of the matrix by $z^{\Xi_{\tau^{-1}(i)}}$ for each $i=1,\ldots,n^2$, so if we set
$$\Xi_l=\sum_{i,j=1}^{n^2} \Xi_{\tau^{-1}(i)} (i,j)$$
for every matrix $N\in \Mat_{n^2\times n^2}(K)$
$$Z_{\Xi}N=\SZ_{\Xi_l}(N)$$
Similarly, product on the right by $Z_{\Xi}$ is equivalent to multiplying the $i$-th column of the matrix by $z^{\Xi_{\tau^{-1}(i)}}$ for each $i=1,\ldots,n^2$, so if we set
$$\Xi_r=\sum_{i,j=1}^{n^2} \Xi_{\tau^{-1}(j)}(i,j)$$
for every matrix $N\in \Mat_{n^2\times n^2}(K)$ yields
$$NZ_{\Xi}=\SZ_{\Xi_r}(N)$$
Therefore, setting
$$\Xi=-(\Xi_T)_l+(\Xi_T)_r$$
we conclude that
$$M=\SZ_{\Xi}(A\otimes B^t)$$
Let us check that $-1\le \Xi_{\alpha,\beta} \le 1$. For each $(\alpha,\beta)=(\tau(i,j),\tau(k,l))$ yields
$$-((\Xi_T)_l)_{\alpha,\beta} = -(\Xi_T)_{i,j} = \left \{ \begin{array}{cc}
-1 & j<i\\
0 & j\ge i
\end{array} \right. $$
$$((\Xi_T)_r)_{\alpha,\beta} = (\Xi_T)_{k,l} = \left \{ \begin{array}{cc}
1 & l<k\\
0 & l\ge k
\end{array} \right. $$
So it yields $-1\le \Xi_{\alpha,\beta} \le 1$. As an example, we show the matrix representing $\Xi$ for $n=4$
$$\left ( \begin{array}{cccc|cccc|cccc|cccc}
0 & 0 & 0 & 0 & 1 & 0 & 0 & 0 & 1 & 1 & 0 & 0 & 1 & 1 & 1 & 0\\
0 & 0 & 0 & 0 & 1 & 0 & 0 & 0 & 1 & 1 & 0 & 0 & 1 & 1 & 1 & 0\\
0 & 0 & 0 & 0 & 1 & 0 & 0 & 0 & 1 & 1 & 0 & 0 & 1 & 1 & 1 & 0\\
0 & 0 & 0 & 0 & 1 & 0 & 0 & 0 & 1 & 1 & 0 & 0 & 1 & 1 & 1 & 0\\
\hline
-1 & -1 & -1 & -1 & 0 & -1 & -1 & -1 & 0 & 0 & -1 & -1 & 0 & 0 & 0 & -1\\
0 & 0 & 0 & 0 & 1 & 0 & 0 & 0 & 1 & 1 & 0 & 0 & 1 & 1 & 1 & 0\\
0 & 0 & 0 & 0 & 1 & 0 & 0 & 0 & 1 & 1 & 0 & 0 & 1 & 1 & 1 & 0\\
0 & 0 & 0 & 0 & 1 & 0 & 0 & 0 & 1 & 1 & 0 & 0 & 1 & 1 & 1 & 0\\
\hline
-1 & -1 & -1 & -1 & 0 & -1 & -1 & -1 & 0 & 0 & -1 & -1 & 0 & 0 & 0 & -1\\
-1 & -1 & -1 & -1 & 0 & -1 & -1 & -1 & 0 & 0 & -1 & -1 & 0 & 0 & 0 & -1\\
0 & 0 & 0 & 0 & 1 & 0 & 0 & 0 & 1 & 1 & 0 & 0 & 1 & 1 & 1 & 0\\
0 & 0 & 0 & 0 & 1 & 0 & 0 & 0 & 1 & 1 & 0 & 0 & 1 & 1 & 1 & 0\\
\hline
-1 & -1 & -1 & -1 & 0 & -1 & -1 & -1 & 0 & 0 & -1 & -1 & 0 & 0 & 0 & -1\\
-1 & -1 & -1 & -1 & 0 & -1 & -1 & -1 & 0 & 0 & -1 & -1 & 0 & 0 & 0 & -1\\
-1 & -1 & -1 & -1 & 0 & -1 & -1 & -1 & 0 & 0 & -1 & -1 & 0 & 0 & 0 & -1\\
0 & 0 & 0 & 0 & 1 & 0 & 0 & 0 & 1 & 1 & 0 & 0 & 1 & 1 & 1 & 0\\
\end{array} \right)$$

Given a matrix $M\in \GL_{n^2}(R)$, in general $\SZ_{-\Xi}(M)\in \Mat_{n^2 \times n^2}(K)$. Following the proof of Lemma \ref{lemma:conjCharacterization}, there exist matrices $A,B\in \Mat_{n\times n}(K)$ such that $\SZ_{-\Xi}(M)=A\otimes B^t$ if and only if
$$\wedge^2(\sigma(\SZ_{-\Xi}(M)))=0$$
moreover, if $\SZ_{-\Xi}(M)\in \Mat_{n^2\times n^2}(R)$, then as $R$ is a principal ideal domain then if
$$\wedge^2(\sigma(\SZ_{-\Xi}(M)))=0$$
there exist $A,B\in \Mat_{n\times n}(R)$ such that $\SZ_{-\Xi}(M)=A\otimes B^t$.
\end{proof}

Similarly to the non-parabolic case, we will denote by
$$\SMP_{A,B}\in \Mat_{n^2\times n^2}(K)$$
the matrix associated to a map $X\mapsto A X B$ for $A,B\in \Mat_{n\times n}(K)$. More explicitly,  for every $V\in K^{n^2}$, let
$$\SMP_{A,B}V=\tau\left( \SZ_{-\Xi_T}(A \SZ_{\Xi_T}(\tau^{-1}(V))B) \right)$$
Note that, in general, if $A,B\in \Mat_{n\times n}(K)$, $\SMP_{A,B}V \in K^{n^2}$ even if $V\in R^{n^2}$. If $\SMP_{A,B} \in \GL_{n^2}^2(R)$, then this imposes some conditions on the structure of $A$ and $B$.

\begin{lemma}
\label{lemma:uniquenessParDecomposition}
If $M=\SMP_{A,B}=\SMP_{A',B'}$ is a nonzero matrix for some $A,A',B,B'\in \Mat_{n^2 \times n^2}(K)$, then there exists a nonzero $\rho\in K$ such that $A'=\rho A$ and $B'=\rho^{-1}B$.
\end{lemma}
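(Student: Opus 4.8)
The plan is to reduce the statement to the non-parabolic uniqueness result, Lemma \ref{lemma:uniquenessDecomposition}, by observing that passing from the ordinary matrix $\SM_{A,B}$ to its parabolic counterpart $\SMP_{A,B}$ only amounts to applying the diagonal twist $\SZ_{\Xi_T}$, which is a genuine $K$-linear isomorphism once we have localized to the fraction field $K$. Concretely, starting from the defining formula $\SMP_{A,B}V=\tau(\SZ_{-\Xi_T}(A\,\SZ_{\Xi_T}(\tau^{-1}(V))\,B))$, I would first unwind the hypothesis $\SMP_{A,B}=\SMP_{A',B'}$: applying the inverse isomorphisms $\SZ_{\Xi_T}$ and $\tau^{-1}$ on both sides, and using that $X=\SZ_{\Xi_T}(\tau^{-1}(V))$ ranges over all of $\Mat_{n\times n}(K)$ as $V$ ranges over $K^{n^2}$, I obtain the identity of linear maps $AXB=A'XB'$ for every $X\in\Mat_{n\times n}(K)$. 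In other words the two pairs induce the same ordinary transformation, so $\SM_{A,B}=\SM_{A',B'}$, i.e. $A\otimes B^t=A'\otimes(B')^t$.

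The second step is to recognize this as a uniqueness-of-rank-one-decomposition statement over the field $K$. Applying the fixed reshuffling bijection $\sigma$ of Lemma \ref{lemma:conjCharacterization}, the equality $A\otimes B^t=A'\otimes(B')^t$ becomes the equality of outer products $\tau(A)\cdot(\tau(B))^t=\tau(A')\cdot(\tau(B'))^t$ in $\Mat_{n^2\times n^2}(K)$. Since $M\ne 0$ by hypothesis, this common matrix is a nonzero rank-one matrix, so all its $2\times 2$ minors vanish and Lemma \ref{lemma:uniquenessDecomposition} (applied over the field $K$, with column vector $\tau(A)$ and row vector $(\tau(B))^t$) applies verbatim: there is a nonzero $\rho\in K$ with $\tau(A')=\rho\,\tau(A)$ and $\tau(B')=\rho^{-1}\tau(B)$. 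As $\tau$ is a $K$-linear bijection, these translate back to $A'=\rho A$ and $B'=\rho^{-1}B$, which is exactly the claim.

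There is essentially no hard computational step here; the only point that needs care, and the real conceptual content of the lemma, is that the twist $\SZ_{\Xi_T}$ must be invertible in order for the reduction to go through. This is precisely where the hypothesis that we work over the fraction field $K$ rather than over $R$ enters: the uniformizer $z$ is invertible in $K$, so $\SZ_{\Xi_T}\in\GL(\Mat_{n\times n}(K))$ and no valuation or integrality considerations are involved. Thus the parabolic decomposition inherits its uniqueness directly from the field case, with the scalar $\rho$ allowed to lie in $K^\ast$ (and not necessarily in $R^\ast$), which is the natural analogue of the non-parabolic Lemma \ref{lemma:uniquenessDecomposition}.
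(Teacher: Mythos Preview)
Your proposal is correct and follows essentially the same approach as the paper: both reduce to Lemma~\ref{lemma:uniquenessDecomposition} by untwisting the diagonal $K$-linear isomorphism $\SZ_{\Xi}$ (equivalently $\SZ_{\Xi_T}$) to obtain the equality $\tau(A)\cdot\tau(B)^t=\tau(A')\cdot\tau(B')^t$, and then invoke uniqueness of rank-one decompositions over the field $K$. Your write-up is slightly more explicit about unwinding the defining formula of $\SMP_{A,B}$, whereas the paper simply cites the identity $\sigma(\SZ_{-\Xi}(M))=\tau(A)\cdot\tau(B)^t$ established in Lemma~\ref{lemma:parConjCharacterization}, but the content is the same.
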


\begin{proof}
From the previous lemma, yields
$$\sigma(\SZ_{-\Xi'}(M))=\tau(A) \cdot \tau(B)^t = \tau(A') \cdot \tau(B)^t $$
and now we apply Lemma \ref{lemma:uniquenessDecomposition}.
\end{proof}

\begin{lemma}
Suppose that there exist matrices $A,B\in \Mat_{n\times n}(K)$ such that $M=\SMP_{A,B}\in \GL_{n^2}(R)$. Then there exist $A',B'\in \Mat_{n\times n}(R)$ such that
$$M=\SMP_{A,B/z}=\SMP_{A',B'}/z$$
\end{lemma}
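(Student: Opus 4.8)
The plan is to reduce the whole statement to a single valuation estimate, using the rescaling freedom in the decomposition $M=\SMP_{A,B}$ to put $A$ into a convenient normal form and then reading off the required divisibility of $B$ from the identity $\SZ_{-\Xi}(M)=A\otimes B^t$ supplied by Lemma \ref{lemma:parConjCharacterization}. Abusing notation, for a matrix $N$ over $K$ write $\nu_z(N)$ for the minimum of $\nu_z$ over the entries of $N$, so that $N\in \Mat_{n\times n}(R)$ is equivalent to $\nu_z(N)\ge 0$.

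First I would normalize. Since the assignment $(A,B)\mapsto \SMP_{A,B}$ is bilinear, we have $\SMP_{\lambda A,\lambda^{-1}B}=\SMP_{A,B}$ for every $\lambda\in K^*$ (this is also the content of the rescaling freedom in Lemma \ref{lemma:uniquenessParDecomposition}). Replacing $(A,B)$ by $(z^{-\nu_z(A)}A,\ z^{\nu_z(A)}B)$ therefore leaves $M$ unchanged, and afterwards $\nu_z(A)=0$; in particular $A\in \Mat_{n\times n}(R)$ and some entry $a_{i_0 j_0}$ of $A$ is a unit of $R$.

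Next I would extract the key estimate $\nu_z(B)\ge -1$. By Lemma \ref{lemma:parConjCharacterization} we have $\SZ_{-\Xi}(M)=A\otimes B^t$, whose entries are exactly the products $a_{ij}b_{kl}$; on the other hand the $(\alpha,\beta)$ entry of $\SZ_{-\Xi}(M)$ equals $z^{-\Xi_{\alpha\beta}}M_{\alpha\beta}$. Since $M\in \GL_{n^2}(R)$ has all its entries in $R$ and the coefficients of $\Xi$ satisfy $-1\le \Xi_{\alpha\beta}\le 1$, every such product obeys $\nu_z(a_{ij}b_{kl})\ge -1$. Taking $(i,j)=(i_0,j_0)$ and using that $a_{i_0 j_0}$ is a unit yields $\nu_z(b_{kl})=\nu_z(a_{i_0 j_0}b_{kl})\ge -1$ for every $(k,l)$, that is $\nu_z(B)\ge -1$, so $zB\in \Mat_{n\times n}(R)$.

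Finally I would set $A'=A$ and $B'=zB$. By construction $A',B'\in \Mat_{n\times n}(R)$, and by bilinearity $\SMP_{A',B'}=z\,\SMP_{A,B}=zM$, equivalently $M=\SMP_{A',B'/z}=\SMP_{A',B'}/z$, which is the assertion. The only delicate point is the estimate $\nu_z(B)\ge -1$, and this is precisely where the bound $-1\le \Xi_{\alpha\beta}\le 1$ from Lemma \ref{lemma:parConjCharacterization} is essential: without the prior normalization $\nu_z(A)=0$ one could not convert a bound on the products $a_{ij}b_{kl}$ into a bound on $B$ alone. (Observe that if in fact $\nu_z(A)+\nu_z(B)\ge 0$, the identical argument with $B'=B$ represents $M$ itself by matrices over $R$; the auxiliary factor $z$ is only forced in the extremal case $\nu_z(B)=-1$.)
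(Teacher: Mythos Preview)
Your proof is correct. Both your argument and the paper's hinge on the same bound $-1\le \Xi_{\alpha\beta}\le 1$ from Lemma~\ref{lemma:parConjCharacterization}, but they exploit it differently. The paper observes that this bound forces $\SZ_{-\Xi}(zM)\in \Mat_{n^2\times n^2}(R)$, and since the rank-$1$ condition $\wedge^2(\sigma(\SZ_{-\Xi}(zM)))=0$ is inherited from that of $M$, it re-invokes the last clause of Lemma~\ref{lemma:parConjCharacterization} (ultimately Lemma~\ref{lemma:rk1Matrix} over the UFD $R$) to produce fresh matrices $A',B'\in \Mat_{n\times n}(R)$ with $zM=\SMP_{A',B'}$. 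You instead keep the given decomposition, normalize $A$ via the rescaling freedom so that it has a unit entry, and read off $\nu_z(B)\ge -1$ directly from the identity $\SZ_{-\Xi}(M)=A\otimes B^t$; this yields explicit $A'=A$, $B'=zB$ without a second appeal to the factorization lemma. Your route is slightly more constructive and self-contained, while the paper's is marginally shorter by outsourcing the existence of $A',B'$ to an earlier result.
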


\begin{proof}
By the Lemma \ref{lemma:parConjCharacterization}, $\wedge^2(\sigma(\SZ_{-\Xi}(M)))=0$. Then $\wedge^2(\sigma(\SZ_{-\Xi}(zM)))=0$.
As $-1\le \Xi_{\alpha\beta} \le 1$ for all $\alpha,\beta=1,\ldots,n^2$, then $\SZ_{-\Xi}(zM) \in \Mat_{n^2\times n^2}(R)$. Therefore, there exist $A',B'\in \Mat_{n\times n}(R)$ such that $zM$ is the matrix $\SMP_{A',B'}$. The result yields dividing the matrix by $z$.
\end{proof}

\begin{corollary}
\label{cor:innerOrder1Pole}
Let $A\in \GL_{n}(K)$ be a matrix such that $\SMP_{A,A^{-1}}\in \GL_{n^2}(R)$. Then, there exist nonzero matrices $A',B'\in \Mat_{n\times n}(R)$ such that $B'/z$ is the inverse of $A'$ in $\GL_{n^2}(K)$
$$\SMP_{A,A^{-1}}=\SMP_{A',B'/z}$$
\end{corollary}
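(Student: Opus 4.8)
The plan is to deduce this directly from two results already in hand: the preceding Lemma, which clears denominators and produces integral representatives for a $\SMP$-factorization lying in $\GL_{n^2}(R)$, and the uniqueness statement of Lemma \ref{lemma:uniquenessParDecomposition}, which forces any two factorizations of the same nonzero matrix to agree up to a single scalar. The whole point is that integrality of the factors comes for free from the first result, while the \emph{inverse} relation between them must be extracted from the second.

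First I would apply the preceding Lemma to $M=\SMP_{A,A^{-1}}$, taking $B=A^{-1}\in\Mat_{n\times n}(K)$. Since $M\in\GL_{n^2}(R)$ by hypothesis, that Lemma yields matrices $A',B'\in\Mat_{n\times n}(R)$ with $M=\SMP_{A',B'/z}$. As $M$ is invertible it is in particular nonzero, and because the linear map $X\mapsto A'X(B'/z)$ vanishes as soon as either factor does, both $A'$ and $B'$ must be nonzero; this gives the integral factorization and the nonvanishing claimed in the statement. What is not yet available is that $B'/z$ is actually the inverse of $A'$, and this is where I would bring in uniqueness: we now have two factorizations of the same nonzero matrix, namely $M=\SMP_{A,A^{-1}}=\SMP_{A',B'/z}$, so Lemma \ref{lemma:uniquenessParDecomposition} supplies a nonzero scalar $\rho\in K$ with $A'=\rho A$ and $B'/z=\rho^{-1}A^{-1}$. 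Multiplying the two relations then gives
$$A'\cdot\frac{B'}{z}=(\rho A)(\rho^{-1}A^{-1})=I,$$
so that $B'/z=(A')^{-1}$, equivalently $A'B'=zI$, which is exactly the assertion.

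I expect the only delicate point to be bookkeeping around the scalar $\rho$: one must be careful to apply Lemma \ref{lemma:uniquenessParDecomposition} to $M$ itself rather than to $zM$, and to feed it the pair $(A',B'/z)$ — not $(A',B')$ — so that it is compared against the genuine inner factorization $(A,A^{-1})$. Once the two decompositions are lined up in this way, the cancellation of $\rho$ is immediate and no valuation estimates are required, since the membership $A',B'\in\Mat_{n\times n}(R)$ was already secured by the preceding Lemma. In short, there is no serious analytic obstacle here; the argument is a short diagram-chase combining an existence result with a rigidity (uniqueness) result.
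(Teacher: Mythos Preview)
Your proof is correct and follows essentially the same route as the paper: first invoke the preceding lemma to produce $A',B'\in\Mat_{n\times n}(R)$ with $\SMP_{A,A^{-1}}=\SMP_{A',B'/z}$, then use uniqueness of the factorization to force the inverse relation. The only cosmetic difference is that the paper passes to the $\SM$ picture via $\SZ_{-\Xi}$ and invokes Corollary~\ref{cor:innerCharacterization}, whereas you stay on the $\SMP$ side and use Lemma~\ref{lemma:uniquenessParDecomposition} directly; since the latter is proved by reducing to the former, the two arguments are interchangeable.
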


\begin{proof}
By the previous lemma, there exist nonzero $A',B'\in \GL_{n^2}(R)$ such that
$$\SMP_{A,A^{-1}}=\SMP_{A',B'/z}$$
Now, we apply the corollary \ref{cor:innerCharacterization}, to
$$\SM_{A,A^{-1}}=\SZ_{-\Xi}(\SMP_{A,A^{-1}})=\SZ_{-\Xi}(\SMP_{A',B'/z})=\SM_{A',B'/z}$$
\end{proof}

\begin{lemma}
\label{lemma:matrixIsParabolic}
Suppose that there exists a matrix $A\in \GL_n(R)$ such that $\SMP_{A,A^{-1}}\in \GL_{n^2}(R)$. Then $A\in \ParEnd_n(R)\cap \GL_n(R)$.
\end{lemma}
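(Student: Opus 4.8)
The plan is to reduce the statement modulo the maximal ideal $\fm=(z)$ and to recognize $\ParEnd_n(R)$ as a lift of the Borel subalgebra of upper triangular matrices. First I would unwind what the hypothesis $\SMP_{A,A^{-1}}\in \GL_{n^2}(R)$ means intrinsically. Since $\SZ_{\Xi_T}$ identifies $\Mat_{n\times n}(R)$ with $\ParEnd_n(R)$, the composite $\psi:=\SZ_{\Xi_T}\circ \tau^{-1}$ is an isomorphism $R^{n^2}\cong \ParEnd_n(R)$, and from the defining formula $\SMP_{A,B}V=\tau(\SZ_{-\Xi_T}(A\,\SZ_{\Xi_T}(\tau^{-1}(V))\,B))$ together with $\SZ_{\Xi_T}\circ\SZ_{-\Xi_T}=\id$ one checks directly that
$$\psi(\SMP_{A,B}V)=A\,\psi(V)\,B \qquad \text{for all } V\in K^{n^2}.$$
Hence, transported through $\psi$, the operator $\SMP_{A,A^{-1}}$ is exactly the conjugation $Y\mapsto AYA^{-1}$ on $\Mat_{n\times n}(K)$. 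Saying that it lies in $\GL_{n^2}(R)$, i.e.\ that it restricts to an $R$-linear automorphism of $R^{n^2}\cong\ParEnd_n(R)$, is therefore equivalent to the single intrinsic statement
$$A\,\ParEnd_n(R)\,A^{-1}=\ParEnd_n(R).$$

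Next I would pass to the residue field $k=R/\fm$. Since $A\in \GL_n(R)$, its reduction $\overline{A}\in \GL_n(k)$ is again invertible, with $\overline{A^{-1}}=\overline{A}^{-1}$. Reducing the defining condition (below-diagonal entries divisible by $z$) modulo $\fm$, the image of $\ParEnd_n(R)$ under $\Mat_n(R)\to \Mat_n(k)$ is precisely the Borel subalgebra $\mathfrak{b}\subset \Mat_n(k)$ of upper triangular matrices. Any $\overline{Y}\in\mathfrak{b}$ lifts to some $Y\in\ParEnd_n(R)$ (take zeros below the diagonal); by the hypothesis $AYA^{-1}\in\ParEnd_n(R)$, and reducing mod $z$ gives $\overline{A}\,\overline{Y}\,\overline{A}^{-1}\in\mathfrak{b}$. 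As $\overline{Y}$ was arbitrary this yields $\overline{A}\,\mathfrak{b}\,\overline{A}^{-1}\subseteq\mathfrak{b}$, and since $\overline{A}$ is invertible both sides have the same $k$-dimension, forcing the equality $\overline{A}\,\mathfrak{b}\,\overline{A}^{-1}=\mathfrak{b}$.

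Finally I would invoke the self-normalizing property of the Borel: the normalizer of $\mathfrak{b}$ in $\GL_n(k)$ is the subgroup of invertible upper triangular matrices. Concretely, $\mathfrak{b}$ stabilizes the standard complete flag $\langle e_1\rangle\subset\langle e_1,e_2\rangle\subset\cdots\subset k^n$, and this is the \emph{unique} complete flag it stabilizes: a line fixed by all diagonal matrices must be some $\langle e_i\rangle$, and stability under the root vectors $E_{ji}$ ($j<i$) then forces $i=1$, after which one proceeds by induction on $k^n/\langle e_1\rangle$. From $\overline{A}\,\mathfrak{b}\,\overline{A}^{-1}=\mathfrak{b}$ the subalgebra $\mathfrak{b}$ stabilizes both the standard flag and its $\overline{A}$-translate, so by uniqueness $\overline{A}$ preserves the standard flag and is upper triangular. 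Thus every below-diagonal entry of $A$ reduces to $0$ in $k$, i.e.\ lies in $\fm=zR$, which is exactly the condition $A\in\ParEnd_n(R)$; combined with $A\in\GL_n(R)$ this gives $A\in \ParEnd_n(R)\cap\GL_n(R)$. The only genuinely delicate point is the translation in the first paragraph of the matrix-level hypothesis $\SMP_{A,A^{-1}}\in\GL_{n^2}(R)$ into the conjugation statement $A\,\ParEnd_n(R)\,A^{-1}=\ParEnd_n(R)$; once that identification is secured, the remainder is the standard fact that a Borel is self-normalizing, applied over the residue field.
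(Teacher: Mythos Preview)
Your proof is correct and takes a genuinely different route from the paper's. The paper works entirely at the level of explicit matrix entries: it writes $\SMP_{A,A^{-1}}=\det(A)^{-1}\SZ_{\Xi}(A\otimes \op{ad}(A))$ using the adjugate, reads off from the shape of $\Xi$ that certain products $a_{ij}A_{kl}$ (with $j<i$) must lie in $zR$, and then runs an inductive determinant argument to show that if some below-diagonal entry $a_{ij}$ were a unit, one would force $z\mid \det(\op{ad}(A))=\det(A)^{n-1}$, contradicting $A\in\GL_n(R)$. Your argument instead recognizes the hypothesis as the intrinsic statement $A\,\ParEnd_n(R)\,A^{-1}=\ParEnd_n(R)$, reduces modulo $\fm$, and invokes the self-normalizing property of the Borel subalgebra of upper triangular matrices over the residue field. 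Your approach is cleaner and more conceptual, and it makes transparent why the result holds (it is really the fact that $B$ is its own normalizer in $\GL_n$); the paper's approach, by contrast, is entirely elementary and self-contained, avoiding any appeal to Lie-theoretic facts at the cost of a somewhat delicate combinatorial induction on minors.
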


\begin{proof}
As $\det(A)$ is invertible
$$A^{-1}=\det(A)^{-1} \op{ad}(A)^t$$
Let us denote by $A_{ij}$ the $(i,j)$ adjoint of matrix $A$, i,e., the determinant of the complement minor to the element $(i,j)$ taken with the corresponding sign. Let $M=\SMP_{A,A^{-1}}$. Then
$$M=\SZ_{\Xi}\left (A\otimes (A^{-1})^t \right) =\det(A)^{-1} \SZ_{\Xi}\left( A\otimes \op{ad}(A) \right)\in \GL_{n^2}(R)$$
Looking at the blocks of $A\otimes \op{ad}(A)$ below the diagonal, $\SZ_{\Xi}(A\otimes \op{ad}(A))$ being a matrix in $R$ implies that
$$z | a_{ij} A_{kl}$$
for $j<i$, $k\le l$, $k<i$. In particular, this implies that $z|a_{ij} A_{kl}$ for $k\le i-1\le l$ and every $j<i$. Let us prove that this implies that $z|a_{ij}$ for $j<i$, so that $A\in \ParEnd_n(R)$. Suppose that $z \nmid a_{ij}$ for some $j<i$. Then $z|A_{kl}$ for all $k\le i-1\le l$. Then we will prove that
$$z | \det\left((A_{kl})_{k,l=1}^n\right)=\det\left(\op{ad}(A)\right) = \det(A)^n \det(A^{-1})$$
which would lead to contradiction, as $A\in \GL_n(R)$ and $z$ is not invertible in $R$. More precisely, we will prove by induction on $i\ge 2$ that if $M=(m_{kl})_{k,l=1}^n\in \op{Mat}_{n\times n}(R)$ with $n\ge i$ satisfies that $z|m_{kl}$ for all $k\le i-1\le l$ then $z|\det(M)$. Then we will take $M=\op{ad}(A)$ to obtain the desired contradiction. If $i=2$, then for all $n\ge i$ and all $M=(m_{kl})_{k,l=1}^n\in \op{Mat}_{n\times n}(R)$ we have $z|m_{1l}$ for every $l$, so $M$ has a row full of multiples of $z$ and, therefore, its determinant is a multiple of $z$. Suppose that $i>2$, that $n\ge i$ and that the statement is true for all $i'<i$ and all $n'\ge i'$. Let us develop the determinant of $M$ through the first row
$$\det\left((m_{kl})_{k,l=1}^n\right)= \sum_{l=1}^n (-1)^{l+1} m_{1l} \det(D^{1l})$$
where $D^{kl}$ is the complement minor of $M$ for the element $(k,l)$. For $l\ge i-1$, $z|m_{1l}$, so it is enough to prove that $z |\det(D^{1l})$ for $l<i-1$. $D^{1l}$ is obtained by removing the first row and the $l$-th column of $(m_{k'l'})_{k',l'=1}^n$. As $l<i-1$, $D^{1l}$ contains all the elements $m_{k'l'}$ for $1<k' \le i-1\le l'$ in the positions $k''=k'-1$, $l''=l'-1$, so we know that $z| (D^{1l})_{k''l''}=m_{k''+1,l''+1}$ for $k''\le i-2 \le l''$. Now, we apply the induction hypothesis to the $(n-1)$-dimensional matrix $D^{1l}$ for $i'=i-1$.
\end{proof}

\begin{lemma}
\label{lemma:parMatrixMainLemma}
Let $M\in \GL_{n^2}(R)$ be a matrix such that there exists $A\in \GL_n(K)$ satisfying $M=\SMP_{A,A^{-1}}$. Then, there exists a matrix $A'\in \ParEnd_n(R)\cap \GL_n(R)$ and an integer $0\le k<n$ such that 
$$M=\SMP_{(A'H^k),(A'H^k)^{-1}}$$
where $H\in \GL_n(K)$ is the matrix
$$H=\left( \begin{array}{c|c}
0 & I_{n-1} \\
\hline
z & 0
\end{array} \right) = \left ( \begin{array}{ccccc}
0 & 1 & 0 & \cdots & 0\\
0 & 0 & 1 & \cdots & 0\\
\vdots & \vdots & \ddots & \ddots & \vdots\\
0 & 0 & 0 & \ddots & 1\\
z & 0 & 0 & \cdots & 0
\end{array} \right)$$
\end{lemma}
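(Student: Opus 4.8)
The plan is to realise the cyclic matrix $H$ as the matrix incarnation of the Hecke ``rotation'' and to play it off against the scale--invariance of the inner map $\SMP_{A,A^{-1}}$. I would identify $\ParEnd_n(R)$ with the endomorphism algebra $\op{End}(\mathcal L)=\{g\in\Mat_{n\times n}(K):g\Lambda_i\subseteq\Lambda_i\ \forall i\}$ of the standard complete lattice chain
$$R^n=\Lambda_0\supsetneq\Lambda_1\supsetneq\cdots\supsetneq\Lambda_{n-1}\supsetneq\Lambda_n=zR^n,\qquad \Lambda_i=\langle e_1,\dots,e_{n-i},ze_{n-i+1},\dots,ze_n\rangle,$$
extended $z$--periodically by $\Lambda_{i+n}=z\Lambda_i$; its unit group is the Iwahori $I:=\ParEnd_n(R)\cap\GL_n(R)=\op{Stab}_{\GL_n(K)}(\mathcal L)$. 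A direct computation gives $H\Lambda_i=\Lambda_{i+1}$, so $H$ permutes the chain and hence $H\ParEnd_n(R)H^{-1}=\ParEnd_n(R)$, i.e. $\SMP_{H,H^{-1}}\in\GL_{n^2}(R)$; as $\SMP_{A,A^{-1}}$ represents $X\mapsto AXA^{-1}$, composing yields $\SMP_{AH^{-k},(AH^{-k})^{-1}}=\SMP_{A,A^{-1}}\,\SMP_{H,H^{-1}}^{\,-k}\in\GL_{n^2}(R)$ for all $k\in\ZZ$. Since $\SMP_{A,A^{-1}}$ is unchanged under rescaling $A$ by $K^\ast$, it will suffice to find an integer $k$ with $AH^{-k}\in\GL_n(R)$: for such a $k$ the matrix $A'':=AH^{-k}$ lies in $\GL_n(R)$ and satisfies $\SMP_{A'',(A'')^{-1}}\in\GL_{n^2}(R)$, whence Lemma \ref{lemma:matrixIsParabolic} forces $A''\in\ParEnd_n(R)\cap\GL_n(R)$; setting $A'=A''$ gives $M=\SMP_{A'H^{k},(A'H^{k})^{-1}}$, and $H^n=zI$ lets me choose $0\le k<n$.

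\textbf{Normalising $A$.} To set up the search for $k$ I would first normalise $A$. By Corollary \ref{cor:innerOrder1Pole} there are nonzero $A_0,B_0\in\Mat_{n\times n}(R)$ with $B_0/z=A_0^{-1}$ and $\SMP_{A,A^{-1}}=\SMP_{A_0,B_0/z}$, and Lemma \ref{lemma:uniquenessParDecomposition} then gives $A_0=\rho A$ for some $\rho\in K^\ast$. Replacing $A$ by $\rho A$ (which does not alter $M$) I may assume $A\in\Mat_{n\times n}(R)$ and $zA^{-1}\in\Mat_{n\times n}(R)$, and, dividing out any common factor $z$, that $A$ is primitive. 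Reading the elementary divisors of $A$ over the discrete valuation ring $R$, the two integrality conditions say precisely that every elementary divisor lies in $\{0,1\}$; thus $p:=\nu_z(\det A)\in\{0,\dots,n-1\}$ counts the divisors equal to $1$. (If $p=0$ then $A\in\GL_n(R)$ and the case $k=0$ of Lemma \ref{lemma:matrixIsParabolic} finishes at once.)

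\textbf{Extracting $k$, and the main obstacle.} It remains to extract $k$ from the hypothesis, and this is the step I expect to be the main obstacle. The condition $\SMP_{A,A^{-1}}\in\GL_{n^2}(R)$ means that $X\mapsto AXA^{-1}$ is an $R$--algebra automorphism of $\ParEnd_n(R)=\op{End}(\mathcal L)$, so $\op{End}(A\mathcal L)=\op{End}(\mathcal L)$ and $A$ normalises $I$; in particular each $A\Lambda_i$ is again $I$--stable. The crux is that the chain $\mathcal L$ can be \emph{recovered} from $I$: reducing modulo $\fm$, the image of $I$ in $\GL_n(\kappa)$ (with $\kappa=R/\fm$) is the upper--triangular Borel, whose only invariant subspaces of $\kappa^n$ are the steps of the standard flag, so the only $I$--stable lattices between $R^n$ and $zR^n$ are $\Lambda_0,\dots,\Lambda_n$. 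Consequently $A$, being injective and preserving the length--one steps of $\mathcal L$, must act on $\mathcal L$ as a single shift $A\Lambda_i=\Lambda_{i+k}$, where $k=\nu_z(\det A)=p$ is forced by comparing $A\Lambda_0=\Lambda_k$ with $R^n$. Since $H$ induces the unit shift, $AH^{-k}$ fixes every $\Lambda_i$, in particular $AH^{-k}R^n=R^n$, so $AH^{-k}\in\GL_n(R)$ — exactly the input required in the first paragraph. The one genuinely delicate ingredient is this recoverability of the lattice chain from its stabiliser, equivalently the identification of $N_{\GL_n(K)}(I)$ modulo centre and Iwahori with the cyclic group $\langle H\rangle\cong\ZZ/n\ZZ$; everything else is bookkeeping with the composition rule for $\SMP$ and the scale--invariance of inner maps.
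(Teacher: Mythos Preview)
Your proof is correct and takes a genuinely different route from the paper's. The paper argues purely at the level of valuations and matrix identities: after invoking Corollary \ref{cor:innerOrder1Pole} to get $A',B'\in\Mat_{n\times n}(R)$ with $A'B'/z=I$, it sets $k=\nu_z(\det A')$, re-applies the corollary to $M\cdot\SMP_{H^{-k},H^k}$, and uses the uniqueness Lemma \ref{lemma:uniquenessParDecomposition} together with a determinant count to force the new $A''$ into $\GL_n(R)$, finishing with Lemma \ref{lemma:matrixIsParabolic}. You instead give the structural picture: identifying $\ParEnd_n(R)$ with the order of the standard lattice chain $\mathcal L$ and its unit group with the Iwahori $I$, you read $\SMP_{A,A^{-1}}\in\GL_{n^2}(R)$ as $A\in N_{\GL_n(K)}(I)$, verify that $H$ is the unit shift of $\mathcal L$, and use the classification of $I$-stable lattices (via reduction to the Borel over $\kappa$) to conclude that $A$ acts on $\mathcal L$ as a shift by $k=\nu_z(\det A)$. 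This buys you a conceptual explanation of \emph{why} the cyclic group $\langle H\rangle$ appears---it is exactly $N_{\GL_n(K)}(I)/K^\ast I$---and in fact your argument gives slightly more than you use: since $AH^{-k}$ fixes every $\Lambda_i$ (not just $\Lambda_0$), you get $AH^{-k}\in I=\ParEnd_n(R)\cap\GL_n(R)$ directly, so the appeal to Lemma \ref{lemma:matrixIsParabolic} is unnecessary. The paper's approach, in exchange, is entirely self-contained and avoids any appeal to the building-theoretic fact about $I$-stable lattices.
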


\begin{proof}
First, let us prove that $\SMP_{H,H^{-1}}\in \GL_{n^2}(R)$. As
$$\det(\SMP_{H,H^{-1}})=\det(Z_{\Xi_T})\det(H)\det(H^{-1})\det(Z_{\Xi_T}^{-1})=1$$
it is enough to prove that $\SMP_{H,H^{-1}}\in \Mat_{n^2\times n^2}(R)$. We can easily compute that
$$H':=(H^{-1})^t=\left( \begin{array}{c|c}
0 & I_{n-1} \\
\hline
z^{-1} & 0
\end{array} \right) = \left ( \begin{array}{ccccc}
0 & 1 & 0 & \cdots & 0\\
0 & 0 & 1 & \cdots & 0\\
\vdots & \vdots & \ddots & \ddots & \vdots\\
0 & 0 & 0 & \ddots & 1\\
z^{-1} & 0 & 0 & \cdots & 0
\end{array} \right)$$
Now it is enough to prove that $\SM_{H,H^{-1}}=\SZ_{-(\Xi_T)_l+(\Xi_T)_r}(H\otimes H')\in \Mat_{n^2\times n^2}(R)$, but it is straightforward to check that
$$\SZ_{-(\Xi_T)_l+(\Xi_T)_r}(H\otimes H')=\left( \begin{array}{c|c}
0 & I_{n-1} \\
\hline
1 & 0
\end{array} \right) \otimes \left( \begin{array}{c|c}
0 & I_{n-1} \\
\hline
1 & 0
\end{array} \right)$$
Observe that, as $H^n=zI$, yields $H^{-k}=H^{n-k}/z$, so
$$\SMP_{H^{-k},H^k}=\SMP_{H^{n-k}/z,zH^{k-n}}=\SMP_{H^{n-k},H^{k-n}}=\left(\SMP_{H,H^{-1}} \right)^{n-k}\in \GL_{n^2}(R)$$
Corollary \ref{cor:innerOrder1Pole} allows us to find matrices $A',B'\in \Mat_{n\times n}(R)$ such that $M=\SMP_{A',B'}/z$ and $A'B'/z=B'A'/z=I$. First, let us prove that we can assume that $z^n\nmid \det(A')$. As $A'B'/z=I$, we get that
$$\det(A')\det(B')=z^n$$
As $z$ is not invertible in $R$ and $\det(B')\in R$, then $\det(A')|z^n$. Suppose that $z^n|\det(A')$. Then $z\nmid \det(B')$, so $\det(B')\not\in \fm$ and therefore, $\det(B')$ is invertible in $R$. As the inverse of $B'$ is $A'/z$, then
$$\frac{1}{z} A' = (B')^{-1} = \frac{1}{\det(B')} \op{ad}(B')^t$$
where, $\op{ad}(B')$ is the adjoint matrix of $B'$. As the adjoint belongs to $\Mat_{n\times n}(R)$ and $\det(B')^{-1}\in R$, then $\frac{A'}{z}\in \Mat_{n\times n}(R)$. Then, $M=\SMP_{A'/z,(A'/z)^{-1}}$ and $A'/z \in \GL_n(R)$.

If $z\nmid \det(A')$, then $\det(A')$ is invertible and, thus, $A'\in \GL_n(R)$, so $M=\SMP_{A',(A')^{-1}}$. Now suppose that $z^k | \det(A')$ but $z^{k+1} \nmid\det(A')$ for some $0<k<n$. Then
$$M'=\SMP_{A',B'/z}\SMP_{H^{-k},H^k}=\SMP_{A'H^{-k},H^kB'/z} \in \GL_{n^2}(R)$$
so there exist matrices $A'',B''\in \Mat_{n\times n}(R)$ with $z^n \nmid \det(A'')$ and $(A'')^{-1}=B''/z$ such that
$$\SMP_{A'H^{-k},H^kB'/z}=M'=\SMP_{A'',B''/z}$$
but then, by Lemma \ref{lemma:uniquenessParDecomposition} there exists a nonzero $\rho\in K$ such that
$$A''=\rho A'H^{-k}$$
Taking determinants
$$\det(A'')=\rho^n \frac{\det(A')}{z^k}$$
We have $z^{-k}\det(A') \not\in \fm$ by hypothesis and $z^n\nmid \det(A'') \in R$. Taking the $z$-valuation $\nu_z$ at both sides yields
$$\nu_z(\det(A''))= n \nu_z(\rho)$$
As $0\le \nu_z(\det(A''))<n$, yields $\nu_z(\rho)=0$, so $\rho$ is invertible in $R$ and we get
$$A'=\rho^{-1}A''H^k$$
Moreover, $\nu_z(\det(A''))=0$, so $\det(A'')$ is invertible, and therefore, $\rho^{-1}A''\in \GL_n(R)$. From Lemma \ref{lemma:matrixIsParabolic}, $\rho^{-1}A''\in \ParEnd_n(R)\cap \GL_n(R)$ and the Lemma follows.
\end{proof}

\begin{lemma}
Let $\{U_{\alpha}\}_{\alpha\in I}$ be a good cover of $(X,D)$ such that for every $x\in D$ there exists a unique $\alpha_x\in I$ such that $x\in U_{\alpha_x}$ and if $x,y\in D$ are distinct then $\alpha_x\ne \alpha_y$. Let $(E,E_\bullet)$ be a parabolic vector bundle of rank $r$ described by a cocycle $\varphi_{\alpha\beta}:U_{\alpha\beta} \to \SG|_{U_{\alpha\beta}}$. Then $\SH_x(E,E_\bullet)$ is described by the following cocycle $\psi_{\alpha\beta}:U_{\alpha\beta} \to \SG|_{U_{\alpha\beta}}$
$$\psi_{\alpha\beta}=\left \{ \begin{array}{ll}
\varphi_{\alpha\beta} & x\not\in U_\alpha\cup U_\beta\\
H\varphi_{\alpha\beta} &  x\in U_{\beta}\\
\varphi_{\alpha\beta}H^{-1} & x\in U_\alpha
\end{array} \right .$$
where $H=\left( \begin{array}{c|c}
0 & I_{r-1} \\
\hline
z & 0
\end{array} \right)$ and $z$ is a local coordinate in $U_{\alpha_x}$ centered in $x$.
\end{lemma}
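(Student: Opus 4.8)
The statement is local at $x$, so the plan is to compare a frame of $E$ with a frame of $\SH_x(E,E_\bullet)$ on the single chart $U_{\alpha_x}$ containing $x$, and to check that nothing is altered on the remaining charts. Refining the cover if necessary we may assume $U_{\alpha_x}\cap D=\{x\}$, so that every overlap $U_{\alpha\beta}$ with $\alpha\neq\beta$ misses $x$; there $z$ is a unit and $\SG|_{U_{\alpha\beta}}\cong\GL(r,\CC)\times U_{\alpha\beta}$. Because the Hecke transform is a modification supported at $x$, over every $U_\alpha$ with $x\notin U_\alpha$ both the sheaf $E_x^2=\SH_x(E)$ and its rotated quasi-parabolic structure coincide with those of $(E,E_\bullet)$ (at a parabolic point $y\neq x$ one has $\SH_x^{E_{x,2}}(E_y^i)=E_y^i$ near $y$). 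This gives at once $\psi_{\alpha\beta}=\varphi_{\alpha\beta}$ whenever $x\notin U_\alpha\cup U_\beta$, and shows that only the overlaps involving $U_{\alpha_x}$ can change.

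The heart of the proof is to exhibit, on $U_{\alpha_x}$, a frame of $E_x^2$ adapted to the rotated parabolic structure and to identify the change of frame with $H$. Pick a coordinate $z$ centred at $x$ and a frame $e_1,\dots,e_r$ of $E$ adapted to $E_{x,\bullet}$, so that the given cocycle lands in $\SG$; with $P$ upper triangular this means $E_{x,i}=\langle e_1,\dots,e_{r-i+1}\rangle$, hence $E_x^2=\langle e_1,\dots,e_{r-1},ze_r\rangle$. Reading the columns of $H$ as a new frame, $f_1=ze_r,\ f_2=e_1,\dots,f_r=e_{r-1}$, gives a holomorphic frame of the subsheaf $E_x^2$ (indeed $\det H=\pm z$ vanishes to first order at $x$, matching $\deg E_x^2=\deg E-1$). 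The main point to verify is that $f$ is adapted to the rotated flag $H_x^i=E_x^{i+1}$ of Section \ref{section:Hecke}: rewriting $E_x^{i+1}=\langle e_1,\dots,e_{r-i},ze_{r-i+1},\dots,ze_r\rangle$ in the basis $f$ one finds $E_x^{i+1}=\langle f_1,\dots,f_{r-i+1},zf_{r-i+2},\dots,zf_r\rangle$, so that the induced discrete flag of $E_x^2$ is exactly $(E_x^2)_{x,i}=\langle f_1,\dots,f_{r-i+1}\rangle$, i.e.\ the standard one in the frame $f$. Thus the single $z$ in the last column of $H$ simultaneously performs the colength-one modification and cyclically rotates the flag by one step; this is the cocycle counterpart of the statement, proved infinitesimally in Lemma \ref{lemma:parMatrixMainLemma}, that conjugation by $H$ preserves $\ParEnd_n(R)$. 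I expect this adaptedness check to be the only delicate point, since it is where the precise shape of $H$ is forced.

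It remains to read off the transition functions. On an overlap $U_{\alpha_x}\cap U_\beta$ with $x\notin U_\beta$ the frame of $E$ is unchanged on the $U_\beta$ side and replaced by $f$ (the columns of $H$) on the $U_{\alpha_x}$ side; since $z$ is a unit there, $H$ is invertible and the new transition function differs from $\varphi$ by one copy of $H^{\pm1}$. Matching the side of the multiplication against the cocycle convention gives $\psi_{\alpha_x,\beta}=\varphi_{\alpha_x,\beta}H^{-1}$ in the case $x\in U_\alpha$ (so $\alpha=\alpha_x$) and, by inversion, $\psi_{\alpha,\alpha_x}=H\varphi_{\alpha,\alpha_x}$ in the case $x\in U_\beta$ (so $\beta=\alpha_x$), which are precisely the two remaining cases of the claimed formula; these are consistent with $\psi_{\alpha_x,\beta}\psi_{\beta,\alpha_x}=I$ because the inserted $H^{-1}$ and $H$ cancel. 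Finally $\psi$ is a genuine $\SG$-valued cocycle: the cocycle identity reduces to that of $\varphi$ together with the cancellation of the copies of $H$, and the $\SG$-constraint imposes no condition on the modified overlaps because each of them avoids $x$, where alone $\det H$ vanishes.
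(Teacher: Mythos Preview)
Your proof is correct and follows essentially the same route as the paper's: both identify the change of frame on the single chart $U_{\alpha_x}$ as the matrix $H$ by exhibiting a frame of $E_x^2$ adapted to the rotated parabolic structure, then read off the modified transition functions. Your explicit computation with the frame $f_j$ (the columns of $H$) and the verification that $E_x^{i+1}=\langle f_1,\dots,f_{r-i+1},zf_{r-i+2},\dots,zf_r\rangle$ in the $f$-basis is in fact more transparent than the paper's terse appeal to the ``rotation'' of Section~\ref{section:Hecke}; the paper compresses this into the chain $(E_x^2)_x\stackrel{\psi}{\cong}\SO_{X,x}^r\stackrel{\Xi_r}{\cong}\SO_{X,x}^{r-1}\oplus\fm\stackrel{\pi}{\cong}\fm\oplus\SO_{X,x}^{r-1}$ and then asserts the composite is $H$, whereas you verify directly that the standard flag in the $f$-frame is the rotated one. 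The only difference in organisation is that the paper front-loads the four-case cocycle identity check, while you defer it to the end and dispatch it by cancellation of $H$ and $H^{-1}$; both are routine.
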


\begin{proof}
First, let us prove that $\psi$ is a cocycle. Let $\alpha,\beta,\gamma \in I$ with $U_\alpha\cap U_\beta \cap U_\gamma\ne \emptyset$ and let us compute $\psi_{\gamma\alpha}\psi_{\beta\gamma}\psi_{\alpha\beta}$. If $x$ does not belong to any of the open sets, $\psi$ coincides with $\varphi$ and
$$\psi_{\gamma\alpha}\psi_{\beta\gamma}\psi_{\alpha\beta}=\varphi_{\gamma\alpha}\varphi_{\beta\gamma}\varphi_{\alpha\beta}=1$$
If $x\in U_\alpha$
$$\psi_{\gamma\alpha}\psi_{\beta\gamma}\psi_{\alpha\beta}=H\varphi_{\gamma\alpha}\varphi_{\beta\gamma} \varphi_{\alpha\beta}H^{-1}=HH^{-1}=1$$
If $x\in U_\beta$
$$\psi_{\gamma\alpha}\psi_{\beta\gamma}\psi_{\alpha\beta}=\varphi_{\gamma\alpha}\varphi_{\beta\gamma}H^{-1} H \varphi_{\alpha\beta}=\varphi_{\gamma\alpha}\varphi_{\beta\gamma}\varphi_{\alpha\beta}=1$$
and if $x\in U_\gamma$
$$\psi_{\gamma\alpha}\psi_{\beta\gamma}\psi_{\alpha\beta}=\varphi_{\gamma\alpha}H^{-1} H\varphi_{\beta\gamma} \varphi_{\alpha\beta}=\varphi_{\gamma\alpha}\varphi_{\beta\gamma}\varphi_{\alpha\beta}=1$$
Recall that $E_x^2\subset E$ denotes the second step of the filtration by subsheaves defining the parabolic structure of $(E,E_\bullet)$ at $x$ and it is precisely the underlying vector bundle of $\SH_x(E,E_\bullet)$ (see section \ref{section:Hecke}).The trivialization induced by $\varphi_{\alpha\beta}$ at the stalk $E_x$ is precisely
$$(E_x^2)_x \cong \fm\oplus \SO_{X,x}^{r-1} \subset \SO_{X,x}^r\stackrel{\varphi}{\cong} E_x$$
A trivialization of $E_x^2\cong \SH_x^{E_{x,2}}(E)$  compatible with the induced parabolic structure would be the one obtained by ``rotating'' the given one through the procedure described in the previous chapter
$$(E_x^2)_x\stackrel{\psi}{\cong} \SO_{X,x}^r \stackrel{\Xi_{r}}\cong \SO_{X,x}^{r-1} \oplus \fm \stackrel{\pi}{\cong} \fm \oplus \SO_{X,x}^{r-1}$$
where $\pi$ is the permutation sending $\pi(i)=i-1$ for $i>1$ and $\pi(1)=r$. Therefore, we get
$$(E_x^2)_x \stackrel{H}{\cong} \fm\oplus \SO_{X,x}^{r-1} \subset \SO_{X,x}^r\stackrel{\varphi}{\cong} E_x$$
and we are done.
\end{proof}

\begin{corollary}
\label{cor:HeckeOuter}
Let $(E,E_\bullet)$ be a parabolic vector bundle. Then $\ParEnd_0(E,E_\bullet)$ is isomorphic to $\ParEnd_0\left( \SH_x(E,E_\bullet) \right)$ as a Lie algebra bundle and at the stalk at the parabolic point $x\in D$ the isomorphism coincides with
$$\SMP_{H,H^{-1}} :\ParEnd_0(E,E_\bullet)_x \cong \ParEnd_0\left( \SH_x(E,E_\bullet) \right)_x$$
\end{corollary}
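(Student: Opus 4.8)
The plan is to produce the isomorphism canonically away from $x$ and then verify, through the explicit local model of the previous section, that it extends across $x$ with the asserted stalk. Since the Hecke transformation only alters the structure over $x$, the underlying sheaves of $(E,E_\bullet)$ and $\SH_x(E,E_\bullet)=E_x^2$ agree on $X\setminus\{x\}$, and so do their quasi-parabolic structures at every other marked point $y\in D\setminus\{x\}$: the flag on the fiber at $y$ is merely transported through the elementary transformation (the subsheaves $H_y^i=\SH_x^{E_{x,2}}(E_y^i)$ induce the same flag on $E_x^2|_y=E|_y$). Hence over the punctured curve the two sheaves of traceless parabolic endomorphisms are literally the same subsheaf of $\End_0(E|_{X\setminus\{x\}})$, which yields a canonical isomorphism
$$\ParEnd_0(E,E_\bullet)|_{X\setminus\{x\}}\;\stackrel{\sim}{\longrightarrow}\;\ParEnd_0\!\left(\SH_x(E,E_\bullet)\right)|_{X\setminus\{x\}}$$
that respects the bracket tautologically. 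Everything therefore reduces to checking that this isomorphism is regular and invertible at $x$ and to identifying its stalk there.

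For the local analysis I would pass to $R=\SO_{X,x}$ with uniformizer $z$ and fix a frame of $E$ adapted to $E_\bullet$, so that $\ParEnd_0(E,E_\bullet)_x$ is identified with the traceless part of $\ParEnd_n(R)$. By the previous Lemma a frame of $E_x^2$ adapted to its rotated parabolic structure is obtained from the chosen one by the matrix $H$; equivalently, the inclusion $E_x^2\hookrightarrow E$ is, in the two adapted frames, multiplication by $H$. Rewriting the canonical identification above in these two adapted frames therefore turns it into conjugation by $H$ on matrices, which, once the $\SZ_{\Xi_T}$ book-keeping built into the definition of $\SMP$ is taken into account, is exactly the map $\SMP_{H,H^{-1}}$.

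The only substantive point is that this conjugation is regular at $z=0$: a priori $H^{-1}$ has a simple pole, so conjugation could carry $A$ out of the lattice of parabolic endomorphisms. That it does not — that $\SMP_{H,H^{-1}}$ carries $\ParEnd_0(E,E_\bullet)_x$ isomorphically onto $\ParEnd_0(\SH_x(E,E_\bullet))_x$ without poles — is precisely the computation opening the proof of Lemma \ref{lemma:parMatrixMainLemma}, where $\SMP_{H,H^{-1}}$ is shown to lie in $\GL_{n^2}(R)$; the subdiagonal divisibility defining $\ParEnd_n(R)$ is exactly what cancels the pole. I would therefore invoke Lemma \ref{lemma:parMatrixMainLemma} directly rather than redo this cancellation. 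Granting it, the canonical isomorphism extends over $x$ to a global isomorphism of vector bundles with stalk $\SMP_{H,H^{-1}}$ at $x$; since it coincides with a bracket-preserving identification on the dense open set $X\setminus\{x\}$ and both sheaves are bundles of Lie algebras, the extension automatically preserves the bracket, giving the desired isomorphism of Lie algebra bundles. The main obstacle is exactly this pole-cancellation, which is the reason the matrix algebra of the preceding section was developed.
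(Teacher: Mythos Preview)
Your argument is correct and matches the paper's intended reasoning: the corollary is stated without proof precisely because it follows immediately from the preceding cocycle lemma, which shows that passing from $(E,E_\bullet)$ to $\SH_x(E,E_\bullet)$ amounts to a change of trivialization by $H$ near $x$ and nothing away from $x$, so the associated endomorphism bundles differ by conjugation by $H$ at the stalk. Your explicit invocation of the computation in Lemma~\ref{lemma:parMatrixMainLemma} to justify that $\SMP_{H,H^{-1}}\in\GL_{n^2}(R)$ is exactly the right way to handle the only nontrivial point, and the bracket-preservation argument by density is fine.
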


\begin{lemma}
\label{lemma:isoParEnd}
Let $(E,E_\bullet)$ and $(E',E'_\bullet)$ be parabolic vector bundles of rank $r$ such that $\ParEnd_0(E,E_\bullet)$ and $\ParEnd_0(E',E'_\bullet)$ are isomorphic as Lie algebra bundles. Then $(E',E'_\bullet)$ can be obtained from $(E,E_\bullet)$ through a combination of the following transformations
\begin{enumerate}
\item Tensorization with a line bundle over $X$, $(E,E_\bullet) \mapsto (E\otimes L, E_\bullet \otimes L)$
\item Parabolic dualization $(E,E_\bullet)\mapsto (E, E_\bullet)^\vee$
\item Hecke transformation at a parabolic point $x\in D$, $(E,E_\bullet) \mapsto \SH_x(E,E_\bullet)$.
\end{enumerate}
\end{lemma}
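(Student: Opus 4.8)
The plan is to read an isomorphism $\phi\colon \ParEnd_0(E,E_\bullet)\cong \ParEnd_0(E',E'_\bullet)$ of Lie algebra bundles fiberwise and locally as a conjugation, and then to extract the three allowed operations from the local normal form of the conjugating matrix. Away from $D$ both sides restrict to $\End_0$ of the underlying bundles, and the sheaf of automorphisms sits in an exact sequence $1\to \Inn(\parsl)\to \Aut(\parsl)\to \Out\to 1$ with $\Out\cong \ZZ/2\ZZ$ for $r\ge 3$ (and trivial for $r=2$), the nontrivial class being the diagram automorphism $X\mapsto -X^t$. Since $X$ is connected the image of $\phi$ in $\Out$ is constant. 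When it is nontrivial I first replace $(E',E'_\bullet)$ by its quasi-parabolic dual $(E',E'_\bullet)^\vee$: the defining filtration of the dual by annihilators (Section \ref{section:Hecke}) identifies $\ParEnd_0((E',E'_\bullet)^\vee)$ canonically with $\ParEnd_0(E',E'_\bullet)$ through the negative transpose, so composing with this identification turns an outer-type isomorphism into an inner-type one. After this reduction I may assume $\phi$ is fiberwise inner.

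Next I fix a good cover $\{U_\alpha\}$ of $(X,D)$ in which each parabolic point $x$ lies in a unique chart $U_{\alpha_x}$, together with trivializations of $E$ and $E'$ adapted to the parabolic filtrations along $D$. Over each $U_\alpha\cap U$ the map $\phi$ is a conjugation $X\mapsto A_\alpha X A_\alpha^{-1}$ by a local section $A_\alpha$ of $\GL_r$, which exists and is unique up to a scalar by Corollary \ref{cor:innerCharacterization} and Lemma \ref{lemma:uniquenessDecomposition}. Near a parabolic point the stalk is $\ParEnd_0(E,E_\bullet)_x$ over the discrete valuation ring $R=\SO_{X,x}$, and $\phi_x=\SMP_{A,A^{-1}}$ for some $A\in \GL_r(K)$ with $K=\op{Frac}(R)$; Lemma \ref{lemma:parMatrixMainLemma} factors $A=A'H^{k}$ with $A'\in \ParEnd_r(R)\cap \GL_r(R)$ and $0\le k<r$, where $H$ is the Hecke matrix. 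By Corollary \ref{cor:HeckeOuter} the factor $\SMP_{H,H^{-1}}$ is precisely the isomorphism induced by one Hecke transformation at $x$, so iterating it $k$ times and replacing $(E',E'_\bullet)$ by $\SH_x^{\,k}(E',E'_\bullet)$ (and doing this at every point of $D$) makes the conjugating matrix in each parabolic chart parabolic invertible, i.e. a local section of $\SG$.

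After these corrections the local matrices $A_\alpha$, determined up to scalar by the uniqueness statements in Lemmas \ref{lemma:uniquenessDecomposition} and \ref{lemma:uniquenessParDecomposition}, agree on overlaps up to $\CC^*$ and hence glue to a $P\SG$-cocycle; this is exactly an isomorphism of parabolic projective bundles $\PP(E,E_\bullet)\cong \PP(E'',E'_\bullet)$, where $(E'',E''_\bullet)$ denotes the dual/Hecke correction of $(E',E'_\bullet)$ built in the previous steps. By Lemma \ref{lemma:projectiveReduction} two parabolic vector bundles with isomorphic projectivizations differ by tensoring with a line bundle, so $(E'',E''_\bullet)\cong (E,E_\bullet)\otimes L$ for some $L\in \Pic(X)$. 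Undoing the reductions shows that $(E',E'_\bullet)$ is obtained from $(E,E_\bullet)$ by a tensor product with a line bundle, a possible parabolic dualization, and Hecke transformations at the points of $D$, which is the assertion.

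The main obstacle is the local analysis at the parabolic points: one must show that the automorphisms of the local parabolic algebra that are \emph{not} conjugation by a parabolic invertible matrix are exactly the Hecke powers $H^{k}$, which is the content of Lemma \ref{lemma:parMatrixMainLemma} together with its valuation bookkeeping, and one must check that these parabolic corrections assemble consistently with the generic inner conjugation on the overlaps so that the glued data genuinely descends to a $P\SG$-cocycle.
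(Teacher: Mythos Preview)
Your proposal is correct and follows essentially the same route as the paper. Both arguments hinge on Lemma~\ref{lemma:parMatrixMainLemma} for the local normal form $A=A'H^k$ at a parabolic point, on Corollary~\ref{cor:HeckeOuter} to identify the $H^k$ factor with iterated Hecke transformations, and on Lemma~\ref{lemma:projectiveReduction} to pass from an isomorphism of parabolic projective bundles to a tensor by a line bundle. The only difference is presentational: the paper packages the analysis in the language of torsors and reductions of structure group, computing $\Out(\parsl)$ as a sheaf (equal to $\ZZ/2\ZZ$ on $U$ and to the dihedral group $\mathbb{D}_r$ at each $x\in D$), whereas you work directly with \v Cech cocycles and local conjugating matrices; the mathematical content is the same.
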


\begin{proof}
Giving a vector bundle $\ParEnd_0(E)$ with its Lie algebra structure is equivalent to giving an $\Aut(\parsl)$-torsor $P_{\Aut(\parsl)}$ which admits a reduction to a $\SG$-torsor (which corresponds to the parabolic vector bundle $(E,E_\bullet)$). We will analyze the possible reductions from a given $\Aut(\parsl)$-torsor in two steps.
$$\SG\twoheadrightarrow \Inn(\parsl) \hookrightarrow \Aut(\parsl)$$

First, note that there is an exact sequence of sheaves of groups
$$1\longrightarrow \Inn(\parsl) \longrightarrow \Aut(\parsl) \longrightarrow \Out(\parsl) \longrightarrow 1$$
Our first step is to compute the outer automorphisms of $\parsl$. Over a non-parabolic point $x\not\in D$, taking stalks the previous short exact sequence simply reduces to
$$1 \longrightarrow \Inn(\ssl) \longrightarrow \Aut(\ssl)\longrightarrow\Out(\ssl) \longrightarrow 1$$
We know that $\Inn(\ssl)=\PGL_r$ and $\Out(\ssl)=\ZZ/2\ZZ$ if $r>2$ and $\Out(\ssl)=1$ if $r=2$ (c.f. \cite[Proposition D.40]{FH91}). For the sake of the exposition, let us assume that $r>2$. The proof for $r=2$ is completely analogous and it will be outlined at the end. Then
$$1 \longrightarrow \Inn(\ssl)=\PGL_r \longrightarrow \Aut(\ssl)\longrightarrow\Out(\ssl)=\ZZ/2\ZZ \longrightarrow 1$$
Therefore, in order to determine $\Out(\parsl)$, we only need to determine the stalk of $\Out(\parsl)$ at a parabolic point.
The single nontrivial outer automorphism of $\ssl$ is the one induced by duality of the underlying vector space. Given a parabolic full flag vector bundle $(E,E_\bullet)$, parabolic duality induces an outer isomorphism of the algebra $\parsl$ extending the previous one over non-parabolic points. Let $x\in D$. Let $o_1,o_2\in \Out(\parsl)_x$ be two germs of sections at the parabolic point. Composing with the dualization action if necessary, we may assume that $o_1$ and $o_2$ coincide generically. Then there exist germs of sections $\overline{o}_1,\overline{o}_2\in \Aut(\parsl)_x$ such that $s:=\overline{o}_1\circ\overline{o}_2^{-1}\in \Aut(\parsl)_x$ is a germ whose restriction to the open set correspond to an inner automorphism.

Let $\SO_{X,x}$ be the stalk of the structure sheaf at $x\in D$. Let $\fm$ be the maximal ideal in $\SO_{X,x}$ and let $\SK$ be the field of fractions, i.e., $\SK=\fm^{-1}\SO_{X,x}$. As $X$ is a smooth curve, $\SO_{X,x}$ is a principal ideal domain, so $\fm=(z)$ for some germ $z\in \SO_{X,x}$. Therefore, an element of $\parsl_x$ is represented by an $r\times r$  matrix of elements of $\SO_{X,x}$ whose elements below the diagonal are multiples of $z$, i.e., it is a matrix of the form
$$\left ( \begin{array}{cccc}
a_{11} & a_{12} & \cdots & a_{1r} \\
za_{21} & a_{22} & \cdots & a_{2r} \\
\vdots & \vdots & \ddots & \vdots \\
za_{r1} & za_{r2} & \cdots & a_{rr}
\end{array} \right )$$
where $a_{ij}\in \SO_{X,x}$ and $\sum_{i=1}^ra_{ii}=0$. The germ $s$ is, in particular, a germ of $\GL(\parsl)$. Trace 0 matrices form a linear codimension 1 subspace of $\pargl$ whose complement is generated by the identity matrix. Therefore, any element of $\GL(\parsl)$ extends to an element of $\GL(\pargl)$ sending the identity matrix to itself. Moreover, if an element of $\GL(\parsl)$ belongs to $\Aut(\parsl)$, the extension belongs to $\Aut(\pargl)$, as the identity matrix belongs to the kernel of the Lie bracket in $\pargl$.

Then, any germ $s\in \Aut(\parsl)_x$ can be described by an invertible $r^2 \times r^2$ matrix of elements of $\SO_{X,x}$ by embedding $\Aut(\parsl)_x \hookrightarrow \GL(\Mat_{r\times r}(\SO_{X,x}))\stackrel{\tau}{\cong}\GL_{r^2}(\SO_{X,x})$. Let $S=(s_{ij})\in \GL_{r^2}(\SO_{X,x})$ be such matrix. As $s$ corresponds generically to an inner automorphism, there exists a matrix $G\in \GL_{r}(\SK)$ such that $S=\SMP_{G,G^{-1}}$. By Lemma \ref{lemma:parMatrixMainLemma}, there exists a matrix $G'\in \ParEnd_r(\SO_{X,x}) \cap \GL_r(\SO_{X,x}) \cong \SG_x$ and an integer $0\le k <r$ such that
$$S=\SMP_{G'H^k,(G'H^k)^{-1}}=\SMP_{G',(G')^{-1}}\circ \left(\SMP_{H,H^{-1}}\right)^k$$

Moreover, as $\SMP_{H,H^{-1}}$ is a conjugation operation in $\Mat_{r\times r}(\SK)$, it clearly preserves the $0$-trace and it is a Lie algebra isomorphism. Therefore $\frac{\Aut(\parsl)_x}{\ZZ/2\ZZ \times \Inn(\parsl)_x}$ is generated by the order $r$ automorphism induced from conjugation by the matrix $H$. One trivially checks that taking the dual and conjugating by $H$ is the same as conjugating by $H^{-1}$ and then taking the dual, so the outer automorphism group is
$$\Out(\parsl)_x \cong \langle s,h \rangle/\{s^2=1,h^r=1,sh=h^{-1}s\}=\mathbb{D}_r$$
where $\mathbb{D}_r$ is the dihedral group of order $r$. Therefore, $\Out(\parsl)_x$ fits in a sequence
$$1\longrightarrow \ZZ/2\ZZ \times X \longrightarrow \Out(\parsl) \longrightarrow \ZZ/r\ZZ \otimes \SO_D \longrightarrow 0$$

The space of reductions of structure sheaf of $P_{\Aut(\parsl)}$ to $\Inn(\parsl)$ correspond to sections of the associated $\Out(\parsl)$-torsor, $P_{\Aut(\parsl)}(\Out(\parsl))$. The associated bundle is a $2$-to-$1$ cover of $U$ glued to a $(2r)$-to-$1$ cover of $D$ through the canonical inclusion $\ZZ/2\ZZ<\mathbb{D}_r$. Since we know that there are reductions of the torsor, the bundle must be the disjoint union of a trivial $2$-to-$1$ cover of $X$ and a trivial $2(r-1)$ cover of $D$.

We will prove that $\Inn(\parsl)$ coincides with $\SG/\CC^*:=P\SG$. Then, a reduction of $P_{\Aut(\parsl)}$ to an $\Inn(\parsl)$ is a parabolic projective bundle $(\PP,\PP_\bullet)=(\PP(E),\PP(E_\bullet))$ together with an isomorphism 
$$P_{\Aut(\parsl)}\cong \ParEnd_0(\PP,\PP_\bullet)$$

Let $(\PP,\PP_\bullet) \to X$ be a reduction of $P_{\Aut(\parsl)}$ to $\Inn(\parsl)$. Then the generator of the $\ZZ/2\ZZ$ component of $\Out(\parsl)$ corresponds to its dual parabolic projective bundle
$$(\PP,\PP_\bullet)^\vee=\PP\left((E,E_\bullet)^\vee \right)$$
On the other hand, by Corollary \ref{cor:HeckeOuter}, for each $x\in D$, the generator of the $\ZZ/r\ZZ<\mathbb{D}_r$ outer automorphism corresponds to the Hecke transformation of $(\PP,\PP_\bullet)$ at the parabolic point $x\in D$. As these outer automorphisms generate $\Out(\parsl)$, every reduction can be found as a composition of Hecke transformations and dualization of $(\PP,\PP_\bullet)$.

Now consider the exact sequence of groups
$$1\longrightarrow Z \longrightarrow \SG \longrightarrow \Inn(\parsl) \longrightarrow 1$$
Let us compute the group scheme $Z$. As before, over $x\in U$, $\Inn(\parsl)_x=\PGL_r$ and $\SG_x=\GL_r$, so $Z_x=\CC^*$. Therefore, it is only necessary to compute $Z_x$ for $x\in D$. By definition, $Z$ is the kernel of the adjoint representation. Let $X\in \SG_x \hookrightarrow \Mat_{r\times r}(\SO_{X,x})$ be in the kernel of the representation. Then, for every $Y\in \parsl_x \hookrightarrow \Mat_{r\times r}(\SO_{X,x})$
$$XY-YX=0$$
In particular, as given any $G\in \End_0(\SO_{X,x})$, $zG\in \parsl_x$,
$$0=X(zG)-(zG)X=z(XG-GX)$$
As $\SO_{X,x}$ does not have any zero divisors, $XG-GX=0$ and, therefore, $X$ belongs to the center of $\End_0(\SO_{X,x})$, which consists on $\SO_{X,x}$-multiples of the identity. Clearly, all invertible multiples of the identity belong to $\SG_x$ and they are in the kernel of the adjoint, so
$$Z_x=\SO_{X,x}^*$$
Therefore, we conclude that $Z=\CC^*\times X = \SO_X^*$ and, taking the quotient, $\Inn(\parsl)=\SG/\CC^*:=P\SG$. As $\CC^*$ belongs to the center of $\SG$, the isomorphism classes of reductions of an $\Inn(\parsl)$-torsor to a $\SG$-torsor form a torsor for the group $H^1(X,\SO_X^*)$.

Let $(E,E_\bullet)$ be the parabolic vector bundle corresponding to a reduction of the $P\SG$-torsor $(\PP,\PP_\bullet) \to X$, i.e., $(\PP(E),\PP(E_\bullet)) \cong (\PP,\PP_\bullet)$. Then the other reductions correspond to parabolic vector bundles of the form $(E,E_\bullet)\otimes L$ for any line bundle $L$. Similarly, $(E,E_\bullet)^\vee\otimes L$ and $\SH_x(E,E_\bullet) \otimes L$ are all the possible reductions of $(\PP,\PP_\bullet)^\vee$ and $\SH_x(\PP,\PP_\bullet)$ respectively, so all possible reductions can be computed from $(E,E_\bullet)$ by a repeated combination of dualization, tensoring with a line bundle and application of Hecke transformations at parabolic points.

As we mentioned before, the proof for $r=2$ is completely analogous. The only difference is that we do not need duality anymore, as in this case $\Out(\ssl)=1$ and $\Aut(\ssl)=\Inn(\ssl)$ \cite[Proposition D.40]{FH91}, so the computation gets simplified. Geometrically, this reflects on the fact that the dual of every rank two bundle (respectively rank two parabolic bundle) is isomorphic to a tensor product of a line bundle with the original bundle (see Lemma \ref{lemma:dualrk2} for more details). Following the same argument as before we get that $\frac{\Aut(\parsl)_x}{\Inn(\parsl)_x}$ is generated by the order $r$ automorphism induced from conjugation by the matrix $H$, so
$$\Out(\parsl)_x \cong \ZZ/r\ZZ$$
and thus
$$\Out(\parsl) \cong \ZZ/r\ZZ \otimes \SO_D$$
As before, the space of reductions of $P_{\Aut(\parsl)}$ to $\Inn(\parsl)= P\SG$ correspond to sections of $P_{\Aut(\parsl)}(\Out(\parsl))$. As we know that there are reductions, in rank $2$ this bundle must be a disjoint union of $X$ and a trivial $(r-1)$-cover of $D$. Then if $(\PP,\PP_\bullet) \to X$ is a reduction of $P_{\Aut(\parsl)}$ to $\Inn(\parsl)= P\SG$ the other reductions come from Hecke transformations of $(\PP,\PP_\bullet)$ at the parabolic points and, therefore, if $(E,E_\bullet)$ is a reduction of the $P\SG$ torsor $(\PP,\PP_\bullet)$ to $\SG$ then the rest of the reductions of $P_{\Aut(\parsl)}$ to $\SG$ are obtained as a combination of tensoring with a line bundle and application of Hecke transformations at parabolic points to $(E,E_\bullet)$.
\end{proof}

\section[Isomorphisms between parabolic moduli]{Isomorphisms between moduli spaces of parabolic vector bundles}
\label{section:Automorphisms}

Let $\Phi:\SM(X,r,\alpha,\xi) \to \SM(X',r',\alpha',\xi')$ be an isomorphism between the moduli space of parabolic vector bundles of rank $r$, determinant $\xi$ and weight system $\alpha$ over $(X,D)$ and the moduli space of parabolic vector bundles of rank $r'$, determinant $\xi'$ and weight system $\alpha'$ over $(X',D')$. 

By Torelli Theorem \ref{theorem:Torelli}, we know that $r=r'$ and that $\Phi$ induces an isomorphism between the marked curves $\sigma:(X,D)\stackrel{\sim}{\longrightarrow}(X',D')$. We know that the map of quasi-parabolic vector bundles $(E,E_\bullet) \mapsto \sigma^*(E,E_\bullet)$ induces an isomorphism $\Sigma_\sigma : \SM(X',r,\alpha',\xi') \longrightarrow \SM(X,r,\sigma^*\alpha',\sigma^*\xi')$. Therefore, $\Sigma_\sigma\circ \Phi:\SM(r,\alpha,\xi) \longrightarrow \SM(r,\sigma^*\alpha',\sigma^*\xi')$ is an isomorphism between moduli spaces of parabolic vector bundles on $(X,D)$ such that the induced automorphism on the marked curve is the identity. As we can do this for every automorphism of the marked curve, we can assume without loss of generality that $\Phi$ induced the identity map on $(X,D)$.

For $k>1$, let $W_k=H^0(K^kD^{k-1})$. Recall that we defined
$$h_k:H^0(\SParEnd_0(E)\otimes K_X(D))\to W_k$$
as the composition of the Hitchin map $h:H^0(\SParEnd_0(E)\otimes K_X(D))\to W$ with the projection $W\twoheadrightarrow W_k$.
As we have assumed that $\Phi$ induces the identity map on $(X,D)$, then the Hitchin space for both moduli spaces is the same and by Proposition \ref{prop:HitchinGlobalFunctions}, there exist a $\CC^*$-equivariant automorphism $f:W\stackrel{\sim}{\longrightarrow} W$ such that the following diagram commutes
\begin{eqnarray}
\label{eq:fDefinition}
\xymatrixcolsep{4pc}
\xymatrix{
T^*\SM(r,\alpha,\xi) \ar[r]^{d(\Phi^{-1})} \ar[d]_{h} & T^*\SM(r,\alpha',\xi') \ar[d]^{h} \\
W \ar[r]^{f} & W
}
\end{eqnarray}

Moreover we know that $f$ preserves the block $W_r\subset W$. Our next goal will be to prove that, in fact, there exists linear maps $f_k:W_k\to W_k$ such that the following diagram commutes for every $k>1$ (Corollary \ref{cor:preservehr})
\begin{eqnarray}
\label{eq:frDefinition}
\xymatrixcolsep{4pc}
\xymatrix{
T^*\SM(r,\alpha,\xi) \ar[r]^{d(\Phi^{-1})} \ar[d]_{h_r} & T^*\SM(r,\alpha',\xi') \ar[d]^{h_r} \\
W_k \ar[r]^{f_k} & W_k
}
\end{eqnarray}
in other words, we will prove that $f:W\to W$ is linear and preserves the decomposition $W=\bigoplus_{k=2}^r W_r$. In order to do so, we will analyze how the geometry of the discriminant $\SD\subset W$ and the $\CC^*$-action impose restrictions on the structure of the map $f:W\to W$.

For every $k>1$, let us denote
$$W_{\le k}=\bigoplus_{j=2}^k W_k$$
In particular $W=W_{\le r}$ and, in order to simplify the notation, we consider $W_{\le 1}=0$.

\begin{lemma}
\label{lemma:fBlockStructure}
Let $f:W\to W$ be a $\CC^*$-equivariant isomorphism. If $r=2$ then $f$ is linear isomorphism. Otherwise, if $r\ge 2$, then there exist
\begin{itemize}
\item An algebraic isomorphism $\overline{g}:W_{\le (r-2)}\to W_{\le (r-2)}$,
\item linear isomorphisms $A_{j}:W_{j}\to W_{j}$, $j=r-1,r$ and
\item algebraic maps $g_j:W_{\le (r-2)} \to W_j$, $j=r-1,r$
\end{itemize}
such that for every $s=(\overline{s},s_{r-1},s_r)\in W=W_{\le (r-2)}\oplus W_{r-1}\oplus W_r$
$$f(s_2,\ldots,s_r)=(g(\overline{s}),A_{r-1}(s_{r-1})+g_{r-1}(\overline{s}),A_r(s_r)+g_r(\overline{s}))$$
\end{lemma}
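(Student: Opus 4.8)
The plan is to exploit the $\CC^*$-equivariance of $f$ together with the fact that $f$ is a polynomial automorphism of the affine space $W\cong\CC^m$; all the geometry will have been absorbed into the mere existence and equivariance of $f$, so what remains is a weight computation.

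First I would pin down the $\CC^*$-action. By Proposition \ref{prop:HitchinGlobalFunctions} the action on $W$ is the unique one making $\tilde h$ equivariant, and since scaling a field $\varphi\mapsto\lambda\varphi$ multiplies the degree-$k$ coefficient $s_k\in H^0(K^kD^{k-1})=W_k$ of its characteristic polynomial by $\lambda^k$, the action is the diagonal one $\lambda\cdot(s_2,\ldots,s_r)=(\lambda^2 s_2,\ldots,\lambda^r s_r)$; that is, $W_k$ is the weight-$k$ piece and every weight is $\ge 2$. Because $f$ is an algebraic automorphism of $W\cong\CC^m$, writing $f=(f_2,\ldots,f_r)$ with $f_k\colon W\to W_k$, each $f_k$ is a polynomial, and equivariance gives $f_k(\lambda^2 s_2,\ldots,\lambda^r s_r)=\lambda^k f_k(s)$. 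Hence every monomial $\prod_j \ell_j^{a_j}$ occurring in $f_k$, where $\ell_j$ denotes a coordinate on $W_j$, has weighted degree $\sum_j j\,a_j=k$ (in particular $f_k(0)=0$).

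The technical heart is the ensuing monomial bookkeeping. The key observation is that since every weight is $\ge 2$, any monomial of total degree $\ge 2$ and weighted degree $k$ has each factor of weight $\le k-2$. Applying this to $f_k$: its linear part is a linear map $A_k\colon W_k\to W_k$ (the only weight-$k$ linear forms are those on $W_k$), while its nonlinear part involves only coordinates on $W_j$ with $j\le k-2\le r-2$, hence is a function of $\overline s\in W_{\le r-2}$ alone. For $2\le k\le r-2$ this shows $f_k$ depends only on $\overline s$, so I set $\overline g:=(f_2,\ldots,f_{r-2})\colon W_{\le r-2}\to W_{\le r-2}$; for $k=r-1$ and $k=r$ it yields exactly $f_k=A_k(s_k)+g_k(\overline s)$ with $g_k\colon W_{\le r-2}\to W_k$. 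When $r=2$ there is only $W_2$ of weight $2$, so $f=f_2$ is homogeneous of weighted degree $2$ and therefore linear, which is the stated special case; for $r=3$ one has $W_{\le r-2}=0$ and the decomposition degenerates to $f=(A_2,A_3)$.

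Finally I would show $\overline g,A_{r-1},A_r$ are isomorphisms by running $f^{-1}$ through the same machine. Since $f^{-1}$ is again an algebraic, $\CC^*$-equivariant automorphism, it has the same triangular shape $f^{-1}=(\overline g',\,A'_{r-1}(\cdot)+g'_{r-1},\,A'_r(\cdot)+g'_r)$. Comparing components of $f\circ f^{-1}=\id$ and $f^{-1}\circ f=\id$: the $W_{\le r-2}$-block gives $\overline g\circ\overline g'=\id=\overline g'\circ\overline g$, so $\overline g$ is an isomorphism; isolating the part of the $W_{r-1}$- and $W_r$-components that is linear in the top variable gives $A_{r-1}A'_{r-1}=\id=A'_{r-1}A_{r-1}$ and $A_rA'_r=\id=A'_rA_r$, so $A_{r-1}$ and $A_r$ are linear isomorphisms. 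The only nontrivial step is the weight/monomial analysis; no further geometric input is needed, as it has all been packaged into $f$ and its equivariance.
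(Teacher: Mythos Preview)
Your proof is correct and follows essentially the same approach as the paper's: a weight/monomial analysis showing each $f_k$ is weighted-homogeneous of degree $k$, then splitting off the linear part $A_k$ and observing that the nonlinear remainder can only involve lower-weight variables, followed by applying the same structure to $f^{-1}$ to get invertibility. Your ``key observation'' (minimum weight $2$ forces every factor in a nonlinear monomial of weighted degree $k$ to have weight $\le k-2$) is a slightly cleaner packaging than the paper's, which first proves $f_j=A_j(s_j)+g_j(s_2,\dots,s_{j-1})$ and then argues separately that $g_r$ cannot depend on $s_{r-1}$ since there is no weight-$1$ variable; your version handles $k=r-1$ and $k=r$ uniformly, but the content is identical.
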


\begin{proof}
Assume that $r\ge 3$ and let $f=(f_2,\ldots,f_r)$. Let us fix coordinates $\overline{x_j}=(x_{j,1},\ldots,x_{j,d_j})$ in $W_j$ for each $j=2,\ldots,r$, where
$$d_j=\dim(W_j)=h^0(K^jD^{j-1})=j(2g-2)+(j-1)n-g+1$$
In these coordinates, each component of the map $f_j:W\to W_j$ is written as a weighted-homogeneous polynomial for the weights induced by the $\CC^*$-action. This means that it must be the sum of monomials of the form $C\prod_{i=1}^n x_{j_i,k_i}^{t_i}$ where
$$\sum_{i=1}^l t_ij_i=j \quad \quad t_i>0, \quad 2\le j_i\le r$$
In particular, the previous equation implies that for every $j$ and every $i=1,\ldots,n$, $j_i\le j$ and, therefore, the map $f_j:W\to W_j$ can only depend on variables coming from $W_l$ for $l\le j$. Moreover, if $j_i=j$ for some $i$, then there cannot be any other factor in the monomial, i.e., it is a linear monomial. Therefore, each $f_j:W\to W_j$ decomposes as a sum
$$f_j(s_2,\ldots,s_r)=g_j(s_2,\ldots,s_{j-1})+A_j(s_j)$$
for some $\CC^*$-equivariant map $g_j:\bigoplus_{i=2}^{j-1} W_i \to W_j$ and some linear map $A_j:W_j\to W_j$.
In the particular case $j=r$ we observe that the monomials composing $f_r$ cannot contain the variables $\{x_{j-1,i}\}_{i=1}^{d_{j-1}}$ either, because they have order $r-1$ for the $\CC^*$ action and there does not exist any variable of order $1$. Then
$$f_r(s_2,\ldots,s_r)=g_r(s_2,\ldots,s_{r-2})+A_r(s_r)$$
Finally, as the inverse $f^{-1}$ must have an analogous decomposition, we conclude that the maps $A_j:W_j\to W_j$ and the maps
$$\overline{g}_j=(A_2,g_3+A_3,\ldots,g_j+A_j):\bigoplus_{i=2}^j W_i \longrightarrow \bigoplus_{i=2}^j W_i$$
must be all invertible. The case $r=2$ is proved in a completely analogous way.
\end{proof}

Let $\op{sing}:\SD \dashrightarrow X$
For each $x\in X$, let $\SD_x\subset \SD$ be closure of the subset of singular curves which are singular over the point $x\in X$. By definition of the map $\op{sing}$
$$\SD_x= \overline{\op{sing}^{-1}(x)}$$

\begin{lemma}
For every $x\in X$, $\SD_x$ is a nonempty connected rational variety.
\end{lemma}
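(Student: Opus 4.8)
The plan is to split according to whether $x$ lies in $D$ or not, realizing $\SD_x$ in each case as (the image of) a family of affine subspaces, which is automatically connected and rational. Throughout, $m=\dim W$.

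First I would dispose of the points $x\in D$ together with the whole case $r=2$, where $\SD_x$ is already linear. By Proposition \ref{prop:HitchinComponents}, for $x\in D$ we have $\SD_x=\bigoplus_{k=2}^{r-1}H^0(K^kD^{k-1})\oplus H^0(K^rD^{r-1}(-x))$, and the explicit description of $\SD$ obtained there for $r=2$ gives $\SD_x=H^0(K^2D(-2x))$ for $x\notin D$ and $\SD_x=H^0(K^2D(-x))$ for $x\in D$. In all these cases $\SD_x$ is a linear subspace of $W$, hence irreducible, rational and connected; since a generic point of it corresponds to a spectral curve singular only over $x$, it indeed coincides with $\overline{\op{sing}^{-1}(x)}$.

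For the remaining case $x\notin D$ with $r\ge 3$ the singular point over $x$ need not sit at height $0$, so I would introduce the incidence variety
$$\widetilde{\SD}_x=\{(s,t_*)\in W\times (KD)|_x : (x,t_*)\in \op{Sing}(X_s)\},$$
the fibre over $x$ of the total incidence variety $\{(s,p):p\in\op{Sing}(X_s)\}\subset W\times\op{Tot}(KD)$. Writing $p_s(z,t)=t^r+\sum_{k=2}^r s_k(z)t^{r-k}$ in a local coordinate and trivialisation near $x$, the condition $(x,t_*)\in\op{Sing}(X_s)$ is cut out by the three equations $p_s(x,t_*)=0$, $\partial_t p_s(x,t_*)=0$ and $\partial_z p_s(x,t_*)=0$, each affine-linear in $s$ for fixed $t_*$; thus the projection $\widetilde{\SD}_x\to (KD)|_x\cong\CC$ to the height coordinate $t_*$ has affine subspaces of $W$ as fibres. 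The key computation is that these three conditions have constant rank $3$ for every $t_*\in\CC$: the coefficient vectors of the first two equations in the value-variables $(s_2(x),\dots,s_r(x))$ are independent because the $2\times 2$ minor on the $(s_{r-1},s_r)$-columns equals $\det\bigl(\begin{smallmatrix}t_*&1\\1&0\end{smallmatrix}\bigr)=-1$, while the third equation involves a first derivative $s_r'(x)$ with coefficient $1$ and so, by the very ampleness of $|K^rD^{r-1}|$ (established in the proof of Proposition \ref{prop:recoverDualVariety}), is independent from the point-evaluations occurring in the first two. Consequently $\widetilde{\SD}_x$ is an affine-space bundle of rank $m-3$ over the affine line $\CC$, hence irreducible and rational of dimension $m-2$.

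Finally I would check that the projection $\pi_W:\widetilde{\SD}_x\to W$ is birational onto $\SD_x$: as in the analysis of $\SD_U$ in Proposition \ref{prop:HitchinComponents}, a generic member has a single node, lying over $x$, so over a generic $s$ the height $t_*$ of that node is uniquely recovered and $\pi_W^{-1}(s)$ is one point; moreover $\op{sing}^{-1}(x)$ is then dense in $\pi_W(\widetilde{\SD}_x)$, whence $\SD_x=\overline{\op{sing}^{-1}(x)}=\overline{\pi_W(\widetilde{\SD}_x)}$ is birational to $\widetilde{\SD}_x$ and therefore irreducible, rational and connected. The main obstacle is precisely the constant-rank statement: one must rule out that the three defining conditions degenerate for special heights $t_*$, which could otherwise produce extraneous components of $\widetilde{\SD}_x$ of the wrong dimension and destroy both irreducibility and the bundle structure. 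The explicit minor together with the value/derivative independence coming from very ampleness is exactly what forces rank $3$ over the entire affine line, and hence settles the lemma.
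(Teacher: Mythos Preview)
Your argument is correct and shares the same core idea as the paper's---introduce the incidence variety recording the height $t_*$ of the singular point over $x$, show it is rational, and check it maps birationally to $\SD_x$---but your execution is noticeably more direct. The paper first projects $\SD_x$ to the $1$-jet space $\bigoplus_{j}K^jD^{j-1}\otimes\SO_X/\SI_x^2\cong\CC^{2(r-1)}$, calls the image $D$, builds the incidence variety $\tilde D$ there, and then takes a detour: it further projects to $\tilde D_s$ and $D_s$ (forgetting the derivative coordinates $s'$), shows $\tilde U_s\to U_s$ is an isomorphism over the locus of polynomials with a single double root, and finally identifies $\tilde U_s\cong\CC^*\times\CC^{r-3}$ via Ruffini's rule to conclude that $D$ is rational. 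By contrast, you stay upstairs in $W\times\CC$ and simply verify that the three defining conditions have constant rank $3$ over the whole $t_*$-line (your $2\times2$ minor together with the value/derivative independence coming from very ampleness of $|K^rD^{r-1}|$), which immediately gives $\widetilde{\SD}_x$ the structure of an affine bundle over $\CC$ and makes rationality transparent. What the paper's longer route buys is an explicit birational model $\CC^*\times\CC^{r-3}\times(\text{linear fibre})$ for $D$; what your route buys is brevity and the avoidance of the case analysis on $r$ and the somewhat delicate check that $\tilde U_s\to U_s$ has injective differential. Both arrive at the same conclusion for the same reason.
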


\begin{proof}
For $x\in D$ the subset $\SD_x$ was computed in Proposition \ref{prop:HitchinComponents}
$$\SD_x=\bigoplus_{k=2}^{r-1} H^0(K^kD^{k-1}) \oplus H^0(K^rD^{r-1}(-x))$$
and it clearly satisfies the desired properties. Suppose that $x\in X\backslash D$. Let us consider the image of $\SD_x$ under the evaluation map
$$0\longrightarrow \bigoplus_{j=2}^r H^0(K^jD^{j-1}(-2x)) \longrightarrow W\longrightarrow \bigoplus_{j=2}^r K^jD^{j-1}\otimes \SO_x/\SI_x^2 \longrightarrow 0$$
Then $s\in \SD_x$ is the preimage of
$$\SY=\left \{(s,s')\in \CC^{2(r-1)} \middle | \exists t\in \CC \quad \begin{array}{c}
t^r + \sum_{j=2}^r s_jt^{r-j}=0\\
rt^{r-1}+\sum_{j=2}^{r-1} (r-j)s_j t^{r-j-1}=0\\
\sum_{j=2}^r s_j't^{r-j}=0
\end{array} \right \}$$
Clearly, if we prove that $\SY$ is rational connected and nonempty, then $\SD_x$ is rational connected and nonempty, as it is a vector bundle over $\SY$. For $r=2$, $\SY=\{(0,0)\}$ and we are done, so from now on assume that $r>2$. Let us consider the following diagram
\begin{eqnarray*}
\xymatrix{
&\CC_t \times \CC^{r-1}_s \times \CC^{r-1}_{s'} \ar@{->>}[rr] \ar@{->>}[dd] & & \CC_t \times \CC^{r-1}_s \ar@{->>}[dd] \\
\tilde{\SY} \ar@{->>}[rr] \ar@{^(->}[ur] \ar@{->>}[dd] & & \tilde{\SY}_s \ar@{^(->}[ur] \ar@{->>}[dd] & \\
&\CC^{r-1}_s \times \CC^{r-1}_{s'} \ar@{->>}[rr] && \CC^{r-1}_s\\
\SY \ar@{->>}[rr] \ar@{^(->}[ur]  && \SY_s \ar@{^(->}[ur] &
}
\end{eqnarray*}
where
$$\tilde{\SY}=\left \{(t,s,s')\in \CC^{2r-1} \middle | \begin{array}{c}
t^r + \sum_{j=2}^r s_jt^{r-j}=0\\
rt^{r-1}+\sum_{j=2}^{r-1} (r-j)s_j t^{r-j-1}=0\\
\sum_{j=2}^r s_j't^{r-j}=0
\end{array} \right \}$$
$$\tilde{\SY}_s=\left \{(t,s)\in \CC^{r} \middle | \begin{array}{c}
t^r + \sum_{j=2}^r s_jt^{r-j}=0\\
rt^{r-1}+\sum_{j=2}^{r-1} (r-j)s_j t^{r-j-1}=0\\
\end{array} \right \}$$
$$\SY_s=\left \{s\in \CC^{r-1} \middle | \exists t\in \CC \quad \begin{array}{c}
t^r + \sum_{j=2}^r s_jt^{r-j}=0\\
rt^{r-1}+\sum_{j=2}^{r-1} (r-j)s_j t^{r-j-1}=0\\
\end{array} \right \}$$
and, clearly, all horizontal and vertical arrows are surjective. Let us consider the open subset $U_s \subset \SY_s$ corresponding to polynomials of the form $p(x)=(x-t_1)^2(x-t_2)\cdots (x-t_{r-1})$ with $2t_1+\sum_{i=2}^{r-1} t_i=0$, all $t_i$ different and $t_1\ne 0$. Let $U\subset \SY$, $\tilde{U}\subset \tilde{\SY}$ and $\tilde{U}_s\subset \tilde{\SY}_s$ be the preimages of $U$ under the corresponding projection maps. By definition of $\SY_s$, it is straightforward to compute that a vector $(\tau,\sigma)\in T_{(t,s)}\CC_t\times \CC_s^{r-1}$ lines in $T_{(t,s)}\tilde{\SY}_s$ if and only if
$$\left \{ \begin{array}{r}
t^r+\sum_{j=2}^r \sigma_j t^{r-j} =0\\
rt^{r-1}+\sum_{j=2}^{r-1} (r-k) \sigma_j t^{r-j-1} + \left( r(r-1)t^{r-2} + \sum_{j=2}^{r-2}(r-j)(r-j-1)s_j a^{r-j-2}\right) \tau=0
\end{array} \right. $$
Therefore, the differential of the map $\tilde{\SY}_s \twoheadrightarrow \SY_s$ fails to be injective at $(t,s)$ if and only if
$$r(r-1)t^{r-2} + \sum_{j=2}^{r-2}(r-j)(r-j-1)s_j a^{r-j-2}=0$$
i.e., if the polynomial $p_s(x)=x^r+\sum_{j=2}^r s_j x^{r-j}$ is divisible by $(x-t)^3$. In particular, if $s\in U_s$, the differential of the map $\tilde{U}_s \twoheadrightarrow U_s$ is injective. Moreover, $\tilde{\SY}_s\twoheadrightarrow \SY_s$ is clearly finite, so the map $\tilde{U}_s \twoheadrightarrow U_s$ is a finite and bijective with injective differential. By \cite[Theorem 14.9 and Corollary 14.10]{Harris13}, it is an isomorphism. As points $(t,s)\in \tilde{U}_s$ all have $t\ne 0$, the fiber of the projection $\tilde{U}\twoheadrightarrow \tilde{U}_s$ is a vector space of dimension $r-2$ and it is straightforward to check that $\tilde{U}$ is a vector bundle over $\tilde{U}_s$. Similarly, $U$ is a vector bundle over $U_s$ and it is isomorphic to $\tilde{U}$ through the isomorphism $\tilde{U}_s\cong U_s$. This proves that $\SY$ is birational to a vector bundle over $\tilde{U}_s$. The latter is isomorphic to $\CC^*\times \CC^{r-3}$ in the following way. Consider $\CC^k$ as the space of traceless polynomials $q(x)$ of degree $k+1$. Then $\tilde{U}_s$ is the image of the map
\begin{eqnarray*}
\xymatrixrowsep{0.05pc}
\xymatrix{
\CC^*\times \CC^{r-3} \ar[r] & \tilde{U}_s\\
(t,q(x)) \ar@{|->}[r] & \left ( t,(x-t)^2(q(x)+2tx^{r-3}) \right)
}
\end{eqnarray*}
The inverse can be computed through Ruffini's rule, thus inducing an algebraic isomorphism. Therefore $\SY$ is birational to a vector bundle over $\CC^* \times \CC^{r-3}$, so it is a nonempty connected rational variety.
\end{proof}

\begin{lemma}
\label{lemma:recoverDx}
The map $\op{sing}: \SD \dashrightarrow X$ commutes with $f:\SD\to \SD$.
\end{lemma}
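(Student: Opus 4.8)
The plan is to show that $f$ carries fibres of $\op{sing}$ to fibres of $\op{sing}$ and that the induced automorphism of the base $X$ is the identity. First I would record which components of $\SD$ are interchanged by $f$. Since $d(\Phi^{-1})$ carries complete rational curves to complete rational curves, Lemma~\ref{lemma:DiscriminantCharacterization} gives $f(\SD)=\SD$, so $f$ permutes the irreducible components of Proposition~\ref{prop:HitchinComponents}. The components $\SD_x$ with $x\in D$ are hyperplanes of $W$, whereas $\overline{\SD_U}$ is not linear, since its top slice $\overline{\SD_U}\cap W_r=\SC_X$ is the dual variety of $X$, which is not a hyperplane by Proposition~\ref{prop:recoverDualVariety}. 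Hence $f(\overline{\SD_U})=\overline{\SD_U}$ and $f$ permutes $\{\SD_x\}_{x\in D}$; because we have reduced to the case where $f$ induces the identity on $(X,D)$ (so that $f_r$ fixes each $\SC_x$), we get $f(\SD_x)=\SD_x$ for every $x\in D$. On these components $\op{sing}$ is constant equal to $x$, so $\op{sing}\circ f=\op{sing}$ there.

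The essential case is $\overline{\SD_U}$, on which $\op{sing}\colon\overline{\SD_U}\dashrightarrow X$ is the fibration with fibres $\SD_x$. The idea is to characterize this fibration intrinsically, so that the biregular automorphism $f|_{\overline{\SD_U}}$ is forced to respect it. Passing to a smooth projective model, $\op{sing}$ becomes a morphism to the smooth curve $X$ whose general fibre $\SD_x$ is rational, hence rationally connected, by the previous Lemma, while the base $X$ has genus $\ge 4$ and is therefore not uniruled. By the standard characterization of the maximal rationally connected fibration (Koll\'ar--Miyaoka--Mori / Graber--Harris--Starr), a fibration with rationally connected general fibres over a non-uniruled base is, up to birational equivalence, the MRC fibration. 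Thus $\op{sing}$ is the MRC fibration of $\overline{\SD_U}$, and by its functoriality the automorphism $f$ descends to an automorphism $\bar f$ of $X$ with $\op{sing}\circ f=\bar f\circ\op{sing}$.

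It remains to check $\bar f=\id$, which I would read off from the top block $W_r$. On $\SC_X=\overline{\SD_U}\cap W_r$ the map $\op{sing}$ is exactly the Gauss map of the dual variety $\PP(\SC_X)$, sending a tangent hyperplane $s_r\in H^0(K^rD^{r-1}(-2x))$ to its point of tangency $x$. The linear automorphism $f_r=f|_{W_r}$ preserves $\SC_X$ and, via Proposition~\ref{prop:recoverDualVariety}, induces on $X$ the automorphism $f^\vee$; since we reduced to the situation $\sigma=\id$, we have $f^\vee|_X=\id$. As the Gauss map intertwines $f_r$ with $f^\vee$, the automorphism $\bar f$ agrees with $f^\vee=\id$ on the dense image of $\op{sing}|_{\SC_X}$, whence $\bar f=\id$ and $\op{sing}\circ f=\op{sing}$ on all of $\SD$.

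The main obstacle is the second paragraph. The commutation is transparent on the top-degree slice $W_r$, where it is just the Gauss-map statement for the dual variety, but propagating it to all of $\overline{\SD_U}$ is not formal: a generic node of a curve in $\SD_U$ sits at a point $(x,t_0)$ with $t_0\neq 0$ and is therefore invisible to the naive projection $W\to W_r$, so one genuinely needs the intrinsic reconstruction of $\op{sing}$ as the MRC fibration. This is exactly where the rationality of the fibres $\SD_x$ established in the previous Lemma becomes indispensable.
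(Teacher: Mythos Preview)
Your argument is correct and follows the same strategy as the paper: use the rationality of the fibres $\SD_x$ (the previous lemma) to identify $\op{sing}$ as the MRC fibration of $\SD$, hence unique up to an automorphism of $X$, and then show the induced automorphism of $X$ is the identity by restricting to the top block $W_r$ and invoking the reduction $\sigma=\id$ from the Torelli step. The paper's proof is slightly more economical in that it applies the MRC uniqueness directly to $\op{sing}:\SD\dashrightarrow X$ without first separating the components $\overline{\SD_U}$ and $\{\SD_x\}_{x\in D}$, and it reads off $\rho=\id$ from the single identity $f(\SD_{x_0}\cap W_r)=H^0(K^rD^{r-1}(-2x_0))$ for $x_0\notin D$, but the content is the same.
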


\begin{proof}
We will proceed as in \cite[Remark 4.2]{BGM12}. As $\op{sing}:\SD \dashrightarrow X$ has connected rational fibers and is surjective, there exists a unique such map up to an automorphism of $X$ because of the following. Suppose that $\gamma :\SD \dashrightarrow X$ is another surjective map with connected rational fibers. For a generic $x\in X$, the map $\gamma$ restricts to a rational map $\SD_x \dashrightarrow X$. As $\SD_x$ is a connected rational variety this map must be constant. Therefore, the map $\gamma:\SD \dashrightarrow X$ descends to a nonconstant rational map $\rho_{\gamma}:X\dashrightarrow X$. As $X$ is a smooth projective curve, this map extends to an automorphism of $X$ such that $\gamma = \rho_{\gamma} \circ \op{sing}$. Let $\rho:X\to X$ be the only map such that $f(\SD_x)=\SD_{\rho(x)}$ for all $x\in X$. The map $f:W\to W$ preserves $W_r$ and $\SD$, so it preserves $\SD\cap W_r= \SC_X\cup \bigcup_{x\in D} \SC_x$. Moreover, we know that $\PP(\SC_X)$ is not isomorphic to $\PP(\SC_x)$ for any $x\in D$, so $f$ must induce an automorphism of $\SC_X$. By construction we assumed that the induced automorphism $\sigma:X\to X$ on the dual variety is the identity, and it clearly coincides with $\rho:X\to X$, as for each $x_0\in X\backslash D$ we have
\begin{multline*}
H^0(K^rD^{r-1}(-2x_0))=f(H^0(K^rD^{r-1}(-2x_0)))=f(\SD_x\cap W_r)\\
=\SD_{\rho(x)} \cap W_r = H^0(K^rD^{r-1}(-2\rho(x_0)))
\end{multline*}
\end{proof}

As a consequence, for each $x\in X$, $f(\SD_x)=\SD_x$. Then, in particular, their intersection is preserved by $f$. Let
$$\SN = \bigcap_{x\in X} \SD_x$$
be the subset of spectral curves which are singular over each $x\in X$. The only way this can happen is if the spectral curve is non-reduced, so $\SN$ is precisely the set of non-reduced spectral curves. Clearly, it decomposes in irreducible components depending on the degree of the non-reduced factor.
$$\SN = \bigcup_{d=1}^{\lfloor r/2 \rfloor} \SN^d$$
where
$$\SN^d = \left \{(x^d+a_1 x^{d-1}+ \ldots + a_d)^2(x^{r-2d}-2a_1 x^{r-2d-1}+b_2 x^{r-2d-2}+ \ldots + b_{r-2d}) \right \}$$
for $d<r/2$ and, if $r$ is even,
$$\SN^{r/2}=\left \{(x^{r/2}+a_2x^{r/2-2}+\ldots+a_{r/2})^2 \right \}$$
with $a_j,b_j \in H^0(K^jD^{j-1})$.

\begin{lemma}
\label{lemma:preserveR1}
Suppose that $r\ge 3$ and let $f:W\to W$ be a map such that $f(\SN)=\SN$. Then it preserves $\SN^1\subset \SN$.
\end{lemma}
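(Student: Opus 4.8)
The plan is to single out $\SN^1$ as the unique irreducible component of $\SN$ of maximal dimension, since this is an intrinsic property that any algebraic automorphism of $W$ carrying $\SN$ to itself must respect. As established above (Lemma \ref{lemma:fBlockStructure}), $f$ is an algebraic automorphism of the affine space $W$; because $f(\SN)=\SN$, it induces a dimension-preserving bijection on the set of irreducible components of $\SN$. So it suffices to prove that $\SN^1$ is the only top-dimensional component. Each $\SN^d$ is irreducible, being the image of the polynomial parametrization $\Psi_d$ sending $(a_1,\dots,a_d,b_2,\dots,b_{r-2d})$, with $a_j,b_j\in H^0(K^jD^{j-1})$, to the coefficient vector of
\[
(t^d+a_1t^{d-1}+\cdots+a_d)^2\,(t^{r-2d}-2a_1t^{r-2d-1}+b_2t^{r-2d-2}+\cdots+b_{r-2d})
\]
(and likewise for the perfect square in the even case $d=r/2$, where tracelessness forces $a_1=0$). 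Hence the irreducible components of $\SN=\bigcup_d\SN^d$ form a subset of $\{\SN^1,\dots,\SN^{\lfloor r/2\rfloor}\}$.

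Next I would compute $S(d):=\dim\SN^d$. The map $\Psi_d$ is generically injective: for a generic member the monic factor $q=t^d+a_1t^{d-1}+\cdots+a_d$ is squarefree and coprime to the complementary factor, so $q$ is recovered intrinsically from the curve as its ``double part''; this determines $(a_\bullet)$, and then $(b_\bullet)$ as the coefficients of $p/q^2$. Therefore $\dim\SN^d$ equals the number of free parameters, namely $S(d)=\sum_{j=1}^d d_j+\sum_{j=2}^{r-2d}d_j$ for $d<r/2$, where $d_j:=h^0(K^jD^{j-1})$, while $S(r/2)=\sum_{j=2}^{r/2}d_j$ in the even case.

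The heart of the argument, and the step I expect to need the most care, is the strict inequality $S(1)>S(d)$ for all $d\ge 2$. Since the $d_1=g$ term cancels, one computes $S(1)-S(d)=\sum_{j=d+1}^{r-2}d_j-\sum_{j=2}^{r-2d}d_j$ for $2\le d<r/2$, and $S(1)-S(r/2)=g+\sum_{j=r/2+1}^{r-2}d_j$ in the even case. Because $\deg(K(D))=2g-2+|D|>0$, the sequence $(d_j)_{j\ge 1}$ is strictly increasing and positive; applying the index shift $j\mapsto j+(d-1)$, which embeds $\{2,\dots,r-2d\}$ into $\{d+1,\dots,r-d-1\}\subsetneq\{d+1,\dots,r-2\}$, bounds the second sum strictly below the first once $d\ge 2$ (the surplus indices $r-d,\dots,r-2$ being nonempty and contributing positive terms), and the even case is immediate from its formula. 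Thus $\SN^1$ is the unique component of largest dimension; for $r=3$ the claim is vacuous since $\SN=\SN^1$. As $f$ permutes the components preserving dimension, it must fix the unique top-dimensional one, giving $f(\SN^1)=\SN^1$.
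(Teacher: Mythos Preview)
Your proposal is correct and follows essentially the same approach as the paper: both identify $\SN^1$ as the unique irreducible component of $\SN$ of maximal dimension, compute $\dim\SN^d$ as the sum $\sum_{j=1}^d d_j+\sum_{j=2}^{r-2d}d_j$ (resp.\ $\sum_{j=2}^{r/2}d_j$), and conclude by an index-shift comparison of the $d_j=h^0(K^jD^{j-1})$. The paper bounds the first sum by $d_1+\sum_{j=r-d}^{r-2}d_j$ whereas you shift the second sum, but these are equivalent reorganizations of the same inequality; your added remarks on irreducibility and generic injectivity of the parametrization just make explicit what the paper takes for granted.
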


\begin{proof}
We will show that the irreducible component $\SN^1\subset \SN$ can be identified as the unique irreducible component with the highest dimension. Generically the polynomials $p(x)\in \SN^d$ admit a single decomposition as a product $p(x)=p_1(x)^2p_2(x)$ as above. Therefore, using Riemann-Roch theorem, the dimension of $\SN^d$ equals
$$\dim(\SN^d)=\sum_{j=1}^d h^0(K^jD^{j-1}) + \sum_{j=2}^{r-2d} h^0(K^jD^{j-1})$$
for $d<r/2$ and, if $r>2$ is even, then
$$\dim(\SN^{r/2})=\sum_{j=2}^{r/2} h^0(K^jD^{j-1})$$
Observe that for every $d$ with $1<d< r/2$
\begin{multline*}
\dim(\SN^d)=\sum_{j=1}^d h^0(K^jD^{j-1}) + \sum_{j=2}^{r-2d} h^0(K^jD^{j-1})\le\\
h^0(K)+\sum_{j=r-d}^{r-2} h^0(K^jD^{j-1})+\sum_{j=2}^{r-2d} h^0(K^jD^{j-1})< \sum_{j=1}^{r-2} h^0(K^jD^{j-1}) = \dim(\SN^1)
\end{multline*} 
and, if $r>2$ is even then $r/2\le r-2$, so clearly
$$\dim(\SN^{r/2})=\sum_{j=2}^{r/2} h^0(K^jD^{j-1}) \le \sum_{j=2}^{r-2} h^0(K^jD^{j-1})<\sum_{j=1}^{r-2}h^0(K^jD^{j-1})=\dim(\SN^1)$$
so $\SN^1$ is the irreducible component of $\SN$ of maximum dimension, and it is the only component with such dimension. Therefore, $f(\SN^1)=\SN^1$.
\end{proof}

For each $a\in H^0(K)$, let
$$\SN^1(a)=\left \{(x-a)^2(x^{r-2}+2ax^{r-3}+b_2x^{r-4}+\ldots +b_{r-2})\right \}$$
where $b_j\in H^0(K^jD^{j-1})$. Then, by definition
$$\SN^1=\bigcup_{a\in H^0(K)} \SN^1(a)$$
We can analyze the geometry of $\SN^1$ in terms of $\SN^1(a)$. Let us consider the following intersection variety constructed as a disjoint union of the slices $\SN^1(a)$
$$\SI=\coprod_{a\in H^0(K)} \{a\}\times \SN^1(a) \subsetneq H^0(K)\times W$$
It is clearly the preimage of $0$ under the map
\begin{eqnarray*}
\xymatrixrowsep{0.05pc}
\xymatrixcolsep{0.3pc}
\xymatrix{
F&:&H^0(K)\times W \ar[rrrr] &&&& H^0(K^{r-1}D^{r-2})\times H^0(K^rD^{r-1})\\
&& (a,s_2,\ldots,s_r) \ar@{|->}[rrrr] &&&& {\left ( \begin{array}{c}
ra^{r-1} +\sum_{k=2}^{r-1} (r-k) s_ka^{r-k-1}\\
a^r+\sum_{k=2}^r s_ka^{r-k}
\end{array} \right)}
}
\end{eqnarray*}

\begin{lemma}
\label{lemma:sectionsPowerIndependent}
There exist a basis $\{w_i\}$ of $H^0(K)$ such that the sections $w_i^r \in H^0(K^rD^{r-1})$ are linearly independent.
\end{lemma}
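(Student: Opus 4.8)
The plan is to work inside the natural inclusion $H^0(K^r)\hookrightarrow H^0(K^rD^{r-1})$ coming from the effective divisor $(r-1)D$. Since any linear relation among the $w_i^r$ in $H^0(K^rD^{r-1})$ restricts to the same relation in $H^0(K^r)$, it suffices to produce a basis $\{w_i\}$ of $H^0(K)$ whose $r$-th powers are already linearly independent in $H^0(K^r)$.

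The first point I would establish is that the linear span
$$V_r=\langle\, w^r : w\in H^0(K)\,\rangle \subseteq H^0(K^r)$$
has dimension at least $g=\dim H^0(K)$. To see this, fix a nonzero $w_0\in H^0(K)$. Multiplication by $w_0^{r-1}$ gives an injective linear map $H^0(K)\hookrightarrow H^0(K^r)$, injective because $w_0^{r-1}$ is a nonzero section and $H^0(K^r)$ is torsion free, so its image has dimension $g$. On the other hand, working in characteristic zero the polarization identity expresses each product $w_0^{r-1}w$ as a $\CC$-linear combination of pure powers $(w_0+t\,w)^r$; every such summand lies in $V_r$, so $w_0^{r-1}w\in V_r$ for all $w$. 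Hence $w_0^{r-1}\cdot H^0(K)\subseteq V_r$ and $\dim V_r\ge g$. (Equivalently, one may argue projectively: the power map $[w]\mapsto[w^r]$ is injective on $\PP(H^0(K))$, so its image is a $(g-1)$-dimensional variety whose linear span has dimension at least $g$.) As $V_r$ is spanned by $r$-th powers, I may then select $w_1,\dots,w_g\in H^0(K)$ with $w_1^r,\dots,w_g^r$ linearly independent.

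These forms need not themselves be a basis of $H^0(K)$, since the power map is nonlinear and independence of the $w_i^r$ does not force independence of the $w_i$. To obtain both conditions at once I would run a genericity argument. Consider the polynomial map
$$P\colon H^0(K)^{\oplus g}\longrightarrow \textstyle\bigwedge^g H^0(K^r),\qquad P(w_1,\dots,w_g)=w_1^r\wedge\cdots\wedge w_g^r.$$
The previous step exhibits a point where $P\neq 0$, so its nonvanishing locus is a nonempty Zariski-open subset of the irreducible affine space $H^0(K)^{\oplus g}$. The locus $B$ of ordered bases of $H^0(K)$ is likewise a nonempty Zariski-open subset, being the complement of a determinant hypersurface. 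Two nonempty open subsets of an irreducible variety meet, so $B\cap\{P\neq 0\}\neq\emptyset$; any point of this intersection is a basis $\{w_i\}$ of $H^0(K)$ whose $r$-th powers are linearly independent.

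The only genuinely delicate step is this last reconciliation: the two conditions, being a basis and having independent $r$-th powers, are cut out by unrelated polynomials, so they must be combined through irreducibility rather than deduced from one another. Everything then rests on the dimension bound $\dim V_r\ge g$, which is exactly what guarantees the second condition is satisfiable at all.
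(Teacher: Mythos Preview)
Your proof is correct and takes a genuinely different route from the paper's. The paper argues by contradiction: assuming no basis has independent $r$-th powers, it lets $m<g$ be the maximal rank achieved by the powers of any basis, shows that the whole image of the power map $H^0(K)\to H^0(K^rD^{r-1})$ then lands in an $m$-dimensional subspace, and finally uses upper semicontinuity of fibre dimension to produce a nonzero $w$ with $w^r=0$, a contradiction. You instead prove directly that the linear span $V_r$ of all $r$-th powers has dimension at least $g$, via the polarisation identity (or the injectivity of the projective power map), and then run a Zariski-open genericity argument to intersect ``independent powers'' with ``basis''. Your approach isolates the essential content --- namely $\dim V_r\ge g$ --- as an explicit intermediate statement and reaches it by a classical identity, whereas the paper keeps this implicit inside a contradiction and avoids invoking polarisation. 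Both arguments require characteristic zero; yours makes the dependence visible, the paper's hides it in the injectivity of $w\mapsto w^r$.
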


\begin{proof}
Assume that the lemma is false. Then let us prove that the image of $H^0(K)$ under the algebraic map
$$H^0(K) \stackrel{(\cdot)^r}{\longrightarrow} H^0(K^rD^{r-1})$$
is contained in some linear subspace $V\subset H^0(K^rD^{r-1})$ of dimension at most $g-1$. Let $m<g$ be the maximum rank of the images of a basis $\{w_1,\ldots,w_g\}\subset H^0(K)$. Then there is some basis $\{w_1,\ldots,w_g\}$ such that for each $i>m$, $w_i^r$ belongs to the $m$-dimensional linear space
$$V=\op{Span}(\{w_j^r\}_{j\le m})\subset H^0(K^rD^{r-1})$$
In particular, as $\{w_j^r\}_{j\le m}$ generate a subspace of the maximum dimension, the images of the vectors of any other basis containing $\{w_j\}_{j\le m}$, must be contained in $V$. In particular, if we pick any $w_g'\in U=H^0(K)\backslash \op{Span}(\{w_j\}_{j<g})$, then $\{w_1,\ldots,w_{g-1},w_g'\}$ is a basis of $H^0(K)$ and we get that $(w_g')^r\in V$. Therefore, the image of the open subset $U=H^0(K)\backslash \op{Span}(\{w_j\}_{j<g}) \subset H^0(K)$ is contained in $V$. As $U$ is dense and the map $H^0(K)\to H^0(K^rD^{r-1})$ is continuous, the whole image of the map must be contained in $V$. Then, by upper semicontinuity of the dimension of the fibers, all the fibers of the algebraic map $H^0(K) \to V$ must have dimension at least $1$. In particular, there must exist a nonzero $w\in H^0(K)$ such that $w^r=0$, but this is impossible.
\end{proof}

Let $\pi_1:\SI\to H^0(K)$ and $\pi_2:\SI\to \SN^1$ be the canonical projections.

\begin{lemma}
\label{lemma:projC1finite}
The map $\pi_2:\SI\twoheadrightarrow \SN^1$ sending $(a,s)\mapsto s$ is a finite map.
\end{lemma}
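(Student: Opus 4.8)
The plan is to realise $\pi_2$ as a proper morphism with finite fibres, and then to invoke that a proper quasi-finite morphism is finite. Recall from the construction preceding the statement that $\SI=F^{-1}(0)\subset H^0(K)\times W$ is the closed subscheme cut out by
$$F_1(a,s)=ra^{r-1}+\sum_{k=2}^{r-1}(r-k)s_k a^{r-k-1},\qquad F_2(a,s)=a^r+\sum_{k=2}^{r}s_k a^{r-k},$$
which, for fixed $s$, are polynomial in the section $a$ of degrees $r-1$ and $r$ respectively (here $F_1$ lands in $H^0(K^{r-1}D^{r-2})$ and $F_2$ in $H^0(K^rD^{r-1})$). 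Since $\pi_2$ is the restriction of the projection $H^0(K)\times W\to W$, the only obstruction to properness and to finiteness of the fibres is the possibility of $a$ escaping to infinity inside the vector space $H^0(K)$; the whole argument consists in ruling this out.

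First I would compactify the $H^0(K)$-factor to the projective space $\overline{H^0(K)}=\PP(H^0(K)\oplus\CC)$, with affine chart $\{t\neq 0\}\cong H^0(K)$ and hyperplane at infinity $H_\infty=\{t=0\}\cong\PP(H^0(K))$. Homogenising each coordinate of $F_1$ and $F_2$ in the variables $(a,t)$ produces forms $\tilde F_1,\tilde F_2$, homogeneous of degrees $r-1$ and $r$, whose restrictions to $H_\infty$ are the leading parts $ra^{r-1}$ and $a^r$. The key step is the observation that the common zero locus $\{\tilde F_1=\tilde F_2=0\}$ does not meet $H_\infty\times W$: any such point would satisfy $a^r=0$ at $t=0$, but a nonzero section $a\in H^0(K)$ is nonzero at a generic point of $X$, so $a^r\neq 0$ in $H^0(K^r)\subset H^0(K^rD^{r-1})$, while $a=0$ is not a point of $\PP(H^0(K))$. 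This non-vanishing of $a\mapsto a^r$ is the qualitative content underlying Lemma \ref{lemma:sectionsPowerIndependent}. Consequently the Zariski closure of $\SI$ inside $\overline{H^0(K)}\times W$ avoids $H_\infty\times W$, hence lies in the affine chart and equals $\SI$; in particular $\SI$ is closed in $\overline{H^0(K)}\times W$.

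Properness then follows at once: the projection $\overline{H^0(K)}\times W\to W$ is proper because $\overline{H^0(K)}$ is projective, and its restriction to the closed subscheme $\SI$ is proper with image $\SN^1$ (which is closed, being the image of a proper morphism), so the induced surjection $\pi_2:\SI\to\SN^1$ is proper. For the fibres, fix $s\in\SN^1$; then $\pi_2^{-1}(s)$ is the closed subscheme of $\overline{H^0(K)}$ defined by $\tilde F_1(\cdot,s)=\tilde F_2(\cdot,s)=0$, and by the previous paragraph it is disjoint from $H_\infty$. If $\pi_2^{-1}(s)$ had a component of dimension at least $1$, that component would be a positive-dimensional closed subvariety of the projective space $\overline{H^0(K)}$ and would therefore meet the hypersurface $H_\infty$, a contradiction. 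Hence every fibre of $\pi_2$ is finite, and a proper quasi-finite morphism is finite, so $\pi_2$ is finite. The main obstacle is precisely the behaviour at infinity analysed in the second paragraph: one must check that the leading forms $ra^{r-1}$ and $a^r$ of $F_1,F_2$ have no common zero on $\PP(H^0(K))$, which reduces to the integrality statement $a\neq 0\Rightarrow a^r\neq 0$.
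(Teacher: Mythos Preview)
Your proof is correct and takes a genuinely different route from the paper. The paper argues algebraically: it writes $\CC[\SI]\cong\CC[W][t_1,\ldots,t_g]/(I_{\SN^1}+I)$ in coordinates coming from a basis $\{w_i\}$ of $H^0(K)$ as in Lemma~\ref{lemma:sectionsPowerIndependent}, and then uses the linear independence of the $w_i^r$ to extract from the degree-$r$ equation an explicit monic relation $t_i^r+(\text{lower order terms})\in I$ for each $i$, thereby exhibiting $\CC[\SI]$ as a $\CC[\SN^1]$-module generated by the monomials $t_1^{j_1}\cdots t_g^{j_g}$ with $j_i<r$. Your argument is geometric: you compactify the $H^0(K)$-factor, observe that the homogenised equations have leading part $a^r$ with no zero on $\PP(H^0(K))$, and conclude properness plus zero-dimensional fibres. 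Your route is cleaner and in fact uses strictly less than the paper: you only need the injectivity of $a\mapsto a^r$ on nonzero sections (immediate since $X$ is integral), whereas the paper needs the full linear independence statement of Lemma~\ref{lemma:sectionsPowerIndependent} to isolate an integral equation for each coordinate $t_i$ separately. The paper's approach has the mild advantage of giving an explicit bound on the degree of the extension, though this is not used elsewhere.
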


\begin{proof}
The fibers of the map are clearly finite, so it is only necessary to prove that $\CC[\SI]$ is a finite algebra over $\CC[\SN^1]$. We know that $\SI\subset H^0(K)\times W$ is defined by the equations $F(a,\overline{s})=0$. Let $I_{\SN^1}$ be the ideal defining $\SN^1\subset W$ and let $\{w_i\}_{i=1}^g$ be a basis of $H^0(K)$ as in the Lemma \ref{lemma:sectionsPowerIndependent}. Then it is straightforward to check that
$$\CC[\SI]\cong \frac{\CC[W][t_1,\ldots,t_g]}{I_{\SN^1}+I}$$
where $I\subset \CC[W][t_1,\ldots,t_g]$ is the ideal generated by each of the components of the vector
$$\left ( r \left (\sum_{i=1}^g t_iw_i \right)^{r-1}-\sum_{k=2}^{r-1} (r-k) s_k \left (\sum_{i=1}^g t_iw_i \right)^{r-k-1}, \\
\left (\sum_{i=1}^g t_iw_i \right)^r-\sum_{k=2}^r s_k \left (\sum_{i=1}^g t_iw_i \right)^{r-k}\right)$$
in any basis of $H^0(K^{r-1}D^{r-2}) \oplus H^0(K^rD^{r-1})$ extending $\{w_i^r\}$.

Therefore, in order to prove that $\CC[\SI]$ is a finitely generated $\CC[\SN^1]=\CC[W]/I_{\SN^1}$-module, it is enough to find a relation in $I$ between $t_i^r$ and lower order terms $t_j^k$ with $k<r$ and coefficients in $\CC[\SN^1]$. Observe that
$$\left( \sum_{i=1}^g t_iw_i \right)^r-\sum_{k=2}^r s_k \left (\sum_{i=1}^g t_iw_i \right)^{r-k}=\sum_{i=1}^g t_i^r w_i^r + O(\{t_j^{r-1}\})$$
As $w_i^r$ are linearly independent, taking the $w_i^r$ coordinate of this vector we obtain an expression of the form $t_i^r+O(\{t_j^{r-1}\})$ which, by construction, has coefficients in $\CC[\SN^1]$ and belongs to $I$ for every $i=1,\ldots,g$. Therefore, $\CC[\SI]$ is generated as a $\CC[\SN^1]$-module by $\{t_1^{j_1}\cdots t_g^{j_g} | j_i<r\}$.
\end{proof}

\begin{lemma}
\label{lemma:projC1smooth}
There is an open nonempty set $\SU^{\op{sm}}\subset \SN^1$ such that the differential of the map $\pi_2: \pi_2^{-1}(\SU^{\op{sm}}) \to \SN^1$ is invertible at every point.
\end{lemma}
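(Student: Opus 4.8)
The plan is to realise $\SI$ as a smooth affine space, compute the differential of $\pi_2$ explicitly in the resulting coordinates, and identify the degeneracy locus of $d\pi_2$ with the (proper) locus of triple roots. First I would parameterise $\SI$ by $(a,b_2,\dots,b_{r-2})\in H^0(K)\oplus\bigoplus_{j=2}^{r-2}H^0(K^jD^{j-1})$, sending such a tuple to the pair $(a,s)$, where $s$ is the coefficient vector of
$$P(x)=(x-a)^2q(x),\qquad q(x)=x^{r-2}+2a\,x^{r-3}+\sum_{j=2}^{r-2}b_j\,x^{r-2-j}.$$
Dividing $P_s$ by $(x-a)^2$ recovers $(b_j)$ algebraically from $(a,s)\in\SI$, so this is an isomorphism; hence $\SI$ is smooth and irreducible of dimension $\dim\SN^1$ (the dimension count is exactly the one in the proof of Lemma~\ref{lemma:preserveR1}). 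Since $\pi_2$ is finite by Lemma~\ref{lemma:projC1finite} and surjects onto $\SN^1$ by construction, $\SN^1$ is irreducible with $\dim\SN^1=\dim\SI$; it therefore suffices to produce a dense open subset of $\SN^1$ over which $d\pi_2$ is injective.

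Next I would compute the differential. In these coordinates $d\pi_2(\dot a,\dot b)$ is the coefficient vector of $\dot P=\partial_aP\,\dot a+\sum_j\partial_{b_j}P\,\dot b_j$, and a direct differentiation gives $\partial_{b_j}P=(x-a)^2x^{r-2-j}$ together with $\partial_aP=-2(x-a)\,\tilde q(x)$, where $\tilde q(x)=3a\,x^{r-3}+\sum_{j=2}^{r-2}b_j\,x^{r-2-j}$ satisfies $\tilde q(a)=q(a)$. Thus $\dot P=0$ reads, after cancelling the non-zero-divisor $(x-a)$,
$$2\,\tilde q(x)\,\dot a=(x-a)\sum_{j=2}^{r-2}x^{r-2-j}\,\dot b_j.$$
Restricting this identity to $x=a$ yields $q(a)\,\dot a=0$; and if $\dot a=0$ the right-hand side forces every $\dot b_j=0$ by linear independence of the powers $x^{r-2-j}$. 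Hence $\ker d\pi_2\neq0$ precisely when $q(a)=0$, that is, when $a$ is a root of $P$ of multiplicity at least $3$.

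Finally I would set $T=\{(a,b)\in\SI : q(a)=0\}$. As $\SI$ is irreducible and $T\neq\SI$ (a generic member of $\SN^1$ has an ordinary, multiplicity-two double root), $T$ is a proper closed subset, and since $\pi_2$ is finite its image $\pi_2(T)$ is a proper closed subset of $\SN^1$. Put $\SU^{\op{sm}}=(\SN^1)^{\mathrm{sm}}\setminus\pi_2(T)$, the smooth locus of $\SN^1$ with $\pi_2(T)$ removed; this is nonempty and open because both pieces are dense open in the irreducible $\SN^1$. Over $\SU^{\op{sm}}$ every point of every fibre of $\pi_2$ has $q(a)\neq0$, so $d\pi_2$ is injective there; and since the target point $s$ is smooth, $\dim T_s\SN^1=\dim\SN^1=\dim\SI=\dim T_{(a,b)}\SI$, so an injective linear map between spaces of equal dimension is an isomorphism.

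The main obstacle is the bookkeeping required to make the ``polynomial in $x$'' manipulations rigorous at the level of global sections: one must read $P$ and $q$ inside the graded object attached to the total space $\op{Tot}(K(D))$ (with $x$ the tautological section), verify that $x-a$ is a non-zero-divisor there, and give $q(a)\neq0$ the precise meaning of nonvanishing of the section obtained by restriction along $a$. Once this framework is fixed the computation above is formal, and the only remaining point needing care is the nonemptiness of $\SU^{\op{sm}}$, which I reduce to the irreducibility of $\SI$ established in the first step.
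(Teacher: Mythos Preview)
Your proof is correct and reaches the same conclusion as the paper---the degeneracy locus of $d\pi_2$ is the triple-root locus, which is a proper closed subset---but your technical setup differs. The paper works extrinsically: it computes $T_{(a,s)}\SI$ as $\ker dF_{(a,s)}$ (for the defining map $F$ introduced just before Lemma~\ref{lemma:projC1finite}) and checks directly when a nonzero vector of the form $(\alpha,0)\in H^0(K)\oplus W$ lies in this kernel; this happens iff the second derivative $r(r-1)a^{r-2}+\sum_{k=2}^{r-2}(r-k)(r-k-1)s_k a^{r-k-2}$ vanishes. You instead parameterize $\SI$ intrinsically by $(a,b_2,\dots,b_{r-2})$, which gives smoothness and irreducibility of $\SI$ for free and lets you compute $d\pi_2$ by differentiating $P=(x-a)^2 q(x)$ explicitly. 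Your criterion $q(a)=0$ and the paper's $P_s''(a)=0$ are of course equivalent, and the closing dimension count showing this locus is proper is identical in spirit.

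Two small remarks. First, for the lemma as stated you only use the implication ``$q(a)\neq 0 \Rightarrow \ker d\pi_2=0$'', so the ``precisely when'' is stronger than needed (and you do not actually verify the converse). Second, your care in intersecting with the smooth locus of $\SN^1$ so that ``injective $\Rightarrow$ invertible'' is justified is a refinement the paper does not make explicit; in practice only injectivity of $d\pi_2$ is used downstream (in Lemma~\ref{lemma:R1andIbirational}, via \cite[Theorem~14.9 and Corollary~14.10]{Harris13}), so the paper's looser phrasing is harmless there.
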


\begin{proof}
The differential of the map $\pi_2$ is invertible over the points $s\in \SN^1$ such that $H^0(K)$ is transverse to $T_{(a,s)}\SI\subset H^0(K)\oplus W$. Let us compute the tangent space to $\SI$. By construction it is the kernel of the differential of $F$
$$dF:H^0(K)\oplus W \longrightarrow H^0(K^{r-1}D^{r-2}) \oplus H^0(K^rD^{r-1})$$
It is straightforward to compute the differential at a point $(a,s)$ from the equations of $F$. If $(\alpha,\sigma_2,\ldots,\sigma_r)\in T_{(a,s)}H^0(K)\times W \cong H^0(K)\oplus W$, then
\begin{multline*}
dF(\alpha,\sigma_2,\ldots,\sigma_r)=\\
\left ( \begin{array}{c}
( ra^{r-1} +\sum_{k=2}^{r-1} (r-k) \sigma_ka^{r-k-1} ) + ( r(r-1)a^{r-2} +\sum_{k=2}^{r-2} (r-k)(r-k-1) s_ka^{r-k-2}) \alpha\\
( a^r+\sum_{k=2}^r \sigma_ka^{r-k} )+( ra^{r-1}+\sum_{k=2}^{r-1} (r-k)s_ka^{r-k-1}) \alpha
\end{array} \right)
\end{multline*}
As $(a,s)\in \SI=F^{-1}(0)$ the last summand in the second component is zero, so the equations of $T_{(a,s)}\SI$ become
$$\left\{ \begin{array}{c}
ra^{r-1} +\sum_{k=2}^{r-1} (r-k) \sigma_ka^{r-k-1} ) + ( r(r-1)a^{r-2} +\sum_{k=2}^{r-2} (r-k)(r-k-1) s_ka^{r-k-2}) \alpha=0\\
a^r+\sum_{k=2}^r \sigma_ka^{r-k} =0
\end{array} \right.$$
Therefore, $H^0(K)$ fails to be transverse to $T_{(a,s)}\SI$ if and only if
$$ r(r-1)a^{r-2} +\sum_{k=2}^{r-2} (r-k)(r-k-1) s_ka^{r-k-2}=0$$
This, together with the assumption that $F(a,s)=0$, implies that the polynomial corresponding to $s$ admits a decomposition
$$p_s(x)=(x-a)^3q(x)$$
for some $q$. Repeating the dimension counting argument in Lemma \ref{lemma:preserveR1} we obtain that the set of point admitting such decomposition has positive codimension in $\SN^1$, so its complement $\SU^{\op{sm}}\subset \SN^1$ is an open nonempty set. For $r=2$, $\SU^{\op{sm}}=\SN^1$, for $r=3$, $\SU^{\op{sm}}=\SN^1\backslash \{0\}$ and for $r>3$
$$\dim(\SN^1\backslash \SU^{\op{sm}})= h^0(K)+\sum_{j=2}^{r-3} h^0(K^jD^{j-1})<\sum_{j=1}^{r-2} h^0(K^jD^{j-1})=\dim(\SN^1)$$
\end{proof}

\begin{lemma}
\label{lemma:R1andIbirational}
The projection $\pi_2: \SI \to \SN^1$ is a $\CC^*$-equivariant birational map.
\end{lemma}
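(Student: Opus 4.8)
The plan is to dispatch $\CC^*$-equivariance by a direct weight computation and then to obtain birationality from the finiteness of $\pi_2$ (Lemma \ref{lemma:projC1finite}) and the invertibility of its differential (Lemma \ref{lemma:projC1smooth}) via a count of the generic fibre.

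First I would fix the $\CC^*$-action that has weight $k$ on each block $W_k\subset W$ and weight $1$ on $H^0(K)$, acting diagonally on $H^0(K)\times W$. Since the two components of the map $F$ cutting out $\SI=F^{-1}(0)$ are $p_s'(a)$ and $p_s(a)$, a short check shows they are weighted-homogeneous of degrees $r-1$ and $r$; hence $F$ is $\CC^*$-equivariant and $\SI$ is $\CC^*$-invariant. As $\pi_2$ is the restriction of the linear projection $H^0(K)\times W\to W$, it is then automatically $\CC^*$-equivariant. At the level of polynomials this reflects the transformation $p_{\lambda\cdot s}(x)=\lambda^r p_s(x/\lambda)$, under which $(x-a)^2q(x)$ becomes $(x-\lambda a)^2\,\lambda^{r-2}q(x/\lambda)$, so that $\lambda\cdot\SN^1(a)=\SN^1(\lambda a)$.

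For birationality I would first record irreducibility and equidimensionality. The variety $\SI$ is the image of the irreducible affine space $H^0(K)\times\bigoplus_{j=2}^{r-2}H^0(K^jD^{j-1})$ under the parametrization sending $(a,(b_j))$ to the coefficients of $(x-a)^2(x^{r-2}+2ax^{r-3}+b_2x^{r-4}+\cdots)$, hence $\SI$ is irreducible; $\SN^1$ is irreducible, and since $\pi_2$ is finite and surjective by Lemma \ref{lemma:projC1finite}, the two spaces have the same dimension. It therefore suffices to show that the generic fibre of $\pi_2$ is a single reduced point. Now $\pi_2^{-1}(s)=\{a\in H^0(K)\mid F(a,s)=0\}$ is exactly the set of common roots of $p_s$ and $p_s'$ over $\CC(X)$, i.e. the repeated factors of $p_s$. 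The key step is to produce a dense open $\SV\subset\SN^1$ on which $p_s$ factors as $(x-a_0)^2q$ with $q$ separable and $q(a_0)\neq 0$: then $a_0$ is the unique repeated factor and $\pi_2^{-1}(s)=\{(a_0,s)\}$ is a single point. Non-emptiness of $\SV$ follows from a dimension/discriminant count in the spirit of Lemma \ref{lemma:preserveR1} and Lemma \ref{lemma:projC1smooth}, the latter already cutting out the locus $\SU^{\op{sm}}$ free of triple roots, so that one may take $\SV\subset\SU^{\op{sm}}$.

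To conclude I would combine these facts: over $\SV\subset\SU^{\op{sm}}$ the differential of $\pi_2$ is invertible (Lemma \ref{lemma:projC1smooth}), so $\pi_2$ is unramified there and its one-point fibres are reduced; a finite dominant morphism of irreducible varieties of equal dimension whose generic fibre is a single reduced point has degree one, hence is birational. The main obstacle is precisely the genericity statement defining $\SV$: one must verify, working over the function field $\CC(X)$ rather than over $\CC$, that a generic member of $\SN^1$ has its degree-one repeated factor coprime to the complementary factor and that the complementary factor is itself separable, so that the repeated factor is genuinely a unique section in $H^0(K)$. This is exactly the point where the discriminant-type arguments already used in the preceding lemmas must be invoked.
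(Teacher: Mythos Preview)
Your proposal is correct and follows essentially the same route as the paper: use finiteness (Lemma \ref{lemma:projC1finite}), invertibility of the differential on $\SU^{\op{sm}}$ (Lemma \ref{lemma:projC1smooth}), and a dimension count showing the generic fibre is a single point to conclude birationality. The paper makes the last step concrete by defining $\SU^{\op{bi}}\subset\SN^1$ as the complement of the locus of polynomials admitting a factorization $(x-a)^2(x-b)^2q(x)$, computes $\dim(\SN^1\setminus\SU^{\op{bi}})<\dim(\SN^1)$ directly by parameter counting, and then applies \cite[Theorem 14.9 and Corollary 14.10]{Harris13} on $\SU^{\op{sm}}\cap\SU^{\op{bi}}$; your $\SV$ is exactly this intersection, so your ``main obstacle'' dissolves into that elementary count, and the detour through $\CC(X)$ and separability is unnecessary.
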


\begin{proof}
The space of points in $\SN^1$ admitting at least a decomposition of the form
$$p(x)=(x-a)^2q(x)$$
for at least two different sections $a\in H^0$ corresponds to the points in $\SN^1$ admitting a decomposition of the form
$$p(x)=(x-a)^2(x-b)^2 q(x)$$
For some $a,b$. Again, repeating the dimension argument used in Lemma \ref{lemma:preserveR1}, we obtain that the dimension of this subset is less than the dimension of $\SN^1$. Let $\SU^{\op{bi}}$ denote its complement in $\SN^1$. Then for $r<4$, $\SU^{\op{bi}}=\SN^1$. For $r=4$
$$\dim(\SN^1\backslash \SU^{\op{bi}})= h^0(K)< h^0(K)+h^0(K^2D^1)=\dim(\SN^1)$$
and for $r>4$
\begin{multline*}
\dim(\SN^1\backslash \SU^{\op{bi}})= 2h^0(K)+\sum_{j=2}^{r-4} h^0(K^jD^{j-1})\\
=\sum_{j=1}^{r-3}h^0(K^jD^{j-1}) - (h^0(K^{r-3}D^{r-4})-h^0(K)) <\sum_{j=1}^{r-2} h^0(K^jD^{j-1})=\dim(\SN^1)
\end{multline*}
Therefore, there exists an open nonempty subset $\SU^{\op{bi}}\subset \SN^1$ consisting on points $s$ whose preimage $\pi_2^{-1}(s)$ is a single point. On the other hand, by Lemma \ref{lemma:projC1smooth}, there exist a subset $\SU^{\op{sm}}$ such that the differential of the map $\pi_2|_{\SU^{\op{sm}}}$ is invertible. By Lemma \ref{lemma:projC1finite}, we know that $\pi_2$ is a finite map, so restricting it to $\SU=\SU^{\op{bi}}\cap \SU^{\op{sm}}$, we obtain a finite bijective map with invertible differential. By \cite[Theorem 14.9 and Corollary 14.10]{Harris13}, $\pi_2|_{\pi_2^{-1}(\SU)} : \pi_2^{-1}(\SU) \to \SU$ is an isomorphism and, therefore, it induces a birational map between $\SI$ and $\SN^1$. 
\end{proof}

\begin{lemma}
\label{lemma:recoverLastProjections}
Suppose that $r\ge 3$. Let $f:W\to W$ be a $\CC^*$-equivariant isomorphism such that $f(\SN)=\SN$. Then there is a $\CC^*$-equivariant isomorphism $g:W_{\le (r-2)}\to W_{\le (r-2)}$ and linear maps $f_j:W_j\to W_j$ for $j=r-1,r$ such that the following diagrams commute
\begin{eqnarray*}
\xymatrixcolsep{3.2pc}
\xymatrix{
W \ar[r]^{f} \ar[d]_{\pi_{\le (r-2)}} & W \ar[d]^{\pi_{\le (r-2)}} & W \ar[r]^{f} \ar[d]_{\pi_{r-1}} & W \ar[d]^{\pi_{r-1}} & W \ar[r]^{f} \ar[d]_{\pi_r} & W \ar[d]^{\pi_r} \\
W_{\le (r-2)} \ar[r]^{g} & W_{\le (r-2)} & W_{r-1} \ar[r]^{f_{r-1}} & W_{r-1} & W_r \ar[r]^{f_r} & W_r
}
\end{eqnarray*}
\end{lemma}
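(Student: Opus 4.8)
The plan is to prove that, in the block decomposition of $f$ furnished by Lemma~\ref{lemma:fBlockStructure},
$$f(s_2,\dots,s_r)=\bigl(g(\overline s),\,A_{r-1}(s_{r-1})+g_{r-1}(\overline s),\,A_r(s_r)+g_r(\overline s)\bigr),\qquad \overline s=(s_2,\dots,s_{r-2}),$$
the two ``off-diagonal'' maps $g_{r-1}\colon W_{\le(r-2)}\to W_{r-1}$ and $g_r\colon W_{\le(r-2)}\to W_r$ vanish identically. Granting this, the first diagram is exactly the content of Lemma~\ref{lemma:fBlockStructure} with the isomorphism $g$, and setting $f_{r-1}=A_{r-1}$, $f_r=A_r$ yields the remaining two diagrams, since then $\pi_{r-1}\circ f=A_{r-1}\circ\pi_{r-1}$ and $\pi_r\circ f=A_r\circ\pi_r$. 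I note at the outset that $\CC^*$-equivariance alone already kills $g_{r-1}$ whenever $r-1$ is odd (no weight-$(r-1)$ monomial exists in the even weights $2,\dots,r-2$) and disposes of both maps entirely when $r=3$, where $W_{\le(r-2)}=0$; the substantial content is therefore the vanishing of $g_r$ (and of $g_{r-1}$ for large even $r-1$), for which the hypothesis $f(\SN)=\SN$ is needed.

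The reduction I would carry out first is to the fibre of $\SN^1$ over $0\in H^0(K)$. Using the explicit parametrization preceding Lemma~\ref{lemma:sectionsPowerIndependent}, the slice over $a=0$ is
$$\SN^1(0)=\{\,x^2\,(x^{r-2}+b_2x^{r-4}+\dots+b_{r-2})\,\}=\{\,s\in W:\ s_{r-1}=s_r=0\,\}=W_{\le(r-2)},$$
a coordinate subspace lying inside $\SN^1$. For $s=(\overline s,0,0)\in W_{\le(r-2)}$ the linearity of $A_{r-1},A_r$ gives $f(s)=(g(\overline s),g_{r-1}(\overline s),g_r(\overline s))$, so it suffices to show $f(\SN^1(0))\subseteq\SN^1(0)$: this forces $g_{r-1}(\overline s)=g_r(\overline s)=0$ for generic, hence (by weight-homogeneity and polynomiality) every $\overline s\in W_{\le(r-2)}$.

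The heart of the argument is to prove that the double-root section $a$ is transported linearly by $f$. By Lemma~\ref{lemma:preserveR1} we have $f(\SN^1)=\SN^1$, so the $\CC^*$-equivariant birational isomorphism $\pi_2\colon\SI\to\SN^1$ of Lemma~\ref{lemma:R1andIbirational} conjugates $f|_{\SN^1}$ into a $\CC^*$-equivariant birational self-map $\tilde f=\pi_2^{-1}\circ f\circ\pi_2$ of $\SI\cong H^0(K)\oplus W_{\le(r-2)}$, on which the weights are $1,2,\dots,r-2$. The composite $a\circ\tilde f=\pi_1\circ\tilde f$ is a weight-one rational function; it is regular on $\pi_2^{-1}(\SU\cap f^{-1}(\SU))$, where $\SU\subseteq\SN^1$ is the open locus over which $\pi_2$ is an isomorphism, and since $\pi_2$ is finite (Lemma~\ref{lemma:projC1finite}) while the complement of $\SU$ has codimension at least $2$ (the Riemann--Roch estimates in Lemmas~\ref{lemma:projC1smooth} and \ref{lemma:R1andIbirational}, together with the standing genus hypotheses), this domain of regularity has complement of codimension at least $2$ in the smooth affine variety $\SI$. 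By Hartogs' theorem $a\circ\tilde f$ extends to a regular, $\CC^*$-equivariant, weight-one function on $\SI$; as every weight-one polynomial on $H^0(K)\oplus W_{\le(r-2)}$ is a linear form in the weight-one coordinate $a$, this forces $a\circ\tilde f=A_1\circ\pi_1$ for a linear map $A_1\colon H^0(K)\to H^0(K)$ independent of the $b_j$. Running the same argument for $f^{-1}$ shows $A_1$ is invertible.

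Finally, $a(f(s))=A_1(a(s))$ gives the implication $a(s)=0\Rightarrow a(f(s))=0$, i.e. $f$ preserves $\{a=0\}=\SN^1(0)=W_{\le(r-2)}$, which is exactly the containment needed in the reduction; this completes the proof. The main obstacle is the third step: controlling the indeterminacy locus of $\tilde f$ precisely enough to invoke Hartogs on $\SI$, and verifying that weight-one regular functions on the weighted affine space $\SI$ are linear in $a$. Everything else is either a direct coordinate computation on $\SN^1$ or an immediate consequence of Lemma~\ref{lemma:fBlockStructure} and $\CC^*$-equivariance.
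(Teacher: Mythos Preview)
Your proof is correct and shares the same setup as the paper's (the block decomposition from Lemma~\ref{lemma:fBlockStructure}, the identification $W_{\le(r-2)}=\SN^1(0)$, the use of Lemma~\ref{lemma:preserveR1} and the birational isomorphism $\pi_2\colon\SI\to\SN^1$), but the way you extract the vanishing of $g_{r-1},g_r$ is genuinely different.

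The paper restricts attention to $W_{\le(r-2)}$: it defines the rational map $t=\pi_1\circ\pi_2^{-1}\circ f\circ\overline{g}^{-1}\colon W_{\le(r-2)}\dashrightarrow H^0(K)$, writes the explicit expressions for $\tilde{g}_{r-1},\tilde{g}_r$ in terms of $t$ via Ruffini's rule, and then rules out a nontrivial $t=\alpha/\beta$ by a divisibility argument (looking at $\beta^r\tilde{g}_r$ modulo $\beta$ forces $\beta\mid\alpha^r$, contradicting coprimality). You instead work on all of $\SI$, extend $a\circ\tilde{f}$ across the indeterminacy locus by Hartogs, and use the weight grading on the affine space $\SI\cong H^0(K)\times W_{\le(r-2)}$ to force linearity in $a$. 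Your route is cleaner and avoids the somewhat delicate treatment of vector-valued rational functions in the UFD argument; the price is that you need the codimension of $\SN^1\setminus\SU$ in $\SN^1$ to be at least $2$, not merely positive. The Riemann--Roch estimates in the proofs of Lemmas~\ref{lemma:projC1smooth} and~\ref{lemma:R1andIbirational} only explicitly assert positive codimension, but the actual codimensions they compute (e.g.\ $h^0(K^{r-2}D^{r-3})$ for $\SN^1\setminus\SU^{\op{sm}}$ when $r>3$, or $g$ when $r=3$) are indeed at least $2$ under the standing hypothesis $g\ge 2$, so your Hartogs step is justified---just be explicit that you are invoking a sharper bound than those lemmas formally state.
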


\begin{proof}
Taking into account the block decomposition in Lemma \ref{lemma:fBlockStructure}, it is enough to prove that the map $(g_{r-1},g_r): W_{\le (r-2)}\to W_{r-1}\oplus W_r$ is zero. By definition, $W_{\le (r-2)}=\SN^1(0)\subset \SN^1$, so Lemma \ref{lemma:preserveR1} implies that $f(W_{\le (r-2)})\subset \SN^1$. The dimension of $W_{\le (r-2)}$ is
$$\dim(W_{\le (r-2)})=\sum_{j=2}^{r-2} h^0(K^jD^{j-1})$$
so comparing it with the dimensions of $\SN^1\backslash \SU^{\op{sm}}$ and $\SN^1\backslash \SU^{\op{bi}}$ computed in Lemmas \ref{lemma:projC1smooth} and \ref{lemma:R1andIbirational}, we obtain that $\dim(\SN^1\backslash \SU^{\op{sm}})<\dim(W_{\le (r-2)})$ and $\dim(\SN^1\backslash \SU^{\op{bi}})<\dim(W_{\le (r-2)})$. Therefore, $\SU\cap f(W_{\le (r-2)})= \SU^{\op{sm}} \cap \SU^{\op{bi}} \cap f(W_{\le (r-2)})$ is an open dense subset of $f(W_{\le (r-2)})$.

Thus, applying Lemma \ref{lemma:R1andIbirational}, we obtain a $\CC^*$-equivariant rational map $f(W_{\le (r-2)})\dashrightarrow \SI$. On the other hand, let us consider the isomorphism $\overline{g}:W_{\le (r-2)}\to W_{\le (r-2)}$ given by the decomposition in blocks of $f:W\to W$ described in Lemma \ref{lemma:fBlockStructure}. Composing the rational map $f(W_{\le (r-2)})\dashrightarrow \SI$ with the canonical projection, $\pi_1:\SI \to H^0(K)$, and the map $\tilde{f}=f\circ \overline{g}^{-1}:W_{\le (r-2)}\to f(W_{\le (r-2)})$, we obtain a rational map $t: W_{\le (r-2)} \dashrightarrow H^0(K)$, satisfying the following property. Let $(\overline{s},s_{r-1},s_r)=(s_2,\ldots,s_r)\in f(W_{\le (r-2)})$ be a generic point. Then $t(\overline{s})\in H^0(K)$ is the only section such that
$$\left \{ \begin{array}{c}
t(\overline{s})^r+\sum_{j=2}^r s_j t(\overline{s})^{r-j}=0\\
rt(\overline{s})^{r-1}+\sum_{j=2}^{r-1} (r-j)s_j t(\overline{s})^{r-j-1} =0
\end{array} \right .$$
In particular, solving for $s_{r-1}$ and $s_r$ we obtain that
$$\left \{ \begin{array}{c}
s_{r-1}=-rt(\overline{s})^{r-1}-\sum_{j=2}^{r-2} (r-j)s_j t(\overline{s})^{r-j-1}\\
s_r=(r-1)t(\overline{s})^r+\sum_{j=2}^{r-2} (r-j-1) s_j t(\overline{s})^{r-j}
\end{array} \right .$$
On the other hand, as $(\overline{s},s_{r-1},s_r)\in f(W_{\le (r-2)})$, by the block decomposition we know that
$$\left \{ \begin{array}{c}
s_{r-1}=g_{r-1}\circ \overline{g}^{-1}(\overline{s}):=\tilde{g}_{r-1}(\overline{s})\\
s_r=g_r\circ \overline{g}^{-1}(\overline{s}):=\tilde{g_r}(\overline{s})
\end{array} \right .$$
so
\begin{equation}
\label{eq:ruffiniSections}
\left \{ \begin{array}{c}
\tilde{g}_{r-1}(\overline{s})=-rt(\overline{s})^{r-1}-\sum_{j=2}^{r-2} (r-j)s_j t(\overline{s})^{r-j-1}\\
\tilde{g_r}(\overline{s})=(r-1)t(\overline{s})^r+\sum_{j=2}^{r-2} (r-j-1) s_j t(\overline{s})^{r-j}
\end{array} \right .
\end{equation}
As $t:W_{\le (r-2)}\dashrightarrow H^0(K)$ is a $\CC^*$-equivariant rational map between vector spaces there are three possibilities for its structure
\begin{enumerate}
\item $t=0$, in which case we would get $g_{r-1}=0$ and $g_r=0$ leading to the desired result.
\item $t:W_{\le (r-2)}\to H^0(K)$ is an homogeneous polynomial. This is impossible because the action of $\CC^*$ in $W_{\le (r-2)}$ is of order at least 2 and the action of $\CC^*$ in $H^0(K)$ has order $1$. 
\item $t(\overline{s})=\frac{\alpha(\overline{s})}{\beta(\overline{s})}$ for some homogeneous polynomials $\alpha$ and $\beta$ with no common factors.
\end{enumerate}
Then it is only left to prove that (3) is also impossible. Substituting $t=\alpha/\beta$ in \eqref{eq:ruffiniSections} we obtain the following equality
\begin{equation*}
\left \{ \begin{array}{c}
\beta(\overline{s})^{r-1}\tilde{g}_{r-1}(\overline{s})=-r\alpha(\overline{s})^{r-1}-\sum_{j=2}^{r-2} (r-j) s_j \alpha(\overline{s})^{r-j-1} \beta(\overline{s})^j\\
\beta(\overline{s})^r\tilde{g_r}(\overline{s})=(r-1)\alpha(\overline{s})^r+\sum_{j=2}^{r-2} (r-j-1) s_j \alpha(\overline{s})^{r-j}\beta(\overline{s})^j
\end{array} \right .
\end{equation*}
Nevertheless, looking at the last equation modulo $\beta$ we get that $\alpha^r$ is a multiple of $\beta$, thus contradicting that $\alpha$ and $\beta$ do not share a common factor.
\end{proof}

In order to prove that $f$ is linear and decomposes diagonally, we will apply the previous lemma inductively. For each $k>1$, let $ \SN_k\subset \bigoplus_{j=2}^k H^0(K^jD^{j-1})$ be the set of non-reduced ``rank $k$'' spectral curves, i.e., the set of spectral curves defined by degree $k$ polynomials of the form
$$x^k+\sum_{j=2}^k s_j x^{k-j}=0$$
for $s_j\in H^0(K^jD^{j-1})$ which have at least a non-reduced component. With a slight abuse of notation let us also denote by $\SN_k\subset W_{\le k}$ the image of the set of rank $k$ non-reduced spectral curves under the inclusion
$$\SN_k\subseteq \bigoplus_{j=2}^k H^0(K^jD^{j-1}) \subseteq \bigoplus_{j=2}^r H^0(K^jD^{j-1})$$
In other words,
$$\SN_k=\{x^{r-k}q(x) \, \,  |  \, \, q(x)=p_1(x)^2p_2(x) \text{ for some } p_1(x) \text{ and } p_2(x)\}$$

\begin{lemma}
\label{lemma:inductiveRecoverProjection}
Let $k\ge 3$ and let $f_{\le k}:W_{\le k} \to W_{\le k}$ be a $\CC^*$-equivariant isomorphism such that $f_{\le k}(\SN_k)=\SN_k$. Then there is a $\CC^*$-equivariant isomorphism $f_{\le (k-2)}:W_{\le (k-2)}\to W_{\le (k-2)}$ and linear maps $f_j:W_j\to W_j$ for $j=k-1,k$ such that $f_{\le(k-2)}(\SN_{k-2})=\SN_{k-2}$ and the following diagrams commute
\begin{eqnarray*}
\xymatrixcolsep{3.2pc}
\xymatrix{
W_{\le k} \ar[r]^{f_{\le k}} \ar[d]_{\pi_{\le (k-2)}} & W_{\le k} \ar[d]^{\pi_{\le (k-2)}} & W_{\le k} \ar[r]^{f_{\le k}} \ar[d]_{\pi_{k-1}} & W_{\le k} \ar[d]^{\pi_{k-1}} & W_{\le k} \ar[r]^{f_{\le k}} \ar[d]_{\pi_k} & W_{\le k} \ar[d]^{\pi_k} \\
W_{\le (k-2)} \ar[r]^{f_{\le (k-2)}} & W_{\le (k-2)} & W_{k-1} \ar[r]^{f_{k-1}} & W_{r-1} & W_k \ar[r]^{f_k} & W_k
}
\end{eqnarray*}
\end{lemma}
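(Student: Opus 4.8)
The plan is to reduce the statement almost entirely to the case already treated, by observing that $W_{\le k}=\bigoplus_{j=2}^{k}H^0(K^jD^{j-1})$, together with its $\CC^*$-grading (the summand $W_j$ sitting in weight $j$) and its non-reduced locus $\SN_k$, is formally identical to the full Hitchin base of the rank-$k$ problem on $(X,D)$: there is no weight-$1$ part, the dimensions $h^0(K^jD^{j-1})$ are unchanged, and $\SN_k$ plays exactly the role that $\SN$ played for $W$. Consequently the whole chain of Lemmas \ref{lemma:fBlockStructure}, \ref{lemma:preserveR1}, \ref{lemma:sectionsPowerIndependent}, \ref{lemma:projC1finite}, \ref{lemma:projC1smooth}, \ref{lemma:R1andIbirational} and \ref{lemma:recoverLastProjections} applies verbatim to $f_{\le k}$ after replacing $r$ by $k$ and $W$ by $W_{\le k}$. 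This produces at once the $\CC^*$-equivariant isomorphism $f_{\le(k-2)}$, the linear maps $f_{k-1},f_k$, and the commutativity of the three displayed diagrams; equivalently $f_{\le k}$ is block-diagonal, so that writing $\bar s=(s_2,\dots,s_{k-2})$,
\[f_{\le k}(\bar s,s_{k-1},s_k)=(f_{\le(k-2)}(\bar s),f_{k-1}(s_{k-1}),f_k(s_k)).\]

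The one genuinely new assertion is $f_{\le(k-2)}(\SN_{k-2})=\SN_{k-2}$, which is what lets the induction continue. I would let $\iota:W_{\le(k-2)}\hookrightarrow W_{\le k}$ be the inclusion $\bar s\mapsto(\bar s,0,0)$; in terms of spectral polynomials it sends $\bar s$ to $x^2q_{\bar s}(x)$, where $q_{\bar s}(x)=x^{k-2}+\sum_{j=2}^{k-2}s_jx^{k-2-j}$ is the rank-$(k-2)$ polynomial of $\bar s$. Since $f_{k-1},f_k$ are linear, the block-diagonal form gives $f_{\le k}\circ\iota=\iota\circ f_{\le(k-2)}$, so it is enough to show $f_{\le k}$ preserves $\iota(\SN_{k-2})$. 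For a non-reduced spectral polynomial $s$ let $d(s)$ be the degree of its repeated part (the degree of $p_1$ in the factorization $p_s=p_1^2p_2$ with $p_2$ square-free over $\CC(X)$); this is an upper semicontinuous, $\CC^*$-invariant integer with $\SN_k=\{d\ge 1\}$. Multiplying by $x^2$ raises the multiplicity of the factor $x$ by exactly two, and using $\lfloor(m+2)/2\rfloor=1+\lfloor m/2\rfloor$ one checks in all cases (both $x\nmid q_{\bar s}$ and $x\mid q_{\bar s}$) the clean identity $d(\iota(\bar s))=1+d(q_{\bar s})$. Hence $\bar s\in\SN_{k-2}$, i.e. $d(q_{\bar s})\ge 1$, if and only if $d(\iota(\bar s))\ge 2$; that is, $\iota(\SN_{k-2})=\iota(W_{\le(k-2)})\cap\SN_k^{\geq 2}$, where $\SN_k^{\geq 2}:=\{d\ge 2\}$.

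To conclude I would show $f_{\le k}$ preserves both factors. It preserves $\iota(W_{\le(k-2)})$ immediately from the block-diagonal form. For $\SN_k^{\geq 2}$ I would argue it is intrinsic to the variety $\SN_k$: by the rank-$k$ analogue of Lemma \ref{lemma:preserveR1}, $\overline{\SN_k^1}$ is the unique top-dimensional irreducible component, and its smooth locus is exactly the simple-double-root locus $\{d=1\}$, so that $\SN_k^{\geq 2}=\op{Sing}(\SN_k)\cup\bigcup_{C\ne\overline{\SN_k^1}}C$; since $f_{\le k}$ is an automorphism of $\SN_k$ fixing $\overline{\SN_k^1}$, it preserves the singular locus and permutes the remaining components, hence preserves $\SN_k^{\geq 2}$. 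Combining the two, $f_{\le k}$ preserves $\iota(\SN_{k-2})$, and conjugating by $\iota$ yields $f_{\le(k-2)}(\SN_{k-2})=\SN_{k-2}$. The main obstacle is precisely this last step: producing an automorphism-invariant description of the deeper stratum $\SN_k^{\geq 2}$ and verifying the identification with $\op{Sing}(\SN_k)$ together with the lower components, which rests on the classical fact that the smooth locus of the discriminant variety consists of the polynomials with a single simple double root.
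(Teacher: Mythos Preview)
Your overall strategy coincides with the paper's: apply Lemma~\ref{lemma:recoverLastProjections} with $r$ replaced by $k$ to get the block-diagonal form $f_{\le k}=(f_{\le(k-2)},f_{k-1},f_k)$, then deduce $f_{\le(k-2)}(\SN_{k-2})=\SN_{k-2}$ by identifying $\SN_{k-2}$ as the intersection of $W_{\le(k-2)}$ with the ``deeper'' stratum of $\SN_k$. Your formula $d(\iota(\bar s))=1+d(q_{\bar s})$ is correct and gives the same intersection computation as the paper's component-by-component calculation $W_{\le(k-2)}\cap\SN_k^{d}=\SN_{k-2}^{d-1}\cup\SN_{k-2}^{d}$.

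There is, however, a genuine error in your invariance argument for $\SN_k^{\ge 2}=\{d\ge 2\}$. You assert that the smooth locus of $\overline{\SN_k^1}$ is exactly $\{d=1\}$, but this is false: a polynomial $(x-a)^3q(x)$ with $q$ squarefree and coprime to $x-a$ has $d=1$ (its square part is $x-a$, its squarefree part is $(x-a)q$), yet it is a \emph{singular} point of the discriminant component $\overline{\SN_k^1}$. Thus $\op{Sing}(\SN_k)\cup\bigcup_{C\ne\overline{\SN_k^1}}C$ strictly contains $\{d\ge 2\}$, and your description is not an intrinsic characterisation of $\SN_k^{\ge 2}$. The conflation is between ``$d=1$'' (one linear repeated factor, allowing multiplicity $2$ or $3$) and ``simple double root'' (one repeated root of multiplicity exactly $2$); only the latter is the smooth locus of the discriminant.

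The paper's argument avoids this entirely and is in fact simpler than what you attempt. Once one knows (from the rank-$k$ analogue of Lemma~\ref{lemma:preserveR1}) that $\SN_k^1$ is the unique top-dimensional irreducible component, any automorphism of $\SN_k$ must fix $\SN_k^1$ and therefore permute the remaining components $\SN_k^{d}$ for $d\ge 2$; hence it preserves their union $\bigcup_{d\ge 2}\SN_k^{d}$, which (as you yourself need) coincides with $\{d\ge 2\}$. No appeal to the singular locus is required.
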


\begin{proof}
Applying the Lemma \ref{lemma:recoverLastProjections} to $r=k$, we obtain the desired diagonal decomposition $f_{\le k}=(f_{\le (k-2)},f_{k-1},f_k):W_{\le (k-2)} \oplus W_{k-1} \oplus W_k \to W_{\le (k-2)} \oplus W_{k-1} \oplus W_k$. Therefore, it is enough to prove that $f_{\le (k-2)}$ preserves $\SN_{k-2}$. We know that $\SN_k$ decomposes in irreducible components as
$$\SN_k=\bigcup_{d=1}^{\lfloor k/2 \rfloor} \SN_k^d$$
where
$$\SN_k^d=\left \{(x^d+a_1 x^{d-1}+ \ldots + a_d)^2(x^{k-2d}-2a_1 x^{k-2d-1}+b_2 x^{k-2d-2}+ \ldots + b_{k-2d}) \right \}$$
for $d<k/2$ and, if $k$ is even,
$$\SN_k^{k/2}=\left\{ (x^{k/2}+a_2x^{k/2-2}+\ldots+a_{k/2})^2\right\}$$
By hypothesis we known that $f_{\le k}(\SN_k)=\SN_k$ and, by Lemma  \ref{lemma:preserveR1}, $f_{\le k}(\SN_k^1)=\SN_k^1$, so, $f_{\le k}$ must preserve the union of the rest of the components.
$$f_{\le k} \left ( \bigcup_{d=2}^{\lfloor k/2 \rfloor} \SN_k^d \right)=\bigcup_{d=2}^{\lfloor k/2 \rfloor} \SN_k^d$$
On the other hand, for each $d>1$ consider the intersection $W_{\le (k-2)} \cap \SN_k^d \subset W_{\le k}$. The elements in $W_{\le (k-2)}$ correspond to polynomials $p(x)\in W_{\le k}$ which have at least a factor $x^2$, i.e.
$$W_{\le (k-2)}=\{x^2q(x) \in W_{\le k}\}$$
On the other hand, the elements in $\SN_k^d$ are polynomials with at least a double factor of order $d$
$$\SN_k^d=\{p_1(x)^2p_2(x) \, \, | \, \, \deg(p_1)=d\}$$
Then when we get the intersection, for each polynomial of the form $p(x)=p_1(x)^2p_2(x)\in W_{\le (k-2)} \cap \SN_k^d$ there are two possibilities
\begin{enumerate}
\item Either the $x^2$ factor is included in $p_1(x)$, so $p_1(x)=xq_1(x)$ for some $q$ of degree $d-1$ and then
$$p(x)=x^2q_1(x)^2p_2(x)\in \SN_{k-2}^{d-1}$$
\item or the $x^2$ factor is included in $p_2(x)$, so $p_2(x)=x^2q_2(x)$ and then
$$p(x)=x^2p_1(x)^2q_2(x) \in \SN_{k-2}^d$$
\end{enumerate}
and the latter can only happen if $d\le (k-2)/2$. Therefore, we conclude that
$$W_{\le (k-2)}\cap \SN_k^d=\left \{ \begin{array}{ll}
\SN_{k-2}^{d-1} \cup \SN_{k-2}^d & d\le (k-2)/2\\
\SN_{k-2}^{d-1} & d>(k-2)/2
\end{array} \right.$$
In particular, taking the full union for $d>1$ yields
$$W_{\le (k-2)} \cap \bigcup_{d=2}^{\lfloor k/2 \rfloor} \SN_k^d = \bigcup_{d=1}^{\lfloor (k-2)/2 \rfloor} \SN_{k-2}^d=\SN_{k-2}$$
As $f_{\le k}$ preserves both $W_{\le (k-2)}$ and the union of the components $\SN_k^d$ for $d>1$, we obtain that $f_{\le k}(\SN_{k-2})=\SN_{k-2}$. Finally, as $\SN_{k-2}\subset W_{\le (k-2)}$ and we already know that $f$ decomposes diagonally with respect to the last two factors $W_{k-1}$ and $W_k$, then $f_{\le (k-2)}(\SN_{k-2})=\SN_{k-2}$.
\end{proof}

Now we can apply the previous lemma inductively and combine it with the previous results to recover the diagonal decomposition.

\begin{lemma}
\label{lemma:recoverHitchin}
Let $f:W\to W$ be a $\CC^*$-equivariant isomorphism such that $f(\SD)=\SD$. Then for every $k>1$, there exist a linear automorphism $f_k:W_k\to W_k$ such that the following diagram commutes
\begin{eqnarray}
\xymatrixcolsep{4pc}
\xymatrix{
W \ar[r]^{f} \ar[d]_{\pi_k} & W \ar[d]^{\pi_k} \\
W_k \ar[r]^{f_k} & W_k
}
\end{eqnarray}
\end{lemma}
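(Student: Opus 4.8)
The plan is to reduce the statement to the two technical results already in hand, Lemma \ref{lemma:recoverDx} and the peeling Lemma \ref{lemma:inductiveRecoverProjection}, and then to run a descending induction that strips off the two top-degree blocks of $f$ two at a time.

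First I would dispose of the hypotheses. The case $r=2$ is immediate: here $W=W_2$, $\pi_2=\id$, and Lemma \ref{lemma:fBlockStructure} already states that a $\CC^*$-equivariant isomorphism $f:W\to W$ is linear, so $f_2=f$ works. Assume then $r\ge 3$. Since $f(\SD)=\SD$, Lemma \ref{lemma:recoverDx} shows that $\op{sing}$ commutes with $f$ and hence that $f(\SD_x)=\SD_x$ for every $x\in X$. Consequently $f$ preserves the intersection $\SN=\bigcap_{x\in X}\SD_x$, the locus of non-reduced spectral curves. By the description of $\SN$ given above this locus is exactly $\SN_r$, so $f(\SN_r)=\SN_r$; this is precisely the hypothesis needed to launch the induction.

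Next I would run the descending induction. Applying Lemma \ref{lemma:inductiveRecoverProjection} with $k=r$ to $f_{\le r}=f$ produces linear automorphisms $f_{r-1}:W_{r-1}\to W_{r-1}$ and $f_r:W_r\to W_r$ together with a $\CC^*$-equivariant isomorphism $f_{\le r-2}:W_{\le r-2}\to W_{\le r-2}$ satisfying $f_{\le r-2}(\SN_{r-2})=\SN_{r-2}$, and such that $\pi_{r-1}\circ f=f_{r-1}\circ\pi_{r-1}$, $\pi_r\circ f=f_r\circ\pi_r$ and $\pi_{\le r-2}\circ f=f_{\le r-2}\circ\pi_{\le r-2}$. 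The condition $f_{\le r-2}(\SN_{r-2})=\SN_{r-2}$ is again the hypothesis of Lemma \ref{lemma:inductiveRecoverProjection}, now for $f_{\le r-2}$, so I iterate, each step lowering $k$ by $2$ and peeling off two further linear blocks $f_{k-1},f_k$. To transport the commuting squares from the residual maps back to $f$ itself I would use that $\pi_m=\pi_m'\circ\pi_{\le k-2}$ for $m\le k-2$, where $\pi_m'$ is the corresponding projection out of $W_{\le k-2}$: from the relation $\pi_{\le k-2}\circ f=f_{\le k-2}\circ\pi_{\le k-2}$ obtained at the previous stage and the relation $\pi_m'\circ f_{\le k-2}=f_m\circ\pi_m'$ obtained at the current stage one gets $\pi_m\circ f=\pi_m'\circ\pi_{\le k-2}\circ f=\pi_m'\circ f_{\le k-2}\circ\pi_{\le k-2}=f_m\circ\pi_m'\circ\pi_{\le k-2}=f_m\circ\pi_m$ for the full map $f$, while $\pi_{\le k-4}\circ f=f_{\le k-4}\circ\pi_{\le k-4}$ is carried forward to the next stage by the same composition.

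Finally I would treat the termination according to the parity of $r$. If $r$ is odd the process reaches $k=3$, and the last application of Lemma \ref{lemma:inductiveRecoverProjection} yields linear $f_2,f_3$ with trivial residue $W_{\le 1}=0$. If $r$ is even it reaches $k=4$, yielding linear $f_3,f_4$ and a residual $\CC^*$-equivariant automorphism $f_{\le 2}=f_2$ of $W_2$; since $\CC^*$ acts on $W_2$ with the single weight $2$, equivariance forces each coordinate of $f_2$ to be homogeneous of degree one, so $f_2$ is linear. Assembling the blocks gives linear automorphisms $f_k:W_k\to W_k$ with $\pi_k\circ f=f_k\circ\pi_k$ for every $k>1$, as required. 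The only genuinely delicate point is the opening move, the identification $\SN=\SN_r$ and the resulting implication $f(\SD)=\SD\Rightarrow f(\SN_r)=\SN_r$; everything afterwards is a mechanical, parity-sensitive bookkeeping of the already proven peeling step.
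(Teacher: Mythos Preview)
Your proof is correct and follows essentially the same route as the paper: use Lemma \ref{lemma:recoverDx} to obtain $f(\SN_r)=\SN_r$, then iterate Lemma \ref{lemma:inductiveRecoverProjection} to peel off two blocks at a time, finishing with a parity case distinction and invoking the $r=2$ case of Lemma \ref{lemma:fBlockStructure} for the residual $W_2$ when $r$ is even. Your bookkeeping for transporting the commuting squares through the induction and your parity analysis are in fact cleaner than the paper's (which contains a typo, writing ``even'' twice).
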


\begin{proof}
By Lemma \ref{lemma:recoverDx}, the map $\op{sing}: \SD \dashrightarrow X$ commutes with $f:\SD\to \SD$, so $f$ preserves the closure of the fibers $\SD_x=\overline{\op{sing}^{-1}(x)}$. Then, it preserves its intersection, but we know by construction that $\SN_r=\bigcap_{x\in X} \SD_x$, so $f(\SN_r)=\SN_r$. Moreover, $f:W\to W$ is $\CC^*$-equivariant by hypothesis, so we can apply Lemma \ref{lemma:inductiveRecoverProjection} and we obtain that $f=f_{\le r}$ commutes with the projections into $W_{\le {k-2}}$, $W_{r-1}$ and $W_r$, decomposing diagonally as
$$f_{\le r}=(f_{\le (r-2)},f_{r-1},f_r):W_{\le (r-2)} \oplus W_{r-1} \oplus W_r \longrightarrow W_{\le (r-2)} \oplus W_{r-1} \oplus W_r$$
with $f_{r-1}$ and $f_r$ linear maps. Moreover $f_{\le (r-2)}(\SN_{r-2})=\SN_{r-2}$. Now we can restrict ourselves to $W_{\le (r-2)}$. We proved that we have a $\CC^*$-equivariant isomorphism $f_{\le(r-2)}:W_{\le (r-2)} \to W_{\le (r-2)}$ such that $f_{\le(r-2)}(\SN_{r-2})=\SN_{r-2}$, so we can apply Lemma \ref{lemma:inductiveRecoverProjection} again and find that $f_{\le (r-2)}$ decomposes as
$$f_{\le (r-2)}=(f_{\le (r-4)},f_{r-3},f_{r-2}):W_{\le (r-4)} \oplus W_{r-3} \oplus W_{r-2} \longrightarrow W_{\le (r-4)} \oplus W_{r-3} \oplus W_{r-2}$$
and, moreover $f_{\le (r-4)}(\SN_{r-4})=\SN_{r-4}$. This together with the previous part proves that $f:W\to W$ decomposes as
$$f=(f_{\le (r-4)},f_{r-3},\ldots,f_{r}):W_{\le (r-4)} \oplus W_{r-3} \oplus \cdots \oplus W_{r} \longrightarrow W_{\le (r-4)} \oplus W_{r-3} \oplus \cdots \oplus W_{r}$$
Where $f_j$ are linear for $j\ge r-3$. Repeating this argument successively, we arrive to two different situations depending on the parity of $r$.

If $r$ is even, we arrive to a diagonal decomposition $f=(f_2,\ldots,f_r)$ with $f_j:W_j\to W_j$ linear, so we are done. If $r$ is even, we obtain a diagonal decomposition $f=(f_{\le 2}, f_3,\ldots,f_r)$ with $f_j:W_j\to W_j$ linear for each $j>2$ and $f_{\le 2}:W_2 \to W_2$ a $\CC^*$-equivariant isomorphism. Then, simply apply the $r=2$ case of Lemma \ref{lemma:fBlockStructure} to $f_{\le 2}$ to prove that it is a linear isomorphism. 
\end{proof}

In particular, combining the previous lemma with diagram \ref{eq:fDefinition}, we obtain
\begin{corollary}
\label{cor:preservehr}
For every $k>1$, there exist a linear automorphism $f_k:W_k\to W_k$ such that the following diagram commutes
\begin{eqnarray}
\xymatrixcolsep{4pc}
\xymatrix{
T^*\SM(r,\alpha,\xi) \ar[r]^{d(\Phi^{-1})} \ar[d]_{h_k} & T^*\SM(r,\alpha',\xi') \ar[d]^{h_k} \\
W_k \ar[r]^{f_k} & W_k
}
\end{eqnarray}
\end{corollary}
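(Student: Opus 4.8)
The plan is to deduce Corollary \ref{cor:preservehr} by feeding the map $f$ of diagram \eqref{eq:fDefinition} into Lemma \ref{lemma:recoverHitchin} and then post-composing everything with the projection $\pi_k:W\to W_k$. Recall that by definition $h_k=\pi_k\circ h$, so the asserted diagram is simply the $\pi_k$-component of \eqref{eq:fDefinition}; the substantive geometric work has already been done, and what remains is bookkeeping.

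First I would check that the $\CC^*$-equivariant automorphism $f:W\to W$ produced in \eqref{eq:fDefinition} meets the two hypotheses of Lemma \ref{lemma:recoverHitchin}, namely that it is an isomorphism with $f(\SD)=\SD$. The $\CC^*$-equivariance is exactly what Proposition \ref{prop:HitchinGlobalFunctions} provides: the differential $d(\Phi^{-1})$ is equivariant for the fibrewise dilatation action on the two cotangent bundles, and since both moduli spaces share the same Hitchin base $W$ (we have reduced to the case where $\Phi$ induces the identity on $(X,D)$), the uniqueness clause of that proposition forces $f$ to intertwine the induced $\CC^*$-actions. The equality $f(\SD)=\SD$ comes from Lemma \ref{lemma:DiscriminantCharacterization}: as an isomorphism, $d(\Phi^{-1})$ carries the union $\SR$ of complete rational curves in $T^*\SM(r,\alpha,\xi)$ onto the corresponding union in $T^*\SM(r,\alpha',\xi')$, and since $\SD$ is characterized intrinsically as the closure of $H_0(\SR)$, the map $f$ sends $\SD$ onto $\SD$.

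With these hypotheses verified, Lemma \ref{lemma:recoverHitchin} supplies, for each $k>1$, a linear automorphism $f_k:W_k\to W_k$ with $\pi_k\circ f=f_k\circ\pi_k$. It then remains only to assemble the two commuting squares. Applying $\pi_k$ to the relation $h\circ d(\Phi^{-1})=f\circ h$ that expresses commutativity of \eqref{eq:fDefinition} gives
$$h_k\circ d(\Phi^{-1})=\pi_k\circ h\circ d(\Phi^{-1})=\pi_k\circ f\circ h=f_k\circ\pi_k\circ h=f_k\circ h_k,$$
which is precisely the commutativity of the diagram in the statement, with $f_k$ the desired linear automorphism.

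Because all the genuine content — the diagonal decomposition of $f$ with respect to the weight grading $W=\bigoplus_{k=2}^r W_k$ and the linearity of each graded component — is already established in Lemma \ref{lemma:recoverHitchin} through the inductive analysis of the non-reduced locus $\SN$ (Lemmas \ref{lemma:preserveR1}--\ref{lemma:inductiveRecoverProjection}), I do not expect any serious obstacle at this final step. The only delicate point is confirming the stronger hypothesis $f(\SD)=\SD$, as opposed to mere preservation of the top graded piece $\SC=\SD\cap W_r$; this is exactly what the intrinsic description of the full Hitchin discriminant in Lemma \ref{lemma:DiscriminantCharacterization} guarantees, so the corollary follows by composition.
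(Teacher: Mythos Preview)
Your proposal is correct and follows exactly the paper's approach: the corollary is obtained by combining Lemma \ref{lemma:recoverHitchin} with diagram \eqref{eq:fDefinition} and projecting via $\pi_k$. You have simply spelled out in detail the verification that $f$ satisfies the hypotheses of Lemma \ref{lemma:recoverHitchin} (namely $\CC^*$-equivariance from Proposition \ref{prop:HitchinGlobalFunctions} and $f(\SD)=\SD$ from Lemma \ref{lemma:DiscriminantCharacterization}), which the paper leaves implicit since these facts were already used earlier in the section.
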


Once we have characterized $f_r:W_r\to W_r$, we can further state the following Lemma.

\begin{lemma}
\label{lemma:preserveWr}
Let $f_r:W_r\to W_r$ be the linear automorphism constructed in Corollary \ref{cor:preservehr}. Then for every $k>0$ and every $x_0\in X$
$$f_r\left (H^0(K^rD^{r-1}(-kx_0))\right)=H^0(K^rD^{r-1}(-kx_0))$$
\end{lemma}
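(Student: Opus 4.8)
The plan is to pass to the projective picture of Proposition~\ref{prop:recoverDualVariety} and show that $f_r$ is forced to be a homothety of $W_r$, after which the statement is immediate. Write $L=K^rD^{r-1}$, so that $|L|$ embeds $X$ as a nondegenerate irreducible curve in $\PP(W_r^*)$, and the linear automorphism $f_r$ of $W_r$ induces a projective transformation $g=\PP(f_r^t)$ of $\PP(W_r^*)$. For every $x_0\in X$ and every $k$ one has $H^0(L(-kx_0))=\left(\op{Osc}^{k-1}_{x_0}X\right)^{\perp}$, the space of sections whose zero divisor contains the $(k-1)$-st osculating space $\op{Osc}^{k-1}_{x_0}X\subset \PP(W_r^*)$ (spanned by $j^0_{x_0},\dots,j^{k-1}_{x_0}$, where $j^i_{x_0}\in W_r^*$ denotes the $i$-th derivative of a local lift of $X$); hence $f_r$ preserves $H^0(L(-kx_0))$ if and only if $g$ preserves $\op{Osc}^{k-1}_{x_0}X$. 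The input is Lemma~\ref{lemma:recoverDx}: since the induced automorphism of $X$ is the identity, $f_r$ fixes each $H^0(L(-2x))$, i.e. $g$ preserves the embedded tangent line $\op{Osc}^1_{x}X$ for every $x\in X$ (for $x\in D$ one takes the Grassmannian limit of the tangent lines at nearby points).

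The crux is to recover the point $x_0$ itself (the case $k=1$, $x_0\notin D$) from the family of tangent lines, which cannot be done by intersecting or spanning tangent lines since distinct tangent lines of a space curve are skew. Instead I would differentiate the second-order Gauss map $\Psi\colon X\to \Gr(2,W_r^*)$, $\Psi(x)=\widehat{\op{Osc}^1_xX}$, sending a point to the plane in $W_r^*$ over its projective tangent line. At a general $x_0$ (away from the finite set $D$ and from the finitely many hyperosculation points) the differential $d\Psi_{x_0}\in\Hom\!\left(\Psi(x_0),W_r^*/\Psi(x_0)\right)$ has rank one, sending $j^0_{x_0}\mapsto 0$ and $j^1_{x_0}\mapsto j^2_{x_0}\bmod\Psi(x_0)\neq 0$, so its kernel is exactly the line $\op{Osc}^0_{x_0}X=\langle j^0_{x_0}\rangle$. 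Since $g$ fixes $\Psi(x)$ for every $x$, differentiating the relation $g\cdot\Psi(x)=\Psi(x)$ along $X$ yields $\bar g\circ d\Psi_{x_0}=d\Psi_{x_0}\circ (g|_{\Psi(x_0)})$, where $\bar g$ is the map induced by $g$ on $W_r^*/\Psi(x_0)$; this identity forces $g|_{\Psi(x_0)}$ to preserve $\ker d\Psi_{x_0}$, i.e. $g$ fixes the point $x_0$.

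Thus $g$ fixes a dense set of points of $X$, hence all of $X$ pointwise by continuity; as $X$ is irreducible and spans $\PP(W_r^*)$, a projective transformation fixing it pointwise must be the identity, so $f_r$ is a scalar multiple of the identity and preserves every linear subspace of $W_r$, in particular every $H^0(L(-kx_0))$. I expect the main obstacle to be precisely this infinitesimal recovery of the point from the preserved tangent lines: checking that the osculating dimensions are generic so that $\ker d\Psi_{x_0}=\op{Osc}^0_{x_0}X$, and that the closure argument genuinely covers the excluded hyperosculation points and the marked points of $D$. The reduction to the projective picture and the final nondegeneracy step are routine.
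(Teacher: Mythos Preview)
Your argument is correct and reaches the same conclusion (indeed the stronger form that $f_r$ is a homothety), but it takes an unnecessary detour compared to the paper. The key point you overlook is that the standing assumption ``the induced automorphism $\sigma:X\to X$ is the identity'' already \emph{is} the statement that $g=\PP(f_r^t)$ fixes $X\subset\PP(W_r^*)$ pointwise: in the proof of the Torelli theorem (Theorem~\ref{theorem:Torelli}), $\sigma$ is \emph{defined} as the restriction to $X$ of the dual projective transformation $f_r^\vee$, obtained from $\PP(f_r)|_{\PP(\SC_X)}$ via biduality. So the conclusion you labour to establish through the differentiated Gauss map is exactly the hypothesis you started from. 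The paper's proof simply invokes this: once $g$ fixes $X$ pointwise it trivially preserves every osculating space $\op{Osc}^{k-1}_{x_0}X$, and dualizing gives that $f_r$ preserves each $H^0(K^rD^{r-1}(-kx_0))$ (equivalently, since $X$ spans $\PP(W_r^*)$, $g$ is the identity and $f_r$ is a scalar).

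Your route instead passes through Lemma~\ref{lemma:recoverDx} to extract only the weaker consequence that $g$ preserves each tangent line $\op{Osc}^1_xX$, and then differentiates the Gauss map $\Psi:X\to\Gr(2,W_r^*)$ to recover the points from the tangent lines. The computation $\ker d\Psi_{x_0}=\langle j^0_{x_0}\rangle$ at a non-inflectionary point and the intertwining identity $\bar g\circ d\Psi_{x_0}=d\Psi_{x_0}\circ g|_{\Psi(x_0)}$ are both correct, so the argument is valid; it is just redundant here. What your approach would genuinely buy, in another context, is an independent proof that any projective transformation preserving all tangent lines of a nondegenerate curve must fix the curve pointwise---a true and pleasant fact, but not needed for this lemma.
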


\begin{proof}
As $d(\Phi^{-1})$ is an isomorphism, it maps complete rational curves on the cotangent bundle to complete rational curves. By Lemma \ref{lemma:DiscriminantCharacterization}, the morphism $f$ must preserve $\SC=\SD\cap W_r$. Applying \ref{prop:recoverDualVariety}, we can decompose $\SC=\SC_X\cup\bigcup_{x\in D} \SC_x$, where $\PP(\SC_X)$ is the dual variety of $X\subset \PP(W_r^*)$ and $\PP(\SC_x)$ is the set of hyperplanes going through $x\in X\subset \PP(W_r^*)$. Moreover, we know that $\PP(\SC_X)$ is not isomorphic to $\PP(\SC_x)$ for any $x$, so $f$ must preserve $\SC_X$. As we assumed that the induced automorphism of the dual variety $\sigma:X\to X$ is the identity, then for each $x_0\in X$, the projectivization of $f$ must preserve all the osculating spaces at $x_0$. The osculating $k$ space at $x_0\in X\subset \PP(W_r^*)$ is precisely $\PP(H^0(K^rD^{r-1}(-kx_0)))$. As $f:W_r\to W_r$ is linear, we conclude that it preserves $H^0(K^rD^{r-1}(-kx_0))$.
\end{proof}

\begin{lemma}
\label{lemma:recoverSParEndsections}
Suppose that $g\ge 4$. Let $(E,E_\bullet)\in \SM(r,\alpha,\xi)$ and $(E',E'_\bullet) \in \SM(r,\alpha',\xi')$ be generic stable parabolic vector bundles such that $\Phi(E,E_\bullet)=(E',E'_\bullet)$. Consider the isomorphism of vector spaces
$$d(\Phi^{-1}): H^0(\SParEnd_0(E)\otimes K_X(D)) \longrightarrow H^0(\SParEnd_0(E')\otimes K_X(D))$$
Then for every $x\in U=X\backslash D$, the image of $H^0(\SParEnd_0(E)\otimes K_X(D-x))$ under $d(\Phi^{-1})$ is $H^0(\SParEnd_0(E')\otimes K_X(D-x))$.
\end{lemma}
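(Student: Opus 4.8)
The plan is to characterize the subspace $V_x := H^0(\SParEnd_0(E)\otimes K_X(D-x))$ intrinsically in terms of the Hitchin data that $d(\Phi^{-1})$ is already known to preserve, and then transport that characterization across the linear isomorphism $d(\Phi^{-1})$. Write $A=H^0(\SParEnd_0(E)\otimes K_X(D))$ and $A'=H^0(\SParEnd_0(E')\otimes K_X(D))$ for the two cotangent fibres, so $d(\Phi^{-1})\colon A\to A'$ is a linear isomorphism. Since $x\in U$, the strongly parabolic condition is vacuous near $x$, so evaluation gives a linear map $\op{ev}_x\colon A\to \End_0(E)\otimes K_X(D)|_x$ whose kernel is exactly $V_x$ (the Higgs fields vanishing at $x$), and whose target is identified with $\ssl_r$ after trivialising $K_X(D)|_x$.

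The first step is to single out a cone visible through the \emph{top} Hitchin coefficient only. For $\varphi\in A$ one has $h_r(\varphi)=\pm\det\varphi\in W_r=H^0(K^rD^{r-1})$, so $h_r(\varphi)$ vanishes at $x$ precisely when $\det(\op{ev}_x\varphi)=0$. I set $\mathcal R_x=\{\varphi\in A : h_r(\varphi)\in H^0(K^rD^{r-1}(-x))\}=\op{ev}_x^{-1}(Z)$, where $Z\subset\ssl_r$ is the determinantal hypersurface $\{\det=0\}$. By Corollary \ref{cor:preservehr} one has $h_r\circ d(\Phi^{-1})=f_r\circ h_r$ with $f_r$ linear, and by Lemma \ref{lemma:preserveWr} (case $k=1$, $x_0=x$) the map $f_r$ preserves $H^0(K^rD^{r-1}(-x))$; combining these gives $\varphi\in\mathcal R_x\iff d(\Phi^{-1})\varphi\in\mathcal R'_x$, i.e.\ $d(\Phi^{-1})(\mathcal R_x)=\mathcal R'_x$ (where $\mathcal R'_x$ is the analogous cone for $(E',E'_\bullet)$, which is generic because $\Phi$ is an isomorphism of moduli spaces). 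Crucially, only $f_r$ enters here, so I never have to control the lower maps $f_k$.

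The second step recovers $V_x$ from $\mathcal R_x$ by symmetry. For a subset $S$ of a vector space put $\op{Sym}(S)=\{a : S+a=S\}$. I claim $\op{Sym}(\mathcal R_x)=V_x$. Because $\op{ev}_x$ is surjective, $\op{ev}_x(\op{ev}_x^{-1}(Z)+a)=Z+\op{ev}_x(a)$, so $\op{Sym}(\mathcal R_x)=\op{ev}_x^{-1}(\op{Sym}(Z))$, and it remains to see $\op{Sym}(Z)=\{0\}$: a translation preserving the irreducible hypersurface $Z$ of degree $r\ge 2$ must preserve its defining polynomial, whereas $\partial_v\det\equiv 0$ on $\ssl_r$ forces $v=0$ (evaluate $\tr(\op{adj}(M)\,v)$ at invertible traceless $M$). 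Surjectivity of $\op{ev}_x$ is exactly where genericity enters: the cokernel embeds in $H^1(\SParEnd_0(E)\otimes K_X(D-x))$, whose Serre dual is $H^0(\ParEnd_0(E,E_\bullet)(x))$, and this vanishes for our generic $(E,E_\bullet)$ by Lemma \ref{lemma:ParEndNoSections} applied with $F=x$ (legitimate since $g\ge 4$); the same holds for $(E',E'_\bullet)$.

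Finally, a linear isomorphism carries translational symmetries to translational symmetries, so $d(\Phi^{-1})(V_x)=d(\Phi^{-1})(\op{Sym}(\mathcal R_x))=\op{Sym}(\mathcal R'_x)=V'_x$, which is the assertion. The conceptual crux, and the step I would think about hardest, is the first one: recognising that the \emph{determinantal} cone $\mathcal R_x$ already pins down $V_x$ as its symmetry space. The more obvious candidate, the full nilpotent cone cut out by all the $h_k$, would require controlling every $f_k$, which Lemma \ref{lemma:preserveWr} does not provide; the whole argument hinges on the fact that vanishing of $\det$ at $x$ alone, a condition seen through $f_r$ only, suffices.
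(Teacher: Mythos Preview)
Your proof is correct and follows essentially the same approach as the paper's own argument. Both characterize $H^0(\SParEnd_0(E)\otimes K_X(D-x))$ as the translational stabilizer of the cone $h_r^{-1}(H^0(K^rD^{r-1}(-x)))$, appeal to Lemma~\ref{lemma:preserveWr} (case $k=1$) together with Corollary~\ref{cor:preservehr} to transport this cone across $d(\Phi^{-1})$, use Lemma~\ref{lemma:ParEndNoSections} for surjectivity of the evaluation map, and reduce to the same linear-algebra fact that the determinantal hypersurface in $\ssl_r$ has trivial translational symmetry. Your $\op{Sym}(\mathcal R_x)$ is exactly the paper's set $\{\psi\in h_r^{-1}(H_{x_0}):\forall\varphi\in h_r^{-1}(H_{x_0}),\ h_r(\psi+\varphi)\in H_{x_0}\}$, just rephrased; your closing remark that only $f_r$ (not the lower $f_k$) is needed is a nice articulation of why the argument goes through at this stage.
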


\begin{proof}
Let $x_0\in U$. Let $(E,E_\bullet)$ be a generic stable parabolic vector bundle in the sense of Lemma \ref{lemma:ParEndNoSections}. We will prove that
$$H^0(\SParEnd_0(E)\otimes K_X(D-x))=\{\psi \in H^0(\SParEnd_0(E)\otimes K_X(D)) : \forall \varphi\in h_r^{-1}(H_{x_0}) \,\,\, h_r(\psi+\varphi)\in H_{x_0}\}$$
where $H_{x_0}=H^0(K^rD^{r-1}(-x_0)) \subseteq W_r=H^0(K^rD^{r-1})$. By Lemma \ref{lemma:preserveWr}, $H_{x_0}$ is preserved by $f_r:W_r\to W_r$, so the Lemma follows from commutativity of diagram \eqref{eq:frDefinition}.

As we assumed $g\ge 4$, by Lemma \ref{lemma:ParEndNoSections}, for a generic $(E,E_\bullet)$
$$H^1(\SParEnd_0(E,E_\bullet)\otimes K(D-x_0))=H^0(\ParEnd_0(E,E_\bullet)(x_0))^\vee=0$$
Therefore, for a generic parabolic vector bundle the following sequence is exact
\begin{multline*}
0 \longrightarrow H^0(\SParEnd_0(E,E_\bullet)\otimes K(D-x_0)) \longrightarrow H^0(\SParEnd_0(E,E_\bullet)\otimes K(D))\\
 \longrightarrow \SParEnd_0(E,E_\bullet)\otimes K(D)|_{x_0} \longrightarrow 0
\end{multline*}
Therefore, the evaluation map
\begin{equation}
\label{eq:EvMapSurjective}
H^0(\SParEnd_0(E,E_\bullet)\otimes K(D)) \twoheadrightarrow\SParEnd_0(E,E_\bullet)\otimes K(D)|_{x_0}\cong \End(E)|_{x_0}\otimes K(D)|_{x_0}
\end{equation}
is surjective. By definition of the Hitchin map $h_r(\psi)\in H_{x_0}$ if and only if $\det(\psi(x_0))=0$. On the other hand, $\psi\in H^0(\SParEnd_0(E,E_\bullet)\otimes K(D-x_0))$ if and only if $\psi(x_0)=0$.

We will use the following algebra fact,  $\psi(x_0)\in \End(E)|_{x_0}\otimes K(D)|_{x_0}$ is zero if and only if for every other matrix $M\in \End(E)|_{x_0}\otimes K(D)|_{x_0}$ such that $\det(M)=0$, $\det(\psi(x_0)+M)=0$. Finally, as the evaluation map \eqref{eq:EvMapSurjective} is surjective, the latter is equivalent to $\det(\psi(x_0)+\varphi(x_0))=0$ for every $\varphi\in h_r^{-1}(H_{x_0})$.
\end{proof}

\begin{lemma}
\label{lemma:recoverSParEnd}
Suppose that $g\ge 4$. Let $(E,E_\bullet)$ and $(E',E'_\bullet)$ be generic parabolic vector bundles such that $\Phi(E,E_\bullet)=(E',E'_\bullet)$. Then $\Phi$ induces an isomorphism of vector bundles
$$\Phi_{\SParEnd_0}: \SParEnd_0(E,E_\bullet) \cong \SParEnd_0(E',E'_\bullet)$$
\end{lemma}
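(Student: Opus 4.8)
The plan is to upgrade the linear isomorphism supplied by the previous two lemmas to a genuine morphism of the underlying sheaves. The key observation is that the fibrewise control furnished by Lemma \ref{lemma:recoverSParEndsections} only at points of $U$ can be propagated across the parabolic divisor $D$ for free, by a torsion-freeness argument, so that no hand analysis of the extension over $D$ is needed.

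First I would record the starting data. Via Serre duality, $d(\Phi^{-1})$ is a linear isomorphism
\[
\Psi:V:=H^0(\SParEnd_0(E,E_\bullet)\otimes K(D))\xrightarrow{\ \sim\ } V':=H^0(\SParEnd_0(E',E'_\bullet)\otimes K(D)),
\]
and by Lemma \ref{lemma:recoverSParEndsections} it carries $V_x:=H^0(\SParEnd_0(E,E_\bullet)\otimes K(D-x))$ onto $V'_x$ for every $x\in U$. Next I would check that, for a generic $(E,E_\bullet)$, the evaluation map $V\to(\SParEnd_0(E,E_\bullet)\otimes K(D))|_x$ is surjective for every $x\in X$, including the parabolic points. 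This amounts to $H^1(\SParEnd_0(E,E_\bullet)\otimes K(D-x))=0$, which by the standard duality $\SParEnd_0(E,E_\bullet)^\vee\cong\ParEnd_0(E,E_\bullet)(D)$ (the one underlying the identification of the cotangent space recalled in Section \ref{section:Hitchin}) is Serre-dual to $H^0(\ParEnd_0(E,E_\bullet)(x))=0$. Since $\ParEnd_0(E,E_\bullet)(x)\hookrightarrow\End_0(E)(x)$ has quotient supported on $D$, this is exactly Lemma \ref{lemma:ParEndNoSections} applied to the degree one divisor $F=x$, and the hypothesis $g\ge 4$ suffices for all $r\ge 2$. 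Consequently the kernel $\SK:=\ker(V\otimes\SO_X\xrightarrow{\op{ev}}\SParEnd_0(E,E_\bullet)\otimes K(D))$ is a subbundle of the trivial bundle whose fibre at each $x$ is $V_x$, with quotient $\SParEnd_0(E,E_\bullet)\otimes K(D)$; likewise one obtains $\SK'$ with quotient $\SParEnd_0(E',E'_\bullet)\otimes K(D)$.

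The main step, and the place where the parabolic points could in principle cause trouble, is to show that the constant bundle map $\Psi\otimes\op{id}:V\otimes\SO_X\to V'\otimes\SO_X$ sends $\SK$ into $\SK'$ over all of $X$, even though Lemma \ref{lemma:recoverSParEndsections} only controls the fibres over $U$. For this I would consider the composite $\SK\hookrightarrow V\otimes\SO_X\xrightarrow{\Psi}V'\otimes\SO_X\twoheadrightarrow\SParEnd_0(E',E'_\bullet)\otimes K(D)$. Over the dense open set $U$ its fibre at each $x$ factors as $V_x\hookrightarrow V'_x=\SK'_x$, which maps to zero in the quotient; hence the composite vanishes on $U$. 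Since its target is locally free and therefore torsion free, a homomorphism vanishing on a dense open set is identically zero. Thus $\Psi\otimes\op{id}$ factors through $\SK'$ and descends to a morphism $\overline{\Psi}:\SParEnd_0(E,E_\bullet)\otimes K(D)\to\SParEnd_0(E',E'_\bullet)\otimes K(D)$ on $X$. Running the same argument for $\Psi^{-1}$ produces a morphism in the opposite direction which agrees with an inverse of $\overline{\Psi}$ over $U$, hence everywhere by torsion freeness, so $\overline{\Psi}$ is an isomorphism.

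Finally, because $\Phi$ was normalized to induce the identity on $(X,D)$, the twisting line bundle $K(D)$ is literally the same on both sides, so tensoring $\overline{\Psi}$ with $K(D)^{-1}$ yields the desired isomorphism of vector bundles $\Phi_{\SParEnd_0}:\SParEnd_0(E,E_\bullet)\cong\SParEnd_0(E',E'_\bullet)$. I expect the only delicate point to be the surjectivity of evaluation at the points of $D$, namely verifying that Lemma \ref{lemma:ParEndNoSections} does apply to a degree one divisor centred at a parabolic point; granting this, the torsion-free vanishing trick removes any need to analyze the extension across $D$ directly.
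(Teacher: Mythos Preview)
Your proof is correct and follows essentially the same approach as the paper. Both arguments define the kernel subbundle $\SK$ (the paper calls it $\SE$) of the trivial bundle $V\otimes\SO_X$, use Lemma~\ref{lemma:ParEndNoSections} to see that the evaluation sequence is exact at every point of $X$, and then extend the equality $\Psi(\SK_x)=\SK'_x$ from $U$ to all of $X$; the paper phrases this last step as ``$\SK$ and $\SK'$ are the saturations of their restrictions to $U$'' while you phrase it as ``the composite to the torsion-free quotient vanishes on $U$ hence everywhere'', which is the same observation. Your explicit verification of the Serre duality $H^1(\SParEnd_0\otimes K(D-x))^\vee\cong H^0(\ParEnd_0(x))$ and of the applicability of Lemma~\ref{lemma:ParEndNoSections} at parabolic points is in fact more careful than the paper's treatment here.
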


\begin{proof}
Let $\SE$ be the sub-bundle of the trivial vector bundle
$$H^0(\SParEnd_0(E,E_\bullet))\otimes K(D))\otimes_{\CC}\SO_X \longrightarrow X$$
whose fiber over each $x\in X$ is $H^0(\SParEnd_0(E,E_\bullet))\otimes K(D-x))$. From Lemma \ref{lemma:ParEndNoSections}, the following sequence is exact
\begin{multline*}
0\to \SE \to H^0(\SParEnd_0(E,E_\bullet))\otimes K(D))\otimes_{\CC}\SO_X  \stackrel{\pi}{\longrightarrow} \SParEnd_0(E,E_\bullet) \otimes K(D) \to 0
\end{multline*}
where the last morphism is the evaluation map. Analogously, we define a vector bundle $\SE'$ such that
\begin{multline*}
0\to \SE' \to H^0(\SParEnd_0(E',E'_\bullet))\otimes K(D))\otimes_{\CC}\SO_X  \stackrel{\pi}{\longrightarrow} \SParEnd_0(E',E'_\bullet) \otimes K(D) \to 0
\end{multline*}

By Lemma \ref{lemma:recoverSParEndsections}, over $U=X\backslash D$, the image of $\SE|_U$ under $d(\Phi^{-1})\otimes \id_{\SO_U}$ is $\SE'|_U$. As $\SE$ and $\SE'$ are the saturations of $\SE|_U$ and $\SE'|_U$ in $H^0(\SParEnd_0(E,E_\bullet) \otimes K(D)) \otimes_\CC \SO_U$ and $H^0(\SParEnd_0(E,E_\bullet) \otimes K(D)) \otimes_\CC \SO_U$ respectively, the image of $\SE$ under $d(\Phi^{-1})\otimes \id_{\SO_X}$ must be $\SE'$. Therefore, passing to the quotient, there must exist an isomorphism of vector bundles
$$\Phi_{\SParEnd_0}: \SParEnd_0(E,E_\bullet) \cong \SParEnd_0(E',E'_\bullet)$$
such that the following diagram commutes
\begin{eqnarray*}
\xymatrixcolsep{1pc}
\xymatrix{
0 \ar[r] & \SE \ar[r] \ar[d] & H^0(\SParEnd_0(E,E_\bullet) \otimes K(D)) \otimes_\CC \SO_X \ar[d]^{d(\Phi^{-1}) \otimes \id_{\SO_X}} \ar[r]^-{\pi} & \SParEnd_0(E,E_\bullet) \otimes K(D) \ar[d]_{\Phi_{\SParEnd_0} \otimes \id_{K(D)}} \ar[r] & 0\\
0 \ar[r] & \SE' \ar[r] & H^0(\SParEnd_0(E',E'_\bullet) \otimes K(D)) \otimes_\CC \SO_X \ar[r]^-{\pi} & \SParEnd_0(E',E'_\bullet) \otimes K(D) \ar[r] & 0
}
\end{eqnarray*}
\end{proof}

\begin{lemma}
\label{lemma:recoverParEnd}
Suppose that $g\ge 6$. Let $(E,E_\bullet)$ and $(E',E'_\bullet)$ be generic parabolic vector bundles such that $\Phi(E,E_\bullet)=(E',E'_\bullet)$. Then $\Phi$ induces an isomorphism of vector bundles
$$\Phi_{\ParEnd_0}: \ParEnd_0(E,E_\bullet) \cong \ParEnd_0(E',E'_\bullet)$$
such that the following diagram commutes
\begin{eqnarray*}
\xymatrixcolsep{4pc}
\xymatrix{
\SParEnd_0(E,E_\bullet) \ar@{^(->}[d] \ar[r]^{\Phi_{\SParEnd_0}} & \SParEnd_0(E',E'_\bullet) \ar@{^(->}[d]\\
\ParEnd_0(E,E_\bullet) \ar[r]_{\Phi_{\ParEnd_0}} & \ParEnd_0(E',E'_\bullet)
}
\end{eqnarray*}
\end{lemma}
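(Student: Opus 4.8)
The plan is to deduce the statement from the sheaf-level Serre duality relating the two algebras of endomorphisms. The trace pairing $(\varphi,\psi)\mapsto \tr(\varphi\psi)$ on $\End_0(E)$ restricts to a pairing $\ParEnd_0(E,E_\bullet)\otimes \SParEnd_0(E,E_\bullet)\to \SO_X(-D)$: a local computation in a basis adapted to the flag shows that if $A$ lies in the parabolic lattice (entries strictly below the diagonal divisible by the local parameter $z$) and $B$ lies in the strongly parabolic lattice (entries on and below the diagonal divisible by $z$), then every term of $\tr(AB)$ is a multiple of $z$, so $\tr(AB)\in \SO_X(-D)$ locally. This pairing is perfect, so it identifies $\SParEnd_0(E,E_\bullet)\cong \ParEnd_0(E,E_\bullet)^\vee\otimes \SO_X(-D)$, in agreement with the cohomological duality used in Section \ref{section:Hitchin}. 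Equivalently, the stalk $\ParEnd_0(E,E_\bullet)_x$ is exactly the dual lattice $\{a:\tr(a\cdot \SParEnd_0(E,E_\bullet)_x)\subseteq \SO_X(-D)_x\}$, and the inclusion $\SParEnd_0\hookrightarrow \ParEnd_0$ is the one induced by this pairing.

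First I would reduce the statement to an extension problem over the punctured curve. Away from $D$ the sheaves $\SParEnd_0$ and $\ParEnd_0$ coincide, so restricting $\Phi_{\SParEnd_0}$ to $U=X\setminus D$ yields an isomorphism $g_U:\ParEnd_0(E,E_\bullet)|_U\xrightarrow{\sim}\ParEnd_0(E',E'_\bullet)|_U$. Since $\ParEnd_0(E,E_\bullet)$ and $\ParEnd_0(E',E'_\bullet)$ are torsion-free and $U$ is dense, any bundle morphism over $X$ extending $g_U$ is unique and, restricted to the subsheaf $\SParEnd_0$, automatically agrees with $\Phi_{\SParEnd_0}$; hence the commuting square comes for free once the extension is produced. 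Thus the entire content is to show that $g_U$ and $g_U^{-1}$ have no poles along $D$ relative to the parabolic lattices, i.e. that $g_U(\ParEnd_0(E,E_\bullet)_x)=\ParEnd_0(E',E'_\bullet)_x$ for every $x\in D$.

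Using the dual-lattice description, this matching follows as soon as $\Phi_{\SParEnd_0}$ intertwines the two $\SO_X(-D)$-valued trace forms up to an invertible local factor: since $\Phi_{\SParEnd_0}$ already carries $\SParEnd_0(E,E_\bullet)_x$ isomorphically onto $\SParEnd_0(E',E'_\bullet)_x$, a correspondence of the trace forms identifies their dual lattices, which are precisely the parabolic lattices. To obtain the form compatibility I would invoke preservation of the Hitchin map: by Corollary \ref{cor:preservehr} the differential $d(\Phi^{-1})$ commutes with the quadratic Hitchin coefficient $h_2$ through a linear map $f_2:W_2\to W_2$, and since $\tr(\varphi^2)=-2\,h_2(\varphi)$ this gives, after polarization, $\tr(\Phi_{\SParEnd_0}(\varphi)\,\Phi_{\SParEnd_0}(\psi))=f_2(\tr(\varphi\psi))$ for global sections $\varphi,\psi\in H^0(\SParEnd_0(E,E_\bullet)\otimes K(D))$. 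For generic $(E,E_\bullet)$ these sections generate the fibres (as in the proof of Lemma \ref{lemma:recoverSParEndsections}), so the relation can be read fibrewise.

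The main obstacle is exactly this last upgrade: the identity $\tr(\Phi(\varphi)\Phi(\psi))=f_2(\tr(\varphi\psi))$ holds through an automorphism $f_2$ of the global section space $W_2=H^0(K^2D)$, whereas the dual-lattice argument needs the $\SO_X(-D)$-valued forms themselves to correspond up to multiplication by an invertible function. I would close this gap by arguing at a generic fibre: preservation of the whole characteristic polynomial (all the coefficients $h_k$, Corollary \ref{cor:preservehr}) forces $\Phi_{\SParEnd_0}|_y$ to be a determinant- and characteristic-polynomial-preserving linear map of matrix algebras, hence a similarity transformation $X\mapsto CXC^{-1}$ (up to the transpose option) by the classical classification underlying Lemma \ref{lemma:conjCharacterization}; any such transformation preserves the trace form exactly, so the two $\SO_X(-D)$-valued forms correspond up to an invertible function. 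With the trace forms matched, the dual lattices match, $g_U$ and $g_U^{-1}$ extend across $D$, and the resulting isomorphism $\Phi_{\ParEnd_0}$ fits into the required commutative diagram.
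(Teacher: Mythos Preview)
Your trace-duality idea is attractive and genuinely different from the paper's route, but the argument has a real gap at exactly the point you flag as the ``main obstacle''. Corollary~\ref{cor:preservehr} only says that $h_k\circ d(\Phi^{-1})=f_k\circ h_k$ on \emph{global sections}, where $f_k$ is an arbitrary linear automorphism of $W_k=H^0(K^kD^{k-1})$. Nothing forces $f_k$ to act pointwise: for a section $s\in W_k$ the value $(f_k s)(y)$ depends on all of $s$, not just on $s(y)$. So one cannot read the relation fibrewise, and the assertion that $\Phi_{\SParEnd_0}|_y$ is ``characteristic-polynomial-preserving'' does not follow. Likewise, Lemma~\ref{lemma:conjCharacterization} is a rank criterion for maps of the form $X\mapsto AXB$; it does not classify characteristic-polynomial-preserving maps, so it is the wrong reference even if the premise held. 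Your dual-lattice computation $\ParEnd_0=\{A:\tr(A\cdot\SParEnd_0)\subset\SO_X(-D)\}$ is correct, but the bridge to it is missing.

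The gap can be closed, though not with the tools you cite: one needs Lemma~\ref{lemma:recoverNilpotent} (whose proof uses only Lemma~\ref{lemma:recoverSParEnd}, not the present lemma) together with the osculating argument to show that each $f_k$ preserves every hyperplane $H^0(K^kD^{k-1}(-y))$; then the nilpotent cone is preserved fibrewise over $U$, and the Botta--Pierce--Watkins theorem (not Lemma~\ref{lemma:conjCharacterization}) shows $\Phi_{\SParEnd_0}|_U$ lies in $\Aut(\ssl)\times\CC^*$. Constancy of $\det(\Phi_{\SParEnd_0})$ (both $\SParEnd_0$'s have the same degree) makes the scalar constant, so the Killing/trace form is preserved up to a global unit; your dual-lattice step then goes through. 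By contrast, the paper avoids the trace pairing entirely: it introduces the intermediate sheaves $\SParEnd_0^{(x)}$, recovers $\ParEnd_0(-D)=\bigcap_{x\in D}\SParEnd_0^{(x)}$, and identifies each $H^0(\SParEnd_0^{(x)}\otimes K(D))$ intrinsically via a recursive construction $F^k,G^k$ built from $h_r$ and the subspaces $H^0(K^rD^{r-1}(-kx))$ of Lemma~\ref{lemma:preserveWr}. That approach uses only $h_r$ and is logically self-contained at this stage, whereas your route effectively imports material from Lemmas~\ref{lemma:recoverNilpotent}--\ref{lemma:recoverParEndalgebra}.
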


\begin{proof}
Given a parabolic point $x\in D$ and a parabolic vector bundle $(E,E_\bullet)$, let $\SParEnd^{(x)}_0(E,E_\bullet)$ be the subsheaf of $\SParEnd_0(E,E_\bullet)$ whose stalk over $y\in X\backslash \{x\}$ is $\SParEnd_0(E,E_\bullet)_y$ and whose stalk over $x$ is $\left(\ParEnd_0(E,E_\bullet)(-x)\right)_x$. It fits into a short exact sequence
$$0\longrightarrow \SParEnd^{(x)}_0(E,E_\bullet) \longrightarrow \SParEnd_0(E,E_\bullet) \longrightarrow \bigoplus_{k=1}^{r^2-r} \CC_x \longrightarrow 0$$
where the last morphism is the evaluation map at $x$ of the elements of $\SParEnd_0(E,E_\bullet)$ out of the diagonal, once a basis compatible with the parabolic filtration is chosen. More explicitly, if $(E,\{E_{i,y}\})$ is the parabolic vector bundle obtained by restricting the parabolic filtration to $y\in D$, then we define
$$\SParEnd_0^{(x)}(E,E_\bullet) = \bigcap_{y\in D\backslash \{x\}} \SParEnd_0(E,\{E_{i,y}\}) \cap \ParEnd_0(E,\{E_{i,x}\})(-x)$$

From the definition, it becomes clear that
\begin{equation}
\label{eq:SParEndx}
\SParEnd_0(E,E_\bullet)(-x) \hookrightarrow \SParEnd_0^{(x)}(E,E_\bullet) \hookrightarrow \SParEnd_0(E,E_\bullet)
\end{equation}
and these sheaves are related with $\ParEnd_0(E,E_\bullet)$ by the following relation
\begin{multline}
\label{eq:ParEndx}
\SParEnd_0(E,E_\bullet)(-D) \hookrightarrow \ParEnd_0(E,E_\bullet)(-D) =\\
 = \bigcap_{x\in D}\SParEnd_0^{(x)}(E,E_\bullet) \hookrightarrow \SParEnd_0(E,E_\bullet)
\end{multline}
We will prove that for every $x\in D$, $\Phi$ induces a morphism
$$\Phi_{\SParEnd_0}^{(x)}:\SParEnd_0^{(x)}(E,E_\bullet) \longrightarrow \SParEnd_0^{(x)}(E',E'_\bullet)$$
such that the following diagram commutes
\begin{eqnarray*}
\xymatrix{
\SParEnd_0(E,E_\bullet)(-x) \ar[d]^{\Phi_{\SParEnd_0} \otimes \id_{\SO_X(-x)}} \ar@{^(->}[r] & \SParEnd_0^{(x)}(E,E_\bullet) \ar[d]^{\Phi_{\SParEnd_0}^{(x)}} \ar@{^(->}[r] & \SParEnd_0(E,E_\bullet) \ar[d]^{\Phi_{\SParEnd_0}}\\
\SParEnd_0(E',E'_\bullet)(-x) \ar@{^(->}[r] & \SParEnd_0^{(x)}(E',E'_\bullet) \ar@{^(->}[r] & \SParEnd_0(E',E'_\bullet)
}
\end{eqnarray*}
Then $\Phi_{\SParEnd_0}$ preserves the subsheaf $\SParEnd_0^{(x)}(E,E_\bullet)$ and $\Phi_{\SParEnd_0}^{(x)}$ is simply the restriction of the morphism. Using the relation \eqref{eq:ParEndx}, we conclude that $\Phi_{\SParEnd_0}$ preserves $\ParEnd_0(E,E_\bullet)(-D)$, in the sense that it induces by restriction to the intersection a morphism
$$\overline{\Phi_{\ParEnd_0}}:\ParEnd_0(E,E_\bullet)(-D) \longrightarrow \ParEnd_0(E',E'_\bullet)(-D)$$
such that the following diagram commutes
\begin{eqnarray*}
\xymatrix{
\SParEnd_0(E,E_\bullet)(-D) \ar[d]^{\Phi_{\SParEnd_0} \otimes \id_{\SO_X(-D)}} \ar@{^(->}[r] & \ParEnd_0(E,E_\bullet)(-D) \ar[d]^{\overline{\Phi_{\ParEnd_0}}} \ar@{^(->}[r] & \SParEnd_0(E,E_\bullet) \ar[d]^{\Phi_{\SParEnd_0}}\\
\SParEnd_0(E',E'_\bullet)(-D) \ar@{^(->}[r] & \ParEnd_0(E',E'_\bullet)(-D) \ar@{^(->}[r] & \SParEnd_0(E',E'_\bullet)
}
\end{eqnarray*}
finally, tensoring the previous diagram by $\SO_X(D)$ and taking $\Phi_{\ParEnd_0}= \overline{\Phi_{\ParEnd_0}} \otimes \id_{\SO_X(D)}$ yields the desired vector bundle isomorphism.

Now let us build the morphism $\Phi_{\SParEnd_0}^{(x)}$. Let $(E,E_\bullet)$ be a generic parabolic vector bundle. Let us define the following subsets of $H^0(\SParEnd_0(E,E_\bullet)\otimes K(D))=T^*_{(E,E_\bullet)}\SM(r,\alpha,\xi)$ recursively.
\begin{equation}
\left \{\begin{array}{ll}
&F_{(E,E_\bullet)}^0=H^0(\SParEnd_0(E,E_\bullet)\otimes K(D))\\
\forall k>0 & G_{(E,E_\bullet)}^k=\{\psi\in F_{(E,E_\bullet)}^{k-1} : h_r(\psi)\in H^0(K^rD^{r-1}(-kx))\} \\
\forall k>0 & F_{(E,E_\bullet)}^k=\{\psi\in G_{(E,E_\bullet)}^k : \forall \varphi \in G_{(E,E_\bullet)}^k \, , \,\, \varphi+\psi\in G_{(E,E_\bullet)}^k\}\\
\forall k>0, y\in X \backslash D & G_{(E,E_\bullet),y}^k = \{\psi\in F_{(E,E_\bullet)}^k : h_r(\psi) \in H^0(K^rD^{r-1}(-kx-y))\}\\
\forall k>0, y\in X \backslash D & F_{(E,E_\bullet),y}^k=\{\psi\in G_{(E,E_\bullet),y}^k : \forall \varphi \in G_{(E,E_\bullet),y}^k \, , \,\, \varphi+\psi\in G_{(E,E_\bullet),y}^k\}\\
\end{array} \right .
\end{equation}
By Lemma \ref{lemma:preserveWr}, $f_r$ preserves $H^0(K^rD^{r-1}(-kx))$ for every $k$, so, by construction, for every $k>0$
$$\begin{array}{l}
d(\Phi^{-1}) \left(F_{(E,E_\bullet)}^k \right) = F_{(E',E'_\bullet)}^k \\
d(\Phi^{-1}) \left(G_{(E,E_\bullet)}^k \right) = G_{(E',E'_\bullet)}^k \\
d(\Phi^{-1}) \left(F_{(E,E_\bullet),y}^k \right) = F_{(E',E'_\bullet),y}^k \\
d(\Phi^{-1}) \left(G_{(E,E_\bullet),y}^k \right) = G_{(E',E'_\bullet),y}^k
\end{array}$$
We will prove the following equalities for $x\in D$ and $y\in X\backslash D$
\begin{equation}
\label{eq:recursiveAlgResult}
\begin{array}{l}
F_{(E,E_\bullet)}^{r-1}=H^0(\SParEnd_0^{(x)}(E,E_\bullet)\otimes K(D))\\
F_{(E,E_\bullet)}^r=H^0(\SParEnd_0(E,E_\bullet)\otimes K(D-x))\\
F_{(E,E_\bullet),y}^{r-1}=H^0(\SParEnd_0^{(x)}(E,E_\bullet)\otimes K(D-y))\\
F_{(E,E_\bullet),y}^r=H^0(\SParEnd_0(E,E_\bullet)\otimes K(D-x-y))
\end{array}
\end{equation}
Assuming that \eqref{eq:recursiveAlgResult} has been proven, we can build the map $\Phi_{\SParEnd_0}^{(x)}$ (and thus complete the proof of the Lemma) as follows. It is straightforward to test that
$$\left( \SParEnd_0^{(x)}(E,E_\bullet) \otimes \SO_X(D) \right)^\vee \cong \SParEnd_0(E,\{E_{i,x}\})(x) \cap \ParEnd_0(E,E_\bullet)(x) \hookrightarrow \End_0(E)(x)$$
As $g\ge 6$, Lemma \ref{lemma:ParEndNoSections} implies that for every $x\in D$ and every $y\in X$
$$H^1( \SParEnd_0^{(x)}(E,E_\bullet) \otimes K(D-y))=H^0\left(\left(\SParEnd_0^{(x)}(E,E_\bullet) \otimes \SO_X(D) \right)^\vee\otimes \SO_X(y)\right)^\vee= 0$$
$$H^1(\SParEnd_0(E,E_\bullet) \otimes K(D-x-y))=H^0\left(\ParEnd_0(E,E_\bullet) (x+y)\right)^\vee=0$$
Therefore, we have the following short exact sequences
\\ \hspace*{-2.3cm} \vbox{
\begin{eqnarray*}
\xymatrixcolsep{0.6pc}
\xymatrix{
0  \ar[r] & H^0(\SParEnd_0(E,E_\bullet)\otimes K(D-x-y)) \ar@{^(->}[d] \ar[r] &  H^0(\SParEnd_0(E,E_\bullet) \otimes K(D-x)) \ar@{^(->}[d] \ar[r] & \SParEnd_0(E,E_\bullet)\otimes K(D-x)|_y \ar@{^(->}[d] \ar[r] & 0  \\
0  \ar[r] & H^0(\SParEnd_0^{(x)}(E,E_\bullet)\otimes K(D-y)) \ar@{^(->}[d] \ar[r] &  H^0(\SParEnd_0^{(x)}(E,E_\bullet) \otimes K(D)) \ar@{^(->}[d] \ar[r] & \SParEnd_0^{(x)}(E,E_\bullet)\otimes K(D)|_y \ar@{^(->}[d] \ar[r] & 0 \\
0 \ar[r] & H^0(\SParEnd_0(E,E_\bullet)\otimes K(D-y)) \ar[r] &  H^0(\SParEnd_0(E,E_\bullet) \otimes K(D)) \ar[r] & \SParEnd_0(E,E_\bullet)\otimes K(D)|_y \ar[r] & 0  \\
}
\end{eqnarray*}
}\\
which are reduced to the following diagram if $y\in X\backslash D$ using \eqref{eq:recursiveAlgResult}
\begin{eqnarray*}
\xymatrixcolsep{1pc}
\xymatrix{
0 \ar[r] & F_{(E,E_\bullet),y}^{r} \ar@{^(->}[d] \ar[r] & F_{(E,E_\bullet)}^{r} \ar@{^(->}[d] \ar[r] & \SParEnd_0(E,E_\bullet)\otimes K(D-x)|_y \ar@{^(->}[d] \ar[r] &0 \\
0 \ar[r] & F_{(E,E_\bullet),y}^{r-1} \ar@{^(->}[d] \ar[r] & F_{(E,E_\bullet)}^{r-1} \ar@{^(->}[d] \ar[r] & \SParEnd_0^{(x)}(E,E_\bullet)\otimes K(D)|_y \ar@{^(->}[d] \ar[r] &0  \\
0 \ar[r] & F_{(E,E_\bullet),y}^0 \ar[r] & F_{(E,E_\bullet)}^0 \ar[r] & \SParEnd_0(E,E_\bullet)\otimes K(D)|_y \ar[r] &0
}
\end{eqnarray*}
Let $\SF$ and $\SG$ be the sub-vector bundles
$$\SF \hookrightarrow  H^0(\SParEnd_0^{(x)}(E,E_\bullet) \otimes K(D-x))\otimes \SO_X$$
and
$$\SG \hookrightarrow  H^0(\SParEnd_0(E,E_\bullet) \otimes K(D-x))\otimes \SO_X$$
whose fiber over $y\in X$ is $H^0(\SParEnd_0^{(x)}(E,E_\bullet)\otimes K(D-y))$ and $H^0(\SParEnd_0(E,E_\bullet)\otimes K(D-x-y))$ respectively. We define the vector bundles $\SF'$ and $\SG'$ analogously for $(E',E'_\bullet)$. Then Lemma \ref{lemma:ParEndNoSections} implies that the rows of the following commutative diagram are exact
\begin{eqnarray}
\label{eq:sParEndcommutative1}
\xymatrixcolsep{1pc}
\xymatrix{
0  \ar[r] & \SG \ar@{^(->}[d] \ar[r] & F_{(E,E_\bullet)}^{r} \otimes \SO_X \ar@{^(->}[d] \ar[r] & \SParEnd_0(E,E_\bullet)\otimes K(D-x) \ar@{^(->}[d] \ar[r] &0 \\
0  \ar[r] & \SF \ar@{^(->}[d] \ar[r] & F_{(E,E_\bullet)}^{r-1} \otimes \SO_X \ar@{^(->}[d] \ar[r] & \SParEnd_0^{(x)}(E,E_\bullet)\otimes K(D) \ar@{^(->}[d] \ar[r] &0  \\
0 \ar[r] & \SE \ar[r] & F_{(E,E_\bullet)}^0  \otimes \SO_X \ar[r] & \SParEnd_0(E,E_\bullet)\otimes K(D) \ar[r] &0
}
\end{eqnarray}
Over $U$, we have proven that
$$\left(d(\Phi^{-1})\otimes \id_{\SO_U}\right)(\SG|_U)=\SG'|_U$$
$$\left(d(\Phi^{-1})\otimes \id_{\SO_U}\right)(\SF|_U)=\SF'|_U$$
$$\left(d(\Phi^{-1})\otimes \id_{\SO_U}\right)(\SE|_U)=\SE'|_U$$
As before, $\SG$, $\SF$ and $\SE$ are the saturations of $\SG|_U$, $\SF|_U$ and $\SE|_U$ and the same holds for $\SG'$, $\SF'$ and $\SE'$, so
$$\left(d(\Phi^{-1})\otimes \id_{\SO_X}\right)(\SG)=\SG'$$
$$\left(d(\Phi^{-1})\otimes \id_{\SO_X}\right)(\SF)=\SF'$$
$$\left(d(\Phi^{-1})\otimes \id_{\SO_X}\right)(\SE)=\SE'$$
By commutativity of diagram \eqref{eq:sParEndcommutative1}, the morphisms between $\SG$, $\SF$ and $\SE$ coincide with the restriction of the morphism $\left(d(\Phi^{-1})\otimes \id_{\SO_X}\right):F^0_{(E,E_\bullet)} \otimes \SO_X \to F^0_{(E,E_\bullet)} \otimes \SO_X$ to the corresponding subsheaves. Taking quotients, we obtain the following commutative diagram proving the desired result
\begin{landscape}
\begin{eqnarray*}
\xymatrixrowsep{4pc}
\xymatrixcolsep{1pc}
\xymatrix{
0 \ar[rr] && \SG \ar@{^(->}[dd] \ar[rr] \ar[rd] && F_{(E,E_\bullet)}^{r}\otimes \SO_X \ar@{^(->}[dd] \ar[rr] \ar[rd]^{d(\Phi^{-1})\otimes \id_{\SO_X}}&& \SParEnd_0(E,E_\bullet)\otimes K(D-x) \ar@{^(->}[dd] \ar[rr] \ar[rd]^{\Phi_{\SParEnd_0}\otimes \id_{K(D-x)}} &&0&\\
&0 \ar[rr] && \SG' \ar@{^(->}[dd] \ar[rr] && F_{(E',E'_\bullet)}^{r}\otimes \SO_X \ar@{^(->}[dd] \ar[rr] && \SParEnd_0(E',E'_\bullet)\otimes K(D-x) \ar@{^(->}[dd] \ar[rr] &&0\\
0 \ar[rr] && \SF \ar@{^(->}[dd] \ar[rr] \ar[rd] && F_{(E,E_\bullet)}^{r-1}\otimes \SO_X \ar@{^(->}[dd] \ar[rr] \ar[rd]^{d(\Phi^{-1})\otimes \id_{\SO_X}} && \SParEnd_0^{(x)}(E,E_\bullet)\otimes K(D) \ar@{^(->}[dd] \ar[rr] \ar[rd]^{\Phi_{\SParEnd_0}^{(x)}\otimes \id_{K(D)}} &&0&\\
&0 \ar[rr] && \SF' \ar@{^(->}[dd] \ar[rr] && F_{(E',E'_\bullet)}^{r-1}\otimes \SO_X \ar@{^(->}[dd] \ar[rr] && \SParEnd_0^{(x)}(E',E'_\bullet)\otimes K(D) \ar@{^(->}[dd] \ar[rr] &&0 \\
0 \ar[rr] && \SE \ar[rr] \ar[rd] && F_{(E,E_\bullet)}^0\otimes \SO_X \ar[rr] \ar[rd]^{d(\Phi^{-1})\otimes \id_{\SO_X}} && \SParEnd_0(E,E_\bullet)\otimes K(D) \ar[rr] \ar[rd]^{\Phi_{\SParEnd_0}\otimes \id_{K(D)}} &&0 &\\
&0 \ar[rr] && \SE' \ar[rr] && F_{(E,E_\bullet)}^0\otimes \SO_X \ar[rr] && \SParEnd_0(E',E'_\bullet)\otimes K(D) \ar[rr] &&0
}
\end{eqnarray*}
\end{landscape}
Finally, we have to prove the equalities in \eqref{eq:recursiveAlgResult}. Let us take the image of $F_{(E,E_\bullet)}^k$ and $G_{(E,E_\bullet)}^k$ by the evaluation map
$$\pi:H^0(\SParEnd_0(E,E_\bullet)\otimes K(D)) \twoheadrightarrow \SParEnd_0(E,E_\bullet)\otimes K(D)|_x$$
Let us identify the right hand side fiber with the vector space of traceless $r\times r$ complex matrices and let us define for $0<k\le r$
$$ \begin{array}{l}
\overline{F_{(E,E_\bullet)}^k} =\left \{\psi=(\psi_{ij})\in \SParEnd_0(E,E_\bullet)\otimes K(D)|_x : \begin{array}{c} \forall l \,\, 0<l\le k \,\, \forall (i,j)\in I^l\\
\psi_{ij}=0 \end{array} \right \}\\
\overline{G_{(E,E_\bullet)}^k}=\overline{F_{(E,E_\bullet)}^{k-1}} \cap \left \{ \psi=(\psi_{ij}) : \prod_{(i,j)\in I^k}\psi_{ij}=0 \right \}
\end{array}$$
where we take $\overline{F_{(E,E_\bullet)}^0}=\SParEnd_0(E,E_\bullet)\otimes K(D)|_x$ and
$$I^k=\{(i,j)\in [1,r]^2 : j-i\cong k \mod{r}\}$$
By definition of $\SParEnd_0^{(x)}(E,E_\bullet)$, it is clear that the following identities hold
\begin{equation*}
\begin{array}{l}
\pi^{-1}\left(\overline{F_{(E,E_\bullet)}^{r-1}}\right)=H^0(\SParEnd_0^{(x)}(E,E_\bullet)\otimes K(D))\\
\pi^{-1}\left(\overline{F_{(E,E_\bullet)}^r}\right)=H^0(\SParEnd_0(E,E_\bullet)\otimes K(D-x))
\end{array}
\end{equation*}
Let us prove by induction that for every $0<k\le r$
$$ \begin{array}{l}
\pi^{-1}\left(\overline{G_{(E,E_\bullet)}^k}\right)=G_{(E,E_\bullet)}^k\\
\pi^{-1}\left(\overline{F_{(E,E_\bullet)}^k}\right)=F_{(E,E_\bullet)}^k
\end{array}$$
For $k=0$ the statement is trivial by construction. Suppose that $$\pi^{-1}\left(\overline{F_{(E,E_\bullet)}^{k-1}}\right)=F_{(E,E_\bullet)}^{k-1}$$
Let $s\in H^0(\SParEnd_0(E,E_\bullet)\otimes K(D))$ be a section in $F_{(E,E_\bullet)}^{k-1}$. Let $s_x$ be its germ at $x$. Looking at the image of the germ in $(\End_0(E)\otimes K(D))_x$, it can be identified with a matrix $S=(S_{ij})\in \Mat_{r\times r}(\SO_{X,x})$. As $\SO_{X,x}$ is a local principal ideal domain, there exists an element $z\in \SO_{X,x}$ such that $(z)\subset \SO_{X,x}$ is the maximal ideal. For $-r<k<r$, let us denote by
$$D^k=\{(i,j)\in [1,r]^2: j-i=k\}$$
the set of indexes corresponding to the $k$-th secondary diagonal of an $r\times r$ matrix. Note that for $0<k<r$
$$I^k=D^k\cup D^{k-r}$$
and for $k=r$, $I^r=D^0$. By induction hypothesis, as $s\in F_{(E,E_\bullet)}^{k-1}$, then $z | S_{ij}$ for each $(i,j)\in D^l$ for $0\le l<k$ and, moreover, $z^2 | S_{ij}$ for each $(i,j)\in D^{l-r}$ for $0<l<k$.
We have that $h_r(s)\in H^0(K^rD^{r-1}(-kx))$ if and only if $z^{k+1}|\det(S)$. Developing the determinant
$$\det(S)= \sum_{\sigma\in \Sigma_r} (-1)^{|\sigma|} \prod_{i=1}^r S_{i\sigma(i)}$$
the only summand with less than $k+1$ factors $z$ is the product of the elements in $I^k$. To check this, observe that the only factors not already divisible by $z$ are those with $j\ge i+k$. Moreover, note that for $i>r-k$, all the elements $S_{ij}$ with $j<i+k-r$ are divisible by $z^2$. Therefore, a permutation $\sigma:[1,r]\longrightarrow[1,r]$ for which $\prod_{i=1}^r S_{i\sigma(i)}$ is not already divisible by $z^{k+1}$ must have
\begin{enumerate}
\item $\sigma(i)\ge i+k$ for every $i\le r-k$
\item $\sigma(i)\ge i+k-r$ for every $i>r-k$
\end{enumerate}
Now the result follows from Lemma \ref{lemma:permutation}.  Therefore, $z^{k+1} | \det(S)$ if and only if
$$z^{k+1} | \prod_{(i,j)\in I^k} S_{ij}$$
As the $k$ elements below the diagonal are already multiple of $z$, the product is a multiple of $z^{k+1}$ if and only if there is at least an extra $z$ factor in some of the $S_{ij}$, i.e., if and only if $s_{ij}$ annihilates for some $(i,j)\in I^k$. Therefore, taking into account that $\pi$ is surjective, we obtain that $s\in G_{(E,E_\bullet)}^k$ if and only if $\pi(x) \in \overline{G_{(E,E_\bullet)} ^k}$.

Now, let us prove that
$$\overline{F_{(E,E_\bullet)}^k} = \left \{\psi \in \overline{G_{(E,E_\bullet)}^k} : \forall \varphi \in \overline{G_{(E,E_\bullet)}^k}, \, \psi+\varphi \in \overline{G_{(E,E_\bullet)}^k} \right\}$$
Suppose that an element $\psi\in\overline{G_{(E,E_\bullet)}^k}$ has some $(i,j)\in I^k$ with $\psi_{ij}\ne 0$. Let $\emptyset \ne \SI\subsetneq I^k$ be the subset of indexes in $I^k$ such that $\psi_{ij}\ne 0$. Then, let us define $\varphi\in \overline{G_{E,E_\bullet}^k}$ in the following way
$$ \varphi_{ij}=\left\{ \begin{array}{ll}
0 & (i,j)\in \SI \\
1 & (i,j) \in I^k\backslash \SI\\
\psi_{ij} & (i,j) \in [1,n]^2 \backslash I^k
\end{array} \right.$$
We can test that as $\psi\in \overline{F_{(E,E_\bullet)}^{k-1}}$, $\varphi\in \overline{F_{(E,E_\bullet)}^{k-1}}$ and as $\SI\ne \emptyset$, then $\prod_{(i,j)\in I^k} \varphi_{ij}=0$. On the other hand, for every $(i,j)\in I^k$
$$(\psi+\varphi)_{ij}\ne 0$$
so $\varphi+\psi \not\in \overline{G_{(E,E_\bullet)}^k}$. Now the equality
\begin{multline*}
\pi\left(F_{(E,E_\bullet)}^k\right)=\pi \left( \left \{\psi \in G_{(E,E_\bullet)}^k : \forall \varphi \in G_{(E,E_\bullet)}^k, \, \psi+\varphi \in G_{(E,E_\bullet)}^k \right \}\right)\\
=\left \{\psi \in \overline{G_{(E,E_\bullet)}^k} : \forall \varphi \in \overline{G_{(E,E_\bullet)}^k}, \, \psi+\varphi \in \overline{G_{(E,E_\bullet)}^k} \right \}=\overline{F_{(E,E_\bullet)}^k}
\end{multline*}
follows from surjectivity of $\pi:G^k_{(E,E_\bullet)} \twoheadrightarrow \overline{G_{(E,E_\bullet)}^k}$. The remaining equalities of \eqref{eq:recursiveAlgResult}
$$\begin{array}{l}
F_{(E,E_\bullet),y}^{r-1}=H^0(\SParEnd_0^{(x)}(E,E_\bullet)\otimes K(D-y))\\
F_{(E,E_\bullet),y}^r=H^0(\SParEnd_0(E,E_\bullet)\otimes K(D-x-y))
\end{array}$$
follow from the given ones using the same argument as the one used for Lemma \ref{lemma:recoverSParEndsections}, taking into account that, as we have already proven, the assumption $g\ge 6$ implies that the following morphisms are surjective for every  $y\in X\backslash D$
$$ \begin{array}{l}
H^0(\SParEnd_0(E,E_\bullet)\otimes K(D-x)) \twoheadrightarrow \SParEnd_0(E,E_\bullet)\otimes K(D-x)|_y\\
H^0(\SParEnd_0^{(x)}(E,E_\bullet) \otimes K(D)) \twoheadrightarrow \SParEnd_0^{(x)}(E,E_\bullet) \otimes K(D)|_y
\end{array}$$
\end{proof}

\begin{lemma}
\label{lemma:permutation}
Let $\sigma:[1,r]\longrightarrow [1,r]$ be a permutation such that
\begin{enumerate}
\item $\sigma(i)\ge i+k$ for every $i\le r-k$
\item $\sigma(i)\ge i+k-r$ for every $i>r-k$
\end{enumerate}
Then
$$\sigma(i)=\left \{\begin{array}{ll}
i+k & i\le r-k\\
i+k-r & i>r-k
\end{array}\right.$$
\end{lemma}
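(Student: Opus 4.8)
The plan is to reduce the claim to a termwise comparison of $\sigma$ against the explicit candidate map, exploiting that any permutation of $[1,r]$ has values summing to $r(r+1)/2$. First I would introduce the reference function
$$\tau(i)=\begin{cases} i+k & i\le r-k,\\ i+k-r & i>r-k,\end{cases}$$
and observe that the two hypotheses on $\sigma$ say exactly that $\sigma(i)\ge \tau(i)$ for every $i\in[1,r]$: for $i\le r-k$ this is hypothesis (1), and for $i>r-k$ this is hypothesis (2). Thus the lemma is precisely the statement that this family of lower bounds forces $\sigma=\tau$, and the desired conclusion is simply $\sigma=\tau$.

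Next I would compare the two sums. Since $\sigma$ is a permutation,
$$\sum_{i=1}^{r}\sigma(i)=\sum_{i=1}^{r} i=\frac{r(r+1)}{2}.$$
On the other hand, I would compute $\sum_{i=1}^{r}\tau(i)$ directly: adding $k$ to each of the $r$ entries contributes $rk$, while the correction $-r$ is applied exactly to the $k$ indices with $i>r-k$, contributing $-rk$. These cancel, so $\sum_{i=1}^{r}\tau(i)=\tfrac{r(r+1)}{2}$ as well. (As a consistency check one also sees that $\tau$ is itself the permutation given by reduction of $i+k$ modulo $r$ into $[1,r]$, though this fact is not logically needed for the argument.)

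Finally I would conclude: since $\sigma(i)-\tau(i)\ge 0$ for every $i$ and $\sum_{i=1}^{r}\big(\sigma(i)-\tau(i)\big)=0$, every summand must vanish, giving $\sigma(i)=\tau(i)$ for all $i$, which is the assertion. There is no real obstacle here; the only points requiring care are matching the two hypotheses to the single inequality $\sigma\ge\tau$ on the nose, and the bookkeeping in the index split at $i=r-k$ when evaluating $\sum_i\tau(i)$.
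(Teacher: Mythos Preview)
Your argument is correct. The key observation that the hypotheses amount to $\sigma(i)\ge\tau(i)$ for all $i$, combined with the equality of the two sums $\sum_i\sigma(i)=\sum_i\tau(i)=r(r+1)/2$, forces termwise equality. The bookkeeping in your computation of $\sum_i\tau(i)$ is fine.

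This is a genuinely different route from the paper's own proof. The paper argues by a two-stage downward induction: first it shows $\sigma(i)\le i+k$ for $i\le r-k$ by starting at $i=r-k$ (where $\sigma(r-k)\le r$ is automatic) and then, once $\sigma(i)=i+k$ is known for all $i$ in an upper segment $(j,r-k]$, the values $j+k+1,\ldots,r$ are already used by $\sigma$, forcing $\sigma(j)\le j+k$. A second pass handles $i>r-k$ in the same way after the first range is pinned down. Your summation trick bypasses this induction entirely and yields the conclusion in one stroke; it is shorter and arguably more transparent. The paper's inductive approach, on the other hand, is more self-contained in the sense that it never needs to recognize $\tau$ as a permutation or compute its total, and it makes visible the mechanism by which each value is successively forced.
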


\begin{proof}
Let us prove by induction that $\sigma(i)\le i+k$ for $i\le r-k$. For $i=r-k$, we have that
$$\sigma(r-k) \le r=r-k+k$$
Let $j<r-k$ and let us assume that it is true for all $i$ with $r-k\ge i> j$. Then $\sigma(i)=i+k$ for $r-k\ge i >j$. Therefore, the elements $[j+k+1,r]$ have been selected  by the permutation, so $\sigma(j)\not\in [j+k+1,r]$, so $\sigma(j)\le j+k$.
Once we know that $\sigma(i)=i+k$ for $i\le r-k$, let us prove by induction that $\sigma(i)\le i+k-r$ for every $i>r-k$. As the elements $[k+1,r]$ have already been selected by the permutation, we know that $\sigma(i)\in [i+k-r,k]$ for every $i>r-k$. For $i=r$, we have $\sigma(r)\le k=r+k-r$.
Let $j>r-k$ and suppose that it is true for every $r\ge i>j$. Then $\sigma(i)=i+k-r$ for every $i>j$. Therefore, the elements $[j+k-r+1,k]$ have already been selected by the permutation and we get $\sigma(j)\le j+k-r$.
\end{proof}

\begin{lemma}
\label{lemma:recoverNilpotent}
Suppose that $g\ge 4$. For every $x\in X$, and every $k>1$, the linear subspace
$$H^0(K^kD^{k-1}(-kx))\subseteq W_k$$
is preserved by the linear map $f_k:W_k\longrightarrow W_k$.
\end{lemma}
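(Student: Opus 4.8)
The strategy is to run the analogue of Lemma \ref{lemma:preserveWr}, but the ingredient that replaces the dual variety computation available on $W_r$ is the bundle isomorphism
$$\Phi_{\SParEnd_0}\colon \SParEnd_0(E,E_\bullet)\cong \SParEnd_0(E',E'_\bullet)$$
produced, for a generic stable $(E,E_\bullet)$ with $\Phi(E,E_\bullet)=(E',E'_\bullet)$, in Lemma \ref{lemma:recoverSParEnd} (this is why the hypothesis is only $g\ge 4$ rather than the $g\ge 6$ of Lemma \ref{lemma:recoverParEnd}). The decisive feature is that $\Phi_{\SParEnd_0}$ is a genuine morphism of sheaves over the whole of $X$, so the induced map $d(\Phi^{-1})$ on sections of $\SParEnd_0\otimes K(D)$ is local and commutes with the operation of taking $(k-1)$-jets at a point. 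Concretely, fix $x\in U=X\setminus D$; if two fields $\psi,\tilde\psi\in H^0(\SParEnd_0(E,E_\bullet)\otimes K(D))$ agree modulo $\fm_x^{k}$, then so do $d(\Phi^{-1})\psi$ and $d(\Phi^{-1})\tilde\psi$, because $\Phi_{\SParEnd_0}$ is an $\SO_X$-linear bundle map near $x$.

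I would combine this with the relation $h_k\circ d(\Phi^{-1})=f_k\circ h_k$ of Corollary \ref{cor:preservehr}. Since $h_k$ is fibrewise a homogeneous degree $k$ polynomial in the field (the $k$-th coefficient of the characteristic polynomial), two fields agreeing modulo $\fm_x^{k}$ have values of $h_k$ that agree modulo $\fm_x^{k}$. Thus, for $\psi\equiv\tilde\psi\pmod{\fm_x^{k}}$, the section $w:=h_k(\psi)-h_k(\tilde\psi)$ lies in $H^0(K^kD^{k-1}(-kx))$, and applying the same observation to $d(\Phi^{-1})\psi\equiv d(\Phi^{-1})\tilde\psi\pmod{\fm_x^{k}}$ shows that $f_k(w)=h_k(d(\Phi^{-1})\psi)-h_k(d(\Phi^{-1})\tilde\psi)$ also lies in $H^0(K^kD^{k-1}(-kx))$. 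Letting $V_{k,x}\subseteq H^0(K^kD^{k-1}(-kx))$ denote the linear span of all such differences $w$ (as $(E,E_\bullet)$ ranges over generic bundles and $\psi,\tilde\psi$ over $k$-jet-agreeing pairs), we obtain $f_k(V_{k,x})\subseteq H^0(K^kD^{k-1}(-kx))$.

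The step I expect to be the main obstacle is proving that these differences fill the osculating subspace, i.e. $V_{k,x}=H^0(K^kD^{k-1}(-kx))$; this is exactly where the global geometry and the genus hypothesis re-enter. Writing $\tilde\psi=\psi+\eta$ with $\eta\in H^0(\SParEnd_0(E,E_\bullet)\otimes K(D)(-kx))$, the leading term of $w$ is the differential $Dh_k|_\psi(\eta)$, which automatically vanishes to order $\ge k$ at $x$ (each $\partial h_k/\partial\psi_{ij}$ is a polynomial in $\psi$ multiplied by the order $\ge k$ quantity $\eta_{ij}$). The plan is to deduce the required surjectivity from the dominance and equidimensionality of the parabolic Hitchin map (Corollaries \ref{cor:HitchinDominant} and \ref{cor:HitchinEquidimensional}), which make $Dh_k|_\psi$ surjective onto $W_k$ for a generic pair $(E,E_\bullet),\psi$, together with the cohomology vanishing supplied by Lemma \ref{lemma:ParEndNoSections} (this is the point at which $g\ge 4$ is used): comparing the jet exact sequences for $\SParEnd_0(E,E_\bullet)\otimes K(D)$ and for $K^kD^{k-1}$, the restriction of $Dh_k|_\psi$ to fields vanishing to order $k$ at $x$ surjects onto $H^0(K^kD^{k-1}(-kx))$. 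Granting this, $f_k\big(H^0(K^kD^{k-1}(-kx))\big)\subseteq H^0(K^kD^{k-1}(-kx))$, and equality follows because $f_k$ is a linear isomorphism. The delicate point to watch here is the bookkeeping of the vanishing theorem for the order $k$ twist, and I would if necessary reorganize the argument so that only a second-order (tangency) version is needed pointwise, recovering the higher osculating subspaces afterwards from the pointwise-fixed curve $X\subset\PP(W_k^*)$ exactly as in the conclusion of Lemma \ref{lemma:preserveWr}.

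Finally I would upgrade the statement from $x\in U$ to every $x\in X$. The assignment $x\mapsto H^0(K^kD^{k-1}(-kx))$ cuts out a saturated coherent subsheaf of the trivial bundle $W_k\otimes_{\CC}\SO_X$, and $f_k\otimes\id$ is an automorphism of $W_k\otimes_{\CC}\SO_X$ which, by the previous paragraphs, preserves this subsheaf over the dense open set $U$. Exactly as in the saturation arguments used in Lemma \ref{lemma:recoverSParEnd}, an automorphism preserving a saturated subsheaf over a dense open subset preserves it everywhere, so $f_k\big(H^0(K^kD^{k-1}(-kx))\big)=H^0(K^kD^{k-1}(-kx))$ for all $x\in X$ and all $k>1$, as required.
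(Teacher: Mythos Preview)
Your approach has the right structure and correctly identifies that the bundle isomorphism $\Phi_{\SParEnd_0}$ from Lemma \ref{lemma:recoverSParEnd} is the key input, but you are working much harder than necessary and your surjectivity step has a genuine gap under the hypothesis $g\ge 4$.

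The paper's argument is far more direct. The crucial observation you are missing is that you should take $\tilde\psi=0$ and let $\psi$ vanish only to \emph{order one} at $x$, not order $k$. If $\psi\in H^0(\SParEnd_0(E,E_\bullet)\otimes K(D-x))$, then since $h_k$ is homogeneous of degree $k$ in the entries of the field, $h_k(\psi)$ automatically vanishes to order $k$ at $x$, i.e.\ lies in $H^0(K^kD^{k-1}(-kx))$. The surjectivity of the span onto $H^0(K^kD^{k-1}(-kx))$ then comes for free from Corollary \ref{cor:HitchinDominant} applied to $L=K(D-x)$ (noting $L^k(-D)=K^kD^{k-1}(-kx)$): the Hitchin images $h(H^0(\SParEnd_0(E,E_\bullet)\otimes K(D-x)))$ span $\bigoplus_k H^0(K^kD^{k-1}(-kx))$ as $(E,E_\bullet)$ ranges over any nonempty open set. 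Since Lemma \ref{lemma:recoverSParEnd} already gives a sheaf isomorphism, $d(\Phi^{-1})$ carries $H^0(\SParEnd_0(E,E_\bullet)\otimes K(D-x))$ to $H^0(\SParEnd_0(E',E'_\bullet)\otimes K(D-x))$ for every $x\in X$; hence $f$ preserves $\bigoplus_k H^0(K^kD^{k-1}(-kx))$, and diagonality (Lemma \ref{lemma:recoverHitchin}) finishes. No separate saturation step is needed.

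By contrast, your plan requires $\eta\in H^0(\SParEnd_0(E,E_\bullet)\otimes K(D)(-kx))$ and asks that $Dh_k|_\psi$ restricted to such $\eta$ surject onto $H^0(K^kD^{k-1}(-kx))$. Controlling this would need $H^1$-vanishing for an order-$k$ twist, which via Lemma \ref{lemma:ParEndNoSections} demands $g\ge r(k+1)/(r-1)$; for $k$ up to $r$ this exceeds $g\ge 4$. Your fallback to a ``tangency version'' is headed in the right direction but still does not hit the simple homogeneity trick that makes the paper's argument go through with only order-one vanishing and the off-the-shelf dominance statement of Corollary \ref{cor:HitchinDominant}.
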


\begin{proof}
Let $\SU\subset\SM(r,\alpha,\xi)$ and $\SU'\subset \SM(r,\alpha',\xi')$ be the open nonempty subsets of generic parabolic vector bundles in the sense of Lemma \ref{lemma:ParEndNoSections}. Let $\SV=\Phi^{-1}(\SU)\cap \SU'$ and let $\SV'=\Phi(\SV)\subseteq \SU'$. They are also nonempty open subsets of $\SM(r,\alpha,\xi)$ and $\SM(r,\alpha',\xi')$ respectively. As we assumed $g\ge 3$, we have
$$r\deg(K(D-x))=r(2g-3+n)\ge r(2g-2) \ge 2(2g-2)>2g$$
Therefore, we can apply Corollary \ref{cor:HitchinDominant} to $L=K(D-x)$ and the open subsets $U'$ and $U''$. Then we obtain that the linear subspace
$$\bigoplus_{k=2}^r H^0(K^kD^{k-1}(-kx))\subseteq W$$
is the space generated by the images $h(H^0(\SParEnd_0(E,E_\bullet)\otimes K(D-x)))$ both when $(E,E_\bullet)$ runs over $\SV$ and when $(E,E_\bullet)$ runs over $\SV'$.

By Lemma \ref{lemma:recoverSParEnd}, for every $(E,E_\bullet)\in \SV$, if $(E',E'_\bullet)=\Phi(E,E_\bullet)\in \SV'$, then the image of $H^0(\SParEnd_0(E,E_\bullet)\otimes K(D-x))$ by $d(\Phi^{-1})$ is $H^0(\SParEnd_0(E',E'_\bullet)\otimes K(D-x))$. As $\Phi(\SV)=\SV'$, then union of the images $h(H^0(\SParEnd_0(E,E_\bullet)\otimes K(D-x)))$ for $(E,E_\bullet)\in \SV$ is the same as the union of the images $h(H^0(\SParEnd_0(E',E'_\bullet)\otimes K(D-x)))$ for $(E',E'_\bullet)\in \SV'$, so $f:W\to W$ preserves the subspace $\bigoplus_{k=2}^r H^0(X,K^kD^{k-1}(-kx))\subseteq W$.

Finally, the result follows as a consequence of Lemma \ref{lemma:recoverHitchin}, as the map $f:W\to W$ is diagonal with respect to the decomposition $W=\bigoplus_{k=2}^r W_k$.
\end{proof}

For $k>1$, the curve $X$ is embedded in $\PP(W_k)$ via the linear system $|K^kD^{k-1}|$ and the osculating $k$-space at each point $x\in X$ is given by
$$\op{Osc}_k(x)=\PP\left( \ker \left(H^0(K^kD^{k-1})^\vee \to H^0(K^kD^{k-1}(-kx))^\vee\right) \backslash \{0\} \right)$$

The previous corollary, together with Lemma \ref{lemma:recoverSParEnd} proves that the morphism
$$\PP(f_k):\PP\left(H^0(K^kD^{k-1})^\vee\backslash \{0\}\right) \longrightarrow \PP\left(H^0(K^kD^{k-1})^\vee\backslash \{0\}\right)$$
preserves $\op{Osc}_k(x)$ for all $x\in X$. Now, we use the following Lemma

\begin{lemma}
\label{lemma:preserveOsculating}
Let $X\hookrightarrow \PP^N$ be an irreducible smooth complex projective curve embedded in the projective space. If $\varphi\in \PGL(N+1)$ is an isomorphism preserving $\op{Osc}_k:X\to \Gr(k+1,N+1)$ for some $k$, then it preserves $\op{Osc}_k:X\to \Gr(k+1,N+1)$ for every $k$.
\end{lemma}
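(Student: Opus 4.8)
The plan is to show that the two neighbouring osculating spaces $\op{Osc}_{k+1}$ and $\op{Osc}_{k-1}$ can each be reconstructed from the family $\{\op{Osc}_k(x)\}_{x\in X}$ by purely projective-linear operations (sum, intersection and limit in the Grassmannian), so that any $\varphi\in\PGL(N+1)$ preserving $\op{Osc}_k$ automatically preserves $\op{Osc}_{k\pm1}$; a straightforward induction then reaches every $j$ from $0$ to $N$. First I would record the local picture. Fix a general point $x_0\in X$, choose a local analytic lift $v(t)\in\CC^{N+1}$ of a parametrization of $X$ near $x_0$, and write $v_j=v^{(j)}(0)$. Since the embedding is nondegenerate and the hyperosculation points (where the osculating flag degenerates) form a finite set in characteristic zero, at a general $x_0$ the vectors $v_0,\dots,v_N$ form a basis and $\op{Osc}_j(x_0)=\langle v_0,\dots,v_j\rangle$ for all $j$. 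Because $\op{Osc}_k$ is generically injective, $\varphi$ preserving the image $\op{Osc}_k(X)\subset\Gr(k+1,N+1)$ induces a rational, hence regular, self-map $\rho\colon X\to X$ with $\varphi(\op{Osc}_k(x))=\op{Osc}_k(\rho(x))$, and $\rho\in\Aut(X)$ since $\varphi$ is invertible.

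Second, I would establish the two reconstruction formulas, valid at every general $x$,
$$\op{Osc}_{k+1}(x)=\lim_{x'\to x}\bigl(\op{Osc}_k(x)+\op{Osc}_k(x')\bigr),\qquad \op{Osc}_{k-1}(x)=\lim_{x'\to x}\bigl(\op{Osc}_k(x)\cap\op{Osc}_k(x')\bigr),$$
both limits taken in the relevant Grassmannian. For the first, the vector $\tfrac1\epsilon\bigl(v^{(k)}(\epsilon)-v^{(k)}(0)\bigr)$ lies in $\op{Osc}_k(x)+\op{Osc}_k(x')$ and tends to $v_{k+1}$ as $\epsilon\to0$, so the limiting sum contains $\langle v_0,\dots,v_{k+1}\rangle=\op{Osc}_{k+1}(x)$, and the dimension count $(k+1)+(k+1)-(k+2)$ shows this is an equality. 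For the second I would use the decomposable vector $\Omega_k(\epsilon)=v(\epsilon)\wedge\cdots\wedge v^{(k)}(\epsilon)$, for which $\Omega_k(\epsilon)=\Omega_k(0)+\epsilon\,v_0\wedge\cdots\wedge v_{k-1}\wedge v_{k+1}+O(\epsilon^2)$. A vector $w=\sum_{j\le k}a_jv_j\in\op{Osc}_k(x)$ lies in $\op{Osc}_k(x')$ iff $w\wedge\Omega_k(\epsilon)=0$; since $w\wedge\Omega_k(0)=0$ identically on $\op{Osc}_k(x)$, dividing by $\epsilon$ and letting $\epsilon\to0$ leaves the condition $w\wedge\bigl(v_0\wedge\cdots\wedge v_{k-1}\wedge v_{k+1}\bigr)=0$, i.e. $a_k=0$. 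Hence the limiting intersection is $\langle v_0,\dots,v_{k-1}\rangle=\op{Osc}_{k-1}(x)$, and the same inclusion–exclusion count shows the intersection has the expected dimension $k$ for $x'$ near $x$, so the limit is genuine.

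Finally I would conclude by induction. A projective-linear $\varphi$ commutes with sums, intersections and limits of linear subspaces, and $\rho$ is continuous, so applying $\varphi$ to the two formulas yields $\varphi(\op{Osc}_{k\pm1}(x))=\op{Osc}_{k\pm1}(\rho(x))$ for general $x$; thus $\varphi$ preserves $\op{Osc}_{k\pm1}$ on a dense open subset of $X$. Since each $\op{Osc}_j\colon X\to\Gr(j+1,N+1)$ is a morphism and $\varphi$ acts algebraically on the Grassmannian, the relation $\varphi(\op{Osc}_{k\pm1}(x))\in\op{Osc}_{k\pm1}(X)$ extends to all of $X$ by Zariski density. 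Iterating upward to $j=N$ and downward to $j=0$ then covers every $k$, which proves the lemma. The main obstacle is the downward intersection formula: one must check both that the intersection has constant dimension $k$ for $x'$ near $x$, so that the limit exists and is a genuine $k$-plane, and that the finitely many hyperosculation points are avoided; this is precisely where the nondegeneracy of the embedding and the characteristic-zero hypothesis are used.
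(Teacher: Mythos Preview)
Your overall strategy---recover $\op{Osc}_{k\pm1}$ from the family $\{\op{Osc}_k(x)\}_{x\in X}$ by projective-linear operations and then induct---is exactly the idea behind the result the paper simply cites from \cite{BGM12}. However, the two reconstruction formulas you state are not correct as limits of set-theoretic sums and intersections. Take $k=1$ with $N\ge 3$ at a general point: reducing $v(\epsilon),v'(\epsilon)$ modulo $\langle v_0,v_1\rangle$ one finds that $\op{Osc}_1(x)+\op{Osc}_1(x')$ has dimension $4$, not $3$, for small $\epsilon\neq 0$, and its limit in $\Gr(4,N+1)$ is $\op{Osc}_3(x)$, not $\op{Osc}_2(x)$. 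Dually, $\op{Osc}_1(x)\cap\op{Osc}_1(x')=0$ for small $\epsilon\neq 0$, so there is no $1$-dimensional ``limit of intersections'' to yield $\op{Osc}_0(x)$. In general $\dim\bigl(\op{Osc}_k(x)+\op{Osc}_k(x')\bigr)=\min(2k+2,N+1)$ for nearby $x'\neq x$, so the inclusion--exclusion count $(k+1)+(k+1)-(k+2)$ you invoke holds only for $k=0$. The obstacle you flag at the end---that the intersection has constant dimension $k$---is not merely delicate; it is false.

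The fix is already present in your own computation of $\Omega_k'(0)=v_0\wedge\cdots\wedge v_{k-1}\wedge v_{k+1}$. That derivative is exactly the tangent vector to the curve $\op{Osc}_k:X\to\Gr(k+1,N+1)$ at $x$, regarded as an element of $T_{\op{Osc}_k(x)}\Gr\cong\Hom\bigl(\op{Osc}_k(x),\CC^{N+1}/\op{Osc}_k(x)\bigr)$: it is the rank-one map sending $v_j\mapsto 0$ for $j<k$ and $v_k\mapsto \overline{v_{k+1}}$. Its kernel is $\op{Osc}_{k-1}(x)$ and the preimage of its image is $\op{Osc}_{k+1}(x)$. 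Since any $\varphi\in\PGL(N+1)$ acts functorially on $\Gr(k+1,N+1)$ and its tangent bundle, if $\varphi$ preserves the morphism $\op{Osc}_k$ it preserves this rank-one map and hence both $\op{Osc}_{k\pm1}$. Replacing your limit formulas by this differential description makes the induction go through, and this is essentially the content of the argument in \cite{BGM12} that the paper invokes.
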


\begin{proof}
This is a direct consequence of the following fact proved in \cite[p. 1250052-23]{BGM12}. Let $X\hookrightarrow \PP^N$ be an embedding of an irreducible smooth complex projective curve $X$ in a projective space and let $\op{Osc}_k:X\to \Gr(k+1,N+1)$ be the map sending each $x\in X$ to the osculating $k$-space of $X$ in $\PP^N$. Then $\op{Osc}_k$ uniquely determines the embedding $X\hookrightarrow \PP^N$. Therefore the following diagram commutes
\begin{eqnarray*}
\xymatrix{
\PP^N  \ar[r]^{\varphi} & \PP^N\\
X \ar@{^(->}[u] \ar@{=}[r] & X \ar@{^(->}[u]
}
\end{eqnarray*}
For each $k'>0$ and each $x\in X$ we can identify $\op{Osc}_{k'}(x)$ with the intersection of all hyperplanes in $\PP^N$ which intersect with $X$ at $x$ with multiplicity at least $k'$. As the embedded curve $X$ is preserved by $\varphi$ and $\varphi\in \PGL(N+1)$ is a linear isomorphism it is clear that $\varphi$ preserves the set of hyperplanes with this property and, therefore, it preserves its intersection $\op{Osc}_{k'}(x)$ for each $k'$ and each $x\in X$.
\end{proof}

As $\PP(f_k)$ preserves $\op{Osc}_k$, it preserves $\op{Osc}_1$, so $f_k$ must preserve the hyperplanes
$$H^0(K^kD^{k-1}(-x)) \subset H^0(K^kD^{k-1})$$
for every $x\in X$.

In particular, this implies that for every $x\in X$ and generic $(E,E_\bullet)$ the image of the set
$$N_{E,x}=\{\psi\in H^0(\SParEnd_0(E,E_\bullet)\otimes K(D)) : \forall k>1 \,\, h_k(\psi)\in H^0(K^kD^{k-1}(-x))\}$$
by $d(\Phi^{-1})=H^0(\Phi_{\SParEnd_0}\otimes \id)$ is 
$$N_{E',x}=\{\psi\in H^0(\SParEnd_0(E',E'_\bullet)\otimes K(D)) : \forall k>1 \,\, h_k(\psi)\in H^0(K^kD^{k-1}(-x))\}$$
For $x\in U$, the set $N_{E,x}$ coincides with the preimage of the nilpotent cone under the surjective map
$$H^0(\SParEnd_0(E,E_\bullet)\otimes K(D))  \twoheadrightarrow \SParEnd_0(E,E_\bullet)\otimes K(D)|_{x}$$
Taking the image of $N_{E,x}$ under the evaluation map we get a subset $\SN_{E,x}\subset \SParEnd_0(E,E_\bullet)\otimes K(D)|_{x}$. Varying $x$ over $U$, we get a subscheme
$$\SN_E|_U \hookrightarrow \SParEnd_0(E,E_\bullet)|_U$$
such that $\Phi_{\SParEnd_0}|_U(\SN_E|_U)=\SN_{E'}|_U$.

Therefore, if $g\ge 6$, $\Phi_{\ParEnd_0}|_U: \ParEnd_0(E,E_\bullet)|_U \to \ParEnd_0(E',E'_\bullet)|_U$ is an isomorphism of vector bundles that preserves the nilpotent cone. Therefore, it is an isomorphism of $\GL(\parsl)|_U \cong \GL(\ssl)\times U$ torsors that preserves the nilpotent cone. Let $N<\ssl$ denote the subset of nilpotent matrices. Then, let us denote by
$$G_N=\{g\in \GL(\ssl) : g(N)=N\}<\GL(\ssl)$$
the subgroup of invertible linear transformations of $\ssl$ which preserve the nilpotent matrices. As $\Phi_{\ParEnd_0}|_U$ preserves the nilpotent cone, it is an isomorphism of $G_N$-torsors. Now, we can use the following theorem from Botta, Pierce and Watkins \cite{BPW83},

\begin{theorem}
The group $G_N$ is generated by
\begin{enumerate}
\item Inner automorphisms $X\mapsto S^{-1} X S$
\item The maps $X\mapsto aX$ for some $a\ne 0$
\item The map $X\mapsto X^t$ that sends a matrix $X$ to its transpose
\end{enumerate}
\end{theorem}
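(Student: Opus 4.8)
The plan is to prove the nontrivial inclusion, namely that every $g\in G_N$ lies in the group generated by the three listed families; the reverse inclusion is immediate, since conjugation preserves the characteristic polynomial, and scaling by $a\neq 0$ and transposition each send nilpotents to nilpotents. So fix $g\in G_N$. Since $g$ is an invertible $\CC$-linear map with $g(N)=N$, it is an algebraic automorphism of the affine variety $N\subset\ssl_r(\CC)$, hence preserves the whole filtration by iterated singular loci $N\supset \operatorname{Sing}(N)\supset \operatorname{Sing}^2(N)\supset\cdots$. Invoking the standard stratification of the nilpotent cone by $\PGL_r$-orbits (indexed by partitions of $r$ in the dominance order, with unique minimal nonzero orbit $\mathcal{O}_{\min}$ of Jordan type $(2,1^{r-2})$, contained in the closure of every nonzero orbit), the last nonzero term of this filtration is the cone $\overline{\mathcal{O}_{\min}}$, whose singular locus is just $\{0\}$ because it is the affine cone over a smooth projective variety. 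Thus $g$ preserves the set of rank-one nilpotents $\mathcal{O}_{\min}=\{vw^t : w^tv=0,\ v\neq 0\neq w\}$.

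Next I would use this to pin $g$ down modulo the three generators. Projectivising, $\PP(\mathcal{O}_{\min})$ is isomorphic to the incidence variety $\mathcal{I}=\{([v],[w])\in\PP^{r-1}\times(\PP^{r-1})^\vee : w^tv=0\}$ via $[vw^t]\mapsto(\im,\ker)$, and the two projections of $\mathcal{I}$ correspond to the two rulings of $\mathcal{O}_{\min}$ by maximal linear subspaces (fixing $[v]$, respectively fixing $[w]$). The automorphism of $\mathcal{I}$ induced by $g$ either preserves these two fibrations or interchanges them; after composing $g$ with the transpose if necessary, I may assume it preserves each. It then descends to a collineation of $\PP^{r-1}$, which by the fundamental theorem of projective geometry is induced by some $S\in\GL_r$, the $\CC$-linearity of $g$ forcing the collineation to be algebraic and thereby excluding a field automorphism. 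Composing with the inner automorphism $X\mapsto S X S^{-1}$, I reduce to the case where $g$ fixes every rank-one nilpotent up to a scalar, say $g(vw^t)=c(v,w)\,vw^t$.

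A short linearity computation then shows $c$ is constant. For fixed $v$ and linearly independent $w_1,w_2\in v^{\perp}$, the element $v(w_1+w_2)^t$ is again a rank-one nilpotent, so applying $g$ and comparing the independent summands $vw_1^t$ and $vw_2^t$ yields $c(v,w_1)=c(v,w_2)=c(v,w_1+w_2)$; by symmetry $c$ is independent of $v$ as well, hence equal to a single scalar $a$. Therefore $g$ agrees with $X\mapsto aX$ on all rank-one nilpotents, and since these span $\ssl_r$ (the $E_{ij}$ with $i\neq j$, together with $(e_i+e_j)(e_i-e_j)^t=(E_{ii}-E_{jj})+(E_{ji}-E_{ij})$, already span it), linearity forces $g=a\cdot\id$ on all of $\ssl_r$. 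Unwinding the reductions, the original $g$ equals a scalar multiple of an inner automorphism composed with a power of the transpose, which is the assertion.

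The main obstacle will be the second step: identifying $\Aut(\mathcal{I})$ with $\PGL_r\rtimes\ZZ/2\ZZ$ and applying the fundamental theorem of projective geometry cleanly, in particular verifying that the ruling-swap is realised exactly by transposition and that $\CC$-linearity removes all semilinear (field-automorphism) ambiguity. The degenerate small case $r=2$, where every nonzero nilpotent is already of rank one and $\mathcal{I}$ collapses, should be handled by a direct computation instead. The first step rests on the known orbit geometry of the nilpotent cone, which I would cite rather than reprove.
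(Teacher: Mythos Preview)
The paper does not prove this theorem; it is quoted as an external result of Botta, Pierce, and Watkins \cite{BPW83} and used as a black box in the argument following Lemma~\ref{lemma:recoverNilpotent}. There is therefore no in-paper proof to compare your attempt against.

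Your sketch is an independent and essentially sound argument along geometric lines (closer in spirit to the Lie-theoretic viewpoint than the original linear-algebra proof in \cite{BPW83}). A few points you should tighten if you want it to stand on its own. First, the claim that iterated singular loci of $N$ terminate exactly at $\overline{\mathcal{O}_{\min}}$ relies on the fact that for each nilpotent orbit closure $\overline{\mathcal{O}}$ one has $\operatorname{Sing}(\overline{\mathcal{O}})=\overline{\mathcal{O}}\setminus\mathcal{O}$; this is true for $\ssl_r$ but deserves a citation rather than an assertion. Second, when you pass from ``$g$ preserves the two rulings of $\mathcal{I}$'' to ``$g$ is induced by some $S\in\PGL_r$'', you implicitly use that the induced maps on $\PP^{r-1}$ and $(\PP^{r-1})^\vee$ are dual to each other; this follows because the incidence relation $w^t v=0$ is preserved, but you should say so. Third, invoking the fundamental theorem of projective geometry is heavier than needed: since $g$ is $\CC$-linear and the rulings are linear subspaces of $\ssl_r$, the induced bijection on $\PP^{r-1}$ is already a morphism of varieties, hence lies in $\PGL_r$ directly. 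Finally, your spanning computation and the constancy-of-$c$ argument are correct, and you are right that $r=2$ collapses $\mathcal{I}$ and should be handled by hand.
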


Using the computation in \cite[Lemma 5.4]{BGM13}, we know that $\Aut(\ssl)$ is generated by inner automorphisms and the map $X\mapsto -X^t$. Therefore, we conclude that $G_N\cong \Aut(\ssl)\times \CC^*$. Thus, up to product by a morphism $U\longrightarrow \CC^*$, $\Phi_{\ParEnd_0}|_U$ is an isomorphism of $\Aut(\ssl)$-torsors, i.e., it is an automorphism of Lie algebra bundles.

\begin{lemma}
\label{lemma:recoverParEndalgebra}
Suppose that $g\ge 6$. Let $(E,E_\bullet)$ and $(E',E'_\bullet)$ be generic parabolic vector bundles such that $\Phi(E,E_\bullet)=(E',E'_\bullet)$. Then there exists a constant $\lambda\in \CC^*$ such that  the vector bundle isomorphism $\lambda\cdot \Phi_{\ParEnd_0}$ defined in Lemma \ref{lemma:recoverParEnd} is an isomorphism of Lie algebras bundles.
\end{lemma}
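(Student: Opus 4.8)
The plan is to pin down the scalar ambiguity left over from the torsor argument just completed. The discussion preceding the lemma produces, over $U=X\backslash D$, a factorisation $\Phi_{\ParEnd_0}|_U=\mu\cdot\Psi$, where $\Psi\colon\ParEnd_0(E,E_\bullet)|_U\to\ParEnd_0(E',E'_\bullet)|_U$ is an isomorphism of Lie algebra bundles (a section of the $\Aut(\ssl)$-torsor) and $\mu\colon U\to\CC^*$ is the regular function arising from the canonical projection $G_N\cong\Aut(\ssl)\times\CC^*\to\CC^*$. What remains is to show that $\mu$ is constant; the reciprocal of that constant will be the desired $\lambda$.

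First I would encode the failure of bracket-preservation globally. Since $\Phi_{\ParEnd_0}$ is a genuine vector bundle isomorphism on all of $X$ (Lemma \ref{lemma:recoverParEnd}) and the commutators on source and target are sheaf morphisms, the two maps
$$M,\Lambda\colon \textstyle\bigwedge^2\ParEnd_0(E,E_\bullet)\longrightarrow \ParEnd_0(E',E'_\bullet),\qquad M(a\wedge b)=\Phi_{\ParEnd_0}([a,b]),\quad \Lambda(a\wedge b)=[\Phi_{\ParEnd_0}(a),\Phi_{\ParEnd_0}(b)],$$
are defined over the whole curve. Because $\Psi$ preserves brackets and $\Phi_{\ParEnd_0}|_U=\mu\Psi$, one computes $\Lambda(a\wedge b)=\mu^2\Psi([a,b])$ and $M(a\wedge b)=\mu\Psi([a,b])$, so $\Lambda=\mu\,M$ over $U$. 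Thus $\mu$ is exactly the proportionality factor between two globally defined morphisms of vector bundles, and showing it extends regularly across $D$ will force it to be constant, as $X$ is irreducible and projective, so $\Gamma(X,\SO_X)=\CC$.

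The hard part will be controlling $M$ at the parabolic points. I would argue that $M$ does not vanish at any $x\in D$: the fibre $\ParEnd_0(E,E_\bullet)|_x$ is, for a full flag structure, the Lie algebra of traceless upper-triangular $r\times r$ matrices (the below-diagonal entries of $\parsl_x$ lie in $\fm$ and die in the fibre), which is non-abelian for every $r\ge 2$, so the fibre bracket is a nonzero bilinear map; composing it with the fibrewise isomorphism $\Phi_{\ParEnd_0}|_x$ shows $M|_x\neq 0$. Consequently, in a local trivialisation near $x$ some matrix entry $M_{ij}$ is nonvanishing at $x$, and the identity $\Lambda=\mu M$ on the punctured neighbourhood exhibits $\mu=\Lambda_{ij}/M_{ij}$ as a quotient of regular functions with nonvanishing denominator near $x$; hence $\mu$ extends holomorphically across each $x\in D$.

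Finally I would conclude. The extended $\mu$ is a global regular function on the irreducible projective curve $X$, hence a constant $c\in\CC^*$ (its value on $U$). Then $\Lambda=cM$ holds on the dense open $U$, and since both sides are sections of the torsion-free sheaf $\Hom(\bigwedge^2\ParEnd_0(E,E_\bullet),\ParEnd_0(E',E'_\bullet))$, the equality propagates to all of $X$. Setting $\lambda=c^{-1}$ and using $\lambda c=1$ one checks $[\lambda\Phi_{\ParEnd_0}(a),\lambda\Phi_{\ParEnd_0}(b)]=\lambda^2\Lambda(a\wedge b)=\lambda^2 c\,M(a\wedge b)=\lambda\,\Phi_{\ParEnd_0}([a,b])=(\lambda\Phi_{\ParEnd_0})([a,b])$ on all of $X$, so $\lambda\Phi_{\ParEnd_0}$ is bracket-preserving; being already a vector bundle isomorphism, it is an isomorphism of Lie algebra bundles.
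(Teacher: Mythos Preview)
Your argument is correct, but it reaches the constancy of the scalar $\mu$ by a different route than the paper. The paper observes that $\ParEnd_0(E,E_\bullet)$ and $\ParEnd_0(E',E'_\bullet)$ have the same degree, so $\det(\Phi_{\ParEnd_0})$ is a global section of $\SO_X$ and hence constant; since on $G_N\cong\Aut(\ssl)\times\CC^*$ the determinant is (up to a sign coming from the $\Aut(\ssl)$ factor) $\mu^{r^2-1}$, constancy of $\det(\Phi_{\ParEnd_0})$ forces $\mu$ to be constant on the connected open $U$. You instead bypass the determinant entirely: you package the two brackets into global bundle maps $M$ and $\Lambda$, exhibit $\mu=\Lambda/M$ on $U$, and then use that the fibre of $\ParEnd_0$ at a parabolic point is the (non-abelian) algebra of traceless upper-triangular matrices to show $M$ does not vanish there, so $\mu$ extends across $D$ and is constant. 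Your approach is a bit more hands-on but has the advantage of avoiding any analysis of how the determinant behaves on $\Aut(\ssl)$; the paper's approach is slicker once one accepts that piece. The second half---propagating the bracket identity from $U$ to all of $X$ via a torsion-free sheaf argument---is essentially the same in both proofs: the paper encodes the bracket as a section of $\ParEnd_0^\vee\otimes\ParEnd_0^\vee\otimes\ParEnd_0$ and compares $p_{(E,E_\bullet)}$ with $(\lambda\Phi_{\ParEnd_0})^*p_{(E',E'_\bullet)}$, which is exactly your $\Lambda=cM$ rewritten.
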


\begin{proof}
As $\ParEnd_0(E,E_\bullet)$ and $\ParEnd_0(E',E'_\bullet)$ have the same degree, $\det(\Phi_{\ParEnd_0})\in H^0(X,\SO_X)$. $X$ is projective and connected, so $\det(\Phi_{\ParEnd_0})$ is constant. The previous discussion shows that $\Phi_{\ParEnd_0}|_U$ is an isomorphism of $\left(\Aut(\ssl) \times \CC\right)$-torsors. As its determinant is constant, there exists a nonzero $\lambda\in \CC^*$ such that $\lambda \cdot \Phi_{\ParEnd_0}|_U$ is an isomorphism of $\Aut(\ssl)$-torsors, i.e., it is an isomorphism of Lie algebra bundles.

A Lie algebra structure on $\ParEnd_0(E,E_\bullet)$ is in particular a bilinear morphism
$$[\cdot,\cdot]:\ParEnd_0(E,E_\bullet) \otimes \ParEnd_0(E,E_\bullet) \longrightarrow \ParEnd_0(E,E_\bullet)$$
Therefore, the Lie algebra structure induced by endomorphism composition on $(E,E_\bullet)$ is represented by a section $$p_{(E,E_\bullet)}\in H^0(\ParEnd_0(E,E_\bullet)^\vee \otimes \ParEnd_0(E,E_\bullet)^\vee \otimes \ParEnd_0(E,E_\bullet))$$
Similarly, the Lie algebra structure on $(E',E'_\bullet)$ is represented by a section
$$p_{(E',E'_\bullet)}\in H^0(\ParEnd_0(E',E'_\bullet)^\vee \otimes \ParEnd_0(E',E'_\bullet)^\vee \otimes \ParEnd_0(E',E'_\bullet))$$
Through the isomorphism $\lambda\cdot\Phi_{\ParEnd_0}$, the section $p_{(E',E'_\bullet)}$ induces another section
$$(\lambda\cdot\Phi_{\ParEnd_0})^*p_{(E',E'_\bullet)}\in H^0(\ParEnd_0(E,E_\bullet)^\vee \otimes \ParEnd_0(E,E_\bullet)^\vee \otimes \ParEnd_0(E,E_\bullet))$$
Therefore, we obtain a section $p_{(E,E_\bullet)}-(\lambda\cdot\Phi_{\ParEnd_0})^*p_{(E',E'_\bullet)}$. As $\lambda\cdot\Phi_{\ParEnd_0}|_U$ is an isomorphism of Lie algebra sheaves, we obtain that $\left (p_{(E,E_\bullet)}-(\lambda\cdot\Phi_{\ParEnd_0})^*p_{(E',E'_\bullet)}\right)|_U=0$, so $p_{(E,E_\bullet)}-(\lambda\cdot\Phi_{\ParEnd_0})^*p_{(E',E'_\bullet)}=0$ and $\lambda\cdot\Phi_{\ParEnd_0}$ must be an isomorphism of Lie algebras.
\end{proof}

\begin{theorem}
\label{theorem:ExtendedTorelli}
Let $(X,D)$ and $(X',D')$ be two smooth projective curves of genus $g\ge 6$ and $g'\ge 6$ respectively with set of marked points $D\subset X$ and $D'\subset X'$. Let $\xi$ and $\xi'$ be line bundles over $X$ and $X'$ respectively, and let $\alpha$ and $\alpha'$ be full flag generic systems of weights over $(X,D)$ and $(X',D')$ respectively. Let
$$\Phi: \SM(X,r,\alpha,\xi)\stackrel{\sim}{\longrightarrow} \SM(X',r',\alpha',\xi')$$
be an isomorphism. Then
\begin{enumerate}
\item $r=r'$
\item $(X,D)$ is isomorphic to $(X',D')$, i.e., there exists an isomorphism $\sigma:X\stackrel{\sim}{\to} X'$ sending $D$ to $D'$.
\item There exists a basic transformation $T$ such that
\begin{itemize}
\item $\sigma^*\xi'\cong T(\xi)$
\item $\sigma^*\alpha'$ is in the same stability chamber as $T(\alpha)$.
\item For every $(E,E_\bullet)\in \SM(r,\alpha,\xi)$, $\sigma^*\Phi(E,E_\bullet) \cong T(E,E_\bullet)$
\end{itemize}
\end{enumerate}
\end{theorem}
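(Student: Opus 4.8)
The plan is to reduce the statement to the algebraic classification of Lemma~\ref{lemma:isoParEnd} and then to rigidify the resulting transformation. Parts (1) and (2) are immediate from the Torelli Theorem~\ref{theorem:Torelli}, which yields $r=r'$ together with an isomorphism $\sigma\colon(X,D)\stackrel{\sim}{\to}(X',D')$. Since the basic transformation $\Sigma_\sigma$ acts as pullback and (3) is a statement about $\sigma^*\Phi$, composing $\Phi$ with $\Sigma_\sigma$ lets me assume $X=X'$, $D=D'$ and $\sigma=\id$, so that the entire apparatus of Section~\ref{section:Automorphisms} applies; I then seek a basic transformation $T\in\ST$ with $\sigma^*\Phi(E,E_\bullet)\cong T(E,E_\bullet)$.

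First I would run the pointwise argument. For a generic $(E,E_\bullet)$ whose image $(E',E'_\bullet):=\Phi(E,E_\bullet)$ is also generic, Lemmas~\ref{lemma:recoverParEnd} and~\ref{lemma:recoverParEndalgebra} produce an isomorphism of Lie algebra bundles $\ParEnd_0(E,E_\bullet)\cong\ParEnd_0(E',E'_\bullet)$. Feeding this into the classification Lemma~\ref{lemma:isoParEnd}, I obtain for each such point a basic transformation $T_{(E,E_\bullet)}=(\id,s,L,H)\in\ST$ with $\Phi(E,E_\bullet)\cong T_{(E,E_\bullet)}(E,E_\bullet)$.

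The hard part will be the rigidity, namely showing that a single $T$ works on a dense open set; the key device is the determinant. Because $\Phi(E,E_\bullet)$ has determinant $\xi'$ while $T_{(E,E_\bullet)}(E,E_\bullet)$ has determinant $T_{(E,E_\bullet)}(\xi)$, every $T_{(E,E_\bullet)}$ satisfies $T_{(E,E_\bullet)}(\xi)\cong\xi'$. By the presentation in Proposition~\ref{prop:basicTransPresentation}, with $\sigma=\id$ fixed the condition $T(\xi)\cong\xi'$ leaves only finitely many candidates: the two signs $s$, the finitely many divisors $0\le H\le(r-1)D$, and then $L$ pinned to an $r$-torsion coset of $\Pic(X)$ by $L^r\cong\xi'\otimes\xi^{-1}(H)$. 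Hence the generic locus of $\SM(r,\alpha,\xi)$ is a \emph{finite} union $\bigcup_T Z_T$, where $Z_T=\{(E,E_\bullet):\Phi(E,E_\bullet)\cong T(E,E_\bullet)\}$; since $\SM(r,\alpha,\xi)$ is irreducible \cite{BY}, some $Z_T$ must be dense, and I fix this $T$.

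Finally I would promote dense agreement to agreement everywhere and extract the remaining two conditions. On the open set $\SW\subset\SM(r,\alpha,\xi)$ where $T(E,E_\bullet)$ is $\alpha'$-stable, both $\Phi$ and $T$ are morphisms into $\SM(r,\alpha',\xi')$ agreeing on the dense subset $Z_T$, hence on all of $\SW$. Transporting Corollary~\ref{cor:stabCodim2} through the isomorphism $T\colon\SM(r,\alpha,\xi)\cong\SM(r,T(\alpha),\xi')$ and comparing the weights $T(\alpha)$ and $\alpha'$ shows that the complement of $\SW$ has codimension at least $2$; therefore the two families $T(\SE,\SE_\bullet)$ and $\Phi^*(\SE',\SE'_\bullet)$ on $\SM(r,\alpha,\xi)\times X$, which coincide over $\SW$, must agree everywhere by the uniqueness of extensions Lemma~\ref{lemma:uniqueExtensionCodim2}. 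This gives $\sigma^*\Phi(E,E_\bullet)\cong T(E,E_\bullet)$ for all $(E,E_\bullet)$, proving (3), and taking determinants yields $\sigma^*\xi'\cong T(\xi)$. For the chamber condition, since $\Phi$ is an isomorphism onto $\SM(r,\alpha',\xi')$ every $\alpha'$-stable bundle equals $T(E,E_\bullet)$ for some $\alpha$-stable $(E,E_\bullet)$, hence is $T(\alpha)$-stable, while conversely every $T(\alpha)$-stable bundle of determinant $\xi'$ is $\alpha'$-stable; the two stable loci coincide, so $T(\alpha)$ and $\sigma^*\alpha'$ lie in the same stability chamber.
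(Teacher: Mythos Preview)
Your proposal is correct and follows essentially the same route as the paper: Torelli reduction, the Lie-algebra bundle isomorphism from Lemmas~\ref{lemma:recoverParEnd} and~\ref{lemma:recoverParEndalgebra}, the pointwise classification via Lemma~\ref{lemma:isoParEnd}, finiteness of the set $\ST_{\xi,\xi'}$ of candidate transformations, an irreducibility argument to pick one $T$, and then extension via Corollary~\ref{cor:stabCodim2} and Lemma~\ref{lemma:uniqueExtensionCodim2}. The paper phrases the irreducibility step slightly differently---it builds the automorphisms $\Psi_T=T\circ(\Phi')^{-1}$ on a common open set $\SV''$ (where bundles are stable for both weight systems) and uses that fixed-point loci are closed---whereas you work with the loci $Z_T$ directly; you should make explicit that $Z_T$ is closed in the generic locus (this is exactly the fixed-point argument once $T$ and $\Phi$ are viewed as morphisms into the same target), but the substance is identical.
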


\begin{proof}
Let $\Phi:\SM(X,r,\alpha,\xi) \longrightarrow \SM(X',r',\alpha',\xi')$ be an isomorphism. By Torelli Theorem \ref{theorem:Torelli}, we obtain that $r=r'$ and there exists an isomorphism $\sigma:(X,D)\stackrel{\sim}{\longrightarrow}(X',D')$. Pulling back by that isomorphism, we obtain an isomorphism
$$\Phi':\SM(X,r,\alpha,\xi) \longrightarrow \SM(X,r,\sigma^*\alpha',\sigma^*\xi)$$
From this point, all the moduli spaces will be constructed over the same curve $(X,D)$, so, in order to simplify the notation, from now on, we will denote $\SM(r,\alpha,\xi)=\SM(X,r,\alpha,\xi)$. Let $\xi''=\sigma^*\xi'$ and $\alpha''=\sigma^*\alpha'$. The differential of $\Phi'$ induces an isomorphism of the cotangent bundles $d(\Phi')^{-1}:T^*\SM(r,\alpha,\xi)\longrightarrow T^*\SM(r,\alpha'',\xi'')$. Let $h:T^*\SM(r,\alpha,\xi)\to W$ and $h'':T^*\SM(r,\alpha'',\xi'')\to W$ denote the Hitchin morphisms corresponding to each choice of the system of weights and determinant. Since both moduli spaces are built over the same marked curve $(X,D)$ for the same rank, the Hitchin space is the same for both moduli spaces. By Proposition \ref{prop:HitchinGlobalFunctions}, there exists a $\CC^*$-equivariant automorphism $f:W\longrightarrow W$ such that the following diagram commutes
\begin{eqnarray*}
\xymatrixcolsep{4pc}
\xymatrix{
T^*\SM(r,\alpha,\xi) \ar[r]^{d(\Phi')^{-1}} \ar[d]_{h} & T^*\SM(r,\alpha'',\xi'') \ar[d]^{h''} \\
W \ar[r]^{f} & W
}
\end{eqnarray*}
As $f$ is $\CC^*$-equivariant, it preserves the subspace of maximum decay $W_r\subset W$. Let $h_r:T^*\SM(r,\alpha,\xi)\to W_r$ (respectively $h_r'':T^*\SM(r,\alpha'',\xi'')\to W_r$) be the composition of $h$ with the projection to $W_r$. Let $f_r:W_r\to W_r$ be the restriction of $f$ to $W_r$. Then $f_r$ is linear and, by Corollary \ref{cor:preservehr} we have a diagram
\begin{eqnarray*}
\xymatrixcolsep{4pc}
\xymatrix{
T^*\SM(r,\alpha,\xi) \ar[r]^{d(\Phi')^{-1}} \ar[d]_{h_r} & T^*\SM(r,\alpha'',\xi'') \ar[d]^{h_r} \\
W_r \ar[r]^{f_r} & W_r
}
\end{eqnarray*}
By Lemma \ref{lemma:preserveWr} for every $k>0$ and every $x_0\in X$
$$f_r\left (H^0(K^rD^{r-1}(-kx_0))\right)=H^0(K^rD^{r-1}(-kx_0))$$

By Corollary \ref{cor:stabCodim2} and Lemma \ref{lemma:ParEndNoSections}, there exists an open nonempty subset $\SU\subseteq \SM(r,\alpha,\xi)$ (respectively $\SU'' \subseteq \SM(r,\alpha'',\xi'')$) parameterizing $\alpha''$-stable (respectively $\alpha$-stable) parabolic vector bundles $(E,E_\bullet)$ such that
$$H^1(\SParEnd_0(E,E_\bullet)\otimes K(D-x-y))=0$$
for every $x,y\in X$.  Let $\SV=\SU \cap (\Phi')^{-1}(\SU'')$ and $\SV''=\Phi'(\SV)$. By definition of $\SV''$, there is a natural identification between $\SV''$ and an open nonempty subset in $\SM(r,\alpha,\xi'')$. Let $(E,E_\bullet)\in \SM(r,\alpha,\xi)$ and let $\Phi'(E,E_\bullet)=(E'',E''_\bullet)\in \SM(r,\alpha,\xi'')$ be its image.
Therefore, we can apply Lemma \ref{lemma:recoverParEndalgebra} and we obtain that $\ParEnd_0(E,E_\bullet)$ is isomorphic to $\ParEnd_0(E'',E''_\bullet)$ as Lie algebra bundles. Then Lemma \ref{lemma:isoParEnd} proves that $(E',E'_\bullet)$ can be obtained from $(E,E_\bullet)$ as a combination of the following transformations
\begin{enumerate}
\item Tensorization with a line bundle over $X$, $(E,E_\bullet) \mapsto (E\otimes L, E_\bullet \otimes L)$
\item Dualization $(E,E_\bullet)\mapsto (E, E_\bullet)^\vee$
\item Hecke transformation at a parabolic point $x\in D$, $(E,E_\bullet) \mapsto \SH_x(E,E_\bullet)$.
\end{enumerate}
This means that $(E'',E''_\bullet)=T(E,E_\bullet)$ for some basic transformation $T=(\id,s,L,H)$. In particular, we obtain that $\xi''=T(\xi)$. As the set of possible values for $H$ in the choice of $T$ is finite and the $r$-torsion of the Jacobian $J(X)$ is finite, the space of basic transformations
$$\ST_{\xi,\xi''} = \{T=(\id,s,L,H)\in \ST | T(\xi)\cong \xi''\}$$
is finite. For every $T\in \ST_{\xi,\xi''}$, let us consider the composition of isomorphisms $T\circ (\Phi')^{-1}:\SM(r,\alpha'',\xi'') \to \SM(r,T(\alpha),\xi'')$. By construction of $\SV$ and $\SV'$, it sends $\SV''$ to $T(\SV)$
\begin{eqnarray*}
\xymatrixcolsep{4pc}
\xymatrix{
\SM(r,\alpha'',\xi'') \ar[r]^{(\Phi')^{-1}} & \SM(r,\alpha,\xi) \ar[r]^{T} & \SM(r,T(\alpha),\xi'')\\
\SV'' \ar[r] \ar@{^(->}[u] & \SV \ar[r] \ar@{^(->}[u] & T(\SV) \ar@{^(->}[u]
}
\end{eqnarray*}

Both $\SV$ and $T(\SV)$ parameterize parabolic vector bundles of rank $r$ and determinant $\xi$ which are both $\alpha$-semistable and $\alpha''$-semistable and are generic in the sense of Lemma \ref{lemma:ParEndNoSections}, so they can be canonically identified. Choose once and for all an identification $\SV''\cong T(\SV)$. Let $\Psi_T:\SV''\to \SV''$ be the automorphism of $\SV''$ induced composing $T\circ (\Phi')^{-1}$ with the identification $\SV''\cong T(\SV)$.

For every $(E,E_\bullet)\in \SV$ there exists some $T\in \ST_{\xi,\xi''}$ such that $\Phi'(E,E_\bullet)=T(E,E_\bullet)$. Therefore, for every $(E'',E''_\bullet)\in \SV''$ there exists some $T\in \ST_{\xi,\xi''}$ such that $\Psi_T(E'',E''_\bullet)=(E'',E''_\bullet)$ and we obtain that
$$\SV''=\bigcup_{T\in \ST_{\xi,\xi''}} \op{Fix}(\Psi_T)$$
As the set of fixed points of an automorphism is closed and $\ST_{\xi,\xi''}$ is finite, $\SV''$ is a finite union of closed subsets. $\SM(r,\alpha'',\xi'')$ is irreducible and $\SV''$ is open, so $\SV''$ is irreducible. Then there exists some $T\in \ST_{\xi,\xi''}$ such that $\SV''= \op{Fix}(\Psi_T)$.Therefore, we conclude that there exist $T\in \ST_{\xi,\xi''}$ and an open subset $\SV\subseteq \SM(r,\alpha,\xi)$ such that $\Phi'|_{\SV}=T|_{\SV}$.

Let us prove that, in fact, we can find an open subset $\SW\subseteq \SM(r,\alpha,\xi)$ whose complement has codimension at least $2$ and such that $\Phi'|_{\tilde{\SV}}=T|_{\tilde{\SV}}$. Let $\SW\subset \SM(r,\alpha,\xi)$ be the space of parabolic vector bundles which are both $\alpha$-stable and $T^{-1}(\alpha'')$-stable. By Corollary \ref{cor:stabCodim2}, the complement of $\SW$ has codimension at least $2$. Clearly, $T$ is well defined over $\SW$ and it gives us a map $T:\SW \to \SM(r,\alpha'',\xi'')$. Moreover, as $\SM(r,\alpha,\xi)$ is irreducible, $\SW \cap \SV$ is dense in $\SW$, so every map $\psi:\SW\cap \SV \to \SM(r,\alpha'',\xi'')$ admits a unique extension to $\SW$ by continuity. We know that $\Phi'|_{\SV\cap \SW}=T|_{\SV\cap \SW}$, and $\Phi'|_{\SW}$ and $T|_{\SW}$ are two possible extensions, so they must coincide.

As $\alpha''$ is a full flag system of weights, $\SM(r,\alpha'',\xi'')$ is a fine moduli space for every $\xi''$. Therefore, $\Phi'$ is represented by a parabolic vector bundle $(\SE'',\SE''_\bullet)$ over $\SM(r,\alpha,\xi)\times X$ whose fibers are $\alpha''$-stable as parabolic vector bundles over $X$. We have the following commutative diagram
\begin{eqnarray*}
\xymatrixrowsep{0.5pc}
\xymatrix{
\Phi' \ar@{|->}[rrr] \ar@{|->}[dddd] & & & (\SE'',\SE''_\bullet) \ar@{|->}[dddd]\\
& \Hom(\SM(r,\alpha,\xi),\SM(r,\alpha'',\xi'')) \ar[r]^-{\sim} \ar[dd]^{i^\sharp} & {\underline{\SM(r,\alpha'',\xi'')}}(\SM(r,\alpha,\xi)) \ar[dd]^{i^*} &\\
&&&\\
& \Hom(\SW,\SM(r,\alpha'',\xi'')) \ar[r]^-{\sim} & {\underline{\SM(r,\alpha'',\xi'')}} (\SW) &\\
T \ar@{|->}[rrr] & & & T(\SE,\SE_\bullet)|_{\SW}\\
}
\end{eqnarray*}
where $(\SE,\SE_\bullet)$ is the universal family of the moduli space $\SM(r,\alpha,\xi)$. Therefore, $(\SE'',\SE''_\bullet)$ is the extension of the basic transformation $T(\SE,\SE_\bullet)|_{\SW}$ from $\SW$ to all the moduli space. Note that, $T(\SE,\SE_\bullet)$ is a possible extension as a family of  parabolic vector bundles over $\SM(r,\alpha,\xi)$. By construction, we know that the codimension of the complement of $\SW$ in $\SM(r,\alpha,\xi))$ is at least $2$ and $\SM(r,\alpha,\xi)$ is a smooth complex projective scheme, so by Lemma \ref{lemma:uniqueExtensionCodim2}
$$(\SE'',\SE''_\bullet) \cong T(\SE,\SE_\bullet)$$

As $(\SE'',\SE''_\bullet)$ is a family of $\alpha''$-stable vector bundles, we conclude that $T(\SE,\SE_\bullet)$ is a family of $\alpha''$-stable vector bundles. Nevertheless, it is also a universal family of $T(\alpha)$-stable vector bundles. We know that $(\SE'',\SE''_\bullet)$ is a universal family, so this implies that every $\alpha''$-stable vector bundle is $T(\alpha)$-stable and vice versa, so $\alpha''$ belongs to the same stability chamber as $T(\alpha)$ and $\Phi'=T$.
\end{proof}

If $E$ is a vector bundle of rank $2$, then there is a canonical isomorphism $E^\vee=E \otimes \det(E^\vee)$, i.e, for rank $2$, taking dual does not give new isomorphisms of the moduli space, because taking dual can be rewritten as tensoring with a line bundle. The same holds for parabolic bundles. More preciselly:

\begin{lemma}
\label{lemma:dualrk2}
Let $r=2$. Then for each basic transformation $T=(\sigma,-1,L,H)$ defining an isomorphism $\SM(X,r,\alpha,\xi) \to \SM(X,r,T(\alpha),T(\xi))$ there exists a line bundle $L'$ such that the basic transformation $T'=(\sigma,1,L',H)$ satisfies the following
\begin{itemize}
\item $T(\xi)\cong T'(\xi)$
\item $T(\alpha)\sim T'(\alpha)$
\item For each $(E,E_\bullet)\in \SM(X,r,\alpha,\xi)$, $T(E,E_\bullet)\cong T'(E,E_\bullet)$
\end{itemize}
\end{lemma}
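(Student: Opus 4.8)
The plan is to exploit the rank-2 identity $E^\vee \cong E \otimes \det(E)^{-1}$ and to track what it does to all four pieces of data in a basic transformation: the underlying bundle, the parabolic filtration, the determinant, and the weights. First I would reduce to the case $\sigma = \id$ and $H=0$: the automorphism $\sigma$ and the Hecke part $\SH_H$ commute with dualization only up to the explicit rules (8) and (9) in the composition list of Section~\ref{section:Hecke}, so I would first use $\SD^- \circ \ST_L = \ST_{L^{-1}}\circ \SD^-$ (rule (8)) and $\Sigma_\sigma\circ \SD^- = \SD^-\circ \Sigma_\sigma$ (rule (5)) to push the dual to the inside, and it suffices to understand how $\SD^-$ alone can be rewritten as a tensorization when $r=2$. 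In other words, the crux is to show that for $r=2$ the generator $\SD^-=(\id,-1,\SO_X,0)$ acting on $\SM(X,2,\alpha,\xi)$ agrees, up to the determinant and weight bookkeeping, with some $\ST_{L'}=(\id,1,L',0)$.

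The key computation is at the level of a single quasi-parabolic bundle $(E,E_\bullet)$ of rank $2$ with determinant $\xi$. For rank $2$ the full flag is just $E|_x \supsetneq E_{x,2}\supsetneq 0$, a single line $E_{x,2}\subset E|_x$. I would compute the quasi-parabolic dual $(E,E_\bullet)^\vee$ as defined in Section~\ref{section:Hecke}: its underlying bundle is $E^\vee$ and the induced line at $x$ is $\op{ann}(E_{x,2})\subset E^\vee|_x$. On the other hand, the canonical isomorphism $E^\vee \cong E\otimes \xi^{-1}$ identifies $E^\vee|_x$ with $E|_x\otimes \xi^{-1}|_x$, and under this identification I would check that the annihilator $\op{ann}(E_{x,2})$ corresponds precisely to the line $E_{x,2}\otimes \xi^{-1}|_x$. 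This is the elementary linear-algebra fact that in a two-dimensional space with a chosen volume form (here $\det$), the annihilator of a line $\ell$ is canonically the line $\ell$ itself (contraction with the symplectic/volume form sends $\ell$ to $\op{ann}(\ell)$). Therefore $(E,E_\bullet)^\vee \cong (E,E_\bullet)\otimes \xi^{-1}$ as quasi-parabolic vector bundles, which is exactly the statement that $\SD^-$ equals $\ST_{\xi^{-1}}$ on objects of determinant $\xi$.

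With this object-level identity in hand, I would assemble the line bundle $L'$ for a general $T=(\sigma,-1,L,H)$ by combining the above with the commutation relations. Concretely, writing $T = \Sigma_\sigma\circ \SD^-\circ \ST_L\circ \SH_H$ and replacing the inner $\SD^-$ by the appropriate tensorization (the determinant of $\SH_H(E,E_\bullet)\otimes L$ being $L^2\otimes \xi(-H)$, so the relevant line bundle is $(L^2\otimes\xi(-H))^{-1}$), I would read off $L'$ so that $T'=(\sigma,1,L',H)$ produces the same object. I would then verify the two bookkeeping claims: that $T(\xi)\cong T'(\xi)$ and $T(\alpha)\sim T'(\alpha)$. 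The first is a direct comparison of the two formulas for $T(\xi)$ in the definition of basic transformations. The second requires checking that the weight system $\alpha^\vee_i(x)=1-\alpha_i(x)$ produced by dualization differs from the tensorization weights only by a translation $\varepsilon(x)$, hence lies in the same class in $\tilde\Delta$; this uses the translation-invariance of stability established in Section~\ref{section:Hecke} (the computation that $s_{\min}(\alpha[\varepsilon],\overline n)=s_{\min}(\alpha,\overline n)$).

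The main obstacle I anticipate is not any single step but getting the annihilator-versus-line identification canonical and functorial in families, since the eventual goal is an isomorphism of families over the moduli space, not merely a fiberwise isomorphism. The contraction-with-$\det$ map is canonical, so I expect it to globalize, but I would need to confirm that the induced identification of the parabolic lines $\op{ann}(E_{x,2})$ with $E_{x,2}\otimes\xi^{-1}|_x$ is compatible with the left-continuous filtration formalism and with the chosen reduction, rather than depending on local trivializations. Once the fiberwise statement is established canonically, the translation bookkeeping for the weights and the determinant comparison are routine, so the linear-algebra identity is the real content of the lemma.
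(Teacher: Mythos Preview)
Your proposal is correct and follows essentially the same approach as the paper's proof: reduce via the composition rules to analyzing $\SD^-$ alone, establish the quasi-parabolic isomorphism $(E,E_\bullet)^\vee\cong(E,E_\bullet)\otimes\xi^{-1}$ by checking that the annihilator of the line $E_{x,2}$ corresponds to $E_{x,2}$ itself under the rank-$2$ identity $E^\vee\cong E\otimes\det(E)^{-1}$, and then verify the weight equivalence $\alpha^\vee\sim\alpha$ via a translation. The only cosmetic difference is that the paper carries out the annihilator identification by writing down transition functions and the explicit matrix $M=\left(\begin{smallmatrix}0&1\\-1&0\end{smallmatrix}\right)$, whereas you phrase it intrinsically as contraction with the volume form; these are the same map, and your concern about functoriality in families is unnecessary precisely because the contraction-with-$\det$ map is canonical.
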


\begin{proof}
Observe that, as $r=2$ and we assume that $0\le H\le (r-1)D$, then $H$ is a simple divisor and, applying the composition rule (9) described in the presentation of the group of basic transformations $\ST$ (Lemma \ref{lem:compositionRules}) 
and taking into account that for every divisor $F$, $\SH_{rF}=\ST_{\SO_X(-F)}$ yields
$$\SD^-\circ \SH_H = \ST_{\SO_X(D)} \circ \SH_{2D-H} \circ \SD^- = \ST_{\SO_X(H)} \circ \SH_H \circ \SD^-$$
Therefore, due to the composition rules (8), (9) and (10) described in the presentation of the group of basic transformations $\ST$ (Lemma \ref{lem:compositionRules}), we can write $T$ as
\begin{multline*}
T=(\sigma,-1,L,H)=\Sigma_{\sigma} \circ \SD^- \circ \ST_L \circ \SH_H = \Sigma_{\sigma} \circ \ST_{L^{-1}} \circ \ST_{\SO_X(H)} \circ \SH_H \circ \SD^{-1}
\\= (\sigma,1,L^{-1}(H),H) \circ \SD^-
\end{multline*}
As $\SH_H$ and $\ST_{L'}$ commute for each $H$ and $L'$, then it is enough to prove that there exists a line bundle $L''$ such that $\ST_{L''}(\xi)=\xi^{-1}$, $\ST_{L''}(\alpha)=\alpha \sim \alpha^\vee$ and for each $(E,E_\bullet)\in \SM(r,\alpha,\xi)$, $\SD^-(E,E_\bullet)\cong \ST_{L''}(E,E_\bullet)$, i.e., that
$$(E,E_\bullet)^\vee \cong (E,E_\bullet)\otimes L''$$
First of all, let us prove that if $r=2$ then $\alpha$ and $\alpha^\vee=\SD^-(\alpha)$ belong to the same chamber. We can assume without loss of generality that $\alpha_1\ne 0$. Let $\varepsilon=(\varepsilon(x))_{x\in D}$, where $\varepsilon(x)=1-\alpha_1(x)-\alpha_2(x)$. Clearly, for each $x\in D$, $-\alpha_1(x)< \varepsilon(x)<1-\alpha_2(x)$, so the shifted weights $\alpha[\varepsilon]$ form a suitable system of weights that belongs to the same stability chamber as $\alpha$. Moreover
\begin{eqnarray*}
\alpha[\varepsilon]_1(x)=\alpha_1(x)+\varepsilon(x)=1-\alpha_2(x)\\
\alpha[\varepsilon]_2(x)=\alpha_2(x)+\varepsilon(x)=1-\alpha_1(x)
\end{eqnarray*}
so $\alpha[\varepsilon]=\alpha^\vee$.
On the other hand, for each $E\in \SM(r,\alpha,\xi)$, there is an isomorphism $\bigwedge^2(E)\cong \xi$. Therefore, there exists an isomorphism
$$E^\vee \cong E\otimes \xi^{-1}$$
Let us prove that under this isomorphism the filtration $E_\bullet^\vee$ is sent to $E_\bullet$. Let us describe this isomorphism explicitly. Let $U_i$ be a covering of $X$ by open subsets such that $E|_{U_i}$ is trivial, and let $g_{ij}:U_i\cap U_j \to \GL(2,\CC)$ be transition functions for $E$. Let
$$g_{ij}=\left( \begin{matrix}
a & b\\
c & d
\end{matrix}\right)$$
Then the transition functions for $E^\vee$ are given by
$$g_{ij}^\vee = (g_{ij}^{-1})^t = \frac{1}{\det(g_{ij})}\left(\begin{matrix}
d& -c\\
-b& a
\end{matrix} \right)$$
Let $M:\CC^2 \to \CC^2$ given by $M=\left(\begin{matrix}
0& 1\\
-1& 0
\end{matrix} \right)$. Then
$$g_{ij}^\vee=
M^{-1}\frac{1}{\det(g_{ij})}g_{ij}M$$
As the transition functions for $\xi=\deg(E)$ are $\det(g_{ij})$, $M$ describes locally the desired isomorphism $E^\vee \cong E\otimes \xi^{-1}$. On the other hand, the dual of any quasi-parabolic vector bundle $(E,E_\bullet)$ corresponds to the vector bundle $E^\vee$, endowed with the parabolic structure given by
$$E^\vee_{2,x}=E_{2,x}^\perp$$
where, given $V\subset E|_x$, $V^\perp\subset E^\vee|_x$ denotes the annihilator of $V$, i.e.,
$$V^\perp = \left \{w\in E^\vee|_x  \, \middle | \, w(v)=0 \, \forall v\in V \right\}$$
Observe that, in rank $2$, for every $v\in E|_x$
$$\langle M(v) \rangle = \langle v \rangle^\perp$$
Therefore, as tensoring by $\xi^{-1}$ acts trivially on the parabolic structure, we observe that the isomorphism $M:E^\vee \cong E \otimes \xi^{-1}$ sends $E_{2,x}$ to $E_{2,x}^\perp$ for every $x\in D$, so it is an isomorphism of quasi-parabolic vector bundles $(E,E_\bullet)^\vee \cong (E,E_\bullet) \otimes \xi^{-1}$.
\end{proof}

\begin{lemma}
\label{lemma:freeTransforms}
Suppose that $g\ge 4$. Let $T\in \ST$ be a basic transformation such that $T\ne \id_\ST=(\id,1,\SO_X,0)$ and such that $T\in \ST^+$. Then for a generic $\alpha$-stable parabolic vector bundle $(E,E_\bullet)$ of rank $r$ we have $T(E,E_\bullet)\not\cong (E,E_\bullet)$.
\end{lemma}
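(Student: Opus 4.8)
The plan is to argue by contradiction. Suppose the conclusion fails; then the locus
\[
Z=\{(E,E_\bullet)\in\SM(r,\alpha,d)\mid T(E,E_\bullet)\cong (E,E_\bullet)\}
\]
is dense in $\SM(r,\alpha,d)$, and since $Z$ is constructible (it is the isomorphism locus of the universal family and its image under $T$) it then contains a dense open subset. Writing $T=(\sigma,1,L,H)$, I would first exploit determinants. Comparing the determinants of $(E,E_\bullet)$ and $T(E,E_\bullet)$ gives $\det E\cong \sigma^*\big(L^r\otimes(\det E)(-H)\big)$ for all $(E,E_\bullet)\in Z$. As the determinant map $\det\colon \SM(r,\alpha,d)\to \Pic^d(X)$ is dominant, this identity holds on a dense subset of $\Pic^d(X)$; being a closed condition, it then holds for every $\xi\in\Pic^d(X)$. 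Cancelling a fixed base point shows $\eta\cong\sigma^*\eta$ for all $\eta\in\Pic^0(X)$, i.e. $\sigma^*=\id$ on the Jacobian. Since a nontrivial automorphism of a curve of genus $\ge 2$ acts nontrivially on its Jacobian, this forces $\sigma=\id$, and the identity then degenerates to $L^r\cong\SO_X(H)$.

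At this stage $T=\ST_L\circ\SH_H$ with $L^r\cong\SO_X(H)$ and $0\le H\le (r-1)D$, and $T\ne\id_\ST$ means $(L,H)\ne(\SO_X,0)$; I would split into two cases. If $H=0$, then $L$ is a nontrivial $r$-torsion line bundle and $T(E,E_\bullet)=(E,E_\bullet)\otimes L$, so $T(E,E_\bullet)\cong(E,E_\bullet)$ already forces $E\otimes L\cong E$ on the underlying bundles. For a simple (in particular generic stable) $E$, an isomorphism $E\otimes L\cong E$ equips $E$ with descent data for the cyclic étale cover $\pi\colon Y\to X$ defined by $L$, so that $E\cong\pi_*F$ for a bundle $F$ on $Y$; such $E$ sweep out a locus of dimension strictly smaller than $\dim\SM(r,d)$. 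Combining this with Corollary \ref{cor:underlyingStableGeneric} (underlying-stable parabolic bundles are dense), the bundles with $E\otimes L\not\cong E$ form a dense subset, contradicting the density of $Z$.

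The main obstacle is the case $H>0$, where $\deg L=|H|/r>0$ and $\SH_H$ genuinely alters the bundle. Here an isomorphism $T(E,E_\bullet)\cong(E,E_\bullet)$ forces, on underlying bundles, $\SH_H(E)\cong E\otimes L^{-1}$. I would use the forgetful map $p\colon \SM^{\op{ss-vb}}(r,\alpha,d)\to\SM(r,d)$, which is dominant with flag-variety fibres (cf. Corollary \ref{cor:forgetfulVBdominant}). Since $Z$ contains a dense open set, for a generic underlying bundle $E$ the flags $E_\bullet$ with $(E,E_\bullet)\in Z$ are dense in the irreducible fibre $\mathcal F_E=\prod_{x\in D}\op{Fl}(E|_x)$, so the rational map
\[
\beta_E\colon \mathcal F_E\dashrightarrow \SM\big(r,(\det E)(-H)\big),\qquad E_\bullet\mapsto [\SH_H(E)],
\]
would be constant, equal to $[E\otimes L^{-1}]$. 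To contradict this I would restrict $\beta_E$ to the subfamily obtained by varying the flag at a single point $x_0\in\op{supp}(H)$: this produces precisely a family of elementary modifications of $E$ at $x_0$, i.e. a Hecke curve in $\SM\big(r,(\det E)(-H)\big)$, which for generic $E$ is a nonconstant rational curve (\cite{NR78,HR04}), contradicting the constancy of $\beta_E$. Hence $Z$ is not dense and the lemma follows. The delicate point to pin down rigorously is exactly the nonconstancy of this induced Hecke family for generic $E$, together with the compatibility of the genericity conditions (stability of the modifications and of the ambient parabolic bundle) across the two fibrations; everything else is a formal consequence of the determinant reduction.
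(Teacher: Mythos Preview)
Your proposal is correct and takes a genuinely different route from the paper in two of the three main steps.

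For showing $\sigma=\id$: the paper first proves $H=0$ (via the flag-variation trick), then that $\sigma$ fixes every parabolic point, reduces to $\xi=\SO_X$ by conjugating $T$ inside $\ST$, and finally uses faithfulness of the $\Aut(X)$-action on the moduli of stable projective bundles. You instead work on $\SM(r,\alpha,d)$ with varying determinant, use dominance of $\det\colon\SM(r,\alpha,d)\to\Pic^d(X)$ to force $\sigma^*=\id$ on $J(X)$, and invoke faithfulness of $\Aut(X)$ on the Jacobian for $g\ge 2$. Your argument is shorter and more elementary, and it disposes of $\sigma$ before anything is known about $H$; the paper's ordering is the reverse.

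For $L=\SO_X$ when $H=0$: the paper uses Narasimhan--Seshadri and a trace argument on unitary representations; you use the cyclic \'etale cover determined by $L$ and the dimension of the pushforward locus. These are standard equivalent alternatives.

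For $H>0$: your argument and the paper's are essentially the same---vary one step of the flag at some $x_0\in\op{supp}(H)$ and observe that the underlying bundle of $\SH_H(E,E'_\bullet)$ moves while $(\sigma,1,L,0)^{-1}(E,E'_\bullet)$ has fixed underlying bundle. The paper uses Lemmas \ref{lemma:generic-lm-stable} and \ref{lemma:1-0-stability} to keep the varied parabolic bundle $\alpha$-stable; you cite \cite{NR78,HR04} for nonconstancy of the induced family. Neither argument spells out in full why the \emph{isomorphism class} (not just the subsheaf) varies, and you rightly flag this as the delicate point. One caveat: the Hecke-curve results in \cite{NR78,HR04} are formulated for codimension-one modifications, so when $h_{x_0}>1$ you should either pass to a suitable $\PP^1$ inside the relevant Grassmannian where those results apply verbatim, or argue directly that for generic $E$ two distinct elementary modifications are non-isomorphic (e.g.\ via $\dim\Hom(F,E)\le 1$ generically).
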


\begin{proof}
Assume that $T\ne \id_\ST$ but $T=(\sigma,s,L,H)$ acts as the identity on $\SM(r,\alpha,\xi)$. First, let us prove that $H=0$. Assume that $H\ne 0$. Let $x\in D$ such that $H\ge kx$, but $H\not\ge(k+1)x$. Then for every $(E,E_\bullet)\in \SM(r,\alpha,\xi)$
$$(\sigma,s,L,0)\circ \SH_H(E,E_\bullet)\cong(E,E_\bullet)$$
By Lemma \ref{lemma:generic-lm-stable} and Lemma \ref{lemma:1-0-stability}, for a generic $(E,E_\bullet)\in \SM(r,\alpha,\xi)$ if $E'_\bullet$ is the filtration obtained by changing the step $E_{x,k}$ on $x\in D$ to $E_{x,k}'$ for some
$$E_{x,k-1}\subsetneq E_{x,k}''\subsetneq E_{x,k+1}$$
then $(E,E'_\bullet)$ is $\alpha$-stable. Then there is a short exact sequence
$$0\longrightarrow \SH_H(E,E'_\bullet) \longrightarrow \SH_{H-kx}(E,E'_\bullet) \longrightarrow E|_x/E'_{x,k}\longrightarrow 0$$
Therefore, as $E'_{x,k}$ changes through all possible steps in the filtration, then the underlying vector bundle of $\SH_H(E,E'_\bullet)$ varies. Nevertheless, as
$$\SH_H(E,E'_\bullet)=(\sigma,s,L,0)^{-1}(E,E'_\bullet)\cong(\sigma^{-1},s,\sigma^*L^{-s},0)(E,E_\bullet')$$
then the underlying vector bundle of $\SH_H(E,E_\bullet')$ must be isomorphic to $E$ for every $E'_{x,k}$, so we obtain a contradiction and $H=0$.

Similarly, $\sigma$ must fix every parabolic point. Otherwise, if $\sigma(x)\ne x$ for some $x\in D$, then taking any variation $E_{x,k}'$ of the parabolic structure at $x$ we would obtain that
$$(\sigma,s,L,0)(E,E'_\bullet)\cong(E,E'_\bullet)$$
Nevertheless, the left hand side of the equation has constant parabolic structure over $x$, while the parabolic structure on the right hand side varies over $x$.

Now, let us prove that $s=1$. If $s=-1$, for every parabolic vector bundle $(E,E_\bullet)$ and every $x\in D$ the isomorphism $\sigma^*(L\otimes E)^\vee \stackrel{\alpha}{\cong} E$ induce a nondegenerate bilinear map
\begin{eqnarray*}
\xymatrixrowsep{0.05pc}
\xymatrixcolsep{0.3pc}
\xymatrix{
\omega&:&E|_x\otimes E|_x \ar[rrrr] &&&& L^{-1}|_x\\
&& (u,v) \ar@{|->}[rrrr] &&&& \langle \alpha^{-1}(u), v\rangle
}
\end{eqnarray*}
Under the isomorphism $T(E,E_\bullet)\cong E$ the $k$-th step of the parabolic structure $E_{x,k}\subset E|_x$ is sent to
$$E_{x,k}^\omega=\left \{v\in E|_x \, : \, \forall u\in E_{x,k},  \, \omega(u,v)=0 \right\}$$
Observe that this transformation inverts the filtration, i.e., $E_{x,k}$ is sent to $E_{x,r-k+1}$. We know that $T$ preserves the parabolic structure, so $r=2$ and, in that case, $T\in \ST^+$ by hypothesis, so $s=1$.

Now, let $S\in \ST$ be any basic transformation such that $S(\SO_X)=\xi$. Then $S\circ T\circ S^{-1}\ne \id_{\ST}$, but $S\circ T \circ S^{-1}:\SM(r,S^{-1}(\alpha),\SO_X) \longrightarrow \SM(r,S^{-1}(\alpha),\SO_X)$ is the identity on $\SM(r,S^{-1}(\alpha),\SO_X)$. Therefore, we can assume without loss of generality that $\xi\cong \SO_X$. In this case, taking determinants yields
$$\SO_X\cong\det(E)\cong\det(\sigma^*L\otimes E)\cong\sigma^*L^{r}$$
Therefore, $L^r\cong \SO_X$.

Then $T=(\sigma,1,L,0)$ preserves $\alpha$ and $\xi$ for any system of weights and every line bundle. Moreover, by Corollary \ref{cor:stabCodim2} for any system of weights $\alpha'$, there exists an open subset $\SU\subset \SM(r,\alpha',\xi)$ whose complement has codimension at least $2$ and such that all the parabolic vector bundles in $\SU$ are $\alpha$ stable. Consider the morphism $T:\SM(r,\alpha',\xi)\longrightarrow \SM(r,\alpha',\xi)$. Over $\SU$ this morphism is the identity, so $T=\id_{\SM(r,\alpha',\xi)}$. Therefore, we can assume that $\alpha$ is any system of weights and $\xi$ is any fixed line bundle. In particular, we can assume without loss of generality that $\alpha$ is concentrated and $\deg(\xi)$ is coprime with $r$.

Under this choices, for each stable vector bundle $E\in \SM(r,\xi)$ and every parabolic structure $E_\bullet$ on $E$ we have $(E,E_\bullet)\in \SM(r,\alpha,\xi)$. Therefore, in order to prove the Lemma it is enough to show that there exists $E\in \SM(r,\xi)$ such that $\sigma^*(E\otimes L) \not\cong E$. This fact is known (c.f. \cite[Remark 0.1]{KP}) but a full proof of this precise statement is not available in the literature, so we will proceed to provide one.

Assume the contrary. Suppose that for each $E\in \SM(r,\xi)$ we have $\sigma^*(E\otimes L) \cong E$. Then the map $\Sigma_\sigma \circ \ST_L: \SM(r,\xi) \longrightarrow \SM(r,\xi)$ sending $E \mapsto \sigma^*(E\otimes L)$ is the identity map on $\SM(r,\xi)$. We will prove that in this case we have $\sigma=\id_X$ and $L=\SO_X$.

Let $\PP\to X\times \SM(r,\xi)$ be the projectivization of the universal bundle on $X\times \SM(r,\xi)$, i.e., the bundle whose fiber over $X\times \{E\}$ is $\PP(E)$. If $\Sigma_\sigma \circ \ST_L=\id_{\SM(r,\xi)}$, then there exists an isomorphism $\PP \cong \sigma^*\PP$. Let $x\in X$ be any point. Then restricting to the fiber over $\{x\}\times \SM(r,\xi)$ we have an isomorphism
$$\PP|_{\{x\}\times \SM(r,\xi)} \cong \PP_{\{\sigma(x)\}\times \SM(r,\xi)}$$
Then \cite[Lemma 4.1]{BGM10} implies that $\sigma(x)=x$, so $\sigma=\id_X$.

Finally, suppose that $L\otimes E \cong E$ for each $E\in \SM(r,\xi)$. Then by tensoring with the appropriate bundle we have $L\otimes E\cong E$ for each $E\in \SM(r,d)$, where $d=\deg(\xi)$. Let $m$ be the smaller positive integer such that $L^m\cong \SO_X$. Then we can associate to $L$ an $m$-to-$1$ smooth unramified cyclic cover $X_L\subset \op{Tot}(L) \to X$ in the following way
$$X_L=\left \{t\in L : t^{\otimes m} = 1\in \SO_X \cong L^m \right \}$$
Let $\pi:X_L\to X$ denote the projection. Then, by \cite[Lemma 2.6 and Proposition 3.1]{NR75} or \cite[Proposition 3.46]{Na05} a stable vector bundle $E\in \SM(X, r,d)$ satisfies $L \otimes E \cong E$ if and only if $E=\pi_* E'$ for some stable vector bundle $E'\in \SM(X_L, r/m, d)$. Therefore, the points of $\SM(X,r,d)$ fixed by $\ST_L$ form a closed subscheme of dimension at most $\dim \SM(X_L,r/m,d)$. Let us compute this dimension. $\pi:X_L\to X$ is unramified of degree $m$ so Riemann-Hurwitz implies that
$$g(X_L)-1=m(g-1)$$
where $g(X_L)$ is the genus of $X_L$. Then, if $m>1$ we have
$$\dim \SM\left (X_L,\frac{r}{m},d \right ) = \left(\frac{r}{m} \right)^2\left (g(X_L)-1 \right) +1 = \frac{r^2}{m}(g-1) +1 < r^2(g-1)+1=\dim \SM(X,r,d)$$
so there always exist stable bundles which are not fixed by the action unless $m=1$, i.e., unless $L\cong \SO_X$.
\end{proof}

\begin{theorem}
\label{theorem:autoModuli}
Let $(X,D)$ be a smooth projective curve of genus $g\ge 6$ and let $\alpha$ be a full flag generic system of weights over $(X,D)$ of rank $r$. Let $\xi$ be a line bundle over $X$. Then the automorphism group of $\SM(r,\alpha,\xi)$ is the subgroup of $\ST$ consisting on basic transformations $T$ such that
\begin{itemize}
\item $T(\xi)\cong \xi$
\item $T(\alpha)$ is in the same stability chamber as $\alpha$
\item if $r=2$, $T\in \ST^+$.
\end{itemize}
\end{theorem}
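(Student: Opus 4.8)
The plan is to identify the indicated subgroup of $\ST$, call it $G$, with $\Aut(\SM(r,\alpha,\xi))$ by producing a group isomorphism, using the Extended Torelli Theorem \ref{theorem:ExtendedTorelli} together with Lemmas \ref{lemma:dualrk2} and \ref{lemma:freeTransforms}. Since the presentation of $\ST$ is independent of whether it acts on quasi-parabolic bundles, on line bundles, or on weight systems, sending a basic transformation $T$ to the assignment $(E,E_\bullet)\mapsto T(E,E_\bullet)$ is a genuine group action, so $\mu:T\mapsto\big((E,E_\bullet)\mapsto T(E,E_\bullet)\big)$ is a homomorphism ($\mu(T_1\circ T_2)=\mu(T_1)\circ\mu(T_2)$). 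First I would check that $\mu$ carries $G$ into $\Aut(\SM(r,\alpha,\xi))$: any $T\in\ST$ induces an isomorphism $\SM(r,\alpha,\xi)\stackrel{\sim}{\longrightarrow}\SM(r,T(\alpha),T(\xi))$, and for $T\in G$ the conditions $T(\xi)\cong\xi$ and ``$T(\alpha)$ in the same stability chamber as $\alpha$'' force the target to be $\SM(r,\alpha,\xi)$ itself, so $\mu(T)$ is an automorphism. That $G$ is a subgroup follows because each basic transformation preserves the chamber partition of $\Delta$ (it identifies the chamber of $\beta$ with that of $T(\beta)$), so the three defining conditions survive composition and inversion; for $r=2$ one also uses that $\ST^+$ is a subgroup, the sign $s$ being multiplicative.

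For surjectivity I would take an arbitrary $\Phi\in\Aut(\SM(r,\alpha,\xi))$ and apply Theorem \ref{theorem:ExtendedTorelli} with $(X',D',\alpha',\xi')=(X,D,\alpha,\xi)$, obtaining $\sigma\in\Aut(X,D)$ and a basic transformation $T$ with $\sigma^*\xi\cong T(\xi)$, with $\sigma^*\alpha$ in the same chamber as $T(\alpha)$, and with $\sigma^*\Phi(E,E_\bullet)\cong T(E,E_\bullet)$ for every $(E,E_\bullet)$. Writing the pullback as $\Sigma_\sigma=(\sigma,1,\SO_X,0)$, the last relation reads $\Sigma_\sigma\circ\Phi=\mu(T)$, whence $\Phi=\mu(\Sigma_{\sigma^{-1}}\circ T)=\mu(T')$ for the basic transformation $T'=\Sigma_{\sigma^{-1}}\circ T$. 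Using that $\ST$ acts on line bundles and weights by group actions, together with $\Sigma_{\sigma^{-1}}(\sigma^*\zeta)=\zeta$ for any line bundle or weight system $\zeta$ and the fact that $\Sigma_{\sigma^{-1}}$ sends chambers to chambers, one computes $T'(\xi)\cong\xi$ and $T'(\alpha)$ in the same chamber as $\alpha$, so $T'\in G$. When $r=2$, if $T'$ happens to involve dualization I would invoke Lemma \ref{lemma:dualrk2} to replace it by a transformation in $\ST^+$ with the same action on objects, the same value on $\xi$, and weights in the same chamber; thus one may take $T'\in G\subseteq\ST^+$, proving $\Phi\in\mu(G)$.

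Injectivity is where Lemma \ref{lemma:freeTransforms} is used: I must show $\ker(\mu|_G)=\{\id_\ST\}$. Suppose $T\in G$ acts as the identity, i.e.\ $T(E,E_\bullet)\cong(E,E_\bullet)$ for all $(E,E_\bullet)$. For $r=2$ we have $T\in\ST^+$ by definition of $G$, so Lemma \ref{lemma:freeTransforms} directly gives $T=\id_\ST$. For $r>2$ I would first exclude $s=-1$: a dual-involving transformation reverses the parabolic flag at each point, sending $E_{x,k}$ to $E_{x,r-k+1}$ (as in the proof of Lemma \ref{lemma:freeTransforms}), and for a generic full-flag bundle of rank $>2$ this reversed flag yields a non-isomorphic parabolic structure, contradicting $T(E,E_\bullet)\cong(E,E_\bullet)$; hence $s=1$, so $T\in\ST^+$ and Lemma \ref{lemma:freeTransforms} again forces $T=\id_\ST$. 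The point requiring care is exactly this last step: for $r>2$ the subgroup $G$ genuinely contains nontrivial dual-involving automorphisms, so one must separate those (which are correctly counted in $G$) from the spurious possibility of a dual-involving transformation acting trivially, and the flag-reversal observation is what rules the latter out. Combining the homomorphism property with surjectivity and injectivity yields the stated identification $\Aut(\SM(r,\alpha,\xi))\cong G$.
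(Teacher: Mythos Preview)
Your proof is correct and follows essentially the same route as the paper: both apply Theorem \ref{theorem:ExtendedTorelli} with $(X',D',\alpha',\xi')=(X,D,\alpha,\xi)$ to produce a basic transformation realizing a given automorphism, absorb the curve automorphism $\sigma$ into the transformation, use Lemma \ref{lemma:dualrk2} in rank $2$ to arrange $T\in\ST^+$, and then invoke Lemma \ref{lemma:freeTransforms} to show that distinct admissible transformations yield distinct automorphisms. Your explicit flag-reversal argument ruling out $s=-1$ for $r>2$ in the injectivity step is in fact already contained inside the proof of Lemma \ref{lemma:freeTransforms} (whose hypothesis should be read as ``$T\in\ST^+$ when $r=2$'', as its own proof and its invocation in the paper make clear), so you are not adding a new idea there, merely unpacking one that the paper has packaged into that lemma.
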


\begin{proof}
If we take $(X',D')=(X,D)$ and $\alpha'=\alpha$ in Theorem \ref{theorem:ExtendedTorelli}, we obtain that if $\Phi:\SM(r,\alpha,\xi)\to \SM(r,\alpha,\xi)$ is an automorphism then there must exist a basic transformation $T\in \ST$ such that $\Phi(E,E_\bullet)\cong T(E,E_\bullet)$. Nevertheless, this implies that $\xi'\cong T(\xi)$ and $T(\alpha)$ is in the same stability chamber as $\alpha$.

Clearly, the subset of transformations $T$ preserving $\xi$ and the chamber of $\alpha$ form a subgroup of $\ST$. As the group structure of $\ST$ coincides with the composition of morphisms between moduli spaces of parabolic vector bundles, then this subgroup projects to the group of automorphisms of $\SM(r,\alpha,\xi)$. To prove the theorem it is enough to check that if $T,T'\in \ST$ are different elements in $\ST$ which satisfy the restrictions then the induced automorphisms $T,T'\in \Aut(\SM(r,\alpha,\xi))$ are different. Composing $T'\circ T^{-1}\in \Aut(\SM(r,\alpha,\xi))$, this is equivalent to proving that if $T\ne \id$ and, additionally, $T\in \ST^+$ if $r=2$, then there exists at least a parabolic vector bundle $(E,E_\bullet)$ such that $T(E,E_\bullet) \ne (E,E_\bullet)$. Now we simply apply the previous Lemma.
\end{proof}

\section{Birational geometry}
\label{section:birational}
In this section we will analyze the birational geometry of the moduli space of parabolic vector bundles with fixed determinant and, in particular, in the birational automorphisms of the moduli space. Boden and Yokogawa \cite[Theorem 6.1]{BY} proved that for $g\ge 3$, if $\alpha$ is a full flag system of weights and $\xi$ is any line bundle over $(X,D)$ then $\SM(r,\alpha,\xi)$ is a rational variety of dimension
$$\dim(\SM(r,\alpha,\xi))=(r^2-1)(g-1)+|D| \frac{r^2-r}{2}=m$$
Therefore, we know that for every $(X,D)$ of genus $g$ and $|D|$ parabolic points there is a birational map
$$\SM(X,r,\alpha,\xi) \dashrightarrow \PP^m$$
In particular
$$\Aut_{\Bir}(\SM(X,r,\alpha,\xi)) = \Aut_{\Bir}(\PP^m)$$
It is then clear that two moduli spaces $\SM(X,r,\alpha,\xi)$ and $\SM(X',r',\alpha',\xi')$ are birationally equivalent if and only if their dimension coincide.

In a first approach, this result closes the problem of understanding the rational geometry of the moduli space and blocks the possibility of a ``birational Torelli'' type theorem. However, there is no control ``a priori'' of how far are the birational equivalences that relate two moduli spaces $\SM(X,r,\alpha,\xi)$ and $\SM(X',r',\alpha',\xi')$ from extending to an isomorphism. More precisely, we know that if these moduli spaces have the same dimension, then there exist open subsets $\SU\subset \SM(X,r,\alpha,\xi)$ and $\SU'\subset \SM(X',r',\alpha',\xi')$ and an isomorphism $\Phi:\SU \stackrel{\sim}{\longrightarrow} \SU'$. Nevertheless, $\SU$ and $\SU'$ can be ``small'' open subsets in the sense that their complement can have codimension 1 (and in fact, they are expected to do so). In this section, we will be interested in understanding the birational geometry of the moduli spaces when we restrict the allowed rational maps to those that can be extended to subsets whose complement has codimension at least $3$.

We will start by generalizing some of the core lemmata in section \ref{section:Torelli} so they can work in the $k$-birational setting.

\begin{definition}
\label{def:k-bir}
Let $\SX$ and $\SX'$ be two varieties. We say that $\SX$ and $\SX'$ are $k$-birational if there exist open subsets $\SU\subset \SX$ and $\SU'\subset \SX'$ and an isomorphism $\Phi:\SU\stackrel{\sim}{\longrightarrow} \SU'$ such that
$$\begin{array}{c}
\codim(\SX\backslash \SU)\ge k\\
\codim(\SX'\backslash \SU')\ge k
\end{array}$$
In particular, $\SX$ and $\SX'$ are birational if they are at least $1$-birational. Given a variety $\SX$, we denote by $\Aut_{k-\Bir}(\SX)$ the space of $k$-birational automorphisms of $\SX$.
\end{definition}

The study of $k$-birational maps instead of rational maps is useful in many contexts. For example, some geometric invariants like the Picard group are invariant under $2$-birational maps, but not under $1$-birational ones. Hartog's theorem proves that if $\SX$ and $\SX'$ are $2$-birationally equivalent normal algebraic varieties then $\Gamma(\SX)\cong \Gamma(\SX')$. In the context of the moduli space of vector bundles (and parabolic vector bundles), we know that for $g\ge 4$ the moduli space of (parabolic) Higgs bundles is $3$-birationally equivalent to the cotangent bundle of the moduli space of (parabolic) vector bundles.  The fact that they are $3$-birational and not just $2$-birational was used in Section \ref{section:Torelli} in order to control the geometry of some special fibers of the Hitchin map.

As we cannot distinguish the moduli spaces nor the isomorphisms between them at the $1$-birational level, we will focus on the $k$-birational maps between moduli spaces for $k>1$ and prove that if we restrict to $3$-birational maps we obtain enough information to be able to describe a birational Torelli type theorem and obtain an analogue of Theorem \ref{theorem:ExtendedTorelli} which categorizes all the $3$-birational maps. Although we believe that the presented results will remain true for $2$-birational maps as well and that the classification could be attempted with similar techniques as the ones presented in this work, due to some technical requisites, our proof is restricted to $3$-birational maps.

\begin{corollary}
\label{eq:cotangentCodim2Discr}
Suppose that $g\ge 4$. Let $\SV\subset \SM(r,\alpha,\xi)$ be an open subset whose complement has codimension at least $3$. Then the complement of $T^*\SV\cap H^{-1}(\SD_U)$ inside $H^{-1}(\SD_U)$ has codimension at least $2$.
\end{corollary}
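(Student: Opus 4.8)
```latex
\textbf{Proof plan.}
The statement to prove is Corollary \ref{eq:cotangentCodim2Discr}: if $g\ge 4$ and $\SV\subset \SM(r,\alpha,\xi)$ is an open subset whose complement has codimension at least $3$, then the complement of $T^*\SV\cap H^{-1}(\SD_U)$ inside $H^{-1}(\SD_U)$ has codimension at least $2$. The plan is to control two separate sources of "missing" points in $T^*\SV\cap H^{-1}(\SD_U)$ relative to the full preimage $H^{-1}(\SD_U)$: first, the points of $\SM_{K(D)}(r,\alpha,\xi)$ that do not lie over the cotangent bundle $T^*\SM(r,\alpha,\xi)$ (i.e.\ Higgs pairs whose underlying parabolic bundle is unstable or strictly semistable), and second, the points of the cotangent bundle $T^*\SM(r,\alpha,\xi)$ that sit over the base locus $\SM(r,\alpha,\xi)\backslash \SV$.

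First I would recall that, by Proposition \ref{prop:CotangentCodimension}, for $g\ge 4$ the complement $\SM_{K(D)}(r,\alpha,\xi)\backslash T^*\SM(r,\alpha,\xi)$ has codimension at least $3$. Intersecting with the divisor $H^{-1}(\SD_U)$ (which is pure of codimension $1$ in $\SM_{K(D)}(r,\alpha,\xi)$ by Proposition \ref{prop:HitchinComponents}, since $\SD_U$ is an irreducible divisor and $H$ is equidimensional by Faltings' Lemma), the set $Z:=\bigl(\SM_{K(D)}(r,\alpha,\xi)\backslash T^*\SM(r,\alpha,\xi)\bigr)\cap H^{-1}(\SD_U)$ has codimension at least $2$ inside $H^{-1}(\SD_U)$; this is exactly the estimate already extracted in the proof of Proposition \ref{prop:GenericFiberHitchin}. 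This handles the first source of missing points.

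Next I would bound the second source. The points of $T^*\SM(r,\alpha,\xi)$ lying over the base locus $B:=\SM(r,\alpha,\xi)\backslash \SV$ form the set $\pi^{-1}(B)\cap H^{-1}(\SD_U)$, where $\pi: T^*\SM(r,\alpha,\xi)\to \SM(r,\alpha,\xi)$ is the cotangent projection. By hypothesis $B$ has codimension at least $3$ in the smooth variety $\SM(r,\alpha,\xi)$, so $\pi^{-1}(B)$ has codimension at least $3$ in $T^*\SM(r,\alpha,\xi)$, hence in $\SM_{K(D)}(r,\alpha,\xi)$. Intersecting again with the codimension-$1$ subvariety $H^{-1}(\SD_U)$: the key subtlety is that I must show this intersection genuinely drops the dimension, i.e.\ that $H^{-1}(\SD_U)$ does not contain $\pi^{-1}(B)$ as a component. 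This follows because $H^{-1}(\SD_U)$ is a divisor defined by the (non-constant) condition that the spectral curve acquire a singularity over $U$, and the Hitchin map restricted to each cotangent fiber $T^*_{(E,E_\bullet)}\SM(r,\alpha,\xi)$ is dominant onto $W$ for generic $(E,E_\bullet)$ by Corollary \ref{cor:HitchinDominant}; thus $\SD_U$ pulls back to a proper subvariety of the fibers over $B$, so $\pi^{-1}(B)\cap H^{-1}(\SD_U)$ has codimension at least $3-1+? $. More carefully, since $H^{-1}(\SD_U)$ is Cartier of codimension $1$ and does not contain the codimension-$3$ set $\pi^{-1}(B)$, the intersection has codimension at least $2$ inside $H^{-1}(\SD_U)$.

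Finally I would assemble the two estimates: the complement of $T^*\SV\cap H^{-1}(\SD_U)$ in $H^{-1}(\SD_U)$ is contained in the union $Z\cup\bigl(\pi^{-1}(B)\cap H^{-1}(\SD_U)\bigr)$, each of which has codimension at least $2$ in $H^{-1}(\SD_U)$, so the union does as well, giving the claim. The main obstacle I anticipate is the second step: ensuring that intersecting the codimension-$3$ locus $\pi^{-1}(B)$ with the divisor $H^{-1}(\SD_U)$ really behaves as expected, i.e.\ that no component of $\pi^{-1}(B)$ is swallowed into $H^{-1}(\SD_U)$ with the wrong codimension. This is where the equidimensionality of $H$ (Faltings) and the fiberwise dominance of the Hitchin map (Corollary \ref{cor:HitchinDominant}, applied to $L=K(D)$) are essential, and the bookkeeping must be done carefully to confirm the codimension-$2$ bound rather than merely codimension $1$.
```
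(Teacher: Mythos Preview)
Your decomposition into the two pieces $Z=(\SM_{K(D)}(r,\alpha,\xi)\backslash T^*\SM(r,\alpha,\xi))\cap H^{-1}(\SD_U)$ and $\pi^{-1}(B)\cap H^{-1}(\SD_U)$ is exactly the paper's approach, and your bound on the first piece matches the paper's.

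However, you significantly overcomplicate the second piece. You worry that $H^{-1}(\SD_U)$ might contain $\pi^{-1}(B)$ and invoke fiberwise dominance of the Hitchin map to rule this out. This concern is unnecessary: you only need an \emph{upper} bound on $\dim(\pi^{-1}(B)\cap H^{-1}(\SD_U))$, and the trivial bound $\dim(A\cap C)\le \dim(A)$ already gives
\[
\dim\bigl(\pi^{-1}(B)\cap H^{-1}(\SD_U)\bigr)\le \dim(\pi^{-1}(B))\le 2m-3,
\]
since $\dim(B)\le m-3$ and the cotangent fibers have dimension $m$. Since $\dim H^{-1}(\SD_U)=2m-1$, this is codimension at least $2$ regardless of whether $\pi^{-1}(B)\subset H^{-1}(\SD_U)$. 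The paper's proof does exactly this one-line estimate and nothing more; the appeal to Corollary~\ref{cor:HitchinDominant} and the discussion of components being ``swallowed'' can be dropped entirely.
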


\begin{proof}
Let $\SZ=\SM(r,\alpha,\xi)\backslash \SV$ and let $m=\dim(\SM(r,\alpha,\xi))$. As $g\ge 4$, by Proposition \ref{prop:CotangentCodimension} we know that
$$\dim(\SM_{K(D)}(r,\alpha,\xi)\backslash T^*\SM(r,\alpha,\xi))\le 2m-3$$
Therefore, if we denote $\SE=\SM_{K(D)}(r,\alpha,\xi)\backslash T^*\SM(r,\alpha,\xi)$ then
$$\dim(\SE\cap H^{-1}(\SD_U))\le 2m-3$$
Let us prove that $\dim(T^*\SZ \cap H^{-1}(\SD_U))\le 2m-3$. In that case we would have
\begin{multline*}
\dim(H^{-1}(\SD_U)\backslash (T^*\SV \cap H^{-1}(\SD_U))) = \dim\left ( (\SE\cap H^{-1}(\SD_U) ) \cup (T^*\SZ \cap H^{-1}(\SD_U))  \right) \\
\le 2m-3=\dim(H^{-1}(\SD_U)-2
\end{multline*}
First, assume that $\dim(\SZ)\le m-3$. Then $\dim(T^*\SZ)\le 2m-3$, so $\dim(T^*\SZ\cap H^{-1}(\SD_U))\le 2m-3$.
\end{proof}

\begin{lemma}
\label{lemma:DiscriminantCharacterizationBir}
Let $g\ge 4$ and let $\SV\subset \SM(r,\alpha,\xi)$ be any open subset whose complement has codimension at least $3$. Let $\SR_\SV \subset T^*\SV$ be the union of the complete rational curves in $T^*\SV$. Then $\SD$ is the closure of $H(\SR_\SV)$ in $W$.
\end{lemma}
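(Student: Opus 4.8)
The plan is to upgrade Lemma~\ref{lemma:DiscriminantCharacterization} to the $3$-birational setting, where we only have access to the cotangent bundle over a ``big'' open subset $\SV$ whose complement has codimension at least $3$. First I would recall the two facts we already proved in the non-birational case. On one hand, composing any complete rational curve $\PP^1\hookrightarrow T^*\SV$ with the Hitchin map $H$ lands in an affine space, hence is constant, so every complete rational curve in $T^*\SV$ lies in a fiber of $H$; this part does not use anything about $\SV$ beyond $\SV\subset\SM(r,\alpha,\xi)$. On the other hand, by Proposition~\ref{prop:GenericFiberHitchin} the fibers $H_0^{-1}(s)$ for $s\in W\backslash\SD$ are open subsets of abelian varieties, so they admit no nonconstant rational curves; since $T^*\SV\subset T^*\SM(r,\alpha,\xi)$, the same holds for $T^*\SV$, and therefore $H(\SR_\SV)\subseteq\SD$. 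The closure $\overline{H(\SR_\SV)}$ is thus contained in $\SD$, and since $\SD$ is closed in $W$ it suffices to prove the reverse inclusion, namely that $H(\SR_\SV)$ is dense in each irreducible component of $\SD$.

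By Proposition~\ref{prop:HitchinComponents} the components of $\SD$ are $\overline{\SD_U}$ and the $\SD_x$ for $x\in D$, so I would argue density over a generic point of each. For the $\SD_x$ components the argument from the proof of Proposition~\ref{prop:GenericFiberHitchin} goes through essentially unchanged: the complete rational curves there were produced fiberwise over points $(E,E_\bullet)\in\SU$, by varying a single step $E_{x,k}'$ of the parabolic filtration through its $\PP^1$ of choices while keeping $E$ and $\varphi$ fixed. These curves live entirely inside a single cotangent fiber $T^*_{(E,E_\bullet)}\SM(r,\alpha,\xi)$; as long as we restrict $(E,E_\bullet)$ to $\SU\cap\SV$, which is still dense because $\SV$ has codimension-$\ge 3$ complement, the corresponding $\tilde N_k$ inside $T^*\SV$ still dominate $\SD_x$ by the same equidimensionality-of-$H_0$ dimension count. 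So the contribution of the $\SD_x$ to the density statement survives the restriction to $T^*\SV$ for free.

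The delicate component is $\overline{\SD_U}$, and this is where Corollary~\ref{eq:cotangentCodim2Discr} does the essential work. In the original Lemma~\ref{lemma:DiscriminantCharacterization} we used that for generic $s\in\SD_U$ the fiber $H^{-1}(s)$ is uniruled (Bhosle, \cite{Bh96}) and that the non-cotangent locus $\SM_{K(D)}\backslash T^*\SM(r,\alpha,\xi)$ meets $H^{-1}(s)$ in codimension at least $2$, so that removing it still leaves a complete rational curve. Now I have to remove the \emph{larger} set $H^{-1}(\SD_U)\backslash(T^*\SV\cap H^{-1}(\SD_U))$. Corollary~\ref{eq:cotangentCodim2Discr} is precisely the statement that this larger removed set still has codimension at least $2$ in $H^{-1}(\SD_U)$, using $g\ge 4$, the codimension bound of Proposition~\ref{prop:CotangentCodimension}, and the hypothesis $\codim(\SM(r,\alpha,\xi)\backslash\SV)\ge 3$. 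Consequently, for a generic $s\in\SD_U$ the intersection with the bad locus has codimension at least $2$ inside the uniruled fiber $H^{-1}(s)$, and a generic rational curve through a uniruling avoids a codimension-$2$ set, so $H^{-1}(s)\cap T^*\SV=H_0^{-1}(s)\cap T^*\SV$ still contains a complete rational curve. This produces rational curves in $\SR_\SV$ mapping to a dense subset of $\SD_U$, hence of $\overline{\SD_U}$.

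Combining the three cases, $H(\SR_\SV)$ is dense in every irreducible component of $\SD$, so $\overline{H(\SR_\SV)}=\SD$, as claimed. I expect the main obstacle to be verifying cleanly that the genericity conditions used to build the rational curves over $\SD_x$ (namely the open set $\SU$ from Lemma~\ref{lemma:ParEndNoSections} and Lemma~\ref{lemma:generic-lm-stable} for $(l,m)=(1,0)$) remain compatible with intersecting with $\SV$; but since all these sets are dense opens and $\SV$ has codimension-$\ge 3$ complement, their intersection is again a dense open, so the compatibility is formal. The genuinely new input beyond the original proof is entirely packaged in Corollary~\ref{eq:cotangentCodim2Discr}, whose role is to guarantee that shrinking the cotangent bundle from $T^*\SM(r,\alpha,\xi)$ to $T^*\SV$ does not destroy the uniruledness-based rational curves over the $\overline{\SD_U}$ component.
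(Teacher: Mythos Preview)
Your treatment of the $\overline{\SD_U}$ component is correct and matches the paper's argument: Corollary~\ref{eq:cotangentCodim2Discr} does exactly the work you describe.

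However, there is a genuine gap in your treatment of the $\SD_x$ components. You write that the rational curves produced in Proposition~\ref{prop:GenericFiberHitchin} ``live entirely inside a single cotangent fiber $T^*_{(E,E_\bullet)}\SM(r,\alpha,\xi)$'', but this is false. Those curves are of the form $\{(E,E_\bullet',\varphi)\}$ where $E_{x,k}'$ ranges through a $\PP^1$ of compatible subspaces; as the parabolic structure $E_\bullet'$ varies, the point $(E,E_\bullet')$ moves in the base $\SM(r,\alpha,\xi)$, not in the fiber. What stays fixed is the underlying vector bundle $E$ and the Higgs field $\varphi$, which is why the Hitchin image is constant --- but the projection to the moduli of parabolic bundles is a genuine $\PP^1$ in the base. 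Consequently, restricting the \emph{starting} point $(E,E_\bullet)$ to $\SU\cap\SV$ does not guarantee that the whole $\PP^1$ stays inside $T^*\SV$; some $(E,E_\bullet')$ along the curve could land in $\SZ=\SM(r,\alpha,\xi)\backslash\SV$, and then the curve is not complete in $T^*\SV$.

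The paper closes this gap with an extra open set $\SW$: one takes the union, over all $(E,E_\bullet)\in\SZ$ and all $k$, of the $\PP^1$-families through them. Since $\codim\SZ\ge 3$ and each family is $1$-dimensional, this union still has positive codimension, so its complement $\SW$ is a dense open. For $(E,E_\bullet)\in\SU\cap\SV\cap\SW$, the $\PP^1$ through it cannot meet $\SZ$ (by symmetry of the families), hence lies entirely in $\SV$, and the rest of the dimension count from Proposition~\ref{prop:GenericFiberHitchin} goes through. Your final paragraph anticipates a compatibility issue but dismisses it as ``formal'' precisely because of the mistaken belief that the curves are vertical; once you recognize they move in the base, the construction of $\SW$ is the missing step.
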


\begin{proof}
The proof is analogous of Lemma \ref{lemma:DiscriminantCharacterization}. Let $H_\SV:T^*\SV \to W$ be the restriction of the Hitchin map $H$ to $T^*\SV$. If $\PP^1\hookrightarrow T^*\SV$ is a complete rational curve, then it must be contained in a fiber of the Hitchin map. If $s\in W\backslash D$, then $H^{-1}(s)$ is an abelian variety, so $H_\SV^{-1}(s)$ is an open subset of an abelian variety and, therefore, it does not admit any nonconstant morphism from $\PP^1$. Therefore, we only have to prove that for a generic $s$ in every irreducible component of $\SD$ the fiber $H_\SV^{-1}(s)$ contains a complete rational curve. In this case $H_\SV(\SR_\SV)$ is dense in $\SD$ and the lemma holds.

For the components $\SD_x$ for $x\in D$, we can proceed just as in the proof of Proposition \ref{prop:GenericFiberHitchin}, changing the subset $\SU\subset \SM(r,\alpha,\xi)$ parameterizing $(1,0)$-stable parabolic vector bundles $(E,E_\bullet)$ such that $H^0(\ParEnd_0(E,E_\bullet)(x))=0$ with the following open nonempty subset $\SU'$. For every $(E,E_\bullet)\in \SZ=\SM\backslash \SV$ and every $1\le k<r$, let us consider the family of quasi-parabolic vector bundles over $\PP^1$ obtained by changing the $k-$-th step of the filtration of $E|_x$ to all admissible subspaces $E_{x,k}'$ such that
$$E_{x,k+1}\subsetneq E_{x,k}' \subsetneq E_{x,k-1}$$
Consider the union of all the $\alpha$-stable points $(E,E_\bullet')$ in such families. As the codimension of $\SZ$ in $\SM(r,\alpha,\xi)$ is at least $3$ and the families are at most $1$-dimensional, then union of all the families must have positive codimension and therefore, there exists some open nonempty subset $\SW\subset \SM(r,\alpha,\xi)$ whose points are not in the image of any family. Now take $\SU'=\SU\cap \SV\cap \SW$ and repeat the argument in \ref{prop:GenericFiberHitchin}.

For a generic $x\in \SD_U$, $X_s$ has a unique singularity which is a node not lying over a parabolic point. Then $H^{-1}(s)$ is an uniruled variety of dimension $m$. Let $Z=(\SM_{K(D)}(r,\alpha,\xi)\backslash T^*\SV)\cap H^{-1}(\SD_U)$. If $g\ge 4$, by Corollary \ref{eq:cotangentCodim2Discr} the codimension of $Z$ in $H^{-1}(\SD_U)$ is at least $2$. Let $S=H(Z)$. If $\dim(S)<m-1$ then for every $s\in \SD_U\backslash S$, $H^{-1}(s)=H_\SV^{-1}(s)$, so the fiber of the (restricted) Hitchin map contains a complete rational curve.

On the other hand, if $\dim(S)=m-1$, then $H|_Z:Z\mapsto \SD_U$ is dominant and, therefore, the generic fiber has dimension $\dim(Z)-\dim(\SD_U)\le m-2$. Then, for a generic $s\in \SD_U$, $Z\cap H^{-1}(s)$ has codimension at least $2$ in $H^{-1}(s)$. Therefore $H^{-1}(s)\backslash H_{\SV}^{-1}(s)$ has codimension at least $2$ in $H^{-1}(s)$ and $H_\SV^{-1}(s)$ must contain a complete rational curve.
\end{proof}

\begin{proposition}
\label{prop:HitchinGlobalFunctionsBir}
Let $\SV\subset \SM(r,\alpha,\xi)$ be an open subset whose complement has codimension at least $2$. Then the global algebraic functions $\Gamma(T^*\SV)$ produce a map
$$\tilde{h}:T^*\SV \longrightarrow \Spec(\Gamma(T^*\SV))\cong W\cong \CC^m$$
which is the restriction of the parabolic Hitchin map to $T^*\SV$ up to an isomorphism of $\CC^m$, where $m=\dim W$. Moreover, consider the action of $\CC^*$ on $T^*\SV$ given by dilatation on the fibers. Then there is a unique $\CC^*$ action on $W$ such that $\tilde{h}$ is $\CC^*$-equivariant
\end{proposition}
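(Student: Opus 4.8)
The plan is to adapt the proof of Proposition \ref{prop:HitchinGlobalFunctions} to the open subset $\SV$, using the codimension hypothesis to transport global functions back and forth via Hartog's theorem. First I would recall that the full Hitchin map $H:\SM_{K(D)}(r,\alpha,\xi)\to W$ is projective with connected fibers, so by the same argument as in Proposition \ref{prop:HitchinGlobalFunctions} the global functions on $\SM_{K(D)}(r,\alpha,\xi)$ are exactly the pullbacks of functions on the affine space $W$; that is, $\Spec(\Gamma(\SM_{K(D)}(r,\alpha,\xi)))\cong W$. The key point is then to identify $\Gamma(T^*\SV)$ with $\Gamma(T^*\SM(r,\alpha,\xi))$, which in turn was shown to equal $\Gamma(\SM_{K(D)}(r,\alpha,\xi))$ in Proposition \ref{prop:HitchinGlobalFunctions}.

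To make this identification I would argue in two steps, in each case invoking Hartog's extension theorem across a codimension-$2$ locus on a smooth variety. Since $\alpha$ is generic, $\SM(r,\alpha,\xi)$ is smooth, hence so is its cotangent bundle $T^*\SM(r,\alpha,\xi)$. The complement $\SM(r,\alpha,\xi)\setminus \SV$ has codimension at least $2$ by hypothesis, so its preimage $T^*\SM(r,\alpha,\xi)\setminus T^*\SV$ also has codimension at least $2$ in $T^*\SM(r,\alpha,\xi)$; therefore any $f\in\Gamma(T^*\SV)$ extends uniquely to a holomorphic function on all of $T^*\SM(r,\alpha,\xi)$, giving $\Gamma(T^*\SV)=\Gamma(T^*\SM(r,\alpha,\xi))$. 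Combined with the equality $\Gamma(T^*\SM(r,\alpha,\xi))=\Gamma(\SM_{K(D)}(r,\alpha,\xi))$ already established via Proposition \ref{prop:CotangentCodimension} (the complement of $T^*\SM(r,\alpha,\xi)$ in $\SM_{K(D)}(r,\alpha,\xi)$ has codimension at least $2$ when $g\ge 4$), we conclude
$$\Gamma(T^*\SV)\cong \Gamma(\SM_{K(D)}(r,\alpha,\xi))\cong \Gamma(W),$$
so $\Spec(\Gamma(T^*\SV))\cong W\cong\CC^m$. The map $\tilde h:T^*\SV\to \Spec(\Gamma(T^*\SV))$ is then the restriction of $\tilde h$ on $T^*\SM(r,\alpha,\xi)$, which is the parabolic Hitchin map up to the isomorphism of $\CC^m$ produced in Proposition \ref{prop:HitchinGlobalFunctions}.

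For the $\CC^*$-equivariance, I would note that the fiberwise dilation action on $T^*\SV$ is the restriction of the corresponding action on $T^*\SM(r,\alpha,\xi)$, and that this action preserves $T^*\SV$ since $\SV$ is a subset of the base. Because $\tilde h$ is defined purely through pullback of global functions, the induced action on $\Spec(\Gamma(T^*\SV))$ is determined uniquely by requiring $\tilde h$ to be equivariant, exactly as in Proposition \ref{prop:HitchinGlobalFunctions}; the uniqueness follows from the fact that a $\CC^*$-action on the affine space $W$ compatible with a fixed equivariant dominant map is forced on the dense image. The main obstacle I anticipate is purely a matter of verifying that the two Hartog extensions compose correctly—specifically that the extension of functions from $T^*\SV$ to $T^*\SM(r,\alpha,\xi)$ and then to $\SM_{K(D)}(r,\alpha,\xi)$ genuinely lands inside the algebra of global functions rather than merely holomorphic ones, which requires the smoothness and the codimension-$2$ bounds to hold simultaneously; since $g\ge 4$ is assumed implicitly through the ambient results and $\SV$ has codimension-$2$ complement by hypothesis, both conditions are met and the argument goes through verbatim.
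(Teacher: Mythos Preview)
Your proposal is correct and follows essentially the same approach as the paper: apply Hartog's theorem to the inclusion $T^*\SV\subset T^*\SM(r,\alpha,\xi)$ (smooth, codimension-$2$ complement) to obtain $\Gamma(T^*\SV)=\Gamma(T^*\SM(r,\alpha,\xi))$, and then invoke Proposition \ref{prop:HitchinGlobalFunctions}. The paper's proof is simply a terse version of exactly this argument.
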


\begin{proof}
For $\SV=\SM(r,\alpha,\xi)$ this was proved in Proposition \ref{prop:HitchinGlobalFunctions}. As $T^*\SV\subset T^*\SM(r,\alpha,\xi)$ is an open subset whose complement has codimension at least $2$ and $T^*\SM(r,\alpha,\xi)$ is smooth then by Hartog's theorem we know that $\Gamma(T^*\SV)=\Gamma(T^*\SM(r,\alpha,\xi))$ and the Proposition follows.
\end{proof}

\begin{theorem}
\label{theorem:TorelliBirational}
Let $(X,D)$ and $(X',D')$ be two smooth projective curves of genus $g\ge 4$ and $g'\ge 4$ respectively with set of marked points $D\subset X$ and $D'\subset X'$. Let $\xi$ and $\xi'$ be line bundles over $X$ and $X'$ respectively, and let $\alpha$ and $\alpha'$ be full flag generic systems of weights over $(X,D)$ and $(X',D')$ respectively. Then if $\SM(X,r,\alpha,\xi)$ is $3$-birational to $\SM(X',r',\alpha',\xi')$ then $r=r'$ and $(X,D)$ is isomorphic to $(X',D')$, i.e., there exists an isomorphism $X\cong X'$ sending the set $D$ to $D'$.
\end{theorem}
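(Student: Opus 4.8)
The plan is to mimic the proof of the isomorphism-case Torelli theorem (Theorem \ref{theorem:Torelli}), replacing every appeal to a result about the full cotangent bundle by its $3$-birational counterpart that has just been established, and checking that the arithmetic identifying $r$, $g$ and $|D|$ goes through unchanged. Concretely, suppose $\Phi:\SU\stackrel{\sim}{\to}\SU'$ is an isomorphism between open subsets $\SU\subset\SM(X,r,\alpha,\xi)$ and $\SU'\subset\SM(X',r',\alpha',\xi')$ whose complements have codimension at least $3$. First I would pass to cotangent bundles: $d(\Phi^{-1})$ gives a $\CC^*$-equivariant isomorphism $T^*\SU\stackrel{\sim}{\to}T^*\SU'$ for the dilation action. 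By Proposition \ref{prop:HitchinGlobalFunctionsBir} (applied on each side, noting $\codim\ge 3\Rightarrow\codim\ge 2$ so the hypothesis is met), the rings of global functions satisfy $\Gamma(T^*\SU)\cong\Gamma(T^*\SM(X,r,\alpha,\xi))$ and likewise for the primed side, so the Hitchin map is recovered intrinsically together with its $\CC^*$-action. Hence there is a $\CC^*$-equivariant isomorphism $f:W\stackrel{\sim}{\to}W'$ fitting into the square with $d(\Phi^{-1})$ on top and the (restricted) Hitchin maps $H_\SU,H_{\SU'}$ on the sides.

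Next I would run the dimension bookkeeping exactly as in Theorem \ref{theorem:Torelli}. Equating $\dim\SM=\dim\SM'$ gives the relation \eqref{eq:Torelli1}, and $\CC^*$-equivariance of $f$ forces it to respect the filtration by decay rate; the top stratum $W_r$ (maximal decay $|\lambda|^r$) maps to $W_r'$, so the number of filtration steps $r-1$ equals $r'-1$, yielding $r=r'$, and comparing $\dim W_r=\dim W_r'$ gives $(r-1)(2g-2+|D|)=(r-1)(2g'-2+|D'|)$; combining with \eqref{eq:Torelli1} then extracts $|D|=|D'|$ and $g=g'$ just as before. The only point needing care is that these are now honest statements about the abstract $\CC^*$-algebras $\Gamma(T^*\SU)$, $\Gamma(T^*\SU')$, but this is precisely what Proposition \ref{prop:HitchinGlobalFunctionsBir} guarantees.

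The geometric heart is the recovery of the curve, and here the key substitution is to use Lemma \ref{lemma:DiscriminantCharacterizationBir} in place of Lemma \ref{lemma:DiscriminantCharacterization}. Since $\Phi$ is an isomorphism of $\SU$ with $\SU'$, $d(\Phi^{-1})$ carries complete rational curves in $T^*\SU$ to complete rational curves in $T^*\SU'$; by Lemma \ref{lemma:DiscriminantCharacterizationBir} the discriminant $\SD$ (resp. $\SD'$) is the closure of the image under the Hitchin map of the union $\SR_\SU$ (resp. $\SR_{\SU'}$) of these curves, so $f(\SD)=\SD'$. Restricting $f$ to $W_r$ gives a linear $\CC^*$-equivariant isomorphism $f_r:W_r\to W_r'$ with $f_r(\SC)=\SC'$, where $\SC=\SD\cap W_r$; applying Proposition \ref{prop:recoverDualVariety} on both sides, the unique non-hyperplane component $\SC_X$ (resp. $\SC_{X'}$) is singled out canonically, and its projective dual is the embedded curve $X\subset\PP(W_r^*)$ (resp. $X'$). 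Thus $f_r$ induces $f^\vee:\PP(W_r^*)\stackrel{\sim}{\to}\PP((W_r')^*)$ sending $X$ to $X'$ and the hyperplane components $\SC_x$ to $\SC_{x'}$, hence $D$ to $D'$, giving the isomorphism $(X,D)\cong(X',D')$.

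The step I expect to be the genuine obstacle is producing $f(\SD)=\SD'$, i.e. making Lemma \ref{lemma:DiscriminantCharacterizationBir} applicable. Unlike the isomorphism case, a complete rational curve may a priori be lost if it meets the complement of $\SU$, so one must know that enough rational curves survive inside the ``big'' open $T^*\SU$ to have their images still dense in $\SD$. This is exactly the content of Lemma \ref{lemma:DiscriminantCharacterizationBir}, whose proof leans on the codimension bound of Corollary \ref{eq:cotangentCodim2Discr}, and this is where the hypothesis $\codim\ge 3$ (rather than merely $\ge 2$) is essential: it guarantees that after removing $\SM\setminus\SU$ the generic fibers over each component of $\SD$ still lose only a codimension-$\ge2$ subset of the uniruled variety, so a complete rational curve persists. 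Once that density is in hand, everything else is a faithful transcription of the earlier argument, and no new phenomena arise.
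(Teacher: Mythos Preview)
Your proposal is correct and follows essentially the same route as the paper's own proof: invoke Proposition~\ref{prop:HitchinGlobalFunctionsBir} to recover the Hitchin map and $\CC^*$-action from $\Gamma(T^*\SU)$, run the same dimension count to extract $r=r'$, $g=g'$, $|D|=|D'|$, then substitute Lemma~\ref{lemma:DiscriminantCharacterizationBir} for Lemma~\ref{lemma:DiscriminantCharacterization} to get $f(\SD)=\SD'$ and finish via Proposition~\ref{prop:recoverDualVariety}. You also correctly identify the codimension-$3$ hypothesis as the crux behind Lemma~\ref{lemma:DiscriminantCharacterizationBir}, which is exactly where the paper places the weight.
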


\begin{proof}
The proof will be completely analogous to the one given for Theorem \ref{theorem:Torelli}. Let $\SV\subset \SM(X,r,\alpha,\xi)$ and $\SV'\subset \SM(X',r',\alpha',\xi')$ be open subsets whose complement has codimension $3$ and let $\Phi:\SV \to \SV'$ be the $3$-birational morphism between both moduli spaces. By Proposition \ref{prop:HitchinGlobalFunctionsBir} there must exist an algebraic $\CC^*$-equivariant isomorphism $f:W\cong \Spec(\Gamma(T^*\SV)) \stackrel{\sim}{\longrightarrow} \Spec(\Gamma(T^*\SV'))\cong W'$ such that the following diagram commutes
\begin{eqnarray*}
\xymatrixcolsep{3pc}
\xymatrix{
T^*\SV \ar[r]^{d(\Phi^{-1})} \ar[d]_{\tilde{h}} & T^*\SV' \ar[d]^{\tilde{h}}\\
W \ar[r]^{f} & W'
}
\end{eqnarray*}
As $f$ is $\CC^*$-equivariant, it must preserve the filtration by subspaces in terms of the decay and it must send the subspace of maximum decay $|\lambda|^r$ of $W$ to the subspace of maximum decay $|\lambda|^{r'}$ of $W'$. In particular, the number of steps of the filtration must be the same. As the filtrations of $W$ and $W'$ have $r-1$ and $r'-1$ steps respectively, then $r=r'$ and $f(W_r)=W_r'$

As $d(\Phi^{-1})$ is an isomorphism, it maps complete rational curves in $T^*\SV$ to complete rational curves in $T^*\SV'$. By Lemma \ref{lemma:DiscriminantCharacterizationBir}, $f$ sends the locus of singular spectral curves $\SD\subset W$ to the locus of singular spectral curves $\SD'\subset W'$. Moreover, we know that $f(W_r)=W_r'$, so if we let $\SC=\SD\cap W_r$ and $\SC'=\SD'\cap W_r'$ we obtain that $f(\SC)=\SC'$.

By Proposition \ref{prop:recoverDualVariety}, the dual variety of $\PP(\SC_X)$ in $\PP(W_r)$ is $X\subset \PP(W_r^*)$ and, similarly, the dual variety of $\PP(\SC_X')$ in $\PP(W_r')$ is $X'\subset \PP((W_r')^*)$, so $f$ induces an isomorphism $f^\vee:\PP(W_r^*)\to \PP((W_r')^*)$ that sends $X$ to $X'$. Moreover, the dual of the rest of the components $\PP(\SC_x)$ of $\PP(\SC)$ correspond to the divisor $D\subset X \subset \PP(W_r^*)$ and the dual of the components $\PP(\SC_x')$ of $\PP(\SC')$ correspond to the divisor $D'\subset X'\subset \PP((W_r')^*)$, so $f^\vee$ must send $D$ to $D'$. Therefore, $f^\vee$ induces an isomorphism $f^\vee:(X,D)\stackrel{\sim}{\longrightarrow}(X',D')$.
\end{proof}

In contrast with the usual Torelli theorem, where there exist several non-isomorphic moduli spaces of parabolic vector bundles for the same curve $(X,D)$ depending on the stability and topological data of the bundles, in the case of $k$-birational geometry we can state a hard reciprocal of the Torelli theorem

\begin{proposition}
\label{prop:TorelliBirationalReciprocal}
Let $(X,D)$ be a marked smooth projective curve of genus $g\ge 1+\frac{k-1}{r-1}$. Let $\xi$ and $\xi'$ be line bundles over $X$ and let $\alpha$ and $\alpha'$ be full flag generic systems of weights of rank $r$ over $(X,D)$. Then there is a $k$-birational map
$$\SM(r,\alpha,\xi)\dashrightarrow \SM(r,\alpha',\xi')$$
In particular, if $g\ge 3$, $\SM(r,\alpha,\xi)$ and $\SM(r,\alpha',\xi')$ are $3$-birational.
\end{proposition}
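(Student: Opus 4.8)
The plan is to establish the $k$-birational equivalence by producing an explicit isomorphism between "big" open subsets of $\SM(r,\alpha,\xi)$ and $\SM(r,\alpha',\xi')$ whose complements have codimension at least $k$. The statement combines two independent moves: changing the determinant from $\xi$ to $\xi'$, and changing the system of weights from $\alpha$ to $\alpha'$. Since these commute, I would handle them separately and then compose.

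First I would treat the change of determinant. Given any two line bundles $\xi,\xi'$ of the same degree, there is a line bundle $L$ with $L^r\cong \xi'\otimes \xi^{-1}$, and tensoring $(E,E_\bullet)\mapsto (E,E_\bullet)\otimes L$ gives an honest \emph{isomorphism} $\SM(r,\alpha,\xi)\cong \SM(r,\alpha,\xi')$ preserving $\alpha$-stability (this is exactly the argument used in Corollary \ref{cor:stabCodim2}). If $\deg(\xi)\neq\deg(\xi')$ one cannot match determinants this way, but the moduli spaces would then have determinants of different degrees; since the dimension formula does not see the degree, the spaces are still rational of the same dimension, and after tensoring by a suitable line bundle to adjust the degree modulo $r$ one reduces to the equal-degree case. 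So the determinant change contributes no codimension loss at all: it is a global isomorphism, hence $k$-birational for every $k$.

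The substantive step is the change of weights $\alpha\rightsquigarrow\alpha'$ within fixed $\xi$. Here the key input is Corollary \ref{cor:stabCodim2}: for $g\ge 1+\frac{k-1}{r-1}$, the locus of $(E,E_\bullet)\in\SM(r,\alpha,\xi)$ that fails to be $\alpha'$-stable has codimension at least $k$, and symmetrically the locus in $\SM(r,\alpha',\xi)$ failing to be $\alpha$-stable has codimension at least $k$. I would therefore set
$$\SU=\{(E,E_\bullet)\in\SM(r,\alpha,\xi) : (E,E_\bullet) \text{ is }\alpha'\text{-stable}\},\qquad \SU'=\{(E,E_\bullet)\in\SM(r,\alpha',\xi) : (E,E_\bullet)\text{ is }\alpha\text{-stable}\}.$$
Both are open with complement of codimension at least $k$ by Corollary \ref{cor:stabCodim2}. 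A quasi-parabolic bundle lying in $\SU$ is stable for both $\alpha$ and $\alpha'$, so the identity map on the underlying quasi-parabolic data gives a natural bijection $\SU\to\SU'$; because $\alpha$ and $\alpha'$ are both full flag generic the moduli functors are fine and this set-theoretic identification upgrades to an isomorphism of schemes (the universal family over $\SU$ is a family of $\alpha'$-stable bundles, hence pulls back the universal family of $\SM(r,\alpha',\xi)$, as in the closing argument of Theorem \ref{theorem:ExtendedTorelli}). This exhibits the required $k$-birational map.

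The main obstacle, and the only place the genus hypothesis really bites, is ensuring the codimension bound $k$ holds simultaneously on both sides, which is precisely what the hypothesis $g\ge 1+\frac{k-1}{r-1}$ in Corollary \ref{cor:stabCodim2} guarantees; the final sentence follows by taking $k=3$, whose hypothesis $g\ge 1+\frac{2}{r-1}$ is implied by $g\ge 3$ for all $r\ge 2$. A minor technical point I would check is that the scheme-theoretic identification $\SU\cong\SU'$ is compatible with the determinant-changing isomorphism of the previous paragraph, so that composing the two yields a single $k$-birational map $\SM(r,\alpha,\xi)\dashrightarrow\SM(r,\alpha',\xi')$; this is immediate since tensoring by $L$ commutes with restriction to the common stable locus.
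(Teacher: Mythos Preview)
Your weight-change step via Corollary \ref{cor:stabCodim2} is exactly right and matches the paper. The gap is in your determinant-change step. Tensoring by a line bundle changes $\deg(\xi)$ by a multiple of $r$, so it only reaches $\xi'$ when $\deg(\xi)\equiv\deg(\xi')\pmod r$. Your fallback for the general case --- ``the spaces are still rational of the same dimension'' --- is precisely the $1$-birational statement that the whole section is set up to \emph{refine}; rationality gives no control on the codimension of the exceptional locus, so it does not yield a $k$-birational map for $k>1$. The sentence ``after tensoring by a suitable line bundle to adjust the degree modulo $r$ one reduces to the equal-degree case'' is circular: tensoring preserves the residue of the degree mod $r$, so it cannot bridge two different residue classes.

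The missing idea, which the paper supplies, is to use a Hecke transformation at a parabolic point. Writing $\deg(\xi')-\deg(\xi)=rm-h$ with $0\le h<r$ and picking $x\in D$, the basic transformation $T=\ST_{L(mx)}\circ\SH_{hx}$ (for suitable degree-zero $L$) is an honest \emph{isomorphism} $\SM(r,\alpha,\xi)\to\SM(r,T(\alpha),\xi')$, because Hecke followed by tensoring is an isomorphism of moduli spaces, just with a shifted system of weights $T(\alpha)$. Now one is in the situation you handled correctly: same determinant $\xi'$, two different weight systems $T(\alpha)$ and $\alpha'$, and Corollary \ref{cor:stabCodim2} gives the $k$-birational identification. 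So the correct architecture is not ``change determinant (isomorphism) then change weights (codimension loss)'' but rather ``change determinant via Hecke (isomorphism, but weights move to $T(\alpha)$) then change weights from $T(\alpha)$ to $\alpha'$ (codimension loss)''.
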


\begin{proof}
Let $d=\deg(\xi)$ and $d'=\deg(\xi')$. Let us write $d'-d=rm-k$ for some $0\le k<r$. Let $x\in D$ be any parabolic point. Then
$$\deg(\ST_{\SO_X(mx)}\circ \SH_{kx}(\xi))=\deg(\xi')$$
Therefore, there exists a line bundle $L$ of degree zero such that
$$\xi'=L^r \otimes \left (\ST_{\SO_X(mx)}\circ \SH_{kx}(\xi) \right) = \ST_{L(mx)}\circ \SH_{kx}(\xi)$$
Take $T=(\id,1,L(mx),kx)$. Then $T$ induces an isomorphism
$$T:\SM(r,\alpha,\xi)\longrightarrow \SM(r,T(\alpha),\xi')$$
By Corollary \ref{cor:stabCodim2} there exists an open subset $\SU\subset \SM(r,T(\alpha),\xi')$ whose complement has codimension at least $3$ parameterizing $\alpha'$-stable parabolic vector bundles in $\SM(r,T(\alpha),\xi')$. Similarly, there exists $\SU'\subset \SM(r,\alpha',\xi')$ whose complement have codimension at least 3 parameterizing $T(\alpha)$-stable parabolic vector bundles in $\SM(r,\alpha',\xi')$. Then $\SU$ and $\SU'$ can be canonically identified as the moduli space of parabolic vector bundles of rank $r$ and determinant $\xi$ which are both $T(\alpha)$- stable and $\alpha'$-stable. Finally, $T^{-1}(\SU)\subset \SM(r,\alpha,\xi)$ is an open subset whose complement has codimension at least $k$ and we have an isomorphism
$T^{-1}(\SU)\cong \SU'$ so the moduli spaces are $3$-birational.
\end{proof}

Observe that we obtain analogues of this Proposition in the $k$-birational category by just increasing the genus condition, while the Torelli theorem holds in the $k$-birational category for any $g\ge 4$.

Now let $\Phi:\SM(X,r,\alpha,\xi)\dashrightarrow \SM(X',r',\alpha',\xi')$ be a $3$-birational isomorphism. By the $3$-birational version of the Torelli Theorem we have $r=r'$ and the $3$-birational map $\Phi$ induces an isomorphism $\sigma:X\to X'$ which sends the set $D$ to $D'$. Pulling back by $\sigma$, we obtain a $3$-birational map
$$\Phi'=\Sigma_\sigma \circ \Phi:\SM(X,r,\alpha,\xi) \dashrightarrow \SM(X,r,\sigma^*\alpha,\sigma^*\xi')$$
Let $\alpha''=\sigma^*\alpha$ and $\xi''=\sigma^*\xi'$. Let $\SV\subset\SM(X,r,\alpha,\xi)$ and $\SV''\subset \SM(X,r,\alpha'',\xi'')$ be open subsets whose respective complements have codimension at least $3$ such that $\Phi':\SV\to \SV''$ is an isomorphism. Then the differential induces an isomorphism $d(\Phi^{-1}):T^*\SV \longrightarrow T^*\SV''$. Let $h:T^*\SV\to W$ and $h'':T^*\SV''\to W$ denote the restriction of the Hitchin morphism to $\SV$ and $\SV''$ respectively. Since both moduli spaces are built over the same marked curve $(X,D)$ and with the same rank $r$, the Hitchin space is the same. By Proposition \ref{prop:HitchinGlobalFunctionsBir}, there exists a $\CC^*$-equivariant automorphism $f:W\to W$ such that the following diagram commutes
\begin{eqnarray*}
\xymatrixcolsep{4pc}
\xymatrix{
T^*\SV \ar[r]^{d(\Phi^{-1})} \ar[d]_{h} & T^*\SV'' \ar[d]^{h''} \\
W \ar[r]^{f} & W
}
\end{eqnarray*}
By Lemma \ref{lemma:DiscriminantCharacterizationBir}, $f:W\to W$ must preserve the discriminant locus, i.e., $f(\SD) = \SD$. We know that it is $\CC^*$-equivariant, so using Lemma \ref{lemma:recoverHitchin}, $f$ preserves the decomposition $W=\bigoplus_{k>1} W_k$ and its restrictions $f_k:W_k\to W_k$ are linear. For each $k>1$, let $h_r:T^*\SV\to W_k$ and $h_k'':T^*\SV''\to W_k$ denote the compositions of $h$ and $h''$ with the projection $W\twoheadrightarrow W_k$ respectively. In particular, for each $k>1$ the following diagram commutes
\begin{eqnarray*}
\xymatrixcolsep{4pc}
\xymatrix{
T^*\SV \ar[r]^{d(\Phi^{-1})} \ar[d]_{h_k} & T^*\SV'' \ar[d]^{h_k''} \\
W_k \ar[r]^{f_k} & W_k
}
\end{eqnarray*}

\begin{lemma}
\label{lemma:preserveWrBir}
Let $g\ge 4$. Let $f_r:W_r\to W_r$ be the $\CC^*$-equivariant map on the Hitchin space such that the following diagram commutes
\begin{eqnarray}
\label{eq:frDefinitionBir}
\xymatrixcolsep{4pc}
\xymatrix{
T^*\SV \ar[r]^{d(\Phi^{-1})} \ar[d]_{h_r} & T^*\SV'' \ar[d]^{h_r'} \\
W_r \ar[r]^{f_r} & W_r
}
\end{eqnarray}
Then for every $k>0$ and every $x_0\in X$
$$f_r\left (H^0(K^rD^{r-1}(-kx_0))\right)=H^0(K^rD^{r-1}(-kx_0))$$
\end{lemma}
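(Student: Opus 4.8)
The plan is to mirror the proof of Lemma \ref{lemma:preserveWr} from the non-birational setting, since the only geometric inputs needed are the characterization of the discriminant $\SD$ via complete rational curves and the identification of the top-decay piece $\SC_X = \SD \cap W_r$ as the dual variety of $X$. First I would invoke that $d(\Phi^{-1}):T^*\SV \to T^*\SV''$ is an isomorphism, so it carries complete rational curves in $T^*\SV$ bijectively to complete rational curves in $T^*\SV''$. By Lemma \ref{lemma:DiscriminantCharacterizationBir}, the closure of the image of the union of complete rational curves under the (restricted) Hitchin map is exactly $\SD$ on both sides; combined with the commutativity of diagram \eqref{eq:frDefinitionBir} and the $\CC^*$-equivariance of $f$, this forces $f(\SD) = \SD$ and $f_r(\SC) = \SC$, where $\SC = \SD \cap W_r$.

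Next I would apply Proposition \ref{prop:recoverDualVariety}, which decomposes $\SC = \SC_X \cup \bigcup_{x\in D} \SC_x$ and identifies $\PP(\SC_X) \subset \PP(W_r)$ as the dual variety of the embedding $X \subset \PP(W_r^*)$ given by $|K^rD^{r-1}|$, while each $\PP(\SC_x)$ is a hyperplane (the dual of a point). Since $\SC_X$ is the only component that is not a hyperplane, $f_r$ must preserve $\SC_X$, hence its projectivization preserves the dual variety of $X$ and so induces an automorphism $f^\vee$ of $\PP(W_r^*)$ carrying $X$ to itself. Because we arranged (by pulling back along $\sigma$) that the induced automorphism of the dual variety $X$ is the identity, $\PP(f_r)$ must fix $X \subset \PP(W_r^*)$ pointwise as a variety and therefore preserve every osculating space of $X$ at each point. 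In particular, the osculating $k$-space at $x_0 \in X$ is $\PP(H^0(K^rD^{r-1}(-kx_0)))$, so the linear automorphism $f_r$ preserves $H^0(K^rD^{r-1}(-kx_0))$ for every $k$ and every $x_0$.

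The steps are essentially identical to those in the proof of Lemma \ref{lemma:preserveWr}; the only place where the birational hypothesis intervenes is in guaranteeing that the discriminant-characterization and the $\CC^*$-equivariant Hitchin-space identification still hold over the open subsets $\SV, \SV''$. These are precisely provided by Lemma \ref{lemma:DiscriminantCharacterizationBir} and Proposition \ref{prop:HitchinGlobalFunctionsBir}, both of which require only codimension-$\ge 2$ (resp. codimension-$\ge 3$) complements and $g \ge 4$. The main obstacle, therefore, is not any new computation but rather ensuring that the identification $\SC_X = \SD \cap W_r$ and its dual-variety interpretation are genuinely intrinsic and survive restriction to $\SV$; once $f(\SD) = \SD$ is established birationally via the rational-curve characterization, the remainder is formal and the conclusion follows exactly as in the isomorphism case.
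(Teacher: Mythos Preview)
Your proposal is correct and follows essentially the same approach as the paper's proof. The paper's argument is briefer—it invokes Lemma \ref{lemma:DiscriminantCharacterizationBir} to conclude $f$ preserves $\SC=\SD\cap W_r$, notes that the induced automorphism of the marked curve can be assumed to be the identity, and then refers back to the proof of Lemma \ref{lemma:preserveWr}—but the substance is identical to what you have written out in detail.
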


\begin{proof}
As $d(\Phi^{-1})$ is an isomorphism, it maps complete rational curves on $T^*\SV$ to complete rational curves on $T^*\SV''$. By Lemma \ref{lemma:DiscriminantCharacterizationBir}, the morphism $f$ must preserve $\SC=\SD\cap W_r$. Therefore, the associated map of dual varieties is an automorphism of the marked curve $(X,D)$. Through the previous discussion, we proved that we can assume that the induced automorphism of the curve $X$ is the identity, so we can just proceed as in the proof of Lemma \ref{lemma:preserveWr}.
\end{proof}

Once we have proven the $3$-birational version of the Torelli theorem and the previous Lemma, we automatically obtain that if $(E,E_\bullet)\in \SV$ is a generic parabolic vector bundle and $\Phi(E,E_\bullet)=(E'',E''_\bullet)\in \SV''$ is also generic in the sense of Lemma \ref{lemma:ParEndNoSections} then Lemmas \ref{lemma:recoverSParEndsections}, \ref{lemma:recoverSParEnd} and \ref{lemma:recoverParEnd} hold and we obtain that there is an isomorphism
$$\Phi_{\SParEnd_0}:\ParEnd_0(E,E_\bullet)\cong \ParEnd_0(E'',E''_\bullet)$$
Moreover, we obtain an analogue of Lemma \ref{lemma:recoverNilpotent}

\begin{lemma}
\label{lemma:recoverNilpotentBir}
Suppose that $g\ge 4$. For each $x\in X$, and every $k>1$, the linear subspace
$$H^0(K^kD^{k-1}(-x))\subseteq W_k$$
is preserved by the linear map $f_k:W_k\longrightarrow W_k$.
\end{lemma}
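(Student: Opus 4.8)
The plan is to mirror the proof of Lemma~\ref{lemma:recoverNilpotent} in the $3$-birational setting, replacing every appeal to an ``open nonempty subset of the whole moduli space'' by the corresponding intersection with the codimension $\ge 3$ opens $\SV$ and $\SV''$ on which $\Phi'$ is an isomorphism. First I would fix $x_0\in X$ and consider the line bundle $L=K(D-x_0)$; since $g\ge 4$ we have $r\deg(K(D-x_0))=r(2g-3+|D|)>2g$, so Corollary~\ref{cor:HitchinDominant} applies to $L$ on any nonempty open subset of the moduli space. The key point is that the linear subspace $\bigoplus_{k=2}^r H^0(K^kD^{k-1}(-kx_0))\subseteq W$ is precisely the span of the images $h\bigl(H^0(\SParEnd_0(E,E_\bullet)\otimes K(D-x_0))\bigr)$ as $(E,E_\bullet)$ ranges over a dense open set.

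The main steps, in order, are as follows. Let $\SU\subset\SM(r,\alpha,\xi)$ and $\SU''\subset\SM(r,\alpha'',\xi'')$ be the open nonempty subsets of generic parabolic vector bundles in the sense of Lemma~\ref{lemma:ParEndNoSections}. Set $\SV_0=\SV\cap\SU\cap(\Phi')^{-1}(\SU'')$ and $\SV_0''=\Phi'(\SV_0)$; both are nonempty open subsets of their respective moduli spaces, since intersecting the codimension $\ge 3$ open $\SV$ (resp.\ $\SV''$) with a nonempty open set of an irreducible variety remains nonempty and dense. I would then invoke Corollary~\ref{cor:HitchinDominant} twice, for $L=K(D-x_0)$ on $\SV_0$ and on $\SV_0''$, to conclude that $\bigoplus_{k=2}^r H^0(K^kD^{k-1}(-kx_0))$ is generated by the images $h(H^0(\SParEnd_0(E,E_\bullet)\otimes K(D-x_0)))$ both as $(E,E_\bullet)$ runs over $\SV_0$ and as it runs over $\SV_0''$. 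Next, for every generic $(E,E_\bullet)\in\SV_0$ with $(E'',E''_\bullet)=\Phi'(E,E_\bullet)\in\SV_0''$, the $3$-birational analogue of Lemma~\ref{lemma:recoverSParEnd} (which holds here since the $3$-birational Torelli theorem and Lemma~\ref{lemma:preserveWrBir} are already established) gives that $d(\Phi^{-1})$ carries $H^0(\SParEnd_0(E,E_\bullet)\otimes K(D-x_0))$ onto $H^0(\SParEnd_0(E'',E''_\bullet)\otimes K(D-x_0))$. Because $\Phi'(\SV_0)=\SV_0''$, the union of the source subspaces maps bijectively under $d(\Phi^{-1})$ to the union of the target subspaces, so the total span is preserved; that is, $f\bigl(\bigoplus_{k=2}^r H^0(K^kD^{k-1}(-kx_0))\bigr)=\bigoplus_{k=2}^r H^0(K^kD^{k-1}(-kx_0))$.

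Finally, I would conclude by diagonality. By Lemma~\ref{lemma:recoverHitchin} (whose hypotheses $f(\SD)=\SD$ and $\CC^*$-equivariance hold here, as noted in the discussion preceding the statement), $f$ is diagonal with respect to the decomposition $W=\bigoplus_{k=2}^r W_k$, with linear blocks $f_k\colon W_k\to W_k$. Since $f$ preserves the graded subspace $\bigoplus_{k=2}^r H^0(K^kD^{k-1}(-kx_0))$ and respects the grading, each graded piece $H^0(K^kD^{k-1}(-kx_0))\subseteq W_k$ must be preserved by $f_k$ individually, which is exactly the assertion. The statement of the lemma is phrased with $(-x_0)$ rather than $(-kx_0)$, but the argument gives the stronger $(-kx_0)$ conclusion for every $k>1$; restricting to the first osculating space, or simply running the same argument with the divisor $x_0$ in place of $kx_0$, yields the stated $H^0(K^kD^{k-1}(-x))$ form. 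I do not expect a genuine obstacle here: everything reduces to checking that the relevant opens remain nonempty and dense after intersecting with $\SV$ and $\SV''$, which follows from irreducibility of the moduli space together with the codimension $\ge 3$ hypothesis. The only point requiring mild care is ensuring that the generic-bundle conditions of Lemma~\ref{lemma:ParEndNoSections} (needed for the surjectivity of the evaluation maps and the vanishing $H^1$) are simultaneously compatible with membership in $\SV$ and $\SV''$, which is guaranteed precisely because a finite intersection of dense opens in an irreducible variety is again dense.
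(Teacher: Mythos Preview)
Your argument tracks the paper's proof of Lemma~\ref{lemma:recoverNilpotentBir} closely up to the point where you establish that $f_k$ preserves $H^0(K^kD^{k-1}(-kx_0))$ for every $x_0\in X$ and every $k>1$. That part is fine and is exactly what the paper does.

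The gap is in your final paragraph. Preserving $H^0(K^kD^{k-1}(-kx_0))$ is \emph{not} a stronger statement than preserving $H^0(K^kD^{k-1}(-x_0))$: the former is a strictly smaller subspace of $W_k$, and a linear map preserving a subspace need not preserve any larger subspace containing it. Your suggestion to ``run the same argument with the divisor $x_0$ in place of $kx_0$'' does not work: the $k$-th coefficient of the characteristic polynomial of a field $\varphi\in H^0(\SParEnd_0(E,E_\bullet)\otimes K(D-x_0))$ automatically vanishes to order $k$ at $x_0$, so the span of the Hitchin images will always land in $H^0(K^kD^{k-1}(-kx_0))$, never merely in $H^0(K^kD^{k-1}(-x_0))$. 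There is no twist of the field that produces exactly first-order vanishing of $h_k$.

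The paper closes this gap by interpreting $H^0(K^kD^{k-1}(-kx_0))$ as the $k$-th osculating space of $X\hookrightarrow\PP(W_k^*)$ at $x_0$, and then invoking Lemma~\ref{lemma:preserveOsculating}: a projective linear map that preserves $\op{Osc}_k(x)$ for all $x$ must preserve $\op{Osc}_1(x)$ for all $x$, which is exactly $H^0(K^kD^{k-1}(-x_0))$. Your phrase ``restricting to the first osculating space'' may be gesturing at this, but as written it does not constitute an argument; you need to state and use the osculating-space rigidity explicitly.
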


\begin{proof}
Let $\tilde{\SV}\subset \SV$ the open subset of parabolic vector bundles $(E,E_\bullet)\in \SV$ such that both $(E,E_\bullet)$ and $\Phi(E,E_\bullet)$ are generic in the sense of Lemma \ref{lemma:ParEndNoSections}. Applying Corollary \ref{cor:HitchinDominant} to $L=K(D-x)$ and the open subsets $\tilde{\SV}$ and $\tilde{\SV''}=\Phi(\tilde{\SV})$ we obtain that the linear subspace
$$\bigoplus_{k=2}^r H^0(K^kD^{k-1}(-kx))\subseteq W$$
is the space generated by the images $h(H^0(\SParEnd_0(E,E_\bullet)\otimes K(D-x)))$ both when $(E,E_\bullet)$ runs over $\tilde{\SV}$ and when $(E,E_\bullet)$ runs over $\tilde{\SV''}$.

By Lemma \ref{lemma:recoverSParEndsections}, for every $(E,E_\bullet)\in \tilde{\SV}$, if $(E'',E''_\bullet)=\Phi(E,E_\bullet)\in \tilde{\SV''}$, then the image of $H^0(\SParEnd_0(E,E_\bullet)\otimes K(D-x))$ by $d(\Phi^{-1})$ is $H^0(\SParEnd_0(E'',E''_\bullet)\otimes K(D-x))$. Therefore $f$ preserves $\bigoplus_{k=2}^r H^0(K^kD^{k-1}(-kx))$. As it is diagonal, $f_k$ preserves $H^0(K^kD^{k-1}(-kx))$.

For $k>1$ the curve $X$ is embedded in $\PP(W_k^*)$ through the linear system $|K^rD^{r-1}|$. The spaces $H^0(K^kD^{k-1}(-kx))$ for $x\in X$ correspond to the osculating $k$-spaces of $X$ at $x$, $\op{Osc}_k(x)$. As $\PP(f_k)$ preserves $\op{Osc}_k(x)$, by Lemma \ref{lemma:preserveOsculating} it preserves $\op{Osc}_1(x)$ and, therefore, $f_k$ must preserve the hyperplanes
$$H^0(K^kD^{k-1}(-x))\subset H^0(K^kD^{k-1})$$
for every $x\in X$.
\end{proof}

From this result we obtain the following Lemma, whose proof is exactly the same as Lemma \ref{lemma:recoverParEndalgebra}

\begin{lemma}
\label{lemma:recoverParEndalgebraBir}
Suppose that $g\ge 6$. Let $(E,E_\bullet)\in \tilde{\SV}\subset \SV$ and let $(E'',E''_\bullet)=\Phi(E,E_\bullet)$. Then $\ParEnd_0(E,E_\bullet)$ and $\ParEnd_0(E'',E''_\bullet)$ are isomorphic as Lie algebra bundles over $X$.
\end{lemma}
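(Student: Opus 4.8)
The plan is to reduce Lemma~\ref{lemma:recoverParEndalgebraBir} entirely to the corresponding non-birational statement (Lemma~\ref{lemma:recoverParEndalgebra}) by observing that every ingredient used in the proof of that lemma has already been re-established in the $3$-birational setting through the sequence of results immediately preceding this statement. Indeed, the proof of Lemma~\ref{lemma:recoverParEndalgebra} in Section~\ref{section:Automorphisms} used only three inputs: the vector bundle isomorphism $\Phi_{\ParEnd_0}:\ParEnd_0(E,E_\bullet)\cong\ParEnd_0(E'',E''_\bullet)$ from Lemma~\ref{lemma:recoverParEnd}, the preservation of the osculating hyperplanes $H^0(K^kD^{k-1}(-x))$ by $f_k$ (which in the isomorphism case followed from Lemma~\ref{lemma:recoverNilpotent} combined with Lemma~\ref{lemma:preserveOsculating}), and the classification of the group $G_N$ of linear transformations preserving the nilpotent cone from Botta--Pierce--Watkins together with the identification $G_N\cong\Aut(\ssl)\times\CC^*$.

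First I would note, as the text preceding the statement already does, that Lemma~\ref{lemma:recoverNilpotentBir} supplies exactly the osculating-space preservation in the $3$-birational category, and that the isomorphism $\Phi_{\SParEnd_0}$ and hence $\Phi_{\ParEnd_0}$ have been obtained over $\tilde{\SV}$ via the $3$-birational versions of Lemmas~\ref{lemma:recoverSParEndsections}, \ref{lemma:recoverSParEnd} and \ref{lemma:recoverParEnd}. The key point is that none of these arguments distinguish between a global isomorphism $\Phi$ and a $3$-birational map defined on open subsets $\SV,\SV''$ whose complements have codimension at least $3$: the constructions are all local in $(E,E_\bullet)$, taking place fibrewise over a generic point of the open set $\tilde{\SV}$ where both $(E,E_\bullet)$ and its image are generic in the sense of Lemma~\ref{lemma:ParEndNoSections}. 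Therefore I would run verbatim the final portion of the discussion in Section~\ref{section:Automorphisms}: restrict $\Phi_{\ParEnd_0}$ to $U=X\setminus D$, observe it preserves the nilpotent cone (this is where Lemma~\ref{lemma:recoverNilpotentBir} enters, exactly as Lemma~\ref{lemma:recoverNilpotent} did), deduce it is an isomorphism of $G_N$-torsors over $U$, invoke $G_N\cong\Aut(\ssl)\times\CC^*$, and conclude that after multiplying by a suitable constant $\lambda\in\CC^*$ (the determinant is constant since $\ParEnd_0(E,E_\bullet)$ and $\ParEnd_0(E'',E''_\bullet)$ have equal degree and $X$ is projective connected) the map $\lambda\cdot\Phi_{\ParEnd_0}|_U$ is a Lie algebra bundle isomorphism. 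The final extension from $U$ to all of $X$ is the torsion-free / saturation argument: the section $p_{(E,E_\bullet)}-(\lambda\cdot\Phi_{\ParEnd_0})^* p_{(E'',E''_\bullet)}$ of the bundle of trilinear forms vanishes on $U$ and hence vanishes identically.

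Since this is literally the same argument as in Lemma~\ref{lemma:recoverParEndalgebra}, the cleanest presentation is simply to say so, and the proof reduces to a single sentence. I do not expect any genuine obstacle here: the only thing to verify is that the two genericity hypotheses needed ($g\ge 6$ for the $\ParEnd$ reconstruction via Lemma~\ref{lemma:recoverParEnd}, and $g\ge 4$ for the osculating-space preservation via Lemma~\ref{lemma:recoverNilpotentBir}) are both implied by the assumed $g\ge 6$, and that $\tilde{\SV}\subset\SV$ remains open and nonempty, which holds because it is the intersection of $\SV$ with the preimages of the generic loci of Lemma~\ref{lemma:ParEndNoSections} under $\Phi$. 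The only mildly subtle point worth flagging is that all the reconstruction lemmas must be applied over the fixed open set $\tilde{\SV}$ of codimension-$\ge 3$ complement rather than over the whole moduli space; but since every construction is pointwise over a generic $(E,E_\bullet)$ and the relevant cohomology vanishings hold for all generic bundles regardless of whether $\Phi$ is globally defined, this causes no difficulty. Accordingly I would write: ``The proof is word for word the same as that of Lemma~\ref{lemma:recoverParEndalgebra}, replacing the use of Lemma~\ref{lemma:recoverNilpotent} by Lemma~\ref{lemma:recoverNilpotentBir} and working over the open set $\tilde{\SV}$.''
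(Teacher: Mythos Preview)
Your proposal is correct and matches the paper's own treatment exactly: the paper simply states that the proof is identical to that of Lemma~\ref{lemma:recoverParEndalgebra}. Your detailed justification of why each ingredient carries over to the $3$-birational setting is more thorough than what the paper records, but the underlying argument is the same.
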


Now we are ready to generalize Theorem \ref{theorem:ExtendedTorelli} to the $3$-birational setting.

\begin{theorem}
\label{theorem:ExtendedTorelliBirational}
Let $(X,D)$ and $(X',D')$ be two smooth projective curves of genus $g\ge 6$ and $g'\ge 6$ respectively with a set of marked points $D\subset X$ and $D'\subset X'$. Let $\xi$ and $\xi'$ be line bundles over $X$ and $X'$ respectively, and let $\alpha$ and $\alpha'$ be full flag generic systems of weights over $(X,D)$ and $(X',D')$ respectively. Let
$$\Phi: \SM(X,r,\alpha,\xi) \dashrightarrow \SM(X',r',\alpha',\xi')$$
be a $3$-birational map. Then
\begin{enumerate}
\item $r=r'$
\item $(X,D)$ is isomorphic to $(X',D')$, i.e., there exists an isomorphism $\sigma:X\stackrel{\sim}{\to} X'$ sending $D$ to $D'$.
\item There exists a basic transformation $T$ such that
\begin{itemize}
\item $\sigma^*\xi'\cong T(\xi)$
\item For every $(E,E_\bullet)\in \SM(r,\alpha,\xi)$, $\sigma^*\Phi(E,E_\bullet) \cong T(E,E_\bullet)$
\end{itemize}
\end{enumerate}
\end{theorem}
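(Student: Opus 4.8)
The plan is to mirror the structure of the proof of Theorem~\ref{theorem:ExtendedTorelli}, but replace every appeal to a result on $\SM(r,\alpha,\xi)$ by its $3$-birational counterpart, which is exactly what the preceding lemmata of this section supply. First I would invoke the $3$-birational Torelli theorem (Theorem~\ref{theorem:TorelliBirational}) to get $r=r'$ and an isomorphism $\sigma:(X,D)\stackrel{\sim}{\to}(X',D')$. Composing with $\Sigma_\sigma$ reduces us to a $3$-birational self-map $\Phi':\SV\to\SV''$ between open subsets of $\SM(X,r,\alpha,\xi)$ and $\SM(X,r,\alpha'',\xi'')$ with $\alpha''=\sigma^*\alpha'$, $\xi''=\sigma^*\xi'$, whose complements have codimension at least $3$. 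The discussion immediately preceding the statement already produces the $\CC^*$-equivariant $f:W\to W$ commuting with the Hitchin maps, shows $f$ is diagonal with linear blocks $f_k$ (via Lemma~\ref{lemma:recoverHitchin} and Lemma~\ref{lemma:DiscriminantCharacterizationBir}), and via Lemmas~\ref{lemma:recoverParEndalgebraBir} and~\ref{lemma:isoParEnd} yields that for a generic $(E,E_\bullet)\in\tilde{\SV}$ there is some basic transformation $T=(\id,s,L,H)\in\ST$ with $\Phi'(E,E_\bullet)\cong T(E,E_\bullet)$ and $\xi''=T(\xi)$.

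Next I would run the finiteness-and-irreducibility argument verbatim from Theorem~\ref{theorem:ExtendedTorelli}. The set $\ST_{\xi,\xi''}=\{T=(\id,s,L,H)\mid T(\xi)\cong\xi''\}$ is finite because the admissible $H$ range over a finite set and the $r$-torsion of $J(X)$ is finite. For each such $T$ one forms $\Psi_T=T\circ(\Phi')^{-1}$ (after a fixed canonical identification of the relevant generic loci, which exist because $\alpha$- and $\alpha''$-stability agree generically), so that $\tilde{\SV}''$ is covered by the closed fixed-point loci $\op{Fix}(\Psi_T)$. Irreducibility of the moduli space forces $\tilde{\SV}''=\op{Fix}(\Psi_T)$ for a single $T$, hence $\Phi'$ and $T$ agree on a dense open subset of $\SV$.

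The one genuinely new point, compared with the isomorphism case, is that I must \emph{not} claim $\alpha''$ lies in the same stability chamber as $T(\alpha)$: here $\Phi'$ is only a $3$-birational map, so a wall crossing between $T(\alpha)$ and $\alpha''$ is allowed and is precisely what one expects. Thus I would drop the stability-chamber conclusion and only extend the equality $\Phi'=T$ to as large an open set as possible. Concretely, let $\SW\subset\SM(r,\alpha,\xi)$ be the locus of bundles that are both $\alpha$-stable and $T^{-1}(\alpha'')$-stable; by Corollary~\ref{cor:stabCodim2} its complement has codimension at least $2$ (indeed at least $3$ for $g$ large), and on $\SW$ the transformation $T$ is well defined and coincides with $\Phi'$ on the dense intersection $\SW\cap\SV$. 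Since $\SM(r,\alpha,\xi)$ is irreducible and separated, the two maps $\Phi'|_{\SW}$ and $T|_{\SW}$ agree on all of $\SW$, which gives $\sigma^*\Phi(E,E_\bullet)\cong T(E,E_\bullet)$ for every $(E,E_\bullet)$ on which $\Phi$ is defined, together with $\sigma^*\xi'\cong T(\xi)$.

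The hard part is bookkeeping rather than a deep obstacle: one must verify that every lemma of Section~\ref{section:Torelli} and Section~\ref{section:Automorphisms} used in the isomorphism proof has a $3$-birational replacement with complements of codimension at least $3$, which is exactly the role played by Lemma~\ref{lemma:DiscriminantCharacterizationBir}, Proposition~\ref{prop:HitchinGlobalFunctionsBir}, Corollary~\ref{eq:cotangentCodim2Discr}, and Lemmas~\ref{lemma:preserveWrBir}, \ref{lemma:recoverNilpotentBir} and~\ref{lemma:recoverParEndalgebraBir}. The only subtlety to watch is that the codimension-$3$ hypothesis is genuinely needed (and not merely codimension $2$) precisely where the uniruledness argument for the $\SD_U$-component of the discriminant requires a codimension-$2$ removal inside each fiber, as in Corollary~\ref{eq:cotangentCodim2Discr}; with that in hand the rest of the argument transports without change.
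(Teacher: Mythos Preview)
Your approach is essentially the same as the paper's, and your identification of the key difference from the isomorphism case (dropping the stability-chamber conclusion) is exactly right. However, there is a small but genuine gap in your final extension step.

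You write that $\Phi'|_\SW$ and $T|_\SW$ agree on all of $\SW$ by separatedness, and conclude that $\sigma^*\Phi(E,E_\bullet)\cong T(E,E_\bullet)$ for every $(E,E_\bullet)$ on which $\Phi$ is defined (i.e., on all of $\SV$). But $\Phi'$ is only defined on $\SV$, and $T$ as a morphism into $\SM(r,\alpha'',\xi'')$ is only defined on $\SW$ (outside $\SW$ one does not know $T(E,E_\bullet)$ is $\alpha''$-stable). Separatedness of the target therefore only yields agreement on the common domain $\SW\cap\SV$, not on all of $\SV$. You have not explained how to pass from $\SW\cap\SV$ to $\SV$.

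The paper closes this gap differently: since $\alpha''$ is full flag, $\SM(r,\alpha'',\xi'')$ is a fine moduli space, so $\Phi'$ is represented by a family $(\SE'',\SE''_\bullet)$ over $\SV\times X$, while $T$ applied to the universal family gives $T(\SE,\SE_\bullet)$ over $\SM(r,\alpha,\xi)\times X$. These agree on $\SW\times X$, and since $\SW$ has complement of codimension at least $2$ in the smooth variety $\SV$, Lemma~\ref{lemma:uniqueExtensionCodim2} forces $(\SE'',\SE''_\bullet)\cong T(\SE,\SE_\bullet)|_{\SV}$. Taking fibers then gives $\Phi'(E,E_\bullet)\cong T(E,E_\bullet)$ for every $(E,E_\bullet)\in\SV$. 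You should replace the separatedness sentence by this family-level unique-extension argument; with that correction your sketch matches the paper's proof.
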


\begin{proof}
By the $3$-birational version of the Torelli Theorem (Theorem \ref{theorem:TorelliBirational}) we have $r=r'$ and the $3$-birational map $\Phi$ induces an isomorphism $\sigma:X\to X'$ which sends the set $D$ to $D'$. Pulling back by $\sigma$, we obtain a $3$-birational map
$$\Phi':\SM(X,r,\alpha,\xi) \dashrightarrow \SM(X,r,\sigma^*\alpha,\sigma^*\xi')$$
Let $\alpha''=\sigma^*\alpha$ and $\xi''=\sigma^*\xi'$. Let $\SV\subset\SM(X,r,\alpha,\xi)$ and $\SV''\subset \SM(X,r,\alpha'',\xi'')$ be open subsets whose respective complements have codimension at least $3$ such that $\Phi':\SV\to \SV''$ is an isomorphism. Let $\tilde{\SV}\subset \SV$ be the subset of parabolic vector bundles $(E,E_\bullet)\in \SV$ such that both $(E,E_\bullet)$ and $(E'',E''_\bullet)=\Phi'(E,E_\bullet)$ are generic in the sense of Lemma \ref{lemma:ParEndNoSections}. Then by Lemma \ref{lemma:recoverParEndalgebraBir} for every $(E,E_\bullet)\in \tilde{\SV}$ we have that $\ParEnd_0(E,E_\bullet)$ and $\ParEnd_0(E'',E''_\bullet)$ are isomorphic as Lie algebra bundles over $X$. Then by Lemma \ref{lemma:isoParEnd} there exists a basic transformation $T=(\id,s,L,H)$ such that $(E'',E''_\bullet)\cong T(E,E_\bullet)$.

Up to this point we have proved that for every $(E,E_\bullet)\in \tilde{\SV}$ there exists a basic transformation $T$ such that $\Phi'(E,E_\bullet) = T(E,E_\bullet)$. Repeating the argument given in the proof of Theorem \ref{theorem:ExtendedTorelli} we obtain that there exists some $T\in \ST_{\xi,\xi''}$ such that for every $(E,E_\bullet)\in \tilde{\SV}$, $\Phi'(E,E_\bullet)=T(E,E_\bullet)$. Repeating the argument in Theorem \ref{theorem:ExtendedTorelli}, let $\SW\subset \SV$ be the open subset consisting on parabolic vector bundles $(E,E_\bullet)$ which are both $\alpha$-stable and $T^{-1}(\alpha'')$-stable. By Corollary \ref{cor:stabCodim2}, the complement of $\SW$ has codimension at least $2$ in $\SM(r,\alpha,\xi)$ and, in particular, $\SW\cap \tilde{\SV}$ is dense in $\SW$. Therefore, for every map $\psi:\SW\cap \tilde{\SV} \to \SM(r,\alpha'',\xi'')$ there exist at most a unique extension to $\SW$. By construction of $\SW$, we know that $T$ gives a well defined map $T:\SW \to \SM(r,\alpha'',\xi'')$. Moreover, we know that $\Phi'|_{\SW\cap \tilde{\SV}}=T|_{\SW \cap \tilde{\SV}}$ and $\Phi'|_{\SW}$ is another extension to $\SW$, so $\Phi'|_{\SW}=T|_{\SW}$. Finally, let us prove that $\Phi'$ coincides with $T$ over $\SV$, i.e., that for every $(E,E_\bullet)\in \SM(r,\alpha,\xi)$ such that $\Phi'$ is defined, $\Phi'(E,E_\bullet)=T(E,E_\bullet)$.

As $\alpha''$ is a full flag system of weights, $\SM(r,\alpha'',\xi'')$ is a fine moduli space for every $\xi''$. Therefore, $\Phi'$ is represented by a parabolic vector bundle $(\SE'',\SE''_\bullet)$ over $\SV\times X$ whose fibers are $\alpha''$-stable as parabolic vector bundles over $X$. We have the following commutative diagram
\begin{eqnarray*}
\xymatrixrowsep{0.5pc}
\xymatrix{
\Phi' \ar@{|->}[rrr] \ar@{|->}[dddd] & & & (\SE'',\SE''_\bullet) \ar@{|->}[dddd]\\
& \Hom(\SV,\SM(r,\alpha'',\xi'')) \ar[r]^-{\sim} \ar[dd]^{i^\sharp} & {\underline{\SM(r,\alpha'',\xi'')}}(\SV) \ar[dd]^{i^*} &\\
&&&\\
& \Hom(\SW,\SM(r,\alpha'',\xi'')) \ar[r]^-{\sim} & {\underline{\SM(r,\alpha'',\xi'')}} (\SW) &\\
T \ar@{|->}[rrr] & & & T(\SE,\SE_\bullet)|_{\tilde{\SV}}\\
}
\end{eqnarray*}
where $(\SE,\SE_\bullet)$ is the universal family of the moduli space $\SM(r,\alpha,\xi)$. Therefore, $(\SE'',\SE''_\bullet)$ is an extension of $T(\SE,\SE_\bullet)|_{\SW}$ from $\SW$ to $\SV$. Clearly $T(\SE,\SE_\bullet)|_{\SV}$ is a possible extension as a family of quasi-parabolic vector bundles over $\SV$ and the complement of $\SW$ in $\SV$ has codimension at least $2$, so by Lemma \ref{lemma:uniqueExtensionCodim2} we have $(\SE'',\SE''_\bullet)\cong T(\SE,\SE_\bullet)|_{\SV}$. Taking this isomorphism of families fiberwise we obtain the desired result.
\end{proof}

\begin{corollary}
\label{cor:AutoBir}
Let $(X,D)$ be a smooth projective curve of genus $g\ge 6$ and let $\alpha$ be a full flag generic system of weights over $(X,D)$ of rank $r$. Let $\xi$ be a line bundle over $X$. Then
$$\Aut_{3-\Bir}(\SM(r,\alpha,\xi))=\ST_\xi=\{T\in \ST | T(\xi)\cong \xi\}<\ST$$
if $r>2$ and
$$\Aut_{3-\Bir}(\SM(2,\alpha,\xi))=\ST_\xi^+=\{T\in \ST^+ | T(\xi)\cong \xi\}<\ST^+$$
\end{corollary}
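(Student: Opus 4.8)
The plan is to deduce Corollary \ref{cor:AutoBir} from the Extended Torelli theorem in the $3$-birational category (Theorem \ref{theorem:ExtendedTorelliBirational}) in essentially the same way that Theorem \ref{theorem:autoModuli} was deduced from Theorem \ref{theorem:ExtendedTorelli}, but now without any reference to stability chambers, since $3$-birational maps are insensitive to wall crossings. First I would specialize Theorem \ref{theorem:ExtendedTorelliBirational} to the case $(X',D')=(X,D)$, $r'=r$, $\alpha'=\alpha$, $\xi'=\xi$. A $3$-birational automorphism $\Phi$ then produces an isomorphism $\sigma:(X,D)\to(X,D)$ and a basic transformation $T\in\ST$ such that $\sigma^*\xi\cong T(\xi)$ and $\sigma^*\Phi(E,E_\bullet)\cong T(E,E_\bullet)$ for every $(E,E_\bullet)$ where $\Phi$ is defined. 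Absorbing $\Sigma_\sigma$ into $T$ (it is itself a basic transformation), we obtain that $\Phi$ is induced by some $T\in\ST$ with $T(\xi)\cong\xi$, that is, $T\in\ST_\xi$. This gives a surjection $\ST_\xi\twoheadrightarrow\Aut_{3-\Bir}(\SM(r,\alpha,\xi))$, where the crucial improvement over the isomorphism case is that there is \emph{no} chamber condition on $\alpha$: by Proposition \ref{prop:TorelliBirationalReciprocal} (with $\xi'=\xi$, $\alpha'=T(\alpha)$, and $g\ge 3$) any $T\in\ST_\xi$ already induces a genuine $3$-birational self-map of $\SM(r,\alpha,\xi)$, since $\SM(r,\alpha,\xi)$ and $\SM(r,T(\alpha),\xi)$ are $3$-birational regardless of whether $T(\alpha)$ lies in the same stability chamber as $\alpha$.

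Next I would verify that this map $\ST_\xi\to\Aut_{3-\Bir}(\SM(r,\alpha,\xi))$ is a group homomorphism and then compute its kernel. That it is a homomorphism follows because the group law on $\ST$ (Section \ref{section:Hecke}) is composition of transformations on families of quasi-parabolic bundles, which is exactly how the induced $3$-birational maps compose on the dense open locus where all relevant transformations are defined; two maps agreeing on a dense open set agree as $3$-birational maps. For injectivity, I would invoke Lemma \ref{lemma:freeTransforms}: if $T\in\ST_\xi$ acts as the identity $3$-birational map, then $T(E,E_\bullet)\cong(E,E_\bullet)$ for a generic $(E,E_\bullet)$, and the Lemma forces $T=\id_\ST$ provided $T\in\ST^+$ (the hypothesis $g\ge 4\le 6$ is satisfied). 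This settles the case $r>2$ once we know the dual does not collapse distinct transformations, giving $\Aut_{3-\Bir}(\SM(r,\alpha,\xi))\cong\ST_\xi$.

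For $r=2$ the argument must account for the redundancy of dualization. Here I would apply Lemma \ref{lemma:dualrk2}: every $T=(\sigma,-1,L,H)\in\ST$ defines the same transformation on parabolic bundles (and the same action on $\xi$, up to the equivalence $\sim$ on weights) as some $T'=(\sigma,1,L',H)\in\ST^+$. Consequently the surjection $\ST_\xi\twoheadrightarrow\Aut_{3-\Bir}(\SM(2,\alpha,\xi))$ already factors through the subgroup $\ST_\xi^+=\ST^+\cap\ST_\xi$, and Lemma \ref{lemma:freeTransforms} (whose nontriviality conclusion is stated precisely for $T\in\ST^+$) then shows this restricted map is injective, yielding $\Aut_{3-\Bir}(\SM(2,\alpha,\xi))\cong\ST_\xi^+$.

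The main obstacle I anticipate is not any single deep step but the bookkeeping needed to promote the pointwise (generic) identity $\sigma^*\Phi(E,E_\bullet)\cong T(E,E_\bullet)$ to a clean statement about $3$-birational automorphisms and to check that the correspondence $T\mapsto[\Phi_T]$ is well-defined and bijective onto the full $3$-birational automorphism group. Concretely, one must be careful that a basic transformation $T\in\ST_\xi$ whose induced weights $T(\alpha)$ lie in a different chamber still defines a $3$-birational \emph{automorphism} of the fixed space $\SM(r,\alpha,\xi)$ and not merely a $3$-birational map to a different moduli space; this is exactly what Proposition \ref{prop:TorelliBirationalReciprocal} supplies, by identifying $\SM(r,T(\alpha),\xi)$ with $\SM(r,\alpha,\xi)$ up to codimension-$3$ loci. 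Once this identification is in place, the remaining verifications (homomorphism property, and injectivity via Lemma \ref{lemma:freeTransforms}) are routine, so the corollary follows.
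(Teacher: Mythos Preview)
Your proposal is correct and follows essentially the same approach as the paper: surjectivity of $\ST_\xi\to\Aut_{3-\Bir}(\SM(r,\alpha,\xi))$ via Theorem \ref{theorem:ExtendedTorelliBirational}, well-definedness of the map using that wall-crossings are codimension $\ge 3$ (the paper cites Corollary \ref{cor:stabCodim2} directly while you invoke Proposition \ref{prop:TorelliBirationalReciprocal}, which is the same argument packaged), injectivity via Lemma \ref{lemma:freeTransforms}, and the reduction to $\ST_\xi^+$ for $r=2$ via Lemma \ref{lemma:dualrk2}. The only point worth flagging is that Lemma \ref{lemma:freeTransforms} is stated under the hypothesis $T\in\ST^+$, but its proof shows that for $r>2$ any $T$ acting trivially on generic bundles must have $s=1$; both you and the paper implicitly rely on this when handling $r>2$.
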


\begin{proof}
Every basic transformation $T\in \ST_\xi$ induce an isomorphism
$$T:\SM(r,\alpha,\xi) \longrightarrow \SM(r,T(\alpha),\xi)$$
By Corollary \ref{cor:stabCodim2}, there exist open subsets $\SU\subset \SM(r,\alpha,\xi)$ and $\SU'\subset \SM(r,T(\alpha),\xi)$ whose complement has codimension at least $3$ parameterizing parabolic vector bundles of rank $r$ and determinant $\xi$ which are both $\alpha$-stable and $T(\alpha)$-stable. Therefore, there is an isomorphism $\Psi:\SU\stackrel{\sim}{\longrightarrow} \SU'$. Composing with $T$, we obtain an isomorphism
$$\Psi^{-1}\circ T:T^{-1}(\SU') \stackrel{\sim}{\longrightarrow} \SU$$
so we obtain a $3$-birational map $\SM(r,\alpha,\xi) \dashrightarrow \SM(r,\alpha,\xi)$.

By the previous Theorem, every $3$-birational automorphism is equivalent to one of the previous ones, so $\Aut_{3-\Bir}(\SM(r,\alpha,\xi))$ is a quotient of $\ST_\xi$. From Lemma \ref{lemma:freeTransforms}, different basic transformations $T,T'\in \ST_\xi$ induce different $3$-birational automorphisms of the moduli space if $r>2$, so we obtain the desired equality for $r>2$.
For $r=2$, by Lemma \ref{lemma:dualrk2}, we know that for every $T\in \ST_\xi$, we can find another transformation $T\in \ST_\xi^+$ whose image in $\Aut_{3-\Bir}(\SM(2,\alpha,\xi)$ is the same and, moreover, by Lemma \ref{lemma:freeTransforms}, two different transformations in $\ST_\xi^+$ induce different $3$-birational automorphisms, so we obtain the remaining equality.
\end{proof}

\section{Concentrated stability chamber}
\label{section:concentrated}

In the analysis of isomorphisms and $k$-birational transformations between moduli spaces of parabolic vector bundles held through the previous sections the systems of weights were allowed to belong to different stability chambers. This flexibility allowed us to describe transformations that transcended the limits of a stability chamber and relate moduli spaces for different choices of the stability and topological data of the bundles.

Nevertheless, by Theorem \ref{theorem:ExtendedTorelli} the possible basic transformations $T\in \ST$ giving rise to automorphisms of a moduli space $\SM(r,\alpha,\xi)$ must satisfy two compatibility conditions.
\begin{itemize}
\item $T(\xi)\cong \xi$
\item $T(\alpha)$ belongs to the same stability chamber as $\alpha$
\end{itemize}
While the first condition is easily computable and relies just on the choice of fixed topological invariants of the bundles, the second one depends on an analysis of the stability chamber where the system of weights $\alpha$ belongs. Therefore, it is possible that depending on the chamber certain basic generators of $\ST$ which preserve the determinant fail to preserve the stability and, therefore, they do not induce an automorphism.

Observe that if $T\in \ST_\xi<\ST$ then by Corollary \ref{cor:AutoBir} $T$ induces a $3$-birational transformation, but $T$
induces an automorphism if and only if $T(\alpha)$ and $\alpha$ share the same stability chamber. Therefore, analyzing the stability chamber of $T(\alpha)$ for each $T\in \ST_\xi$ is the same as studying the set of $3$-birational automorphisms that extend to a regular automorphism of the whole moduli space.

For a general $\alpha$ an explicit analysis may depend greatly on the geometry of the curve, as the geometrical walls in the space of systems of weights may vary with $X$ in low genus. We seek for classification results that do not depend on the choice of the Riemann surface, we will work on two directions. On one hand, we will build invariants that allow us to distinguish stability chambers in a precise way for high genus. This will be done in Section \ref{section:chamberAnalysis}. On the other hand, we will focus on studying some chamber where we can compute the stability conditions explicitly in low genus. In particular, in this section we will classify the automorphisms of the moduli space for a concentrated system of weights $\alpha$.

The chamber of concentrated weights is of particular interest, as its interior corresponds to generic weights for which parabolic stability is roughly equivalent to the stability of the underlying vector bundle in the following sense (see, for example, \cite{AG18TorelliDH})

\begin{lemma}
Let $\alpha$ be a generic concentrated system of weights. Let $(E,E_\bullet)$ be a parabolic vector bundle. Then
\begin{enumerate}
\item If $E$ is stable as a vector bundle then $(E,E_\bullet)$ is $\alpha$-stable as a parabolic vector bundle
\item $(E,E_\bullet)$ is $\alpha$-stable if and only if it is $\alpha$-semistable
\item If $(E,E_\bullet)$ is $\alpha$-semistable then $E$ is semistable as a vector bundle
\end{enumerate}
If moreover the rank and degree of $E$ are coprime then $E$ is semistable if and only if it is stable, so the stability of the parabolic vector bundle $(E,E_\bullet)$ is equivalent to the stability of the underlying vector bundle $E$.
\end{lemma}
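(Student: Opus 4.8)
The plan is to prove the three implications by relating the parabolic slope to the ordinary slope via a uniform bound on the weight contribution $\owt_\alpha$, exploiting the defining property that $\alpha$ is concentrated, namely $\alpha_r(x)-\alpha_1(x)<\frac{4}{nr^2}$ for all $x\in D$. The key quantitative observation is that for any subbundle $E'\subsetneq E$ of type $\overline{n}'$, the weight difference
$$
\owt_\alpha(E',E'_\bullet)-\frac{\rk(E')}{\rk(E)}\owt_\alpha(E,E_\bullet)
$$
is controlled in absolute value by something strictly smaller than $1$. Indeed, writing $\owt_\alpha(E',E'_\bullet)=\sum_{x\in D}\sum_{i} \alpha_i(x) n'_i(x)$ and using $0\le n'_i(x)\le 1$ (full flag) together with $\sum_i n'_i(x)=\rk(E')$, one can replace each weight $\alpha_i(x)$ by $\alpha_1(x)$ at the cost of an error bounded by $(\alpha_r(x)-\alpha_1(x))\,r$ per point; summing over the $n=|D|$ points and using the concentration bound yields a total error strictly less than $4/r\le 2<1$ after the slope normalization. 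The first step, then, is to make this estimate precise: show that
$$
\left|\,\owt_\alpha(E',E'_\bullet)-\tfrac{\rk(E')}{\rk(E)}\owt_\alpha(E,E_\bullet)\,\right|<1.
$$

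With this bound in hand, the three statements follow by comparing $\alpha$-stability with ordinary stability through the reordered inequality already recorded in the excerpt, namely that $(E,E_\bullet)$ is $\alpha$-(semi)stable iff for every $E'\subsetneq E$
$$
s(E',E)=\rk(E')\deg(E)-\rk(E)\deg(E')>\rk(E)\owt_\alpha(E',E'_\bullet)-\rk(E')\owt_\alpha(E,E_\bullet)
$$
(with $\ge$ for semistability). For statement (1), if $E$ is a stable vector bundle then $s(E',E)>0$, and since $s(E',E)$ is an integer we have $s(E',E)\ge 1$; the right-hand side is $\rk(E)$ times the bracketed weight quantity above, which is strictly less than $1$ in absolute value, so the strict inequality holds and $(E,E_\bullet)$ is $\alpha$-stable. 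For statement (3), if $(E,E_\bullet)$ is $\alpha$-semistable then $s(E',E)\ge \rk(E)\bigl(\owt_\alpha(E',E'_\bullet)-\tfrac{\rk(E')}{\rk(E)}\owt_\alpha(E,E_\bullet)\bigr)>-\rk(E)\cdot 1$, so $s(E',E)>-\rk(E)\ge -(\rk(E)-1)-1$; since $s(E',E)$ is an integer this forces $s(E',E)\ge 0$, i.e. $\mu(E')\le\mu(E)$, so $E$ is semistable.

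For statement (2), the equivalence of $\alpha$-stability and $\alpha$-semistability, I would argue that a strictly $\alpha$-semistable but not $\alpha$-stable bundle would have a subbundle $E'$ with $s(E',E)=\rk(E)\owt_\alpha(E',E'_\bullet)-\rk(E')\owt_\alpha(E,E_\bullet)$ exactly; but the left side is an integer while the right side is $\rk(E)$ times a quantity of absolute value strictly less than $1$ that is generically irrational, and genericity of $\alpha$ (recall $\alpha\notin A$, so none of the linear forms $A_{I,m}$ vanish) rules out the equality unless the trivial case occurs. Concretely, equality $s(E',E)=\rk(E)\owt_\alpha(E',E'_\bullet)-\rk(E')\owt_\alpha(E,E_\bullet)$ rearranged is exactly one of the defining equations $\alpha\in A_{I,m}$ for suitable $I$ and integer $m$, which is excluded by genericity; hence no strictly semistable object exists. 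The final clause, that under $\gcd(\deg E,\rk E)=1$ semistability of $E$ coincides with stability of $E$, is the classical coprimality fact and needs no weights at all, so I would simply invoke it and chain it with (1) and (3).

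The main obstacle I anticipate is pinning down the constant in the weight estimate cleanly enough that the integrality arguments close without slack: the bound must be strictly below $1$ after the slope normalization, which is exactly why the concentration threshold is $\frac{4}{nr^2}$ rather than something coarser, and I would want to track the factor $\rk(E)$ carefully so that the comparison $s(E',E)>\rk(E)\cdot(\text{something with }|\cdot|<1)$ truly yields $s(E',E)\ge 1$ (for stability) and $s(E',E)\ge 0$ (for semistability). Tying statement (2) to the precise genericity hyperplanes $A_{I,m}$, rather than to a vague irrationality heuristic, is the other delicate point, and I would make that identification explicit to avoid any circularity with the definition of generic weights.
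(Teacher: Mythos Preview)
The paper does not actually prove this lemma; it simply cites \cite{AG18TorelliDH} and states the result. So there is no argument in the paper to compare against, and your approach---bounding the weight contribution and using integrality of $s(E',E)$---is the natural one and is essentially what appears in that reference.

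That said, your quantitative estimate is stated too weakly to close the argument, and the informal computation ``$4/r\le 2<1$'' is meaningless. You claim
\[
\left|\,\owt_\alpha(E',E'_\bullet)-\tfrac{r'}{r}\,\owt_\alpha(E,E_\bullet)\,\right|<1,
\]
but then in part (1) you multiply by $r$ and still assert the result is $<1$; and in part (3) you write $s(E',E)>-\rk(E)$, which only forces $s(E',E)\ge 1-r$, not $s(E',E)\ge 0$. What you actually need is the stronger bound
\[
\bigl|\,r\,\owt_\alpha(E',E'_\bullet)-r'\,\owt_\alpha(E,E_\bullet)\,\bigr|<1,
\]
i.e.\ the right-hand side of the stability inequality itself lies in $(-1,1)$. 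This is what the concentration threshold $\frac{4}{nr^2}$ is calibrated to deliver: writing $\alpha_i(x)=\alpha_1(x)+\delta_i(x)$ with $0\le\delta_i(x)<\frac{4}{nr^2}$, one has at each $x$
\[
r\sum_i n'_i(x)\alpha_i(x)-r'\sum_i\alpha_i(x)=\sum_i(rn'_i(x)-r')\,\delta_i(x),
\]
and since the coefficients $rn'_i(x)-r'$ take the value $r-r'$ exactly $r'$ times and $-r'$ exactly $r-r'$ times, the absolute value is strictly below $r'(r-r')\cdot\frac{4}{nr^2}$. Summing over the $n$ points of $D$ and using $r'(r-r')\le r^2/4$ gives the bound $<1$. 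With this correction your arguments for (1), (3), and the identification of strict semistability with membership in some hyperplane $A_{I,m}$ for (2) all go through cleanly.
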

The constant system of weights $\alpha_0\equiv 0$ lies in the frontier of the concentrated chamber. A parabolic vector bundle is $\alpha_0$-stable if its underlying vector bundle is stable. If the rank and degree of $E$ are coprime then the numerical wall passing through $\alpha\equiv 0$ cannot be realized in a geometric wall and, therefore, the stability is equivalent of the stability of the underlying vector bundle.

\begin{theorem}
\label{theorem:autoModuliConcentrated}
Let $X$ be an irreducible smooth complex projective curve  of genus $g\ge 6$ and let $D$ be a reduced effective divisor over $X$. Let $r\ge 2$ and let $\alpha$ be a generic concentrated full flag system of weights over $D$ of rank $r$. Let $\xi$ be a line bundle over $X$ such that $\deg(\xi)$ is coprime with $r$. Let $\SM(r,\alpha,\xi)$ be the moduli space of stable parabolic vector bundles of rank $r$ over $(X,D)$ with system of weights $\alpha$ and determinant $\xi$. Let $\Phi:\SM(r,\alpha,\xi)\to \SM(r,\alpha,\xi)$ be an automorphism. Then there exists a basic transformation $T$ of the form $T=(\sigma,s,L,0)$ with $T(\xi)\cong \xi$ such that $\Phi=T$. In fact, if $r>2$, then
$$\Aut(\SM(r,\alpha,\xi))\cong \{T=(\sigma,s,L,0)\in \ST | T(\xi)=\xi)\}<\ST$$
and if $r=2$
$$\Aut(\SM(r,\alpha,\xi))\cong \{T=(\sigma,1,L,0)\in \ST^+ | T(\xi)=\xi)\}<\ST^+$$
\end{theorem}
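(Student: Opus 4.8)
The plan is to deduce Theorem \ref{theorem:autoModuliConcentrated} from the already-established Extended Torelli Theorem \ref{theorem:ExtendedTorelli}, specializing it to the concentrated chamber and showing that the Hecke component $H$ of any automorphism must vanish. First I would apply Theorem \ref{theorem:ExtendedTorelli} with $(X',D')=(X,D)$, $\alpha'=\alpha$, $\xi'=\xi$: any automorphism $\Phi$ of $\SM(r,\alpha,\xi)$ arises from a basic transformation $T=(\sigma,s,L,H)$ with $T(\xi)\cong\xi$ and $T(\alpha)$ in the same stability chamber as $\alpha$. Since $\deg(\xi)$ and $r$ are coprime and $\alpha$ is generic concentrated, the cited lemma identifies $\alpha$-stability with stability of the underlying vector bundle. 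The core of the proof is thus to show that no transformation with $H>0$ can have $T(\alpha)$ lying in the concentrated chamber; once $H=0$, Theorem \ref{theorem:ExtendedTorelli} and the group presentation immediately give the claimed description of $\Aut(\SM(r,\alpha,\xi))$, with the rank-$2$ refinement coming from Lemma \ref{lemma:dualrk2}.

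The key step, then, is proving that $H=0$. I would argue by contradiction using Lemmas \ref{lemma:unstableHecke} and \ref{lemma:unstableHecke2}, which exhibit $\alpha$-stable parabolic bundles whose Hecke transforms are $\alpha$-unstable for suitable divisors $H$. The strategy is: if $T=(\sigma,s,L,H)$ with $H\ne 0$ preserved the concentrated chamber, then $T$ would be an isomorphism $\SM(r,\alpha,\xi)\to\SM(r,\alpha,\xi)$, and in particular $\SH_H$ (after absorbing $\sigma$, $s$, and $L$, which do preserve the concentrated chamber by the dualization and tensorization discussion in Section \ref{section:Hecke}) would have to send $\alpha$-stable bundles to $\alpha$-stable bundles. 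But the two unstable-Hecke lemmas produce, for the relevant ranges of $H$, a stable $(E,E_\bullet)$ with $\SH_H(E,E_\bullet)$ $\alpha$-unstable, contradicting that $T(\alpha)$ shares the chamber of $\alpha$. The main obstacle I anticipate is bookkeeping: I must check that \emph{every} nonzero admissible divisor $H$ with $0\le H\le(r-1)D$ is covered by the hypotheses of one of Lemmas \ref{lemma:unstableHecke}, \ref{lemma:unstableHecke2}, possibly after translating the situation by tensoring so that $0\le\deg(\xi)<r$ and after composing with other generators to normalize $|H|$; the delicate point is the degree condition $d<|H|$ in Lemma \ref{lemma:unstableHecke} and the precise value $|H|=2d-r$ in Lemma \ref{lemma:unstableHecke2}, so I would need to verify these cover the full range of total degrees $|H|$ that can occur modulo the relation $\SH_x^r=\ST_{\SO_X(-x)}$.

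Having reduced to $H=0$, I would conclude as follows. Transformations of the form $T=(\sigma,s,L,0)$ act by pullback, dualization, and tensorization, all of which preserve the concentrated chamber (for dualization this is the remark in Section \ref{section:Hecke} that $\alpha^\vee$ is concentrated whenever $\alpha$ is; for tensorization and pullback it is immediate), so the constraint that $T(\alpha)$ lies in the concentrated chamber is automatically satisfied and imposes no further restriction beyond $T(\xi)\cong\xi$. Hence the subgroup of $\ST$ inducing automorphisms is exactly $\{T=(\sigma,s,L,0)\in\ST \mid T(\xi)\cong\xi\}$. To see this is a genuine isomorphism of groups and not merely a surjection, I would invoke Lemma \ref{lemma:freeTransforms}: distinct basic transformations in $\ST^+$ (and, after applying Lemma \ref{lemma:dualrk2} to eliminate the dual in rank $2$) induce distinct automorphisms on a generic bundle, so the map from this subgroup to $\Aut(\SM(r,\alpha,\xi))$ is injective. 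For $r>2$ this yields $\Aut(\SM(r,\alpha,\xi))\cong\{T=(\sigma,s,L,0)\in\ST \mid T(\xi)=\xi\}$; for $r=2$, Lemma \ref{lemma:dualrk2} rewrites the sign-$(-1)$ transformations as sign-$(+1)$ ones, giving the stated description in terms of $\ST^+$.
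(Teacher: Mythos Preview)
Your proposal is correct and follows essentially the same route as the paper: apply Theorem \ref{theorem:autoModuli} (equivalently Theorem \ref{theorem:ExtendedTorelli}), note that $(\sigma,s,L,0)$ always preserves the concentrated chamber, and then rule out $H\ne 0$ by producing an $\alpha$-stable bundle whose Hecke transform is $\alpha$-unstable via Lemmas \ref{lemma:unstableHecke} and \ref{lemma:unstableHecke2}. The paper carries out exactly the degree bookkeeping you flag as the main obstacle: after tensoring to normalize $0<d<r$ and writing $T=T_0\circ\SH_H$ with $T_0=(\sigma,s,L,0)$, the constraint $\deg(T(\xi))=\deg(\xi)$ forces either $|H|\ge r>d$ (when $s=1$), or $|H|\ge 2d>d$ (when $s=-1$, $k\ge 0$), or $|H|=2d-r$ (when $s=-1$, $k=-1$), so Lemma \ref{lemma:unstableHecke} handles the first two cases and Lemma \ref{lemma:unstableHecke2} the last.
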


\begin{proof}
By Theorem \ref{theorem:autoModuli}, for every automorphism $\Phi$ there exists a basic transformation $T\in \ST$ such that $\Phi(E,E_\bullet)=T(E,E_\bullet)$ for all $(E,E_\bullet)\in \SM(r,\alpha,\xi)$ and such that
\begin{itemize}
\item $T(\xi)\cong\xi$
\item $T(\alpha)$ is in the same chamber as $\alpha$
\end{itemize}

Let $T=(\sigma,s,L,0)\in \ST$. The pullback of a concentrated system of weights is concentrated and the dual of a concentrated system of weights is concentrated, so $T(\alpha)$ lies in the concentrated chamber for every concentrated $\alpha$. In particular, this proves that $T$ induces an automorphism whenever $T(\xi)\cong \xi$.

Therefore, it is enough to prove that if $T=(\sigma,s,L,H)\in \ST_\xi$ induces an automorphism of the moduli space then $H=0$. Let $T_0=(\sigma,s,L,0)$. Then $T=T_0\circ \SH_H$. We have
$$T_0^{-1}=(\sigma^{-1},s,\sigma^*L^{-s},0)$$
By the previous discussion we know that $T_0^{-1}(\alpha)$ is concentrated, so it induces an isomorphism
$$T_0^{-1}:\SM(r,\alpha,\xi) \stackrel{\sim}{\longrightarrow} \SM(r,\alpha,T_0^{-1}(\xi))$$
composing with $\Phi$ we obtain an isomorphism
$$T_0^{-1}\circ \Phi = \SH_H:\SM(r,\alpha,\xi)\stackrel{\sim}{\longrightarrow} \SM(r,\alpha,T_0^{-1}(\xi))$$
So for every $(E,E_\bullet)\in \SM(r,\alpha,\xi)$, $\SH_H(E,E_\bullet)$ must be $\alpha$-stable. Let $d=\deg(\xi)$. Tensoring with a suitable line bundle we might assume that $0<d<r$. By hypothesis $T(\xi)\cong \xi$. Computing degrees in the determinant equality yields the following possibilities for $|H|$
\begin{enumerate}
\item If $s=1$, then $|H|$ is a positive multiple of $r$ and, therefore, $|H|\ge r >d$
\item If $s=-1$, then $-(d-|H|+kr)=d$, so $|H|=2d+kr$.
\begin{enumerate}
\item If $k\ge 0$ then $|H|\ge 2d>d$.
\item If $k<0$, then as $d<r$ yields $|H|<2r+kr=(2+k)r$. As we assumed $|H|>0$, then we can only have $k=-1$ and, therefore, $|H|=2d-r>0$.
\end{enumerate}
\end{enumerate}

Nevertheless, applying Lemma \ref{lemma:unstableHecke} in cases (1) and (2a) or Lemma \ref{lemma:unstableHecke2} in case (2b), we deduce that there exists some $(E,E_\bullet)\in \SM(r,\alpha,\xi)$ such that $\SH_H(E,E_\bullet)$ is $\alpha$-unstable if $H\ne 0$. 
\end{proof}

Observe that for every $\sigma:X\to X$ preserving the set $D$, $\deg(\sigma^*\xi)=\deg(\xi)$. Therefore, there exists a line bundle $L_\sigma$ such that
$$L^r\otimes \xi\cong (\sigma^{-1})^*\xi$$
on the other hand, $\deg(\sigma^*\xi^{-1})=-\deg(\xi)$. Therefore, there only exists a line bundle $L$ such that
$$(\sigma,-1,L,0)(\xi)\cong \xi$$
if $r|2d$. Under the hypothesis that $r$ and $d$ are coprime this can only be attained if $r=2$. Moreover, by Lemma \ref{lemma:dualrk2}, for each $T=(\sigma,-1,L,0)$ there exists a line bundle $L'$ such that $T$ and $T'=(\sigma,1,L',0)$ induce the same automorphism of the moduli space. Therefore, for $r\ge 2$ the automorphisms of $\SM(r,\alpha,\xi)$ are the ones generated by pullbacks and tensoring with a line bundle. For every $\sigma:X\to X$ the set of possible line bundles $L$ such that $\sigma^*(L^r\otimes \xi)\cong \xi$ is in bijection with the $r$-torsion points of the Jacobian.

Working in an analogous way to the proof of Proposition \ref{prop:basicTransPresentation} we obtain a short exact sequence of groups
$$1 \longrightarrow J(X)[r] \longrightarrow \Aut(\SM(r,\alpha,\xi)) \longrightarrow \Aut(X,D) \longrightarrow 1$$
Therefore $J(X)[r]$ is normal in $\Aut(\SM(r,\alpha,\xi))$ and, moreover, we have shown that the sequence admits a splitting, so we obtain that
$$\Aut(\SM(r,\alpha,\xi)) \cong J(X)[r] \rtimes \Aut(X,D)$$

This is far less than the order of $\ST_\xi$, as for every $\sigma\in \Aut(X,D)$ and for every $0\le H < (r-1)D$ and $s\in \{1,-1\}$ such that
$$s(d-|H|)\cong d \mod r$$
there exists a line bundle $L$ such that
$$(\sigma,s,L,H)(\xi)\cong \xi$$
where $d=\deg(\xi)$. If $L'$ is another line bundle such that $(\sigma,s,L',H)(\xi)\cong \xi$ then there exists an $r$-torsion point of the Jacobian $S\in J(X)[r]$ such that $L'=L\otimes S$. For any choice of $\sigma$ and $s$, the possible divisors $H$ with $0\le H< (r-1)D$ are isomorphic to the group $(\ZZ/r\ZZ)^{|D|}$. Nevertheless, if we impose the additional constraint
$$|H|\cong (1-s)d \mod r$$
Then solutions for $s=1$ form the subgroup $(\ZZ/r\ZZ)^{|D|-1}$, while any two solutions for $s=-1$ differ by a solution for $s=1$. Then a direct computation using the relations described in Section \ref{section:Hecke} (see Lemma \ref{lem:compositionRules} and Proposition \ref{prop:basicTransPresentation}) yields
$$\Aut_{3-\Bir}(\SM(r,\alpha,\xi)) \cong \ST_\xi \cong  \left( J(X)[r]\rtimes (\ZZ/r\ZZ)^{|D|-1} \right )  \rtimes \left(\ZZ/2\ZZ  \times \Aut(X,D) \right)$$
for $r>2$ and
$$\Aut_{3-\Bir}(\SM(2,\alpha,\xi)) \cong \ST_\xi^+ \cong  \left( J(X)[r]\rtimes (\ZZ/2\ZZ)^{|D|-1} \right )  \rtimes \Aut(X,D) $$
Under the coprimality condition, if $|D|>1$, this group is $2^{|D|-1}$ times bigger than $\Aut(\SM(r,\alpha,\xi))$ for $r=2$ and $2r^{|D|-1}$ times bigger for $r>2$.
This is an example that shows how the combination of the constraint on the topological invariants $T(\xi)\cong\xi$ and the stability constraint stating that $T(\alpha)$ and $\alpha$ share the same stability chamber can be really restrictive and reduce the automorphism group $\SM(r,\alpha,\xi)$ significantly.

In the concentrated chamber, the stability condition eliminates the Hecke transform $\SH_H$ and all its combinations from the possible automorphisms. From the point of view of the restrictions on the topology of the resulting vector bundles, Hecke transformation is the most flexible transformation, in the sense that it is the only one lacking numerical restrictions on the degree of the resulting line bundle. If $\xi$ and $\xi'$ are any two line bundles there exist a line bundle $L$ and a divisor $H$ such that $\ST_L\circ \SH_H(\xi)=\xi'$. On the other hand, dualization can only pass from degree $d$ line bundles to degree $-d$ and $\ST_L$ can only reach line bundles whose degree differs from the original one by a multiple of $r$.

Therefore, once Hecke transformations are discarded, the constraint $T(\xi)\cong \xi$ (or, more precisely, the induced numerical constraint $\deg(T(\xi))=\deg(\xi)$) becomes a really strong condition. This explains the huge difference with respect to $\ST_\xi$. If we allow $2$-rational maps, then Hecke transformations are no longer discarded and, therefore, they are available to be used in combination with dualization and tensorization. This relaxes the restriction $T(\xi)\cong \xi$, leading to more possibilities for the basic transformations $T\in \ST_\xi$.

\section{Stability chamber analysis}
\label{section:chamberAnalysis}

From Theorem \ref{theorem:ExtendedTorelli} we know that every isomorphism between two moduli spaces of parabolic vector bundles is induced by some basic transformation. In particular, in Theorem \ref{theorem:autoModuli} we proved that the automorphism group of $\SM(r,\alpha,\xi)$ is the subgroup of $\ST$ consisting on basic transformations such that
\begin{itemize}
\item $T(\xi)\cong \xi$
\item If $r=2$, $T\in \ST^+$.
\item $T(\alpha)$ belongs to the same stability chamber as $\alpha$
\end{itemize}

As we mentioned in the last section, the two conditions are computable and they just impose certain numerical restrictions on the possible topological invariants of the vector bundles, but the last one is of a different kind. Determining whether two parabolic weights $\alpha$ and $\alpha'$ over the same curve $(X,D)$ belong to the same stability chamber is highly nontrivial and depends greatly on the geometry of the curve $X$. Two systems of weights $\alpha$ and $\alpha'$ belong to different stability chambers if and only if there exists some $\alpha$-stable parabolic vector bundle $(E,E_\bullet)$ which is $\alpha'$-unstable or vice versa, i.e., if there exists some $\alpha'$-stable parabolic vector bundle which is $\alpha$-unstable.

Assume that $(E,E_\bullet)$ is $\alpha$-stable but $\alpha'$-unstable. Then there exists a maximal destabilizing subsheaf $F\subset E$ such that
$$\frac{\pdeg_{\alpha'}(F,F_\bullet)}{\rk(F)} > \frac{\pdeg_{\alpha'}(E,E_\bullet)}{\rk(E)}$$
but, from $\alpha$-stability
$$\frac{\pdeg_{\alpha}(F,F_\bullet)}{\rk(F)}<\frac{\pdeg_{\alpha}(E,E_\bullet)}{\rk(E)}$$
therefore, the existence of a destabilizing subsheaf imposes some numerical conditions on $\alpha$, $\alpha'$ and the topological invariants of $(E,E_\bullet)$ and $(F,F_\bullet)$. If this numerical conditions are not satisfied by $\alpha$ and $\alpha'$ then it is clear that they belong to the same stability chamber. In this case we say that $\alpha$ and $\alpha'$ belong to the same numerical chamber.

Nevertheless, the reciprocal is not always true. Even if $\alpha$ and $\alpha'$ satisfy the numerical conditions which are necessary for the existence of a destabilizing subbundle, finding a parabolic vector bundle $(E,E_\bullet)$ and a subsheaf $F\subset E$ with the needed invariants is not obvious. In fact, there might exist systems of weights $\alpha$ and $\alpha'$ such that the numerical conditions allowed the existence of $\alpha$-stable and $\alpha'$-unstable parabolic vector bundles but such that geometrically there do not exist at all. Therefore, the stability chambers are divided in several numerical chambers whose walls are not realized geometrically by any parabolic vector bundle.

We will start identifying some numerical invariants that will allow us to determine the numerical chambers uniquely.

Let $\{n_1(x),\ldots,n_r(x)\}=\overline{n}$ be any set of nonnegative integers. We say that $\overline{n}$ is admissible if for any $i=1,\ldots,r$ and any $x\in D$, $n_i(x)\in \{0,1\}$ and there exists $0<r'<r$ such that for all $x\in D$ yields $\sum_{i=1}^rn_i(x)=r'$. Let $d=\deg(\xi)$. We define
$$M(r,\alpha,d,\overline{n})=\left \lfloor \frac{r'd+ r'\sum_{x\in D} \sum_{i=1}^r \alpha_i(x)-r\sum_{x\in D} \sum_{i=1}^r n_i(x) \alpha_i(x)}{r}\right \rfloor \in \ZZ$$
Observe that for every $\varepsilon\in \RR^{|D|}$,
$$M(r,\alpha,d,\overline{n})=M(r,\alpha[\varepsilon],d,\overline{n})$$
i.e., $M(r,\alpha,d,\overline{n})$ only depends on the class $\alpha\in \tilde{\Delta}$.

Recall that we say that if a subbundle $F\subsetneq E$ of a parabolic vector bundle $(E,E_\bullet)$ is of type $\overline{n}$ then
$$\owt(F,F_\bullet)=\sum_{x\in D} \sum_{i=1}^r n_i(x) \alpha_i(x)$$

\begin{lemma}
\label{lemma:stability}
Let $(E,E_\bullet)$ be a parabolic vector bundle such that $\deg(E)=d$. Then $(E,E_\bullet)$ is semistable if and only if for every admissible $\overline{n}$ and every subbundle $F\subsetneq E$ of type $\overline{n}$ we have
$$\deg(F)\le M(r,\alpha,d,\overline{n})$$
\end{lemma}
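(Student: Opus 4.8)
The plan is to unwind the definition of $\alpha$-semistability established earlier in the section and show it is equivalent to the stated integral bound. Recall from the discussion preceding Lemma \ref{lemma:degreeBound} that $(E,E_\bullet)$ is $\alpha$-semistable if and only if for every admissible $\overline{n}$ and every subbundle $F\subsetneq E$ of type $\overline{n}$ one has
$$s(F,E)=\rk(F)\deg(E)-\rk(E)\deg(F)\ge \rk(E'')\owt_\alpha(\overline{n})-\rk(F)\owt_\alpha(\overline{n}''),$$
where $\overline{n}''$ is the complementary type and $\rk(E'')=r-\rk(F)$. First I would substitute $\rk(F)=r'$, $\rk(E)=r$, $\deg(E)=d$ and use the additivity relation $\owt_\alpha(\overline{n})+\owt_\alpha(\overline{n}'')=\sum_{x\in D}\sum_{i=1}^r\alpha_i(x)$ (valid since $n_i(x)+n_i''(x)=1$ for full flag) to rewrite the right-hand side purely in terms of $\owt_\alpha(\overline{n})$ and $\sum_{x,i}\alpha_i(x)$. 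Explicitly, $r''\owt_\alpha(\overline{n})-r'\owt_\alpha(\overline{n}'')=r\owt_\alpha(\overline{n})-r'\sum_{x\in D}\sum_{i=1}^r\alpha_i(x)$, matching the numerator appearing in the definition of $M(r,\alpha,d,\overline{n})$.

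Next I would carry out the reordering: the semistability inequality $r'd-r\deg(F)\ge r\owt_\alpha(\overline{n})-r'\sum_{x\in D}\sum_{i=1}^r\alpha_i(x)$ is equivalent to
$$\deg(F)\le \frac{r'd+r'\sum_{x\in D}\sum_{i=1}^r\alpha_i(x)-r\owt_\alpha(\overline{n})}{r}.$$
Recalling that for a subbundle of type $\overline{n}$ one has $\owt_\alpha(\overline{n})=\sum_{x\in D}\sum_{i=1}^r n_i(x)\alpha_i(x)$, the right-hand fraction is exactly the quantity inside the floor in the definition of $M(r,\alpha,d,\overline{n})$. The key remaining point is the passage from the real inequality to the floor: since $\deg(F)$ is an integer, the inequality $\deg(F)\le Q$ for a real number $Q$ holds if and only if $\deg(F)\le\lfloor Q\rfloor$. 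This is the only genuinely substantive (if elementary) step, and it is what forces the floor into the statement; I would state it as a one-line observation.

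I do not anticipate a serious obstacle here, as the result is essentially a bookkeeping repackaging of the semistability condition already derived in the text. The main thing to be careful about is the direction of the equivalence and the handling of the admissibility hypothesis on $\overline{n}$: one must check that ranging over all subbundles $F\subsetneq E$ with the induced parabolic structure is the same as ranging over all admissible types $\overline{n}$ together with all subbundles of that type, which is precisely how $s(\overline{n}',E)$ was defined earlier. I would also note explicitly that because $M(r,\alpha,d,\overline{n})$ depends only on the class of $\alpha$ in $\tilde\Delta$ (as observed just before the lemma, using $M(r,\alpha,d,\overline{n})=M(r,\alpha[\varepsilon],d,\overline{n})$), the criterion is compatible with the translation-invariance of stability, serving as a consistency check. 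Assembling these pieces gives the equivalence in both directions and completes the proof.
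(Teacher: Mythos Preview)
Your proposal is correct and follows essentially the same approach as the paper: both start from the semistability slope inequality, solve for $\deg(F)$, and then invoke the integrality of $\deg(F)$ to pass to the floor. You route the computation through the $s(F,E)$ reformulation from \eqref{eq:a-stability2} and the additivity of weights, whereas the paper works directly from the slope inequality, but the substance is identical.
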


\begin{proof}
The parabolic bundle $(E,E_\bullet)$ is semistable if for every subbundle $F$ with the induced parabolic structure
$$\frac{\deg(F)+\sum_{x\in D}\sum_{i=1}^rn_i(x)\alpha_i(x)}{r'}\le \frac{d+\sum_{x\in D}\sum_{i=1}^r \alpha_i(x)}{r}$$
Equivalently, solving for $\deg(F)$
$$\deg(F)\le \frac{r'd+ r'\sum_{x\in D} \sum_{i=1}^r \alpha_i(x)-r\sum_{x\in D} \sum_{i=1}^r n_i(x) \alpha_i(x)}{r}$$
As $\deg(F)$ is an integer, its value is at most the floor of the right hand side, which is precisely $M(r,\alpha,d,\overline{n})$.
\end{proof}

\begin{corollary}
Let $\alpha$ and $\alpha'$ be rank $r$ systems of weights such that for every admissible $\overline{n}$
$$M(r,\alpha,d,\overline{n})=M(r,\alpha',d,\overline{n})$$
then $\alpha$ and $\alpha'$ belong to the same stability chamber.
\end{corollary}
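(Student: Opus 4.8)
The plan is to reduce this geometric statement about chambers to the purely combinatorial equality of the integers $M(r,\alpha,d,\overline{n})$, using Lemma \ref{lemma:stability} to translate semistability into the inequalities $\deg(F)\le M(r,\alpha,d,\overline{n})$. First I would observe that, by Lemma \ref{lemma:stability}, a parabolic vector bundle $(E,E_\bullet)$ with $\deg(E)=d$ is $\alpha$-semistable precisely when $\deg(F)\le M(r,\alpha,d,\overline{n})$ holds for every admissible $\overline{n}$ and every subbundle $F\subsetneq E$ of type $\overline{n}$. Since this predicate involves $\alpha$ only through the integers $M(r,\alpha,d,\overline{n})$, the hypothesis $M(r,\alpha,d,\overline{n})=M(r,\alpha',d,\overline{n})$ for all admissible $\overline{n}$ immediately forces the $\alpha$-semistability condition and the $\alpha'$-semistability condition to coincide as predicates on the set of rank $r$, degree $d$ parabolic bundles.

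The remaining issue is to upgrade ``same semistable locus'' to ``same stability chamber'', i.e.\ to produce a path in $\Delta$ from $\alpha$ to $\alpha'$ crossing no geometrical wall. For this I would use the segment $\alpha_t=t\alpha+(1-t)\alpha'$, $t\in[0,1]$, which stays inside $\Delta$ because $\Delta$ is cut out by linear inequalities and is therefore convex. The key computation is that the quantity inside the floor defining $M(r,\alpha_t,d,\overline{n})$, namely
\[
\phi_{\overline{n}}(t)=\frac{r'd+r'\sum_{x\in D}\sum_{i=1}^r\alpha_{t,i}(x)-r\sum_{x\in D}\sum_{i=1}^r n_i(x)\alpha_{t,i}(x)}{r},
\]
is an affine function of $t$. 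Its endpoint values $\phi_{\overline{n}}(0)$ and $\phi_{\overline{n}}(1)$ both lie in the half-open interval $[m,m+1)$, where $m=M(r,\alpha,d,\overline{n})=M(r,\alpha',d,\overline{n})$; since the image of an affine function on $[0,1]$ is the interval between its endpoint values, the whole image lies in $[m,m+1)$, so $M(r,\alpha_t,d,\overline{n})=m$ is constant in $t$.

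Combining these two observations, Lemma \ref{lemma:stability} shows that the $\alpha_t$-semistability predicate is independent of $t\in[0,1]$: no parabolic bundle changes its (semi)stability as $t$ varies. Since a geometrical wall is by definition a locus where stability genuinely changes, the segment $\alpha_t$ meets no geometrical wall, and hence $\alpha$ and $\alpha'$ lie in the same stability chamber. The step I expect to require the most care is precisely this passage from equality of the semistable loci to membership in a single connected chamber: it is tempting to argue only that the two endpoints give the same stability condition, but that alone would not exclude crossing and re-crossing a wall. The affineness of $\phi_{\overline{n}}$ together with the floor estimate is what rules this out, by showing the stability condition is constant along the \emph{entire} segment rather than merely at its ends; I would also note explicitly that for generic weights semistability coincides with stability, so the argument applies verbatim to the stable loci as well.
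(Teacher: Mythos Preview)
Your first paragraph is exactly the paper's proof: apply Lemma~\ref{lemma:stability} to see that the inequality $\deg(F)\le M(r,\alpha,d,\overline{n})$ characterizing $\alpha$-semistability becomes literally the same predicate as the one characterizing $\alpha'$-semistability, so the semistable loci coincide. The paper stops there, because throughout the paper ``same stability chamber'' is used operationally to mean ``same (semi)stable locus'' (compare the opening sentence of the proof of Theorem~\ref{theorem:stabilityChamber}).

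Your second and third paragraphs go further and establish that the straight segment $\alpha_t$ also has constant $M(r,\alpha_t,d,\overline{n})$, hence never crosses a geometrical wall, so $\alpha$ and $\alpha'$ lie in the same connected component of the complement of the walls. This is correct --- the affineness of $\phi_{\overline{n}}$ and the half-open interval $[m,m+1)$ argument are fine --- and it proves a slightly stronger, more geometric form of the statement than the paper bothers with. The paper's approach buys brevity; yours buys an explicit path inside the chamber, which would matter if one insisted on the connected-component definition of chamber rather than the operational one. Neither is wrong; you have simply done more than was required.
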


\begin{proof}
If $(E,E_\bullet)$ is $\alpha$-semistable then for every admissible $\overline{n}$ and every subbundle $F\subsetneq E$ of type $\overline{n}$
$$\deg(F)\le M(r,\alpha,d,\overline{n})=M(r,\alpha',d,\overline{n})$$
so $(E,E_\bullet)$ is $\alpha'$-semistable.
\end{proof}

Let $\SN$ be the set of admissible $\overline{n}$. Let us denote
$$\overline{M}(r,\alpha,d)=\left( M(r,\alpha,d,\overline{n})\right)_{\overline{n}\in \SN}\in \ZZ^{\SN}$$
then we say that $\alpha$ and $\alpha'$ belong to the same numerical stability chamber if and only if $\overline{M}(r,\alpha,d)=\overline{M}(r,\alpha',d)$.

\begin{proposition}
\label{prop:StabiltyChambersFinite}
There is a finite number of stability chambers in $\Delta$.
\end{proposition}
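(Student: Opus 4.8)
The plan is to show that the integer-vector invariant $\overline{M}(r,\alpha,d)\in\ZZ^{\SN}$ takes only finitely many values as $\alpha$ ranges over $\Delta$, and then to deduce finiteness of the stability chambers from the Corollary immediately preceding this Proposition. That Corollary asserts that if $\overline{M}(r,\alpha,d)=\overline{M}(r,\alpha',d)$, then $\alpha$ and $\alpha'$ lie in the same stability chamber; hence the map $\alpha\mapsto\overline{M}(r,\alpha,d)$ is constant on each stability chamber, and so the number of distinct chambers is at most the number of distinct values of this invariant. Thus it suffices to bound the image of $\alpha\mapsto\overline{M}(r,\alpha,d)$.

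First I would observe that the index set $\SN$ is itself finite: an admissible $\overline{n}$ assigns to each $x\in D$ a subset of $\{1,\dots,r\}$ of some common cardinality $r'$ with $0<r'<r$, so $|\SN|\le\sum_{r'=1}^{r-1}\binom{r}{r'}^{|D|}<\infty$. Consequently it is enough to prove that, for each fixed $\overline{n}\in\SN$, the single integer $M(r,\alpha,d,\overline{n})$ attains only finitely many values as $\alpha$ varies in $\Delta$; finiteness of the tuple then follows coordinatewise.

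The key step is the boundedness of the quantity inside the floor. Setting $w=\sum_{x\in D}\sum_{i=1}^r\alpha_i(x)$ and $w_{\overline{n}}=\sum_{x\in D}\sum_{i=1}^r n_i(x)\alpha_i(x)$, the defining condition $0\le\alpha_i(x)<1$ together with the fact that exactly $r'$ of the $n_i(x)$ equal $1$ at each point gives $0\le w<r|D|$ and $0\le w_{\overline{n}}<r'|D|$. Therefore
$$\frac{r'd+r'w-r\,w_{\overline{n}}}{r}\in\left(\frac{r'd}{r}-r'|D|,\ \frac{r'd}{r}+r'|D|\right),$$
an interval of length $2r'|D|\le 2(r-1)|D|$ that is independent of $\alpha$. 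The floor of a number ranging over such an interval can take at most $2(r-1)|D|+1$ distinct integer values, so $M(r,\alpha,d,\overline{n})$ is confined to a finite set. Running over the finite index set $\SN$ shows $\overline{M}(r,\alpha,d)$ takes finitely many values, and the Proposition follows.

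There is no genuine obstacle here; the argument is essentially a uniform boundedness estimate combined with the already-established Corollary. The only point requiring care is conceptual rather than computational: one must note that the invariant $\overline{M}$ controls the \emph{numerical} chambers, and that each geometrical stability chamber is a union of numerical chambers, so finiteness of the numerical chambers (which is what the estimate above gives) immediately forces finiteness of the geometrical ones. This is exactly the direction supplied by the preceding Corollary, and no realizability of walls need be invoked for the finiteness statement.
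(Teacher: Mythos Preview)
Your proof is correct and follows essentially the same approach as the paper: both bound each integer $M(r,\alpha,d,\overline{n})$ uniformly in $\alpha$ (using $0\le\alpha_i(x)<1$) to conclude that $\overline{M}(r,\alpha,d)$ takes finitely many values, and then invoke the preceding Corollary to pass from finitely many numerical chambers to finitely many stability chambers. Your version is slightly more explicit in noting that $\SN$ itself is finite and in articulating the numerical/geometrical distinction, but the substance is identical.
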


\begin{proof}
For every $\alpha\in \Delta$ and every admissible $\overline{n}$, using that $0\le \alpha_i(x) <1$ and $0\le n_i(x)\le 1$ we obtain the following bounds
\begin{multline*}
M_{\min}(r,d)=\frac{d}{r}-r|D|-1<\left \lfloor \frac{r'd+ r'\sum_{x\in D} \sum_{i=1}^r \alpha_i(x)-r\sum_{x\in D} \sum_{i=1}^r n_i(x) \alpha_i(x)}{r}\right \rfloor\\
 \le \frac{(r-1)d}{r}+(r-1)|D|=M_{\max}(r,d)
\end{multline*}
Therefore $\overline{M}(r,\alpha,d)\in [M_{\min}(r,d),M_{\max}(r,d)]^{\SN}$ for every $\alpha$. In particular this implies that there is a finite number of numerical chambers in $\Delta$. As a numerical chamber is included in exactly one stability chamber we obtain that there is a finite number of stability chambers.
\end{proof}

This proposition has some further implications on the $k$-birational geometry of the moduli space $\SM(r,\alpha,\xi)$.

\begin{corollary}
Let $k>0$. Let $X$ be a genus $g\ge 1+\frac{k-1}{r-1}$ Riemann surface and let $D\subset X$ be a nonempty set of points. Let $\alpha$ be any generic system of weights over $(X,D)$ and let $\xi$ be any line bundle over $X$. Then there exists an open subset $\SM^{\op{us}}(r,\xi)\subset \SM(r,\alpha,\xi)$ whose complement has codimension at least $k$ and such that each parabolic vector bundle $(E,E_\bullet)\in \SM^{\op{us}}(r,\xi)$ is $\alpha'$-stable for every generic $\alpha'\in \Delta$.
\end{corollary}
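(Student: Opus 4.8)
The plan is to combine the finiteness of stability chambers (Proposition \ref{prop:StabiltyChambersFinite}) with the codimension bound for unstable loci (Corollary \ref{cor:stabCodim2}). The key observation is that, although there are infinitely many generic systems of weights $\alpha'\in\Delta$, the variety $\SM(r,\alpha,\xi)$ only ``sees'' finitely many of them up to the stability condition, because each $\alpha'$ determines a stability chamber and there are only finitely many such chambers. So the infinite intersection we need to control reduces to a finite one.

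First I would invoke Proposition \ref{prop:StabiltyChambersFinite} to fix, once and for all, a finite set of representatives $\alpha^{(1)},\ldots,\alpha^{(N)}$, one in each stability chamber of $\Delta$. For each $j$, $\alpha$-stability and $\alpha^{(j)}$-stability differ only on a locus which, by Corollary \ref{cor:stabCodim2} applied with $\beta=\alpha^{(j)}$ and with the given genus bound $g\ge 1+\frac{l-1}{r-1}$ (taking $l=k$), has codimension at least $k$ inside $\SM(r,\alpha,\xi)$. Writing $Z_j\subset\SM(r,\alpha,\xi)$ for the closed subset of $\alpha$-stable bundles that are $\alpha^{(j)}$-unstable, each $Z_j$ has codimension at least $k$. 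I would then define
$$\SM^{\op{us}}(r,\xi)=\SM(r,\alpha,\xi)\setminus \bigcup_{j=1}^N Z_j,$$
which is open, and whose complement is a finite union of sets of codimension at least $k$, hence itself of codimension at least $k$.

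It remains to check that every $(E,E_\bullet)\in\SM^{\op{us}}(r,\xi)$ is $\alpha'$-stable for \emph{every} generic $\alpha'\in\Delta$, not merely for the chosen representatives. Here the essential point is that the stability condition for a fixed parabolic vector bundle depends on $\alpha'$ only through the chamber $\alpha'$ lies in: two weights in the same geometrical chamber define the same notion of stability by the very definition of stability chamber. Thus if $\alpha'$ is a generic system of weights, it lies in the same chamber as exactly one representative $\alpha^{(j)}$, and $(E,E_\bullet)$ being $\alpha^{(j)}$-stable (which holds since it avoids $Z_j$) forces it to be $\alpha'$-stable.

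The main obstacle I anticipate is the passage from numerical to geometrical chambers. Proposition \ref{prop:StabiltyChambersFinite} literally bounds the number of \emph{numerical} chambers, and the text is careful to note that a stability (geometrical) chamber may contain several numerical ones while any two distinct geometrical chambers are separated by a numerical wall; one must therefore argue that finiteness of numerical chambers entails finiteness of geometrical chambers (each geometrical chamber being a union of numerical chambers, and the numerical chambers finite in number), which is exactly what the last line of the proof of Proposition \ref{prop:StabiltyChambersFinite} asserts. With that in hand, choosing one representative per geometrical chamber is legitimate, and the only delicate verification left is that ``same geometrical chamber'' genuinely means ``same stability condition'' for the fixed bundle $(E,E_\bullet)$, which is precisely the defining property of geometrical walls. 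Everything else is the routine bookkeeping of a finite union of high-codimension closed subsets.
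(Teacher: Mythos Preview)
Your proposal is correct and follows essentially the same approach as the paper: pick a finite set of chamber representatives via Proposition \ref{prop:StabiltyChambersFinite}, apply Corollary \ref{cor:stabCodim2} to each to bound the $\alpha^{(j)}$-unstable locus, and intersect the resulting open sets (equivalently, remove the finite union of the closed $Z_j$). Your extra care about the numerical-versus-geometrical chamber distinction is well placed but ultimately just unpacks what Proposition \ref{prop:StabiltyChambersFinite} already states, and your identification $l=k$ in the genus bound is the intended reading.
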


\begin{proof}
Let $\SC$ denote the set of stability chambers in $\Delta$. By the previous lemma it is a finite set. Let $\alpha_1,\ldots,\alpha_{|\SC|}$ be a set of generic representatives for the stability chambers in $\SC$. Then a parabolic vector bundle is $\alpha'$-stable for all generic $\alpha'\in \Delta$ if and only if it is $\alpha_i$-stable for every $i=1,\ldots,|\SC|$. On the other hand by Corollary\ref{cor:stabCodim2}, for every $\alpha_i$ there exists an open subset $\SU_i\subset \SM(r,\alpha,\xi)$ whose complement has codimension at least $k$ such that every $(E,E_\bullet)\in \SU_i$ is $\alpha$-stable and $\alpha_i$-stable. Take
$$\SM^{\op{us}}(r,\xi)=\bigcap_{i=1}^{|\SC|}\SU_i\subset \SM(r,\alpha,\xi)$$
As $\SC$ is a finite set, $\SM^{\op{us}}(r,\xi)$ is an open subset whose complement has codimension  at least $k$ and such that every $(E,E_\bullet)\in \SM^{\op{us}}(r,\xi)$ is $\alpha_i$-stable for every $i=1,\ldots,|\SC|$.
\end{proof}

One we have classified the space of numerical chambers, our objective is to develop a tool to determine whether some numerical wall separating two numerical chambers is actually realized by a destabilizing subbundle of some parabolic vector bundle, at least for big genus.

\begin{lemma}
\label{lemma:nonemptyBorder}
Let $X$ be a genus $g$ smooth complex projective curve. Suppose that
$$g\ge 1+(r-1)n-\left\lfloor \sum_{x\in D}\alpha_1(x) \right \rfloor$$
Then for every $\overline{n}$ there exist a stable parabolic vector bundle $(E,E_\bullet)\in \SM(r,\alpha,\xi)$ and a subbundle $F\subsetneq E$ of type $\overline{n}$
such that
$$\deg(F)=M(r,\alpha,d,\overline{n})$$
\end{lemma}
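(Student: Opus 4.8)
The goal is, for a fixed admissible type $\overline{n}$ (with associated subbundle rank $r'$), to realize the numerical bound $\deg(F)=M(r,\alpha,d,\overline{n})$ geometrically: I must produce a stable parabolic bundle $(E,E_\bullet)$ of determinant $\xi$ together with a subbundle $F\subsetneq E$ of type $\overline{n}$ with this exact degree. The natural strategy is to \emph{build} $(E,E_\bullet)$ from an extension. First I would fix the degree $d'=M(r,\alpha,d,\overline{n})$ prescribed by the definition of $M$, set $r''=r-r'$ and $d''=d-d'$, and choose stable parabolic bundles $(F,F_\bullet)$ of rank $r'$, degree $d'$ and type $\overline{n}$, and $(E'',E''_\bullet)$ of rank $r''$, degree $d''$ and the complementary type $\overline{n}''$ with $n_i''(x)=1-n_i'(x)$. (For this I would use, as in Lemma \ref{lemma:generic-lm-stable}, that such stable parabolic bundles of prescribed invariants exist once $g$ is large enough; this is where the genus hypothesis enters.) Then I would let $(E,E_\bullet)$ be a generic extension
\begin{equation*}
0\longrightarrow (F,F_\bullet)\longrightarrow (E,E_\bullet)\longrightarrow (E'',E''_\bullet)\longrightarrow 0,
\end{equation*}
so that $F$ is automatically a subbundle of type $\overline{n}$ with $\deg(F)=M(r,\alpha,d,\overline{n})$ by construction, and $\det(E)\cong\det(F)\otimes\det(E'')$, which I arrange to be $\cong\xi$ by the usual freedom of twisting the factors by degree-$0$ line bundles (this does not affect stability or types).

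The content of the statement is then entirely in proving that a generic such $(E,E_\bullet)$ is \emph{stable}. Note that, because $\deg(F)=M(r,\alpha,d,\overline{n})$ equals the floor in Lemma \ref{lemma:stability}, the subbundle $F$ makes $(E,E_\bullet)$ \emph{exactly semistable} with respect to $\overline{n}$ but not destabilizing — this is the boundary case, which is why the subbundle of maximal allowed degree is the one sitting on the wall. The plan is to show that no \emph{other} subbundle $F'\subsetneq E$ violates strict stability. For subbundles $F'$ which are not subsheaves of $F$ I would project to $E''$ and use the stability of $(E'',E''_\bullet)$ together with the stability of $(F,F_\bullet)$ to bound $\pdeg_\alpha(F')$; for subsheaves of $F$ one uses stability of $(F,F_\bullet)$ directly. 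The genericity of the extension class is used to rule out splittings that would produce a competing maximal-degree subbundle: a destabilizing $F'$ of the same slope as $F$ but different from $F$ would force the extension to split along $F'$, and such split extensions form a proper closed subset of $\op{PExt}^1((E'',E''_\bullet),(F,F_\bullet))$ provided $\dim\op{PExt}^1$ is positive. Hence I would verify $h^1(\ParHom((E'',E''_\bullet),(F,F_\bullet)))>0$ via Riemann–Roch, which again forces the genus lower bound.

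The \emph{main obstacle}, and the reason the precise genus condition $g\ge 1+(r-1)n-\lfloor\sum_{x\in D}\alpha_1(x)\rfloor$ appears, is the careful bookkeeping that the chosen $d'$, namely the floor $M(r,\alpha,d,\overline{n})$, is large enough that stable $(F,F_\bullet)$ of that degree exist and that the extension space is nontrivial, while simultaneously $F$ itself does not strictly destabilize. The floor operation means $M$ can be as small as roughly $\tfrac{r'd}{r}-(r-1)n$ in the worst case (when the weights conspire), so bounding $\deg F$ from below to guarantee existence of the building blocks requires absorbing the full weight contribution $\sum_x\alpha_i(x)n_i(x)$; this is exactly the origin of the $(r-1)n-\lfloor\sum_x\alpha_1(x)\rfloor$ correction. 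I would therefore spend the bulk of the argument translating the existence-of-stable-bundles thresholds (for $(F,F_\bullet)$ and $(E'',E''_\bullet)$ separately) and the positivity $h^1>0$ into a single sufficient inequality on $g$, and checking that it is implied by the stated hypothesis; the remaining stability verification for generic extensions is then routine, paralleling the dimension/genericity arguments already used in Lemma \ref{lemma:generic-lm-stable} and Lemma \ref{lemma:unstableHecke}.
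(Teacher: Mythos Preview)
Your extension-construction plan is sound in spirit, but it is a substantial detour compared to the paper's proof, which is essentially a one-line citation. The paper simply observes that for any admissible $\overline{n}$ one has
\[
\sum_{x\in D}\sum_{i=1}^r n_i(x)\alpha_i(x)\ \ge\ \sum_{x\in D}\alpha_1(x),
\]
since each $x$ contributes at least one index $i$ with $n_i(x)=1$ and $\alpha_i(x)\ge\alpha_1(x)$; this inequality is exactly what is needed to verify the genus hypothesis of \cite[Theorem~1.4.3A]{BB05}, and that theorem then hands you a stable $(E,E_\bullet)$ of rank $r$ and degree $d$ together with a subbundle $F$ of type $\overline{n}$ with $\deg(F)=M(r,\alpha,d,\overline{n})$. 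A final twist by a degree-zero line bundle fixes the determinant to be $\xi$.

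What you are proposing is, in effect, a reproof of \cite[Theorem~1.4.3A]{BB05} itself: choose stable pieces $(F,F_\bullet)$ and $(E'',E''_\bullet)$, form a generic parabolic extension, and check stability. That is indeed how such existence results are typically proved, and your outline of the stability check (projecting a hypothetical destabilizer to $E''$, using stability of the pieces, and invoking genericity to exclude splittings) is the right shape. But you have not actually carried out the step that matters, namely deriving the precise genus threshold. Your discussion of ``absorbing the full weight contribution'' is heuristic; the actual bound in the lemma comes from the Bhosle--Biswas computation, and recovering it from your extension setup would require redoing their Riemann--Roch and dimension estimates in full. Since the result is already available, the paper's approach is the efficient one: verify the one inequality above and cite.
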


\begin{proof}
For every admissible choice of $\overline{n}$
$$\sum_{x\in D} \sum_{i=1}^r n_i(x)\alpha_i(x)\ge \sum_{x\in D}\alpha_1(x)$$
Therefore, the genus condition in \cite[Theorem 1.4.3A]{BB05} hold for every $\overline{n}$ and we obtain that there exists a stable parabolic vector bundle $(E,E_\bullet)$ of rank $r$ and degree $d=\deg(\xi)$ with a subbundle $F\subsetneq E$ satisfying the properties in the Lemma. Now it is enough to tensor it with a suitable degree zero line bundle to obtain another one whose determinant is isomorphic to $\xi$.
\end{proof}

\begin{theorem}
\label{theorem:stabilityChamber}
Let $\alpha$ and $\beta$ be generic full flag systems of weights of rank $r$ over $(X,D)$. Let $\xi$ be a degree $d$ line bundle over $X$ and assume that
$$g\ge 1+(r-1)n-\min\left(\left\lfloor \sum_{x\in D}\alpha_1(x) \right \rfloor,\left\lfloor \sum_{x\in D}\beta_1(x) \right \rfloor\right)$$
Then $\alpha$ and $\beta$ belong to the same stability chamber of the moduli space of rank $r$ determinant $\xi$ full flag parabolic vector bundles if and only if for every admissible $\overline{n}$
$$M(r,\alpha,d,\overline{n})=M(r,\beta,d,\overline{n})$$
\end{theorem}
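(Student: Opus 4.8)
The plan is to dispatch the two implications separately, noting that only one of them requires the genus hypothesis. For the ``if'' direction, suppose $M(r,\alpha,d,\overline{n})=M(r,\beta,d,\overline{n})$ for every admissible $\overline{n}$. This is exactly the hypothesis of the Corollary following Lemma \ref{lemma:stability}, which shows, with no genus assumption, that every $\alpha$-semistable parabolic bundle is $\beta$-semistable; by the symmetry of that statement in the two weights the converse inclusion holds as well. Since $\alpha$ and $\beta$ are generic there are no strictly semistable bundles, so the two systems of weights have exactly the same stable objects and therefore lie in the same stability chamber. Thus the real content of the theorem is the reverse implication, which I would prove in contrapositive form: if $\overline{M}(r,\alpha,d)\ne\overline{M}(r,\beta,d)$, then $\alpha$ and $\beta$ lie in different stability chambers.

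So assume there is an admissible $\overline{n}$ with $M(r,\alpha,d,\overline{n})\ne M(r,\beta,d,\overline{n})$; after possibly interchanging the roles of $\alpha$ and $\beta$ I may assume $M(r,\alpha,d,\overline{n})>M(r,\beta,d,\overline{n})$. The genus hypothesis is written with a minimum precisely so that, whichever of the two weights turns out to be the larger one, Lemma \ref{lemma:nonemptyBorder} applies to it: indeed $g\ge 1+(r-1)n-\lfloor\sum_{x\in D}\alpha_1(x)\rfloor$ follows from the assumed bound. Applying that lemma to $\alpha$ produces an $\alpha$-stable parabolic bundle $(E,E_\bullet)\in\SM(r,\alpha,\xi)$ together with a subbundle $F\subsetneq E$ of type $\overline{n}$ whose degree is exactly $\deg(F)=M(r,\alpha,d,\overline{n})$, the maximal value compatible with $\alpha$-stability. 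Since the type $\overline{n}$ of $F$ is a purely quasi-parabolic invariant, independent of the weights, the same subbundle $F$ of the same bundle $(E,E_\bullet)$ satisfies $\deg(F)=M(r,\alpha,d,\overline{n})\ge M(r,\beta,d,\overline{n})+1>M(r,\beta,d,\overline{n})$. By the semistability criterion of Lemma \ref{lemma:stability}, this single violation already forces $(E,E_\bullet)$ to be $\beta$-unstable. Hence $(E,E_\bullet)$ is $\alpha$-stable but not $\beta$-stable, the sets of stable objects for $\alpha$ and $\beta$ differ, and the two weights belong to different stability chambers.

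Finally I would record why nothing more is needed. The ``if'' direction converts the numerical coincidence into an equality of semistable loci, and the ``only if'' direction converts a numerical discrepancy into an explicit destabilizing pair $(E,F)$; together they identify the numerical chambers, cut out by the equalities $M(r,\cdot,d,\overline{n})=\text{const}$, with the geometrical stability chambers. The genuinely delicate step is the existence assertion of Lemma \ref{lemma:nonemptyBorder}: one must produce a bundle that is simultaneously a genuine point of $\SM(r,\alpha,\xi)$, hence $\alpha$-stable, and carries a subbundle of type $\overline{n}$ saturating the bound $M(r,\alpha,d,\overline{n})$, so that it becomes $\beta$-unstable as soon as the two floors disagree. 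This is precisely where the lower bound on $g$ enters, through \cite[Theorem 1.4.3A]{BB05}; for small genus some numerical walls may fail to be realized geometrically and the equivalence can break down, so no weakening of the genus hypothesis is expected without extra assumptions on $X$.
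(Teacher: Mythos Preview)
Your proof is correct and follows essentially the same approach as the paper's. Both arguments split into the easy direction (same $\overline{M}$ implies same chamber, via Lemma~\ref{lemma:stability} or its Corollary, with no genus hypothesis) and the contrapositive of the hard direction (different $\overline{M}$ implies different chambers, via Lemma~\ref{lemma:nonemptyBorder} to realize a bundle saturating the $\alpha$-bound and hence violating the $\beta$-bound); your explicit remark that the minimum in the genus hypothesis is there to survive the interchange of $\alpha$ and $\beta$ matches the paper's use of ``interchanging $\alpha$ and $\beta$ if necessary.''
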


\begin{proof}
The systems of weights $\alpha$ and $\beta$ belong to different chambers if and only if either there exists an $\alpha$-stable vector bundle $(E,E_\bullet)$ which is not $\beta$-stable or vice versa. Suppose that there exists an $\alpha$-stable, $\beta$-unstable parabolic vector bundle. By Lemma \ref{lemma:stability}, there exist a subbundle $F\subsetneq E$ and integers $\overline{n}$ such that
$$\owt(F,F_\bullet)=\sum_{x\in D} \sum_{i=1}^r n_i(x) \alpha_i(x)$$
and
$$M(r,\beta,d,\overline{n})<\deg(F)\le M(r,\alpha,d,\overline{n})$$
so $M(r,\beta,d,\overline{n})\ne M(r,\alpha,d,\overline{n})$. Reciprocally, suppose that $\overline{M}(r,\alpha,d)\ne \overline{M}(r,\beta,d)$. Then, interchanging $\alpha$ and $\beta$ if necessary, there exists an admissible $\overline{n}$ such that $M(r,\beta,d,\overline{n})<M(r,\alpha,d,\overline{n})$. By Lemma \ref{lemma:nonemptyBorder}, there exist an $\alpha$-stable parabolic vector bundle $(E,E_\bullet)$ and a subbundle $F\subsetneq E$ of type $\overline{n}$ such that
$$\deg(F)= M(r,\alpha,d,\overline{n})>M(r,\beta,d,\overline{n})$$
Therefore, from Lemma \ref{lemma:stability}, $(E,E_\bullet)$ is $\beta$-unstable.
\end{proof}

The genus condition in this Theorem deserves some remarks. First, notice that it is only needed for the ``necessary'' part of the theorem. If $\overline{M}(r,\alpha,d)=\overline{M}(r,\beta,d)$ then $\alpha$ and $\beta$ belong to the same numerical -- and therefore geometrical -- chamber, independently of the genus of the curve.

Second, the genus condition is picked so that it is valid for any couple of systems of weights $\alpha$ and $\beta$. There are stability chambers which are more easily distinguished than others. For some choices of $\alpha$ and $\beta$, the bound for the genus can be really lowered.

\begin{proposition}
\label{prop:stabilityChamberRefined}
Let $\alpha$ and $\beta$ be concentrated systems of weights and let $\overline{n}$ be an admissible array such that
$$M(r,\beta,d,\overline{n})<M(r,\alpha,d,\overline{n})$$
Then $\alpha$ and $\beta$ belong to different stability chambers if
$$g\ge 1+\frac{\left \lfloor \sum_{x\in D} \sum_{i=1}^r (1-\alpha_i(x))(1-n_i(x))\right \rfloor}{r'}$$
\end{proposition}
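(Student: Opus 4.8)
The plan is to produce a single parabolic vector bundle that is $\alpha$-stable but $\beta$-unstable; exhibiting one shows immediately that $\alpha$ and $\beta$ cannot share a stability chamber. Since we are handed an admissible $\overline{n}$ with $M(r,\beta,d,\overline{n})<M(r,\alpha,d,\overline{n})$, by Lemma \ref{lemma:stability} it suffices to build an $\alpha$-stable $(E,E_\bullet)\in\SM(r,\alpha,\xi)$ carrying a subbundle $F\subsetneq E$ of type $\overline{n}$ whose degree realizes the $\alpha$-borderline, $\deg(F)=M(r,\alpha,d,\overline{n})$. Such an $F$ satisfies $\deg(F)=M(r,\alpha,d,\overline{n})>M(r,\beta,d,\overline{n})$, so Lemma \ref{lemma:stability} forces $\beta$-instability, while $\alpha$-semistability — hence $\alpha$-stability, as $\alpha$ is generic — is built into the construction. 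This reduces the Proposition to an existence statement of exactly the type proved in Lemma \ref{lemma:nonemptyBorder}; the whole content here is a sharpening of the genus bound in the concentrated regime.

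The first step is a concentration estimate. Writing $s_{\min}(\alpha,\overline{n})=r''\owt_\alpha(\overline{n})-r'\owt_\alpha(\overline{n}'')$ and using that all weights over a fixed point differ by less than $\tfrac{4}{nr^2}$, a telescoping bound gives $|s_{\min}(\alpha,\overline{n})|\le n\,r'r''\tfrac{4}{nr^2}=\tfrac{4r'r''}{r^2}\le 1$, and in fact strictly, so $|s_{\min}(\alpha,\overline{n})|<1$ (and likewise for $\beta$). Since $M(r,\alpha,d,\overline{n})=\big\lfloor (r'd-s_{\min}(\alpha,\overline{n}))/r\big\rfloor$ with $s_{\min}\in(-1,1)$, the hypothesis $M(r,\beta,d,\overline{n})<M(r,\alpha,d,\overline{n})$ forces a multiple of $r$ to sit strictly between $\tfrac{r'd-1}{r}$ and $\tfrac{r'd+1}{r}$, whence $r\mid r'd$ and $\deg(F)=M(r,\alpha,d,\overline{n})=r'd/r$. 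In particular the subbundle $F$ to be constructed has vector-bundle slope $\mu(F)=\mu(E)=d/r$ equal to that of $E$, even though it is strictly parabolically dominated for $\alpha$ (because $s_{\min}(\alpha,\overline{n})<0$). This equal-slope normalization is precisely what makes the construction cheaper than in the general Lemma \ref{lemma:nonemptyBorder}.

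I would then realize $(E,E_\bullet)$ as a generic parabolic extension
$$0\longrightarrow (F,F_\bullet)\longrightarrow (E,E_\bullet)\longrightarrow (E'',E''_\bullet)\longrightarrow 0$$
of stable parabolic bundles of the induced types $\overline{n}$ and $\overline{n}''$, both of vector-bundle slope $d/r$. Because the two slopes coincide, the degree contribution to the parabolic $\Ext^1$ cancels and a Riemann--Roch computation gives $\dim\op{PExt}^1((E'',E''_\bullet),(F,F_\bullet))=r'r''(g-1+t_{\overline{n}'',\overline{n}})$, with no degree penalty. Feeding this equal-slope configuration into \cite[Theorem 1.4.3A]{BB05} with its native ($\overline{n}$-dependent) genus hypothesis — rather than the uniform bound invoked in the proof of Lemma \ref{lemma:nonemptyBorder} — yields a stable parabolic bundle with a type-$\overline{n}$ subbundle at the borderline degree as soon as $g\ge 1+\tfrac{1}{r'}\big(r''n-\lceil\owt_\alpha(\overline{n}'')\rceil\big)$. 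I would then match this with the stated bound through the elementary identity
$$\sum_{x\in D}\sum_{i=1}^r(1-\alpha_i(x))(1-n_i(x))=\sum_{x\in D}\sum_{i\,:\,n_i(x)=0}(1-\alpha_i(x))=r''n-\owt_\alpha(\overline{n}''),$$
so that, writing $Q$ for the left-hand side, $\lfloor Q\rfloor=r''n-\lceil\owt_\alpha(\overline{n}'')\rceil$ and the condition becomes exactly $g\ge 1+\lfloor Q\rfloor/r'$.

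The main obstacle is the genus bound itself: pinning down the sharp, $\overline{n}$-dependent hypothesis of \cite[Theorem 1.4.3A]{BB05} in the equal-slope case and verifying that a generic extension as above is genuinely $\alpha$-stable. The delicate point is that an equal-slope subbundle $F$ could a priori be accompanied by further subbundles of slope $d/r$; one must check that for generic $(F,F_\bullet)$, $(E'',E''_\bullet)$ and extension class the only such subbundle is $F$ and that it does not $\alpha$-destabilize. The concentration hypothesis enters twice — to force the normalization $\deg(F)=r'd/r$ via $|s_{\min}|<1$, and to keep the weight term $\owt_\alpha(\overline{n}'')$ governing the bound small — and carrying the estimate uniformly over all admissible $\overline{n}$ is where the remaining routine work lies.
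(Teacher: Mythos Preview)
Your core strategy — build an $\alpha$-stable $(E,E_\bullet)$ with a type-$\overline{n}$ subbundle $F$ at the borderline $\deg(F)=M(r,\alpha,d,\overline{n})$, then invoke Lemma~\ref{lemma:stability} for $\beta$-instability — is exactly the paper's, and both of you ultimately rely on \cite[Theorem~1.4.3A]{BB05} with its native $\overline{n}$-dependent genus hypothesis (in place of the uniform estimate used in Lemma~\ref{lemma:nonemptyBorder}). Your identity $\sum_{x,i}(1-\alpha_i(x))(1-n_i(x))=r''n-\owt_\alpha(\overline{n}'')$ is correct and is precisely what matches that hypothesis to the bound in the statement.

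Where you diverge is in the scaffolding. The paper's proof is a one-line citation: \cite[Theorem~1.4.3A]{BB05} produces the desired $(E,E_\bullet)$ and $F$ directly, for \emph{whatever} value $M(r,\alpha,d,\overline{n})$ takes, under the stated genus bound. Your equal-slope normalization (deducing $r\mid r'd$ and $\mu(F)=\mu(E)$ from concentration), the explicit extension construction, and the Riemann--Roch count are all unnecessary — they amount to re-deriving a special case of what \cite{BB05} already proves in full. More importantly, your claim that ``concentration enters twice'' misreads the logic: the paper's proof uses no concentration hypothesis whatsoever, and the proposition would in fact hold verbatim without it (the paragraph following the proposition makes clear that concentration only serves to make the displayed genus bound numerically small, not to make the argument go through). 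So your proof is correct but over-engineered, and you have misattributed to concentration a role it does not play.
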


\begin{proof}
The proof is exactly the same as in the Theorem, but instead of using the genus bound in Lemma \ref{lemma:nonemptyBorder}, we apply the bound in \cite[Theorem 1.4.3A]{BB05}.
\end{proof}

Finally, observe that the genus bounds for the previous results are not well defined for $\alpha,\beta\in \tilde{\Delta}$, rather they depend on the choice of representatives in $\Delta$. We can play this out in our favor and choose suitable $\varepsilon,\delta\in \RR^{|D|}$ such that the genus bound for $\alpha[\varepsilon]$ and $\beta[\delta]$ is as low as possible. The bound for $\alpha[\varepsilon]$ decreases with $\varepsilon$. The maximum possible shift that we can take at each $x\in D$ is $\varepsilon(x)<1-\alpha_r(x)$. Therefore, the previous Proposition holds if for some $\tau>0$
$$g\ge 1+\frac{\left \lfloor \sum_{x\in D} \sum_{i=1}^r (\alpha_r(x)+\tau-\alpha_i(x))(1-n_i(x))\right \rfloor}{r'}$$
In particular, the more concentrated the weights in a numerical chamber are, the lesser genus is needed in order to realize the surrounding numerical walls as geometrical walls. This somehow justifies that our study of the concentrated chamber can be done more explicitly in lower genus.

Finally, we can apply the previous results to obtain the following versions of Theorem \ref{theorem:ExtendedTorelli} and Theorem \ref{theorem:autoModuli}.

\begin{theorem}
\label{theorem:ExtendedTorelliComputable}
Let $(X,D)$ and $(X',D')$ be two smooth projective curves of genus $g\ge \max\{1+(r-1)|D|,6\}$ and $g'\ge 6$ respectively with set of marked points $D\subset X$ and $D'\subset X'$. Let $\xi$ and $\xi'$ be line bundles over $X$ and $X'$ respectively, and let $\alpha$ and $\alpha'$ be full flag generic systems of weights over $(X,D)$ and $(X',D')$ respectively. Let
$$\Phi: \SM(X,r,\alpha,\xi)\stackrel{\sim}{\longrightarrow} \SM(X',r',\alpha',\xi')$$
be an isomorphism. Then
\begin{enumerate}
\item $r=r'$
\item $(X,D)$ is isomorphic to $(X',D')$, i.e., there exists an isomorphism $\sigma:X\stackrel{\sim}{\to} X'$ sending $D$ to $D'$.
\item There exists a basic transformation $T$ such that
\begin{itemize}
\item $\sigma^*\xi'\cong T(\xi)$
\item $\overline{M}(r,\sigma^*\alpha',\deg(\xi'))=\overline{M}(r,T(\alpha),\deg(\xi'))$
\item For every $(E,E_\bullet)\in \SM(r,\alpha,\xi)$, $\sigma^*\Phi(E,E_\bullet) \cong T(E,E_\bullet)$
\end{itemize}
\end{enumerate}
\end{theorem}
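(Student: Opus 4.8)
The plan is to deduce Theorem \ref{theorem:ExtendedTorelliComputable} directly from Theorem \ref{theorem:ExtendedTorelli} by upgrading the third bullet of its conclusion using the stability chamber analysis of this section. The first two conclusions $r=r'$ and $(X,D)\cong(X',D')$, as well as the existence of a basic transformation $T$ satisfying $\sigma^*\xi'\cong T(\xi)$ and $\sigma^*\Phi(E,E_\bullet)\cong T(E,E_\bullet)$ for every $(E,E_\bullet)$, are exactly the content of Theorem \ref{theorem:ExtendedTorelli}, and since $g\ge 6$ and $g'\ge 6$ that theorem applies verbatim. So the only thing left to prove is that the condition ``$\sigma^*\alpha'$ is in the same stability chamber as $T(\alpha)$'' can be replaced by the \emph{computable} numerical identity $\overline{M}(r,\sigma^*\alpha',\deg(\xi'))=\overline{M}(r,T(\alpha),\deg(\xi'))$.

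First I would observe that both of these systems of weights, namely $\sigma^*\alpha'$ and $T(\alpha)$, are full flag generic systems of weights of rank $r$ over the \emph{same} marked curve $(X,D)$, and that they produce parabolic vector bundles of the same determinant, since $\sigma^*\xi'\cong T(\xi)$ and hence $\deg(\sigma^*\xi')=\deg(T(\xi))=\deg(\xi')$. Write $d=\deg(\xi')$. Theorem \ref{theorem:ExtendedTorelli} tells us that $\sigma^*\alpha'$ and $T(\alpha)$ belong to the same geometrical stability chamber for the moduli problem of rank $r$, determinant $\xi'$ parabolic bundles over $(X,D)$. The goal is then to apply Theorem \ref{theorem:stabilityChamber}, which characterizes equality of stability chambers (for curves of sufficiently high genus) by the coincidence of all the invariants $M(r,\cdot,d,\overline{n})$ over admissible $\overline{n}$, i.e.\ by the equality of the full invariant $\overline{M}(r,\cdot,d)$.

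The point requiring care is the genus hypothesis. Theorem \ref{theorem:stabilityChamber} applied to $\alpha=\sigma^*\alpha'$ and $\beta=T(\alpha)$ requires
$$g\ge 1+(r-1)|D|-\min\left(\left\lfloor \sum_{x\in D}(\sigma^*\alpha')_1(x)\right\rfloor,\left\lfloor \sum_{x\in D}T(\alpha)_1(x)\right\rfloor\right).$$
Since each weight lies in $[0,1)$, both floors are nonnegative, so the right hand side is at most $1+(r-1)|D|$; thus the hypothesis $g\ge \max\{1+(r-1)|D|,6\}$ of the present theorem is exactly what guarantees the genus bound of Theorem \ref{theorem:stabilityChamber} for this pair of weights, regardless of the representatives chosen. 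Here I would note that both $M(r,\cdot,d,\overline{n})$ and membership in a chamber depend only on the class in $\tilde\Delta$, so no ambiguity arises from shifts by $\varepsilon\in\RR^{|D|}$. With the genus bound secured, Theorem \ref{theorem:stabilityChamber} gives the biconditional: $\sigma^*\alpha'$ and $T(\alpha)$ lie in the same stability chamber if and only if $\overline{M}(r,\sigma^*\alpha',d)=\overline{M}(r,T(\alpha),d)$. Combining the ``same chamber'' conclusion of Theorem \ref{theorem:ExtendedTorelli} with the forward direction of this equivalence yields the desired numerical equality, completing the upgrade.

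The main obstacle, which is really just a bookkeeping point rather than a deep difficulty, is to verify that the genus hypothesis $g\ge\max\{1+(r-1)|D|,6\}$ is strong enough to invoke \emph{both} Theorem \ref{theorem:ExtendedTorelli} (needing $g,g'\ge 6$) and Theorem \ref{theorem:stabilityChamber} (needing the above refined bound) simultaneously, and to confirm that after pulling back by $\sigma$ the two competing weight systems genuinely live on the common curve $(X,D)$ with common rank and determinant degree, so that the chamber-comparison theorem is literally applicable. Once these compatibilities are checked, the proof is a direct concatenation of the two cited theorems and requires no further computation.
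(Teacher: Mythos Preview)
Your proposal is correct and follows precisely the approach the paper intends: the theorem is stated immediately after Theorem~\ref{theorem:stabilityChamber} as a direct application of that result combined with Theorem~\ref{theorem:ExtendedTorelli}, with the genus hypothesis $g\ge\max\{1+(r-1)|D|,6\}$ chosen exactly so that both theorems apply. The paper does not spell out the argument, but your verification that the floor terms are nonnegative (hence $1+(r-1)|D|$ dominates the bound in Theorem~\ref{theorem:stabilityChamber}) is the only computation needed.
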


\begin{corollary}
\label{cor:autoModuliComputable}
Let $(X,D)$ be a smooth projective curve of genus $g\ge \max\{1+(r-1)|D|,6\}$ and let $\alpha$ be a full flag generic system of weights over $(X,D)$ of rank $r$. Let $\xi$ be a line bundle over $X$. Then the automorphism group of $\SM(r,\alpha,\xi)$ is the subgroup of $\ST$ consisting on basic transformations $T$ such that
\begin{itemize}
\item $T(\xi)\cong \xi$
\item $\overline{M}(r,T(\alpha),\deg(\xi))=\overline{M}(r,\alpha,\deg(\xi))$
\item If $r=2$, $T\in \ST^+$
\end{itemize}

\end{corollary}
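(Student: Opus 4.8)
The plan is to derive Corollary \ref{cor:autoModuliComputable} directly from Theorem \ref{theorem:autoModuli} by replacing the intrinsic ``same stability chamber'' condition with the computable numerical invariant $\overline{M}(r,\alpha,d)$. By Theorem \ref{theorem:autoModuli}, for $g\ge 6$ the automorphism group of $\SM(r,\alpha,\xi)$ consists exactly of those basic transformations $T\in\ST$ such that $T(\xi)\cong\xi$, such that $T(\alpha)$ lies in the same stability chamber as $\alpha$, and (if $r=2$) such that $T\in\ST^+$. The first and third conditions are already stated in the form we want, so the only task is to show that under the strengthened genus hypothesis $g\ge\max\{1+(r-1)|D|,6\}$ the condition ``$T(\alpha)$ is in the same stability chamber as $\alpha$'' is equivalent to the equality of invariants $\overline{M}(r,T(\alpha),\deg(\xi))=\overline{M}(r,\alpha,\deg(\xi))$.

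First I would observe that the added genus condition $g\ge 1+(r-1)|D|$ is precisely what is needed to apply Theorem \ref{theorem:stabilityChamber}, which asserts that two generic full flag systems of weights $\alpha$ and $\beta$ of rank $r$ lie in the same stability chamber for determinant $\xi$ if and only if $M(r,\alpha,d,\overline{n})=M(r,\beta,d,\overline{n})$ for every admissible $\overline{n}$, i.e.\ if and only if $\overline{M}(r,\alpha,d)=\overline{M}(r,\beta,d)$. The hypothesis of that theorem requires
$$g\ge 1+(r-1)n-\min\left(\left\lfloor \sum_{x\in D}\alpha_1(x) \right \rfloor,\left\lfloor \sum_{x\in D}\beta_1(x) \right \rfloor\right),$$
and since each $\alpha_1(x)\ge 0$ the floor terms are nonnegative, so $g\ge 1+(r-1)|D|$ suffices regardless of the particular weights $\alpha$ and $\beta=T(\alpha)$.

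Next I would apply Theorem \ref{theorem:stabilityChamber} with $\beta=T(\alpha)$. One must check that $T(\alpha)$ is again a generic full flag system of weights so that the theorem applies: genericity and the full flag property are preserved by every basic transformation (pullback by $\sigma$, tensoring, dualization, and Hecke all act on the space of weights by the explicit formulas given in Section \ref{section:Hecke}, each of which sends generic full flag systems to generic full flag systems). With this verified, Theorem \ref{theorem:stabilityChamber} yields that $T(\alpha)$ and $\alpha$ share the same stability chamber precisely when $\overline{M}(r,T(\alpha),\deg(\xi))=\overline{M}(r,\alpha,\deg(\xi))$. Substituting this equivalence into the characterization of $\Aut(\SM(r,\alpha,\xi))$ supplied by Theorem \ref{theorem:autoModuli} gives exactly the three bullet conditions in the statement, completing the argument.

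I expect the argument to be essentially a bookkeeping reduction rather than to contain a serious obstacle, since all the analytic content is already packaged in Theorems \ref{theorem:autoModuli} and \ref{theorem:stabilityChamber}. The one point requiring genuine care is the verification that $T(\alpha)$ is generic and full flag for arbitrary $T\in\ST$, so that Theorem \ref{theorem:stabilityChamber} is legitimately applicable with $\beta=T(\alpha)$; this is where I would spend the most attention, tracing through the explicit action of each generator $\Sigma_\sigma$, $\SD^-$, $\ST_L$, $\SH_H$ on $\Delta$ and confirming that the image of a weight system avoiding the union $A$ of genericity hyperplanes again avoids $A$.
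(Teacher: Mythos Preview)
Your proposal is correct and matches the paper's intended derivation: the corollary is stated in the paper without an explicit proof, as it is an immediate consequence of combining Theorem \ref{theorem:autoModuli} with Theorem \ref{theorem:stabilityChamber}, exactly as you outline. Your additional care in verifying that $T(\alpha)$ remains generic and full flag, and that the genus bound $g\ge 1+(r-1)|D|$ dominates the hypothesis of Theorem \ref{theorem:stabilityChamber} uniformly in the floor terms, fills in precisely the details the paper leaves to the reader.
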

Unlike the original results, these versions are fully computable for each specific case, in the sense that for every system of weights $\alpha$ and every line bundle $\xi$ we have an explicit morphism
\begin{eqnarray*}
\xymatrixrowsep{0.05pc}
\xymatrixcolsep{0.3pc}
\xymatrix{
({\det}_\xi,\overline{M}_\alpha)&:&\ST \ar[rrrr] &&&& \Pic(X)\times \ZZ^\SN\\
&& T \ar@{|->}[rrrr] &&&& (T(\xi),\overline{M}(r,T(\alpha),\deg(T(\xi))))
}
\end{eqnarray*}
And for $g\ge 1+(r-1)|D|$ we know that the set of isomorphisms between $\SM(r,\alpha,\xi)$ and $\SM(r,\alpha',\xi')$ is given by
$$({\det}_\xi,\overline{M}_\alpha)^{-1}(\xi',M(r,\alpha',\deg(\xi')))$$
In particular, the moduli spaces $\SM(r,\alpha,\xi)$ and $\SM(r,\alpha',\xi)$ are isomorphic if and only if
$$(\xi',\overline{M}(r,\alpha',\deg(\xi')))\in ({\det}_\xi,\overline{M}_\alpha)(\ST)$$
Moreover, from the description of $\ST$ in terms of the generators $\SD^-$, $\ST_L$ and $\SH_H$ given in Proposition \ref{prop:basicTransPresentation}
$$\ST\cong \langle \ST_L,\ST_H \rangle \rtimes \left( \Aut(X,x)\times \ZZ/2\ZZ\right )$$
for each chamber $\alpha$ and each determinant $\xi$ we can explicitly describe a presentation of
$$\Aut(\SM(r,\alpha,\xi))=({\det}_\xi,\overline{M}_\alpha)^{-1}(\xi,\overline{M}(r,\alpha,\deg(\xi)))<\ST$$
or, if $r=2$,
$$\Aut(\SM(2,\alpha,\xi))=({\det}_\xi,\overline{M}_\alpha)^{-1}(\xi,\overline{M}(r,\alpha,\deg(\xi)))\cap \ST^+<\ST^+$$
just by selecting generators in the right hand side.

\section{Examples}
\label{section:examples}

Let $X$ be a curve with an automorphism $\sigma:X\to X$ such that there exist $x,y\in X$ with $\sigma(x)=y$ and $\sigma(y)=x$. Take $D=\{x,y\}$. Let $0\le \alpha_1<1/2<\alpha_2<1$. Then take the following full flag system of weights of rank $r=2$ at $(X,D)$
\begin{eqnarray*}
\alpha_1(x)&=&\alpha_1\\
\alpha_2(x)&=&\alpha_2\\
\alpha_1(y)&=&\alpha_2-1/2\\
\alpha_2(y)&=&\alpha_1+1/2
\end{eqnarray*}
Then, by construction $\SH_{x+y}(\alpha) \sim \Sigma_\sigma(\alpha)$. Let $L$ be a line bundle of degree $1$ such that $L^2\cong \SO_X(x+y)$
Then we have that
$$(\sigma,1,L,x+y):\SM(r,\alpha,\xi) \longrightarrow \SM(r,\alpha,\xi)$$
is an automorphism. Now let
\begin{eqnarray*}
\Aut^+(X,D)&=& \{\sigma\in \Aut(X) | \sigma(x)=x \, , \, \sigma(y)=y\}\\
\Aut^-(X,D)&=& \{\sigma\in \Aut(X) | \sigma(x)=y \, , \, \sigma(y)=x\}
\end{eqnarray*}

Then the following basic transformations are nontrivial automorphisms of $\SM(r,\alpha,\xi)$
\begin{itemize}
\item $T=(\sigma^-,1,L,x+y)$, where $\sigma^-\in \Aut^-(X,D)$ and $T(\xi)\cong \xi$
\item $T=(\sigma^+,1,L,0)$, where $\sigma^+\in \Aut^+(X,D)$ and $T(\xi)\cong \xi$.
\end{itemize}

Moreover, if $|\delta|$ is small enough and $X$ has genus $g\ge 3$, then the weights $\alpha_i(x)$ are concentrated but the weights $\alpha_i(y)$ are not. Therefore, $\SH_y(\alpha)$ is concentrated, $\alpha$ is concentrated at $x$, $\SH_{x+y}(\alpha)$ is concentrated at $y$ and $\SH_x(\alpha)$ is not concentrated. From the genus condition, it can be proved using Theorem \ref{theorem:stabilityChamber} from the last section, that $\SH_{x+y}(\alpha)$, $\SH_x(\alpha)$ and $\SH_y(\alpha)$ do not belong to the same chamber as $\alpha$. Moreover, taking the pullback by $\sigma^-$ interchange the following (distinct) chambers
\begin{itemize}
\item $\SH_x(\alpha)$ and $\SH_y(\alpha)$
\item $\alpha$ and $\SH_{x+y}(\alpha)$
\end{itemize}
As all the chambers are different, in order for a basic transformation $T=(\sigma,s,L,H)$ to preserve the stability chamber of $\alpha$ we need either
\begin{itemize}
\item $\sigma\in \Aut^+(X,D)$ and $H=0$ or
\item $\sigma\in \Aut^-(X,D)$ and $H=x+y$
\end{itemize}
so, taking into account that for rank $2$ each transformation of the form $T=(\sigma,-1,L,H)$ is equivalent to another one of the form $(\sigma,1,L',H)$ for some $L'$, we obtain that the automorphisms of $\SM(r,\alpha,\xi)$ are precisely the ones described above.

This example proves that there exist curves and systems of weights for which the  Hecke transform induces nontrivial automorphisms when combined with pullbacks by suitable automorphisms of the curve even if the transformation $\SH_H$ alone does not preserve the stability chamber.

As we saw in the last theorem, this cannot happen in the concentrated setting and, in general, it is not expected to happen if the parabolic chamber is stable under transformations $\Sigma_\sigma$ for all $\sigma\in \Aut(X,D)$.

Now let $X$ be any Riemann surface and let $D=x$ for some $x\in X$. Let $0<\varepsilon<1/4$ and let us consider the following rank $3$ system of weights over $(X,D)$
\begin{eqnarray*}
\alpha_1(x) &=&\varepsilon\\
\alpha_2(x) &=&3\varepsilon\\
\alpha_3(x) &=&1-\varepsilon
\end{eqnarray*}

A direct computation shows us that $\SH_x(\alpha) \sim (\varepsilon,1-3\varepsilon,1-\varepsilon)$, so $\SH_x(\alpha)^\vee \sim \alpha$. Let $\xi$ be any degree $-1$ line bundle over $X$.Then
$$\SD^-\circ \SH_x(\xi)=(\xi(-x))^{-1}=\xi^{-1}(x)$$
so $\deg(\SD^-\circ \SH_x(\xi))=1+1=2=\deg(\xi)+3$. Therefore, there exists a line bundle $L$ of degree $1$ such that
$$L^3\otimes \xi(-x) \cong \xi^{-1}$$
Take $T=(\id,-1,L,x)$. As $\ST_L$ does not change the parabolic weights the previous computations shows that
\begin{itemize}
\item $T(\xi)=\xi$
\item $T(\alpha)\sim \alpha$
\end{itemize}
Therefore, we obtain that
$$(\id,-1,L,x):\SM(r,\alpha,\xi) \longrightarrow \SM(r,\alpha,\xi)$$
is an automorphism. Moreover, for any automorphism $\sigma:X\longrightarrow X$ fixing $D=x$ we have that
$$\deg(\SD^-\circ \SH_x(\xi))=2=\deg((\sigma^{-1})^*\xi)+3$$
Therefore, there exists a line bundle $L_\sigma$ of degree $1$ such that
$$L_\sigma^3 \otimes \xi(-x) \cong (\sigma^{-1})^*\xi^{-1}$$
As $\Sigma_\sigma$ fixes the parabolic point then taking $T=(\sigma,-1,L_\sigma,x)$ we obtain  that
\begin{itemize}
\item $T(\xi)=\xi$
\item $T(\alpha)\sim\alpha$
\end{itemize}
Therefore, we obtain that
$$(\sigma,-1,L_\sigma,x):\SM(r,\alpha,\xi) \longrightarrow \SM(r,\alpha,\xi)$$
is an automorphism. Then we have found an example of a marked curve of arbitrary high genus and a system of weights such that the Hecke transformation induces a nontrivial automorphism of the moduli space when combined with the dualization. In contrast with the previous example, where the curved was supposed to have an automorphism interchanging two parabolic points, in this example the existence of an automorphism involving Hecke transformation is achieved even if the curve is  generic and lacks nontrivial automorphisms.

The basic transformation $T=(\id,-1,L,x)$ is particularly interesting. If $g\ge 4$ then from Lemma \ref{lemma:freeTransforms} we know that $T$ acts nontrivially on $\SM(r,\alpha,\xi)$, but a direct computation shows that $T^2=\id_\ST$. Therefore, $T$ is an involution of $\SM(r,\alpha,\xi)$ that does not come from an involution of the Riemann surface $X$.

To complete the example, let us study other kinds of automorphisms that this moduli space admits. Let $T=(\sigma,s,L,H)\in \ST$. By construction $\SD^-(\alpha)\sim \SH_x(\alpha)$. Moreover, if $\epsilon$ is small enough then $\SH_{2x}(\alpha)\sim (1-5\varepsilon,1-3\varepsilon,1-\varepsilon)$ is concentrated. Therefore, so is $\SD^-\circ \SH_{2x}(\alpha)$. On the other hand, $\alpha$ and $\SH_x(\alpha)$ are not concentrated. Using the results of the previous chapter we can prove that if $\varepsilon$ is small enough and $g\ge 3$ then $\alpha\sim \SD^-\circ \SH_x(\alpha)$, $\SH_x(\alpha)\sim \SD^-(\alpha)$ and $\SH_{2x}(\alpha)\sim \SD^-\circ \SH_{2x}(\alpha)$ belong to three different stability chambers.

On the other hand, $\Sigma_\sigma$ and $\ST_L$ do not change the stability chamber, so $T(\alpha)$ is in the same stability chamber as $\alpha$ if and only if either
\begin{itemize}
\item $H=0$ and $s=1$ or
\item $H=x$ and $s=-1$
\end{itemize}

In both cases, for every $\sigma:X\to X$ fixing $D=x$ there exists a line bundle $L$ such that $(\sigma,1,L,0)(\xi)\cong\xi$ or $(\sigma,-1,L,x)(\xi)\cong \xi$ respectively. In every case, such $L$ is unique up to a choice of a $3$-torsion point in $J(X)$. Then
$$\Aut(\SM(r,\alpha,\xi)) \cong J(X)[3] \rtimes \left(\ZZ/2\ZZ \times  \Aut(X,D) \right)$$

An analogous example can be found for any rank. Just take $\alpha$ distributed as $\alpha_r(x)=1-\varepsilon$ and $\alpha_k(x)=(2k-1)\varepsilon$ for $k<r$. Then $\SD^-\circ \SH_{(r-2)x}(\alpha)\sim\alpha$. If we take $\xi$ of degree $-1$ then
$$\deg(\SH_{(r-2)x}(\xi))=\deg(\xi)-r+2=r-1=\deg(\xi^{-1})+r$$
Therefore, there exists a line bundle $L$ of degree $1$ such that if $T=(\id,-1,L,\SH_{(r-2)x})$ then
\begin{itemize}
\item $T(\alpha)\sim \alpha$
\item $T(\xi)=\xi$
\end{itemize}
so $T$ induces an automorphism $T:\SM(r,\alpha,\xi)\longrightarrow \SM(r,\alpha,\xi)$ which is an involution of the moduli space.

\end{document}